\theoremstyle{plain}
\newtheorem{proposition}{Proposition}[section]
\newtheorem{theorem}[proposition]{Theorem}
\newtheorem{lemma}[proposition]{Lemma}
\newtheorem{corollary}[proposition]{Corollary}
\theoremstyle{definition}
\newtheorem{example}[proposition]{Example}
\newtheorem{definition}[proposition]{Definition}
\newtheorem{observation}[proposition]{Observation}
\theoremstyle{remark}
\newtheorem{remark}[proposition]{Remark}
\DeclareMathOperator{\Ad}{Ad}
\DeclareMathOperator{\Aut}{Aut}
\DeclareMathOperator{\Hom}{Hom}
\DeclareMathOperator{\End}{End}
\DeclareMathOperator{\Spanset}{span} 
\DeclareMathOperator{\Gr}{Gr} 
\DeclareMathOperator{\id}{id}
\DeclareMathOperator{\Isom}{Isom}
\DeclareMathOperator{\SL}{\mathsf{SL}}
\DeclareMathOperator{\GL}{\mathsf{GL}}
\DeclareMathOperator{\SU}{\mathsf{SU}}
\DeclareMathOperator{\PSL}{\mathsf{PSL}}
\DeclareMathOperator{\PGL}{\mathsf{PGL}}
\DeclareMathOperator{\dist}{d}
\DeclareMathOperator{\Bc}{\mathcal{B}}
\DeclareMathOperator{\Cc}{\mathcal{C}}
\DeclareMathOperator{\Fc}{\mathcal{F}}
\DeclareMathOperator{\Gc}{\mathcal{G}}
\DeclareMathOperator{\Hc}{\mathcal{H}}
\DeclareMathOperator{\Nc}{\mathcal{N}}
\DeclareMathOperator{\Oc}{\mathcal{O}}
\DeclareMathOperator{\Rc}{\mathcal{R}}
\DeclareMathOperator{\Pc}{\mathcal{P}}
\DeclareMathOperator{\Cb}{\mathbb{C}}
\DeclareMathOperator{\Hb}{\mathbb{H}}
\DeclareMathOperator{\Kb}{\mathbb{K}}
\DeclareMathOperator{\Nb}{\mathbb{N}}
\DeclareMathOperator{\Rb}{\mathbb{R}}
\DeclareMathOperator{\Zb}{\mathbb{Z}}
\DeclareMathOperator{\Psf}{\mathsf{P}}
\DeclareMathOperator{\aL}{\mathfrak{a}}
\DeclareMathOperator{\gL}{\mathfrak{g}}
\DeclareMathOperator{\kL}{\mathfrak{k}}
\DeclareMathOperator{\nL}{\mathfrak{n}}
\DeclareMathOperator{\pL}{\mathfrak{p}}
\newcommand{\abs}[1]{\left|#1\right|}
\newcommand{\norm}[1]{\left\|#1\right\|}
\newcommand{\wh}[1]{\widehat{#1}}
\newcommand{\ip}[1]{\left\langle #1\right\rangle}
\newcommand{\geodflow}{\phi}
\newcommand{\flatflow}{\varphi}
\newcommand{\homflow}{\psi}
\newcommand{\into}{\hookrightarrow}
\newcommand{\proj}{\mathbf{P}}
\newcommand{\peripherals}{\Pc}
\DeclareMathOperator{\Stab}{Stab}
\begin{document}

\title[Relatively Anosov representations]{Relatively Anosov representations via flows I: theory}
\author{Feng Zhu}\email{fzhu52@wisc.edu}
\author{Andrew Zimmer}\address{University of Wisconsin-Madison}\email{amzimmer2@wisc.edu}
\date{\today}
\keywords{(relatively) Anosov representations, relatively hyperbolic groups, geometrically finite groups, dominated splittings, convex real projective geometry}
\subjclass[2020]{Primary 22E40; Secondary 37D20, 37D30, 20H10, 20F67, 37B05}

\begin{abstract} This is the first in a series of two papers that develops a theory of relatively Anosov representations using the original ``contracting flow on a bundle" definition of Anosov representations introduced by Labourie and Guichard--Wienhard. In this paper we will mostly focus on general theory while in the second paper we will focus on examples.  In the case of relatively hyperbolic groups, this bundle construction involves several choices: the model Gromov-hyperbolic space the group acts on and the norms on the fibers of the bundle. We use the properties of these bundles to define a subclass of nicely behaved relatively Anosov representations, which we call uniformly relatively Anosov.  We also prove a stability result.

\end{abstract}

\maketitle

\tableofcontents

\section{Introduction}

Anosov representations were introduced by Labourie~\cite{L2006}, and further developed by Guichard--Wienhard~\cite{GW}, as a generalization of convex cocompact representations into the isometry group of real hyperbolic space. Informally speaking, an Anosov representation is a representation of a word-hyperbolic group into a semisimple Lie group which has a equivariant boundary map into a flag manifold with good dynamical properties. Since their initial introduction there have been a number of different interpretations due to, among others, Kapovich--Leeb--Porti \cite{KLP2017,KLP2018,KLP2018b}, Gu\'eritaud--Guichard--Kassel--Wienhard~\cite{GGKW}, Bochi--Potrie--Sambarino \cite{BPS}, and Kassel--Potrie~\cite{KP2022}. 

It is natural to wonder if the theory of Anosov representations can be extended to relatively hyperbolic groups. In this direction, Kapovich--Leeb~\cite{KL} developed relative versions of the characterizations in~\cite{KLP2017,KLP2018,KLP2018b} and in recent work the first author~\cite{reldomreps, rdr2} developed relative versions of the characterizations in ~\cite{BPS} and~\cite{KP2022}. In recent work, Weisman~\cite{W2022} introduces a new class of representations of relatively hyperbolic groups called \emph{extended geometrically finite representations} which includes the class of relatively Anosov representations (as in Definition~\ref{defn:Pk Anosov}) and also convex co-compact representations (in the sense of~\cite{DGK_convex_cocopt_realproj}) of relatively hyperbolic groups. 

This is the first in a series of two papers whose purpose is to develop a theory of relatively Anosov representations using the original  ``contracting flow on a bundle" definition of Labourie and Guichard--Wienhard. In the relative setting this construction involves several choices. First one needs to specify a model space the group acts on and second, since the base of the bundle is non-compact, one needs to specify the norms on the fibers of the bundle. In this paper we will mostly focus on general theory while in the second paper we will focus on examples.  

Previously, Canary, Zhang, and second author developed a ``contracting flow on a bundle" theory for geometrically finite Fuchsian groups~\cite{CZZ2021}. This theory was then used in~\cite{BCKM2021a,BCKM2021b, CZZ2022}. The general case considered here is much more complicated. In the Fuchsian case the peripherals subgroups are always cyclic and there is a canonical flow space coming from the action of the group on the real hyperbolic plane. Further, as we will explain below, relatively Anosov representations of geometrically finite Fuchsian groups fall into a special class of well-behaved relatively Anosov representations which we call \emph{uniformly relatively Anosov}, see Section~\ref{sec:introducing uniformly Anosov} and Corollary~\ref{cor:geometrically finite Fuchsian groups}. 

Throughout the paper, we will let $\Kb$ denote either the real numbers $\Rb$ or the complex numbers $\Cb$.

\subsection{Relatively Anosov representations via a boundary map} There is some choice in how exactly one defines an Anosov representation of a relatively hyperbolic group, see the discussion in~\cite{KL}, but the following is perhaps the most natural. 

\begin{definition}\label{defn:Pk Anosov} Suppose that $(\Gamma,\peripherals)$ is relatively hyperbolic with Bowditch boundary $\partial(\Gamma, \peripherals)$. A representation $\rho\colon \Gamma \to \SL(d,\Kb)$ is \emph{$\Psf_k$-Anosov relative to $\peripherals$} if there exists a continuous map 
$$
\xi = (\xi^k, \xi^{d-k}) \colon \partial(\Gamma, \peripherals) \to \Gr_k(\Kb^d) \times \Gr_{d-k}(\Kb^d)
$$
which is 
\begin{enumerate} 
\item \emph{$\rho$-equivariant}: if $\gamma \in \Gamma$, then $\rho(\gamma) \circ \xi = \xi \circ \gamma$, 
\item \emph{transverse}: if $x,y \in \partial(\Gamma, \peripherals)$ are distinct, then $\xi^k(x) \oplus \xi^{d-k}(y) = \Kb^d$, 
\item \emph{strongly dynamics-preserving}: if $(\gamma_n)_{n \geq 1}$ is a sequence of elements in $\Gamma$ where $\gamma_n \to x \in \partial(\Gamma, \peripherals)$ and $\gamma_n^{-1} \to y \in \partial(\Gamma, \peripherals)$, then 
$$
\lim_{n \to \infty} \rho(\gamma_n)V = \xi^k(x)
$$
uniformly on compact subsets of $\left\{ V \in \Gr_k(\Kb^d) : V \text{ transverse to } \xi^{d-k}(y) \right\}$. 
\end{enumerate}
\end{definition}

We note that the above definition is equivalent to being ``asymptotically embedded'' in the sense of Kapovich--Leeb~\cite{KL}, see Proposition~\ref{prop:relAnosov_relae} below. Using the theory developed in this paper, we will also show that the above definition is equivalent to being ``relatively dominated'' in the sense of~\cite{reldomreps}, see Corollary~\ref{cor:relAnosov = reldom} below. Further, when $\rho$ is sufficiently irreducible, the strongly dynamics-preserving property is a consequence of the other two conditions, see Proposition~\ref{prop:sufficiently irreducible} below. 

Given a semisimple Lie group $\mathsf{G}$ and a parabolic subgroup $\mathsf{P} \leq \mathsf{G}$, one can define $\mathsf{P}$-Anosov representations into $\mathsf{G}$ in a completely analogous way, see Section~\ref{sec:general case}. As in the classical word-hyperbolic case, see~\cite[Prop. 4.3]{GW}, we will show that there exists an irreducible linear representation $\Psi \colon \mathsf{G} \to \SL(d,\Rb)$ such that $\rho \colon \Gamma \to \mathsf{G}$ is $\mathsf{P}$-Anosov relative to $\peripherals$ if and only if $\Psi \circ \rho \colon \Gamma \to \SL(d,\Rb)$ is $\Psf_1$-Anosov relative to $\peripherals$ (see Proposition~\ref{prop:adapted representations}). Thus in this paper we mostly restrict our attention to relatively Anosov representations into $\SL(d,\Kb)$.

\subsection{Relatively Anosov representations via a flow space}\label{sec:intro to flow} We now introduce the ``contracting flow'' definition for Anosov representations of relatively hyperbolic groups and state our first main result. 

Given a relatively hyperbolic group $(\Gamma, \peripherals)$ we can realize $\Gamma$ as a subgroup of $\Isom(X)$ where $X$ is a proper geodesic Gromov-hyperbolic metric space such that every point in $X$ is within a uniformly bounded distance of a geodesic, $\Gamma$ acts geometrically finitely on the Gromov boundary $\partial_\infty X$ of $X$, and the stabilizers of the parabolic fixed points are exactly the conjugates of $\peripherals$. Following the terminology in~\cite{HealyHruska}, we call such an $X$ a \emph{weak cusp space for $(\Gamma, \peripherals)$}.

Given such an $X$, let $\Gc(X)$ denote the space of parametrized geodesic lines in $X$ and let $\geodflow^t$ denote the natural flow on $\Gc(X)$ given by 
$$
\geodflow^t(\sigma) = \sigma(\cdot + t).
$$
We let $\wh{\Gc}(X) := \Gamma \backslash \Gc(X)$ denote the quotient. We also have natural maps $\cdot^\pm \colon \Gc(X) \to \partial_\infty X$ given by 
$$
\sigma^\pm := \lim_{t \to \pm \infty} \sigma(t). 
$$

Next given a representation $\rho \colon \Gamma \to \SL(d,\Kb)$, let 
$$
E(X) := \Gc(X) \times \Kb^d \quad \text{and} \quad \wh{E}_\rho(X) := \Gamma \backslash E(X)
$$
where $\Gamma$ acts on  $E(X)$ by 
$$
\gamma \cdot (\sigma, Y) = (\gamma \circ \sigma, \rho(\gamma) Y).
$$ 
Notice that $\wh{E}_\rho(X) \to \wh{\Gc}(X)$ is a vector bundle. The flow $\geodflow^t$ extends to a flow on $E(X)$, which we call $\flatflow^t$, which acts trivially on the second factor. This in turn descends to a flow on $\wh{E}_\rho(X)$ which we also call $\flatflow^t$. 

Given a continuous, $\rho$-equivariant, transverse map 
$$
\xi = (\xi^k, \xi^{d-k}) \colon\partial(\Gamma, \peripherals) \to \Gr_k(\Kb^d) \times \Gr_{d-k}(\Kb^d)
$$
we can define vector bundles $\Theta^k, \Xi^{d-k} \to \Gc(X)$ by setting 
$$
\Theta^k(\sigma) := \xi^k(\sigma^+) \quad \text{and} \quad \Xi^{d-k}(\sigma) := \xi^{d-k}(\sigma^-).
$$
(here we use the fact that $\partial_\infty X$ is equivariantly homeomorphic to $\partial(\Gamma, \peripherals)$). Since $\xi$ is transverse, we have $E(X) = \Theta^k \oplus \Xi^{d-k}$. Since $\xi$ is $\rho$-equivariant, this descends to a vector bundle decomposition  $\wh{E}_\rho(X) = \wh{\Theta}^k \oplus \wh{\Xi}^{d-k}$. We can then consider the bundle 
$$
\Hom(\wh{\Xi}^{d-k}, \wh{\Theta}^k) \to \wh{\Gc}(X).
$$
and, since the subbundles are $\flatflow^t$-invariant, we can define a flow on $\Hom(\wh{\Xi}^{d-k}, \wh{\Theta}^k)$ by 
$$
\homflow^t(f) := \flatflow^t \circ f \circ \flatflow^{-t}.
$$
Finally, we note that any metric on $\wh{E}_\rho(X) \to \wh{\Gc}(X)$ induces, via the operator norm, a continuous family of norms on the fibers of $\Hom(\wh{\Xi}^{d-k}, \wh{\Theta}^k) \to \wh{\Gc}(X)$. 

\begin{definition}
With the notation above, we say that $\rho$ is  \emph{$\Psf_k$-Anosov relative to $X$} if there exists a metric $\norm{\cdot}$ on the vector bundle $\wh{E}_\rho(X) \rightarrow \wh{\Gc}(X)$ such that the flow $\homflow^t$ on $\Hom(\wh{\Xi}^{d-k}, \wh{\Theta}^k)$ is exponentially contracting (with respect to the associated operator norms).
\end{definition}

A relatively hyperbolic group $(\Gamma, \peripherals)$ can have many non-quasi-isometric weak cusp spaces (see~\cite{Healy2020}) and, at least initially, it is not entirely clear which weak cusp space one should or can use when constructing the bundles above. 

In~\cite{GrovesManning}, Groves--Manning constructed weak cusp spaces, which are now often called \emph{Groves--Manning cusp spaces}, by attaching so-called combinatorial horoballs to a Cayley graph of the group. These spaces are perhaps the most canonical choice of weak cusp space, see~\cite{HealyHruska}. The first main result of this paper is that given a relatively Anosov representation and any Groves--Manning cusp space, one can always construct families of norms on the associated vector bundle so that the flow on the Hom bundle is exponentially contracting.

\begin{theorem}[see Sections~\ref{sec:contraction_Anosov} and~\ref{sec:building_norms}] \label{thm:main}
Suppose that $(\Gamma,\peripherals)$ is relatively hyperbolic and $\rho\colon \Gamma \to \SL(d,\Kb)$ is a representation. Then the following are equivalent: 
\begin{enumerate}
\item $\rho$ is $\Psf_k$-Anosov relative to $\peripherals$,
\item there is a weak cusp space $X$ for $(\Gamma,\peripherals)$ such that $\rho$ is $\Psf_k$-Anosov relative to $X$,
\item if $X$ is any Groves--Manning cusp space for $(\Gamma,\peripherals)$, then $\rho$ is $\Psf_k$-Anosov relative to $X$.
\end{enumerate}
\end{theorem}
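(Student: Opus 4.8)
The implications $(3) \Rightarrow (2)$ is immediate since a Groves--Manning cusp space is in particular a weak cusp space, so the real content is the cycle $(1) \Leftrightarrow (2)$ together with the upgrade $(1) \Rightarrow (3)$. The plan is to first dispose of the ``soft'' direction and then to concentrate on the construction of norms, which is where essentially all the difficulty lies.

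For the direction $(2) \Rightarrow (1)$: suppose $\rho$ is $\Psf_k$-Anosov relative to some weak cusp space $X$, so there is a metric $\norm{\cdot}$ on $\wh E_\rho(X) \to \wh\Gc(X)$ making $\homflow^t$ exponentially contracting on $\Hom(\wh\Xi^{d-k}, \wh\Theta^k)$. I would extract from the contraction of the Hom-bundle flow the existence of the equivariant boundary map $\xi$ (which is already part of the data), verify transversality (also part of the data), and then deduce the strongly dynamics preserving property. The key computational point is the standard ``$\sin$ and $\cos$'' estimate relating the operator norm of $\homflow^t(f)$ along a geodesic $\sigma$ to the angle between $\rho(\gamma_n)\Theta^k$ and $\Theta^k$ for suitable $\gamma_n$ along $\sigma$; exponential contraction along the flow on $\wh\Gc(X)$ then forces $\rho(\gamma_n) V \to \xi^k(x)$ for $V$ transverse to $\xi^{d-k}(y)$, using that geodesics in $X$ between points of $\partial_\infty X$ track words in $\Gamma$ in a controlled (relative) way. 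This is modeled on the word-hyperbolic argument of Guichard--Wienhard and its geometrically finite refinement in \cite{CZZ2021}; the only new subtlety is that the base $\wh\Gc(X)$ is noncompact, so one must be careful that the contraction estimate is uniform over the whole flow space and does not degenerate into the cusps --- but this is guaranteed by the definition of ``exponentially contracting,'' which asks for a uniform rate.

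The heart of the theorem is $(1) \Rightarrow (3)$: given that $\rho$ is $\Psf_k$-Anosov relative to $\peripherals$, and given \emph{any} Groves--Manning cusp space $X$, one must manufacture a metric on $\wh E_\rho(X)$ witnessing the flow contraction. The approach is to build the norm fiberwise over $\Gc(X)$ in a $\Gamma$-equivariant way, then descend. Over the ``thick part'' (the complement of horoball neighborhoods, which maps to a compact subset of $\wh\Gc(X)$) any continuous equivariant metric works, essentially by the compactness argument of the classical case combined with the singular-value estimates that follow from the boundary map and strongly dynamics preserving property. The real work is over the combinatorial horoballs: here one exploits the explicit product-like structure of a Groves--Manning horoball over a peripheral coset, together with control on $\rho$ restricted to peripheral subgroups (which follows from Definition \ref{defn:Pk Anosov} applied to the peripheral parabolic fixed points --- each $P \in \peripherals$ acts with a single fixed point, and the transversality/dynamics data there is rigid). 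Concretely, along a geodesic that penetrates a horoball to depth $n$ and returns, the flow time spent is $\asymp \log n$ on each descent/ascent, and one must choose the fiber norm to decay/grow at a matching rate so that the composite contraction along $\homflow^t$ beats a fixed exponential; this amounts to interpolating between a ``deep'' norm adapted to the asymptotics of $\rho|_P$ and the thick-part norm, with the interpolation rate tuned to the geometry of the combinatorial horoball. I expect \textbf{this norm construction over the horoballs --- and checking that the resulting global metric is continuous, equivariant, and yields a genuinely uniform exponential rate for $\homflow^t$ across \emph{all} geodesics simultaneously} --- to be the main obstacle; the delicacy is that geodesics in a Groves--Manning cusp space can enter and exit many horoballs, so the local estimates must patch together without accumulating error, which requires a Morse-type lemma controlling how geodesics in $X$ decompose into thick segments and horoball excursions, and a careful bookkeeping of the flow parameter against these pieces.

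Finally, to close the cycle I would note that $(3) \Rightarrow (2)$ is trivial, and that the implication $(2) \Rightarrow (1)$ established above, applied in particular to a Groves--Manning $X$, together with $(1) \Rightarrow (3)$, gives all remaining implications. One technical preliminary worth isolating first is a lemma (surely proved in Section \ref{sec:building_norms}) asserting that the truth of ``$\rho$ is $\Psf_k$-Anosov relative to $X$'' depends only on the quasi-isometry type of $X$ among weak cusp spaces for $(\Gamma,\peripherals)$ that admit an equivariant quasi-isometry --- but since Groves--Manning cusp spaces for a fixed $(\Gamma,\peripherals)$ need \emph{not} all be quasi-isometric (they depend on the chosen parameter), one cannot reduce to a single model and must genuinely run the construction for an arbitrary Groves--Manning parameter; this is why the statement is phrased with ``any'' in item $(3)$ and why the horoball estimate must be carried out with the combinatorial-horoball parameter left general.
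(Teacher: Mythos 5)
Your outline of the overall architecture is right ((3)$\Rightarrow$(2) trivial, (2)$\Rightarrow$(1) by the standard contraction-implies-dynamics argument, (1)$\Rightarrow$(3) by an explicit norm construction over a thick--thin decomposition with interpolation into the horoballs), and your description of (2)$\Rightarrow$(1) matches the paper's Section~\ref{sec:contraction_Anosov} in spirit. But there is a genuine gap in your treatment of (1)$\Rightarrow$(3), and it sits exactly where you wave your hands: the ``control on $\rho$ restricted to peripheral subgroups'' does \emph{not} follow from Definition~\ref{defn:Pk Anosov} applied at the parabolic fixed point. What the definition gives you qualitatively is that $\rho(P)$ is weakly unipotent and that $\rho(\Gamma)$ is $\Psf_k$-divergent (Proposition~\ref{prop:eigenvalue data in rel Anosov repn} and Observation~\ref{obs:strongly_dynamics_pres_div_cartan}); divergence alone says $\frac{\mu_k}{\mu_{k+1}}(\rho(\gamma))\to\infty$ with no rate. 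Your own heuristic shows why a rate is indispensable: a geodesic excursion into a combinatorial horoball associated to $g\in P$ has flow length $T\asymp \log_2\abs{g}_{S\cap P}$, so to get a uniform exponential contraction $e^{-cT}$ you need $\log\frac{\mu_k}{\mu_{k+1}}(\rho(g))\gtrsim \log\abs{g}_{S\cap P}$, i.e.\ a \emph{polynomial} lower bound on the singular value gap in terms of word length in $P$, uniform over $P$. Establishing this is the crux of the paper's argument: it is Theorem~\ref{thm:structure of weakly unipotent discrete groups}(4), whose proof occupies Sections~\ref{sec: growth rates for rational functions} and~\ref{sec: structure of weakly unipotent groups} and requires the structure theory of weakly unipotent discrete groups ($\Gamma$ is a cocompact lattice in $\mathsf{L}\ltimes\mathsf{U}$ with $\mathsf{L}$ compact) together with a Nullstellensatz for regulous functions to bound the rational function $Y\mapsto \frac{\mu_k}{\mu_{k+1}}(e^Y)$ from below on the Lie algebra of $\mathsf{U}$. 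Without this input your interpolation scheme cannot be ``tuned'' to beat a fixed exponential, so the proposal as written does not close.

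Two smaller points. First, you locate the main difficulty in the patching of estimates across many horoball excursions; in the paper this step (Section~9.4) is comparatively routine, handled by submultiplicativity of the contraction ratio $\kappa_t$ together with cocompactness of the $\Gamma$-action on the thick part --- no Morse-type decomposition lemma is needed beyond Proposition~\ref{prop:abundance of geodesic lines}. Second, your closing remark about reducing to a quasi-isometry class of Groves--Manning spaces is a red herring: the paper never invokes (and does not need) quasi-isometry invariance of the property ``Anosov relative to $X$''; it simply runs the norm construction for an arbitrary adapted generating set, and the only input from the Groves--Manning geometry is the logarithmic distance estimate of Proposition~\ref{prop:cusp_space_est}.
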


\begin{remark}\label{rmk:directions in the proof of theorem main} By definition (3) $\implies$ (2) and  ``standard arguments'' (e.g.\ as in~\cite{Canary_notes}) from the theory of Anosov representations imply that (2) $\implies$ (1). So the new content in Theorem~\ref{thm:main} is that (1) $\implies$ (3). 

\end{remark} 

\begin{remark} In the ``classical'' word-hyperbolic case, the flow space, $\Gc(X)$, used in Theorem~\ref{thm:main} is slightly different than the construction in~\cite{GW}. In particular, in their paper they consider a bundle over the ``geodesic flow space'' of a word-hyperbolic group, which informally is the quotient of the space of geodesics in Cayley graph where geodesic lines joining the same points at infinity are identified. The construction of this geodesic flow space is somewhat technical, see~\cite{C1994,M2005}, and one  observation in this work is that it is not necessary to use this construction to obtain many of the basic properties of Anosov representations like stability, quasi-isometric embeddings, and H\"older reguarlity of the boundary maps. 
 \end{remark}

As an application of Theorem~\ref{thm:main}, we can use standard dynamical arguments to prove a relative stability result. We note that for representations of relatively hyperbolic groups, being relatively Anosov is not an open condition. For instance suppose $\Gamma = \ip{a,b} \leq \PSL(2,\Rb)$ is a geometrically finite free group where $b$ is parabolic. Fix lifts $\tilde{a}, \tilde{b} \in \SL(2,\Rb)$ of $a,b$, then consider the representations $\rho_t \colon \Gamma \to \SL(4,\Rb)$ defined by 
$$
\rho_t(a) = \id_2 \oplus \tilde{a}  \quad \text{and} \quad \rho_t(b) =  \begin{pmatrix} 1 & t   \\ 0 & 1 \end{pmatrix} \oplus  \tilde{b}.
$$
Then $\rho_0$ is $\Psf_1$-Anosov relative to $\peripherals:=\{ \ip{b}\}$, but $\rho_t$ is not when $t \neq 0$. 

To avoid examples like these, given a representation $\rho_0\colon \Gamma \to \SL(d,\Kb)$ of a relatively hyperbolic group $(\Gamma, \peripherals)$, we let $\Hom_{\rho_0} (\Gamma, \SL(d,\Kb))$ denote the set of representations $\rho\colon \Gamma \to \SL(d,\Kb)$ such that for each $P \in \peripherals$, the representations $\rho|_P$ and $\rho_0|_P$ are conjugate.

\begin{theorem}[see Section~\ref{sec:stability}]\label{thm:stability}
Suppose that $(\Gamma, \peripherals)$ is relatively hyperbolic and $X$ is a weak cusp space for $(\Gamma, \peripherals)$. If $\rho_0\colon\Gamma \to \SL(d,\Kb)$ is $\Psf_k$-Anosov relative to $X$, then there exists an open neighborhood $\Oc$ of $\rho_0$ in $\Hom_{\rho_0}(\Gamma, \SL(d,\Kb))$ such that every representation in $\Oc$ is $\Psf_k$-Anosov relative to $X$. 

Moreover:
\begin{enumerate}
\item If $\xi_\rho$ is the Anosov boundary map of $\rho \in \Oc$, then the map 
$$
(\rho, x) \in \Oc \times \partial(\Gamma, \peripherals) \mapsto \xi_\rho(x) \in \Gr_k(\Kb^d) \times \Gr_{d-k}(\Kb^d)
$$
is continuous. 
\item If $h\colon M \to \mathcal{O}$ is a real-analytic family of representation and $x \in \partial(\Gamma,\peripherals)$, then the map 
$$
u \in M \mapsto \xi_{h(u)}(x) \in \Gr_k(\Kb^d) \times \Gr_{d-k}(\Kb^d)
$$
is real-analytic.
\end{enumerate}
\end{theorem}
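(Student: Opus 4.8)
The plan is to prove Theorem~\ref{thm:stability} as a structural stability statement for the linear flow on the bundle $\wh{E}_\rho(X) \to \wh{\Gc}(X)$, in the spirit of the classical stability proof for Anosov representations (cf.\ \cite{GW, Canary_notes}); the one genuinely new ingredient is that the hypothesis that $\rho|_P$ and $\rho_0|_P$ are conjugate for every $P \in \peripherals$ must be used to compensate for the fact that $\wh{\Gc}(X)$ is not compact. Fix $X$ throughout. The starting observation is a $\rho$-independent geometric decomposition $\wh{\Gc}(X) = \Kc \cup \bigcup_{P \in \peripherals} \Cc_P$ in which $\Kc$ is compact, each $\Cc_P$ is the image in $\wh{\Gc}(X)$ of the geodesics of $\Gc(X)$ travelling deep enough into the horoball of the parabolic point attached to $P$ that a fixed conjugate of $P$ stabilizes it, and each $\Kc \cap \Cc_P$ is compact. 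Since this decomposition and $\wh{\Gc}(X)$ itself are independent of $\rho$ and only the bundle $\wh{E}_\rho(X)$ varies, and since for $\rho \in \Hom_{\rho_0}(\Gamma,\SL(d,\Kb))$ we may fix a conjugating element from $\rho|_P$ to $\rho_0|_P$ for each of the finitely many peripherals, these conjugating elements induce over each $\Cc_P$ a $\flatflow^t$-equivariant bundle isomorphism $\wh{E}_\rho(X)|_{\Cc_P} \to \wh{E}_{\rho_0}(X)|_{\Cc_P}$ whose operator norm is bounded uniformly in $\rho$ and $P$ (equivariance is automatic, as $\flatflow^t$ is trivial on fibers).

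Granting this, I would proceed as follows. First, reinterpret ``$\rho_0$ is $\Psf_k$-Anosov relative to $X$'', together with the splitting $\wh{E}_{\rho_0}(X) = \wh{\Theta}^k \oplus \wh{\Xi}^{d-k}$ it provides, as the existence of a continuous, strictly $\flatflow^t$-invariant cone field on $\wh{E}_{\rho_0}(X)$ exhibiting a uniform exponential dichotomy. Second, build a metric $\norm{\cdot}_\rho$ on $\wh{E}_\rho(X)$: push forward the $\rho_0$-metric over each $\Cc_P$ by the bounded isomorphism above, take over the compact $\Kc$ any metric depending continuously on $\rho$ and equal at $\rho_0$ to the given one, and glue by a partition of unity subordinate to $\{\interior \Kc\} \cup \{\interior \Cc_P : P \in \peripherals\}$. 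Third, check that $\flatflow^t$ preserves a cone field exhibiting a uniform exponential dichotomy for every $\rho$ near $\rho_0$: over each $\Cc_P$ the flow with metric $\norm{\cdot}_\rho$ is conjugate by the bounded isomorphism to the $\rho_0$-flow, so the pushed-forward cone field is invariant with the same constants uniformly; over the compact set $\Kc$ and the compact collars $\Kc \cap \Cc_P$, invariance of a cone field is a $C^0$-open condition on the cocycle, and over a compact set and bounded time the cocycle for $\rho$ involves only boundedly many elements of $\Gamma$, hence is $C^0$-close to that for $\rho_0$ once $\rho$ is close to $\rho_0$ on a generating set; so invariance persists there, and the cusp and core data are matched along the compact collars by the standard patching argument for dominated splittings over a core-plus-cusps base. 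This produces a $\flatflow^t$-invariant splitting $\wh{E}_\rho(X) = \wh{\Theta}^k_\rho \oplus \wh{\Xi}^{d-k}_\rho$ with uniform domination, whence (after rescaling $\norm{\cdot}_\rho$) $\homflow^t$ on $\Hom(\wh{\Xi}^{d-k}_\rho, \wh{\Theta}^k_\rho)$ is exponentially contracting, i.e.\ $\rho$ is $\Psf_k$-Anosov relative to $X$; the ``standard arguments'' recorded in Remark~\ref{rmk:directions in the proof of theorem main} (the implication $(2) \implies (1)$ of Theorem~\ref{thm:main}) then also furnish the equivariant transverse boundary map $\xi_\rho \colon \partial(\Gamma,\peripherals) \to \Gr_k(\Kb^d) \times \Gr_{d-k}(\Kb^d)$, recovered from the subbundles $\wh{\Theta}^k_\rho$, $\wh{\Xi}^{d-k}_\rho$.

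For the two ``moreover'' clauses I would express the perturbed subbundles as the unique fixed point of a graph-transform operator $T_\rho$ on a closed bounded subset of the Banach space of continuous bundle maps attached to a fixed background splitting over $\wh{\Gc}(X)$. The estimates above make $T_\rho$ a uniform contraction for $\rho \in \Oc$ and $\rho \mapsto T_\rho$ continuous, so $\rho \mapsto (\wh{\Theta}^k_\rho, \wh{\Xi}^{d-k}_\rho)$ is continuous in the sup-norm; choosing, locally continuously in $x$, a geodesic $\sigma$ with $\sigma^+ = x$ and evaluating the relevant subbundle at $\sigma$ gives joint continuity of $(\rho,x) \mapsto \xi_\rho(x)$, which is clause~(1). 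For clause~(2), if $h \colon M \to \Oc$ is real-analytic then $u \mapsto T_{h(u)}$ is real-analytic into the relevant Banach space, so the analytic fixed-point / implicit function theorem applied to the uniform contraction shows $u \mapsto (\wh{\Theta}^k_{h(u)}, \wh{\Xi}^{d-k}_{h(u)})$ is real-analytic; postcomposing with evaluation at a fixed geodesic through $x$ (a bounded, hence real-analytic, linear operation) shows $u \mapsto \xi_{h(u)}(x)$ is real-analytic.

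The main obstacle is precisely the noncompactness of $\wh{\Gc}(X)$: compactness of the base is what supplies, in the classical case, uniform control of the cocycle and uniform smallness of the perturbation, without which neither is $T_\rho$ a uniform contraction nor is invariance of the cone field robust. The hypothesis $\rho \in \Hom_{\rho_0}(\Gamma,\SL(d,\Kb))$ removes this difficulty because over the cusp regions $\Cc_P$ there is, after the bounded bundle isomorphism, literally no perturbation to control; what remains is careful bookkeeping in Step three --- making the bounds uniform over the finitely many peripheral classes and all their $\Gamma$-conjugates, arranging the core/cusp decomposition so that the matching occurs along compact collars, and verifying that the patched cone field still yields a genuine uniform exponential dichotomy. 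I expect essentially all of the technical effort to be concentrated there.
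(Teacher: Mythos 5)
Your Steps 1--2 (the thick--thin decomposition of $\wh{\Gc}(X)$, the flow-equivariant bundle isomorphisms over the cusps induced by the conjugators from $\rho|_P$ to $\rho_0|_P$, and the graph-transform/cone-field argument producing a flow-invariant dominated splitting $\wh{E}_\rho(X) = \wh{\Theta}^k_\rho \oplus \wh{\Xi}^{d-k}_\rho$ with uniform constants) match the paper's construction in Section~\ref{sec:stability} in all essentials. The genuine gap is the sentence in which you conclude from this splitting that ``$\rho$ is $\Psf_k$-Anosov relative to $X$'' and that the boundary map is then ``recovered from the subbundles'' by the standard implication $(2)\implies(1)$. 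The definition of being $\Psf_k$-Anosov relative to $X$ \emph{presupposes} that the subbundles have the form $\Theta^k(\sigma)=\xi^k(\sigma^+)$ and $\Xi^{d-k}(\sigma)=\xi^{d-k}(\sigma^-)$ for a continuous equivariant transverse map $\xi$ on $\partial(\Gamma,\peripherals)$; an abstract flow-invariant dominated splitting over $\Gc(X)$ need not a priori factor through the endpoint maps $\sigma\mapsto\sigma^\pm$. Establishing this well-definedness is exactly Step~3 of the paper's proof, which it flags as the new and more complicated part relative to the Fuchsian case: one shows $\lim_{t\to\infty}U_k(A_{\geodflow^t(\sigma)})=\Theta^k_\rho(\sigma)$ and then argues separately for conical limit points (two asymptotic geodesics are eventually uniformly close, so the frames differ by a bounded cocycle and Lemma~\ref{lem: U_k under product} applies) and for bounded parabolic points (where one uses, once more, that $\rho(P)=g\rho_0(P)g^{-1}$ and the cusp metric was defined by transporting the $\rho_0$-metric, so $\Theta^k_\rho(\sigma)=g\,\xi^k_{\rho_0}(\sigma^+)$). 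Your proposal omits this step entirely, and it is where most of the non-formal work lies.

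A secondary concern is clause~(2): you derive real-analyticity from the analytic implicit function theorem applied to $u\mapsto T_{h(u)}$, but the operator $T_\rho$ is assembled from choices --- the conjugators $g_C(\rho)$, the extension of the background splitting off the cusps, the glued metric --- that are only arranged to be continuous in $\rho$, not analytic, so analyticity of $\rho\mapsto T_\rho$ is not available without further argument. The paper proves clause~(2) by a different route: complexify the analytic family, show $\Hom_{\rho_0}(\Gamma,\SL(d,\Cb))$ is Zariski locally closed so the complexified family stays in it, observe that for hyperbolic $\gamma$ the value $\xi^k_\rho(\gamma^+)$ is the attracting $k$-plane of the $\Psf_k$-proximal element $\rho(\gamma)$ (hence analytic in $\rho$), and conclude by taking locally uniform limits over a dense set of attracting fixed points. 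You should either adopt that argument or supply an analytic construction of the conjugators and the background data.
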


In the special case when $\Gamma$ is a geometrically finite Fuchsian group, Theorem~\ref{thm:stability} was established in~\cite{CZZ2021} and using the characterization in Theorem~\ref{thm:main} the argument from~\cite{CZZ2021}  can be refined to work in the general case. 

As mentioned above, in recent work, Weisman~\cite{W2022} introduces a new class of representations of relatively hyperbolic groups called extended geometrically finite representations which includes the class of relatively Anosov representations. For this class of representations, Weisman  proves a general stability result which implies, in the context of  Theorem~\ref{thm:stability}, that being $\Psf_k$-Anosov relative to $\peripherals$ is an open condition in $\Hom_{\rho_0}(\Gamma, \SL(d,\Kb))$. In the general setting Weisman considers, the ``moreover'' part of Theorem~\ref{thm:stability} is not true (see~\cite[Th.\ 1.4]{W2022}). 

It seems unlikely to us that the contracting flow approach of this paper can be used in the general setting considered by Weisman. However, in the setting of relatively Anosov representations it seems like this approach is better suited to extracting quantitative stability results, for instance the quasi-isometry and H\"older regularity results in Theorems~\ref{thm:singular value and eigenvalue estimates} and~\ref{thm:uniform_stability_case} below. 

\subsection{Quantitative estimates on singular values and eigenvalues} Using the flow space characterization, we can obtain quantitative estimates on singular values and eigenvalues. 

Given $g \in \SL(d,\Kb)$ let 
$$
\lambda_1(g) \geq \cdots \geq \lambda_d(g) 
$$
denote the absolute values of the eigenvalues of $g$ and let 
$$
 \mu_1(g) \geq \dots \geq \mu_d(g)
$$
denote the singular values of $g$. 

Also, given a metric space $X$ and an isometry $g \in \Isom(X)$ we define 
$$
\ell_X(g) := \lim_{n \to \infty} \frac{1}{n} \dist_X(g^n(x_0), x_0)
$$
where $x_0 \in X$ is some (any) point. 

\begin{theorem}[see Sections~\ref{sec:contraction_Anosov} and~\ref{sec:proof of singular value and eigenvalue estimates}]\label{thm:singular value and eigenvalue estimates} If $(\Gamma,\peripherals)$ is relatively hyperbolic, $X$ is a Groves--Manning cusp space for $(\Gamma, \peripherals)$, $x_0 \in X$, and $\rho \colon \Gamma \to \SL(d,\Kb)$ is $\Psf_k$-Anosov relative to $\peripherals$, then:
\begin{itemize}
\item There exist $\alpha,\beta > 0$ such that: if $\gamma \in \Gamma$, then 
$$
-\beta+\alpha d_X(\gamma(x_0), x_0) \leq \log \frac{\mu_k}{\mu_{k+1}}(\rho(\gamma))
$$
and 
$$
\alpha \ell_X(\gamma) \leq \log \frac{\lambda_k}{\lambda_{k+1}}(\rho(\gamma)).
$$
Moreover, we can choose $\alpha, \beta$ to be constant on a sufficiently small neighborhood of $\rho$ in $\Hom_{\rho}(\Gamma, \SL(d,\Kb))$. 
\item For any $p_0$ in the symmetric space $\SL(d,\Kb)/ \SU(d,\Kb)$ the orbits $\Gamma(x_0)$ and $\rho(\Gamma)(p_0)$ are quasi-isometric. Further, the quasi-isometry constants can be chosen to be constant on a sufficiently small neighborhood of $\rho$ in $\Hom_{\rho}(\Gamma, \SL(d,\Kb))$. 
\end{itemize} 
\end{theorem}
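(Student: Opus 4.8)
The plan is to deduce both estimates from the flow characterization in Theorem~\ref{thm:main}. Fix a Groves--Manning cusp space $X$ for $(\Gamma,\peripherals)$ and, by Theorem~\ref{thm:main}, a metric $\norm{\cdot}$ on $\wh E_\rho(X)\to\wh\Gc(X)$ for which $\homflow^t$ on $\Hom(\wh\Xi^{d-k},\wh\Theta^k)$ is exponentially contracting. Pulling back to a $\Gamma$-invariant family of fiberwise norms $\norm{\cdot}_\sigma$ on $E(X)=\Gc(X)\times\Kb^d$, exponential contraction is equivalent — using rank-one homomorphisms to realize operator norms — to the existence of $C\ge 1$ and $c>0$ with
\begin{equation}\label{eq:twovec}
\frac{\norm{v}_{\flatflow^t\sigma}}{\norm{v}_\sigma}\;\le\; Ce^{-ct}\,\frac{\norm{w}_{\flatflow^t\sigma}}{\norm{w}_\sigma}
\end{equation}
for all $t\ge 0$, all $\sigma\in\Gc(X)$, all $0\ne v\in\Theta^k(\sigma)=\xi^k(\sigma^+)$ and all $0\ne w\in\Xi^{d-k}(\sigma)=\xi^{d-k}(\sigma^-)$; moreover the $\Gamma$-invariance of $\norm{\cdot}$ reads $\norm{\rho(\gamma)Y}_{\gamma\sigma}=\norm{Y}_\sigma$.

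For the eigenvalue estimate, fix a loxodromic $\gamma\in\Gamma$ (the inequality is automatic when $\ell_X(\gamma)=0$), let $\sigma$ be an axis and $\ell=\ell_X(\gamma)$, so that $\gamma^n\sigma=\flatflow^{n\ell}\sigma$ and $\rho(\gamma)$ preserves the splitting $\Theta^k(\sigma)\oplus\Xi^{d-k}(\sigma)$, with $\Theta^k(\sigma)$ and $\Xi^{d-k}(\sigma)$ the sums of generalized eigenspaces for the top $k$ and bottom $d-k$ eigenvalues of $\rho(\gamma)$ respectively. Writing $A=\rho(\gamma)|_{\Theta^k(\sigma)}$ and $B=\rho(\gamma)|_{\Xi^{d-k}(\sigma)}$, evaluating \eqref{eq:twovec} at $t=n\ell$ and using $\Gamma$-invariance gives $\norm{A^{-n}}_\sigma\norm{B^n}_\sigma\le Ce^{-cn\ell}$; taking $n$-th roots and letting $n\to\infty$, the spectral-radius formula yields $\lambda_{k+1}(\rho(\gamma))/\lambda_k(\rho(\gamma))\le e^{-c\ell}$, i.e.\ $\log\frac{\lambda_k}{\lambda_{k+1}}(\rho(\gamma))\ge c\,\ell_X(\gamma)$, so $\alpha=c$ works; using a quasi-axis in place of a genuine one introduces bounded errors that wash out in the limit, since a loxodromic element keeps its quasi-axis in the thick part.

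For the singular value estimate one runs the parallel argument with a geodesic line $\sigma$ that realizes $T:=d_X(x_0,\gamma x_0)$ up to a universal additive constant, so $\sigma(0)$ lies near $x_0$ and $\sigma(T)$ near $\gamma x_0$; then $\flatflow^T\sigma$ lies near $\gamma x_0$ and, after translating by $\gamma^{-1}$, near $x_0$. Feeding \eqref{eq:twovec} at $t=T$, the identity $\norm{\rho(\gamma)Y}_{\flatflow^T\sigma}=\norm{Y}_{\gamma^{-1}\flatflow^T\sigma}$, and a comparison of both $\norm{\cdot}_\sigma$ and $\norm{\cdot}_{\gamma^{-1}\flatflow^T\sigma}$ with a fixed background Euclidean norm into the standard relations between the data of a transverse pair $(V,W)=(\xi^k(\sigma^+),\xi^{d-k}(\sigma^-))$, its image under $\rho(\gamma)$ (again transverse, by transversality of $\xi$, so transversality constants are bounded), and the singular values of $\rho(\gamma)$, one obtains $\log\frac{\mu_k}{\mu_{k+1}}(\rho(\gamma))\ge \alpha\,d_X(x_0,\gamma x_0)-\beta$ with $\alpha=c$. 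The step requiring care — what I expect to be the main obstacle — is exactly this comparison with a background norm: since $(\Gamma,\peripherals)$ is only relatively hyperbolic, $\wh\Gc(X)$ is non-compact, so one cannot appeal to continuity of $\norm{\cdot}$, and one must instead use that the norms built in Section~\ref{sec:building_norms} are, by design, uniformly comparable to a reference norm on the fibers over geodesics passing within bounded distance of the $\Gamma$-orbit of $x_0$ (this is where the Groves--Manning horoball structure genuinely enters); everything else is $\delta$-hyperbolic bookkeeping insensitive to bounded errors, which is why the bound is only asymptotic.

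Finally, for the quasi-isometry statement, the singular value estimate gives $\log\frac{\mu_k}{\mu_{k+1}}(\rho(\gamma))\ge \alpha\,d_X(\gamma x_0,x_0)-\beta$, and since $d_{\SL(d,\Kb)/\SU(d,\Kb)}(p_0,\rho(\gamma)p_0)\gtrsim \log\frac{\mu_k}{\mu_{k+1}}(\rho(\gamma))$ this is the lower orbit-map inequality. For the matching upper bound I would split a geodesic $[x_0,\gamma x_0]$ in the Groves--Manning space into maximal subsegments inside combinatorial horoballs and the complementary thick pieces: the thick pieces contribute linearly in a word metric, hence in $d_{\SL(d,\Kb)/\SU(d,\Kb)}$, while a subsegment of combinatorial length $L$ inside the horoball over $P\in\peripherals$ corresponds to $p\in P$ with $|p|_P$ of size $e^{\Theta(L)}$, for which $d_{\SL(d,\Kb)/\SU(d,\Kb)}(p_0,\rho(p)p_0)=O(L)$ because peripheral subgroups of a relatively Anosov representation have at most polynomial singular-value growth (via the equivalence with relative domination, Corollary~\ref{cor:relAnosov = reldom}); adding the contributions and combining with the lower bound gives the quasi-isometry. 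Uniformity of $\alpha,\beta$ and of the quasi-isometry constants on a neighborhood $\Oc$ of $\rho$ follows because the proof of Theorem~\ref{thm:stability} produces, on a possibly smaller neighborhood, the constants $C,c$ of \eqref{eq:twovec} and the comparability constants of the norms uniformly, while the peripheral restrictions remain conjugate throughout $\Hom_\rho(\Gamma,\SL(d,\Kb))$, so their contribution to the horoball estimate is unchanged.
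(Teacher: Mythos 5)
Your overall route — flow characterization from Theorem~\ref{thm:main}, translation of exponential contraction into the two-vector inequality via Proposition~\ref{prop: Hom bundles contraction/expansions}, the singular value gap from comparing $\norm{\cdot}_\sigma$ and $\norm{\cdot}_{\gamma^{-1}\flatflow^T\sigma}$ with a reference norm on a compact set of geodesics based near $x_0$, and uniformity of constants via the stability argument — is exactly the paper's (Lemmas~\ref{lem:decay of ratio of singular values} and~\ref{lem:D-}). Two remarks on the details: your ``main obstacle'' is resolved more cheaply than you suggest, since the set $K=\{\sigma\in\Gc(X):\dist_X(x_0,\sigma(0))\le R\}$ is compact in the \emph{unquotiented} space $\Gc(X)$, so mere continuity of the metric gives the bilipschitz constant $C_K$, and equivariance moves $\gamma^{-1}\flatflow^T\sigma$ back into $K$; no special design of the norms is needed at this step. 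Likewise the eigenvalue bound does not require axes or quasi-axes: applying the already-proved singular value bound to $\gamma^n$ and using $\lambda_j(g)=\lim_n\mu_j(g^n)^{1/n}$ gives $\log\frac{\lambda_k}{\lambda_{k+1}}(\rho(\gamma))\ge c\,\ell_X(\gamma)$ in one line, for every $\gamma$.

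There is, however, a genuine logical gap in your quasi-isometry argument: you justify the upper bound on the horoball contributions by appealing to the polynomial singular-value growth of peripheral images ``via the equivalence with relative domination, Corollary~\ref{cor:relAnosov = reldom}.'' That corollary is itself deduced \emph{from} Theorem~\ref{thm:singular value and eigenvalue estimates} (see Corollary~\ref{cor:relAnosov = reldom in body}, whose proof uses both the lower bound and the quasi-isometry statement you are in the middle of proving), so as written the argument is circular. The non-circular source of the needed input is Proposition~\ref{prop:eigenvalue data in rel Anosov repn} (the image of each peripheral subgroup is weakly unipotent) combined with Lemmas~\ref{lem:structure of weakly unipotent} and~\ref{lem: growth estimates in weakly unipotent groups}, which give $\frac{\mu_1}{\mu_d}(\rho(p))\lesssim |p|_{S\cap P}^{2(d-1)}$ directly from the structure of weakly unipotent groups; packaged with Proposition~\ref{prop:cusp_space_est} this is exactly Proposition~\ref{prop:type-preserving<=>good upper bound on singular values}, which yields $\log\frac{\mu_1}{\mu_d}(\rho(\gamma))\le\alpha\,\dist_X(\gamma(x_0),x_0)+\beta$ for all $\gamma\in\Gamma$ and closes the gap. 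With that substitution your decomposition of the Groves--Manning geodesic into thick and horoball pieces matches the paper's proof of Lemma~\ref{lem:GM upper bound on mu1/mud}.
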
 

The fact that orbits in a Groves--Manning cusp space and the symmetric space $\SL(d,\Kb)/ \SU(d,\Kb)$ are quasi-isometric is somewhat surprising since one can construct weak cusp spaces $X^\prime$ for $(\Gamma, \peripherals)$ where the $\Gamma$ orbits in $X^\prime$ are not quasi-isometric to the $\Gamma$ orbits in a Groves--Manning cusp space (this follows from the proof of Theorem B in~\cite{Healy2020}).  

We also note that the singular value gap estimate in Theorem~\ref{thm:singular value and eigenvalue estimates} and well-known distance estimates for Groves--Manning cusp spaces, see Proposition~\ref{prop:cusp_space_est} below, imply the following growth condition for the peripheral subgroups. 

\begin{corollary} Suppose that $(\Gamma,\peripherals)$ is relatively hyperbolic and $\rho \colon \Gamma \to \SL(d,\Kb)$ is $\Psf_k$-Anosov relative to $\peripherals$. Then for every $P \in \peripherals$ and finite symmetric generating set $S$ of $P$ there exist $\alpha, \beta > 0$ such that 
$$
 -\beta + \alpha \log \abs{\gamma}_S \leq \log \frac{\mu_k}{\mu_{k+1}}(\rho(\gamma)) 
$$
for all $\gamma \in P$. 
\end{corollary}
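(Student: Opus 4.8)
The plan is to combine the singular value gap estimate from Theorem~\ref{thm:singular value and eigenvalue estimates} with the known metric estimates for Groves--Manning cusp spaces. Fix $P \in \peripherals$ and a finite symmetric generating set $S$ of $P$. Let $X$ be a Groves--Manning cusp space for $(\Gamma, \peripherals)$ and fix a basepoint $x_0 \in X$; we may take $x_0$ to lie on the Cayley-graph level of $X$, inside the combinatorial horoball associated to $P$. By Theorem~\ref{thm:singular value and eigenvalue estimates} there exist $\alpha_0, \beta_0 > 0$ such that
$$
-\beta_0 + \alpha_0\, d_X(\gamma(x_0), x_0) \leq \log \frac{\mu_k}{\mu_{k+1}}(\rho(\gamma))
$$
for all $\gamma \in \Gamma$, in particular for all $\gamma \in P$. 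So it suffices to bound $d_X(\gamma(x_0), x_0)$ below by $\alpha_1 \log \abs{\gamma}_S - \beta_1$ for $\gamma \in P$.

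The second step is to invoke the distance estimate for combinatorial horoballs, referenced in the excerpt as Proposition~\ref{prop:cusp_space_est}. The key feature of a Groves--Manning cusp space is that within the combinatorial horoball over a peripheral coset $\gamma_0 P$, the distance between two vertices $u, v$ on the base level satisfies a formula of the shape
$$
d_X(u,v) \asymp 2\log_2\bigl(1 + d_S(u,v)\bigr)
$$
(up to uniform additive and multiplicative constants), where $d_S$ is the word metric; this is the standard logarithmic-depth behavior of combinatorial horoballs. Applying this with $u = x_0$ (the identity coset representative) and $v = \gamma(x_0)$ for $\gamma \in P$, and using that the base-level distance in the horoball is comparable to $\abs{\gamma}_S$, we get constants $a, b > 0$ with
$$
d_X(\gamma(x_0), x_0) \geq a \log\bigl(1 + \abs{\gamma}_S\bigr) - b \geq a \log \abs{\gamma}_S - b
$$
for all $\gamma \in P \setminus \{e\}$.

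Finally I would chain the two inequalities: setting $\alpha := \alpha_0 a > 0$ and $\beta := \alpha_0 b + \beta_0 > 0$ gives
$$
-\beta + \alpha \log \abs{\gamma}_S \leq -\beta_0 + \alpha_0\, d_X(\gamma(x_0), x_0) \leq \log \frac{\mu_k}{\mu_{k+1}}(\rho(\gamma))
$$
for all $\gamma \in P$ (the case $\gamma = e$ being trivial after possibly enlarging $\beta$), which is exactly the claimed bound.

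The only real subtlety — the step I would be most careful about — is matching conventions in Proposition~\ref{prop:cusp_space_est}: one needs the lower bound on $d_X$ in terms of the word length to hold for \emph{all} elements of $P$, not just those realized as geodesic endpoints in some special position, and one must make sure the comparison between the base-level horoball metric and the generating set $S$ of $P$ (rather than a generating set of $\Gamma$) only costs uniform constants. Since any two finite generating sets of $P$ give bi-Lipschitz equivalent word metrics, and the horoball is built over a Cayley graph of $\Gamma$ whose restriction to $P$ is quasi-isometric to the Cayley graph of $P$ with respect to $S$, this is routine; but it is the place where the constants $\alpha, \beta$ genuinely depend on the choice of $S$, which is why the statement quantifies over $S$.
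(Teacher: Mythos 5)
Your argument is exactly the one the paper intends: the corollary is stated as an immediate consequence of the singular value gap estimate in Theorem~\ref{thm:singular value and eigenvalue estimates} combined with the Groves--Manning horoball distance estimate of Proposition~\ref{prop:cusp_space_est}, and your chaining of the two is correct. Your handling of the one genuine subtlety --- that Proposition~\ref{prop:cusp_space_est} is phrased for the generating set $S \cap P$ coming from an adapted generating set of $\Gamma$, while the corollary allows an arbitrary finite symmetric generating set of $P$, which costs only a bi-Lipschitz (hence, after taking logarithms, additive) constant --- is also right.
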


As a further corollary, Theorem~\ref{thm:singular value and eigenvalue estimates} implies the following equivalence between relatively Anosov representations (in the sense of Definition~\ref{defn:Pk Anosov}) and the relatively dominated representations introduced by the first author in~\cite{reldomreps}. This equivalence was previously known assuming some technical assumptions on the peripheral subgroups (which now follow from Theorem~\ref{thm:singular value and eigenvalue estimates}). 

\begin{corollary}[see Corollary~\ref{cor:relAnosov = reldom in body}] \label{cor:relAnosov = reldom}
Suppose that $(\Gamma, \peripherals)$ is relatively hyperbolic and $\rho\colon \Gamma \to \SL(d,\Kb)$ is a representation. Then the following are equivalent: 
\begin{enumerate}
\item $\rho$ is $\Psf_k$-Anosov relative to $\peripherals$.
\item $\rho$ is $\Psf_k$-dominated relative to $\peripherals$ in the sense of ~\cite{reldomreps}. 
\end{enumerate}
\end{corollary}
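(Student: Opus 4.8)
The plan is to deduce both implications from~\cite{reldomreps}, using Theorem~\ref{thm:singular value and eigenvalue estimates} together with the standard distance estimates for Groves--Manning cusp spaces in Proposition~\ref{prop:cusp_space_est} to supply the control over the peripheral subgroups that is needed but was previously assumed. Recall that~\cite{reldomreps} establishes, on the one hand, that a $\Psf_k$-relatively dominated representation admits a continuous $\rho$-equivariant transverse limit map on $\partial(\Gamma,\peripherals)$ with good dynamical behavior, and, on the other hand, that a representation satisfying Definition~\ref{defn:Pk Anosov} together with certain regularity hypotheses on the restrictions $\rho|_P$, $P \in \peripherals$, is $\Psf_k$-relatively dominated. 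So the real work is to (a) promote the limit map of~\cite{reldomreps} to a strongly dynamics preserving one, and (b) verify the peripheral hypotheses for every $\Psf_k$-Anosov representation.

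For (a), hence for (2) $\implies$ (1): I would take the equivariant transverse limit map $\xi$ produced in~\cite{reldomreps} and upgrade its dynamics preserving property to \emph{strongly} dynamics preserving using transversality and the domination inequality, exactly as in the word-hyperbolic case; alternatively one invokes~\cite{rdr2} to see that $\rho$ is asymptotically embedded and then applies Proposition~\ref{prop:relAnosov_relae}.

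For (b), hence for (1) $\implies$ (2): fix a Groves--Manning cusp space $X$ for $(\Gamma,\peripherals)$ and a basepoint $x_0 \in X$. By Proposition~\ref{prop:cusp_space_est}, the function $\gamma \mapsto d_X(\gamma(x_0), x_0)$ agrees, up to uniform multiplicative and additive constants, with the cusped word length used to define $\Psf_k$-relative domination in~\cite{reldomreps}, so there is no ambiguity in the choice of cusped metric. Theorem~\ref{thm:singular value and eigenvalue estimates} then provides $\alpha,\beta>0$ with
\[
\log \frac{\mu_k}{\mu_{k+1}}(\rho(\gamma)) \geq \alpha\, d_X(\gamma(x_0), x_0) - \beta
\]
for all $\gamma \in \Gamma$, which is the lower singular-value-gap (domination) condition, while the quasi-isometric embedding of $\Gamma(x_0)$ into the symmetric space $\SL(d,\Kb)/\SU(d,\Kb)$ --- identifying the symmetric-space displacement of $\rho(\gamma)$ with $\log \tfrac{\mu_1}{\mu_d}(\rho(\gamma))$ up to constants --- bounds $\log \tfrac{\mu_1}{\mu_d}(\rho(\gamma))$ above by a linear function of $d_X(\gamma(x_0),x_0)$. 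Restricting both estimates to the finitely many conjugacy classes of peripheral subgroups, and taking the boundary map from Definition~\ref{defn:Pk Anosov} (transported to $\partial_\infty X$ via the canonical identification) as the limit map, yields exactly the data and estimates demanded by the definition of $\Psf_k$-relative domination; since Theorem~\ref{thm:singular value and eigenvalue estimates} produces a single pair $\alpha,\beta$ valid on all of $\Gamma$, the required uniformity over the peripherals is automatic.

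I expect the main obstacle to be the bookkeeping task in (b): checking that the two-sided linear comparison between $d_X$-distance and the logarithms of the relevant singular value ratios matches the \emph{precise} form of the peripheral hypotheses in~\cite{reldomreps}, and confirming that this comparison is the full content of those technical assumptions rather than a strictly weaker consequence. The upgrade in (a) from dynamics preserving to strongly dynamics preserving is a secondary, routine point once transversality is available.
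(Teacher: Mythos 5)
Your proposal is correct and follows essentially the same route as the paper: the forward direction is exactly the paper's argument (the lower bound on $\log\frac{\mu_k}{\mu_{k+1}}$ from Theorem~\ref{thm:singular value and eigenvalue estimates} plus the quasi-isometry of orbits, via Equation~\eqref{eqn:symmetric distance in prelims}, for the upper bound on $\log\frac{\mu_1}{\mu_d}$), and the converse is handled by citing~\cite{rdr2} together with the strong dynamics preserving upgrade from~\cite[Prop.\ 6.14]{reldomreps}. The ``bookkeeping'' worry you flag is dissolved in the paper by simply adopting the characterization of~\cite[Th.\ C]{rdr2} (boundary map plus the two-sided estimate in terms of Groves--Manning distance) as the working definition of relative domination, so no separate comparison with cusped word length or peripheral-by-peripheral verification is needed.
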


\subsection{Locally uniform norms}\label{sec:introducing uniformly Anosov} In the relatively hyperbolic case, the space $\wh{\Gc}(X)$ will be non-compact and thus it is possible for a metric on the vector bundle $\wh{E}_\rho(X) \to \wh{\Gc}(X)$ to be quite badly behaved. 

We introduce subclasses of relatively Anosov representation based on the regularity properties of the metric on the bundle. The following technical definition is inspired by the so-called admissible metrics studied in~\cite[Def.\ 5.17]{Shub1987}.

\begin{definition}\label{defn:locally uniform norms} Suppose that $(\Gamma,\peripherals)$ is relatively hyperbolic, $X$ is a weak cusp space for $(\Gamma, \peripherals)$, and $\rho \colon \Gamma \to \SL(d,\Kb)$ is a representation. A metric $\norm{\cdot}$ on $\wh{E}_\rho(X) \to \wh{\Gc}(X)$ is \emph{locally uniform} if its lift to $\Gc(X) \times \Kb^d \to \Gc(X)$ has the following property:
\begin{itemize}
\item For any $r > 0$ there exists $L_r > 1$ such that:
\begin{align*}
\frac{1}{L_r} \norm{\cdot}_{\sigma_1} \leq \norm{\cdot}_{\sigma_2} \leq L_r \norm{\cdot}_{\sigma_1}
\end{align*}
for all $\sigma_1, \sigma_2 \in \Gc(X)$ with $\dist_X(\sigma_1(0), \sigma_2(0)) \leq r$. 
\end{itemize}
\end{definition} 

\begin{definition}\label{defn:uniformly Anosov} Suppose that $(\Gamma,\peripherals)$ is relatively hyperbolic and $\rho \colon \Gamma \to \SL(d,\Kb)$ is $\Psf_k$-Anosov relative to $\peripherals$. If $X$ is a weak cusp space for $(\Gamma, \peripherals)$, then $\rho$ is \emph{uniformly $\Psf_k$-Anosov relative to $X$} if there exists a locally uniform metric $\norm{\cdot}$ on $\wh{E}_\rho(X) \to \wh{\Gc}(X)$ such that the flow $\homflow^t$ on $\Hom(\wh{\Xi}^{d-k}, \wh{\Theta}^k)$ is exponentially contracting (with respect to the associated operator norms).
\end{definition}

The next theorem will show that uniformly Anosov representations are very nicely behaved. In particular, there is an equivariant quasi-isometric embedding of the entire weak cusp space into the symmetric space and the boundary map is H\"older regular relative to any visual metric on the Bowditch boundary and Riemannian distance on the Grassmanian. 

In Example~\ref{ex:non uniform to GM cusp space}  we will describe a relatively Anosov representation which is not uniform relative to any Groves--Manning cusp space, but is uniform relative to some weak cusp space. This shows that there is value in studying bundles associated to general weak cusp spaces and in future work we will further explore how to select the ``best'' weak cusp space to study a given relatively Anosov representation.

\begin{theorem}[see Sections~\ref{sec: uniformly Anosov} and~\ref{sec:uniform stability}]\label{thm:uniform_stability_case} Suppose that $(\Gamma,\peripherals)$ is relatively hyperbolic, $X$ is a weak cusp space for $(\Gamma, \peripherals)$, and $\rho_0 \colon \Gamma \to \SL(d,\Kb)$ is uniformly $\Psf_k$-Anosov relative to $X$. Then there exists an open neighborhood $\Oc \subset \Hom_{\rho_0}(\Gamma, \SL(d,\Kb))$ of $\rho_0$ where every $\rho \in \Oc$ is uniformly $\Psf_k$-Anosov relative to $X$. 

Moreover:
\begin{enumerate}
\item If $\rho \in \Oc$, then there exists a $\rho$-equivariant quasi-isometric embedding 
$$
X \to \SL(d,\Kb) / \SU(d,\Kb).
$$
Further, the quasi-isometry constants can be chosen to be constant on $\Oc$. 

\item If $\rho \in \Oc$, then the Anosov boundary map 
$$
\xi_\rho \colon \partial_\infty X \to \Gr_k(\Kb^d) \times \Gr_{d-k}(\Kb^d)
$$
is H\"older relative to any visual metric on $\partial_\infty X$ and any Riemannian distance on $\Gr_k(\Kb^d) \times \Gr_{d-k}(\Kb^d)$. Further, the  H\"older constants can be chosen to be constant on $\Oc$. 

\end{enumerate}
\end{theorem}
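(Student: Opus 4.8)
The plan is to establish the openness statement first --- keeping track of all constants --- and then read off the two ``moreover'' assertions from the resulting locally uniform contracting metric. For openness, Theorem~\ref{thm:stability} already supplies a neighborhood $\Oc_0 \subseteq \Hom_{\rho_0}(\Gamma, \SL(d,\Kb))$ of $\rho_0$ on which every $\rho$ is $\Psf_k$-Anosov relative to $X$, with boundary maps $\xi_\rho$ varying continuously; so it remains to see that, after shrinking $\Oc_0$ to some $\Oc$, each $\rho \in \Oc$ admits a \emph{locally uniform} contracting metric whose local-uniformity function $r \mapsto L_r$ and contraction constants $C, \lambda$ may be taken uniform over $\Oc$. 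I would obtain this by following the metric construction in the proof of Theorem~\ref{thm:stability} and checking it preserves local uniformity. Fix a locally uniform metric $\norm{\cdot}^0$ witnessing that $\rho_0$ is uniformly $\Psf_k$-Anosov relative to $X$, and decompose $X$ equivariantly into a cocompact ``thick'' part and finitely many horoball-type cusp regions, one per $P \in \peripherals$. Over the thick part $\wh{\Gc}(X)$ is essentially compact, so a small perturbation of a contracting flow stays contracting and local uniformity is automatic; the content is in the cusps. There $\rho \in \Hom_{\rho_0}$ gives $\rho|_P = g_P (\rho_0|_P) g_P^{-1}$, and since peripheral subgroups of relatively hyperbolic groups are finitely generated the conjugation orbit of $\rho_0|_P$ is locally closed in $\Hom(P,\SL(d,\Kb))$, so $g_P$ may be taken in a fixed compact set for $\rho$ near $\rho_0$. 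A $\Gamma$-equivariant change of trivialization of $E(X)$ equal to $g_P$ deep in each $P$-cusp and to the identity on the thick part then turns $\rho$ into a cocycle over $\flatflow^t$ that is uniformly $C^0$-close to $\rho_0$ and literally conjugate to $\rho_0$ in the cusps; the (uniform) contraction of the associated Hom-cocycle therefore persists, and the transported metric stays locally uniform since a bounded $\Gamma$-equivariant modification of a locally uniform metric changes $L_r$ only by a uniformly bounded factor. This produces $\Oc$ together with uniform constants $(L_\bullet, C, \lambda)$.

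For the quasi-isometric embedding, fix $\rho \in \Oc$ with its locally uniform contracting metric $\norm{\cdot}$. The map $\sigma \in \Gc(X) \mapsto \ip{\cdot,\cdot}_\sigma$ --- sending a geodesic line to the Hermitian inner product induced on $\Kb^d$ by $\norm{\cdot}_\sigma$ --- is $\rho$-equivariant into $Y := \SL(d,\Kb)/\SU(d,\Kb)$, because $\norm{\cdot}$ descends to $\wh{E}_\rho(X)$; precomposing it with a $\Gamma$-equivariant quasi-section $X \to \Gc(X)$ (which exists because a weak cusp space is coarsely geodesically complete, with a constant $R_0$ independent of $\rho$) gives a $\rho$-equivariant map $F \colon X \to Y$. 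The upper Lipschitz bound is immediate from local uniformity: a unit step in $X$ can be realized by moving the basepoint of a line by at most $1 + 2R_0$, so $F$ moves by $O(\log L_\bullet)$ per unit step and $\dist_Y(F(x),F(y)) \leq A\,\dist_X(x,y) + B$ with $A,B$ depending only on $L_\bullet$ and $R_0$. For the lower bound, along any geodesic line $\sigma$ of $X$ and any $t \geq 0$ exponential contraction of $\homflow^t$ gives $\norm{g}_{\mathrm{op},\geodflow^t\sigma} \leq C e^{-\lambda t}\norm{g}_{\mathrm{op},\sigma}$ for every nonzero $g \in \Hom(\xi^{d-k}(\sigma^-), \xi^k(\sigma^+))$ --- a nonzero such $g$ exists since $1 \leq k \leq d-1$ --- while two inner products at distance $D$ in $Y$ distort every operator norm by a factor controlled by $D$; comparing the two bounds forces $\dist_Y(\ip{\cdot,\cdot}_\sigma, \ip{\cdot,\cdot}_{\geodflow^t\sigma}) \geq \lambda t - \log C$. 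Thus $F$ is a quasi-isometric embedding along every geodesic, hence (since $X$ is geodesic) a $\rho$-equivariant quasi-isometric embedding, with constants depending only on $(L_\bullet, C,\lambda, R_0, d)$ and so uniform over $\Oc$.

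For the H\"older regularity of $\xi_\rho \colon \partial_\infty X \to \Gr_k(\Kb^d) \times \Gr_{d-k}(\Kb^d)$, I would run the standard fellow-travelling argument (cf.~\cite{GW,BPS,Canary_notes}): for a basepoint $x_0 \in X$ and $p,q \in \partial_\infty X$ with Gromov product $(p \mid q)_{x_0} = T$, pick geodesic lines through $x_0$ limiting to $p$ and to $q$ that remain a uniformly bounded distance apart on $[0,T]$, and use exponential contraction of $\homflow^t$ --- forward for the $\xi^k$-component and, via the bundle for the reversed flow, for the $\xi^{d-k}$-component --- to conclude that $\xi_\rho(p)$ and $\xi_\rho(q)$ lie within $C e^{-cT}$ of one another; here local uniformity is exactly what makes the comparison of the two geodesics' cocycles valid with constants independent of the location of the fellow-travelling segment in $X$. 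Since $T \asymp -\log \dist_{\mathrm{vis}}(p,q)$ for any visual metric $\dist_{\mathrm{vis}}$, and since visual metrics (resp.\ Riemannian distances on the Grassmannian product) are H\"older-equivalent (resp.\ bi-Lipschitz), this gives the H\"older bound with exponent and constant depending only on $(L_\bullet, C, \lambda)$, hence uniform over $\Oc$. The step I expect to be the real obstacle is none of the individual estimates above but making the openness construction genuinely uniform over $\Oc$: because the cusp regions are non-compact, one must argue that the perturbation does not degrade the function $r \mapsto L_r$ in a way that depends on how deep in a cusp one looks --- this is precisely what the ``uniformly Anosov'' hypothesis is designed to control, and (as Example~\ref{ex:non uniform to GM cusp space} indicates) it can fail relative to a fixed cusp space without that hypothesis.
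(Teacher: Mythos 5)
Your proposal follows essentially the same route as the paper: openness is obtained by re-running the construction from the proof of Theorem~\ref{thm:stability} (thick/thin decomposition, cusp conjugators $g_P$ in a fixed compact set, transported metric) and observing that it preserves local uniformity with uniform constants; the quasi-isometric embedding is the map to inner products with the upper bound from $L_r$ and the lower bound from the forced singular-value gap of the change-of-basis matrices; and H\"older regularity is the quantitative fellow-travelling argument. The one step to patch is your claimed $\Gamma$-equivariant quasi-section $X \to \Gc(X)$: when a point of $X$ has nontrivial stabilizer no single nearby geodesic line can be chosen equivariantly, so one should instead (as the paper does) push the whole finite orbit of nearby lines into the symmetric space and take its circumcenter, which restores equivariance of $F$ without affecting any of your estimates.
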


Part (1) of Theorem~\ref{thm:uniform_stability_case} shows that uniformly relatively Anosov representations are similar to the relatively Morse representations introduced in~\cite{KL}. In fact, in Section~\ref{sec:relMorse_relunifAnosov} we will show that the two notions essentially coincide.

\subsection{Outline of the paper and proofs} 

\subsubsection{Expository sections} 
Sections \ref{sec:examples}, \ref{sec:prelims}, and \ref{sec:rem_var}  are expository in nature. In Section~\ref{sec:examples} we describe some of the examples of (uniformly) relatively Anosov representations that we construct in the sequel to this paper. Section~\ref{sec:prelims} is devoted to describing the background material needed for our proofs. 

Section \ref{sec:rem_var} is devoted to some basic observations about Definition~\ref{defn:Pk Anosov}. In particular, we explain why this definition is equivalent to one of Kapovich-Leeb's notion of relatively Anosov representations and we also describe why the main results of this paper imply that Definition~\ref{defn:Pk Anosov} is equivalent to the notion of relatively Anosov representations introduced by the first author in~\cite{reldomreps}. 

\subsubsection{Weakly unipotent groups} Sections~\ref{sec:char of type-preserving representations}, \ref{sec: growth rates for rational functions}, and \ref{sec: structure of weakly unipotent groups} are devoted to studying \emph{weakly unipotent groups},  that is a linear group where every element $g$ in the group satisfies 
$$
\lambda_1(g) = \dots = \lambda_d(g) = 1.
$$
(recall that the $\lambda_j(g)$ denote the absolute values of the eigenvalues of $g$). 

In Proposition~\ref{prop:eigenvalue data in rel Anosov repn} we observe that the image of a peripheral subgroup under a relatively Anosov representation is always a weakly unipotent group. A key part of  this paper is developing some structure theory for weakly unipotent discrete groups and in particular establishing singular value estimates in terms of word length. 

In Section~\ref{sec:char of type-preserving representations} we establish the following characterization of representations of relatively hyperbolic groups whose images of peripherals subgroups are weakly unipotent. 

\begin{proposition}[see Proposition~\ref{prop:type-preserving<=>good upper bound on singular values}]
Suppose that $(\Gamma, \peripherals)$ is relatively hyperbolic,  $X$ is a Groves--Manning cusp space for $(\Gamma, \peripherals)$, and $\rho \colon \Gamma \to \SL(d,\Kb)$ is a representation. Then the following are equivalent: 
\begin{enumerate}
\item $\rho(P)$ is weakly unipotent for every $P \in \peripherals$.
\item For any $x_0 \in X$, there exist $\alpha, \beta > 0$ such that
\begin{equation}
\label{eqn:singular value bounds in the outline of paper section}
\log  \frac{\mu_1}{\mu_d}(\rho(\gamma)) \leq \alpha \dist_X(\gamma(x_0), x_0) +\beta
\end{equation}
for all $\gamma \in \Gamma$. 
\end{enumerate}
\end{proposition}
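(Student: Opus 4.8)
The plan is to prove the two implications separately, with the bulk of the work in $(1) \implies (2)$.

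For the easy direction $(2) \implies (1)$: suppose the singular-value bound \eqref{eqn:singular value bounds in the outline of paper section} holds. Fix $P \in \peripherals$ and $\gamma \in P$. Since $\Gamma$ acts properly on the Groves--Manning cusp space $X$, one has a coarse comparison between $\dist_X(\gamma(x_0),x_0)$ and the word length $|\gamma|_S$ of $\gamma$ in a finite generating set; in fact, for elements of a peripheral subgroup, the well-known distance estimate for Groves--Manning cusp spaces (Proposition~\ref{prop:cusp_space_est}) gives $\dist_X(\gamma(x_0),x_0) \asymp \log|\gamma|_S$. Applying \eqref{eqn:singular value bounds in the outline of paper section} to the powers $\gamma^n$ and using $\dist_X(\gamma^n(x_0),x_0) = O(\log n)$ forces $\log\frac{\mu_1}{\mu_d}(\rho(\gamma^n)) = O(\log n)$. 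Combining this with the standard fact that $\log\mu_1(g) = \lim_n \frac{1}{n}\log\mu_1(g^n)$ converges to $\log\lambda_1(g)$ (and similarly $\log\mu_d(g^n)/n \to \log\lambda_d(g)$), we get $\log\lambda_1(\rho(\gamma)) = \log\lambda_d(\rho(\gamma))$. Since $\det\rho(\gamma) = 1$, all the $\lambda_j(\rho(\gamma))$ are equal and their product is $1$, hence each equals $1$. Thus $\rho(P)$ is weakly unipotent.

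For $(1) \implies (2)$: the first step is to reduce to the peripheral subgroups. Given $\gamma \in \Gamma$, a geodesic in $X$ from $x_0$ to $\gamma(x_0)$ decomposes, via the Groves--Manning structure, into segments that travel in the ``thick part'' (a Cayley graph of $\Gamma$) and segments that enter and exit combinatorial horoballs attached along cosets of peripheral subgroups. Using a quasigeodesic representative and the $\delta$-hyperbolicity of $X$, one writes $\gamma = g_0 p_1 g_1 \cdots p_m g_m$ where each $p_i$ lies in some conjugate of a peripheral subgroup, the $g_i$ have bounded word length, $m \lesssim \dist_X(\gamma(x_0),x_0)$, and $\sum_i \dist_X(p_i(x_0),x_0) \lesssim \dist_X(\gamma(x_0),x_0)$. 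Since $\log\frac{\mu_1}{\mu_d}$ is subadditive under the submultiplicativity of $\mu_1$ and supermultiplicativity of $\mu_d$, i.e. $\log\frac{\mu_1}{\mu_d}(ab) \leq \log\frac{\mu_1}{\mu_d}(a) + \log\frac{\mu_1}{\mu_d}(b)$, it suffices to bound $\log\frac{\mu_1}{\mu_d}(\rho(p))$ linearly in $\dist_X(p(x_0),x_0)$ for $p$ in a peripheral subgroup (the contributions of the $g_i$ and of conjugating elements are absorbed into the constants since there are finitely many peripheral conjugacy classes and the $g_i$ range over a bounded set).

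The heart of the matter is therefore: if $H = \rho(P)$ is a finitely generated weakly unipotent subgroup of $\SL(d,\Kb)$, then $\log\frac{\mu_1}{\mu_d}(h) \lesssim \log|h|_S + O(1)$ for $h \in H$, since in the Groves--Manning cusp space $\dist_X(p(x_0),x_0) \asymp \log|p|_S$ for $p$ in a peripheral subgroup. This is where the structure theory of weakly unipotent groups developed in Sections~\ref{sec: growth rates for rational functions} and~\ref{sec: structure of weakly unipotent groups} enters: a weakly unipotent linear group is virtually unipotent-by-(something controlled), or at least its matrix entries grow polynomially in the word length, so that singular values grow polynomially rather than exponentially, giving the logarithmic bound after taking $\log$. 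I expect this polynomial-growth-of-entries estimate for weakly unipotent discrete groups to be the main obstacle — it is presumably the content of the structure results cited in the outline, and the proof here would invoke them as a black box (e.g. the analysis of growth rates of rational functions on such groups). Once that estimate is in hand, assembling the pieces along the geodesic decomposition is routine bookkeeping with the quasi-isometry constants.
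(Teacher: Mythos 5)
Your overall strategy is the paper's: direction $(2)\Rightarrow(1)$ via the Groves--Manning distance estimate for peripheral elements together with $\log\lambda_j(g)=\lim_n\frac1n\log\mu_j(g^n)$ is exactly the argument in the text, and direction $(1)\Rightarrow(2)$ via a decomposition of $\gamma$ into peripheral pieces plus generators, submultiplicativity of $\log\frac{\mu_1}{\mu_d}$, and a polynomial singular-value bound for weakly unipotent groups is also the paper's route (Lemma~\ref{lem:GM upper bound on mu1/mud}).

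Two points in your $(1)\Rightarrow(2)$ step need repair or clarification. First, the claim that the ``conjugating elements are absorbed into the constants'' is false as stated: the peripheral excursions of a long geodesic occur in horoballs over cosets $hP$ with $h$ arbitrarily deep in $\Gamma$, so the conjugators are unbounded and $\frac{\mu_1}{\mu_d}(\rho(hph^{-1}))$ cannot be compared to $\frac{\mu_1}{\mu_d}(\rho(p))$ with uniform constants. The paper avoids conjugation entirely: it takes a geodesic $\sigma$ from $\id$ to $\gamma$, lets $t_1<\dots<t_m$ be the times at which $\sigma$ passes through a vertex lying in $\Gamma$, and sets $s_j:=\sigma(t_j)^{-1}\sigma(t_{j+1})$; each $s_j$ then lies literally in $S$ or in some $P\in\peripherals$ (an excursion through the horoball over $hP$ contributes $(hq_1)^{-1}(hq_2)=q_1^{-1}q_2\in P$, so the $h$ telescopes away), and the product of the $s_j$ reconstitutes $\gamma$. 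Second, the black box you need is not the rational-function/Nullstellensatz machinery of Section~\ref{sec: growth rates for rational functions} --- that is used only for the \emph{lower} bound on $\frac{\mu_k}{\mu_{k+1}}$ in Theorem~\ref{thm:structure of weakly unipotent discrete groups}(4). The upper bound you need is the elementary Lemma~\ref{lem: growth estimates in weakly unipotent groups}: after the Levi decomposition $\mathsf{L}\ltimes\mathsf{U}$ of the Zariski closure (Lemma~\ref{lem:structure of weakly unipotent}), a product of $N$ generators has $\frac{\mu_1}{\mu_d}\lesssim N^{2(d-1)}$ because $\mathsf{L}$ is compact and any product of $d$ of the nilpotent parts vanishes. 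With these two corrections your argument is the paper's.
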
 

Note that Proposition~\ref{prop:eigenvalue data in rel Anosov repn} implies that the estimate in Equation~\eqref{eqn:singular value bounds in the outline of paper section} holds for relatively Anosov representations. 

The image of a relative $\Psf_k$-Anosov representation is \emph{$\Psf_k$-divergent}, that is for any escaping sequence $(g_n)_{n \geq 1}$ in the image, the ratio $\frac{\mu_k}{\mu_{k+1}}(g_n)$ converges to infinity (see Observation~\ref{obs:strongly_dynamics_pres_div_cartan}). In Section~\ref{sec: structure of weakly unipotent groups}, we study the structure of weakly unipotent discrete groups and in particular establish the following uniform growth condition on a $\Psf_k$-divergent discrete weakly unipotent group.

\begin{theorem}[see Theorem~\ref{thm:structure of weakly unipotent discrete groups}]\label{thm:structure of weakly unipotent discrete groups in intro}
Suppose that $\Gamma \leq \SL(d,\Rb)$ is a weakly unipotent discrete group. If $\Gamma$ is $\mathsf{P}_k$-divergent and $S$ is a finite symmetric generating set of $\Gamma$, then there exist $\alpha, \beta > 0$ such that 
$$
\log  \frac{\mu_k}{\mu_{k+1}}(\gamma) \geq \alpha \log \abs{\gamma}_S +\beta
$$
for all $\gamma \in \Gamma$. 
\end{theorem}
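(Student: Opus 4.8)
The plan is to bootstrap the uniform growth estimate from the mere fact that a $\mathsf{P}_k$-divergent sequence exists, using the group structure and weak unipotence. Since $\Gamma$ is finitely generated and discrete, the Cartan projections behave sub-additively: $\log \mu_1/\mu_d(\gamma_1\gamma_2) \leq \log \mu_1/\mu_d(\gamma_1) + \log \mu_1/\mu_d(\gamma_2)$, and similarly each $\log \mu_j$ is coarsely sub-additive. Because $\Gamma$ is weakly unipotent, every element is ``close to unipotent'' in the sense that its eigenvalues all have modulus $1$; combined with discreteness this should force a polynomial (rather than exponential) upper bound $\log \mu_1/\mu_d(\gamma) \leq C\log|\gamma|_S + C$ — but in fact the content of the theorem is a \emph{lower} bound on the $k$-th gap, so the real work is to propagate divergence uniformly.

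First I would fix a generating set $S$ and, using $\mathsf{P}_k$-divergence, observe that $\frac{\mu_k}{\mu_{k+1}}(\gamma) \to \infty$ along every escaping sequence; by a compactness/pigeonhole argument this already gives that for every $M$ there is $R_M$ with $\log\frac{\mu_k}{\mu_{k+1}}(\gamma) \geq M$ whenever $|\gamma|_S \geq R_M$. The task is to show $R_M$ grows only exponentially in $M$, which is equivalent to the claimed logarithmic lower bound. The key step is a \emph{sub-multiplicativity for the gap from below}: one wants something like $\log\frac{\mu_k}{\mu_{k+1}}(\gamma^n) \geq c\, n \log\frac{\mu_k}{\mu_{k+1}}(\gamma) - C$, or more robustly, control of $\log\frac{\mu_k}{\mu_{k+1}}$ of products in terms of the summands. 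This is where weak unipotence enters decisively: for a weakly unipotent $g$, the singular value gap $\log\mu_k/\mu_{k+1}(g)$ is \emph{not} comparable to the translation length (which is zero), so the usual Anosov machinery does not apply; instead I would use the structure theory from Section~\ref{sec: structure of weakly unipotent groups} (the Zariski closure of a weakly unipotent group has a specific form — unipotent radical times a compact part, after conjugation) to reduce to a nilpotent/unipotent model where singular values can be computed from a block-upper-triangular normal form and word length corresponds to polynomial growth of matrix entries.

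Concretely: pass to the Zariski closure $H = \overline{\Gamma}^{Z}$; after conjugation $H \subseteq K \ltimes U$ with $K$ compact and $U$ unipotent. The compact part contributes boundedly to all singular values, so $\log\frac{\mu_k}{\mu_{k+1}}(\gamma)$ is (up to additive constant) determined by the $U$-component $u(\gamma)$. In a unipotent group, choosing a filtration adapted to the $\mathsf{P}_k$-divergence direction, the entries of $u(\gamma)$ are polynomials in coordinates that themselves grow at most exponentially in $|\gamma|_S$; divergence forces at least one off-diagonal block crossing the $k$-$(k{+}1)$ split to be non-trivial as $\gamma$ escapes, and one shows the relevant entry grows at least like $|\gamma|_S^{\epsilon}$ for some $\epsilon>0$ — hence $\log\mu_k/\mu_{k+1}(\gamma) \gtrsim \log|\gamma|_S$. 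The main obstacle is precisely this last quantitative statement: translating $\mathsf{P}_k$-divergence (an asymptotic, qualitative condition) into a \emph{uniform polynomial lower bound} on the growth of the controlling matrix entry. I expect to handle it by a dichotomy argument on the nilpotency class combined with the fact that a finitely generated subgroup of a unipotent group has polynomial distortion bounds — so that an entry which is non-zero for some $\gamma$ must, by equivariance and the group law, grow at a definite polynomial rate along powers and along the word metric.
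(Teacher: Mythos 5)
Your reduction is the right one and matches the paper's: pass to the Zariski closure $\mathsf{G} = \mathsf{L} \ltimes \mathsf{U}$ with $\mathsf{L}$ compact, observe that the compact part contributes boundedly to singular values, and reduce the whole problem to a lower bound for $\frac{\mu_k}{\mu_{k+1}}(e^Y)$ in terms of $\norm{Y}$ on the Lie algebra $\mathfrak{u}$ of $\mathsf{U}$, together with the comparison $\abs{\gamma}_S \lesssim \norm{Y}$ (which the paper gets from the fact that $\Gamma$ is a cocompact lattice in $\mathsf{G}$, itself established via the Margulis lemma and Malcev's theorem — a step you should not skip, since it is also what lets you transfer $\Psf_k$-divergence from the discrete group $\Gamma$ to all escaping sequences in $\mathfrak{u}$).

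The genuine gap is in your final quantitative step. The quantity $\left(\frac{\mu_k}{\mu_{k+1}}(e^Y)\right)^2$ is comparable to an explicit positive, everywhere-defined rational function $R$ on $\mathfrak{u}$ (a ratio of squared Euclidean norms of $\wedge^{k}e^Y$, $\wedge^{k\pm1}e^Y$, using that $e^Y$ is polynomial in $Y$ by nilpotence). Divergence gives you only the \emph{qualitative} statement $R(Y) \to \infty$ as $Y \to \infty$, and the theorem you must prove is that a positive, proper, everywhere-defined rational function automatically satisfies $R(Y) \geq C\norm{Y}^{\delta}$. This is a \L ojasiewicz-type inequality at infinity; it is true for rational functions but false for general continuous proper functions (which may grow slower than any power), so some genuinely algebraic input is required. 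Your proposed mechanism — growth of a ``controlling entry'' along powers of individual elements, polynomial distortion, and a dichotomy on nilpotency class — does not supply this: growth along cyclic subgroups gives no uniformity over the directions in which $Y$ can escape, and there is no single matrix entry controlling the gap uniformly (which entry dominates depends on the escape direction). The paper closes this gap with a separate theorem on positive proper rational functions, proved via the regulous Nullstellensatz of Fichou--Huisman--Mangolte--Monnier: the reciprocal $1/R$ extends continuously to $\proj(\Rb^{\dim\mathfrak{u}+1})$ vanishing on the hyperplane at infinity, and the Nullstellensatz forces a power of $1/R$ to be divisible by the defining linear form, yielding the exponent $\delta$. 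Without this (or an equivalent semialgebraic argument), your proof does not go through.
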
 

The proof of this estimate relies on studying real rational functions $R \colon \Rb^d \to \Rb$ which extend continuously to all of $\Rb^d$ and uses a recent version of the Nullstellensatz for such functions established in~\cite{FHMM2016}, see Section~\ref{sec: growth rates for rational functions} for details. 

\subsubsection{Proof of Theorem~\ref{thm:main}} As mentioned in Remark~\ref{rmk:directions in the proof of theorem main}, the implication (3) $\implies$ (2) is by definition, and  ``standard arguments'' (e.g.\ as in~\cite{Canary_notes})  imply that (2) $\implies$ (1). In Section~\ref{sec:contraction_Anosov} we explain these standard arguments.  

The proof that  (1) $\implies$ (3) is considerably more complicated and involves carefully constructing norms on the fibers above the ``cusps'' in the flow space. This is accomplished in Section~\ref{sec:building_norms} and requires the singular value estimate in Theorem~\ref{thm:structure of weakly unipotent discrete groups in intro}. 

\subsubsection{Proof of Theorem~\ref{thm:stability}} We prove Theorem~\ref{thm:stability} in Section~\ref{sec:stability}. The proof has three main steps, the first two closely follow the arguments in~\cite{CZZ2021} for geometrically finite Fuchsian groups while the third is more complicated due to the more general setting. 

\subsubsection{Proof of Theorem~\ref{thm:singular value and eigenvalue estimates}} We prove Theorem~\ref{thm:singular value and eigenvalue estimates} for a single representation in Section~\ref{sec:contraction_Anosov}. Showing that the orbits are quasi-isometric requires the estimate in Equation~\eqref{eqn:singular value bounds in the outline of paper section}. Later in Section~\ref{sec:proof of singular value and eigenvalue estimates} we explain why the proof of Theorem~\ref{thm:stability} implies that the constants can be chosen to be constant under sufficiently small type-preserving deformations.

\subsubsection{Proof of Theorem~\ref{thm:uniform_stability_case}} We prove Theorem~\ref{thm:uniform_stability_case} for a single representation in Section~\ref{sec: uniformly Anosov}. Later in Section~\ref{sec:uniform stability} we explain why the proof of Theorem~\ref{thm:stability} implies that the constants can be chosen to be constant under sufficiently small type-preserving deformations.

\subsubsection{The appendices} In Appendix~\ref{sec: proofs for section SVD background}, we prove some linear algebra observations which are stated in Sections~\ref{sec:SVD background} and~\ref{sec: background on proximal and weakly unipotent elements}. In Appendix~\ref{appendix: basic properties}, we prove some (probably well-known) facts about Gromov-hyperbolic metric spaces.

\subsection*{Acknowledgements} 
The authors thank Fanny Kassel and  Ilia Smilga for pointing out a mistake in the first version of this paper. 

Zhu was partially supported by Israel Science Foundation grants 18/171 and 737/20.
Zimmer was partially supported by grants DMS-2105580 and DMS-2104381 from the National Science Foundation.

\section{Examples}\label{sec:examples}

In this section we summarize some results from the sequel to this paper, where we will explore a variety of particular examples. 

\subsection{Representations of geometrically finite groups} Suppose $X$ is a negatively-curved symmetric space and let $\mathsf{G} := \Isom_0(X)$,  the connected component of the identity in the isometry group of $X$. Let $\partial_\infty X$ denote the geodesic boundary of $X$. Then given a discrete group $\Gamma \leq \mathsf{G}$, let $\Lambda_X(\Gamma) \subset \partial_\infty X$ denote the limit set of $\Gamma$ and let $\Cc_X(\Gamma)$ denote the convex hull of the limit set in $X$. 

When $\Gamma \leq \mathsf{G}$ is geometrically finite, we will let $\peripherals(\Gamma)$ denote a set of representatives of the conjugacy classes of maximal parabolic subgroups in $\Gamma$. Then $(\Gamma,\peripherals(\Gamma))$ is relatively hyperbolic and $\Cc_X(\Gamma)$ is a weak cusp space for $(\Gamma,\peripherals(\Gamma) )$. 

We will observe that restricting a proximal linear representation of $\mathsf{G}$ to a geometrically finite subgroup produces a uniformly relatively Anosov representation. 

\begin{proposition}[{\cite[Prop.\ 1.7]{ZZ2022b}}] \label{prop:homog_cusps} Suppose that $\tau \colon \mathsf{G} \to \SL(d,\Kb)$ is $\Psf_k$-proximal (i.e.\ $\tau(\mathsf{G})$ contains a $\Psf_k$-proximal element). If $\Gamma \leq \mathsf{G}$ is geometrically finite, then $\rho := \tau|_{\Gamma}$ is uniformly $\Psf_k$-Anosov relative to $\Cc_X(\Gamma)$.
 \end{proposition}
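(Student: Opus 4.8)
The plan is to exploit the homogeneous structure of $X$ and build the bundle metric directly out of $\tau$. Since $X$ is a negatively curved symmetric space, $\mathsf{G}=\Isom_0(X)$ is a simple Lie group of real rank one. First I would fix a maximal compact subgroup $\mathsf{K}\leq\mathsf{G}$ with $X=\mathsf{G}/\mathsf{K}$ and basepoint $o:=e\mathsf{K}$, a one-dimensional maximal $\Rb$-split subalgebra $\mathfrak{a}$ with positive chamber generated by a unit vector $H_0\in\mathfrak{a}^+$, and let $\sigma_0(t):=\exp(tH_0)\cdot o$ be the model geodesic line. Because $X$ has rank one, $\mathsf{G}$ acts transitively on the space of parametrized geodesic lines of $X$, so every $\sigma\in\Gc(X)$ is $g_\sigma\sigma_0$ for some $g_\sigma\in\mathsf{G}$; this transitivity is exactly what will allow a single exponential rate to work for all geodesics. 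Next I would choose an inner product $\ip{\cdot,\cdot}_o$ on $\Kb^d$ for which $\tau(\mathsf{K})$ acts by isometries and $\tau(\exp\mathfrak{a})$ by positive self-adjoint operators; then the $\mathfrak{a}$-weight space decomposition $\Kb^d=\bigoplus_j V_j$ of $\tau$ is $\ip{\cdot,\cdot}_o$-orthogonal, and, indexing the weights $\chi_j$ of the $V_j$ so that $\chi_1(H_0)\geq\cdots\geq\chi_d(H_0)$, the hypothesis that $\tau$ is $\Psf_k$-proximal becomes the spectral gap $\chi_k(H_0)>\chi_{k+1}(H_0)$. Transporting $\ip{\cdot,\cdot}_o$ by $\tau$ yields a $\mathsf{G}$-equivariant family of norms $\norm{\cdot}_p$ on $\Kb^d$, $p\in X$, determined by $\norm{\tau(g)v}_{g\cdot o}=\norm{v}_o$ (well defined by $\mathsf{K}$-invariance).

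I would then produce the boundary maps and the bundle metric. Standard structure theory for proximal representations identifies $\partial_\infty X$ $\mathsf{G}$-equivariantly with $\mathsf{G}/\mathsf{Q}$, where $\mathsf{Q}$ is the proper parabolic of $\mathsf{G}$, and gives $\mathsf{G}$-equivariant maps $\xi^k\colon\partial_\infty X\to\Gr_k(\Kb^d)$ and $\xi^{d-k}\colon\partial_\infty X\to\Gr_{d-k}(\Kb^d)$ with $\xi^k(\sigma_0^+)=\bigoplus_{j\leq k}V_j$ and $\xi^{d-k}(\sigma_0^-)=\bigoplus_{j>k}V_j$; since in rank one any two distinct boundary points are in general position, the pair $(\xi^k,\xi^{d-k})$ is transverse (and strongly dynamics preserving), so restricting along $\partial_\infty X\cong\Lambda_X(\Gamma)\cong\partial(\Gamma,\peripherals(\Gamma))$ gives the Anosov boundary map of $\rho:=\tau|_\Gamma$. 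For the metric on $\wh{E}_\rho(\Cc_X(\Gamma))\to\wh{\Gc}(\Cc_X(\Gamma))$ I would take the lift to $\Gc(\Cc_X(\Gamma))\times\Kb^d$ given by $\norm{\cdot}_\sigma:=\norm{\cdot}_{\sigma(0)}$; this is $\Gamma$-equivariant (since $\rho=\tau|_\Gamma$ and $(\norm{\cdot}_p)$ is $\mathsf{G}$-equivariant), hence descends. To see it is locally uniform, given $\dist_X(\sigma_1(0),\sigma_2(0))\leq r$ write $\sigma_i(0)=g_i\cdot o$, so $h:=g_1^{-1}g_2$ lies in the compact set $B_r:=\{h\in\mathsf{G}:\dist_X(h\cdot o,o)\leq r\}$; then $\norm{v}_{\sigma_2(0)}=\norm{\tau(h)^{-1}\tau(g_1)^{-1}v}_o$ while $\norm{v}_{\sigma_1(0)}=\norm{\tau(g_1)^{-1}v}_o$, so the two differ by at most $L_r:=\sup_{h\in B_r}\max\{\norm{\tau(h)}_{\mathrm{op}},\norm{\tau(h)^{-1}}_{\mathrm{op}}\}<\infty$, finite by continuity of $\tau$.

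Finally I would verify that $\homflow^t$ is exponentially contracting. Given $\sigma\in\Gc(\Cc_X(\Gamma))\subseteq\Gc(X)$ and $g_\sigma$ with $g_\sigma\sigma_0=\sigma$, the operator $\tau(g_\sigma)$ carries the fiberwise data over $\sigma_0$ — the norms $\norm{\cdot}_{\sigma_0(t)}$ and the subspaces $\Theta^k(\sigma_0)=\bigoplus_{j\leq k}V_j$, $\Xi^{d-k}(\sigma_0)=\bigoplus_{j>k}V_j$ — isometrically and equivariantly onto that over $\sigma$, so it suffices to estimate over $\sigma_0$. There $\flatflow^t$ acts trivially on $\Kb^d$ and fixes $\sigma_0^\pm$, so $\homflow^t$ sends $f\in\Hom(\Xi^{d-k}(\sigma_0),\Theta^k(\sigma_0))$ to the same linear map measured with the norms at $\sigma_0(t)$. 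Using $\norm{v}_{\sigma_0(t)}^2=\sum_j e^{-2t\chi_j(H_0)}\norm{\Pi_j v}_o^2$ (with $\Pi_j$ the orthogonal projection onto $V_j$), for $t\geq 0$ one computes $\norm{w}_{\sigma_0(t)}\leq e^{-t\chi_k(H_0)}\norm{w}_{\sigma_0(0)}$ when $w\in\Theta^k(\sigma_0)$ and $\norm{u}_{\sigma_0(t)}\geq e^{-t\chi_{k+1}(H_0)}\norm{u}_{\sigma_0(0)}$ when $u\in\Xi^{d-k}(\sigma_0)$, so that
\[
\norm{\homflow^t(f)}_{\mathrm{op}}=\sup_{u\neq 0}\frac{\norm{fu}_{\sigma_0(t)}}{\norm{u}_{\sigma_0(t)}}\leq e^{-t(\chi_k(H_0)-\chi_{k+1}(H_0))}\norm{f}_{\mathrm{op}},
\]
i.e.\ exponential decay at the rate $\chi_k(H_0)-\chi_{k+1}(H_0)>0$, uniformly over $\sigma$. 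Combined with local uniformity, this produces a locally uniform metric making $\homflow^t$ exponentially contracting; in particular $\rho$ is $\Psf_k$-Anosov relative to the weak cusp space $\Cc_X(\Gamma)$, hence (by Theorem~\ref{thm:main}) relative to $\peripherals(\Gamma)$, and therefore $\rho$ is uniformly $\Psf_k$-Anosov relative to $\Cc_X(\Gamma)$.

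I do not expect any single estimate to be the hard part — each is a short piece of linear algebra. The real work is the bookkeeping in translating the abstract bundle-and-flow definition into the homogeneous model: pinning down the parabolic $\mathsf{Q}$ and the $\mathfrak{a}$-weight structure of $\tau$, checking that $\Psf_k$-proximality is equivalent to the gap $\chi_k(H_0)>\chi_{k+1}(H_0)$, and that the induced $\xi^k,\xi^{d-k}$ are transverse and strongly dynamics preserving. One also leans on the facts (granted in the excerpt) that $\Cc_X(\Gamma)$ is a weak cusp space for $(\Gamma,\peripherals(\Gamma))$ with Bowditch boundary $\Lambda_X(\Gamma)$, and — crucially — on rank one, via the transitivity of $\mathsf{G}$ on parametrized geodesic lines, which is precisely what forces the contraction rate to be uniform; the analogous construction in higher rank would not produce a single rate. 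Verifying local uniformity is the one other place requiring a moment's care, but it reduces to compactness of metric balls in $\mathsf{G}$ together with continuity of $\tau$.
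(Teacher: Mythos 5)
The paper does not actually prove this proposition here --- it is quoted from the sequel \cite{ZZ2022b} --- but your argument is correct and is the natural one: a $(\mathsf{K},\exp\aL)$-compatible inner product transported by $\tau$, combined with the transitivity of $\mathsf{G}$ on parametrized geodesics of the rank-one space, yields a $\mathsf{G}$-equivariant (hence automatically locally uniform) metric whose contraction rate is the single constant $\chi_k(H_0)-\chi_{k+1}(H_0)>0$, and Theorem~\ref{thm:main} then upgrades ``Anosov relative to $\Cc_X(\Gamma)$'' to ``Anosov relative to $\peripherals(\Gamma)$'' as required by Definition~\ref{defn:uniformly Anosov}. The only steps deserving a written line of justification are exactly the ones you flag: that $\Psf_k$-proximality of $\tau$ is equivalent to the restricted-weight gap $\chi_k(H_0)>\chi_{k+1}(H_0)$ (using that $\mathsf{G}$ contains loxodromic elements and that the eigenvalue moduli of $\tau(g)$ are the $e^{\chi_j(\lambda(g))}$), and that the stabilizer of $\sigma_0^+$ preserves $\bigoplus_{j\le k}V_j$ so that $\xi^k$ is well defined and equivariant.
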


 In the context of Proposition~\ref{prop:homog_cusps}, we can obtain additional examples by starting with the representation $\rho_0 : =\tau|_\Gamma$ and deforming it in $\Hom_{\rho_0}(\Gamma, \SL(d,\Kb))$. By Theorem~\ref{thm:stability}, any sufficiently small deformation will be a relatively Anosov representation. 
 
 Using Proposition~\ref{prop:homog_cusps} we will also construct the following example. 
 
 \begin{example}[{\cite[Ex.\ 1.8]{ZZ2022b}}] \label{ex:non uniform to GM cusp space} Let $X:=\Hb^2_{\Cb}$ denote complex hyperbolic 2-space. There exists a geometrically finite subgroup $\Gamma \leq \Isom_0(X)$ and a representation $\rho\colon \Gamma \to \SL(3,\Cb)$ which is uniformly $\Psf_1$-Anosov relative to $\Cc_{X}(\Gamma)$, but not uniformly $\Psf_1$-Anosov relative to any Groves--Manning cusp space for $(\Gamma,\peripherals(\Gamma) )$. 
 \end{example} 
 
We can relax the condition in Proposition~\ref{prop:homog_cusps} to only assuming that the representation extends on each peripheral  subgroup. More precisely, if $\Gamma \leq \mathsf{G}$ is geometrically finite and  $\rho \colon \Gamma \to \SL(d,\Kb)$ is $\Psf_k$-Anosov relative to $\peripherals(\Gamma)$, then we say that $\rho$ has \emph{almost homogeneous cusps} if there exists a finite cover $\pi\colon \widetilde{\mathsf{G}} \to \mathsf{G}$ such that for each $P \in\peripherals(\Gamma)$ there is a representation $\tau_P\colon \widetilde{\mathsf{G}} \to \SL(d,\Kb)$ where 
\begin{align*}
\left\{ \tau_P(g)(\rho\circ \pi)(g)^{-1} : g \in \pi^{-1}(P)\right\}
\end{align*}
is relatively compact in $\SL(d,\Kb)$. This technical definition informally states that the representation restricted to each peripheral subgroup extends to a representation of $\mathsf{G}$. 

\begin{theorem}[{\cite[Th.\ 1.9]{ZZ2022b}}] Suppose that $\Gamma \leq\mathsf{G}$ is geometrically finite and  $\rho \colon \Gamma \to \SL(d,\Kb)$ is $\Psf_k$-Anosov relative to $\peripherals(\Gamma)$. If $\rho$ has almost homogeneous cusps, then $\rho$ is uniformly $\Psf_k$-Anosov relative to $\Cc_X(\Gamma)$.
\end{theorem}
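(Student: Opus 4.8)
The plan is to produce, directly, a \emph{locally uniform} metric on $\wh{E}_\rho(\Cc_X(\Gamma)) \to \wh{\Gc}(\Cc_X(\Gamma))$ for which the flow $\homflow^t$ on $\Hom(\wh{\Xi}^{d-k}, \wh{\Theta}^k)$ is exponentially contracting; by Definition~\ref{defn:uniformly Anosov} this is exactly what must be shown (and the existence of such a metric presupposes only the boundary map $\xi_\rho$ and the bundle splitting $\wh{E}_\rho = \wh{\Theta}^k \oplus \wh{\Xi}^{d-k}$ over $\Gc(\Cc_X(\Gamma))$, which we already have from the hypothesis that $\rho$ is $\Psf_k$-Anosov relative to $\peripherals(\Gamma)$ together with Theorem~\ref{thm:main}). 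Since $\Cc_X(\Gamma)$ is the convex hull of the limit set of a geometrically finite group, it decomposes $\Gamma$-cocompactly into a thick part and finitely many horoball-type cusp regions $\Cc_P$, $P \in \peripherals(\Gamma)$, and this induces a corresponding decomposition of the flow space $\Gc(\Cc_X(\Gamma))$. Over the thick part the set of pairs $\{(\sigma_1,\sigma_2): \dist_X(\sigma_1(0),\sigma_2(0)) \le r\}$ is cocompact modulo $\Gamma$, so \emph{any} continuous $\Gamma$-invariant metric there is automatically locally uniform in the sense of Definition~\ref{defn:locally uniform norms}. Hence all the content is concentrated near the cusps: it suffices to (A) construct, for each $P$, a $P$-invariant, locally uniform metric $\norm{\cdot}_P$ on the bundle over $\Cc_P$ for which $\homflow^t$ is exponentially contracting over $\Cc_P$, and then (B) assemble these with a metric over the thick part into a global locally uniform metric that is still exponentially contracting.

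For (A) I would use the homogeneous model supplied by the almost-homogeneous-cusps hypothesis. Fix $P$ with the associated $\pi \colon \widetilde{\mathsf{G}} \to \mathsf{G}$ and $\tau_P \colon \widetilde{\mathsf{G}} \to \SL(d,\Kb)$, so that $K_P := \{\tau_P(g)(\rho\circ\pi)(g)^{-1} : g \in \pi^{-1}(P)\}$ has compact closure in $\SL(d,\Kb)$. Since $\rho(\Gamma)$ is $\Psf_k$-divergent (Observation~\ref{obs:strongly_dynamics_pres_div_cartan}), so is $\rho(P)$, and hence so is $\tau_P(\pi^{-1}(P))$; with this in hand, the cusp construction in the proof of Proposition~\ref{prop:homog_cusps} applied to $\tau_P$ produces a $\tau_P(\pi^{-1}(P))$-invariant, \emph{homogeneous} family of norms on the fibers of $\Gc(\Cc_X(\Gamma)) \times \Kb^d$ over $\Cc_P$; homogeneity forces local uniformity, and $\Psf_k$-divergence forces the associated Hom flow to be exponentially contracting over $\Cc_P$. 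It then remains to transport this metric to the $\rho$-bundle. The $\rho$-bundle and the $\tau_P$-bundle have the same total space $\Gc(\Cc_X(\Gamma)) \times \Kb^d$, and over $\Cc_P$ their $\pi^{-1}(P)$-actions differ exactly by the relatively compact cocycle $g \mapsto k_g := \tau_P(g)(\rho\circ\pi)(g)^{-1} \in K_P$; one therefore uses $K_P$ to build a $P$-equivariant bundle automorphism of $\Gc|_{\Cc_P} \times \Kb^d$ that is uniformly bounded (and bounded below) and intertwines the two actions, and pulls the homogeneous metric back through it. A uniformly bounded change of bundle metric preserves local uniformity and alters only the additive constant, not the exponential rate, in the contraction estimate, so the pulled-back metric $\norm{\cdot}_P$ retains both properties.

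For (B) the cleanest route is to run the norm-building argument behind the implication $(1)\implies(3)$ of Theorem~\ref{thm:main} (Section~\ref{sec:building_norms}) with $\Cc_X(\Gamma)$ in place of a Groves--Manning cusp space — that argument already patches a metric over the cocompact thick part to norms over the cusps via a $\Gamma$-equivariant partition of unity together with a Fekete/telescoping estimate — and to substitute the locally uniform norms $\norm{\cdot}_P$ of step (A) for the cusp norms used there; local uniformity of the output is then immediate from the two cases above. (Equivalently one can glue by hand: a geodesic in $\Cc_X(\Gamma)$ spends long uninterrupted stretches either in the thick part or inside a single cusp, on which the relevant metric genuinely contracts, separated by transitions of bounded duration over which $\norm{\homflow^t}$ jumps by at most a fixed multiplicative factor, the finitely many metrics being uniformly comparable over the cocompact transition zone, and a telescoping estimate recovers a uniform exponential rate after shrinking it.) I expect the main obstacle to be step (A): producing the $P$-equivariant bounded intertwiner from the relatively compact cocycle $K_P$, and verifying that the resulting homogeneous cusp norms are genuinely compatible with the global contraction mechanism — that is, that excursions of geodesics of $\Cc_X(\Gamma)$ into $\Cc_P$ are quantitatively governed by the homogeneous model, which is where the geometry of horoballs in negatively curved symmetric spaces must be combined with the $\Psf_k$-divergence of $\tau_P(\pi^{-1}(P))$. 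Once (A) and (B) are in place, Definition~\ref{defn:uniformly Anosov} gives that $\rho$ is uniformly $\Psf_k$-Anosov relative to $\Cc_X(\Gamma)$.
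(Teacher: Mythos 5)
First, a point of comparison: this paper does not prove the statement --- it is quoted from the sequel \cite{ZZ2022b} in the expository Section~\ref{sec:examples} --- so there is no in-paper proof to measure your argument against. Judged on its own terms, your architecture (homogeneous norms on each cusp region supplied by $\tau_P$, transported to the $\rho$-bundle via the relatively compact set $K_P$, glued to an arbitrary invariant metric on the cocompact thick part, with a telescoping estimate in the style of Lemmas~\ref{lem:thin_contraction}--\ref{lem:time_T_contraction} recovering a global exponential rate) is the natural one and is surely the intended route. The observation that cocompactness makes local uniformity automatic on the thick part, and that the transition zones contribute only bounded multiplicative losses, is correct.

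There are, however, two places where the core content is asserted rather than proved. First, ``$\Psf_k$-divergence forces the associated Hom flow to be exponentially contracting over $\Cc_P$'' is not a consequence of divergence alone: for a general discrete weakly unipotent group, divergence yields only the \emph{logarithmic} gap of Theorem~\ref{thm:structure of weakly unipotent discrete groups}(4), which is exactly why Theorem~\ref{thm:main} must pass to Groves--Manning horoballs and why Example~\ref{ex:non uniform to GM cusp space} exists. What rescues you here is that $\mathsf{G}$ has rank one, so the Cartan projection of $g \in \pi^{-1}(P)$ is $\dist_X(g x_0, x_0)$ times a fixed unit vector $H_0$, whence $\log\frac{\mu_k}{\mu_{k+1}}(\tau_P(g)) = \dist_X(gx_0,x_0)\,(\chi_k-\chi_{k+1})(H_0) + O(1)$ and divergence forces the slope to be positive; this linear-in-distance gap is the substitute for Equation~\eqref{eqn:growth on parabolics in construction of norms} and is what makes the analogue of Lemma~\ref{lem:growth_between_boundaries} work for genuine horoballs. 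You must also verify contraction for the splitting determined by $\xi_\rho(\sigma^\pm)$ (not by any structure intrinsic to $\tau_P$) uniformly over all geodesics meeting the horoball, which requires a uniform transversality statement near the parabolic point; note that Proposition~\ref{prop:homog_cusps} cannot be invoked verbatim for $\tau_P\colon\widetilde{\mathsf{G}}\to\SL(d,\Kb)$ restricted to the elementary group $\pi^{-1}(P)$, since that proposition concerns $\Psf_k$-proximal representations restricted to (non-elementary) geometrically finite subgroups. Second, the ``uniformly bounded $P$-equivariant intertwiner'' does not exist in the form you describe: equivariance forces $A_{g\sigma} = \tau_P(g) A_\sigma \rho(\pi(g))^{-1}$, and for a fixed $A_{\sigma}\neq\id$ the set $\{\tau_P(g) A_\sigma \tau_P(g)^{-1} : g\in\pi^{-1}(P)\}$ is in general unbounded, so boundedness of $K_P$ does not by itself bound $A_{g\sigma}$. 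The correct move is to set $A\equiv\id$ on a fundamental domain for the $P$-action on $\Gc|_{\Cc_P}$, so that $A_{g\sigma_0} = k_{g}\in K_P$ is bounded by hypothesis, and then repair continuity across the boundary of the fundamental domain (e.g.\ by a $P$-equivariant partition of unity and the center-of-mass construction on $\GL(d,\Kb)/\mathsf{U}(d,\Kb)$). With these two steps supplied, the rest of your outline goes through.
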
 

Proposition 3.6 in~\cite{CZZ2021} implies that every relatively Anosov representation of a geometrically finite Fuchsian group has almost homogeneous cusps and hence is uniform. This also follows from the construction of canonical norms in~\cite[Sec.\ 3.1]{CZZ2021}. 

\begin{corollary}[{\cite[Cor.\ 1.10]{ZZ2022b}}] \label{cor:geometrically finite Fuchsian groups} If $X = \Hb^2_{\Rb}$ is real hyperbolic 2-space, $\Gamma \leq \Isom_0(X)$ is geometrically finite, and $\rho \colon \Gamma \to \SL(d,\Kb)$ is $\Psf_k$-Anosov relative to $\peripherals(\Gamma)$, then $\rho$ is uniformly $\Psf_k$-Anosov relative to $\Cc_X(\Gamma)$.
\end{corollary}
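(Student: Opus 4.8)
The plan is to deduce the corollary from the theorem on almost homogeneous cusps stated above (\cite[Th.~1.9]{ZZ2022b}): since $\mathsf{G} = \Isom_0(\Hb^2_{\Rb})$ is isomorphic to $\PSL(2,\Rb)$ and $\Cc_X(\Gamma)$ is a weak cusp space for $(\Gamma,\peripherals(\Gamma))$, it suffices to prove that every $\rho$ which is $\Psf_k$-Anosov relative to $\peripherals(\Gamma)$ has almost homogeneous cusps; the argument below is essentially \cite[Prop.~3.6]{CZZ2021}. (One could instead bypass the notion of almost homogeneous cusps and build a locally uniform metric on $\wh{E}_\rho(\Cc_X(\Gamma))$ directly, cusp by cusp, in the spirit of the canonical norms of \cite[Sec.~3.1]{CZZ2021}; I describe the shorter route.) The first point is that, because $\Gamma$ consists of orientation-preserving isometries, every maximal parabolic subgroup of $\Gamma$ is infinite cyclic: such a subgroup fixes a unique point of $\partial_\infty \Hb^2_\Rb$ and, after conjugating that point to $\infty$, consists of parabolic translations $z\mapsto z+b$, whose discrete subgroups are cyclic.

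So fix $P=\langle p\rangle \in \peripherals(\Gamma)$ with $p$ parabolic, and let $\pi\colon \SL(2,\Rb)\to\PSL(2,\Rb)\cong\mathsf{G}$ be the double cover. The preimage of the unique one-parameter unipotent subgroup of $\PSL(2,\Rb)$ containing $p$ has an identity component $\wt{U}_p$, a one-parameter unipotent subgroup of $\SL(2,\Rb)$ containing a distinguished lift $\wt{p}$ of $p$, and $\wt{p}=\exp(M)$ for a nonzero nilpotent $M\in\mathfrak{sl}(2,\Rb)$. On the other side, Proposition~\ref{prop:eigenvalue data in rel Anosov repn} tells us $\rho(P)$ is weakly unipotent, so every eigenvalue of $\rho(p)$ has absolute value $1$; write the commuting Jordan decomposition $\rho(p)=e\,u$ with $e$ elliptic semisimple and $u$ unipotent, and set $N:=\log u\in\mathfrak{sl}(d,\Kb)$. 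Since $\rho$ is relatively Anosov the sequence $(p^n)_n$ escapes in $\Gamma$, so Observation~\ref{obs:strongly_dynamics_pres_div_cartan} gives $\frac{\mu_k}{\mu_{k+1}}(\rho(p)^n)\to\infty$; in particular $u\neq 1$, i.e.\ $N\neq 0$. Extend $M$ to an $\mathfrak{sl}_2$-triple $(M,H,M')$, which is a basis of $\mathfrak{sl}(2,\Rb)$, and by the Jacobson--Morozov theorem extend $N$ to an $\mathfrak{sl}_2$-triple $(N,\wh{H},N')$ in $\mathfrak{sl}(d,\Kb)$. The linear map $M\mapsto N$, $H\mapsto\wh{H}$, $M'\mapsto N'$ is then a Lie algebra homomorphism $\mathfrak{sl}(2,\Rb)\to\mathfrak{sl}(d,\Kb)$, which integrates to a representation $\tau_P\colon\SL(2,\Rb)\to\SL(d,\Kb)$ (every finite-dimensional representation of $\mathfrak{sl}(2,\Rb)$ integrates to $\SL(2,\Rb)$, and the image lies in $\SL(d,\Kb)$ because $\SL(2,\Rb)$ is connected and $\mathfrak{sl}(d,\Kb)$ is traceless); by construction $\tau_P(\wt{p})=\exp(N)=u$.

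Now take $\wt{\mathsf{G}}:=\SL(2,\Rb)$ with the double cover $\pi$ above, so $\pi^{-1}(P)=\{\pm\wt{p}^{\,n}:n\in\Zb\}$, and use these $\tau_P$ for the finitely many $P\in\peripherals(\Gamma)$. For $g=\wt{p}^{\,n}$ we have $(\rho\circ\pi)(g)=\rho(p)^n=e^nu^n$ and $\tau_P(g)=u^n$, so $\tau_P(g)(\rho\circ\pi)(g)^{-1}=e^{-n}$; for $g=-\wt{p}^{\,n}$ it equals $\tau_P(-I)\,e^{-n}$. Since $e$ is elliptic, $\overline{\{e^{-n}:n\in\Zb\}}$ is compact, so $\{\tau_P(g)(\rho\circ\pi)(g)^{-1}:g\in\pi^{-1}(P)\}$ is relatively compact in $\SL(d,\Kb)$; hence $\rho$ has almost homogeneous cusps, and \cite[Th.~1.9]{ZZ2022b} gives the conclusion. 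There is no serious obstacle here; the one subtle point is the passage to a finite cover, namely that the $\mathfrak{sl}_2$-representation built from $N$ need not descend to $\PSL(2,\Rb)$, which is exactly why the definition of almost homogeneous cusps permits replacing $\mathsf{G}$ by $\wt{\mathsf{G}}=\SL(2,\Rb)$ --- all the real input (weak unipotence and $\Psf_k$-divergence of $\rho(P)$, Jacobson--Morozov, integrability of $\mathfrak{sl}_2$-representations, and the almost-homogeneous-cusps theorem) is either standard or already available.
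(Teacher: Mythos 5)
Your proposal is correct and follows the same route the paper indicates: deduce the corollary from the almost-homogeneous-cusps theorem \cite[Th.~1.9]{ZZ2022b}, using that peripheral subgroups of a Fuchsian group are infinite cyclic so that the Jordan decomposition $\rho(p)=eu$ together with Jacobson--Morozov produces the required extensions $\tau_P$ on the double cover $\SL(2,\Rb)$. The paper simply cites \cite[Prop.~3.6]{CZZ2021} for the almost-homogeneous-cusps step; your argument fills in exactly that cited proof.
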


\subsection{Visible subgroups in real projective geometry} 
\label{subsec:visible subgroups in intro}

We will also apply our general results to the setting of convex real projective geometry.

Given a properly convex domain $\Omega \subset \proj(\Rb^d)$, the \emph{automorphism group of $\Omega$}, denoted $\Aut(\Omega)$, is the subgroup of $\PGL(d,\Rb)$ which preserves $\Omega$. The \emph{limit set} of a subgroup $\Gamma \leq \Aut(\Omega)$ is defined to be 
$$
\Lambda_\Omega(\Gamma) := \partial \Omega \cap \bigcup_{p \in \Omega} \overline{\Gamma(p)}.
$$
Following~\cite{CZZ2022}, we say that $\Gamma$ is a \emph{visible subgroup of $\Aut(\Omega)$} if 
\begin{enumerate}
\item for all $p,q \in \Lambda_\Omega(\Gamma)$ distinct, the open line segment in $\overline{\Omega}$ joining $p$ to $q$ is contained in $\Omega$.
\item every point in $\Lambda_\Omega(\Gamma)$ is a $\Cc^1$-smooth point of $\partial \Omega$.
\end{enumerate} 
A visible subgroup acts as a convergence group on its limit set and if, in addition, the action on the limit set is geometrically finite then the inclusion representation is relatively $\Psf_1$-Anosov (these assertions follow from~\cite[Prop. 3.5]{CZZ2022}). 

Using the methods in~\cite{DGK_convex_cocopt_realproj} and ~\cite{Z2017}, we will construct the following examples. 

\begin{proposition}[{\cite[Prop.\ 1.16]{ZZ2022b}}] \label{prop:examples of visible groups} Suppose that $X$ is a negatively-curved symmetric space which is not isometric to real hyperbolic 2-space and $\mathsf{G} := \Isom_0(X)$. If $\tau \colon \mathsf{G} \to \PGL(d,\Rb)$ is $\Psf_1$-proximal, then there exists a properly convex domain $\Omega \subset \proj(\Rb^d)$ where $\tau(\mathsf{G}) \leq \Aut(\Omega)$ and if $\Gamma \leq \mathsf{G}$ is geometrically finite, then $\tau(\Gamma) \leq \Aut(\Omega)$ is a visible subgroup which acts geometrically finitely on its limit set.
\end{proposition}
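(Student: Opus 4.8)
The plan is to run the boundary data of $\tau$ through the convex-domain construction of~\cite{Z2017}. Since $X$ is a negatively-curved symmetric space, $\mathsf{G}=\Isom_0(X)$ is a connected simple Lie group of real rank one and $\partial_\infty X\cong\mathsf{G}/\mathsf{Q}$ for a minimal parabolic $\mathsf{Q}\leq\mathsf{G}$; because $X\neq\Hb^2_\Rb$, the sphere $\partial_\infty X$ has dimension at least two, so it is connected and simply connected. We take $\tau$ irreducible (the relevant case; in general restrict to the $\tau(\mathsf{G})$-subrepresentation generated by the attracting line of a $\Psf_1$-proximal element, which is irreducible, and work inside its projectivization). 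Highest-weight theory for the rank-one group $\mathsf{G}$ then embeds $\mathsf{Q}$ in the stabilizer of a partial flag $\ell\subset H$ with $\dim\ell=1$, $\dim H=d-1$, giving a continuous $\tau$-equivariant pair
$$
\xi=(\xi^1,\xi^{d-1})\colon\partial_\infty X=\mathsf{G}/\mathsf{Q}\longrightarrow\proj(\Rb^d)\times\Gr_{d-1}(\Rb^d)
$$
with $\xi^1(x)\subset\xi^{d-1}(x)$ for all $x$, transverse (by transitivity of $\mathsf{G}$ on pairs of distinct boundary points) and strongly dynamics preserving; in particular $\xi^1$ is injective, and for geometrically finite $\Gamma\leq\mathsf{G}$ the restriction of $\xi$ to the limit set $\Lambda_X(\Gamma)\subseteq\partial_\infty X$ makes $\tau|_\Gamma$ be $\Psf_1$-Anosov relative to $\peripherals(\Gamma)$.

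\emph{Constructing the domain.} Following~\cite{Z2017}, set $\Lambda:=\xi^1(\partial_\infty X)$ and let $\Omega$ be the connected component of $\proj(\Rb^d)\setminus\bigcup_{x\in\partial_\infty X}\xi^{d-1}(x)$ whose closure contains $\Lambda$ (equivalently, the interior of the convex hull of $\Lambda$ in a suitable affine chart). Because $\mathsf{G}$ is connected it cannot permute the components nontrivially, so $\Omega$ is $\tau(\mathsf{G})$-invariant, open and convex, with $\Lambda\subseteq\partial\Omega$. The substantive point is that $\Omega$ is \emph{properly} convex and nonempty; this is the main conclusion imported from~\cite{Z2017}, and is where one uses that $\tau$ is irreducible (so $\Lambda$ spans $\Rb^d$) and that $\partial_\infty X$ is connected and simply connected.

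\emph{Visibility and geometric finiteness of $\tau(\Gamma)$.} Every point of $\Omega$ is transverse to each hyperplane $\xi^{d-1}(y)$, so the strong dynamics preserving property forces $\Lambda_\Omega(\tau(\Gamma))=\xi^1(\Lambda_X(\Gamma))$; since $\xi^1$ is an equivariant homeomorphism onto its image, the convergence actions $\Gamma\actson\Lambda_X(\Gamma)$ and $\tau(\Gamma)\actson\Lambda_\Omega(\tau(\Gamma))$ are topologically conjugate, so the latter is geometrically finite as the former is. For the visibility conditions: $\xi^{d-1}(x)$ is disjoint from $\Omega$ and contains $\xi^1(x)\in\partial\Omega$, hence is a supporting hyperplane of $\Omega$ at $\xi^1(x)$; applying the ``no nontrivial boundary segment through a limit point'' conclusion of~\cite{Z2017} to the dual representation $\tau^*$ (whose boundary maps are again transverse over the connected, simply connected $\partial_\infty X$) shows this supporting hyperplane is \emph{unique}, i.e.\ $\partial\Omega$ is $\Cc^1$ at $\xi^1(x)$ --- this is condition (2), for all $x$. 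Condition (1) is then formal: for distinct $p=\xi^1(x)$, $q=\xi^1(y)$ in the limit set, convexity puts the open segment from $p$ to $q$ inside $\overline\Omega$, and if an interior point of it were on $\partial\Omega$ then an affine-function argument would place the whole closed segment in a supporting hyperplane of $\Omega$ at $\xi^1(x)$, necessarily $\xi^{d-1}(x)$ by uniqueness, so $q\in\xi^{d-1}(x)$, contradicting transversality.

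\emph{Main obstacle.} The genuinely hard input is the proper convexity of the candidate $\Omega$, together with the related $\Cc^1$-smoothness of $\partial\Omega$ along the limit set used above. Both are global facts about how $\Lambda\cong\partial_\infty X$ sits inside $\partial\Omega$, and both are exactly where the hypothesis $X\neq\Hb^2_\Rb$ enters: when $\partial_\infty X$ is a circle the complement of the dual hyperplanes can fail to have a properly convex component (e.g.\ for $\Sym^{2k}$ of $\PSL(2,\Rb)$ with $k\geq2$). I would quote these from~\cite{Z2017} rather than reprove them.
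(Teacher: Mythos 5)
This proposition is not proved in the present paper at all: it is quoted verbatim from the sequel \cite{ZZ2022b} (Prop.\ 1.16) in the expository Section~\ref{sec:examples}, with only the remark that it is established there ``using the methods in~\cite{Z2017}.'' So there is no in-paper argument to compare against; what can be said is that your sketch follows exactly the advertised route --- build the boundary pair $(\xi^1,\xi^{d-1})$ from highest-weight theory, import proper convexity of the invariant domain and the regularity of its boundary along $\xi^1(\partial_\infty X)$ from \cite{Z2017} (this is where connectedness of $\partial_\infty X$, i.e.\ $X\neq\Hb^2_{\Rb}$, enters), and then deduce visibility and geometric finiteness by the formal arguments you give. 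Your derivation of condition (1) from condition (2) via the affine-function argument, the identification $\Lambda_\Omega(\tau(\Gamma))=\xi^1(\Lambda_X(\Gamma))$ via strong dynamics preservation, and the transfer of geometric finiteness through the equivariant homeomorphism $\xi^1$ are all sound and consistent with how the paper uses \cite[Prop.\ 3.5]{CZZ2022} elsewhere.

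Two points need repair. First, your reduction to the irreducible case does not work as written: if $\tau$ is reducible and $W\subsetneq\Rb^d$ is the irreducible subrepresentation generated by the attracting line, a properly convex domain in $\proj(W)$ has empty interior in $\proj(\Rb^d)$, so ``working inside its projectivization'' does not produce the required $\Omega\subset\proj(\Rb^d)$. Either the statement must be read with an irreducibility hypothesis, or one needs a genuine argument producing an open invariant properly convex set in the ambient space (e.g.\ by a cone construction on a complement of $W$), and this is not a formality --- invariant properly convex cones need not survive direct sums. Second, \cite{Z2017} is stated for irreducible projective Anosov representations of discrete word-hyperbolic groups with connected boundary; here you are applying its conclusions (proper convexity of the hull of $\xi^1(\partial_\infty X)$, $\Cc^1$-smoothness and extremality at points of the limit set) to the connected Lie group $\mathsf{G}$ acting on its full flag orbit $\xi^1(\mathsf{G}/\mathsf{Q})$. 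The adaptation is plausible and is presumably what \cite{ZZ2022b} carries out, but it is an adaptation, not a citation, and it is precisely the ``main obstacle'' you flag; as a proof it remains a black box. With these caveats made explicit, the sketch is a faithful outline of the intended argument rather than a complete proof.
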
 

As an application of Theorem~\ref{thm:stability}, we will prove the following stability result. 

\begin{theorem}[{\cite[Cor.\ 1.15]{ZZ2022b}}] \label{thm:stability of visible groups} Suppose that $\Gamma \leq \Aut(\Omega)$ is a visible subgroup acting geometrically finitely on its limit set and $\iota \colon \Gamma \into \PGL(d,\Rb)$ is the inclusion representation. Then there is an open neighborhood $\Oc \subset \Hom_{\iota}(\Gamma, \PGL(d,\Rb))$ of $\iota$ such that: if $\rho \in \Oc$, then there exists a properly convex domain $\Omega_\rho \subset \proj(\Rb^d)$ where $\rho(\Gamma) \leq \Aut(\Omega_\rho)$ is a visible subgroup acting geometrically finitely on its limit set.
\end{theorem}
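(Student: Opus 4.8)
The plan is to combine Theorem~\ref{thm:stability} with a projective-geometric criterion for visibility. By the discussion preceding Proposition~\ref{prop:examples of visible groups} (which rests on \cite[Prop.\ 3.5]{CZZ2022}), the hypothesis that $\Gamma \leq \Aut(\Omega)$ is visible and acts geometrically finitely on its limit set implies that the inclusion $\iota$ is $\Psf_1$-Anosov relative to $\peripherals$, with Anosov boundary map $\xi_\iota = (\xi_\iota^1, \xi_\iota^{d-1})$ where $\xi_\iota^1(x) \in \Lambda_\Omega(\Gamma) \subseteq \partial\Omega$ and, by conditions (1)--(2) in the definition of a visible subgroup, $\xi_\iota^{d-1}(x)$ is the unique supporting hyperplane of $\Omega$ at $\xi_\iota^1(x)$. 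Working with $\PGL(d,\Rb)$ and the parabolic stabilizing a line (the relevant theory extends to arbitrary semisimple groups and parabolics, cf.\ Section~\ref{sec:general case}; alternatively one passes to an adapted linear representation), Theorem~\ref{thm:main} lets us pass to a weak cusp space, and Theorem~\ref{thm:stability} then supplies an open neighborhood $\Oc \subseteq \Hom_\iota(\Gamma, \PGL(d,\Rb))$ of $\iota$ on which every $\rho$ is $\Psf_1$-Anosov relative to $\peripherals$ and for which $(\rho, x) \mapsto \xi_\rho(x)$ is continuous on $\Oc \times \partial(\Gamma,\peripherals)$, with the relevant constants uniform on $\Oc$.

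It remains, for each $\rho \in \Oc$, to build a properly convex domain $\Omega_\rho$ in which $\rho(\Gamma)$ is visible with geometrically finite action on its limit set. The crucial intermediate claim is that the image $\Lambda_\rho := \xi_\rho^1(\partial(\Gamma,\peripherals))$ is in \emph{convex position}: there is an affine chart of $\proj(\Rb^d)$ containing $\Lambda_\rho$ in which $\Lambda_\rho$ is bounded and lies on the boundary of its own convex hull, with $\xi_\rho^{d-1}(x)$ a supporting hyperplane at $\xi_\rho^1(x)$ for all $x$. Granting this, I would take $\Omega_\rho := \interior(\operatorname{conv}(\Lambda_\rho))$, which is then a properly convex domain, $\rho(\Gamma)$-invariant by equivariance of $\xi_\rho$, with $\Lambda_\rho \subseteq \partial\Omega_\rho$. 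Visibility of $\rho(\Gamma)$ in $\Aut(\Omega_\rho)$ then comes from the boundary maps: $\Cc^1$-smoothness of $\partial\Omega_\rho$ at a point $\xi_\rho^1(x)$ follows because transversality of $(\xi_\rho^1, \xi_\rho^{d-1})$ forces the supporting hyperplane there to be the unique one $\xi_\rho^{d-1}(x)$; the segment condition for distinct $\xi_\rho^1(x), \xi_\rho^1(y)$ reduces to a strong (uniform) form of transversality along $\partial(\Gamma,\peripherals)$ which holds for $\iota$ and persists on $\Oc$ by continuity of $\xi_\rho$ together with the convergence dynamics coming from the strongly dynamics preserving property. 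Finally $\Lambda_{\Omega_\rho}(\rho(\Gamma)) = \Lambda_\rho$ (the inclusion $\supseteq$ because $\xi_\rho^1$ records attracting fixed points, and $\subseteq$ from strong dynamics preservation identifying orbit accumulation points); since $\xi_\rho^1$ conjugates the $\Gamma$-action on $\partial(\Gamma,\peripherals)$ to the $\rho(\Gamma)$-action on $\Lambda_\rho$, and the former is a geometrically finite convergence action (every point conical or bounded parabolic, by definition of relative hyperbolicity), the action of $\rho(\Gamma)$ on its limit set is geometrically finite. Uniformity of the neighborhood and constants is inherited from Theorem~\ref{thm:stability}.

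The main obstacle I anticipate is establishing the convex position claim — upgrading the stable, continuously varying, pointwise transverse pair $(\xi_\rho^1, \xi_\rho^{d-1})$ to global convexity of $\operatorname{conv}(\Lambda_\rho)$ and the supporting-hyperplane property. Continuity of $\xi_\rho$ alone does not suffice, since convex hulls need not vary continuously in the absence of compactness, and $\partial(\Gamma,\peripherals)$ exhibits non-compact cusp behavior at parabolic fixed points; the argument must use that $\rho|_P$ remains conjugate to $\iota|_P$ for each $P \in \peripherals$ in order to control $\Lambda_\rho$ near parabolic points, together with $\Psf_1$-divergence and Anosov dynamics elsewhere, presumably along the lines of \cite{Z2017} and \cite{CZZ2022}. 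Verifying the segment condition uniformly — ruling out segments appearing in $\partial\Omega_\rho$ through $\Lambda_\rho$ after perturbation — is the most delicate point, and is where a quantitative, stable form of transversality of the kind extractable from the flow-space contraction picture behind Theorems~\ref{thm:main} and~\ref{thm:singular value and eigenvalue estimates} should be used.
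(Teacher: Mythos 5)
This theorem is not actually proved in the present paper: it is quoted from the sequel \cite{ZZ2022b} (Corollary 1.15 there), and Section~\ref{sec:examples} only announces it. So there is no in-paper proof to compare against, and I can only assess your outline on its merits. Your first stage --- deducing from \cite[Prop.\ 3.5]{CZZ2022} that $\iota$ is $\Psf_1$-Anosov relative to $\peripherals$, and then invoking Theorem~\ref{thm:stability} to obtain a neighborhood $\Oc$ on which every $\rho$ is relatively $\Psf_1$-Anosov with continuously varying boundary maps --- is exactly the intended role of this paper's results, and is fine as far as it goes.

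The genuine gap is the one you name and then leave open: the convex-position claim. Being $\Psf_1$-Anosov relative to $\peripherals$ does not imply that the limit set bounds, or lies on the boundary of, an invariant properly convex domain (this already fails in the non-relative Anosov setting), so Theorem~\ref{thm:stability} by itself cannot deliver the conclusion; essentially all of the content of the corollary beyond Anosov stability is concentrated in showing that $\xi^1_\rho(\partial(\Gamma,\peripherals))$ stays in convex position with $\xi^{d-1}_\rho$ supporting, that no segment of $\partial\Omega_\rho$ joins two limit points, and that every limit point remains a $\Cc^1$ point. Your treatment of these is either missing or circular. In particular, the assertion that transversality of $(\xi^1_\rho,\xi^{d-1}_\rho)$ ``forces the supporting hyperplane at $\xi^1_\rho(x)$ to be the unique one'' does not follow: uniqueness of supporting hyperplanes of $\interior(\operatorname{conv}(\Lambda_\rho))$ is a statement about the convex hull, not about the pair of boundary maps, and it requires its own argument (typically via the dual convex domain for the contragredient representation, or by excluding nontrivial faces through limit points using the convergence dynamics). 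Likewise, ruling out segments and controlling the hull near parabolic fixed points --- where $\partial(\Gamma,\peripherals)$ is not conical, compactness arguments fail, and the geometry of $\partial\Omega$ can degenerate --- needs quantitative input from the type-preserving hypothesis and the uniform estimates behind Theorems~\ref{thm:main} and~\ref{thm:singular value and eigenvalue estimates}; you gesture at this but do not assemble it. As written, the proposal is a correct strategy statement whose decisive step is left as a conjecture, so it does not constitute a proof.
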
 

\begin{remark} For other stability results in the context of convex real projective geometry, see~\cite{Koszul1968,BenoistIII,Marquis2010,CLT2018,Choi}. \end{remark}

\section{Preliminaries}\label{sec:prelims}

\subsection{Ambiguous notation} Here we fix any possibly ambiguous notation. 
\begin{itemize}
\item We let $\norm{\cdot}_2$ denote the standard Euclidean norm on $\Kb^d$ and let $e_1,\dots,e_d$ denote the standard basis of $\Kb^d$. 
\item A \emph{metric} $\norm{\cdot}$ on a vector bundle $V \to B$ is a continuous varying family of norms on the fibers each of which is induced by an inner product. 
\item Given a metric space $X$, we will use $\Bc_X(p,r)$ to denote the open ball of radius $r$ centered at $p \in X$ and $\Nc_X(A,r)$ to denote the $r$-neighborhood of a subset $A \subset X$. 
\item Given functions $f,g \colon S \to \Rb$ we write $f \lesssim g$ or equivalently $g \gtrsim f$ if there exists a constant $C > 0$ such that $f(s) \leq C g(s)$ for all $s \in S$. If $f \lesssim g$ and $g \lesssim f$, then we write $f \asymp g$. 

\item Except where otherwise specified, all logarithms are taken to base $e$.
\item Note that constants often carry over between statements in the same section, but not across sections.
\end{itemize}

\subsection{Convergence groups} 

When $M$ is a compact perfect metrizable space, a subgroup $\Gamma \leq {\rm Homeo}(M)$ is called a \emph{convergence group} if for every sequence $(\gamma_n)_{n \geq 1}$ of distinct elements in $\Gamma$, there exist $x,y \in M$ and a subsequence $(\gamma_{n_j})_{j \geq 1}$ such that $\gamma_{n_j}|_{M \smallsetminus \{y\}}$ converges locally uniformly to the constant map $x$. In this case, an element of $\Gamma$ is either 
\begin{itemize}
\item \emph{elliptic}, that is it has finite order, 
\item \emph{parabolic}, that is it has infinite order and fixes exactly one point in $M$, or 
\item \emph{loxodromic}, that is it has infinite order and fixes exactly two points in $M$.
\end{itemize} 
Parabolic and loxodromic elements have the following behavior:
\begin{enumerate}
\item If $g \in \Gamma$ is parabolic and $x$ is the unique fixed point of $g$, then 
$$
\lim_{n \to \pm \infty} g^n(y) =x
$$ 
for all $y \in M \smallsetminus \{x\}$. 
\item If $g \in \Gamma$ is loxodromic, then it is possible to label the fixed points of $g$ as $x^+, x^-$ so that 
$$
\lim_{n \to \pm \infty} g^n(y) =x^\pm
$$
for all $y \in M \smallsetminus \{x^\mp\}$. 
\end{enumerate}
In both cases, the limits are locally uniform.

\subsection{Relatively hyperbolic groups}\label{sec: defining rel hyp groups} We now recall the definition of relatively hyperbolic groups. There are a number of equivalent definitions, here we give one based on the action of the group on a suitable boundary space. For more background and other definitions see \cite{Bowditch_relhyp,GrovesManning,Osin,Yaman,DS2005}.

Suppose that $M$ is a compact perfect metrizable space and $\Gamma \leq {\rm Homeo}(M)$ is a convergence group, then: 
\begin{itemize}
 \item A point $x \in M$ is a \emph{conical limit point} if there exist $a, b \in M$ distinct and a sequence $(\gamma_n)_{n \geq 1}$ in $\Gamma$ such that $\gamma_n(x) \to a$ and $\gamma_n(y) \to b$ for any $y \in M \smallsetminus \{x\}$. 
 \item A infinite order subgroup $H \leq \Gamma$ is \emph{parabolic} if it fixes some point of $M$ and each infinite order element in $H$ is parabolic. The fixed point of a parabolic subgroup is called a \emph{parabolic point}.
 \item A parabolic point $x \in M$ is \emph{bounded} if the quotient $\Stab_\Gamma(x) \backslash (M \smallsetminus \{x\})$ is compact.

\end{itemize}
Finally, $\Gamma$ is called a \emph{geometrically finite convergence group} if every point in $M$ is either a conical limit point or a bounded parabolic point.

\begin{definition}\label{defn:RH}
Given a finitely generated group $\Gamma$ and a collection $\peripherals$ of finitely generated infinite subgroups, we say that $\Gamma$ is \emph{hyperbolic relative to $\peripherals$}, or that $(\Gamma,\peripherals)$ is \emph{relatively hyperbolic}, if $\Gamma$ acts on a compact perfect metrizable space $M$ as a geometrically finite convergence group and the maximal parabolic subgroups are exactly the set
$$
\peripherals^\Gamma := \{ \gamma P \gamma^{-1} : P \in \peripherals, \gamma \in \Gamma\}.
$$
\end{definition}

\begin{remark}
Notice that by definition we assume that a relatively hyperbolic group is non-elementary (i.e.\  $M$ is infinite) and finitely generated. \end{remark}

By a theorem of Bowditch \cite{Bowditch_relhyp}, given a relatively hyperbolic group $(\Gamma,\peripherals)$, any two compact perfect metrizable spaces satisfying Definition~\ref{defn:RH} are $\Gamma$-equivariantly homeomorphic. This unique topological space is then denoted by $\partial(\Gamma,\peripherals)$ and called the \emph{Bowditch boundary of $(\Gamma, \peripherals)$}. 

If a group $\Gamma$ acts properly discontinuously and by isometries on a proper geodesic Gromov-hyperbolic metric space $X$, then the action of $\Gamma$ on the Gromov boundary $\partial_\infty X$ is a convergence group action~\cite{Bowditch_convergence_grps}. 
As the next definition and theorem make precise, one can always assume that the space $M$ in Definition~\ref{defn:RH} is the boundary of such a metric space. 

\begin{definition}\label{defn:weak cusp space} Suppose that  $(\Gamma,\peripherals)$ is relatively hyperbolic and $\Gamma$ acts properly discontinuously and by isometries on a proper geodesic Gromov-hyperbolic metric space $X$. If 
\begin{enumerate}
\item $\partial_\infty X$ satisfies Definition~\ref{defn:RH} and
\item every point in $X$ is within a uniformly bounded distance of a geodesic line, 
\end{enumerate}
then $X$ is a \emph{weak cusp space for $(\Gamma,\peripherals)$}. 
\end{definition} 

By work of Bowditch \cite{Bowditch_relhyp} (also see the exposition in \cite[Section 3]{HealyHruska}), one can alternatively define weak cusp spaces in terms of the action of $\Gamma$ on $X$. 

The main result in~\cite{Yaman}   implies the following. 

\begin{theorem}Any relatively hyperbolic group has a weak cusp space. 
\end{theorem}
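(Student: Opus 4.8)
The statement to prove is: \emph{Any relatively hyperbolic group has a weak cusp space.} This is attributed to ``the main result in~\cite{Yaman}''.

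The plan is to invoke Yaman's characterization of relatively hyperbolic groups in terms of group actions on hyperbolic spaces. Yaman's theorem states (roughly): a group $\Gamma$ is hyperbolic relative to $\peripherals$ if and only if $\Gamma$ admits a properly discontinuous isometric action on a proper geodesic Gromov-hyperbolic space $X$ such that the induced action on $\partial_\infty X$ is a geometrically finite convergence action with maximal parabolic subgroups exactly $\peripherals^\Gamma$. So the first step is to take such an $X$ for our given $(\Gamma, \peripherals)$; this immediately gives condition (1) of Definition~\ref{defn:weak cusp space}.

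The only remaining work is to arrange condition (2): every point of $X$ is within a uniformly bounded distance of a geodesic line. First I would note that this is not automatic for an arbitrary $X$ produced by Yaman, so I would pass to a subspace or replace $X$. A clean approach: let $Y \subseteq X$ be the union of all bi-infinite geodesic lines in $X$ (equivalently, a ``weak convex hull'' of $\partial_\infty X$), together perhaps with a bounded neighborhood of it, and check that $\Gamma$ still acts properly and isometrically on $Y$ with the same boundary. Since $\partial_\infty X$ has at least two points (because $\Gamma$ is non-elementary) and in fact no isolated points, every point of $X$ lies boundedly close to a geodesic ray, and because the limit set is all of $\partial_\infty X$ one can upgrade rays to lines via a standard visibility/hyperbolicity argument; alternatively one cites the exposition in~\cite[Section 3]{HealyHruska}, which the paper has already flagged as giving an equivalent formulation of weak cusp spaces in terms of the $\Gamma$-action and which is precisely designed to bridge this gap. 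I would also remark that the Groves--Manning cusp space construction~\cite{GrovesManning} gives an explicit alternative witness, as noted in the introduction.

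The main obstacle is genuinely just the bookkeeping for condition (2): ensuring that after restricting to the geodesically-spanned part of $X$ one still has a \emph{proper} geodesic space on which $\Gamma$ acts \emph{properly discontinuously} with unchanged Gromov boundary. Properness of the restricted space and the fact that geodesics between boundary points survive need the global hyperbolicity constant and the fact that $\Gamma$ acts cocompactly ``away from the cusps'' (geometric finiteness); these are exactly the points handled in~\cite[Section 3]{HealyHruska}. Accordingly, the proof I would write is short: cite Yaman for the existence of $X$ satisfying (1), then cite~\cite{GrovesManning} or~\cite[Section 3]{HealyHruska} for the existence of a weak cusp space satisfying both (1) and (2), possibly by passing to the union of geodesic lines in $X$.
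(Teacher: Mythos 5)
Your proposal is correct and matches the paper, which proves this theorem simply by citing the main result of~\cite{Yaman} (and, independently, establishes that any Groves--Manning cusp space is a weak cusp space via~\cite{GrovesManning}). Your extra attention to condition (2) of Definition~\ref{defn:weak cusp space} --- passing to the union of geodesic lines or invoking~\cite[Section 3]{HealyHruska} --- is a reasonable way to fill in a detail the paper leaves implicit, but it does not change the approach.
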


For future use, we note that condition (2) in Definition~\ref{defn:weak cusp space} implies the following stronger density result for geodesic lines. 

\begin{proposition}\label{prop:abundance of geodesic lines} Suppose that $(\Gamma,\peripherals)$ is relatively hyperbolic and $X$ is a weak cusp space for $(\Gamma,\peripherals)$. Then there exists $R > 0$ such that: for any $p,q \in X$ there is a geodesic line $\sigma : \Rb \rightarrow X$ with 
$$
p,q \in \Nc_X(\sigma, R).
$$
\end{proposition}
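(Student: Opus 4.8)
The plan is to upgrade the given hypothesis---every point of $X$ lies within a uniformly bounded distance $D$ of some geodesic line---to the two-point statement by a standard combination of the bounded-distance hypothesis with thin-triangle (and thin-quadrilateral) estimates in the $\delta$-hyperbolic space $X$. Fix $p, q \in X$. If $p$ and $q$ are close to a common line we are done, so the content is in joining a line through a neighborhood of $p$ to a line through a neighborhood of $q$.

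\textbf{Step 1: reduce to the case where $p,q$ are exactly on geodesic lines.} By hypothesis there are geodesic lines $\sigma_p, \sigma_q$ and points $p' \in \sigma_p$, $q' \in \sigma_q$ with $\dist_X(p,p'), \dist_X(q,q') \le D$. It suffices to find $R_0$ (depending only on $\delta$ and $D$) and a geodesic line $\sigma$ with $p', q' \in \Nc_X(\sigma, R_0)$; then $R := R_0 + D$ works for $p, q$.

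\textbf{Step 2: build a line through $p'$ and $q'$ by concatenating rays.} Write $\sigma_p^\pm, \sigma_q^\pm \in \partial_\infty X$ for the two endpoints of $\sigma_p, \sigma_q$. The idea is to pick one endpoint from each line---say $a := \sigma_p^-$ and $b := \sigma_q^+$---and take $\sigma$ to be a geodesic line from $a$ to $b$ (which exists since $X$ is proper and geodesic; if $a = b$ one instead chooses the other endpoint of $\sigma_q$, which is possible since $\sigma_q^+ \ne \sigma_q^-$). Now $\sigma_p|_{(-\infty, 0]}$ (parametrized so $\sigma_p(0) = p'$) is a geodesic ray from $p'$ to $a$, and $\sigma$ is a line with $a$ as one endpoint. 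Two geodesic rays with a common endpoint at infinity are eventually within $2\delta$ of each other (a standard consequence of the thin-triangles/Gromov-product estimate for ideal triangles), so there is a point $x \in \sigma$ with $\dist_X(p', x)$ bounded in terms of the Gromov product $(b \mid a)_{p'}$ plus a constant multiple of $\delta$; the point is that $x$ can be taken to be the point of $\sigma$ closest to $p'$, and $\dist_X(p', \sigma)$ is then controlled. The cleanest route: use that $\sigma$ passes $2\delta$-close to $p'$ because $p'$ lies on a geodesic ray $[p', a)$ and on a geodesic ray $[p', b')$ (along $\sigma_p^+$), and apply the $2\delta$-thinness of the ideal triangle with vertices $a, b, \sigma_p^+$, together with the fact that $\sigma$ fellow-travels a side of that triangle. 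Run the symmetric argument at $q'$ using endpoint $b$.

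\textbf{Step 3: assemble the constant.} Combining Steps 1--2 yields $R$ depending only on $\delta$ and $D$, both of which are uniform for the fixed weak cusp space $X$; this is exactly the claimed uniform $R$. \textbf{The main obstacle} is the bookkeeping in Step 2: one must be careful when two of the relevant ideal endpoints coincide (so that the "triangle'' degenerates), and one must verify that the closest-point projection of $p'$ onto $\sigma$ is genuinely within $O(\delta)$---not merely that $\sigma$ and $\sigma_p$ share an endpoint, but that they fellow-travel on a ray emanating from a neighborhood of $p'$ in the correct direction. This is where one invokes the two-sidedness of $\sigma_p$ (it is a line, not just a ray) to guarantee that $p'$ sees both $a = \sigma_p^-$ and $\sigma_p^+$ along geodesic rays, pinning $p'$ near the side $[a, \sigma_p^+)$ of the ideal triangle $(a, b, \sigma_p^+)$ and hence near $\sigma$. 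All of these are routine $\delta$-hyperbolic geometry facts, and indeed the excerpt signals (Appendix~\ref{appendix: basic properties}) that such lemmas are collected there; so in the write-up I would cite those appendix lemmas rather than re-deriving the thin-triangle estimates.
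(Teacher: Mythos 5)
Your Step 2 has a genuine gap. You assert that $p'$ is close to $\sigma = [a,b]$ (with $a=\sigma_p^-$, $b=\sigma_q^+$) because $p'$ lies on the side $[a,\sigma_p^+]=\sigma_p$ of the slim ideal triangle with vertices $a,b,\sigma_p^+$. But slimness only places $p'$ within $\delta$ of the \emph{union} of the other two sides, $[a,b]\cup[b,\sigma_p^+]$, and $p'$ may be close to $[b,\sigma_p^+]$ while being arbitrarily far from $[a,b]$. Concretely, in $\Hb^2$ (upper half-plane) take $\sigma_p$ to be the vertical geodesic with $\sigma_p^-=0$, $\sigma_p^+=\infty$ and $p'=it$ for large $t$, and take $\sigma_q$ to be the vertical geodesic through $1$ oriented so that $\sigma_q^+=1$. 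Then $\sigma=[0,1]$ is a semicircle of Euclidean diameter $1$, and $\dist_X(p',\sigma)\sim\log t\to\infty$; here $p'$ is within $O(1/t)$ of the other side $[1,\infty]$. Your earlier remark that $\dist_X(p',\sigma)$ is "bounded in terms of the Gromov product $(b\mid a)_{p'}$" already signals the problem: that Gromov product is not uniformly bounded, and no fixed a priori choice of one endpoint from each of $\sigma_p,\sigma_q$ works.

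The paper's proof (Lemma~\ref{lem:finding_axes}) repairs exactly this by a case analysis. It considers all four bi-infinite geodesics $\eta_{ab}$ joining $\sigma_p^a$ to $\sigma_q^b$, applies slimness of the ideal triangle $\sigma_p\cup\eta_{++}\cup\eta_{-+}$ to decide \emph{which} of $\eta_{++},\eta_{-+}$ the point $p'$ is $\delta$-close to, then applies slimness of a second ideal triangle (e.g.\ $\sigma_q\cup\eta_{++}\cup\eta_{+-}$) to locate $q'$, and if the two points land near different lines it applies slimness once more to transfer $p'$ onto either $\eta_{+-}$ or onto $\sigma_q$ itself. Note in particular that the output line is sometimes $\sigma_q$ (or, symmetrically, one of the $\eta_{ab}$ with the \emph{other} endpoint of $\sigma_p$), a possibility your construction never produces. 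Your Steps 1 and 3 (reduction to points on lines, and uniformity of the constant) are fine; it is the selection of the common line in Step 2 that needs the branching argument.
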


\begin{proof} This follows from Lemma~\ref{lem:finding_axes}. \end{proof} 

The Bowditch boundary $\partial(\Gamma,\peripherals)$ can be used to compactify $\Gamma$ by saying that a sequence $(\gamma_n)_{n \geq 1}$ in $\Gamma$ converges to $x \in \partial(\Gamma,\peripherals)$ if for every subsequence $(\gamma_{n_j})_{j \geq 1}$ there exist $y \in \partial(\Gamma,\peripherals)$ and a further subsequence $(\gamma_{n_{j_k}})_{k \geq 1}$ such that $\gamma_{n_{j_k}}|_{M \smallsetminus \{y\}}$ converges locally uniformly to the constant map $x$. In this case we write $\gamma_n \to x$. If we identify $\partial(\Gamma,\peripherals)$ with the Gromov boundary $\partial_\infty X$ of a weak cusp space $X$, then $\gamma_n \to x$ if and only if $\gamma_n(p) \to x$ for some (any) $p \in X$. 

\subsection{The Groves--Manning cusp space}  A relatively hyperbolic group can have non-quasi-isometric weak cusp spaces, see~\cite{Healy2020}, but perhaps the most canonical  is a construction due to Groves--Manning. As we describe below, this is obtained by attaching combinatorial horoballs to the standard Cayley graph. 

\begin{definition} Suppose $Y$ is a graph with the simplicial distance $\dist_Y$. The \emph{combinatorial horoball} $\Hc(Y)$ is the graph, also equipped with the simplicial distance, that has vertex set $Y^{(0)} \times \Nb$ and two types of edges:
	\begin{itemize}
		\item \emph{vertical edges} joining vertices $(v,n)$ and $(v,n+1)$, 
		\item \emph{horizontal edges} joining vertices $(v,n)$ and $(w,n)$ when $d_Y(v,w) \leq 2^{n-1}$. 
	\end{itemize}
\end{definition} 

\begin{definition} \label{def:cusp spaces} \label{defn: cusped Cayley graph}
	Suppose that $(\Gamma,\peripherals)$ is relatively hyperbolic. A finite symmetric generating set $S \subset \Gamma$ is \emph{adapted} if $S \cap P$ is a generating set of $P$ for every $P \in \peripherals$. Given such an $S$, we let $\Cc(\Gamma, S)$ and $\Cc(P, S \cap P)$ denote the associated Cayley graphs. Then the associated \emph{Groves--Manning cusp space}, denoted $\Cc_{GM}(\Gamma, \peripherals, S)$, is obtained from the Cayley graph $\Cc(\Gamma, S)$ by attaching, for each $P \in \peripherals$ and $\gamma \in \Gamma$, a copy of the combinatorial horoball $\Hc( \gamma\Cc(P, S \cap P))$  by identifying $\gamma\Cc(P, S \cap P)$ with the $n=1$ level of $\Hc( \gamma\Cc(P, S \cap P))$.

\end{definition}

\begin{theorem}[{\cite[Th.\ 3.25]{GrovesManning}}] If $(\Gamma, \peripherals)$ is relatively hyperbolic and $S$ is an adapted finite generating set, then $\Cc_{GM}(\Gamma, \peripherals, S)$ is a weak cusp space for $(\Gamma, \peripherals)$. 
\end{theorem}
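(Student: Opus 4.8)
The plan is to verify directly the three requirements in Definition~\ref{defn:weak cusp space} for the space $X := \Cc_{GM}(\Gamma,\peripherals,S)$, following the strategy of Groves--Manning~\cite{GrovesManning}. The easy part is that $X$ is a proper geodesic metric space carrying a properly discontinuous isometric $\Gamma$-action. Indeed, since $S$ is finite the Cayley graph $\Cc(\Gamma,S)$ is locally finite; each $\Cc(P,S\cap P)$ is locally finite, hence all of its balls are finite, so each combinatorial horoball $\Hc(\gamma\Cc(P,S\cap P))$ is locally finite (the vertex $(v,n)$ has two vertical neighbors and finitely many horizontal ones). Therefore $X$ is a connected locally finite graph, hence proper and geodesic in the simplicial metric. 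The left $\Gamma$-action on $\Cc(\Gamma,S)$ extends to $X$ because $\gamma$ carries the horoball attached along $\gamma'\Cc(P,S\cap P)$ isometrically onto the one attached along $\gamma\gamma'\Cc(P,S\cap P)$; the action is free on vertices, so it is properly discontinuous.

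Next, and this is the heart of the matter, I would show that $X$ is Gromov-hyperbolic. Two ingredients are needed. First, a combinatorial horoball $\Hc(Y)$ is $\delta_0$-hyperbolic with a \emph{universal} $\delta_0$; this is proved from the explicit description of geodesics in $\Hc(Y)$ (a geodesic between $(v,n)$ and $(w,m)$ ascends, travels at most three horizontal steps near the top, and descends, reaching depth $\approx \max(n,m,\log_2 d_Y(v,w))$), which also shows $\partial_\infty\Hc(Y)$ is a single point. Second, the relative hyperbolicity of $(\Gamma,\peripherals)$ — in the form that the coned-off Cayley graph is hyperbolic and geodesics satisfy bounded coset penetration — is combined with the first point by a thin-triangles argument: a geodesic triangle in $X$ projects to a triangle in the coned-off graph, each peripheral excursion is replaced by the controlled geometry inside the corresponding horoball, and the fellow-traveling estimates glue together to give uniformly thin triangles in $X$. (Equivalently, one could cite that $X$ witnesses a finite relative presentation with linear relative isoperimetric function, which is equivalent to hyperbolicity of the cusped space.)

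Finally, I would identify the boundary and verify geometric finiteness together with condition~(2) of Definition~\ref{defn:weak cusp space}. The tip of each horoball $\Hc(\gamma\Cc(P,S\cap P))$ determines a point $p_{\gamma P}\in\partial_\infty X$, and one shows that $\partial_\infty X$ with the induced $\Gamma$-action is a geometrically finite convergence group action whose bounded parabolic points are exactly the $p_{\gamma P}$, with $\Stab_\Gamma(p_{\gamma P})=\gamma P\gamma^{-1}$: these points are parabolic, and bounded because $\gamma P\gamma^{-1}$ acts cocompactly on the horosphere-like level sets and hence, by a convergence argument, on $\partial_\infty X\smallsetminus\{p_{\gamma P}\}$; every other boundary point is a conical limit point, being approached within bounded distance by a $\Gamma$-orbit geodesic that stays in $\Cc(\Gamma,S)$ for arbitrarily long stretches. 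By Bowditch's uniqueness theorem~\cite{Bowditch_relhyp}, $\partial_\infty X$ is then $\Gamma$-equivariantly $\partial(\Gamma,\peripherals)$ and the maximal parabolics are the conjugates of $\peripherals$, so requirement~(1) holds. For requirement~(2): a vertex in the thick part is within bounded distance of the $\Gamma$-orbit of the basepoint, which lies near a geodesic line since $\Gamma$ is non-elementary; a deep vertex $(v,n)$ in a horoball lies within bounded distance of a geodesic segment between two depth-one vertices at combinatorial distance $\approx 2^{n}$ in $\Cc(P,S\cap P)$ (such pairs exist since $P$ is infinite), and such a segment extends to a bi-infinite geodesic because both of its endpoints on the $n=1$ level are within bounded distance of geodesic lines — this last extension step is exactly where I would invoke Lemma~\ref{lem:finding_axes}.

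I expect the main obstacle to be the hyperbolicity step: it is the only place where the relative hyperbolicity hypothesis is genuinely used, and it requires both the delicate combinatorial geometry of horoballs and careful bookkeeping of how geodesics in $X$ enter and leave horoballs (the relative thin-triangles / bounded-penetration estimates). Everything else is either elementary (properness, the action) or a standard convergence-group translation (the boundary identification, geometric finiteness, and condition~(2)), once hyperbolicity is in hand.
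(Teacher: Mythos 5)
The paper does not prove this statement at all: it is imported wholesale as a citation of \cite[Th.\ 3.25]{GrovesManning} (with the boundary identification implicitly resting on Bowditch \cite{Bowditch_relhyp} and the exposition in \cite[Section 3]{HealyHruska}, which the paper mentions immediately afterwards). Your proposal is therefore not comparable to an internal argument but is a reconstruction of the cited proofs, and as an outline it is essentially correct. Two comments. First, the one place where your sketch is really a pointer rather than an argument is hyperbolicity of $X$: the ``project to the coned-off graph, replace peripheral excursions by horoball geometry, and glue fellow-traveling estimates'' step is not a proof as stated --- naively gluing hyperbolic pieces does not yield a hyperbolic space without a combination theorem or quasiconvexity control, and Groves--Manning in fact prove hyperbolicity of the cusped space via the linear relative isoperimetric inequality (your parenthetical is the actual route). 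Since you flag this as the hard step and defer to \cite{GrovesManning}, this is acceptable for a citation-level statement, but it should not be presented as if the thin-triangles gluing were routine. Second, your treatment of condition (2) of Definition~\ref{defn:weak cusp space} is genuinely valuable, because it is \emph{not} part of \cite[Th.\ 3.25]{GrovesManning} and the paper never verifies it: your argument --- orbit points lie near geodesic lines (this needs Lemma~\ref{lem:AMS_Gromov_hyp} together with the fact that a geodesic joining two $\epsilon$-separated boundary points passes uniformly close to the basepoint), a depth-$n$ vertex $(v,n)$ lies near the geodesic from $(v,1)$ to $(w,1)$ for $w \in P$ with $d_{\Cc(P,S\cap P)}(v,w)\approx 2^{n}$ (possible since $P$ is infinite), and then Lemma~\ref{lem:finding_axes} plus thin quadrilaterals upgrades that segment to a geodesic line --- is correct and fills a gap the paper leaves implicit.
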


We will use the following well-known distance estimate in the Groves--Manning cusp space.

\begin{proposition}\label{prop:cusp_space_est} Suppose that $(\Gamma, \peripherals)$ is relatively hyperbolic, $S$ is an adapted finite generating set, and   $X:=\Cc_{GM}(\Gamma, \peripherals, S)$. For any $x_0 \in X$ there exists $\beta > 0$ such that: if $P \in \peripherals$ and  $g \in P \smallsetminus \{\id\}$, then
	$$
	-\beta+2\log_2 \abs{g}_{S \cap P} \leq  \dist_{X}(g(x_0), x_0)  \leq \beta+ 2\log_2 \abs{g}_{S \cap P}.
	$$

\end{proposition}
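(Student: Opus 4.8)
The plan is to prove the distance estimate for the Groves--Manning cusp space $X = \Cc_{GM}(\Gamma,\peripherals,S)$ by reducing everything to a computation inside a single combinatorial horoball $\Hc(Y)$, where $Y = \Cc(P, S\cap P)$. The point is that for $g \in P$, any geodesic in $X$ from $x_0$ to $g(x_0)$ can be taken (up to a bounded additive error depending on the basepoint $x_0$) to travel up into the horoball attached along $\Cc(P, S\cap P)$, move horizontally at some depth, and come back down. So the estimate is really a statement about $\dist_{\Hc(Y)}\big((\id,1),(g,1)\big)$ where $g,\id$ are regarded as vertices of $Y^{(0)}$. First I would fix $x_0$ and observe that moving from $x_0$ to the vertex $(\id, 1)$ costs a bounded amount, and similarly $g(x_0)$ to $(g,1)$ costs the same bounded amount by equivariance; so it suffices to prove
$$
-\beta' + 2\log_2 \abs{g}_{S\cap P} \leq \dist_{\Hc(Y)}\big((\id,1),(g,1)\big) \leq \beta' + 2\log_2 \abs{g}_{S\cap P}
$$
for a uniform constant $\beta'$ (note the estimate is vacuous-ish when $\abs{g}_{S\cap P} \leq 1$, so we may assume $\abs{g}_{S\cap P} \geq 2$, hence $\log_2$ is positive).

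For the \emph{upper bound}: let $n$ be the smallest integer with $2^{n-1} \geq \abs{g}_{S\cap P}$, i.e. $n = \lceil \log_2 \abs{g}_{S\cap P}\rceil + 1$. Then in $\Hc(Y)$ there is a path from $(\id,1)$ up to $(\id,n)$ (length $n-1$), one horizontal edge from $(\id,n)$ to $(g,n)$ (legal since $d_Y(\id,g) = \abs{g}_{S\cap P} \leq 2^{n-1}$), and back down from $(g,n)$ to $(g,1)$ (length $n-1$); total length $2(n-1) = 2\lceil \log_2 \abs{g}_{S\cap P}\rceil \leq 2\log_2\abs{g}_{S\cap P} + 2$. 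That gives the upper bound with $\beta' = 2$ (plus the basepoint correction).

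For the \emph{lower bound}: take any edge path $\gamma$ in $\Hc(Y)$ from $(\id,1)$ to $(g,1)$, and let $N$ be the maximal depth it reaches. A standard fact about combinatorial horoballs (Groves--Manning) is that horizontal edges at depth $n$ only exist between vertices at $Y$-distance $\leq 2^{n-1}$, so the total horizontal movement achievable at depth $\leq N$, combined with the fact that the projection to $Y$ of the whole path must connect $\id$ to $g$, forces $\abs{g}_{S\cap P} = d_Y(\id,g) \leq (\text{number of horizontal edges at depth } N) \cdot 2^{N-1}$; more carefully one shows $d_Y(\id,g)$ is bounded by a geometric-series sum dominated by $C\cdot 2^{N-1}$ times the number of horizontal steps, but the cleanest route is: the path must climb from depth $1$ to depth $N$ and back, costing $\geq 2(N-1)$ vertical edges, and $d_Y(\id,g) \leq 3\cdot 2^{N-1}$ (if it were larger, no subpath staying at depth $\leq N$ could join $\id$-column to $g$-column — this is Lemma 3.10 or similar in \cite{GrovesManning}), whence $N - 1 \geq \log_2 \abs{g}_{S\cap P} - \log_2 3$ and the path length is $\geq 2(N-1) \geq 2\log_2 \abs{g}_{S\cap P} - 2\log_2 3$. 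This gives the lower bound with $\beta'$ absorbing $2\log_2 3$ and the basepoint correction.

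The main obstacle I anticipate is the lower bound's book-keeping: controlling how much horizontal progress (in the $Y$-metric) a path in $\Hc(Y)$ can make without descending below a given depth, and in particular handling paths that oscillate in depth rather than making one clean excursion. The fix is the standard normal-form / ``upward-then-across-then-downward'' reduction for combinatorial horoballs from \cite{GrovesManning}: any path can be replaced by one of that shape without increasing length, after which the depth $N$ it reaches satisfies $2^{N-1} \gtrsim d_Y(\id,g)$, and the bound follows. I would either cite the relevant lemma from \cite{GrovesManning} directly or include a short self-contained argument normalizing the path. The reduction from $X$ to $\Hc(Y)$ via the basepoint, and the choice to exclude $\abs{g}_{S\cap P} \leq 1$, are routine.
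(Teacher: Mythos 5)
Your overall strategy (climb--across--descend for the upper bound, a normal form for horoball geodesics for the lower bound) is the same as the paper's, and your upper bound is complete and correct. But the lower bound as written has a genuine gap: everything you prove there is a lower bound on $\dist_{\Hc(Y)}\big((\id,1),(g,1)\big)$, the distance \emph{intrinsic to the combinatorial horoball}, whereas the proposition concerns $\dist_X$. A path in $X$ from $(\id,1)$ to $(g,1)$ is under no a priori obligation to stay inside $\Hc(Y)$: it can exit into the Cayley graph $\Cc(\Gamma,S)$, use generators of $\Gamma$ not in $P$, or pass through other horoballs. A lower bound on the intrinsic horoball distance therefore does not bound $\dist_X$ from below, and this is precisely the direction you need. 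Your remark that the reduction from $X$ to $\Hc(Y)$ "via the basepoint" is routine conflates two different issues: moving $x_0$ to $(\id,1)$ really is a bounded-error triangle inequality, but ruling out shortcuts through the rest of $X$ is not.

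The missing ingredient is the geodesic convexity of the deep part of the horoball: by \cite[Lem.\ 3.26]{GrovesManning} there is a $\delta \geq 1$ such that the subgraph $\Hc_P(\delta)$ on the vertices $\{(h,n) : h \in P,\ n \geq \delta\}$ is geodesically convex in $X$. One then writes
$$
\dist_X(g,\id) \geq \dist_X\big((g,\delta),(\id,\delta)\big) - 2\delta,
$$
and by convexity the right-hand distance is realized inside $\Hc_P(\delta)$, where the Groves--Manning normal form (\cite[Lem.\ 3.10]{GrovesManning}: $m$ vertical edges up, at most three horizontal edges, $m$ vertical edges down) gives $\abs{g}_{S\cap P} \leq 3\cdot 2^{m-1}$ and a path length of at least $2m$, yielding the desired lower bound with an extra additive loss of $2\delta$. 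So the gap is repairable by one citation, but it is the one non-routine point of the whole argument and your proposal passes over it; the oscillation/book-keeping issue you flag as the main obstacle is the easier of the two problems, since it is already packaged into the normal-form lemma you intended to cite.
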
 

\begin{proof} For $P \in  \peripherals$ and $L \geq 1$, let $\Hc_P(L)\subset X$ denote the induced subgraph of the associated combinatorial horoball with vertex set
$$
\{ (g,n) : g \in P, n \geq L\}. 
$$
By~\cite[Lem.\ 3.26]{GrovesManning} there exists $\delta \geq 1$ such that each $\Hc_P(\delta)$ is geodesically convex in $X$. 

It suffices to consider the case when $x_0 = \id$. Fix $P \in \peripherals$ and  $g \in P \smallsetminus \{\id\}$. For the upper bound, let $n :=  1+\lceil \log_2 \abs{g}_{S \cap P}\rceil$. Then 
$$
\dist_X(g, \id) \leq 2(n-1) + \dist_X\big((g,n), (\id, n)\big) \leq 2n-1 \leq 3+2\log_2\abs{g}_{S \cap P}. 
$$
To prove the lower bound we use~\cite[Lem.\ 3.10]{GrovesManning}, which implies that there exists a geodesic in $\Hc_P(\delta)$ joining $(\id, \delta)$ to $(g,\delta)$ which consists of $m$ vertical edges, followed by no more than three horizontal edges, followed by $m$ vertical edges. Then $\abs{g}_{S \cap P} \leq 3 \cdot 2^{m-1}$ and since $\Hc_P(\delta)$ is geodesically convex
\begin{align*}
\dist_X(g,\id) & \geq  \dist_X\big( (g,\delta), (\id, \delta)\big)-2\delta \geq 2m-2\delta   \\
& \geq -2 \log_2(3) + 2 - 2\delta+2\log_2\abs{g}_{S \cap P}.
\end{align*}
Since $\delta$ is independent of $P$ and $g$, this completes the proof. 
\end{proof}

\subsection{The geometry of the Grassmanians} Throughout the paper, we will let $\dist_{\proj(\Kb^d)}$ denote the \emph{angle distance} on $\proj(\Kb^d)$, that is: if $\ip{\cdot,\cdot}$ is the standard Euclidean inner product on $\Kb^d$, then 
$$
\dist_{\proj(\Kb^d)}([v], [w]) = \cos^{-1} \left( \frac{\abs{\ip{v,w}}}{\sqrt{\ip{v,v}}\sqrt{\ip{w,w}}} \right)
$$
for all non-zero $v,w \in \Kb^d$. 

Using the Pl\"ucker embedding, we can view $\Gr_k(\Kb^d)$ as a subset of $\proj(\wedge^k \Kb^d)$. Let $\dist_{\proj(\wedge^k \Kb^d)}$ denote the angle distance associated to the inner product on $\wedge^k \Kb^d$ which makes 
$$
\{ e_{i_1} \wedge \dots \wedge e_{i_k} : i_1 < \dots < i_k\}
$$
an orthonormal basis. We then let $\dist_{\Gr_k(\Kb^d)}$ denote the distance on $\Gr_k(\Kb^d)$ obtained by restricting $\dist_{\proj(\wedge^k \Kb^d)}$.

\subsection{The singular value decomposition}\label{sec:SVD background} By the singular value decomposition, any element $g \in \SL(d,\Kb)$ can be written as $g = m a \ell$ where $m,\ell \in \SU(d,\Kb)$ and $a$ is a diagonal matrix with 
$$
\mu_1(g) \geq \dots \geq \mu_d(g)
$$
down the diagonal. In general this decomposition is not unique, but when $\mu_k(g) > \mu_{k+1}(g)$ the subspace 
$$
U_k(g) :=m \ip{e_1,\dots, e_k}
$$
is well defined. Geometrically, $U_k(g)$ is the subspace spanned by the $k$ largest axes of the ellipse $g\cdot\{ x \in \Kb^d : \norm{x}_2=1\}$. 

We will frequently use the following observation. 

\begin{observation}\label{obs:strongly_dynamics_pres_div_cartan}
Suppose that $(g_n)_{n \geq 1}$ is a sequence in $\SL(d,\Kb)$, $V_0 \in \Gr_k(\Kb^d)$, and $W_0 \in \Gr_{d-k}(\Kb^d)$. Then the following are equivalent: 
\begin{enumerate}
\item  $g_n(V) \to V_0$ uniformly on compact subsets of 
$$
\left\{ V \in \Gr_k(\Kb^d) : V \text{ transverse to } W_0\right\},
$$ 
\item $\frac{\mu_k}{\mu_{k+1}}(g_n) \to \infty$, $U_k(g_n) \to V_0$, and $U_{d-k}(g_n^{-1}) \to W_0$. 
\end{enumerate}
\end{observation}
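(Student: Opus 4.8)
The plan is to prove Observation~\ref{obs:strongly_dynamics_pres_div_cartan} directly from the singular value decomposition, establishing the two implications separately. Throughout we fix the Euclidean norm $\norm{\cdot}_2$ on $\Kb^d$ and use that, for $g = m a \ell$ as in the statement, the image $g \cdot \{\norm{x}_2 = 1\}$ is an ellipsoid whose $j$-th principal axis has length $\mu_j(g)$ and direction $m e_j$; also $g^{-1} = \ell^{-1} a^{-1} m^{-1}$ has singular values $\mu_d(g)^{-1} \geq \cdots \geq \mu_1(g)^{-1}$, so that (when $\mu_k(g) > \mu_{k+1}(g)$) one has $U_{d-k}(g^{-1}) = \ell^{-1}\ip{e_{k+1},\dots,e_d} = (\ell^{-1}\ip{e_1,\dots,e_k})^\perp$, the orthogonal complement of the "most contracted" $k$-plane of $g$.

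For the implication $(2) \implies (1)$: assume $\frac{\mu_k}{\mu_{k+1}}(g_n) \to \infty$, $U_k(g_n) \to V_0$, $U_{d-k}(g_n^{-1}) \to W_0$. Write $g_n = m_n a_n \ell_n$. Fix $V \in \Gr_k(\Kb^d)$ transverse to $W_0$. First I would note that transversality to $W_0$ is, by continuity of the transversality condition, "uniform along the sequence": since $\ell_n\ip{e_{k+1},\dots,e_d} = U_{d-k}(g_n^{-1})^{\text{(rotated)}}$... more carefully, $\ell_n^{-1}\ip{e_{k+1},\dots,e_d} = U_{d-k}(g_n^{-1}) \to W_0$, so $\ell_n(V) \to $ (something transverse to $\ip{e_{k+1},\dots,e_d}$), hence for large $n$ the plane $\ell_n(V)$ is bounded away from $\ip{e_{k+1},\dots,e_d}$; equivalently $\ell_n(V)$ is the graph of a uniformly bounded linear map $\ip{e_1,\dots,e_k} \to \ip{e_{k+1},\dots,e_d}$. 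Applying $a_n$ scales the $\ip{e_1,\dots,e_k}$-directions by factors $\geq \mu_k(g_n)$ and the $\ip{e_{k+1},\dots,e_d}$-directions by factors $\leq \mu_{k+1}(g_n)$, so the graphing map of $a_n\ell_n(V)$ has norm $\leq \frac{\mu_{k+1}}{\mu_k}(g_n) \cdot C \to 0$; thus $a_n \ell_n(V) \to \ip{e_1,\dots,e_k}$. Applying the bounded isometries $m_n$ and using $m_n\ip{e_1,\dots,e_k} = U_k(g_n) \to V_0$ gives $g_n(V) \to V_0$.

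For the implication $(1) \implies (2)$: assume $g_n(V) \to V_0$ for every $V$ transverse to $W_0$. By compactness of $\SU(d,\Kb)$ and of the Grassmannians it suffices to show every subsequence has a further subsequence along which (2) holds with the \emph{same} limits $V_0, W_0$; so after passing to a subsequence I may assume $m_n \to m$, $\ell_n \to \ell$, $U_k(g_n) = m_n\ip{e_1,\dots,e_k} \to P$, $U_{d-k}(g_n^{-1}) = \ell_n^{-1}\ip{e_{k+1},\dots,e_d} \to Q$, and $\frac{\mu_{k+1}}{\mu_k}(g_n) \to c \in [0,1]$ (note this ratio is always $\leq 1$). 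The goal becomes: show $c = 0$, $P = V_0$, $Q = W_0$. Pick any $V$ transverse to both $Q$ and $W_0$ --- possible since transversality is generic. As in the previous paragraph, $\ell_n(V)$ is eventually the graph of a bounded map over $\ip{e_1,\dots,e_k}$ (using transversality to $Q$), and then $a_n\ell_n(V)$ is the graph of a map of norm $\lesssim \frac{\mu_{k+1}}{\mu_k}(g_n)$. If $c > 0$, choosing $V$ appropriately (e.g. so that $\ell(V)$ genuinely meets the $e_{k+1},\dots,e_d$ block) makes $\lim a_n\ell_n(V)$ depend on $V$, contradicting that $g_n(V) \to V_0$ is independent of $V$; pushing this through carefully forces $c = 0$, whence $a_n\ell_n(V) \to \ip{e_1,\dots,e_k}$ and $g_n(V) = m_n a_n \ell_n(V) \to m\ip{e_1,\dots,e_k} = P$, so $P = V_0$. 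Finally, to identify $Q = W_0$: for $V$ \emph{not} transverse to $W_0$, the hypothesis says nothing, but one shows the set of "bad" directions $V$ for which $g_n(V) \not\to V_0$ must be exactly $\{V : V \cap Q \neq 0\}$ (the $k$-planes meeting the contracted $(d-k)$-plane); since by hypothesis this bad set is contained in $\{V: V \cap W_0 \neq 0\}$ and both are Schubert-type subvarieties of the same "size", they coincide, forcing $Q = W_0$. I expect this last step --- pinning down $Q = W_0$ rather than just $Q \subseteq$ something --- to be the main obstacle, and the clean way around it is to run the symmetric argument with the sequence $g_n^{-1}$ (which satisfies the mirror image of (1) by a short duality computation: $g_n(V) \to V_0$ for all $V$ transverse to $W_0$ is equivalent, via orthogonal complements and the relation between $g$ and its inverse-transpose, to a dynamical statement about $g_n^{-1}$), thereby obtaining $U_{d-k}(g_n^{-1}) \to W_0$ directly by the same argument that gave $U_k(g_n) \to V_0$.
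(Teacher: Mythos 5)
The paper states this as a bare observation and gives no proof, so there is no argument of the paper to compare against; I can only assess correctness. Your proof of $(2)\implies(1)$ is correct and is the standard graph-transform argument. Your route from (1) to the first two conclusions of (2) (a rank-one limit argument for the renormalized $a_n$, equivalently for $\wedge^k g_n$) is also viable, though the sub-case where some earlier ratio $\mu_j/\mu_{j+1}$, $j<k$, also blows up makes the naive renormalization $a_n/\mu_1(g_n)$ degenerate on $k$-planes, so "pushing this through carefully" genuinely requires either separate renormalizations or the exterior-power trick.

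The gap is exactly where you predicted it: the identification $Q=W_0$, i.e.\ $U_{d-k}(g_n^{-1})\to W_0$. Neither of your two proposed arguments works, and no argument can, because that implication is false under the pointwise hypothesis (1). Take $d=2$, $k=1$, $\Kb=\Rb$, and
$$
g_n=\begin{pmatrix} n & 0\\ 0 & n^{-1}\end{pmatrix} r_{1/n}, \qquad r_\theta:=\begin{pmatrix}\cos\theta & -\sin\theta\\ \sin\theta & \cos\theta\end{pmatrix}.
$$
Then $g_n\ip{v}\to\ip{e_1}$ for \emph{every} nonzero $v$ (e.g.\ $g_ne_2=(-n\sin(1/n),\,n^{-1}\cos(1/n))\to(-1,0)$), so (1) holds with $V_0=\ip{e_1}$ and $W_0=\ip{e_1}$; but $U_1(g_n^{-1})=r_{-1/n}\ip{e_2}\to\ip{e_2}\neq W_0$. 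This kills both of your closing moves. First, the "bad set" $\{V: g_nV\not\to V_0\}$ is empty here, so it is certainly not the Schubert variety $\{V:V\cap Q\neq 0\}$; the hypothesis only ever gives you one containment, and the bad set cannot be pinned down from (1) alone. Second, the duality fails because pointwise convergence does not dualize: here $g_n^{-1}\ip{w}\to\ip{e_2}$ for all $w\notin\ip{e_1}$, so the mirror argument recovers $Q=\ip{e_2}$, not the hypothesized $W_0$. The correct and standard statement replaces (1) by convergence that is locally uniform on compact sets of $k$-planes transverse to $W_0$; under that hypothesis the uniformity does transfer to $g_n^{-1}$ and your duality argument closes the proof. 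So the obstruction you flagged is not a difficulty to be worked around but a genuine failure of the statement as printed, and any complete write-up has to either add the uniformity to (1) or drop the third clause of (2).
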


\begin{proof} We provide a proof in Appendix~\ref{sec: proofs for section SVD background}. \end{proof} 

We will also use the following estimates for distances between the spaces $U_k(g)$ when considering products; for proofs see~\cite[Lem.\ A.4, A.5]{BPS}. 

\begin{lemma}\label{lem: U_k under product} Suppose that $g,h \in \GL(d,\Kb)$. 
\begin{enumerate}
\item If $\mu_k(g) > \mu_{k+1}(g)$ and $\mu_k(gh) > \mu_{k+1}(gh)$,  then 
$$
\dist_{\Gr_k(\Kb^d)}\big( U_k(gh), U_k(g)\big) \leq \frac{\mu_1}{\mu_d}(h) \frac{\mu_{k+1}}{\mu_k}(g).
$$
\item If $\mu_k(g) > \mu_{k+1}(g)$ and $\mu_k(hg) > \mu_{k+1}(hg)$,  then 
$$
\dist_{\Gr_k(\Kb^d)}\big( U_k(hg), hU_k(g) \big) \leq \frac{\mu_1}{\mu_d}(h) \frac{\mu_{k+1}}{\mu_k}(g).
$$
\end{enumerate}
\end{lemma}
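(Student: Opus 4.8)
The plan is to deduce both parts from a single elementary linear-algebra computation, so I begin with the reduction. Fix a singular value decomposition $g = m_g a_g \ell_g$, with $m_g, \ell_g$ unitary and $a_g = \mathrm{diag}(\mu_1(g), \dots, \mu_d(g))$, so that $U_k(g) = m_g\ip{e_1, \dots, e_k}$. From the singular value decomposition one checks that $U_k(wX) = w\, U_k(X)$ and $U_k(Xw) = U_k(X)$ for unitary $w$ (the singular value gaps needed to define $U_k$ are unchanged by such multiplications), and that a unitary matrix acts as an isometry of $\Gr_k(\Kb^d)$. Applying these with $w = m_g$ and $w = \ell_g$: for part (1), setting $c := \ell_g h$ (so $\mu_j(c) = \mu_j(h)$ for all $j$) gives $\dist_{\Gr_k(\Kb^d)}(U_k(gh), U_k(g)) = \dist_{\Gr_k(\Kb^d)}(U_k(a_g c), \ip{e_1, \dots, e_k})$, while for part (2), setting $b := h m_g$ (so $\mu_j(b) = \mu_j(h)$ for all $j$) gives $\dist_{\Gr_k(\Kb^d)}(U_k(hg), h\, U_k(g)) = \dist_{\Gr_k(\Kb^d)}(U_k(b a_g), b\ip{e_1, \dots, e_k})$. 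Thus it suffices to bound each of $\dist_{\Gr_k(\Kb^d)}(U_k(a_g c), F)$ and $\dist_{\Gr_k(\Kb^d)}(U_k(b a_g), bF)$ by $\frac{\mu_1}{\mu_d}(h) \frac{\mu_{k+1}}{\mu_k}(g)$, where I write $F := \ip{e_1, \dots, e_k}$.

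For the core estimate I treat part (2); part (1) is the same, and slightly easier. Put $T := b a_g$ and let $\Pi$ be the orthogonal projection of $\Kb^d$ onto $(bF)^\perp$. Recall $U_k(T) = T S$ where $S$ is the span of the top $k$ right singular vectors of $T$, and that every unit vector of $U_k(T)$ has the form $Tv / \norm{Tv}$ for some unit $v \in S$, with $\norm{Tv} \geq \mu_k(T)$. On one side, $\mu_k(T) = \mu_k(b a_g) \geq \mu_d(b)\mu_k(a_g) = \mu_d(h)\mu_k(g)$, using the elementary bound $\mu_k(XY) \geq \mu_d(X)\mu_k(Y)$. On the other side, writing $v = v' + v''$ with $v' \in F$ and $v'' \in F^\perp$, we have $a_g v' \in F$ and $\norm{a_g v''} \leq \mu_{k+1}(g)\norm{v''} \leq \mu_{k+1}(g)$, so $Tv = b(a_g v') + b(a_g v'')$ with $b(a_g v') \in bF$ and $\norm{b(a_g v'')} \leq \mu_1(b)\mu_{k+1}(g) = \mu_1(h)\mu_{k+1}(g)$; hence $\norm{\Pi(Tv)} = \norm{\Pi(b(a_g v''))} \leq \mu_1(h)\mu_{k+1}(g)$. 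Dividing by $\norm{Tv}$ gives $\norm{\Pi u} \leq \frac{\mu_1(h)}{\mu_d(h)} \frac{\mu_{k+1}(g)}{\mu_k(g)}$ for every unit $u \in U_k(T)$, i.e. the operator norm of $\Pi$ restricted to $U_k(T)$ — which is exactly the sine of the largest principal angle between $U_k(T)$ and $bF$ — is at most $\frac{\mu_1}{\mu_d}(h) \frac{\mu_{k+1}}{\mu_k}(g)$, and this controls $\dist_{\Gr_k(\Kb^d)}(U_k(T), bF)$. For part (1) one repeats the argument with $T := a_g c$ and $bF$ replaced by $F$: now $\mu_k(T) \geq \mu_k(a_g)\mu_d(c) = \mu_k(g)\mu_d(h)$ by the variant $\mu_k(XY) \geq \mu_k(X)\mu_d(Y)$, and after splitting $cv = (cv)' + (cv)''$ along $F \oplus F^\perp$ one gets $\Pi_{F^\perp}(Tv) = a_g(cv)''$ with $\norm{a_g(cv)''} \leq \mu_{k+1}(g)\norm{cv} \leq \mu_{k+1}(g)\mu_1(c) = \mu_{k+1}(g)\mu_1(h)$.

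I do not expect a genuine obstacle: this is \cite[Lem.\ A.4, A.5]{BPS}, and the computation is elementary. The two points that require care are (i) the unitary-invariance identities used in the reduction, including that the singular value gaps persist under the relevant multiplications, and (ii) the comparison, in the last line of the core estimate, between the operator norm of $\Pi|_{U_k(T)}$ (the sine of the largest principal angle) and $\dist_{\Gr_k(\Kb^d)}$: these agree up to a multiplicative constant depending only on $d$, and the asserted inequality is vacuous once its right-hand side exceeds $\pi/2$, so one either absorbs this harmless constant or removes it via a slightly more careful computation in Plücker coordinates.
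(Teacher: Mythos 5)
The paper does not actually prove this lemma; it only cites \cite[Lem.\ A.4, A.5]{BPS}, so your self-contained argument is welcome, and it is in substance the same computation carried out there: reduce by unitary invariance to $T = a_g c$ (resp.\ $T = b a_g$), write unit vectors of $U_k(T)$ as $Tv/\norm{Tv}$ with $v$ in the span of the top right singular vectors, and play the lower bound $\norm{Tv} \geq \mu_k(T) \geq \mu_d(h)\mu_k(g)$ against the upper bound $\mu_1(h)\mu_{k+1}(g)$ on the component orthogonal to the target subspace. All of these steps check out, including the unitary-invariance identities $U_k(wX) = wU_k(X)$ and $U_k(Xw) = U_k(X)$ and the persistence of the singular value gaps.

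The one point you flag at the end is a genuine (if benign) discrepancy, and you are right not to wave it away entirely: what your computation bounds is $\sin\theta_{\max}$, the sine of the largest principal angle, whereas the paper's $\dist_{\Gr_k(\Kb^d)}$ is the angle $\cos^{-1}\bigl(\prod_i \cos\theta_i\bigr)$ in $\proj(\wedge^k\Kb^d)$, which \emph{dominates} $\theta_{\max} \geq \sin\theta_{\max}$. So the inequality as literally stated does not follow from your estimate without a multiplicative constant depending on $k$ (roughly $\sqrt{k}\cdot\pi/2$, from $\prod_i\cos\theta_i \geq 1 - \tfrac{k}{2}\sin^2\theta_{\max}$), and the ``vacuous when the right-hand side exceeds $\pi/2$'' observation does not close this gap in the nontrivial regime. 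That said, the same constant is already silently present in the paper's own citation: the Grassmannian metric used in \cite{BPS} is the sine-type metric, not the angle metric defined here, and every application of the lemma in this paper (the geometric series in Lemma~\ref{lem:quantitative convergence}, the $\lesssim$ estimates, the limit statements) absorbs a dimensional constant without harm. So your proof establishes the lemma in the form in which it is actually used; to get the constant-free statement verbatim one would have to either work directly in Pl\"ucker coordinates as you suggest, or restate the lemma with the comparable sine metric.
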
 

\subsection{Proximal and weakly unipotent elements}\label{sec: background on proximal and weakly unipotent elements} An element $g \in \SL(d,\Kb)$ is called \emph{$\Psf_k$-proximal} if $\lambda_k(g) > \lambda_{k+1}(g)$ (recall that $\lambda_j(g)$ denote the absolute values of the eigenvalues of $g$ listed in decreasing order).  In this case, there exists an $g$-invariant decomposition $\Kb^d = V^+_g \oplus W^-_g$ where $\dim_{\Kb} V_g^+= k$, $\dim_{\Kb} W_g^-=d-k$,
$$
\lambda_j(g|_{V^+_g}) = \lambda_j(g) \quad \text{for} \quad j=1,\dots,k,
$$
and 
$$
 \lambda_j( g|_{W^-_g}) = \lambda_{k+j}(g) \quad \text{for} \quad j=1,\dots, d-k.
$$
Further, 
$$
g^n(V) \to V^+_g
$$
for all $V \in \Gr_k(\Kb^d)$ transverse to $W^-_g$. In fact, as the next observation states, this dynamical behavior characterizes proximality (see Observation~\ref{obs:strongly_dynamics_pres_div_cartan}).

\begin{observation}\label{obs:dynamics of Pk proximal} If $g \in \SL(d,\Kb)$, then the following are equivalent: 
\begin{enumerate}
\item $g$ is $\mathsf{P}_k$-proximal,
\item there exist $V_0 \in \Gr_k(\Kb^d)$, $W_0 \in \Gr_{d-k}(\Kb^d)$ such that $V_0 \oplus W_0 = \Kb^d$ and
$$g^n(V) \to V_0$$ 
uniformly on compact subsets of $\left\{ V \in \Gr_k(\Kb^d) : V \text{ transverse to } W_0\right\}$.
\end{enumerate}
Moreover, if the above conditions are satisfied, then $V_0=V_g^+$ and $W_0 = W_g^-$. 
\end{observation}

\begin{proof} We provide a proof in Appendix~\ref{sec: proofs for section SVD background}. \end{proof}

Recall that an element $g \in \SL(d,\Kb)$ is called  \emph{weakly unipotent} if 
\begin{align*}
\lambda_1(g) =  \dots = \lambda_d(g)=1.
\end{align*}
We also have a dynamical characterization of certain weakly unipotent elements.

\begin{observation}\label{obs:dynamics of weakly unipotent} Suppose that $g \in \SL(d,\Kb)$, $V_0^\pm \in \Gr_k(\Kb^d)$, $W_0^\pm \in \Gr_{d-k}(\Kb^d)$, and 
$$
g^{\pm n} V \to V_0^\pm
$$
uniformly on compact subsets of $\left\{ V \in \Gr_k(\Kb^d) : V \text{ transverse to } W_0^{\pm}\right\}$. Then  $g$ is weakly unipotent if and only if $V_0^+=V_0^-$. 
\end{observation}

\begin{proof} We provide a proof in Appendix~\ref{sec: proofs for section SVD background}. \end{proof}

\subsection{The symmetric space associated to the special linear group} We will consider the symmetric space $N:=\GL(d,\Kb) / \mathsf{U}(d,\Kb)$  normalized so that the distance is given by 
\begin{equation}
\label{eqn:symmetric distance in prelims}
\dist_N\left( g \mathsf{U}(d,\Kb), h \mathsf{U}(d,\Kb) \right) = \sqrt{ \sum_{j=1}^d (\log \mu_j(g^{-1} h) )^2 },
\end{equation}
see~\cite[Chap.\ II.10]{BH1999} for more details. We will also consider the symmetric space $M := \SL(d,\Kb) / \SU(d,\Kb)$ which can be viewed as a totally geodesic subspace of $N$. 

Recall that $N$ identifies with the space of inner products on $\Kb^d$ via 
$$
g \mapsto \ip{g^{-1} \cdot, g^{-1}\cdot}
$$ 
(where $\ip{\cdot,\cdot}$ is the standard Euclidean inner product). The next proposition provides an elementary description of the geodesic segment in $N$ joining two inner products and is used in the proof of Theorem~\ref{thm:main}. 

\begin{proposition}\label{prop:paths between inner products} Suppose that $Q_0$ and $Q_1$ are inner products on $\Kb^d$. Then
	\begin{enumerate}
		\item There exists a basis $v_1,\dots, v_d$ of $\Kb^d$ which is orthogonal with respect to $Q_0$ and $Q_1$.
		\item There exists a smooth path 
		$$
		t \in [0,1] \mapsto f(Q_0,Q_1)(t)
		$$
		of inner products joining $Q_0$ and $Q_1$ such that: if $v_1,\dots, v_d$ is an orthogonal basis with respect to both $Q_0$ and $Q_1$, then $v_1,\dots, v_d$ is an orthogonal basis with respect to every $f(Q_0,Q_1)(t)$ and 
		\begin{equation}
		\label{eqn:path_of_inner_products}
		f(Q_0,Q_1)(t)(v_j, v_j) = Q_0(v_j,v_j)^{1-t} Q_1(v_j, v_j)^t
		\end{equation}
		for every $1 \leq j \leq d$.
	\end{enumerate}
\end{proposition}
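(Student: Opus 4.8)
The plan is to reduce the statement to standard simultaneous-diagonalization of a pair of inner products, and then to exhibit the geodesic explicitly in the orthogonal coordinates that diagonalization provides. For part (1), I would consider the positive-definite self-adjoint operator $A$ on $\Kb^d$ defined by $Q_1(v,w) = Q_0(Av, w)$ for all $v, w$; this $A$ is $Q_0$-self-adjoint and $Q_0$-positive, so by the spectral theorem applied in the inner product space $(\Kb^d, Q_0)$ there is a $Q_0$-orthonormal basis $v_1, \dots, v_d$ of eigenvectors of $A$, say $A v_j = \lambda_j v_j$ with each $\lambda_j > 0$. Then $Q_1(v_i, v_j) = Q_0(A v_i, v_j) = \lambda_i Q_0(v_i, v_j) = 0$ for $i \neq j$, so $v_1, \dots, v_d$ is orthogonal with respect to both $Q_0$ and $Q_1$. (If $\Kb = \Cb$ one uses the Hermitian spectral theorem; the argument is identical.)

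For part (2), given an orthogonal basis $v_1, \dots, v_d$ for both $Q_0$ and $Q_1$, I define $f(Q_0, Q_1)(t)$ by declaring $v_1, \dots, v_d$ to be orthogonal for it and setting its values on the $v_j$ by Equation~\eqref{eqn:path_of_inner_products}; this determines a well-defined inner product for each $t$ since $Q_0(v_j, v_j), Q_1(v_j, v_j) > 0$ forces $f(Q_0,Q_1)(t)(v_j,v_j) > 0$, and smoothness in $t$ is immediate. The one genuine point requiring care is that the path must not depend on which common orthogonal basis is chosen, for otherwise the notation $f(Q_0,Q_1)$ is not justified. I would handle this by identifying the path intrinsically: after rescaling each $v_j$ we may take it $Q_0$-orthonormal, and then $f(Q_0, Q_1)(t)(v_i, v_j) = \lambda_i^{\,t}\, \delta_{ij}$, i.e.\ $f(Q_0,Q_1)(t)$ is the inner product whose associated $Q_0$-self-adjoint operator (in the sense of the first paragraph) is $A^t$, defined via functional calculus. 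Since $A$ depends only on the ordered pair $(Q_0, Q_1)$ and not on any basis, neither does $A^t$, so the path is well-defined; this also shows it is exactly the image under $g \mapsto \ip{g^{-1}\cdot, g^{-1}\cdot}$ of a Cartan-type geodesic, hence a geodesic segment for $\dist_N$ in the normalization of Equation~\eqref{eqn:symmetric distance in prelims}, matching the parametrization $t \in [0,1]$.

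I expect the main obstacle to be precisely this well-definedness/basis-independence bookkeeping together with confirming that the resulting path is the genuine geodesic of $N$ (not merely a smooth path of inner products): one must check that in the $A^t$ description the $\dist_N$-length is realized, which amounts to the computation $\dist_N(f(t_0), f(t_1)) = |t_1 - t_0|\sqrt{\sum_j (\log \lambda_j)^2}$ using that $f(t_0)^{-1} f(t_1)$ acts on $v_j$ by $\lambda_j^{\,t_1 - t_0}$. Everything else is routine linear algebra, so I would keep the exposition short and lean on the spectral theorem and functional calculus to package the whole argument cleanly.
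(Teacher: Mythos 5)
Your proposal is correct, and part (1) is the same argument as the paper's (spectral theorem for the $Q_0$-self-adjoint, $Q_0$-positive operator $A$ with $Q_1(\cdot,\cdot)=Q_0(A\cdot,\cdot)$; the paper phrases it as a change of coordinates $g_0$ reducing $Q_0$ to the Euclidean inner product, which is equivalent). For part (2) you take a genuinely different route on the one nontrivial step, the basis-independence of the path. The paper verifies Equation~\eqref{eqn:path_of_inner_products} for an arbitrary common orthogonal basis $w_1,\dots,w_d$ by an explicit coefficient computation: writing $w_j=\sum_k \alpha_{j,k}v_k$ and using that $\alpha_{j,k}\neq 0$ forces $Q_1(w_j,w_j)=Q_1(v_k,v_k)$. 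You instead identify the path intrinsically as $Q_t(\cdot,\cdot)=Q_0(A^t\cdot,\cdot)$ via functional calculus, which is manifestly independent of any basis and also exhibits the path as a genuine geodesic of $N$ (more than the proposition asks for). Your version is cleaner and more conceptual; the paper's is more elementary and self-contained. One small point you should make explicit to close the argument: to deduce that the intrinsic path satisfies \eqref{eqn:path_of_inner_products} on an \emph{arbitrary} common orthogonal basis $w_1,\dots,w_d$, you need that every such basis consists of eigenvectors of $A$. This is immediate ($Aw_i$ is $Q_0$-orthogonal to every $w_j$ with $j\neq i$, hence lies in $\Spanset(w_i)$), and it is exactly the content that the paper's $\alpha_{j,k}$ computation establishes by hand, but as written your argument only computes the associated operator of the path built from the particular eigenbasis produced in part (1).
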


\begin{remark} Notice that Equation~\eqref{eqn:path_of_inner_products} implies that the inner product $f(Q_0,Q_1)(t)$ depends smoothly on $Q_0$, $Q_1$, and $t$. \end{remark}

\begin{proof} 
	(1): Pick $g_0 \in \GL(d,\Kb)$ so that $Q_0 \circ g_0$ is the standard Euclidean inner product $\ip{\cdot,\cdot}$. By the spectral theory of Hermitian matrices, there exists a basis $w_1,\dots, w_d$ of $\Kb^d$ which is orthonormal relative to $\ip{\cdot,\cdot}$ and orthogonal relative to $Q_1 \circ g_0$. Then $g_0w_1,\dots, g_0w_d$  is an orthogonal basis with respect to both $Q_0$ and $Q_1$.
	
	(2): Fix a basis $v_1,\dots, v_d$ of $\Rb^d$ which is orthonormal with respect to $Q_0$ and orthogonal with respect to $Q_1$. By relabelling, we may assume that 
	$$
	Q_1(v_1,v_1) \geq Q_1(v_2, v_2) \geq \dots \geq Q_1(v_d,v_d).
	$$
	Then define an inner product $Q_t$ by 
	$$
	Q_t\left( \sum_{j=1}^d \alpha_j v_j,  \sum_{j=1}^d \beta_j v_j \right) = \sum_{j=1}^d \alpha_j \bar{\beta}_j Q_0(v_j,v_j)^{1-t} Q_1(v_j, v_j)^t =  \sum_{j=1}^d \alpha_j \bar{\beta}_j Q_1(v_j, v_j)^t.
	$$
	Suppose $w_1,\dots, w_d$ is a basis of $\Kb^d$ which is  orthogonal with respect to $Q_0$ and $Q_1$. We claim that Equation~\eqref{eqn:path_of_inner_products} holds. By scaling we can assume that $w_1,\dots, w_d$ is orthonormal with respect to $Q_0$ and by relabelling we may assume that  
	$$
	Q_1(w_1,w_1) \geq Q_1(w_2, w_2) \geq \dots \geq Q_1(w_d,w_d).
	$$
	Then $Q_1(v_j, v_j) = Q_1(w_j,w_j)$ for all $j$. Also, 
	$$
	w_j = \sum_{k=1}^d \alpha_{j,k} v_k
	$$
	where $\sum_{k=1}^d \abs{\alpha_{j,k}}^2 = 1$ and $\alpha_{j,k} \neq 0$ implies that $Q_1(w_j,w_j) = Q_1(v_k,v_k)$. Hence 
	$$
	Q_t(w_j,w_j) = \sum_{k=1}^d \abs{\alpha_{j,k}}^2 Q_1(v_k,v_k)^{t} = \sum_{k=1}^d \abs{\alpha_{j,k}}^2 Q_1(w_j,w_j)^t = Q_1(w_j,w_j)^t. 
	$$
	So $f(Q_0,Q_1)(t) := Q_t$ satisfies part (2). 
\end{proof}

\section{Definition~\ref{defn:Pk Anosov}: remarks and variations} \label{sec:rem_var}

In this section we record some basic properties of the representations introduced in  Definition~\ref{defn:Pk Anosov} and their connections to previous relative notions of Anosov representations. Then we explain how exponential contraction on the Hom bundle is equivalent to a dominated splitting of the vector bundle. 

\subsection{Basic properties} 

The symmetry in Observation~\ref{obs:strongly_dynamics_pres_div_cartan} implies the following symmetry in the definition of relatively Anosov representations. 

\begin{observation}\label{obs: k and d-k duality} Suppose that $(\Gamma,\peripherals)$ is relatively hyperbolic and $\rho\colon \Gamma \to \SL(d,\Kb)$ is a representation. Then $\rho$ is $\Psf_k$-Anosov relative to $\peripherals$ if and only if $\rho$ is $\Psf_{d-k}$-Anosov relative to $\peripherals$. 
\end{observation}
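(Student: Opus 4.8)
The plan is to build the boundary map witnessing the $\Psf_{d-k}$-structure directly out of the one witnessing the $\Psf_k$-structure, simply by interchanging the roles of its two components, and then to verify the three conditions of Definition~\ref{defn:Pk Anosov} for this swapped map. Since $k \mapsto d-k$ is an involution on $\{0,\dots,d\}$, it suffices to prove one implication; so I would assume that $\rho$ is $\Psf_k$-Anosov relative to $\peripherals$ with boundary map $\xi = (\xi^k, \xi^{d-k})$ and set
$$
\xi' := (\xi^{d-k}, \xi^k) \colon \partial(\Gamma,\peripherals) \to \Gr_{d-k}(\Kb^d) \times \Gr_k(\Kb^d),
$$
so that ${\xi'}^{d-k} = \xi^{d-k}$ and ${\xi'}^{k} = \xi^k$, and claim that $\xi'$ witnesses that $\rho$ is $\Psf_{d-k}$-Anosov relative to $\peripherals$.

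The two easy conditions come first. Continuity and $\rho$-equivariance of $\xi'$ are inherited immediately from $\xi$, since $\xi'$ is nothing but a permutation of the two factors of $\xi$. For transversality, given distinct $x,y \in \partial(\Gamma,\peripherals)$, I would apply transversality of $\xi$ to the (also distinct) pair $(y,x)$ to obtain $\xi^k(y) \oplus \xi^{d-k}(x) = \Kb^d$, which is exactly the statement ${\xi'}^{d-k}(x) \oplus {\xi'}^{k}(y) = \Kb^d$.

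The substantive point is strong dynamics preservation of $\xi'$, and here Observation~\ref{obs:strongly_dynamics_pres_div_cartan} does the work. Given a sequence $(\gamma_n)$ in $\Gamma$ with $\gamma_n \to x$ and $\gamma_n^{-1} \to y$ in $\partial(\Gamma,\peripherals)$, I would apply strong dynamics preservation of $\xi$ to the sequence $(\gamma_n^{-1})$ — which satisfies $\gamma_n^{-1} \to y$ and $(\gamma_n^{-1})^{-1} \to x$ — and read off, using the equivalence (1)$\iff$(2) of Observation~\ref{obs:strongly_dynamics_pres_div_cartan} at the index $k$, the limits $\frac{\mu_k}{\mu_{k+1}}(\rho(\gamma_n)^{-1}) \to \infty$, $U_k(\rho(\gamma_n)^{-1}) \to \xi^k(y)$, and $U_{d-k}(\rho(\gamma_n)) \to \xi^{d-k}(x)$. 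Invoking the identity $\mu_j(g^{-1}) = \mu_{d+1-j}(g)^{-1}$, the first limit rewrites as $\frac{\mu_{d-k}}{\mu_{d-k+1}}(\rho(\gamma_n)) \to \infty$, while the other two are exactly $U_{d-(d-k)}(\rho(\gamma_n)^{-1}) \to {\xi'}^{k}(y)$ and $U_{d-k}(\rho(\gamma_n)) \to {\xi'}^{d-k}(x)$. These are precisely the three clauses of part (2) of Observation~\ref{obs:strongly_dynamics_pres_div_cartan} at the index $d-k$, with $V_0 = {\xi'}^{d-k}(x)$ and $W_0 = {\xi'}^{k}(y)$, so part (1) yields $\rho(\gamma_n) V \to {\xi'}^{d-k}(x)$ for every $V \in \Gr_{d-k}(\Kb^d)$ transverse to ${\xi'}^{k}(y)$, which is strong dynamics preservation of $\xi'$ for the $\Psf_{d-k}$-structure.

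The only delicate part I anticipate is the index bookkeeping in this last step: correctly instantiating Observation~\ref{obs:strongly_dynamics_pres_div_cartan} at the index $d-k$, and keeping the singular-value inversion identity $\mu_j(g^{-1}) = \mu_{d+1-j}(g)^{-1}$ and the relations $U_m(g^{-1})$ versus $U_{d-m}(g)$ aligned. There is no analytic content beyond this.
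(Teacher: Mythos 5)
Your proposal is correct and is exactly the argument the paper has in mind: the paper proves this observation in one line by appealing to the symmetry in Observation~\ref{obs:strongly_dynamics_pres_div_cartan}, and your write-up simply makes that symmetry explicit (swapping the two components of the boundary map, checking transversality on the reversed pair, and translating the strongly dynamics preserving condition via $\mu_j(g^{-1}) = \mu_{d+1-j}(g)^{-1}$ and the $U_k(g)$/$U_{d-k}(g^{-1})$ correspondence). The index bookkeeping in your last step checks out.
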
 

Observation~\ref{obs:strongly_dynamics_pres_div_cartan} also gives information about the eigenvalues of peripheral and non-peripheral infinite order elements. We say subgroup $G \leq \SL(d,\Cb)$ is \emph{weakly unipotent} if every element of $G$ is weakly unipotent. 

\begin{proposition}\label{prop:eigenvalue data in rel Anosov repn} Suppose that $(\Gamma,\peripherals)$ is relatively hyperbolic and $\rho\colon \Gamma \to \SL(d,\Kb)$ is $\Psf_k$-Anosov relative to $\peripherals$. 
\begin{enumerate}
\item If $P \in \peripherals$, then $\rho(P)$ is weakly unipotent. 
\item If $\gamma \in \Gamma$ is non-peripheral and has infinite order, then $\rho(\gamma)$ is $\mathsf{P}_k$-proximal. 
\end{enumerate}
\end{proposition}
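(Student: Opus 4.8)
The plan is to deduce both statements directly from the strongly dynamics preserving property of the boundary map $\xi$ together with the dynamical characterizations recorded in Observations~\ref{obs:strongly_dynamics_pres_div_cartan}, \ref{obs:dynamics of Pk proximal}, and \ref{obs:dynamics of weakly unipotent}. The unifying input is that whenever $(\gamma_n)_{n \geq 1}$ is a sequence in $\Gamma$ with $\gamma_n \to x$ and $\gamma_n^{-1} \to y$ in $\partial(\Gamma,\peripherals)$, the sequence $\rho(\gamma_n)$ exhibits the convergence behavior of Observation~\ref{obs:strongly_dynamics_pres_div_cartan} with $V_0 = \xi^k(x)$ and $W_0 = \xi^{d-k}(y)$; in particular $\frac{\mu_k}{\mu_{k+1}}(\rho(\gamma_n)) \to \infty$, $U_k(\rho(\gamma_n)) \to \xi^k(x)$, and $U_{d-k}(\rho(\gamma_n)^{-1}) \to \xi^{d-k}(y)$. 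Applying the symmetry in Observation~\ref{obs: k and d-k duality}, we may also assume $k \leq \frac d2$ when convenient.

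For part (2), let $\gamma \in \Gamma$ be non-peripheral of infinite order. Since $\gamma$ is loxodromic as a convergence-group element acting on $\partial(\Gamma,\peripherals)$ (a non-peripheral infinite order element cannot be parabolic), it has attracting and repelling fixed points $x^+ \neq x^-$ with $\gamma^n \to x^+$ and $\gamma^{-n} \to x^-$. Transversality of $\xi$ gives $\xi^k(x^+) \oplus \xi^{d-k}(x^-) = \Kb^d$. Feeding the sequence $(\gamma^n)_{n \geq 1}$ into Observation~\ref{obs:strongly_dynamics_pres_div_cartan} and then into Observation~\ref{obs:dynamics of Pk proximal}, we conclude that $\rho(\gamma)$ is $\Psf_k$-proximal, with $V_{\rho(\gamma)}^+ = \xi^k(x^+)$ and $W_{\rho(\gamma)}^- = \xi^{d-k}(x^-)$.

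For part (1), fix $P \in \peripherals$ and an infinite order element $\gamma \in P$; I need to show $\rho(\gamma)$ is weakly unipotent. Let $x \in \partial(\Gamma,\peripherals)$ be the (unique) parabolic fixed point of $P$. Since $\gamma$ is a parabolic element of the convergence group, $\gamma^n \to x$ and $\gamma^{-n} \to x$ as well (both positive and negative powers converge to the same point $x$). Applying Observation~\ref{obs:strongly_dynamics_pres_div_cartan} to $(\gamma^n)_{n \geq 1}$ gives $\rho(\gamma)^n V \to \xi^k(x)$ for all $V$ transverse to $\xi^{d-k}(x)$, so in the notation of Observation~\ref{obs:dynamics of weakly unipotent} we have $V_0 = \xi^k(x)$ and $W_0 = \xi^{d-k}(x)$. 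Now apply the same argument to $\gamma^{-1}$, obtaining $\rho(\gamma)^{-n} V \to \xi^k(x)$ for all $V$ transverse to $\xi^{d-k}(x)$; this forces $\xi^k(x) \subset \xi^{d-k}(x)$ in the case $k \leq \frac d2$ (and dually $\xi^{d-k}(x) \subset \xi^k(x)$ when $k \geq \frac d2$, using Observation~\ref{obs: k and d-k duality} and running the argument with $d-k$ in place of $k$). Either way, Observation~\ref{obs:dynamics of weakly unipotent} yields that $\rho(\gamma)$ is weakly unipotent. For elements of finite order in $P$, weak unipotence is immediate since all eigenvalues are roots of unity, hence of absolute value $1$. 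Thus every element of $\rho(P)$ is weakly unipotent.

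The main technical point to be careful about is establishing the convergence inputs cleanly: specifically, that a non-peripheral infinite order element is loxodromic (so that $x^+ \neq x^-$ and transversality applies) while a peripheral infinite order element is parabolic (so that $\gamma^n$ and $\gamma^{-n}$ limit to the \emph{same} point $x$, which is what forces the nested-subspace degeneracy). Both facts are standard consequences of the convergence group structure recalled in the preliminaries, but they are the hinge of the argument, since the contrast between the loxodromic case (transverse limit points, hence proximality) and the parabolic case (coincident limit points, hence the degeneracy characterizing weak unipotence) is exactly what distinguishes the two conclusions. Everything else is a direct translation through the dynamical observations already proved.
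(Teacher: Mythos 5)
Your proof is correct and follows the same route as the paper, whose entire proof is the single sentence ``This follows immediately from the strongly dynamics preserving property and Observations~\ref{obs:strongly_dynamics_pres_div_cartan}, \ref{obs:dynamics of Pk proximal}, and \ref{obs:dynamics of weakly unipotent}.'' You have simply filled in the details the authors left implicit — that non-peripheral infinite-order elements are loxodromic (giving transverse limit points, hence proximality) while peripheral infinite-order elements are parabolic with coincident forward and backward limit point (giving the nested subspaces $\xi^k(x)\subset\xi^{d-k}(x)$ via $U_k\subset U_{d-k}$, hence weak unipotence) — and these details are handled correctly.
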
 

\begin{proof} This follows immediately from the strongly dynamics-preserving property and Observations~\ref{obs:strongly_dynamics_pres_div_cartan}, ~\ref{obs:dynamics of Pk proximal}, and ~\ref{obs:dynamics of weakly unipotent}. \end{proof} 

\subsection{Relatively asymptotically embedded in the sense of Kapovich--Leeb}
\label{sec:relAE_KL}

In \cite{KL}, Kapovich and Leeb study a number of notions that provide relative versions of Anosov representations. In this subsection we recall one of their definitions (formulated in the language of this paper) and observe that it is equivalent to Definition~\ref{defn:Pk Anosov}. Later, in Section~\ref{sec:relMorse_relunifAnosov}, we will consider another one of their definitions and relate it to the uniformly relatively Anosov representations introduced in Definition~\ref{defn:uniformly Anosov}.

A subgroup $\Gamma \leq \SL(d,\Kb)$ is \emph{$\Psf_k$-divergent} if $\lim_{n \to \infty} \frac{\mu_k}{\mu_{k+1}}(\gamma_n)= \infty$ for every escaping sequence $(\gamma_n)_{n \geq 1}$  in $\Gamma$. Notice that a subgroup is $\Psf_k$-divergent if and only if it is $\Psf_{d-k}$-divergent. 

Let $\Fc_{k,d-k}(\Kb^d)$ denote the space of partial flags of the form $F=(F^k, F^{d-k})$ where $\dim F^j = j$ (with a slight abuse of notation we have $F^k \supset F^{d-k}$ when $k > d/2$ and $F^k = F^{d-k}$ when $k=d/2$). 

A $\Psf_k$-divergent group $\Gamma \leq \SL(d,\Kb)$ has a well-defined limit set in $\Fc_{k,d-k}(\Kb^d)$ defined by
$$
\Lambda_{k,d-k}(\Gamma) := \{ F : \exists (\gamma_n)_{n \geq 1} \text{ in }  \Gamma \text{ with } \gamma_n \to \infty \text{ and } F = \lim (U_k, U_{d-k})(\gamma_n)\}.
$$
Such a group is called \emph{$\Psf_k$-transverse} if every pair of distinct elements in $\Lambda_{k,d-k}(\Gamma)$ are transverse, that is
$$
F_1^k \oplus F_2^{d-k} = \Kb^d
$$
for all distinct $F_1, F_2 \in \Lambda_{k,d-k}(\Gamma)$.

\begin{definition}\cite[Def.\ 7.1]{KL}
A discrete subgroup $\Gamma \leq \SL(d,\Kb)$ is said to be \emph{$\Psf_k$-asymptotically embedded} relative to a finite collection of subgroups $\peripherals$ if $\Gamma$ is $\Psf_k$-transverse, $(\Gamma, \peripherals)$ is relatively hyperbolic, and there is a continuous $\Gamma$-equivariant map
$$
\xi \colon \partial(\Gamma,\peripherals) \to \Gr_k(\Kb^d) \times \Gr_{d-k}(\Kb^d)
$$
which is a homeomorphism onto $\Lambda_{k,d-k}(\Gamma)$. 
\end{definition}

\begin{proposition} \label{prop:relAnosov_relae} Suppose that $\rho \colon \Gamma \to \SL(d,\Kb)$ is a representation and $\peripherals$ is a collection of subgroups of $\Gamma$. Then the following are equivalent: 
\begin{enumerate}
\item $(\Gamma, \peripherals)$ is relatively hyperbolic and $\rho$ is $\Psf_k$-Anosov relative to $\peripherals$.
\item $\rho$ has finite kernel and $\rho(\Gamma)$ is $\Psf_k$-asymptotically embedded relative to $\rho(\mathcal{P})$.
\end{enumerate}
\end{proposition}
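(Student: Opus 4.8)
\textbf{Proof plan for Proposition~\ref{prop:relAnosov_relae}.}

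The plan is to prove the two implications separately, using the dynamical characterizations collected in Observations~\ref{obs:strongly_dynamics_pres_div_cartan},~\ref{obs:dynamics of Pk proximal}, and~\ref{obs:dynamics of weakly unipotent}, together with standard facts about convergence group actions. Throughout, a key tool will be the compactification of $\Gamma$ by $\partial(\Gamma,\peripherals)$ described after Definition~\ref{defn:RH}: a sequence $\gamma_n \to x$ in this sense precisely when $\gamma_n(p) \to x$ in a weak cusp space, and one may pass to subsequences so that simultaneously $\gamma_n \to x$ and $\gamma_n^{-1} \to y$ for some $x,y \in \partial(\Gamma,\peripherals)$.

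\emph{First, (1) $\implies$ (2).} Assume $(\Gamma,\peripherals)$ is relatively hyperbolic and $\rho$ is $\Psf_k$-Anosov relative to $\peripherals$ with boundary map $\xi = (\xi^k, \xi^{d-k})$. The first step is to show $\rho$ has finite kernel: if $\gamma \in \ker\rho$ had infinite order, then taking powers $\gamma_n = \gamma^n$, up to subsequence $\gamma_n \to x$ and $\gamma_n^{-1} \to y$ in $\partial(\Gamma,\peripherals)$ (the boundary is infinite and $\gamma$ has infinite order, so its powers escape any finite set and accumulate on the boundary); strong dynamics preservation forces $\rho(\gamma_n)V \to \xi^k(x)$ for all $V$ transverse to $\xi^{d-k}(y)$, contradicting $\rho(\gamma_n) = \id$ since one can pick two such transverse $V$'s with distinct images (as $\xi$ is transverse and the limit set of $\Gamma$ — hence the image of $\xi$ — has at least two points). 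Next, I would show $\rho(\Gamma)$ is $\Psf_k$-divergent: given an escaping sequence $\rho(\gamma_n)$, finiteness of $\ker\rho$ forces $\gamma_n \to \infty$ in $\Gamma$, so up to subsequence $\gamma_n \to x$, $\gamma_n^{-1} \to y$; strong dynamics preservation gives $\rho(\gamma_n)V \to \xi^k(x)$ for $V$ transverse to $\xi^{d-k}(y)$, and Observation~\ref{obs:strongly_dynamics_pres_div_cartan} then yields $\frac{\mu_k}{\mu_{k+1}}(\rho(\gamma_n)) \to \infty$ along with $U_k(\rho(\gamma_n)) \to \xi^k(x)$ and $U_{d-k}(\rho(\gamma_n)^{-1}) \to \xi^{d-k}(y)$. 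This same computation identifies the limit set: $\Lambda_{k,d-k}(\rho(\Gamma)) = \{(\xi^k(x),\xi^{d-k}(x)) : x \in \partial(\Gamma,\peripherals)\}$, the image of the "diagonal" part of $\xi$ — the containment $\subseteq$ comes from the displayed limits (one checks $x = y$ using that $\gamma_n \to x$ and $\gamma_n^{-1}\to y$ with $\gamma_n$ escaping means the accumulation points of $U_k(\rho(\gamma_n))$ and $U_{d-k}(\rho(\gamma_n))$ both record the forward dynamics, i.e.\ $x$), and $\supseteq$ by applying this to sequences $\gamma_n \to x$. Transversality of $\xi$ then gives $\Psf_k$-transversality of $\rho(\Gamma)$, and $\xi \circ (\cdot)^{-1}$-equivariant continuity plus injectivity (from transversality, distinct boundary points have transverse, hence distinct, images) makes $x \mapsto (\xi^k(x),\xi^{d-k}(x))$ a continuous equivariant bijection from the compact space $\partial(\Gamma,\peripherals)$ onto $\Lambda_{k,d-k}(\rho(\Gamma))$, hence a homeomorphism; composing with the identification $\partial(\Gamma,\peripherals) \cong \partial(\rho(\Gamma), \rho(\peripherals))$ (valid since $\rho$ has finite kernel, so $\rho(\Gamma)$ is relatively hyperbolic with peripherals $\rho(\peripherals)$) gives asymptotic embeddedness.

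\emph{Second, (2) $\implies$ (1).} Assume $\rho$ has finite kernel and $\rho(\Gamma)$ is $\Psf_k$-asymptotically embedded relative to $\rho(\peripherals)$, with equivariant homeomorphism $\eta \colon \partial(\rho(\Gamma),\rho(\peripherals)) \to \Lambda_{k,d-k}(\rho(\Gamma)) \subset \Gr_k(\Kb^d)\times\Gr_{d-k}(\Kb^d)$. Finiteness of $\ker\rho$ means $\Gamma$ itself is relatively hyperbolic relative to $\peripherals$ with $\partial(\Gamma,\peripherals) \cong \partial(\rho(\Gamma),\rho(\peripherals))$, so pulling back $\eta$ gives a continuous $\rho$-equivariant transverse map $\xi \colon \partial(\Gamma,\peripherals) \to \Gr_k(\Kb^d)\times\Gr_{d-k}(\Kb^d)$; equivariance and transversality are immediate from those of $\eta$, and transversality of $\rho(\Gamma)$ is exactly the hypothesis that distinct points of the limit set are transverse. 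The remaining — and I expect the most delicate — step is strong dynamics preservation. Given $\gamma_n \in \Gamma$ with $\gamma_n \to x$ and $\gamma_n^{-1} \to y$ in $\partial(\Gamma,\peripherals)$, finiteness of $\ker\rho$ forces $\rho(\gamma_n) \to \infty$, so by $\Psf_k$-divergence $\frac{\mu_k}{\mu_{k+1}}(\rho(\gamma_n)) \to \infty$; passing to a subsequence, $U_k(\rho(\gamma_n)) \to V_0$ and $U_{d-k}(\rho(\gamma_n)^{-1}) \to W_0$ with $(V_0,W_0) \in \Lambda_{k,d-k}(\rho(\Gamma))$, and by Observation~\ref{obs:strongly_dynamics_pres_div_cartan} we get $\rho(\gamma_n)V \to V_0$ for all $V$ transverse to $W_0$. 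The crux is to identify $V_0 = \xi^k(x)$ and $W_0 = \xi^{d-k}(y)$. For this I would use that $\eta$ is a homeomorphism onto the full limit set together with the convergence group dynamics: the sequence $\rho(\gamma_n)$, acting on the limit set $\Lambda_{k,d-k}(\rho(\Gamma))$ via the convergence action, has "attracting" data determined by where $\gamma_n$ goes in the Bowditch compactification, and one matches these up by testing against the boundary map — concretely, for $w \in \partial(\Gamma,\peripherals)\setminus\{y\}$ one has $\gamma_n(w) \to x$, and pushing forward by $\xi$ (using continuity and that $\xi$ is a homeomorphism onto the limit set) together with the flag convergence $\rho(\gamma_n) \xi^k(w) \to V_0$ (valid once $\xi^k(w)$ is transverse to $W_0$, which holds for all but at most one $w$ by transversality of the limit set) pins down $V_0 = \xi^k(x)$; the argument for $W_0 = \xi^{d-k}(y)$ is symmetric, applied to $\gamma_n^{-1}$. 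Since the limit $(V_0,W_0) = (\xi^k(x),\xi^{d-k}(y))$ is independent of the subsequence, the full sequence converges, giving strong dynamics preservation and completing the proof.

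\emph{Main obstacle.} The delicate point throughout is bookkeeping the identification of limits of $U_k(\rho(\gamma_n))$ with $\xi$-images of Bowditch-boundary limits of $\gamma_n$ — i.e.\ showing the "Cartan/flag dynamics" of $\rho(\gamma_n)$ in the symmetric space is faithfully recorded by the convergence-group dynamics of $\gamma_n$ on $\partial(\Gamma,\peripherals)$, and doing so uniformly enough to conclude convergence of the full sequence rather than just subsequences. This requires carefully exploiting that $\xi$ is a \emph{homeomorphism} onto the limit set (not merely continuous and equivariant), so that one can transfer convergence statements back and forth between the two compactifications, and using transversality of the limit set to guarantee the "testing" subspaces $\xi^k(w)$ lie in the basin of attraction.
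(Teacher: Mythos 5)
Your proposal is correct and follows essentially the same route as the paper: for (1)$\implies$(2) finite kernel, $\Psf_k$-divergence, and the identification of $\Lambda_{k,d-k}(\rho(\Gamma))$ with the image of $\xi$ via Observation~\ref{obs:strongly_dynamics_pres_div_cartan}, and for (2)$\implies$(1) passing to a subsequence where the flag limits of $(U_k,U_{d-k})(\rho(\gamma_n^{\pm 1}))$ exist and pinning them down as $\xi(x)$, $\xi(y)$ by testing against $\rho(\gamma_n)\xi(z)$ for a generic boundary point $z$, exactly as in the paper. One small repair: to show $\ker\rho$ is finite you should apply strong dynamics preservation to an arbitrary escaping sequence in $\ker\rho$, since your argument with powers of a single infinite-order element only rules out infinite-order elements in the kernel, not an infinite torsion kernel.
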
 

\begin{proof}

(1)$\implies$(2): Let $\xi \colon \partial(\Gamma,\peripherals) \to \Gr_k(\Kb^d) \times \Gr_{d-k}(\Kb^d)$ denote the Anosov boundary map. By the strongly dynamics-preserving property, $\ker \rho$ is finite and hence $(\rho(\Gamma), \rho(\peripherals))$ is relatively hyperbolic and there is a $\rho$-equivariant homeomorphism $\partial(\Gamma, \peripherals) \to \partial(\rho(\Gamma), \rho(\peripherals))$ of the Bowditch boundaries. The strongly dynamics-preserving property, see Observation~\ref{obs:strongly_dynamics_pres_div_cartan}, also implies that 
$$
\xi(\partial(\Gamma, \peripherals)) = \Lambda_{k,d-k}(\Gamma)
$$
and the transversality property implies $\xi$ is injective. So by compactness, $\xi$ is a homeomorphism. Thus $\rho(\Gamma)$ is $\Psf_k$-asymptotically embedded relative to $\rho(\mathcal{P})$.

(2)$\implies$(1): Since $\ker \rho$ is finite, $(\Gamma, \peripherals)$ is relatively hyperbolic and there is a $\rho$-equivariant homeomorphism $\partial(\Gamma, \peripherals) \to \partial(\rho(\Gamma), \rho(\peripherals))$ of the Bowditch boundaries. So by hypothesis, there exists a continuous $\rho$-equivariant map
$$
\xi \colon \partial(\Gamma,\peripherals) \to \Gr_k(\Kb^d) \times \Gr_{d-k}(\Kb^d)
$$
which is a homeomorphism onto $\Lambda_{k,d-k}(\Gamma)$. By definition $\xi$ is transverse. To verify the strongly dynamics-preserving property, fix a sequence $(\gamma_n)_{n \geq 1}$ with $\gamma_n \to x \in \partial(\Gamma, \peripherals)$ and $\gamma_n^{-1} \to y \in \partial(\Gamma, \peripherals)$. By Observation~\ref{obs:strongly_dynamics_pres_div_cartan}, we need to show that $\frac{\mu_k}{\mu_{k+1}}(\rho(\gamma_n))$ goes to infinity, $U_k(\rho(\gamma_n))$ converges to $\xi^k(x)$, and $U_{d-k}(\rho(\gamma)^{-1})$ converges to $\xi^{d-k}(y)$. 

By hypothesis, $\lim_{n \to \infty} \frac{\mu_k}{\mu_{k+1}}(\rho(\gamma_n))= \infty$ and by compactness, it suffices to consider the case where 
$$
F^+ := \lim_{n \to \infty} (U_k,U_{d-k})(\rho(\gamma_n)) \quad \text{and} \quad F^-:= \lim_{n \to \infty} (U_k,U_{d-k})(\rho(\gamma_n)^{-1})
$$
exist. Since $\xi$ is a homeomorphism onto $\Lambda_{k,d-k}(\Gamma)$, there exists $x^\prime, y^\prime \in \partial(\Gamma, \peripherals)$ such that $\xi(x^\prime) = F^+$ and $\xi(y^\prime) = F^-$. Fix $z \in \partial(\Gamma, \peripherals) \smallsetminus \{x,y,x^\prime,y^\prime\}$. Then by equivariance, transversality, and Observation~\ref{obs:strongly_dynamics_pres_div_cartan} we have 
$$
\xi(x) = \lim_{n \to \infty} \xi(\gamma_n(z)) = \lim_{n\to \infty} \rho(\gamma_n)\xi(z) = F^+
$$
and likewise $\xi(y) = F^-$. So by Observation~\ref{obs:strongly_dynamics_pres_div_cartan}
$$
\rho(\gamma_n) V \to \xi^k(x)
$$
uniformly on compact subsets of $\left\{ V \in \Gr_k(\Kb^d) : V \text{ transverse to } W_0\right\}$. Thus $\xi$ is strongly dynamics-preserving. 
\end{proof} 

\subsection{Relatively dominated representations} In this section we explain how Theorem~\ref{thm:singular value and eigenvalue estimates} implies Corollary \ref{cor:relAnosov = reldom}. 

Instead of recalling the definition of $\Psf_k$-relatively dominated representations from \cite{reldomreps}, we will use the following characterization.

\begin{definition} [{\cite[Th.\ C]{rdr2}}] \label{thm:rdr2_thmC}
Suppose that $(\Gamma,\peripherals)$ is relatively hyperbolic and $X$ is a Groves--Manning cusp space for $(\Gamma, \peripherals)$. Then a representation $\rho\colon \Gamma \to \SL(d,\Kb)$ is \emph{$\Psf_k$-dominated relative to $\peripherals$} if there exists a continuous, $\rho$-equivariant, transverse, strongly dynamics-preserving map
$$ 
\xi \colon \partial(\Gamma,\peripherals) \to \Gr_k(\Kb^d) \times \Gr_{d-k}(\Kb^d)
$$
(i.e.\ $\rho$ is relatively $\mathsf{P}_k$-Anosov in the sense of Definition~\ref{defn:Pk Anosov}) and for any $x_0 \in X$ there exist constants $\alpha > 1, \beta > 0$ such that 
\begin{equation*} 
-\beta + \frac{1}{\alpha}  \dist_X(x_0, \gamma(x_0)) \leq \log \frac{\mu_k}{\mu_{k+1}}(\rho(\gamma)) \leq \log \frac{\mu_1}{\mu_d}(\rho(\gamma)) \leq \beta + \alpha  \dist_X(x_0, \gamma(x_0))
\end{equation*}
 for all $\gamma \in \Gamma$.
\end{definition} 

\begin{remark} \cite[Th.\ C]{rdr2} assumes that $\xi$ satisfies a weaker condition than strongly dynamics-preserving, called dynamics-preserving in~\cite{rdr2}. However, by \cite[Prop.\ 6.14]{reldomreps} the boundary maps are indeed strongly dynamics-preserving. 
\end{remark}

\begin{corollary}[to Theorem~\ref{thm:singular value and eigenvalue estimates}]\label{cor:relAnosov = reldom in body}
Suppose that $(\Gamma, \peripherals)$ is relatively hyperbolic and $\rho\colon \Gamma \to \SL(d,\Kb)$ is a representation. Then the following are equivalent: 
\begin{enumerate}
\item $\rho$ is $\Psf_k$-Anosov relative to $\peripherals$,
\item $\rho$ is $\Psf_k$-dominated relative to $\peripherals$. 
\end{enumerate}
\end{corollary}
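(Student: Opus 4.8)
The plan is to observe that one direction is tautological and the other is a direct repackaging of Theorem~\ref{thm:singular value and eigenvalue estimates}.

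The implication (2)~$\implies$~(1) needs no work: by Definition~\ref{thm:rdr2_thmC} a representation that is $\Psf_k$-dominated relative to $\peripherals$ is, in particular, relatively $\Psf_k$-Anosov in the sense of Definition~\ref{defn:Pk Anosov}. For (1)~$\implies$~(2), fix a Groves--Manning cusp space $X = \Cc_{GM}(\Gamma,\peripherals,S)$ for $(\Gamma,\peripherals)$ and a basepoint $x_0 \in X$. By hypothesis $\rho$ admits a continuous, $\rho$-equivariant, transverse, strongly dynamics preserving boundary map, so according to Definition~\ref{thm:rdr2_thmC} the only thing left to produce is the two-sided estimate $-\beta + \frac{1}{\alpha}\dist_X(x_0,\gamma(x_0)) \leq \log\frac{\mu_k}{\mu_{k+1}}(\rho(\gamma)) \leq \log\frac{\mu_1}{\mu_d}(\rho(\gamma)) \leq \beta + \alpha\dist_X(x_0,\gamma(x_0))$. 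The middle inequality is trivial. The leftmost inequality is exactly the singular value gap lower bound $-\beta + \alpha\dist_X(\gamma(x_0),x_0) \leq \log\frac{\mu_k}{\mu_{k+1}}(\rho(\gamma))$ from the first bullet of Theorem~\ref{thm:singular value and eigenvalue estimates}, after replacing $\alpha$ by $1/\alpha$. For the rightmost inequality, the second bullet of Theorem~\ref{thm:singular value and eigenvalue estimates} says the equivariant orbit map $\gamma(x_0) \mapsto \rho(\gamma)(p_0)$ is a quasi-isometry of $\Gamma(x_0)$ onto $\rho(\Gamma)(p_0)$ inside $\SL(d,\Kb)/\SU(d,\Kb)$; taking $p_0 = \SU(d,\Kb)$ and combining with the distance formula~\eqref{eqn:symmetric distance in prelims}, which gives $\log\frac{\mu_1}{\mu_d}(g) \asymp \dist(g\SU(d,\Kb),\SU(d,\Kb))$ with constants depending only on $d$ (using $\sum_j \log\mu_j(g) = 0$ so that the $\ell^2$- and $\ell^\infty$-norms of $(\log\mu_j(g))_j$ are comparable), yields $\log\frac{\mu_1}{\mu_d}(\rho(\gamma)) \leq \beta + \alpha\dist_X(x_0,\gamma(x_0))$. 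Finally, enlarge $\alpha$ so that $\alpha > 1$ and so that a single pair $(\alpha,\beta)$ works in all three inequalities simultaneously.

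There is no substantial obstacle internal to this corollary; all of the real content is already absorbed into Theorem~\ref{thm:singular value and eigenvalue estimates} (whose gap lower bound rests on Theorem~\ref{thm:structure of weakly unipotent discrete groups in intro}, and whose orbit quasi-isometry relies on the upper bound of Proposition~\ref{prop:type-preserving<=>good upper bound on singular values} via the fact, recorded in Proposition~\ref{prop:eigenvalue data in rel Anosov repn}, that each $\rho(P)$ is weakly unipotent). The only points requiring minor care are the bookkeeping of constants so that the same $\alpha > 1$ serves for both the upper and lower bounds, and the observation that both Definition~\ref{thm:rdr2_thmC} and Theorem~\ref{thm:singular value and eigenvalue estimates} are stated for the same model spaces, namely Groves--Manning cusp spaces, so no change of cusp space is needed.
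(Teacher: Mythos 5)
Your proposal is correct and follows essentially the same route as the paper's proof: (2)$\implies$(1) by definition, and for (1)$\implies$(2) one combines the singular value gap lower bound from Theorem~\ref{thm:singular value and eigenvalue estimates} with the orbit quasi-isometry and the symmetric space distance formula~\eqref{eqn:symmetric distance in prelims} to get the upper bound on $\log\frac{\mu_1}{\mu_d}$. The constant bookkeeping you flag is handled the same way in the paper (separate pairs $(\alpha_0,\beta_0)$ and $(\alpha_1,\beta_1)$, which one may then merge).
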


\begin{proof} By definition, (2) implies (1). For the other direction, suppose that  $\rho$ is $\Psf_k$-Anosov relative to $\peripherals$. Fix a Groves--Manning cusp space $X$ of $(\Gamma, \peripherals)$ and $x_0 \in X$. By Theorem~\ref{thm:singular value and eigenvalue estimates} there exist $\alpha_0>1, \beta_0 > 0$ such that 
\begin{equation*} 
-\beta_0 + \frac{1}{\alpha_0}\dist_X(x_0, \gamma(x_0)) \leq \log \frac{\mu_k}{\mu_{k+1}}(\rho(\gamma))
\end{equation*}
for all $\gamma \in \Gamma$. Let $p_0 := \SU(d,\Kb)$. Since the orbits $\Gamma(x_0)$ and $\rho(\Gamma)(p_0)$ are quasi-isometric, Equation~\eqref{eqn:symmetric distance in prelims} implies that there exist $\alpha_1>1, \beta_1 > 0$ such that 
\begin{equation*} 
 \log \frac{\mu_1}{\mu_d}(\rho(\gamma)) \leq \beta_1 + \alpha_1  \dist_X(x_0, \gamma(x_0))
\end{equation*}
 for all $\gamma \in \Gamma$. So $\rho$ is $\Psf_k$-dominated relative to $\peripherals$. 
\end{proof}

\subsection{Irreducible representations}

We observe, as in the classical word-hyperbolic case \cite[Prop.\ 4.10]{GW}, that if a representation is sufficiently irreducible, then the strongly dynamics-preserving property can be dropped from the definition. 

\begin{proposition}\label{prop:sufficiently irreducible}
Suppose that $(\Gamma,\peripherals)$ is relatively hyperbolic, $\rho\colon \Gamma \to \SL(d,\Kb)$ is a representation, and there exists a continuous $\rho$-equivariant transverse map
\begin{align*}
\xi \colon \partial(\Gamma,\peripherals) \to \Gr_k(\Kb^d) \times \Gr_{d-k}(\Kb^d).
\end{align*}
 If $\bigwedge^k \rho \colon \Gamma \to \SL(\bigwedge^k \Kb^d)$ is irreducible (e.g.\ $\rho$ has Zariski-dense image), then $\rho$ is $\Psf_k$-Anosov relative to $\peripherals$ with Anosov boundary map $\xi$. 
\end{proposition}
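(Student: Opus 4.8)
The plan is to reduce the strongly dynamics preserving property to the convergence property already known for the equivariant transverse map, using the irreducibility of $\bigwedge^k \rho$ to promote mere convergence of $\bigwedge^k \rho(\gamma_n)$ along escaping sequences to the statement that $\frac{\mu_k}{\mu_{k+1}}(\rho(\gamma_n)) \to \infty$, and then identifying the limit flag with the image of the boundary map. First I would recall that $\rho$ has finite kernel: if not, the kernel is infinite, and picking an escaping sequence $(\gamma_n)$ inside it together with $x \neq y$ in $\partial(\Gamma,\peripherals)$ with $\gamma_n \to x$, $\gamma_n^{-1} \to y$ (possible since $(\Gamma,\peripherals)$ is non-elementary), equivariance forces $\rho(\gamma_n)\xi^k(z) = \xi^k(\gamma_n z)$; but $\rho(\gamma_n) = \id$ while $\gamma_n z$ ranges over infinitely many points, contradicting that $\xi$ is non-constant (which follows from transversality, since $\partial(\Gamma,\peripherals)$ is infinite). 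Hence $(\rho(\Gamma), \rho(\peripherals))$ is relatively hyperbolic with Bowditch boundary $\rho$-equivariantly homeomorphic to $\partial(\Gamma,\peripherals)$.

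Next I would show $\rho(\Gamma)$ is $\Psf_k$-divergent. Fix an escaping sequence $(\gamma_n)$ in $\Gamma$; passing to a subsequence, $\gamma_n \to x$ and $\gamma_n^{-1} \to y$ in $\partial(\Gamma,\peripherals)$. The operators $g_n := \bigwedge^k\rho(\gamma_n)$, suitably normalized (divide by $\mu_1(\rho(\gamma_n))\cdots\mu_k(\rho(\gamma_n))$, i.e. by the operator norm on $\bigwedge^k\Kb^d$), have a subsequential limit which is a nonzero operator $T$ of rank at least one; if $\frac{\mu_k}{\mu_{k+1}}(\rho(\gamma_n))$ were bounded along a subsequence then $T$ would have rank $\geq 2$ on $\bigwedge^k$, but more to the point I want to rule out the bounded case entirely. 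Here is where irreducibility enters: for any $z$ with $z \neq y$ one has $\rho(\gamma_n)V \to$ (by equivariance and the convergence $\gamma_n z \to x$) something determined by $\xi^k(x)$, provided $V = \xi^k(z)$ is transverse to the attracting datum; pushing this through $\bigwedge^k$, the image of $T$ must contain the line $\xi^k(x)$ in $\proj(\bigwedge^k\Kb^d)$ and, letting $z$ vary over the infinite set $\partial(\Gamma,\peripherals)\smallsetminus\{x,y\}$ and using that $\xi^k$ is injective on complements of a point (transversality) together with irreducibility of $\bigwedge^k\rho$, the kernel of $T$ must be a proper subspace while the image is exactly the line $\xi^k(x)$; a rank-one operator on $\bigwedge^k\Kb^d$ forces $\frac{\mu_k}{\mu_{k+1}}(\rho(\gamma_n)) \to \infty$ and $U_k(\rho(\gamma_n)) \to \xi^k(x)$ by Observation~\ref{obs:strongly_dynamics_pres_div_cartan} applied with the ambient $\SL(\bigwedge^k\Kb^d)$-action restricted back to the Grassmannian.

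Once $\Psf_k$-divergence is established, the rest is bookkeeping. The limit set $\Lambda_{k,d-k}(\rho(\Gamma))$ is contained in the image of $\xi$: any subsequential limit $F = \lim (U_k,U_{d-k})(\rho(\gamma_n))$ along an escaping sequence with $\gamma_n \to x$, $\gamma_n^{-1} \to y$ has $F^k = \xi^k(x)$ and, by the symmetric argument applied to $(\gamma_n^{-1})$ (using $\Psf_k$-divergence $=$ $\Psf_{d-k}$-divergence and that $U_{d-k}(g^{-1})$ plays the role of the repelling datum), $F^{d-k} = \xi^{d-k}(y)$ — wait, more carefully: one gets $F^{d-k} = \xi^{d-k}(x)$ by running the $\bigwedge^{d-k}$ argument, and pairing $x$ with $x$ is consistent since the flag over $x$ is $(\xi^k(x),\xi^{d-k}(x))$. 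Then, given any sequence with $\gamma_n \to x$, $\gamma_n^{-1}\to y$, we have shown $\frac{\mu_k}{\mu_{k+1}}(\rho(\gamma_n))\to\infty$, $U_k(\rho(\gamma_n))\to\xi^k(x)$, and $U_{d-k}(\rho(\gamma_n)^{-1})\to\xi^{d-k}(y)$, so Observation~\ref{obs:strongly_dynamics_pres_div_cartan} gives $\rho(\gamma_n)V\to\xi^k(x)$ for every $V$ transverse to $\xi^{d-k}(y)$, which is exactly the strongly dynamics preserving condition. Hence $\xi$ witnesses that $\rho$ is $\Psf_k$-Anosov relative to $\peripherals$.

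\textbf{Main obstacle.} I expect the crux to be the middle step: extracting from irreducibility of $\bigwedge^k\rho$ and the convergence dynamics of $\xi^k$ on the boundary that normalized limits of $\bigwedge^k\rho(\gamma_n)$ are rank one (equivalently that $\frac{\mu_k}{\mu_{k+1}}(\rho(\gamma_n))\to\infty$). The delicate point is that a priori the limit operator $T$ might be degenerate on the Plücker-embedded Grassmannian without being globally rank one; one must use that the kernel of $T$ is a $\rho$-semi-invariant-along-the-sequence object and leverage that the boundary map's image spans enough of $\proj(\bigwedge^k\Kb^d)$ — this is where the hypothesis ``$\bigwedge^k\rho$ irreducible'' is indispensable and where I would expect to need the argument from \cite[Prop.\ 4.10]{GW} adapted to the relatively hyperbolic setting, taking care that escaping sequences into a bounded parabolic point are handled by the same convergence-dynamics input rather than by any geometric finiteness estimate.
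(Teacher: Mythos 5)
Your proposal is essentially the standard argument --- pass to $\bigwedge^k\rho$, normalize by the operator norm, extract a subsequential limit operator $T$, use equivariance plus convergence-group dynamics to see that $T$ sends every line $\bigwedge^k\xi^k(z)$ (for $z\neq y$) either to zero or to $\bigwedge^k\xi^k(x)$, and use irreducibility to see that these lines span $\bigwedge^k\Kb^d$, forcing $T$ to be rank one with image $\bigwedge^k\xi^k(x)$ --- and this is exactly the approach the paper takes, since its proof is a one-line citation of this standard argument (\cite[Cor.\ 6.3]{CZZ2021}, cf.\ \cite[Prop.\ 4.10]{GW}). Two points in your write-up should be tightened: the input $\rho(\gamma_n)\xi^k(z)=\xi^k(\gamma_n z)\to\xi^k(x)$ holds unconditionally for every $z\neq y$ (your proviso that $\xi^k(z)$ be ``transverse to the attracting datum'' is neither needed nor available at that stage and would be circular --- the correct dichotomy is simply whether $\bigwedge^k\xi^k(z)$ lies in $\ker T$, and the lines that do contribute nothing to the image), and identifying $\ker T$ with the hyperplane determined by $\xi^{d-k}(y)$ requires running the same rank-one argument on the dual side (equivalently on $\gamma_n^{-1}\to y$), which is legitimate because $\bigwedge^{d-k}\rho\cong(\bigwedge^{k}\rho)^{*}$ is irreducible whenever $\bigwedge^{k}\rho$ is.
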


\begin{proof} The argument is standard, see for instance the proof of~\cite[Cor.\ 6.3]{CZZ2021}. \end{proof} 

A version of this result for representations into general semisimple Lie groups and with the irreducibility assumption replaced with a Zariski-density assumption may be found in \cite[Th.\ 7.5]{KL}.

\subsection{Dominated splitting and contraction/expansion on Hom bundles} In this section we observe that the exponential contraction of the flow on the Hom bundle in the definition of relatively Anosov representations can be recast in terms of a dominated splitting condition. This is well known in the word-hyperbolic case~\cite{BCLS2015, BPS} and the same arguments work in the relative case as well. 

Suppose, for the rest of this section, that $(\Gamma, \peripherals)$ is a relatively hyperbolic group, $\rho \colon \Gamma \to \SL(d,\Kb)$ is a representation, $X$ is a weak cusp space for $(\Gamma, \peripherals)$, and $\norm{\cdot}$ is a metric on the vector bundle $\wh{E}_\rho(X) \to \wh{\Gc}(X)$. 

If $V,W \subset \wh{E}_\rho(X)$ are subbundles, we can consider the bundle $\Hom(V,W) \to \wh{\Gc}(X)$ with the associated family of operator norms defined by
$$
\norm{f}_\sigma : = \max \left\{\norm{f(Y)}_{\sigma} : Y \in V|_\sigma, \ \norm{Y}_{\sigma} = 1\right\}
$$
when $f \in \Hom(V,W)|_\sigma$. In particular, given a continuous $\rho$-equivariant transverse map 
$$
\xi = (\xi^k, \xi^{d-k}) \colon \partial(\Gamma, \peripherals) \to \Gr_k(\Kb^d) \times \Gr_{d-k}(\Kb^d)
$$
let $\wh{\Theta}^k, \wh{\Xi}^{d-k} \subset \wh{E}_\rho(X)$ denote the subbundles defined in Section~\ref{sec:intro to flow}. Then consider the vector bundles
$$
\Hom\left( \wh{\Theta}^k, \wh{\Xi}^{d-k}\right), \Hom\left( \wh{\Xi}^{d-k}, \wh{\Theta}^{k}\right) \to \wh{\Gc}(X)
$$
with the operator norm. Since $\wh{\Theta}^k$ and $\wh{\Xi}^{d-k}$ are invariant under the flow $\flatflow^t$, 
$$
\homflow^t(f) := \flatflow^t \circ f \circ \flatflow^{-t}.
$$
defines a flow on both Hom bundles. 

We have the following connection between the dynamics on these bundles. 

\begin{proposition}\label{prop: Hom bundles contraction/expansions} With the notation above and $c,C > 0$ fixed, the following are equivalent: 
\begin{enumerate}
\item For all $t \geq 0$, $\sigma \in \wh{\Gc}(X)$, $Y \in \wh{\Theta}^k|_{\sigma}$, and non-zero $Z\in \wh{\Xi}^{d-k}|_{\sigma}$,
$$
\frac{\norm{\flatflow^t(Y) }_{\geodflow^t(\sigma)}}{\norm{\flatflow^t(Z) }_{\geodflow^t(\sigma)}} \leq Ce^{-ct} \frac{\norm{Y}_{\sigma} }{\norm{Z}_{\sigma} }. 
$$
\item For all $t \geq 0$, $\sigma \in \wh{\Gc}(X)$, and $f \in \Hom\left( \wh{\Xi}^{d-k}, \wh{\Theta}^{k}\right)|_{\sigma}$, 
$$
\norm{\homflow^t(f)}_{\geodflow^t(\sigma)} \leq C e^{-ct} \norm{f}_\sigma.
$$
\item For all $t \geq 0$, $\sigma \in \wh{\Gc}(X)$, and $f \in \Hom\left( \wh{\Theta}^k, \wh{\Xi}^{d-k}\right)|_{\sigma}$, 
$$
\norm{\homflow^t(f)}_{\geodflow^t(\sigma)} \geq \frac{1}{C} e^{ct} \norm{f}_\sigma.
$$
\end{enumerate}
\end{proposition}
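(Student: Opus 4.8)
The plan is to prove the cyclic chain of implications $(1) \implies (2) \implies (3) \implies (1)$, all via elementary linear-algebra manipulations with operator norms. The key observation is that for a pointwise decomposition $\wh{E}_\rho(X) = \wh{\Theta}^k \oplus \wh{\Xi}^{d-k}$, a homomorphism $f \in \Hom(\wh{\Xi}^{d-k}, \wh{\Theta}^k)|_\sigma$ has operator norm realized by a single unit vector $Z \in \wh{\Xi}^{d-k}|_\sigma$, and then $\norm{f}_\sigma = \norm{f(Z)}_\sigma$ with $f(Z) \in \wh{\Theta}^k|_\sigma$. Since $\flatflow^t$ preserves both subbundles, $\homflow^t(f) = \flatflow^t \circ f \circ \flatflow^{-t}$ again lies in the Hom bundle over $\geodflow^t(\sigma)$, and its norm can be computed by testing on the unit vector $\flatflow^t(Z)/\norm{\flatflow^t(Z)}_{\geodflow^t(\sigma)}$ (which need not realize the operator norm of $\homflow^t(f)$, giving one inequality) versus on the vector actually realizing it (giving the other). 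So the whole argument is a bookkeeping exercise relating the scalar ratios $\norm{\flatflow^t(Y)}/\norm{\flatflow^t(Z)}$ to operator norms.

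Concretely, for $(1) \implies (2)$: given $f$ over $\sigma$, pick a unit $Z \in \wh{\Xi}^{d-k}|_\sigma$ realizing $\norm{f}_\sigma = \norm{f(Z)}_\sigma$ and set $Y := f(Z) \in \wh{\Theta}^k|_\sigma$. For any unit $Z' \in \wh{\Xi}^{d-k}|_{\geodflow^t(\sigma)}$, write $Z' = \flatflow^t(Z'')/\norm{\flatflow^t(Z'')}_{\geodflow^t(\sigma)}$ for the corresponding $Z'' \in \wh{\Xi}^{d-k}|_\sigma$ (using invertibility of $\flatflow^t$ on fibers), so $\homflow^t(f)(Z') = \flatflow^t(f(Z''))/\norm{\flatflow^t(Z'')}_{\geodflow^t(\sigma)}$, and then bound $\norm{f(Z'')}_\sigma \leq \norm{f}_\sigma \norm{Z''}_\sigma$ and apply (1) with the pair $f(Z'') \in \wh{\Theta}^k$, $Z'' \in \wh{\Xi}^{d-k}$. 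This yields $\norm{\homflow^t(f)(Z')}_{\geodflow^t(\sigma)} \leq C e^{-ct}\norm{f}_\sigma$, and taking the supremum over $Z'$ gives (2). For $(2) \implies (3)$: this is a duality statement — the operator norm of $g \in \Hom(\wh{\Theta}^k, \wh{\Xi}^{d-k})$ is the reciprocal of something, or more precisely one uses that $f \mapsto$ (its "inverse-like" partner) exchanges the two Hom bundles and inverts norms up to the transversality of the splitting; a clean way is to note $\norm{\homflow^t(g)}_{\geodflow^t(\sigma)}^{-1}$ behaves like an operator norm on the other bundle and run (2) backwards in $t$ (replace $t$ by $-t$, swap roles of $\sigma$ and $\geodflow^t(\sigma)$). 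Finally $(3) \implies (1)$: unwind (3) by testing $g \in \Hom(\wh{\Theta}^k, \wh{\Xi}^{d-k})$ on a unit $Y$, specifically for given $Y, Z$ take the rank-one $g$ sending $Y \mapsto Z$ (more precisely defined via the decomposition) and read off the scalar inequality.

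The main obstacle — really the only nontrivial point — is getting the duality step $(2) \iff (3)$ exactly right: one must be careful that the equivalence uses the \emph{same} constants $c, C$ as claimed in the statement, which forces the substitution $t \mapsto -t$ to be set up so no extra constants appear. This is where I would spend the most care: given $f \in \Hom(\wh{\Xi}^{d-k}, \wh{\Theta}^k)|_\sigma$ and a unit $Z$ realizing its norm, the associated $g \in \Hom(\wh{\Theta}^k, \wh{\Xi}^{d-k})|_{\geodflow^t(\sigma)}$ should be built so that the inequality in (3) at time $-t$ reproduces the inequality in (2). Once the correspondence between test vectors under $\flatflow^{\pm t}$ is pinned down the constants match automatically because $\flatflow^t$ and $\flatflow^{-t}$ are mutually inverse on fibers. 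All three implications are then just substitution and the definition of operator norm; no flow-specific or relative-hyperbolicity input is needed, exactly as in the word-hyperbolic case.
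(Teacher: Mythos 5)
Your treatment of (1) $\Leftrightarrow$ (2) is correct and is the standard argument (the paper itself simply defers to \cite[Prop.\ 2.3]{BCLS2015}). The gap is in the two implications involving (3), exactly at the spot you flagged as needing care. For (3) $\implies$ (1): given nonzero $Y \in \wh{\Theta}^k|_{\sigma}$ and $Z \in \wh{\Xi}^{d-k}|_{\sigma}$, the rank-one map $g(W) = \frac{\ip{W,Y}_\sigma}{\norm{Y}_\sigma^2}Z$ does satisfy $\norm{g}_\sigma = \norm{Z}_\sigma / \norm{Y}_\sigma$, but you cannot ``read off the scalar inequality,'' because $\norm{\homflow^t(g)}_{\geodflow^t(\sigma)}$ is \emph{not} equal to $\norm{\flatflow^t(Z)}_{\geodflow^t(\sigma)}/\norm{\flatflow^t(Y)}_{\geodflow^t(\sigma)}$: the kernel of $\homflow^t(g)$ is the $\flatflow^t$-image of the orthocomplement of $Y$ at $\sigma$, which need not be the orthocomplement of $\flatflow^t(Y)$ at $\geodflow^t(\sigma)$, since the flow does not preserve the inner products. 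You only get the lower bound $\norm{\homflow^t(g)}_{\geodflow^t(\sigma)} \geq \norm{\flatflow^t(Z)}_{\geodflow^t(\sigma)}/\norm{\flatflow^t(Y)}_{\geodflow^t(\sigma)}$, and combining this with the lower bound in (3) yields nothing. Likewise, the ``duality / run (2) backwards in $t$'' idea for (2) $\implies$ (3) is not yet an argument: elements of $\Hom(\wh{\Theta}^k, \wh{\Xi}^{d-k})$ are not invertible when $k \neq d/2$, so there is no norm-inverting correspondence between the two Hom bundles, and reversing time in (2) only produces another statement about $\Hom(\wh{\Xi}^{d-k}, \wh{\Theta}^k)$.

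The fix is to route both (2) and (3) through a reformulation of (1). Write $A_t := \flatflow^t|_{\wh{\Theta}^k|_\sigma}$ and $B_t := \flatflow^t|_{\wh{\Xi}^{d-k}|_\sigma}$; taking the supremum over $Y$ and the infimum over $Z$ separately shows that (1) is equivalent to $\norm{A_t}\cdot\norm{B_t^{-1}} \leq Ce^{-ct}$. Then (1) $\implies$ (3) is pure submultiplicativity: since $g = B_t^{-1}\circ \homflow^t(g)\circ A_t$, one has $\norm{g}_\sigma \leq \norm{B_t^{-1}}\,\norm{\homflow^t(g)}_{\geodflow^t(\sigma)}\,\norm{A_t} \leq Ce^{-ct}\norm{\homflow^t(g)}_{\geodflow^t(\sigma)}$. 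For (3) $\implies$ (1) the rank-one test map must be built from data at time $t$, not time $0$: choose a unit $Y_0 \in \wh{\Theta}^k|_\sigma$ with $\norm{A_tY_0} = \norm{A_t}$ and a unit $Z_1 \in \wh{\Xi}^{d-k}|_{\geodflow^t(\sigma)}$ with $\norm{B_t^{-1}Z_1} = \norm{B_t^{-1}}$, let $h$ be the rank-one map over $\geodflow^t(\sigma)$ with kernel the orthocomplement of $A_tY_0$ sending $A_tY_0$ to $\norm{A_tY_0}_{\geodflow^t(\sigma)}Z_1$, and apply (3) to $g := \homflow^{-t}(h)$. Then $\norm{h}_{\geodflow^t(\sigma)} = 1$ while $\norm{g}_\sigma \geq \norm{g(Y_0)}_\sigma = \norm{A_t}\norm{B_t^{-1}}$, which gives $\norm{A_t}\norm{B_t^{-1}} \leq Ce^{-ct}$ with the same constants. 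With this in place the cyclic chain is unnecessary; each of (2) and (3) is directly equivalent to (1).
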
 

\begin{proof} 
One can argue exactly as in Proposition 2.3 in~\cite{BCLS2015}.
\end{proof} 

\section{Singular value growth of type-preserving representations}\label{sec:char of type-preserving representations}  

In this section we use singular values to characterize the representations of a relatively hyperbolic group  that are type-preserving in the sense that they map peripheral subgroups to weakly unipotent subgroups. 

\begin{proposition} \label{prop:type-preserving<=>good upper bound on singular values}
Suppose that $(\Gamma, \peripherals)$ is relatively hyperbolic,  $X := \Cc_{GM}(\Gamma, \peripherals, S)$ is a Groves--Manning cusp space, $x_0 \in X$, and $\rho \colon \Gamma \to \SL(d,\Kb)$ is a representation. Then the following are equivalent: 
\begin{enumerate}
\item $\rho(P)$ is weakly unipotent for every $P \in \peripherals$.
\item There exist $\alpha, \beta > 0$ such that
$$
\log  \frac{\mu_1}{\mu_d}(\rho(\gamma)) \leq \alpha \dist_X(\gamma(x_0), x_0) +\beta
$$
for all $\gamma \in \Gamma$. 
\end{enumerate}
\end{proposition}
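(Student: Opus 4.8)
The plan is to prove both implications using the Groves--Manning distance estimates (Proposition~\ref{prop:cusp_space_est}) together with standard subadditivity properties of singular values. Throughout I take $x_0 = \id$ for convenience; this is harmless since changing the basepoint changes all distances by a bounded amount.

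\emph{Direction (2) $\implies$ (1).} Suppose (2) holds and fix $P \in \peripherals$. For $\gamma \in P$ and $n \in \Nb$ we have, by the distance estimate in Proposition~\ref{prop:cusp_space_est} applied to the adapted generating set, $\dist_X(\gamma^n(x_0), x_0) \leq \beta' + 2\log_2 \abs{\gamma^n}_{S \cap P} \leq \beta' + 2\log_2(n \abs{\gamma}_{S \cap P})$, so $\dist_X(\gamma^n(x_0),x_0)$ grows only logarithmically in $n$. Combining with (2), $\log \frac{\mu_1}{\mu_d}(\rho(\gamma)^n) = O(\log n)$. On the other hand, for any $g \in \SL(d,\Kb)$ one has $\mu_1(g^n)^{1/n} \to \lambda_1(g)$ and $\mu_d(g^n)^{1/n} \to \lambda_d(g)$ as $n \to \infty$ (Gelfand's formula applied to $g$ and to $g^{-1}$, since $\mu_1(g^{-1}) = \mu_d(g)^{-1}$). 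Hence $\frac1n \log \frac{\mu_1}{\mu_d}(\rho(\gamma)^n) \to \log \frac{\lambda_1}{\lambda_d}(\rho(\gamma))$, and since the left side is $O(\tfrac{\log n}{n}) \to 0$, we get $\lambda_1(\rho(\gamma)) = \lambda_d(\rho(\gamma))$. Because $\det \rho(\gamma) = 1$ this forces $\lambda_1(\rho(\gamma)) = \dots = \lambda_d(\rho(\gamma)) = 1$ (all $\lambda_j$ lie between two equal positive numbers whose product over all $j$, suitably interpreted, is the modulus of the determinant; more carefully, $\lambda_1 = \lambda_d$ means all moduli of eigenvalues coincide, say to $c>0$, and $\prod_j |{\rm eigenvalues}| = 1$ gives $c^d = 1$, so $c = 1$). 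Thus $\rho(\gamma)$ is weakly unipotent, and since $\gamma \in P$ was arbitrary, $\rho(P)$ is weakly unipotent.

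\emph{Direction (1) $\implies$ (2).} This is the main step. The function $\gamma \mapsto \log\frac{\mu_1}{\mu_d}(\rho(\gamma)) = \dist_M(\rho(\gamma)p_0, p_0)$ (up to a multiplicative constant comparing $\ell^2$ and $\ell^\infty$ norms on the Cartan projection) is subadditive: $\dist_M(\rho(\gamma\eta)p_0,p_0) \leq \dist_M(\rho(\gamma)p_0,p_0) + \dist_M(\rho(\eta)p_0,p_0)$. I want to bound it linearly by $\dist_X(\gamma(x_0),x_0)$. The idea is to use the coarse geometry of the Groves--Manning cusp space: a geodesic in $X$ from $x_0$ to $\gamma(x_0)$ decomposes (coarsely) into a bounded number of ``excursions'' into combinatorial horoballs $\Hc_P(\delta)$ interleaved with segments that stay in a bounded neighborhood of the Cayley graph $\Cc(\Gamma,S)$. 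Along the part in the thick part, each unit of distance corresponds to a bounded word-length displacement, hence bounded $\dist_M$-displacement of $\rho$. For an excursion into a horoball coset $g\Cc(P, S\cap P)$: if the excursion enters and exits at group elements $g\eta_1, g\eta_2$ with $\eta_1,\eta_2 \in P$, then by Proposition~\ref{prop:cusp_space_est} the portion of the geodesic inside contributes roughly $2\log_2\abs{\eta_1^{-1}\eta_2}_{S\cap P}$ to $\dist_X$, while it contributes $\dist_M(\rho(\eta_1^{-1}\eta_2)p_0, p_0)$ to the $\rho$-displacement. So the required estimate reduces to the \emph{peripheral} bound
$$
\log \frac{\mu_1}{\mu_d}(\rho(\eta)) \leq \alpha_P \log_2 \abs{\eta}_{S \cap P} + \beta_P \qquad \text{for all } \eta \in P,
$$
with constants independent of which $P \in \peripherals$ up to finitely many choices (since $\peripherals$ is finite). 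Establishing this peripheral bound is the crux. Given (1), $\rho(P)$ is weakly unipotent, so $\rho(\eta)$ has all eigenvalue-moduli equal to $1$; write $\rho(\eta) = u(\eta) n(\eta)$ in multiplicative Jordan form with $u$ bounded-spectrum semisimple (compact modulus) and $n$ unipotent commuting with it — more simply, for a weakly unipotent element $g$ one has $\log\frac{\mu_1}{\mu_d}(g) \leq C_d \log(2 + \norm{g - \id})$-type control combined with polynomial growth of entries of unipotent-in-modulus matrices. Concretely: for $\eta \in P$ with $\abs{\eta}_{S\cap P} = \ell$, writing $\eta = s_1\cdots s_\ell$ with $s_i \in S \cap P$, the matrix $\rho(\eta)$ has operator norm $\leq \Lambda^\ell$ where $\Lambda := \max_{s \in S}\norm{\rho(s)}$; but since $\rho(\eta)$ is weakly unipotent, a theorem on weakly unipotent (virtually) groups — or just the observation that in the Zariski closure the unipotent radical contributes polynomial growth and a compact part contributes bounded growth — gives $\norm{\rho(\eta)} \lesssim (1 + \ell)^{N}$ for some $N$ depending only on $d$. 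Here I would invoke the structural results on weakly unipotent discrete groups that the paper develops (the discussion around Theorem~\ref{thm:structure of weakly unipotent discrete groups in intro}), or a direct argument: a weakly unipotent subgroup of $\SL(d,\Kb)$ has a finite-index subgroup that is unipotent-by-compact, hence of polynomial word-growth-to-matrix-norm type. From $\norm{\rho(\eta)}, \norm{\rho(\eta)^{-1}} \lesssim (1+\ell)^N$ we get $\log\frac{\mu_1}{\mu_d}(\rho(\eta)) = \log \norm{\rho(\eta)} + \log\norm{\rho(\eta)^{-1}} \lesssim N\log(1+\ell)$, which is exactly the peripheral bound. Feeding this back through the geodesic-decomposition argument, and using that the number of horoball excursions along a geodesic of length $L$ is $O(L)$ while each thick segment contributes linearly, yields $\log\frac{\mu_1}{\mu_d}(\rho(\gamma)) \leq \alpha \dist_X(\gamma(x_0),x_0) + \beta$ for all $\gamma$.

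\emph{Main obstacle.} The hard part is the peripheral polynomial bound $\norm{\rho(\eta)} \lesssim (1+\abs{\eta}_{S\cap P})^N$ for weakly unipotent $\rho(P)$ — i.e.\ turning ``all eigenvalue moduli are $1$'' into genuinely polynomial (rather than merely sub-exponential-on-average) growth of matrix norms along the word metric. This requires understanding the structure of finitely generated weakly unipotent linear groups (a finite-index unipotent-by-compact subgroup, or an appropriate flag/filtration argument), which is precisely why the paper isolates ``weakly unipotent groups'' as a topic in its own right. A secondary technical point is making the coarse geodesic decomposition of the Groves--Manning cusp space precise — bounding the number of horoball excursions and controlling the endpoints — but this is routine given the convexity of the sub-horoballs $\Hc_P(\delta)$ (as used in the proof of Proposition~\ref{prop:cusp_space_est}) and standard relative-hyperbolicity geometry.
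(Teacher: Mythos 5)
Your proposal is correct and follows essentially the same route as the paper: the (2)$\implies$(1) direction via the spectral-radius limit of singular values combined with the logarithmic horoball distance estimate, and the (1)$\implies$(2) direction by reducing to a polynomial peripheral bound $\log\frac{\mu_1}{\mu_d}(\rho(\eta)) \lesssim \log\abs{\eta}_{S\cap P}$, which the paper obtains exactly as you indicate (compact-by-unipotent structure of weakly unipotent groups via Prasad, then nilpotency of $\hat u_j - \id$ giving $\frac{\mu_1}{\mu_d}$ of an $N$-fold product of generators $\lesssim N^{2(d-1)}$). The only cosmetic difference is that the paper's geodesic decomposition is slightly simpler than your excursion bookkeeping: it just records the consecutive times the geodesic meets vertices of the Cayley graph and observes that successive such group elements differ by either a generator or a single peripheral element, after which subadditivity of $\log\frac{\mu_1}{\mu_d}$ finishes the argument.
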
 

One direction is straightforward.

\begin{lemma} With the notation in Proposition~\ref{prop:type-preserving<=>good upper bound on singular values}, $(2) \implies (1)$. \end{lemma}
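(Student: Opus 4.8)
The plan is to combine two elementary ingredients: the inequality
$$
\frac{\mu_1}{\mu_d}(A) \geq \frac{\lambda_1}{\lambda_d}(A) \qquad \text{for all } A \in \GL(d,\Kb),
$$
which holds because $\mu_1(A) \geq \lambda_1(A)$ (the operator norm dominates the spectral radius) and $\mu_d(A) \leq \lambda_d(A)$ (apply the same bound to $A^{-1}$); and the near-logarithmic growth of orbits of peripheral elements in a Groves--Manning cusp space, as recorded in Proposition~\ref{prop:cusp_space_est}. Feeding powers of a peripheral element into hypothesis (2) and using that $\dist_X(g^n(x_0),x_0)$ grows like $\log n$ will pin down the eigenvalue ratio $\lambda_1/\lambda_d(\rho(g))$ to be $1$.

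Concretely, I would fix $P \in \peripherals$ and $g \in P$ and show that $\rho(g)$ is weakly unipotent. If $g$ has finite order this is immediate, since then $\rho(g)$ has finite order and all of its eigenvalues are roots of unity. So assume $g$ has infinite order, so that $g^n \neq \id$ for every $n \geq 1$. Since $S$ is adapted, $S \cap P$ generates $P$ and $\abs{g^n}_{S \cap P} \leq n\abs{g}_{S \cap P}$, so Proposition~\ref{prop:cusp_space_est} gives $\beta_0 > 0$ (depending on $x_0$) with
$$
\dist_X(g^n(x_0), x_0) \leq \beta_0 + 2\log_2 \abs{g^n}_{S \cap P} \leq \beta_0 + 2\log_2 \abs{g}_{S\cap P} + 2\log_2 n.
$$
Applying hypothesis (2) to $\gamma = g^n$, using $\lambda_j(\rho(g)^n) = \lambda_j(\rho(g))^n$ and the displayed singular-value/eigenvalue inequality, we get constants $C_1, C_2 > 0$ depending only on $g$ with
$$
n \log \frac{\lambda_1}{\lambda_d}(\rho(g)) = \log \frac{\lambda_1}{\lambda_d}(\rho(g)^n) \leq \log \frac{\mu_1}{\mu_d}(\rho(g)^n) \leq \alpha\, \dist_X(g^n(x_0), x_0) + \beta \leq C_1 + C_2 \log n .
$$
Dividing by $n$ and letting $n \to \infty$ forces $\log \frac{\lambda_1}{\lambda_d}(\rho(g)) \leq 0$, hence $\lambda_1(\rho(g)) = \cdots = \lambda_d(\rho(g))$; since $\rho(g) \in \SL(d,\Kb)$ satisfies $\prod_{j} \lambda_j(\rho(g)) = \abs{\det \rho(g)} = 1$, this common value is $1$, so $\rho(g)$ is weakly unipotent. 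As $g \in P$ and $P \in \peripherals$ were arbitrary, $\rho(P)$ is weakly unipotent for every $P$, which is condition (1).

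There is no substantial obstacle in this direction; the only points that require care are getting the direction of the singular-value/eigenvalue inequalities correct and separating off the (trivial) finite-order case, so I expect the write-up to be short and routine.
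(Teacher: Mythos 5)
Your proposal is correct and follows essentially the same route as the paper: apply hypothesis (2) to powers $g^n$ of a peripheral element, use Proposition~\ref{prop:cusp_space_est} to see that $\dist_X(g^n(x_0),x_0)$ grows only like $\log n$, and divide by $n$ to force $\lambda_1(\rho(g))=\lambda_d(\rho(g))$. The only (immaterial) difference is that the paper invokes the limit formula $\log\frac{\lambda_1}{\lambda_d}(\rho(g))=\lim_n \frac1n\log\frac{\mu_1}{\mu_d}(\rho(g)^n)$ where you use the one-sided inequality $\frac{\lambda_1}{\lambda_d}(\rho(g))^n\leq\frac{\mu_1}{\mu_d}(\rho(g)^n)$, which is all that is needed.
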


\begin{proof}  Fix $P \in \peripherals$. By Proposition~\ref{prop:cusp_space_est}  there exists $\beta_1 > 0$ such that: if $g \in P$, then 
\begin{equation*}
-\beta_1+ 2 \log_2 \abs{g}_{S \cap P}  \leq \dist_X(g(x_0), x_0) \leq \beta_1+ 2 \log_2 \abs{g}_{S \cap P}. 
\end{equation*}
Now fix $g \in P$. Then 
\begin{align*}
\log \frac{\lambda_1}{\lambda_d}(\rho(g)) &= \lim_{n \to \infty} \frac{1}{n} \log \frac{\mu_1}{\mu_d}(\rho(g^n)) \leq \liminf_{n \to \infty} \frac{1}{n} \left( \alpha \dist_X(g^n(x_0), x_0) +\beta\right) \\
& \leq \liminf_{n \to \infty} \frac{2 \alpha}{n} \log_2 \abs{g^n}_{S \cap P} \leq \liminf_{n \to \infty} \frac{2 \alpha}{n} \log_2 \left( n \abs{g}_{S \cap P} \right) = 0. 
\end{align*}
So $\lambda_1(\rho(g)) = \lambda_d(\rho(g))$ which implies that $g$ is weakly unipotent. Since $P \in \peripherals$ and $g \in P$ were arbitrary, this completes the proof. 
\end{proof} 

The other direction is more involved and we start with some general lemmas about weakly unipotent subgroups. 

\begin{lemma}\label{lem:structure of weakly unipotent} If $H \leq \GL(d,\Rb)$ is weakly unipotent and $\mathsf{G} := \overline{H}^{Zar}$ is the Zariski closure of $H$, then $\mathsf{G} = \mathsf{L} \ltimes \mathsf{U}$ where $\mathsf{L}$ is compact and $\mathsf{U}$ is the unipotent radical of $\mathsf{G}$.  
\end{lemma}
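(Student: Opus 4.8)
The plan is to analyze the structure of the connected component $\mathsf{G}^\circ$ of the Zariski closure $\mathsf{G} = \overline{H}^{Zar}$ using the Levi decomposition, and then to argue that the reductive Levi part is forced to be compact because every element of $H$ — and hence, by a Zariski-density argument, every semisimple element of $\mathsf{G}$ — has all eigenvalues of absolute value $1$. First I would recall that $\mathsf{G}$ has a Levi decomposition $\mathsf{G} = \mathsf{L} \ltimes \mathsf{U}$, where $\mathsf{U}$ is the unipotent radical of $\mathsf{G}$ and $\mathsf{L}$ is a maximal reductive subgroup; this is standard for linear algebraic groups over a field of characteristic zero (e.g.\ Mostow's theorem). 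It then suffices to show that $\mathsf{L}$ is compact, i.e.\ that the connected reductive group $\mathsf{L}^\circ$ is a compact real reductive group and $\mathsf{L}$ has finitely many components (the latter being automatic since $\mathsf{G}$, being a real algebraic group, has finitely many connected components).

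The key step is to show that the weakly unipotent condition passes to the Zariski closure in the appropriate sense: the function $g \mapsto \lambda_1(g)\lambda_d(g)^{-1}$ is not polynomial, but the condition ``all eigenvalues have absolute value $1$'' can be detected by looking at the absolute value of coefficients of the characteristic polynomial along powers. More robustly, I would argue as follows. The set $\{g \in \GL(d,\Rb) : \rho_{sym}(g) \text{ has spectral radius } \le 1 \text{ for all symmetric powers}\}$ — or more simply, the observation that $H$ consists of matrices $g$ with $\|g^n\|$ growing at most polynomially in $n$ — shows $H$ is contained in a bounded-by-unipotent regime. Concretely: any $g \in H$ has $\mu_1(g^n) = O(n^{d})$ and $\mu_d(g^n)^{-1} = O(n^d)$ since all eigenvalues are unimodular (put $g$ in Jordan form). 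Passing to the Zariski closure, every element of $\mathsf{G}$ still has all eigenvalues of absolute value $1$, because the set of matrices all of whose eigenvalues are unimodular is \emph{not} Zariski closed — so here I must be careful. The correct route: a connected reductive real algebraic subgroup $\mathsf{L}^\circ$ of $\GL(d,\Rb)$ is compact if and only if it contains no nontrivial $\Rb$-split torus, equivalently no element conjugate to a diagonal matrix with a non-unimodular eigenvalue. If $\mathsf{L}^\circ$ were noncompact it would contain a one-parameter $\Rb$-split subgroup $\{a_t\}$ with $a_t$ having an eigenvalue $e^{\lambda t}$, $\lambda \neq 0$; since $H$ is Zariski-dense in $\mathsf{G}$ and the $\Rb$-split torus is a ``visible'' algebraic feature, one produces (via density and the fact that the eigenvalue-modulus spectrum is semicontinuous and takes the value $1$ on the dense set $H$) an element of $H$ with a non-unimodular eigenvalue, a contradiction.

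The cleanest way to make that last argument rigorous, which I would use in the writeup, is: decompose $\mathsf{L}^\circ = \mathsf{S} \cdot \mathsf{Z}^\circ$ where $\mathsf{S}$ is semisimple and $\mathsf{Z}^\circ$ is the connected center (a torus). First, $\mathsf{Z}^\circ$ must be compact: if it had a positive-dimensional $\Rb$-split part $\mathsf{A}$, then since $\mathsf{A}$ is central in $\mathsf{L}^\circ$ and normalized by $\mathsf{U}$ (as $\mathsf{L}$ normalizes $\mathsf{U}$), the subgroup $\mathsf{A}$ would act on the Lie algebra of $\mathsf{U}$ and on $\Rb^d$ with real weights, and Zariski density of $H$ forces some $h \in H$ to have a component along $\mathsf{A}$ producing a non-unimodular eigenvalue — more precisely, the character of $\mathsf{A}$ giving the action on a highest (or lowest) weight line of $\Rb^d$ is nontrivial, and evaluating on a dense set of group elements cannot stay on the unit circle unless the character is trivial. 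Second, the semisimple part $\mathsf{S}$ must be compact: a noncompact semisimple real algebraic group contains a nontrivial $\Rb$-split torus, and the same density/character argument applies. Hence $\mathsf{L}^\circ$ is compact; since $\mathsf{G}/\mathsf{G}^\circ$ is finite and $\mathsf{L}$ surjects onto it, $\mathsf{L}$ is compact. The main obstacle — and the place I would spend the most care — is precisely the subtlety that ``weakly unipotent'' is not a Zariski-closed condition, so the passage from ``every element of $H$ is weakly unipotent'' to ``the Levi part of $\mathsf{G}$ has no split torus'' must go through an argument about characters/eigenvalue functions being nontrivial algebraic functions that are constrained to take modulus-$1$ values on a Zariski-dense set, which forces them to be trivial. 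Everything else (Levi decomposition, finiteness of components, compactness criteria for reductive groups) is standard and can be cited.
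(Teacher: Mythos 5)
Your first step coincides with the paper's: take a Levi decomposition $\mathsf{G}=\mathsf{L}\ltimes\mathsf{U}$ and reduce to showing that the reductive part $\mathsf{L}$ is compact. The paper then simply observes that the projection of $H$ to $\mathsf{L}$ is a Zariski-dense weakly unipotent subgroup of the reductive group $\mathsf{L}$ (using that $\lambda_j(g)=\lambda_j(\tau(g))$) and invokes Prasad's theorem, which says exactly that a reductive group containing such a subgroup is compact. You instead try to prove that compactness directly, and this is where there is a genuine gap.

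Your argument for the connected center is essentially sound: a nontrivial $\Rb$-split central torus yields a nontrivial character $\chi$ of $\mathsf{G}^0$ realized via determinants on invariant subspaces, the condition $\chi(g)^2=1$ is Zariski closed, and $|\chi|=1$ on the Zariski-dense set $H\cap\mathsf{G}^0$ then forces $\chi$ to have bounded image. But for the semisimple factor $\mathsf{S}$ you assert that ``the same density/character argument applies,'' and it does not: a noncompact semisimple group has no nontrivial characters, so the eigenvalue functions on a split torus of $\mathsf{S}$ (e.g.\ ${\rm diag}(t,t^{-1})\mapsto t$ in $\SL(2,\Rb)$) do not extend to group homomorphisms, and there is no regular function on $\mathsf{S}$ whose unimodularity on $H$ can be propagated by Zariski density. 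Ruling out a Zariski-dense weakly unipotent subgroup of a noncompact semisimple group is precisely the hard content of Prasad's theorem (and of the related results of Benoist--Labourie on the existence of $\Rb$-regular/proximal elements in Zariski-dense subgroups); it cannot be obtained from the observation that a polynomial identity holding on a dense set holds everywhere, since, as you yourself note, weak unipotence is not a Zariski-closed condition. To repair the proof you should either cite Prasad (as the paper does) or supply the proximal-element argument for the semisimple case.
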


This lemma is well known (see~\cite[Th.\ 5.12]{KL} or~\cite[Prop.\ 5.5]{AZimmer_rigid_Ccvx}) and follows easily from a result of Prasad, but since the proof is short we include it. 

\begin{proof} Choose a Levi decomposition $\mathsf{G} = \mathsf{L} \ltimes \mathsf{U}$, where $\mathsf{U}$ is the unipotent radical of $\mathsf{G}$ and let $\tau \colon \mathsf{G} \to \mathsf{L}$ denote the projection. Notice that $\tau(H)$ is Zariski-dense in $\mathsf{L}$ and 
$$
\lambda_j(g) = \lambda_j(\tau(g))
$$
for all $g \in \mathsf{G}$ and $1 \leq j \leq d$. So $\mathsf{L}$ has a Zariski-dense weakly unipotent subgroup. Since $\mathsf{L}$ is reductive, then $\mathsf{L}$ is compact by a result of Prasad~\cite{P1994}. 
\end{proof} 

\begin{lemma}\label{lem: growth estimates in weakly unipotent groups} Suppose that $\mathsf{U} \leq \SL(d,\Rb)$ is unipotent and $\mathsf{L} \leq \SL(d,\Rb)$ is compact and normalizes $\mathsf{U}$. For any $g_1, \dots, g_m \in \mathsf{L} \ltimes \mathsf{U}$ there exists $C > 0$ such that: if $N \geq 1$ and $i_1, \dots, i_N \in \{1,\dots, m\}$, then 
\begin{align*}
\frac{\mu_1}{\mu_d}\left(  g_{i_1} \cdots g_{i_N}\right) \leq C N^{2(d-1)}.
\end{align*}
 \end{lemma}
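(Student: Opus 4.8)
The key structural input is that every element $g_i \in \mathsf{L} \ltimes \mathsf{U}$ decomposes as $g_i = \ell_i u_i$ with $\ell_i \in \mathsf{L}$ compact and $u_i \in \mathsf{U}$ unipotent. Writing a product $g_{i_1} \cdots g_{i_N}$ and repeatedly pushing the $\mathsf{L}$-factors to the left using the fact that $\mathsf{L}$ normalizes $\mathsf{U}$, one rewrites
$$
g_{i_1} \cdots g_{i_N} = \ell\, (v_1 v_2 \cdots v_N)
$$
where $\ell \in \mathsf{L}$ and each $v_j$ is a conjugate of one of the fixed unipotents $u_{i_j}$ by an element of $\mathsf{L}$; since $\mathsf{L}$ is compact, the $v_j$ range over a \emph{compact} set $K$ of unipotent matrices depending only on $g_1, \dots, g_m$. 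Because $\mathsf{L}$ is compact, $\frac{\mu_1}{\mu_d}(\ell w) \asymp \frac{\mu_1}{\mu_d}(w)$ uniformly, so it suffices to bound $\frac{\mu_1}{\mu_d}$ of a product of $N$ unipotent matrices drawn from a fixed compact set $K \subset \mathsf{U}$.

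**The core estimate.** For this I would argue that a product of $N$ unipotent matrices from a compact set has matrix entries (and inverse-matrix entries) growing at most polynomially in $N$, of degree $d-1$. The cleanest route: all the $v_j$ lie in a single unipotent subgroup $\mathsf{U}$, which (after conjugating $\mathsf{G}$) we may take to be upper-triangular with $1$'s on the diagonal, hence $v_j = I + n_j$ with $n_j$ strictly upper triangular and $\|n_j\| \leq C_0$ for all $j$. Multiplying out $(I+n_1)\cdots(I+n_N)$, every term is a product of at most $d-1$ of the $n_j$'s (since $n_{j_1} \cdots n_{j_\ell} = 0$ once $\ell \geq d$, as each factor strictly raises the filtration degree), and the number of such terms of length $\ell$ is $\binom{N}{\ell} \leq N^\ell$. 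Hence $\|g_{i_1}\cdots g_{i_N}\| \leq \sum_{\ell=0}^{d-1} \binom{N}{\ell} C_0^\ell \lesssim N^{d-1}$. The same bound applies to the inverse, which is again a product of $N$ unipotents from a compact set (the inverses of the $v_j$ lie in $\mathsf{U}$ and form a compact set too). Then
$$
\frac{\mu_1}{\mu_d}(h) = \mu_1(h)\,\mu_1(h^{-1}) \leq \|h\|_{op}\,\|h^{-1}\|_{op} \lesssim N^{d-1} \cdot N^{d-1} = N^{2(d-1)},
$$
using that the operator norm is comparable to the max-entry norm in fixed dimension $d$. Absorbing all constants into a single $C$ depending only on $g_1, \dots, g_m$ (and $d$) gives the claimed $\frac{\mu_1}{\mu_d}(g_{i_1}\cdots g_{i_N}) \leq C N^{2(d-1)}$.

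**Expected obstacle.** The genuinely delicate point is the first step: commuting all the compact $\mathsf{L}$-factors to the left and verifying that the resulting unipotent factors stay in a \emph{fixed} compact set independent of $N$ and of the word $i_1, \dots, i_N$. One must be careful that conjugation $u \mapsto \ell u \ell^{-1}$ by $\ell$ ranging over the (compact) group generated-in-closure by $\ell_1, \dots, \ell_m$ — not just the finite set $\{\ell_1,\dots,\ell_m\}$ — still has compact image when applied to the finitely many $u_i$; this is immediate since $\mathsf{L}$ itself is compact and acts continuously, so the orbit $\mathsf{L} \cdot \{u_1, \dots, u_m\}$ is compact. Everything downstream is the routine polynomial-growth computation for unipotent products sketched above, which could alternatively be packaged by observing that $\mathsf{U}$ is a simply connected nilpotent Lie group of nilpotency class $\leq d-1$, on which word length with respect to a compact generating set grows like the $\lesssim N^{d-1}$ bound on matrix size. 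I would present the direct upper-triangular computation since it is self-contained and gives the explicit exponent.
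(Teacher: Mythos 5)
Your proposal is correct and follows essentially the same route as the paper: decompose each $g_i = \ell_i u_i$, use the normalizing action of the compact group $\mathsf{L}$ to collect the compact factors on one side while keeping the unipotent factors in a fixed compact subset of $\mathsf{U}$, and then expand the product of $N$ unipotents, where nilpotency kills all terms with $d$ or more factors and the binomial count gives the $N^{d-1}$ bound for the product and its inverse. The only cosmetic differences are that the paper pushes the $\mathsf{L}$-factors to the right rather than the left and conjugates $\mathsf{L}$ into $\SU(d,\Rb)$ instead of invoking the uniform comparability $\frac{\mu_1}{\mu_d}(\ell w)\asymp\frac{\mu_1}{\mu_d}(w)$.
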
 
 
 \begin{proof} By conjugating we may assume that $\mathsf{L} \leq \SU(d,\Rb)$. Let 
 $$
 R := 1+\max\left\{ \mu_1(g_1), \dots, \mu_1(g_m), \mu_1(g_1^{-1}), \dots, \mu_1(g_m^{-1}) \right\}.
  $$
 By definition we can decompose $g_i = \ell_i u_i$ where $\ell_i \in \mathsf{L}$ and $u_i \in \mathsf{U}$. Then 
 $$
g_{i_1} \cdots g_{i_N}= \left( \hat{u}_1 \cdots \hat{u}_N \right) (\ell_{i_1} \cdots \ell_{i_N}) 
 $$
 where $\hat{u}_j := (\ell_{i_1} \cdots \ell_{i_j}) u_{i_j}  (\ell_{i_1} \cdots \ell_{i_j})^{-1}$. Notice that $\hat{u}_j \in \mathsf{U}$ since $\mathsf{L}$ normalizes $\mathsf{U}$. Next let $T_j := \hat{u}_j - \id$. Then 
 $$
 \mu_1(T_j) \leq 1 + \mu_1(\hat{u}_j)=1 + \mu_1(g_{i_j}) \leq R
 $$
 and, since $\mathsf{U}$ is unipotent, the product of any $d$ elements in $\{ T_1, \dots, T_N\}$ is zero. So 
 \begin{align*}
\hat{u}_1 \cdots \hat{u}_N  = \left(\id + T_1 \right)\cdots \left(\id + T_N \right)= \id + \sum_{k=1}^{d-1} \sum_{1 \leq \alpha_1 < \dots < \alpha_k \leq N} T_{\alpha_1} \cdots T_{\alpha_k}
\end{align*}
and thus 
\begin{align*}
\mu_1 \left(  g_{i_1} \cdots g_{i_N} \right) = \mu_1 \left(   \hat{u}_1 \cdots \hat{u}_N \right) \leq 1 + \sum_{k=1}^{d-1} \binom{N}{k} R^k  \leq \left(  \sum_{k=0}^{d-1} R^k \right) N^{d-1}.
\end{align*}

Since $\frac{1}{\mu_d}(g_i) = \mu_1(g_i^{-1})$, the same argument implies that 
$$
\frac{1}{\mu_d}\left(  g_{i_1} \cdots g_{i_N} \right)  = \mu_1 \left(  g_{i_{N}}^{-1} \cdots g_{i_1}^{-1} \right)\leq  \left(  \sum_{k=0}^{d-1} R^k \right) N^{d-1}.
$$
So $C :=  \left(  \sum_{k=0}^{d-1} R^k \right) ^2$ suffices. 
 
 \end{proof}

Now we are ready to prove that $(1) \implies (2)$ in Proposition~\ref{prop:type-preserving<=>good upper bound on singular values}. 

\begin{lemma}\label{lem:GM upper bound on mu1/mud} 
With the notation in Proposition~\ref{prop:type-preserving<=>good upper bound on singular values},
if $\rho\colon \Gamma \to \SL(d,\Kb)$ is a representation where $\rho(P)$ is weakly unipotent for every $P \in \peripherals$, then there exists a constant $C\geq 1$ such that 
\begin{equation*}
\log \frac{\mu_1}{\mu_d}\left( \rho(\gamma) \right) \leq C \dist_X(\gamma(x_0), x_0) + 2 \dist_X(x_0,\id) 
\end{equation*}
for all $x_0 \in X$ and $\gamma \in \Gamma$.
\end{lemma}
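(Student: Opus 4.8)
The plan is to reduce the estimate on $\rho(\gamma)$ to a statement about words in $\Gamma$ that travel through the Cayley graph and finitely many combinatorial horoballs, and then to apply Lemma~\ref{lem: growth estimates in weakly unipotent groups} (via Lemma~\ref{lem:structure of weakly unipotent}) inside each horoball. First I would fix an adapted generating set $S$ and, by $\Gamma$-equivariance and the triangle inequality for $\log\frac{\mu_1}{\mu_d}$ (which is subadditive under products, up to the Euclidean size of $\rho(s)$ for $s\in S$), reduce to bounding $\log\frac{\mu_1}{\mu_d}(\rho(\gamma))$ by a constant times $\dist_X(\gamma(x_0),x_0)$ when $x_0=\id$; the extra $2\dist_X(x_0,\id)$ term is just the cost of moving the basepoint. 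Then I would take a geodesic $\sigma$ in $X=\Cc_{GM}(\Gamma,\peripherals,S)$ from $\id$ to $\gamma$ and decompose it, using the structure of the Groves--Manning cusp space, into maximal subpaths that either lie in (a bounded neighbourhood of) the Cayley graph $\Cc(\Gamma,S)$ or lie inside a single horoball $\Hc(g\Cc(P,S\cap P))$. The subpaths in the Cayley graph contribute a bounded (by $\max_{s\in S}\log\frac{\mu_1}{\mu_d}(\rho(s))$ times their length, i.e.\ linearly in $\dist_X$) amount.

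The heart of the argument is the horoball subpaths. Suppose a portion of the geodesic enters a horoball over $g\Cc(P,S\cap P)$ at $g p_{\mathrm{in}}$ (with $p_{\mathrm{in}}\in P$) and leaves at $g p_{\mathrm{out}}$, travelling up to some level and back down. Writing $h:=p_{\mathrm{in}}^{-1}p_{\mathrm{out}}\in P$, the relevant quantity is $\log\frac{\mu_1}{\mu_d}(\rho(h))$, and by Proposition~\ref{prop:cusp_space_est} the length of this horoball subpath is comparable to $2\log_2|h|_{S\cap P}$, so it suffices to show $\log\frac{\mu_1}{\mu_d}(\rho(h))\lesssim \log|h|_{S\cap P}$ with constants independent of $P$ and $h$. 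For a fixed $P$ this is exactly Lemma~\ref{lem: growth estimates in weakly unipotent groups}: $\rho(P)$ is weakly unipotent, so by Lemma~\ref{lem:structure of weakly unipotent} its Zariski closure is $\mathsf{L}\ltimes\mathsf{U}$ with $\mathsf{L}$ compact, hence lies in $\mathsf{L}'\ltimes\mathsf{U}$ for a compact group $\mathsf{L}'$ (conjugate into $\SU(d,\Kb)$) normalising the unipotent $\mathsf{U}$, and writing $h$ as a word of length $N=|h|_{S\cap P}$ in $\rho(S\cap P)$ gives $\frac{\mu_1}{\mu_d}(\rho(h))\leq C_P N^{2(d-1)}$, i.e.\ $\log\frac{\mu_1}{\mu_d}(\rho(h))\leq 2(d-1)\log N+\log C_P$. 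Since $\peripherals$ is finite, $C:=\max_P C_P$ works uniformly, and this absorbs the generating-set constants from the Cayley-graph part as well.

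Summing over all the subpaths in the decomposition, the Cayley-graph pieces and the horoball pieces each contribute at most a fixed constant times their own contribution to $\dist_X(\id,\gamma)$, so $\log\frac{\mu_1}{\mu_d}(\rho(\gamma))\leq C'\dist_X(\id,\gamma)$; reinstating the basepoint $x_0$ by moving $\id$ to a nearest vertex gives the stated inequality with the $+2\dist_X(x_0,\id)$ term. I expect the main obstacle to be the bookkeeping of the geodesic decomposition: one must argue that a geodesic in the cusped space only meets finitely many horoballs, that consecutive horoball-excursions are separated by Cayley-graph segments of controlled total length, and that the subadditivity bound for $\log\frac{\mu_1}{\mu_d}$ composes cleanly across the pieces — this last point is clean since $\log\frac{\mu_1}{\mu_d}$ is subadditive up to the operator-norm contributions of a bounded number of generators at the ``seams''. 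Everything else is a direct appeal to the cited lemmas and Proposition~\ref{prop:cusp_space_est}.
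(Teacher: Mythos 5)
Your proposal is correct and matches the paper's proof essentially step for step: the paper decomposes a geodesic from $\id$ to $\gamma$ by its consecutive visits to vertices of $\Gamma$, so that each increment $s_j=\sigma(t_j)^{-1}\sigma(t_{j+1})$ lies in $S$ or in some $P\in\peripherals$, then bounds each $\log\frac{\mu_1}{\mu_d}(\rho(s_j))$ by $C\dist_X(\sigma(t_j),\sigma(t_{j+1}))$ using exactly Lemmas~\ref{lem:structure of weakly unipotent} and~\ref{lem: growth estimates in weakly unipotent groups} together with Proposition~\ref{prop:cusp_space_est}, and sums via the (exact, not approximate) submultiplicativity of $\frac{\mu_1}{\mu_d}$. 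The bookkeeping you worry about is handled automatically by this choice of decomposition, since between consecutive $\Gamma$-vertices the geodesic is either a single Cayley edge or a single horoball excursion.
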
 

\begin{proof} 
Using the inclusion $\SL(d,\Cb) \leq \SL(2d,\Rb)$, we may assume that $\Kb=\Rb$. 
 
 By Proposition~\ref{prop:cusp_space_est}  there exists $\beta > 0$ such that: if $g \in P$, then 
\begin{equation*}
-\beta + 2 \log_2\abs{g}_{S \cap P}  \leq \dist_X(g(x_0), x_0) \leq \beta + 2 \log_2 \abs{g}_{S \cap P}.
\end{equation*}
By Lemmas~\ref{lem:structure of weakly unipotent} and~\ref{lem: growth estimates in weakly unipotent groups} for each $P \in \peripherals$ there exists $\hat{\beta}_P > 0$ such that: if $g \in P$, then 
\begin{equation*}
\log \frac{\mu_1}{\mu_d}\left( \rho(g) \right) \leq 2(d-1) \log \abs{g}_{S \cap P} + \hat{\beta}_P. 
\end{equation*}
Finally let 
\begin{equation}
\label{eqn:constant in lem:GM upper bound on mu1/mud}
C := \max \left\{ \max_{s \in S} \log \frac{\mu_1}{\mu_d}(\rho(s)), (d-1)(1+\beta)(\log 2) +  \max_{P \in \peripherals} \,\hat{\beta}_P \right\}.
\end{equation}

Fix $\gamma \in \Gamma$. Let $T := \dist_X(\gamma, \id)$ and let $\sigma \colon [0,T] \to X$ be a geodesic in $X$ joining $\id$ to $\gamma$. Notice that $\sigma(0), \sigma(1), \dots, \sigma(T)$ are vertices of $X$. Then let 
$$
\{ 1=t_1 < t_2 < \dots < t_m=T\} = \{ j : \sigma(j) \in \Gamma\}.
$$ 
Then $s_j := \sigma(t_j)^{-1} \sigma(t_{j+1})$ is an element of $\Gamma$ and by construction 
$$
s_j \in S \cup \bigcup_{P \in \peripherals} P.
$$
If $s_j \in S$, then 
$$
\log \frac{\mu_1}{\mu_d}(\rho(s_j)) \leq C = C \dist_X( \sigma(t_j), \sigma(t_{j+1})) 
$$
and if $s_j \in P$, then 
\begin{align*}
\log \frac{\mu_1}{\mu_d}(\rho(s_j)) & \leq 2(d-1) \log \abs{s_j}_{S \cap P} + \hat{\beta}_P \\
& \leq (d-1) (\log 2) \dist_X(s_j, \id)+(d-1)\beta(\log 2) + \hat{\beta}_P \\
& \leq \Big((d-1)(1+\beta)(\log 2) + \hat{\beta}_P\Big) \dist_X( s_j, \id) \leq C \dist_X( \sigma(t_j), \sigma(t_{j+1})). 
\end{align*} 
So 
\begin{align*}
\log \frac{\mu_1}{\mu_d}\left( \rho(\gamma) \right) & = \log \frac{\mu_1}{\mu_d}\left(  \rho(s_1) \dots \rho(s_m) \right) \leq \sum_{j=1}^m \log \frac{\mu_1}{\mu_d}(\rho(s_j)) \\
& \leq C \sum_{j=1}^m \dist_X( \sigma(t_j), \sigma(t_{j+1})) = C \dist_X(\gamma,\id) \\
& \leq C \dist_X(\gamma(x_0), x_0) + 2\dist_X(x_0, \id).  
\end{align*}

\end{proof}

\section{Consequences of a contracting flow}  \label{sec:contraction_Anosov}

In this section we establish some consequences of having a contracting flow on the Hom bundle associated to a representation with a transverse boundary map. These results show that (2) $\implies$ (1) in Theorem \ref{thm:main} and will also be used in Section~\ref{sec:proof of singular value and eigenvalue estimates} to complete the proof of Theorem~\ref{thm:singular value and eigenvalue estimates}. 

\begin{theorem}\label{thm:2 implies 1 main theorem}
If $(\Gamma, \peripherals)$ is relatively hyperbolic, $X$ is a weak cusp space for $(\Gamma, \peripherals)$, $x_0 \in X$, and $\rho \colon \Gamma \to \SL(d,\Kb)$ is $\Psf_k$-Anosov relative to $X$, then: 
\begin{enumerate}
\item $\rho$ is $\Psf_k$-Anosov relative to $\peripherals$.
\item There exist $\alpha, \beta > 0$ such that: if $\gamma \in \Gamma$, then 
$$
-\beta+\alpha \dist_X(\gamma(x_0), x_0) \leq \log \frac{\mu_k}{\mu_{k+1}}(\rho(\gamma)) \quad \text{and} \quad \alpha \ell_X(\gamma) \leq \log \frac{\lambda_k}{\lambda_{k+1}}(\rho(\gamma)).
$$
\item If $X$ is a Groves--Manning cusp space for $(\Gamma, \peripherals)$, then for any $p_0$ in the symmetric space $\SL(d,\Kb)/ \SU(d,\Kb)$ the orbits $\Gamma(x_0)$ and $\rho(\Gamma)(p_0)$ are quasi-isometric.
\end{enumerate} 
\end{theorem}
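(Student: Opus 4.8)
The plan is to derive all three conclusions from the standard ``domination'' reformulation of the contracting-flow hypothesis, using as the key auxiliary input the compactness of the space of geodesic lines through a fixed ball of $X$. Let $\norm{\cdot}$ be a metric on $\wh{E}_\rho(X)\to\wh{\Gc}(X)$ witnessing that $\rho$ is $\Psf_k$-Anosov relative to $X$; I would lift it to a $\Gamma$-equivariant family of norms $\norm{\cdot}_\sigma$ on $\Kb^d$, $\sigma\in\Gc(X)$, so that $\norm{\rho(\gamma)v}_{\gamma\sigma}=\norm{v}_\sigma$. Since $\flatflow^t$ acts trivially on fibers and $\xi^k(\sigma^+),\xi^{d-k}(\sigma^-)$ depend only on the endpoints, Proposition~\ref{prop: Hom bundles contraction/expansions} produces $C,c>0$ with
\begin{equation*}
\frac{\norm{Y}_{\geodflow^t\sigma}}{\norm{Z}_{\geodflow^t\sigma}}\le Ce^{-ct}\,\frac{\norm{Y}_\sigma}{\norm{Z}_\sigma}
\end{equation*}
for all $t\ge0$, $\sigma\in\Gc(X)$, $Y\in\xi^k(\sigma^+)$, and nonzero $Z\in\xi^{d-k}(\sigma^-)$. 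Fixing the radius $R$ of Proposition~\ref{prop:abundance of geodesic lines}, the set $K:=\{\sigma\in\Gc(X):\sigma(0)\in\Bc_X(x_0,R)\}$ is compact by Arzel\`a--Ascoli ($X$ proper, geodesic lines $1$-Lipschitz), so by continuity there are $L\ge1$ and $\theta_0>0$ with $L^{-1}\norm{\cdot}_2\le\norm{\cdot}_\sigma\le L\norm{\cdot}_2$ and $\angle\big(\xi^k(\sigma^+),\xi^{d-k}(\sigma^-)\big)\ge\theta_0$ for all $\sigma\in K$, the angle bound using transversality of $\xi$. This compactness step stands in for the local uniformity of $\norm{\cdot}$, which is not available at this level of generality.

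For part (2), given $\gamma\in\Gamma$ I would use Proposition~\ref{prop:abundance of geodesic lines} to choose a geodesic line $\sigma$ with $\sigma(0)\in\Bc_X(x_0,R)$ and $\dist_X(\sigma(T),\gamma x_0)\le R$ for some $T$ with $\abs{T-\dist_X(\gamma x_0,x_0)}\le2R$. Applying the displayed inequality along $\sigma$ for time $T$, rewriting $\norm{\cdot}_{\geodflow^T\sigma}=\norm{\rho(\gamma)^{-1}\,\cdot\,}_{\sigma'}$ with $\sigma':=\gamma^{-1}\geodflow^T\sigma$ (note $\sigma'(0)=\gamma^{-1}\sigma(T)\in\Bc_X(x_0,R)$, so $\sigma,\sigma'\in K$), and comparing all norms with $\norm{\cdot}_2$ via $L$, yields
\begin{equation*}
\frac{\norm{\rho(\gamma)^{-1}Y}_2}{\norm{\rho(\gamma)^{-1}Z}_2}\le L^4Ce^{-cT}\,\frac{\norm{Y}_2}{\norm{Z}_2}
\end{equation*}
for $Y\in\xi^k(\sigma^+)$, nonzero $Z\in\xi^{d-k}(\sigma^-)$. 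Since $\xi^k(\sigma^+)\oplus\xi^{d-k}(\sigma^-)=\Kb^d$ with angle $\ge\theta_0$, a standard singular value lemma (cf.~\cite{BPS}) converts this domination into $\log\frac{\mu_k}{\mu_{k+1}}(\rho(\gamma))=\log\frac{\mu_{d-k}}{\mu_{d-k+1}}(\rho(\gamma)^{-1})\ge cT-C'$, i.e.\ $\log\frac{\mu_k}{\mu_{k+1}}(\rho(\gamma))\ge-\beta+\alpha\dist_X(\gamma x_0,x_0)$ for suitable $\alpha,\beta>0$. Replacing $\gamma$ by $\gamma^n$, dividing by $n$, and letting $n\to\infty$ (using $\ell_X(\gamma)=\lim_n\tfrac1n\dist_X(\gamma^n x_0,x_0)$ and $\log\lambda_j(g)=\lim_n\tfrac1n\log\mu_j(g^n)$) gives $\log\frac{\lambda_k}{\lambda_{k+1}}(\rho(\gamma))\ge\alpha\,\ell_X(\gamma)$.

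For part (1), the same domination inequality, now tracking the $k$-plane onto which $\rho(\gamma)^{-1}$ contracts, together with Lemma~\ref{lem: U_k under product}, shows by the usual argument that for an escaping sequence $\gamma_n\to x$, $\gamma_n^{-1}\to y$ in $\partial(\Gamma,\peripherals)\cong\partial_\infty X$ one has $U_k(\rho(\gamma_n))\to\xi^k(x)$ and $U_{d-k}(\rho(\gamma_n)^{-1})\to\xi^{d-k}(y)$; since also $\frac{\mu_k}{\mu_{k+1}}(\rho(\gamma_n))\to\infty$ by part (2), Observation~\ref{obs:strongly_dynamics_pres_div_cartan} shows $\xi$ is strongly dynamics preserving, so $\rho$ is $\Psf_k$-Anosov relative to $\peripherals$. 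For part (3), part (1) and Proposition~\ref{prop:eigenvalue data in rel Anosov repn} give that $\rho(P)$ is weakly unipotent for each $P\in\peripherals$; when $X=\Cc_{GM}(\Gamma,\peripherals,S)$, Proposition~\ref{prop:type-preserving<=>good upper bound on singular values} then gives $\alpha',\beta'>0$ with $\log\frac{\mu_1}{\mu_d}(\rho(\gamma))\le\alpha'\dist_X(\gamma x_0,x_0)+\beta'$. Combining this with the lower bound from part (2) and the estimate $\dist_{\SL(d,\Kb)/\SU(d,\Kb)}(\rho(\gamma)p_0,p_0)\asymp\log\frac{\mu_1}{\mu_d}(\rho(\gamma))$ (up to an additive error, from~\eqref{eqn:symmetric distance in prelims} and $\sum_j\log\mu_j(\rho(\gamma))=0$) shows $\dist_{\SL(d,\Kb)/\SU(d,\Kb)}(\rho(\gamma)p_0,p_0)\asymp\dist_X(\gamma x_0,x_0)$ with multiplicative and additive constants, which is the assertion that $\Gamma(x_0)$ and $\rho(\Gamma)(p_0)$ are quasi-isometric (finiteness of point stabilizers for the proper action makes the comparison well defined, and changing $p_0$ changes distances by a bounded additive amount).

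I expect the main obstacle to be the careful bookkeeping in the second paragraph: matching up the two applications of equivariance, passing between $\rho(\gamma)$ and $\rho(\gamma)^{-1}$ and checking that the singular-value gap lands at index $k$, and verifying that the constants in the ``domination $\Rightarrow$ singular value gap'' lemma depend only on $d$ and $\theta_0$ (and hence are uniform in $\gamma$). Once the inequality in the second paragraph is established with uniform constants, the remaining deductions are routine --- these are the ``standard arguments'' alluded to in Remark~\ref{rmk:directions in the proof of theorem main}.
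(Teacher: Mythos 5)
Your proposal is correct and follows essentially the same route as the paper: extract the domination inequality via Proposition~\ref{prop: Hom bundles contraction/expansions}, use compactness of the geodesics through a ball around $x_0$ (Proposition~\ref{prop:abundance of geodesic lines}) to compare the flow norms with $\norm{\cdot}_2$ uniformly, convert domination into the singular value gap by the max-min theorem, and deduce (3) from Propositions~\ref{prop:eigenvalue data in rel Anosov repn} and~\ref{prop:type-preserving<=>good upper bound on singular values}. The only substantive difference is that the ``usual argument'' you invoke for the convergence $U_k(\rho(\gamma_n))\to\xi^k(x)$ in part (1) is carried out in the paper via an explicit uniform-convergence lemma for the matrices $A_\sigma$ (Lemmas~\ref{lem:decay of ratio of singular values} and~\ref{lem:convergence of uK}) combined with Lemma~\ref{lem: U_k under product}, and your angle bound $\theta_0$ turns out to be unnecessary for the singular-value conversion.
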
 

The rest of the section is devoted to the proof of Theorem~\ref{thm:2 implies 1 main theorem}. So fix $\Gamma$, $\peripherals$, $X$, and $\rho$ as in the statement of the theorem. Then there exists a continuous $\rho$-equivariant transverse map 
$$
\xi = (\xi^k, \xi^{d-k}) \colon \partial(\Gamma, \peripherals) \to \Gr_k(\Kb^d) \times \Gr_{d-k}(\Kb^d).
$$
By hypothesis and Proposition~\ref{prop: Hom bundles contraction/expansions}, there exists a family of norms $\norm{\cdot}$ on the fibers of $\Gc(X) \times \Kb^d \to \Gc(X)$ such that:
\begin{itemize}
\item Each $\norm{\cdot}_\sigma$ is induced by an inner product on $\Kb^d$. 
\item $\norm{\rho(\gamma)(\cdot)}_{\gamma \sigma} = \norm{\cdot}_\sigma$ for all $\gamma \in \Gamma$ and $\sigma \in \Gc(X)$.
\item There are $c, C > 0$ such that
\begin{equation}
\label{eqn:dom splitting in consequences section} 
\frac{\norm{Y }_{\geodflow^t(\sigma)}}{\norm{Z }_{\geodflow^t(\sigma)}} \leq Ce^{-ct} \frac{\norm{Y}_{\sigma} }{\norm{Z}_{\sigma} }
\end{equation}
for all $t \geq 0$, $\sigma \in \Gc(X)$, $Y \in \xi^k(\sigma^+)$, and non-zero $Z\in \xi^{d-k}(\sigma^-)$.
\end{itemize} 

Since each norm is induced by an inner product, for every $\sigma \in \Gc(X)$  there exists a matrix $A_\sigma \in \GL(d,\Kb)$ such that 
$$
\norm{\cdot}_2 = \norm{A_\sigma (\cdot)}_{\sigma}. 
$$

\begin{lemma}\label{lem:decay of ratio of singular values} If $\sigma \in \Gc(X)$ and $t \geq 0$, then
$$
\frac{\mu_{k+1}}{\mu_{k}}\left(A_\sigma^{-1} A_{\geodflow^{t}(\sigma)}\right) \leq Ce^{-c t}.
$$
\end{lemma}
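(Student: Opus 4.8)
The statement says essentially that the "change of inner product" matrices $A_\sigma$ and $A_{\geodflow^t(\sigma)}$ differ by an element whose $k$-th singular value gap is large, with an explicit exponential bound coming from the contraction constant. The key observation is that the ratio $\frac{\mu_{k+1}}{\mu_k}(B)$ of a matrix $B$ has a clean variational meaning: it is the smallest possible value of $\frac{\norm{Bw}_2}{\norm{Bv}_2}$ over unit vectors $v$ ranging in some $k$-plane $V$ and $w$ ranging in some $(d-k)$-plane $W$ transverse to it — more precisely, minimizing over a well-chosen pair $(V,W)$ and then maximizing, one recovers exactly $\mu_{k+1}/\mu_k$ via the Courant–Fischer / minimax characterization of singular values. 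So the plan is: (i) recall this variational description of $\frac{\mu_{k+1}}{\mu_k}(B)$; (ii) apply it to $B := A_\sigma^{-1} A_{\geodflow^t(\sigma)}$ with the specific test planes $V := \xi^k(\sigma^+)$ and $W := \xi^{d-k}(\sigma^-)$, which are transverse by hypothesis; (iii) translate the resulting expression $\frac{\norm{Bw}_2}{\norm{Bv}_2}$ back through the defining relation $\norm{\cdot}_2 = \norm{A_\tau(\cdot)}_\tau$ into a ratio of $\norm{\cdot}_{\geodflow^t(\sigma)}$-norms over $\norm{\cdot}_\sigma$-norms of vectors in $V$ and $W$; (iv) invoke the contraction inequality~\eqref{eqn:dom splitting in consequences section} to bound that ratio by $Ce^{-ct}$.

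\textbf{Carrying it out.} Concretely: for any nonzero $Y \in V = \xi^k(\sigma^+)$ and nonzero $Z \in W = \xi^{d-k}(\sigma^-)$, set $v := A_{\geodflow^t(\sigma)}^{-1} \cdot (\text{a } \norm{\cdot}_2\text{-unit vector in } A_{\geodflow^t(\sigma)} V)$; more directly, since $\norm{u}_2 = \norm{A_\tau u}_\tau$ means $\norm{A_\tau^{-1} x}_\tau = \norm{x}_2$, we have for $x \in \Kb^d$ that $\norm{B x}_2 = \norm{A_\sigma^{-1} A_{\geodflow^t(\sigma)} x}_2 = \norm{A_{\geodflow^t(\sigma)} x}_\sigma$ — wait, that is not quite the right grouping. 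The correct bookkeeping: $\norm{x}_2 = \norm{A_\tau x}_\tau$ for all $x$, so $\norm{y}_\tau = \norm{A_\tau^{-1} y}_2$. Hence for $x \in \Kb^d$, $\norm{B x}_2 = \norm{A_\sigma^{-1}A_{\geodflow^t(\sigma)} x}_2 = \norm{A_{\geodflow^t(\sigma)} x}_\sigma$? No — $\norm{A_\sigma^{-1} z}_2 = \norm{z}_\sigma$ requires $z = A_\sigma w$ giving $\norm{w}_2$; so $\norm{A_\sigma^{-1} z}_2 = \norm{A_\sigma^{-1}z}_2$ and separately $\norm{z}_\sigma = \norm{A_\sigma^{-1}z}_2$, so indeed $\norm{Bx}_2 = \norm{A_\sigma^{-1}(A_{\geodflow^t(\sigma)}x)}_2 = \norm{A_{\geodflow^t(\sigma)}x}_\sigma$. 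Meanwhile $\norm{x}_{\geodflow^t(\sigma)} = \norm{A_{\geodflow^t(\sigma)} x}_2$... This requires care, and the cleanest route is to substitute $x = A_{\geodflow^t(\sigma)}^{-1}\tilde x$ throughout so that the singular values of $B$ become the operator norms of $A_\sigma^{-1}$ measured from the $\norm{\cdot}_{\geodflow^t(\sigma)}$ geometry to the $\norm{\cdot}_\sigma$ geometry. After this substitution, the minimax characterization of $\mu_{k+1}/\mu_k(B)$ says: there exist a $k$-plane $V'$ and transverse $(d-k)$-plane $W'$ such that $\frac{\mu_{k+1}}{\mu_k}(B) = \max_{0 \neq Y \in V', 0 \neq Z \in W'} \frac{\norm{Y}_{\geodflow^t(\sigma)}/\norm{Z}_{\geodflow^t(\sigma)}}{\norm{Y}_\sigma/\norm{Z}_\sigma}$; but by the minimax we may as well take $V' = \xi^k(\sigma^+)$, $W' = \xi^{d-k}(\sigma^-)$ as an upper bound, and then~\eqref{eqn:dom splitting in consequences section} bounds each term by $Ce^{-ct}$.

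\textbf{The main obstacle.} The real subtlety — and the step I expect to require the most care — is getting the minimax characterization of $\mu_{k+1}/\mu_k$ stated in exactly the form where plugging in the \emph{specific} invariant subbundles $\xi^k(\sigma^+)$ and $\xi^{d-k}(\sigma^-)$ gives an \emph{upper} bound rather than an exact value. The clean fact is: for $B \in \GL(d,\Kb)$ and any $k$-plane $V$ and complementary $(d-k)$-plane $W$,
\[
\frac{\mu_{k+1}}{\mu_k}(B) \le \sup_{0 \neq v \in V}\ \sup_{0 \neq w \in W}\ \frac{\norm{Bw}_2\,\norm{v}_2}{\norm{Bv}_2\,\norm{w}_2},
\]
which follows from the standard Courant–Fischer bounds $\mu_k(B) \ge \inf_{0\neq v\in V}\frac{\norm{Bv}_2}{\norm{v}_2}$ (valid for any $k$-plane $V$) and $\mu_{k+1}(B) \le \sup_{0\neq w\in W}\frac{\norm{Bw}_2}{\norm{w}_2}$ (valid for any $(d-k)$-plane $W$). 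Once that inequality is recorded and the norm-conversion identities above are applied to convert $\norm{B\cdot}_2/\norm{\cdot}_2$ ratios into $\norm{\cdot}_{\geodflow^t(\sigma)}/\norm{\cdot}_\sigma$ ratios, the estimate~\eqref{eqn:dom splitting in consequences section} (which is exactly a bound on $\frac{\norm{Y}_{\geodflow^t(\sigma)}/\norm{Z}_{\geodflow^t(\sigma)}}{\norm{Y}_\sigma/\norm{Z}_\sigma}$ for $Y \in \xi^k(\sigma^+)$, $Z \in \xi^{d-k}(\sigma^-)$) closes the argument immediately. Everything else is routine linear algebra; no escaping sequences, relative hyperbolicity, or weak unipotence is needed here.
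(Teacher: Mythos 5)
Your proposal is correct and follows essentially the same route as the paper: the paper's proof also rewrites the domination inequality~\eqref{eqn:dom splitting in consequences section} as a max/min statement over the planes $\xi^k(\sigma^+)$ and $\xi^{d-k}(\sigma^-)$ transported by the $A$-matrices, and then invokes the max-min/min-max characterization of singular values (the paper phrases it for $A_{\geodflow^t(\sigma)}^{-1}A_\sigma$ and converts via $\mu_j(g^{-1})=\mu_{d-j+1}(g)^{-1}$, while you work with $A_\sigma^{-1}A_{\geodflow^t(\sigma)}$ directly — a cosmetic difference). Despite the false starts in your middle paragraph, the final bookkeeping is right: with $Y=A_{\geodflow^t(\sigma)}v$ and $Z=A_{\geodflow^t(\sigma)}w$ one gets $\frac{\norm{Bw}_2\norm{v}_2}{\norm{Bv}_2\norm{w}_2}=\frac{\norm{Y}_{\geodflow^t(\sigma)}/\norm{Z}_{\geodflow^t(\sigma)}}{\norm{Y}_{\sigma}/\norm{Z}_{\sigma}}$, so the Courant--Fischer bound with those test planes closes the argument exactly as you describe.
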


\begin{proof} Fix $\sigma \in \Gc(X)$ and $t \geq 0$. By Equation~\eqref{eqn:dom splitting in consequences section} 
 \begin{align*}
    \max_{Y \in \xi^k(\sigma^+) \smallsetminus \{0\}}  \frac{\norm{Y}_{\geodflow^t(\sigma)}}{\norm{Y}_{\sigma}} \leq Ce^{-ct}  \min_{Z \in \xi^{d-k}(\sigma^-) \smallsetminus \{0\}}  \frac{\norm{Z}_{\geodflow^t(\sigma)}}{\norm{Z}_{\sigma}}.
    \end{align*}
Hence
 \begin{align*}
    \max_{Y \in A_{\sigma}^{-1}\xi^k(\sigma^+) \smallsetminus \{0\}} & \frac{\norm{A_{\geodflow^t(\sigma)}^{-1} A_{\sigma} Y}_2}{\norm{Y}_2} \leq Ce^{-ct}  \min_{Z \in A_{\sigma}^{-1}\xi^{d-k}(\sigma^-) \smallsetminus \{0\}} & \frac{\norm{A_{\geodflow^{t}(\sigma)}^{-1} A_{\sigma}Z}_{2}}{\norm{Z}_{2}}.
    \end{align*}
    So by the max-min/min-max theorem for singular values 
    $$
    \mu_{d-k+1}\left(A_{\geodflow^t(\sigma)}^{-1} A_{\sigma}\right) \leq  C e^{-c t} \mu_{d-k}\left(A_{\geodflow^t(\sigma)}^{-1} A_{\sigma}\right)
$$
or equivalently 
    $$
    \mu_{k+1}\left(A_\sigma^{-1} A_{\geodflow^{t}(\sigma)}\right) \leq C e^{-c t}  \mu_{k}\left(A_\sigma^{-1} A_{\geodflow^{t}(\sigma)}\right)
$$
which establishes the lemma.
\end{proof}

\begin{lemma}\label{lem:convergence of uK}
$$
\lim_{t \to \infty} \sup_{\sigma \in \Gc(X)} \dist_{\Gr_k(\Kb^d)}\left( U_k\left(A_\sigma^{-1}A_{\geodflow^t(\sigma)}\right), A_\sigma^{-1}\xi^k(\sigma^+) \right) = 0. 
$$
\end{lemma}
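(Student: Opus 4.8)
The plan is to exploit the fact that $A_\sigma$ conjugates the inner product $\norm{\cdot}_\sigma$ to the standard Euclidean one, so that $A_\sigma^{-1}\xi^k(\sigma^+)$ is precisely the orthogonal complement (with respect to $\norm{\cdot}_2$) of $A_\sigma^{-1}\xi^{d-k}(\sigma^-)$; indeed, by construction $\xi^k(\sigma^+)$ and $\xi^{d-k}(\sigma^-)$ are $\norm{\cdot}_\sigma$-orthogonal, which is the actual content we should extract from the splitting. (If the norms are only chosen so that the splitting is $\norm{\cdot}_\sigma$-orthogonal up to a uniform constant, then $A_\sigma^{-1}\xi^k(\sigma^+)$ and $(A_\sigma^{-1}\xi^{d-k}(\sigma^-))^\perp$ make a uniformly bounded angle, which is all that is needed.) Writing $B_{\sigma,t} := A_\sigma^{-1}A_{\geodflow^t(\sigma)}$, Lemma~\ref{lem:decay of ratio of singular values} gives $\frac{\mu_{k+1}}{\mu_k}(B_{\sigma,t}) \leq Ce^{-ct}$, so in particular $\mu_k(B_{\sigma,t}) > \mu_{k+1}(B_{\sigma,t})$ for $t$ large (uniformly in $\sigma$), and $U_k(B_{\sigma,t})$ is well-defined.

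\textbf{Identifying the limit subspace.}
First I would observe that $B_{\sigma,t}^{-1}\big(A_\sigma^{-1}\xi^{d-k}(\sigma^-)\big) = A_{\geodflow^t(\sigma)}^{-1}\xi^{d-k}(\geodflow^t(\sigma)^-) = A_{\geodflow^t(\sigma)}^{-1}\xi^{d-k}(\sigma^-)$, using that $\geodflow^t(\sigma)^- = \sigma^-$. Likewise $B_{\sigma,t}\big(A_\sigma^{-1}\xi^{k}(\sigma^+)\big) = A_{\geodflow^t(\sigma)}^{-1}\xi^{k}(\sigma^+)$. The key linear-algebra fact is this: if $g \in \GL(d,\Kb)$ has $\mu_k(g) > \mu_{k+1}(g)$ and $W \in \Gr_{d-k}(\Kb^d)$ is such that $W$ and $W^\perp$ are ``well-positioned'' relative to $g$ (say $g^{-1}W$ stays a bounded distance from the bottom singular subspace $\operatorname{span}(e_{k+1},\dots,e_d)$ of $g$ — which holds here because $g^{-1}W = A_{\geodflow^t(\sigma)}^{-1}\xi^{d-k}(\sigma^-)$ is $\norm{\cdot}_2$-orthogonal, up to uniform constant, to $A_{\geodflow^t(\sigma)}^{-1}\xi^k(\sigma^+)$, a subspace on which $g^{-1}=B_{\sigma,t}^{-1}$ expands), then $\dist_{\Gr_k}\big(U_k(g), W^{\perp,\,\text{or rather the transverse complement}}\big) \lesssim \frac{\mu_{k+1}}{\mu_k}(g)$. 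I would prove this with an explicit singular-value-decomposition computation: write $g = maf$ with $a = \operatorname{diag}(\mu_1,\dots,\mu_d)$, so $U_k(g) = m\operatorname{span}(e_1,\dots,e_k)$; the hypothesis on $W$ controls the relevant block of $f$, and conjugating by $a$ contracts the off-diagonal blocks by $\frac{\mu_{k+1}}{\mu_k}(g)$.

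\textbf{Assembling the estimate.}
Applying this with $g = B_{\sigma,t}$ and $W = A_\sigma^{-1}\xi^{d-k}(\sigma^-)$, I get
\[
\dist_{\Gr_k(\Kb^d)}\Big(U_k\big(B_{\sigma,t}\big),\ A_\sigma^{-1}\xi^k(\sigma^+)\Big) \lesssim \frac{\mu_{k+1}}{\mu_k}(B_{\sigma,t}) \leq Ce^{-ct},
\]
where the implied constant depends only on the uniform bound on the angle between $A_\tau^{-1}\xi^k(\tau^+)$ and $A_\tau^{-1}\xi^{d-k}(\tau^-)$ over all $\tau \in \Gc(X)$ — and that uniform angle bound is exactly what having $\norm{\cdot}_\sigma$ an inner product with the splitting orthogonal (or uniformly transverse) gives us. Taking $\sup_\sigma$ and letting $t \to \infty$ finishes the lemma. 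The main obstacle, I expect, is establishing the clean linear-algebra lemma relating $U_k(g)$ to a transverse complement of a controlled $(d-k)$-subspace with an error of order $\frac{\mu_{k+1}}{\mu_k}(g)$; this is in the spirit of Lemma~\ref{lem: U_k under product} but needs the input that the $(d-k)$-subspace is uniformly transverse to the top singular directions, so one must be careful that the implied constants do not degenerate. Everything else is bookkeeping with the identities $\geodflow^t(\sigma)^\pm = \sigma^\pm$ and the equivariance/flat structure of the norms.
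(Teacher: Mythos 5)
Your proposed route has a genuine gap at its foundation: you assume that $\xi^k(\sigma^+)$ and $\xi^{d-k}(\sigma^-)$ are orthogonal (or uniformly transverse) with respect to $\norm{\cdot}_\sigma$, equivalently that the subspaces $A_\sigma^{-1}\xi^k(\sigma^+)$ and $A_\sigma^{-1}\xi^{d-k}(\sigma^-)$ make a uniformly bounded angle in $\norm{\cdot}_2$ over all $\sigma \in \Gc(X)$. Nothing in the hypotheses of this section provides that. The metric here is an arbitrary metric on $\wh{E}_\rho(X)$ making the flow on the Hom bundle exponentially contracting; no compatibility with the splitting is assumed. The angle function $\sigma \mapsto \angle_{\norm{\cdot}_\sigma}\big(\xi^k(\sigma^+), \xi^{d-k}(\sigma^-)\big)$ is continuous, positive, and $\Gamma$-invariant, but it descends to the \emph{non-compact} space $\wh{\Gc}(X)$, so continuity gives no uniform lower bound. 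Your "key linear-algebra fact" genuinely needs that bound: if the angle between $V_0$ and $W_0$ degenerates, the estimate $\dist_{\Gr_k}(U_k(g), V_0) \lesssim \frac{\mu_{k+1}}{\mu_k}(g)$ fails (the implied constant blows up), as one sees already for $2\times 2$ matrices contracting $\Spanset(e_1)$ and expanding $\Spanset(e_1+\epsilon e_2)$. So the quantitative estimate $\dist \lesssim e^{-ct}$ you are aiming for is exactly the kind of statement that requires extra uniformity hypotheses — it is proved later in the paper only under the "locally uniform metric" assumption (and even there by a different telescoping mechanism using Lemma~\ref{lem: U_k under product}), not in the generality of this lemma.

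The paper's proof is structured precisely to avoid any uniform transversality input. It argues by contradiction and compactness of the Grassmannian: if the sup did not tend to $0$, one extracts $\sigma_n$, $t_n \to \infty$ with $U_k(A_{\sigma_n}^{-1}A_{\geodflow^{t_n}(\sigma_n)}) \to V$ and $A_{\sigma_n}^{-1}\xi^k(\sigma_n^+) \to W$ with $V \neq W$, picks $Y_n \in A_{\sigma_n}^{-1}\xi^k(\sigma_n^+)$ converging to a point of $W \smallsetminus V$ (hence staying a definite distance from $U_k$, which gives a lower bound on $\norm{A_{\geodflow^{t_n}(\sigma_n)}^{-1}A_{\sigma_n}Y_n}_2$ in terms of $\mu_{d-k}$), and uses the max-min characterization to find $Z_n \in A_{\sigma_n}^{-1}\xi^{d-k}(\sigma_n^-)$ contracted at least as much; the resulting ratio contradicts the exponential decay hypothesis. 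Note this argument never compares $\xi^k(\sigma_n^+)$ with $\xi^{d-k}(\sigma_n^-)$ directly and only yields the qualitative conclusion, which is all the lemma claims. If you want to salvage your approach you would have to first prove the uniform angle bound, and I do not see how to extract it from the stated hypotheses.
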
 

\begin{proof} Suppose not. Then there exist $t_n \to \infty$ and a sequence $(\sigma_n)_{n \geq 1}$ in $\Gc(X)$ such that
$$
\liminf_{n \to \infty} \dist_{\Gr_k(\Kb^d)} \left( U_k\left(A_{\sigma_n}^{-1}A_{\geodflow^{t_n}(\sigma_n)}\right), A_{\sigma_n}^{-1}\xi^k(\sigma_n^+) \right) >0.  
$$
Passing to a subsequence, we can suppose that $U_k\left(A_{\sigma_n}^{-1}A_{\geodflow^{t_n}(\sigma_n)}\right) \to V$ and $A_{\sigma_n}^{-1}\xi^k(\sigma_n^+) \to W$ where $V \neq W$. Fix some $Y \in W \smallsetminus V$. We can find $Y_n \in A_{\sigma_n}^{-1}\xi^k(\sigma_n^+)$ such that $Y_n \to Y$. Then 
$$
\liminf_{n \to \infty} \dist_{\proj(\Kb^d)}\left(U_k\left(A_{\sigma_n}^{-1}A_{\geodflow^{t_n}(\sigma_n)}\right), Y_n \right) > 0
$$
(where the distance denotes the minimum of $\dist_{\proj(\Kb^d)}(Z,Y_n)$ over all $Z$ representing lines in the $k$-plane $U_k\left(A_{\sigma_n}^{-1}A_{\geodflow^{t_n}(\sigma_n)}\right)$) and so 
$$
\norm{A_{\geodflow^{t_n}(\sigma_n)}^{-1} A_{\sigma_n} Y_n}_2 \gtrsim \frac{1}{\mu_{k+1}\left(A_{\sigma_n}^{-1}A_{\geodflow^{t_n}(\sigma_n)}\right)} \norm{Y_n}_2 = \mu_{d-k}\left(A_{\geodflow^{t_n}(\sigma_n)}^{-1}A_{\sigma_n}\right)\norm{Y_n}_2.
$$
On the other hand, by the max-min/min-max theorem for singular values, there exists $Z_n \in A_{\sigma_n}^{-1}\xi^{d-k}(\sigma_n^-) \smallsetminus\{0\}$ such that 
$$
\norm{A_{\sigma_n(t_n)}^{-1} A_{\sigma_n}Z_n}_2 \leq \mu_{d-k}\left(A_{\geodflow^{t_n}(\sigma_n)}^{-1}A_{\sigma_n}\right)\norm{Z_n}_2.
$$
Let $\hat{Y}_n := A_{\sigma_n} Y_n$ and $\hat{Z}_n := A_{\sigma_n} Z_n$. Then
$$
\frac{\norm{\hat{Y}_n }_{\geodflow^{t_n}(\sigma_n)}}{\norm{\hat{Z}_n }_{\geodflow^{t_n}(\sigma_n)}}  =  \frac{ \norm{A_{\geodflow^{t_n}(\sigma_n)}^{-1} A_{\sigma_n} Y_n}_2}{\norm{A_{\geodflow^{t_n}(\sigma_n)}^{-1} A_{\sigma_n} Z_n}_2} \gtrsim \frac{\norm{Y_n}_2}{\norm{Z_n}_2} = \frac{\norm{\hat{Y}_n}_{\sigma_n}}{\norm{\hat{Z}_n}_{\sigma_n}}
$$
which contradicts Equation~\eqref{eqn:dom splitting in consequences section}. 
\end{proof}

Fix $x_0 \in X$. By Proposition~\ref{prop:abundance of geodesic lines} there is some $R > 0$ such that: if $\gamma \in \Gamma$, then there exist $\sigma_\gamma \in \Gc(X)$ and $T_\gamma \geq 0$ such that 
$$
\max\left\{ \dist_X\big(x_0, \sigma_\gamma(0)\big), \dist_X\big(\gamma(x_0), \sigma_\gamma(T_\gamma)\big) \right\} \leq R. 
$$
Let 
$$
K : = \{ \sigma \in \Gc(X) : \dist_X(x_0,\sigma(0)) \leq R\}.
$$
By continuity, there exists $C_{K}>1$ so that if $\sigma \in K$, then $\norm{\cdot}_\sigma$ is $C_{K}$-bilipschitz to the standard Euclidean norm $\norm{\cdot}_2$ on $\mathbb K^d$.

\begin{lemma} \label{lem:D-}
If $\gamma \in \Gamma$, then 
$$
\log \frac{\mu_{k}}{\mu_{k+1}}(\rho(\gamma))\geq - \log \left( C C_K^4e^{2cR} \right) + c \dist_X(x_0, \gamma(x_0))
$$
and 
$$
\log \frac{\lambda_{k}}{\lambda_{k+1}}(\rho(\gamma))\geq  c \ell_X(\gamma). 
$$
\end{lemma}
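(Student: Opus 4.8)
The plan is to express $\rho(\gamma)$ in terms of the matrices $A_\sigma$ in such a way that the quantity controlled by Lemma~\ref{lem:decay of ratio of singular values} appears \emph{without} an inverse, so that the index $k$ (rather than $d-k$) survives. Fix $\gamma\in\Gamma$ and let $\sigma_\gamma,T_\gamma$ be as chosen above, so that $\sigma_\gamma(0)$ lies within $R$ of $x_0$ and $\geodflow^{T_\gamma}(\sigma_\gamma)(0)=\sigma_\gamma(T_\gamma)$ lies within $R$ of $\gamma(x_0)$. Set $\tau:=\gamma^{-1}\geodflow^{T_\gamma}(\sigma_\gamma)$; then $\tau(0)=\gamma^{-1}\big(\sigma_\gamma(T_\gamma)\big)$ lies within $R$ of $x_0$, so both $\sigma_\gamma$ and $\tau$ belong to $K$, and hence $\frac{\mu_1}{\mu_d}(A_{\sigma_\gamma})\le C_K^2$ and $\frac{\mu_1}{\mu_d}(A_{\tau})\le C_K^2$.

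Next I would use the $\Gamma$-invariance $\norm{\rho(\gamma)(\cdot)}_{\gamma\sigma}=\norm{\cdot}_\sigma$ with $\sigma=\tau$, noting $\gamma\tau=\geodflow^{T_\gamma}(\sigma_\gamma)$. In terms of the defining matrices this says $A_{\geodflow^{T_\gamma}(\sigma_\gamma)}^{-1}\rho(\gamma)A_{\tau}$ is unitary, i.e.\ $\rho(\gamma)=A_{\geodflow^{T_\gamma}(\sigma_\gamma)}\,u\,A_{\tau}^{-1}$ for some unitary matrix $u$. Factoring $A_{\geodflow^{T_\gamma}(\sigma_\gamma)}=A_{\sigma_\gamma}\big(A_{\sigma_\gamma}^{-1}A_{\geodflow^{T_\gamma}(\sigma_\gamma)}\big)$ and writing $D:=A_{\sigma_\gamma}^{-1}A_{\geodflow^{T_\gamma}(\sigma_\gamma)}$, Lemma~\ref{lem:decay of ratio of singular values} (applied with $\sigma=\sigma_\gamma$ and $t=T_\gamma$) gives $\frac{\mu_k}{\mu_{k+1}}(D)\ge \frac{1}{C} e^{cT_\gamma}$. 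Thus $\rho(\gamma)=A_{\sigma_\gamma}\,(Du)\,A_{\tau}^{-1}$, with the middle factor $Du$ having $k$-th singular value gap $\ge \frac{1}{C} e^{cT_\gamma}$ (unchanged by the unitary $u$), sandwiched between two matrices whose $\mu_1/\mu_d$-ratio is at most $C_K^2$.

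Using the elementary submultiplicativity bounds $\mu_d(A)\,\mu_i(B)\le\mu_i(AB)\le\mu_1(A)\,\mu_i(B)$ together with the invariance of $\frac{\mu_k}{\mu_{k+1}}$ under left and right multiplication by unitaries, one obtains
$$
\frac{\mu_k}{\mu_{k+1}}(\rho(\gamma))\ \ge\ \frac{1}{\frac{\mu_1}{\mu_d}(A_{\sigma_\gamma})\,\frac{\mu_1}{\mu_d}(A_{\tau})}\,\frac{\mu_k}{\mu_{k+1}}(D)\ \ge\ \frac{1}{CC_K^4}\,e^{cT_\gamma}.
$$
Finally $T_\gamma\ge \dist_X\!\big(\sigma_\gamma(0),\sigma_\gamma(T_\gamma)\big)\ge \dist_X(x_0,\gamma(x_0))-2R$ by the triangle inequality, so taking logarithms yields the first inequality with constant $CC_K^4e^{2cR}$. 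For the eigenvalue statement I would apply this inequality to $\gamma^n$, divide by $n$, and let $n\to\infty$: since $\frac{1}{n}\log\frac{\mu_k}{\mu_{k+1}}(\rho(\gamma)^n)\to\log\frac{\lambda_k}{\lambda_{k+1}}(\rho(\gamma))$ (a standard consequence of $\mu_j(g^n)^{1/n}\to\lambda_j(g)$) and $\frac{1}{n}\dist_X(x_0,\gamma^n(x_0))\to\ell_X(\gamma)$, the additive constant and the $2R$ drop out and one gets $\log\frac{\lambda_k}{\lambda_{k+1}}(\rho(\gamma))\ge c\,\ell_X(\gamma)$.

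The only genuinely delicate point — and the one I would be most careful about — is arranging the factorization so that $D=A_{\sigma_\gamma}^{-1}A_{\geodflow^{T_\gamma}(\sigma_\gamma)}$ (and not its inverse) appears: applying Lemma~\ref{lem:decay of ratio of singular values} to $D^{-1}$ instead would give control of the $(d-k)$-th singular value gap, which is the wrong index here. Everything else — the equivariance computation identifying $A_{\geodflow^{T_\gamma}(\sigma_\gamma)}^{-1}\rho(\gamma)A_{\tau}$ as unitary, and the singular value bookkeeping — is routine.
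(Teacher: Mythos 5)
Your proof is correct and follows essentially the same route as the paper: choose $\sigma_\gamma, T_\gamma$, use the equivariance of the norms and the $C_K$-bilipschitz bound on $K$ to compare the singular values of $\rho(\gamma)$ with those of $A_{\geodflow^{T_\gamma}(\sigma_\gamma)}$, apply Lemma~\ref{lem:decay of ratio of singular values} to $A_{\sigma_\gamma}^{-1}A_{\geodflow^{T_\gamma}(\sigma_\gamma)}$, and pass to the limit $\gamma^n$ for the eigenvalue statement. The only cosmetic difference is that you package the comparison as an exact factorization $\rho(\gamma)=A_{\geodflow^{T_\gamma}(\sigma_\gamma)}\,u\,A_\tau^{-1}$ with $u$ unitary, whereas the paper derives the two-sided bound $\frac{1}{C_K}\mu_j(A_{\geodflow^{T_\gamma}(\sigma_\gamma)})\le\mu_j(\rho(\gamma))\le C_K\mu_j(A_{\geodflow^{T_\gamma}(\sigma_\gamma)})$ directly; both yield the same constant $CC_K^4e^{2cR}$.
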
 

\begin{proof} Fix $\gamma \in \Gamma$. Then let $\sigma := \sigma_\gamma$ and $T := T_\gamma$. Notice that 
$$
\dist_X(x_0, \gamma(x_0)) \leq T+2R
$$
and $\sigma, \gamma^{-1} \geodflow^{T}(\sigma) \in K$. So 
\begin{align*}
\norm{\rho(\gamma)^{-1}A_{\geodflow^{T}(\sigma)}( \cdot)}_2 & \leq C_K \norm{\rho(\gamma)^{-1}A_{\geodflow^{T}(\sigma)}( \cdot)}_{\gamma^{-1} \geodflow^{T}(\sigma)} =  C_K \norm{A_{\geodflow^{T}(\sigma)}( \cdot)}_{\geodflow^{T}(\sigma)} =C_K  \norm{\cdot}_2
\end{align*}
and likewise 
$$
\norm{\rho(\gamma)^{-1}A_{\geodflow^{T}(\sigma)}( \cdot)}_2  \geq \frac{1}{C_K} \norm{\cdot}_2. 
$$
Thus 
$$
\frac{1}{C_K}\norm{A_{\geodflow^{T}(\sigma)}^{-1}(\cdot)}_2  \leq \norm{\rho(\gamma)^{-1}( \cdot)}_2\leq C_K \norm{A_{\geodflow^{T}(\sigma)}^{-1}(\cdot)}_2 
$$
which implies that 
$$
\frac{1}{C_K} \mu_j\left( A_{\geodflow^{T}(\sigma)}\right) \leq \mu_j\left( \rho(\gamma) \right) \leq C_K \mu_j\left( A_{\geodflow^{T}(\sigma)}\right) \quad \text{for} \quad j=1,\dots, d.
$$
Similar reasoning shows that 
$$
\frac{1}{C_K}  \leq \mu_j\left( A_{\sigma} \right) \leq C_K  \quad \text{for} \quad j=1,\dots, d.
$$

So by Lemma~\ref{lem:decay of ratio of singular values}
\begin{align*}
\frac{\mu_{k}}{\mu_{k+1}}(\rho(\gamma))&\geq \frac{1}{C_K^2} \frac{\mu_{k}}{\mu_{k+1}}\left(A_{\geodflow^{T}(\sigma)}\right) \geq \frac{1}{C_K^2}\frac{\mu_{d}}{\mu_{1}}\left(A_\sigma\right) \frac{\mu_{k}}{\mu_{k+1}}\left(A_\sigma^{-1}A_{\geodflow^{T}(\sigma)}\right)\\
& \geq \frac{1}{C_K^4C} e^{c T} \geq \frac{1}{C_K^4Ce^{2cR}} e^{c \dist_{X}(x_0,\gamma(x_0))}.
\end{align*}
This proves the first assertion. For the second note that 
\begin{equation*}
\log \frac{\lambda_{k}}{\lambda_{k+1}}(\rho(\gamma)) = \lim_{n \to \infty} \frac{1}{n}\log \frac{\mu_{k}}{\mu_{k+1}}(\rho(\gamma)^n) \geq \lim_{n \to \infty} \frac{c}{n} \dist_X(x_0, \gamma(x_0)) = c \ell_X(\gamma). \qedhere
\end{equation*}

\end{proof}

\begin{lemma} $\xi$ is strongly dynamics-preserving, i.e.\ $\rho \colon \Gamma \to \SL(d,\Kb)$ is $\Psf_k$-Anosov relative to $\peripherals$. \end{lemma}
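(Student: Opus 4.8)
The plan is to verify the strongly dynamics preserving property by checking the dynamical criterion in Observation~\ref{obs:strongly_dynamics_pres_div_cartan}. Fix a sequence $(\gamma_n)_{n\geq 1}$ in $\Gamma$ with $\gamma_n\to x$ and $\gamma_n^{-1}\to y$ in $\partial(\Gamma,\peripherals)=\partial_\infty X$; it suffices to prove that $\frac{\mu_k}{\mu_{k+1}}(\rho(\gamma_n))\to\infty$, that $U_k(\rho(\gamma_n))\to\xi^k(x)$, and that $U_{d-k}(\rho(\gamma_n)^{-1})\to\xi^{d-k}(y)$. The first is immediate: since $\gamma_n\to x$ we have $\dist_X(x_0,\gamma_n(x_0))\to\infty$, so Lemma~\ref{lem:D-} gives $\log\frac{\mu_k}{\mu_{k+1}}(\rho(\gamma_n))\geq -\log(CC_K^4e^{2cR})+c\,\dist_X(x_0,\gamma_n(x_0))\to\infty$.

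For the convergence of the $U_k$'s, the idea is to show that $U_k(\rho(\gamma))$ is uniformly close to $\xi^k(\sigma_\gamma^+)$ whenever $T_\gamma$ is large. Writing $\sigma:=\sigma_\gamma$ and $T:=T_\gamma$, I would factor
\[
\rho(\gamma)=A_\sigma\cdot\bigl(A_\sigma^{-1}A_{\geodflow^T(\sigma)}\bigr)\cdot B^{-1},\qquad B:=\rho(\gamma)^{-1}A_{\geodflow^T(\sigma)}.
\]
Exactly as in the proof of Lemma~\ref{lem:D-}, equivariance of the norms together with $\sigma,\gamma^{-1}\geodflow^T(\sigma)\in K$ force all singular values of $A_\sigma$ and of $B$ to lie in $[1/C_K,C_K]$, while Lemma~\ref{lem:decay of ratio of singular values} gives $\frac{\mu_{k+1}}{\mu_k}\bigl(A_\sigma^{-1}A_{\geodflow^T(\sigma)}\bigr)\leq Ce^{-cT}$. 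For $T$ large enough that all relevant singular value gaps are positive, two applications of Lemma~\ref{lem: U_k under product}---pulling the bounded factor $B^{-1}$ through on the right and the bounded factor $A_\sigma$ through on the left, and using that the action of $A_\sigma$ on $\Gr_k(\Kb^d)$ is Lipschitz with constant controlled by $\frac{\mu_1}{\mu_d}(A_\sigma)\leq C_K^2$---yield
\[
\dist_{\Gr_k(\Kb^d)}\Bigl(U_k(\rho(\gamma)),\ A_\sigma\,U_k\bigl(A_\sigma^{-1}A_{\geodflow^T(\sigma)}\bigr)\Bigr)\leq C'e^{-cT}
\]
for a constant $C'$ depending only on $C$, $C_K$, $d$, $k$. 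Combining this with Lemma~\ref{lem:convergence of uK} (and again the uniform Lipschitz control on $A_\sigma$) gives that $\dist_{\Gr_k(\Kb^d)}(U_k(\rho(\gamma)),\xi^k(\sigma_\gamma^+))\to 0$ uniformly over $\{\gamma\in\Gamma: T_\gamma\geq t\}$ as $t\to\infty$.

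To conclude, I would use the standard fact about Gromov-hyperbolic spaces (see Appendix~\ref{appendix: basic properties}) that $\gamma_n\to x$ forces $T_{\gamma_n}\to\infty$ and $\sigma_{\gamma_n}^+\to x$ in $\partial_\infty X$: indeed $\sigma_{\gamma_n}$ passes within $R$ of $x_0$ at time $0$ and within $R$ of $\gamma_n(x_0)$ at time $T_{\gamma_n}$, so it fellow-travels a geodesic running from a bounded region to $\gamma_n(x_0)$, which forces the Gromov product $(\sigma_{\gamma_n}^+\mid\gamma_n(x_0))_{x_0}$ to tend to infinity. Continuity of $\xi$ then gives $\xi^k(\sigma_{\gamma_n}^+)\to\xi^k(x)$, hence $U_k(\rho(\gamma_n))\to\xi^k(x)$. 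For the remaining convergence I would appeal to the symmetry of the setup under interchanging $k$ and $d-k$: replacing the boundary map $(\xi^k,\xi^{d-k})$ by $(\xi^{d-k},\xi^k)$ and each norm $\norm{\cdot}_\sigma$ by $\norm{\cdot}'_\sigma:=\norm{\cdot}_{\overline{\sigma}}$, where $\overline{\sigma}(t):=\sigma(-t)$, turns the contraction estimate~\eqref{eqn:dom splitting in consequences section} into the corresponding estimate for $d-k$, so $\rho$ is also $\Psf_{d-k}$-Anosov relative to $X$; applying the $U_k$-convergence just established with $d-k$ in place of $k$ and with the sequence $\gamma_n^{-1}\to y$ gives $U_{d-k}(\rho(\gamma_n)^{-1})\to\xi^{d-k}(y)$. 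By Observation~\ref{obs:strongly_dynamics_pres_div_cartan} this shows $\xi$ is strongly dynamics preserving, completing the proof of Theorem~\ref{thm:2 implies 1 main theorem}.

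I expect the only substantive points to be the uniformity bookkeeping in the second paragraph---making sure the error terms coming from Lemma~\ref{lem: U_k under product} and Lemma~\ref{lem:convergence of uK} depend on $\gamma$ only through $T_\gamma$ and the fixed constants $C,c,C_K,R$---together with the Gromov-hyperbolic input $\sigma_{\gamma_n}^+\to x$; everything else is a routine assembly of the preceding lemmas.
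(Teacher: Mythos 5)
Your proposal is correct and follows essentially the same route as the paper's proof: the divergence of $\frac{\mu_k}{\mu_{k+1}}(\rho(\gamma_n))$ from Lemma~\ref{lem:D-}, the factorization of $\rho(\gamma_n)$ into $A_{\geodflow^{T_n}(\sigma_n)}$ times a factor with uniformly bounded singular values, two applications of Lemma~\ref{lem: U_k under product} combined with Lemma~\ref{lem:convergence of uK}, and the reduction of the $U_{d-k}(\rho(\gamma_n)^{-1})$ convergence to the case already treated by applying the same argument to $\gamma_n^{-1}$. Your explicit bookkeeping of the error terms and the explicit time-reversal symmetry on the norms are just more detailed versions of steps the paper states tersely.
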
 

\begin{proof} Fix an escaping  sequence $(\gamma_n)_{n \geq 1}$ in  $\Gamma$ with $\gamma_n \to x \in \partial_\infty X$ and $\gamma_n^{-1} \to y \in \partial_\infty X$.  Lemma~\ref{lem:D-} implies that $\frac{\mu_k}{\mu_{k+1}}(\rho(\gamma_n)) \to \infty$. So by Observation~\ref{obs:strongly_dynamics_pres_div_cartan}  it suffices to show that $U_k(\rho(\gamma_n))$ converges to $\xi^k(x)$ and $U_{d-k}(\rho(\gamma_n)^{-1})$ converges to $\xi^{d-k}(y)$.

Let $\sigma_n := \sigma_{\gamma_n}$ and $T_n := T_{\gamma_n}$. Then $T_n \to \infty$ and $\sigma_n^+ \to x$. Arguing as in the proof of the last lemma, if 
$$
g_n : =A^{-1}_{\geodflow^{T_n}(\sigma_n)} \rho(\gamma_n),
$$
then $\{ g_n : n \in \Nb\} \subset \GL(d,\Kb)$ is relatively compact.  

Then by Lemma~\ref{lem: U_k under product} (twice) and Lemma~\ref{lem:convergence of uK}
\begin{align*}
\lim_{n \to \infty} U_k(\rho(\gamma_n)) & = \lim_{n \to \infty} U_k(A_{\geodflow^{T_n}(\sigma_n)}g_n)=   \lim_{n \to \infty} U_k(A_{\geodflow^{T_n}(\sigma_n)}) \\
& =  \lim_{n \to \infty} A_{\sigma_n}U_k(A_{\sigma_n}^{-1} A_{\geodflow^{T_n}(\sigma_n)}) = \lim_{n \to \infty} \xi^k( \sigma_n^+) = \xi^k(x). 
\end{align*} 
Applying the same argument to $\rho(\gamma_n^{-1})$ we have 
\begin{align*}
\lim_{n \to \infty} U_{d-k}(\rho(\gamma_n)^{-1}) = \xi^{d-k}(y)
\end{align*} 
which completes the proof of the lemma.
\end{proof} 

\begin{lemma} If $X$ is a Groves--Manning cusp space for $(\Gamma, \peripherals)$, then for any $p_0$ in the symmetric space $\SL(d,\Kb)/ \SU(d,\Kb)$ the orbits $\Gamma(x_0)$ and $\rho(\Gamma)(p_0)$ are quasi-isometric. \end{lemma}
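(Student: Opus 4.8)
The plan is to reduce the statement to a single coarse comparison between $\dist_X(x_0,\gamma(x_0))$ and $\log\frac{\mu_1}{\mu_d}(\rho(\gamma))$, and then package it as an explicit quasi-isometry between the two orbits. First I would record the elementary fact that $\dist_N(p_0,\rho(\gamma)(p_0))$ is comparable to $\log\frac{\mu_1}{\mu_d}(\rho(\gamma))$ up to an additive constant depending only on $p_0$: writing $p_0 = g_0\,\SU(d,\Kb)$ and using Equation~\eqref{eqn:symmetric distance in prelims} together with the fact that $\sum_{j=1}^d \log\mu_j(g) = 0$ for $g \in \SL(d,\Kb)$ (so $\log\mu_1(g)\geq 0\geq \log\mu_d(g)$), one has $\tfrac{1}{\sqrt2}\log\frac{\mu_1}{\mu_d}(g) \leq \sqrt{\sum_j(\log\mu_j(g))^2} \leq \sqrt d\,\log\frac{\mu_1}{\mu_d}(g)$, applied to $g = g_0^{-1}\rho(\gamma)g_0$, whose singular value ratios differ from those of $\rho(\gamma)$ by a bounded multiplicative factor depending only on $g_0$. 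By $\Gamma$-equivariance of both the action on $X$ and the action on $N$, it then suffices to bound $\log\frac{\mu_1}{\mu_d}(\rho(\gamma))$ above and below by an affine function of $\dist_X(x_0,\gamma(x_0))$, uniformly in $\gamma\in\Gamma$.

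The lower bound is immediate from Lemma~\ref{lem:D-}: since $\log\frac{\mu_1}{\mu_d}(\rho(\gamma)) \geq \log\frac{\mu_k}{\mu_{k+1}}(\rho(\gamma)) \geq -\log\!\big(C C_K^4 e^{2cR}\big) + c\,\dist_X(x_0,\gamma(x_0))$. For the upper bound I would use that we have already shown $\rho$ is $\Psf_k$-Anosov relative to $\peripherals$; hence Proposition~\ref{prop:eigenvalue data in rel Anosov repn} gives that $\rho(P)$ is weakly unipotent for every $P\in\peripherals$, and now Lemma~\ref{lem:GM upper bound on mu1/mud} — which is exactly the place where the hypothesis that $X$ is a Groves--Manning cusp space is used — yields $\log\frac{\mu_1}{\mu_d}(\rho(\gamma)) \leq C\,\dist_X(\gamma(x_0),x_0) + 2\dist_X(x_0,\id)$. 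Combining the two estimates with the comparison of the first paragraph produces constants $L\geq 1$ and $D\geq 0$ such that $\tfrac1L\dist_X(x_0,\gamma(x_0)) - D \leq \dist_N(p_0,\rho(\gamma)(p_0)) \leq L\,\dist_X(x_0,\gamma(x_0)) + D$ for all $\gamma\in\Gamma$.

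Finally I would assemble the quasi-isometry. Because $\Gamma$ acts properly discontinuously on $X$, the stabilizer $\Stab_\Gamma(x_0)$ is finite; put $D' := \max\{\dist_N(p_0,\rho(s)(p_0)) : s\in\Stab_\Gamma(x_0)\}$. Choosing for each $y\in\Gamma(x_0)$ an element $\gamma_y\in\Gamma$ with $\gamma_y(x_0)=y$, define $\Phi\colon\Gamma(x_0)\to\rho(\Gamma)(p_0)$ by $\Phi(y)=\rho(\gamma_y)(p_0)$. Applying the displayed estimate to $\gamma_{y_1}^{-1}\gamma_{y_2}$ and invoking equivariance shows that $\Phi$ is an $(L,D+2D')$-quasi-isometric embedding — the extra $D'$ terms absorbing the ambiguity in the choice of the $\gamma_y$ — and $\Phi$ is $D'$-coarsely onto since for any $\gamma\in\Gamma$ the element $\gamma_{\gamma(x_0)}^{-1}\gamma$ lies in $\Stab_\Gamma(x_0)$, so $\dist_N\big(\Phi(\gamma(x_0)),\rho(\gamma)(p_0)\big)\leq D'$. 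Within this lemma everything is bookkeeping; the genuinely substantial inputs are Lemma~\ref{lem:D-} (from the contracting flow) and, above all, Lemma~\ref{lem:GM upper bound on mu1/mud}, which rests on the structure theory of discrete weakly unipotent groups and the distance estimate in Groves--Manning cusp spaces and is the only point at which the Groves--Manning hypothesis is needed.
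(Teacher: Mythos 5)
Your proposal is correct and follows essentially the same route as the paper: the lower bound on $\log\frac{\mu_1}{\mu_d}(\rho(\gamma))$ comes from Lemma~\ref{lem:D-} and the upper bound from the weak unipotence of the peripherals (Proposition~\ref{prop:eigenvalue data in rel Anosov repn}) combined with the Groves--Manning singular value estimate, exactly as in the paper's proof (which invokes Proposition~\ref{prop:type-preserving<=>good upper bound on singular values}, itself resting on Lemma~\ref{lem:GM upper bound on mu1/mud}). The only difference is that you spell out the reduction to general $p_0$ and the construction of the orbit-to-orbit map, which the paper compresses into ``it suffices to consider $p_0=\SU(d,\Kb)$'' and ``thus the orbits are quasi-isometric.''
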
 

\begin{proof} It suffices to consider the case $p_0 = \SU(d,\Kb)$. Equation~\eqref{eqn:symmetric distance in prelims} implies that
$$
\dist_M( \rho(\gamma)(p_0), p_0) \asymp \log \frac{\mu_1}{\mu_d}(\rho(\gamma))
$$
for all $\gamma \in \Gamma$. By Propositions~\ref{prop:eigenvalue data in rel Anosov repn} and~\ref{prop:type-preserving<=>good upper bound on singular values} there exist $\alpha, \beta > 0$ such that 
$$
 \log \frac{\mu_1}{\mu_d}(\rho(\gamma)) \leq \beta + \alpha \dist_X(\gamma(x_0), x_0)
 $$
 for all $\gamma \in \Gamma$. Using Lemma~\ref{lem:D-} and possibly increasing $\alpha,\beta$ we may also assume that 
 $$
 \log \frac{\mu_1}{\mu_d}(\rho(\gamma)) \geq  \log \frac{\mu_k}{\mu_{k+1}}(\rho(\gamma)) \geq-\beta + \frac{1}{\alpha} \dist_X(\gamma(x_0), x_0)
 $$
 for all $\gamma \in \Gamma$. Thus the orbits are quasi-isometric. 
\end{proof}

\section{Growth rates for positive proper rational functions}\label{sec: growth rates for rational functions}

In this section we prove a quantitative lower bound on any positive proper rational function. This will be used in the next section to prove part (4) in Theorem~\ref{thm:structure of weakly unipotent discrete groups}. 

\begin{theorem}\label{thm:growth rate of positive proper rational functions} If $R \colon \Rb^d \to \Rb$ is rational, positive, everywhere defined, and 
$$
\lim_{x \to \infty} R(x) = \infty,
$$
then there exist $C, \delta >0$ such that $R(x) \geq C\norm{x}_2^{\delta}$ for all $x \in \Rb^d$. 
\end{theorem}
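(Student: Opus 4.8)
The statement is a quantitative Łojasiewicz-type inequality at infinity for a rational function that is globally defined and proper. Write $R = P/Q$ with $P, Q \in \Rb[x_1,\dots,x_d]$ coprime. Since $R$ is everywhere defined and coprime, the zero set of $Q$ must be empty (otherwise $R$ would blow up or fail to extend along a generic point of $\{Q=0\}$, contradicting that $R$ is a well-defined real-valued function on all of $\Rb^d$); so $Q$ is nowhere vanishing, hence of constant sign, and we may assume $Q > 0$ everywhere. The first reduction is therefore to the case where $R = P/Q$ with $P > 0$ (since $R > 0$) and $Q > 0$ on all of $\Rb^d$, and $R(x) \to \infty$ as $\|x\|_2 \to \infty$.

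**Using the Nullstellensatz for positive-definite denominators.** The paper signals that the key input is the version of the Positivstellensatz/Nullstellensatz from \cite{FHMM2016} for rational functions extending continuously to all of $\Rb^d$. The plan is to apply it to the function $x \mapsto R(x) - 1$, which is positive outside a compact set $K = \{R \le 1\}$ (nonempty and bounded by properness), and more usefully to exploit that $R$, being proper and continuous on $\Rb^d$, attains a positive minimum $m > 0$. The cleanest route: one wants a polynomial lower bound, so consider the behaviour of $P$ and $Q$ separately. Let $\deg Q = 2e$ (it is even, since $Q > 0$ forces even degree with positive leading form — more carefully, $Q$ having no real zeros and the leading form being a limit of positive values forces the leading form to be positive semidefinite, hence $Q(x) \lesssim \|x\|_2^{2e}$ with a matching lower bound $Q(x) \gtrsim \|x\|_2^{2e}$ again because a positive-semidefinite form that is coprime-compatible with $Q>0$ is in fact positive definite on the sphere by a compactness argument). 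Then $R(x) = P(x)/Q(x) \asymp P(x)/(1 + \|x\|_2^{2e})$, and since $R(x) \to \infty$ we get $P(x) \to \infty$ as well and $\deg P > 2e$; writing $\delta_0 = \deg P - \deg Q \ge 1$, one expects $R(x) \asymp \|x\|_2^{\delta_0}$ along ``most'' directions, but the leading form of $P$ may vanish in some directions, which is exactly where the quantitative content lives.

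**The core estimate via \cite{FHMM2016}.** Here is where I expect the real work. On the unit sphere $S^{d-1}$, write $x = t\omega$ with $t = \|x\|_2$. One wants to show $\inf_{\omega} R(t\omega) \gtrsim t^{\delta}$ for some $\delta > 0$ and all large $t$. The function $R$ restricted to each ray is a rational function of $t$ whose growth order depends on $\omega$; properness says this order is positive for every $\omega$, and the issue is uniformity as $\omega$ varies (the order can drop along sequences $\omega_n$ approaching a ``bad'' direction). The strategy is: suppose for contradiction that no such $C,\delta$ exist, i.e. for every $n$ there is $x_n$ with $R(x_n) < \frac{1}{n}\|x_n\|_2^{1/n}$ and (by properness) $\|x_n\| \to \infty$. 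Extract $\omega_n = x_n/\|x_n\| \to \omega_\infty$. The contradiction should come from comparing $R$ with an explicit polynomial certificate produced by the rational Nullstellensatz of \cite{FHMM2016}: since $R - m/2$ is a rational function, continuous and strictly positive on all of $\Rb^d$, that theorem produces an identity expressing it (or a suitable power, times a sum of squares) as a quotient with controlled numerator, from which a polynomial lower bound $R(x) \ge c/(1+\|x\|_2)^N$ follows near the bad locus — but combined with properness and a Puiseux/curve-selection argument along the curve approaching direction $\omega_\infty$ this upgrades to $R(x) \gtrsim \|x\|_2^{\delta}$. The main obstacle, and the step I would spend the most care on, is extracting the uniform exponent $\delta$: pointwise one only knows the growth order is positive in each direction, and translating the algebraic input of \cite{FHMM2016} into a single $\delta$ that works globally requires either a careful curve-selection-lemma argument (reducing to one-variable Puiseux estimates along real analytic arcs $\gamma(s)$ with $\|\gamma(s)\| \to \infty$ and minimizing $R\circ\gamma$) or a direct degree bound on the Nullstellensatz certificate. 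I would structure the final write-up around curve selection: if the desired inequality fails, select a real analytic (or Puiseux) arc $s \mapsto \gamma(s)$ with $\|\gamma(s)\|_2 \to \infty$ along which $R(\gamma(s)) / \|\gamma(s)\|_2^{\delta} \to 0$ for every $\delta > 0$; then $R \circ \gamma$ and $\|\gamma\|_2$ are both Puiseux series in $s$, their leading exponents are well defined, and the failure forces the leading exponent of $R \circ \gamma$ to be $\le 0$, i.e. $R$ stays bounded along $\gamma$ while $\|\gamma\|_2 \to \infty$ — contradicting properness. This reduces the whole theorem to: (i) the reduction to $P,Q > 0$; (ii) curve selection at infinity for semialgebraic sets; (iii) Puiseux-series leading-exponent bookkeeping; with \cite{FHMM2016} invoked to guarantee the global continuity/positivity structure that makes (i) legitimate.
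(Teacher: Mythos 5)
Your overall route---a {\L}ojasiewicz-type inequality at infinity proved by semialgebraic curve-selection and Puiseux methods---is genuinely different from the paper's. The paper compactifies $\Rb^d$ inside $\proj(\Rb^{d+1})$, observes that $1/R$ extends continuously and vanishes on the hyperplane at infinity, and uses the regulous Nullstellensatz of \cite{FHMM2016} to prove the local H\"older bound of Lemma~\ref{lem:estimate on 1/R} (from $f^N = h\cdot x_1$ with $h$ regulous); reading that bound in the affine charts at infinity gives the theorem. Your route can be made to work, but as written it has a genuine gap at exactly the step you yourself flag as the hard one.

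The gap: you cannot ``select a real analytic arc $\gamma$ with $\norm{\gamma(s)}_2\to\infty$ along which $R(\gamma(s))/\norm{\gamma(s)}_2^{\delta}\to 0$ for every $\delta>0$'' by curve selection, because the bad set depends on $\delta$, and curve selection applies to one semialgebraic set at a time; the existence of a single arc witnessing sub-polynomial growth for all $\delta$ simultaneously is precisely the uniformity the theorem asserts, so invoking it is circular. The standard repair is to work with $\theta(t):=\min_{\norm{x}_2=t}R(x)$, which is semialgebraic by Tarski--Seidenberg and hence admits a Puiseux expansion $\theta(t)=ct^{q}(1+o(1))$ at infinity; properness forces $c>0$ and $q>0$, and together with $\min_{\Rb^d}R>0$ this yields the theorem. (Equivalently, use definable choice to select a semialgebraic arc realizing the minimum on each sphere; then your Puiseux bookkeeping is valid.) Two further defects in the write-up: the claimed lower bound $Q(x)\gtrsim\norm{x}_2^{2e}$ is false (take $Q=x_1^2+1$ on $\Rb^2$; a positive polynomial need not have positive definite leading form), and the step in which \cite{FHMM2016} is supposed to yield $R(x)\ge c/(1+\norm{x}_2)^N$ which then ``upgrades'' to $R(x)\gtrsim\norm{x}_2^{\delta}$ does not make sense---a decaying lower bound cannot be upgraded to a growing one, and this is not the role the Nullstellensatz plays in the paper. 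Neither claim is load-bearing for the o-minimal route, but both should be removed; if you complete the argument via the semialgebraic minimum function you do not need \cite{FHMM2016} at all.
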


We will deduce the result from the following lemma. 

\begin{lemma}\label{lem:estimate on 1/R} Suppose that $f \colon \Rb^d \to \Rb$ is rational, extends to a continuous function $\hat{f}\colon \Rb^d \to \Rb$, and $\hat{f} \equiv 0$ on the set $\{x_1 = 0\}$. Then for any compact subset $K \subset \Rb^d$ there exist $C, \delta > 0$ such that: if $x \in K$, then 
$$
\abs{\hat{f}(x)} \leq C \abs{x_1}^{\delta}.
$$
\end{lemma}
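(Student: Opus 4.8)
The plan is to recognize $\hat{f}$ as a \emph{regulous function} on $\Rb^d$ — a continuous function agreeing with a rational function on a Zariski-dense open set — and to invoke the Nullstellensatz for such functions established in~\cite{FHMM2016}. That $\hat{f}$ is regulous is routine: the graph of the rational function $f$ is semialgebraic, its topological closure is therefore semialgebraic, and by continuity of $\hat{f}$ this closure is exactly the graph of $\hat{f}$. The hypothesis says precisely that $\hat{f}$ vanishes on the hyperplane $\{x_1 = 0\}$, which is the zero set $\mathcal{Z}(x_1)$ of the coordinate polynomial $x_1$.

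The key step is the following consequence of the regulous Nullstellensatz: since $\hat{f}$ is regulous and vanishes on $\mathcal{Z}(x_1)$, some power of $\hat{f}$ lies in the ideal generated by $x_1$ in the ring of regulous functions on $\Rb^d$. In other words, there exist an integer $m \geq 1$ and a regulous function $g \colon \Rb^d \to \Rb$ with
$$
\hat{f}(x)^{m} = x_1\, g(x) \qquad \text{for all } x \in \Rb^d .
$$
(Should one wish to use only the real version of the Nullstellensatz, one gets instead $\hat{f}^{2m} + \sum_i g_i^2 = x_1 h$ with $g_i, h$ regulous; since the left-hand side is nonnegative this already forces $0 \leq \hat{f}^{2m} \leq x_1 h = \abs{x_1 h}$, which is all that is used below.)

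It remains to specialize to $K$. Since $g$ is continuous and $K$ is compact, $M := \sup_{x \in K} \abs{g(x)}$ is finite, so $\abs{\hat{f}(x)}^{m} \leq M \abs{x_1}$ for every $x \in K$, that is,
$$
\abs{\hat{f}(x)} \leq M^{1/m}\, \abs{x_1}^{1/m} \qquad \text{for all } x \in K,
$$
which is the asserted bound with $C = M^{1/m}$ and $\delta = 1/m$.

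The only substantive input is the regulous Nullstellensatz of~\cite{FHMM2016}; everything else is elementary, so I expect no real obstacle beyond correctly extracting the statement needed (a power of a regulous function vanishing on $\{x_1 = 0\}$ is divisible by $x_1$ in the regulous ring). An alternative route bypasses the regulous machinery: $\hat{f}$ is a continuous semialgebraic function whose zero set contains $\{x_1 = 0\}$, so the classical \L ojasiewicz inequality for semialgebraic functions directly yields $N \geq 1$ and $C' > 0$ with $\abs{\hat{f}(x)}^{N} \leq C' \abs{x_1}$ on $K$. Given that Section~\ref{sec: growth rates for rational functions} is organized around~\cite{FHMM2016}, I would use the first route.
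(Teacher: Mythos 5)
Your proof is correct and follows essentially the same route as the paper: the paper applies the regulous Nullstellensatz of~\cite{FHMM2016} to the real ideal $J=(x_1)$ to obtain $\hat f^N = h\cdot x_1$ with $h$ regulous, and then bounds $h$ on the compact set $K$, exactly as you do.
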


Delaying the proof of the lemma we prove the theorem.

\begin{proof}[Proof of Theorem~\ref{thm:growth rate of positive proper rational functions}] 

We identify $\Rb^d$ with  the affine chart 
$$
\{ [1:x_1:\dots:x_d] : x_1,\dots, x_d \in \Rb\}
$$ 
in $\proj(\Rb^{d+1})$. Then $1/R$ extends to a continuous function $f \colon \proj(\Rb^{d+1}) \to \Rb$ where $f \equiv 0$ on $\proj(\Rb^{d+1}) \smallsetminus \Rb^d$. 

For $j=1, \dots, d$ let $\phi_j \colon \Rb^d \to \proj(\Rb^{d+1})$ be the map 
$$
\phi_j(y_1,\dots, y_d) = \left[ y_1 : \dots : y_{j} : 1 : y_{j+1} : \dots : y_d\right].
$$
Then 
$$
\proj(\Rb^{d+1}) = \Rb^d \cup \bigcup_{j=1}^{d} \phi_j\left(  [-1,1]^d \right). 
$$

Each $f \circ \phi_j$ satisfies Lemma~\ref{lem:estimate on 1/R} and so there exist 
$C_0, \delta > 0$ such that 
$$
f \circ \phi_j(y) \leq C_0 \abs{y_1}^{\delta}
$$
when $y \in[-1,1]^d$. By continuity and the positivity of $R$, there exists $C_1 > 0$ such that 
$$
R(x) \geq C_1 \norm{x}_2^{\delta}
$$
when $x \in [-1,1]^d$. 

We claim that $\delta$ and $C := \max \left\{ C_1, \frac{d^{\delta/2}}{C_0} \right\}$ satisfy the theorem. If $x \in [-1,1]^d$, this follows from the definition of $C_1$. So suppose that 
$x \notin [-1,1]^d$. Fix $1 \leq j \leq d$ such that $\abs{x_j}$ is maximal. Let 
$$
y := \left(\frac{1}{x_j}, \frac{x_1}{x_j}, \dots, \frac{x_{j-1}}{x_j}, \frac{x_{j+1}}{x_j}, \dots, \frac{x_{d}}{x_j}  \right).
$$
Then $y \in [-1,1]^d$ and $x = \phi_{j}(y)$. So 
$$
R(x) = \frac{1}{f \circ \phi_{j}(y)} \geq \frac{1}{C_0\abs{y_1}^\delta}= \frac{1}{C_0} \abs{x_j}^{\delta} \geq \frac{d^{\delta/2}}{C_0} \norm{x}_2^\delta
$$
(where in the last inequality we used the maximality of $\abs{x_j}$). 
\end{proof} 

\subsection{Proof of Lemma~\ref{lem:estimate on 1/R}} To prove the lemma we need some terminology and a result from~\cite{FHMM2016}. 

Following~\cite{FHMM2016},  for $k \in \Zb_{\geq 0}$ a function $f \colon \Rb^d \to \Rb$ is called \emph{$k$-regulous} if $f$ is $\Cc^k$-smooth and coincides with a rational function on a Zariski open subset of $\Rb^d$. The set of $k$-regulous functions is denoted by $\Rc^k(\Rb^d)$, which we can either view as a subring of the rational functions $\Rb(x_1,\dots, x_d)$ on $\Rb^d$ or as a subring of the $\Cc^k$-smooth functions on $\Rb^d$. 

Recall that an ideal $J$ in the ring of polynomials $\Rb[x_1,\dots, x_d]$ is called \emph{real} if whenever $f_1^2 + \dots + f_m^2 \in J$ then $f_1,\dots, f_m \in J$. Also given an ideal $J \subset \Rb[x_1,\dots, x_d]$, let 
$$
\mathcal{Z}(J) := \{ x \in \Rb^d : f(x) = 0 \text{ for all } f \in J\}.
$$
Finally, given a subset $A \subset \Rb^d$, let 
$$
 \mathcal{I}_{\Rc^k}(A) := \{ f \in \Rc^k(\Rb^d) : f(x) = 0 \text{ for all } x \in A\}.
 $$

We will use the following version of the Nullstellensatz.

\begin{theorem}[{\cite[Th.\ 5.11]{FHMM2016}}] If $k \in \Zb_{\geq 0}$ and  $J \subset \Rb[x_1,\dots, x_d]$ is a real ideal, then 
$$
{\rm Rad}\left( \Rc^k(\Rb^d) \cdot J \right) = \mathcal{I}_{\Rc^k}( \mathcal{Z}(J) ).
$$
\end{theorem}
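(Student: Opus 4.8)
The strategy is to prove the two inclusions separately, the easy one formally and the reverse one by reducing to the classical real Nullstellensatz via a \L ojasiewicz-type estimate. The inclusion $\mathrm{Rad}\big(\Rc^k(\Rb^d)\cdot J\big)\subseteq \mathcal I_{\Rc^k}(\mathcal Z(J))$ is formal: every element of $\Rc^k(\Rb^d)\cdot J$ vanishes on $\mathcal Z(J)$ since each element of $J$ does, and $\mathcal I_{\Rc^k}(\mathcal Z(J))$ is a radical ideal of $\Rc^k(\Rb^d)$ because $f(x)^m=0$ forces $f(x)=0$ for a real-valued function $f$; hence the radical of the left-hand ideal lies inside the right-hand one. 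For the reverse inclusion, fix $f\in\mathcal I_{\Rc^k}(\mathcal Z(J))$; I must produce $N\ge 1$ with $f^N\in\Rc^k(\Rb^d)\cdot J$. Put $V:=\mathcal Z(J)$, a real algebraic subset of $\Rb^d$ on which $f$ vanishes. By the classical real Nullstellensatz, the polynomial vanishing ideal $\mathcal I(V)\subseteq\Rb[x_1,\dots,x_d]$ equals the real radical of $J$, which by hypothesis equals $J$. So the theorem reduces to the statement: \emph{if $V\subseteq\Rb^d$ is algebraic and $f\in\Rc^k(\Rb^d)$ vanishes on $V$, then $f^N\in\Rc^k(\Rb^d)\cdot\mathcal I(V)$ for some $N$.}

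To prove this, write $\mathcal I(V)=(g_1,\dots,g_r)$ (possible since $\Rb[x]$ is Noetherian) and set $G:=g_1^2+\dots+g_r^2$, so that $G\ge 0$, $G\in\mathcal I(V)$, and $V=\{G=0\}$. Since $f$ is continuous and semialgebraic and $\{f=0\}\supseteq\{G=0\}$, the \L ojasiewicz inequality for continuous semialgebraic functions supplies an integer $N_0$ (uniform, the \L ojasiewicz exponent of the pair) and, near each point of $V$, a constant $C$ with $|f|^{N_0}\le C\,G$; since $f(x)\to 0$ as $x$ approaches $V$, replacing $N_0$ by $2N_0$ shows that the rational function $f^{2N_0}/G$, defined a priori on the Zariski-dense set $\{G\ne 0\}$, extends continuously by $0$ across $V$, hence is a continuous rational, i.e.\ $0$-regulous, function on $\Rb^d$. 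To upgrade this to an element of $\Rc^k(\Rb^d)$ I would enlarge $2N_0$ to a suitable $N$ and apply the analogous \L ojasiewicz (gradient) estimates to the partial derivatives of $f^N$ of order at most $k$ — equivalently, use that $f$ restricts to a regular function on each stratum of a finite algebraic stratification of $\Rb^d$ — so that $f^N/G$ together with all its partials of order $\le k$ extend continuously across $V$. Since $f^N/G$ is visibly rational, this makes $h:=f^N/G$ an element of $\Rc^k(\Rb^d)$, and then $f^N=h\,G\in\Rc^k(\Rb^d)\cdot\mathcal I(V)$, completing the reduction and hence the proof.

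The formal inclusion, the real-Nullstellensatz reduction, and the zeroth-order estimate are routine; the substance is the $\Cc^k$ step — showing that a sufficiently high power of a $k$-regulous function vanishing on $V$ is divisible by $G$ \emph{inside the ring $\Rc^k(\Rb^d)$}, i.e.\ that $f$ vanishes along $V$ to order at least $k$ in the regulous sense. The conceptually cleanest route, and the one I expect to be needed to make the derivative estimates transparent, is to pass to a resolution $\pi\colon Y\to\Rb^d$, a composite of blow-ups along smooth centres, after which $\pi^*f$ is regular and $\pi^{-1}(V)$ is a simple normal crossings divisor; upstairs divisibility of powers of $\pi^*f$ by the defining equations of $\pi^{-1}(V)$ follows from the ordinary Nullstellensatz read on the affine charts of $Y$, and one descends using that proper pushforwards of regular functions along such $\pi$ are regulous and that $\mathcal I(V)$ pulls back to generate the relevant divisorial ideal. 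Carrying out this descent while keeping all coefficients regulous, and patching across the affine charts of $Y$ without a smooth partition of unity, is the technical heart of the matter; this is precisely the content of the cited result \cite{FHMM2016}, which in the present paper is invoked rather than reproved.
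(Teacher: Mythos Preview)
The paper does not prove this theorem: it is quoted verbatim from \cite[Th.\ 5.11]{FHMM2016} and used as a black box in the proof of the subsequent lemma (Lemma~\ref{lem:estimate on 1/R}). You yourself recognize this in your final paragraph. There is therefore no ``paper's own proof'' to compare against; the only thing the paper does with this statement is apply it to the principal ideal $J=(x_1)$.

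Your sketch of how one might prove the result is a reasonable outline of the strategy in \cite{FHMM2016}: the easy inclusion is formal, the hard inclusion reduces via the classical real Nullstellensatz to a divisibility statement, and the substance lies in showing that $f^N/G$ is $k$-regulous rather than merely continuous, which is handled there by resolution of singularities. But since this is not content of the present paper, there is nothing further to assess here.
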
 

Now we are ready to prove the lemma. 

\begin{proof}[Proof of Lemma~\ref{lem:estimate on 1/R}] Consider the ideal $J = (x_1)$ in $\Rb[x_1,\dots, x_d]$. Then $J$ is a real ideal and $f \in  \mathcal{I}_{\Rc^k}( \mathcal{Z}(J) )$. So there exist $N \in \Nb$ and $h \in \Rc^k(\Rb^d)$ such that $f^N = h \cdot x_1$. So if $K \subset \Rb^d$ is compact and $C := \max \left\{ \abs{h(x)}^{1/N} : x \in K\right\}$, then 
$$
\abs{\hat{f}(x)} \leq C \abs{x_1}^{1/N} 
$$
for all $x \in K$. 
\end{proof}

\section{The structure of weakly unipotent discrete groups}\label{sec: structure of weakly unipotent groups}

Recall, from Proposition~\ref{prop:eigenvalue data in rel Anosov repn}, that the image of a peripheral subgroup under a relatively Anosov representation is weakly unipotent. In this section we prove a structure theorem for weakly unipotent discrete groups which will be fundamental in the arguments that follow. 

Given a Lie group $\mathsf{G}$, we let $\mathsf{G}^0 \leq \mathsf{G}$ denote the connected component of the identity. 

\begin{theorem} \label{thm:structure of weakly unipotent discrete groups}
Suppose that $\Gamma \leq \SL(d,\Rb)$ is a weakly unipotent discrete group. 
\begin{enumerate}
\item $\Gamma$ is virtually nilpotent. 
\item $\Gamma$ is a cocompact lattice in its Zariski closure $\mathsf{G}:=\overline{\Gamma}^{Zar}$. Moreover 
\begin{enumerate} 
\item $\mathsf{G} = \mathsf{L} \ltimes \mathsf{U}$ where $\mathsf{L}$ is compact and $\mathsf{U}$ is unipotent. 
\item $\mathsf{G}^0 = \mathsf{L}^0 \times \mathsf{U}$ and $\mathsf{L}^0$ is Abelian. 
\end{enumerate}
\item If $S$ is a finite symmetric generating set  of $\Gamma$, then there exist $\alpha, \beta > 0$ such that 
$$
\log  \frac{\mu_1}{\mu_d}(\gamma) \leq \alpha \log \abs{\gamma}_S +\beta
$$
for all $\gamma \in \Gamma$. 
\item If $\Gamma$ is $\mathsf{P}_k$-divergent and $S$ is a finite symmetric generating set of $\Gamma$, then there exist $\alpha, \beta > 0$ such that 
$$
\log  \frac{\mu_k}{\mu_{k+1}}(\gamma) \geq \alpha \log \abs{\gamma}_S +\beta
$$
for all $\gamma \in \Gamma$. 
\end{enumerate}
\end{theorem}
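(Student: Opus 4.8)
The plan is to deduce the estimate from Theorem~\ref{thm:growth rate of positive proper rational functions} by encoding the singular value gap $\tfrac{\mu_k}{\mu_{k+1}}$ as (the square root of) a positive, everywhere-defined, proper rational function on the Lie algebra of the unipotent radical. First I would invoke part~(2) of the theorem, which we may assume already proved: $\mathsf{G} := \overline{\Gamma}^{Zar} = \mathsf{L}\ltimes\mathsf{U}$ with $\mathsf{L}$ compact, $\mathsf{U}$ unipotent, $\mathsf{G}^0 = \mathsf{L}^0\times\mathsf{U}$ with $\mathsf{L}^0$ compact, and $\Gamma$ a cocompact lattice in $\mathsf{G}$. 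Replacing $\Gamma$ by the finite-index subgroup $\Gamma\cap\mathsf{G}^0$ (with its own finite generating set) changes word lengths and singular value ratios only by bounded multiplicative/additive amounts, so after adjusting constants at the end it suffices to treat $\gamma\in\Gamma\cap\mathsf{G}^0 = \Gamma\cap(\mathsf{L}^0\times\mathsf{U})$. Writing $\gamma = (\ell_\gamma,u_\gamma)$, compactness of $\mathsf{L}^0$ gives $\tfrac{\mu_k}{\mu_{k+1}}(\gamma)\asymp\tfrac{\mu_k}{\mu_{k+1}}(u_\gamma)$, while the \v{S}varc--Milnor lemma together with the easy ``one-parameter path'' direction of the comparison between the word metric of a cocompact lattice and the ambient group (following $t\mapsto\exp(t\log u_\gamma)$ in a left-invariant metric that splits as a product on $\mathsf{G}^0$) gives $\abs{\gamma}_S \lesssim \norm{\log u_\gamma} + 1$; in particular $\norm{\log u_\gamma}\gtrsim\abs{\gamma}_S$ once $\abs{\gamma}_S$ is large. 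Note I only need this one direction of the word-length/log-norm comparison, not the finer homogeneous-norm lower bound.

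Next I would transfer $\mathsf{P}_k$-divergence from $\Gamma$ to the ambient group $\mathsf{U}$. The image $\Lambda$ of $\Gamma\cap\mathsf{G}^0$ under the projection $\mathsf{G}^0 = \mathsf{L}^0\times\mathsf{U}\to\mathsf{U}$ is a cocompact subgroup of $\mathsf{U}$ (image of a cocompact lattice under a surjection with compact kernel). So if $(u_n)$ escapes in $\mathsf{U}$, choose $\lambda_n\in\Lambda$ within bounded distance of $u_n$ and lift each $\lambda_n$ to $\gamma_n\in\Gamma\cap\mathsf{G}^0$; then $(\gamma_n)$ escapes in $\Gamma$, so $\tfrac{\mu_k}{\mu_{k+1}}(\gamma_n)\to\infty$, and peeling off the bounded $\mathsf{L}^0$-factor and the bounded correction $u_n^{-1}\lambda_n$ yields $\tfrac{\mu_k}{\mu_{k+1}}(u_n)\to\infty$. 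Equivalently, since $\exp\colon\mathfrak{u}\to\mathsf{U}$ is a proper homeomorphism, $\tfrac{\mu_k}{\mu_{k+1}}(\exp X)\to\infty$ as $X\to\infty$ in $\mathfrak{u}\cong\Rb^N$.

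Now the key step. Because $X\in\mathfrak{u}$ is nilpotent, the entries of $\exp X$ are polynomials in $X$, hence so are the entries of $A_X := \wedge^k\exp X$ and of $\wedge^2 A_X$. Set
\[
R(X) := \frac{\norm{A_X}_{HS}^4}{\binom{d}{k}^2\,\norm{\wedge^2 A_X}_{HS}^2}.
\]
The numerator is everywhere positive ($\exp X$ is invertible, so $A_X\neq 0$), and the denominator never vanishes on $\Rb^N$: it vanishes only if $A_X$ has rank $\leq 1$, which is impossible since $A_X$ is invertible of size $\binom dk\geq 2$ (the cases $k=0,d$ being vacuous). Hence $R$ is a positive, everywhere-defined rational function on $\Rb^N$. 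Using that the singular values of $\wedge^k g$ are the products $\mu_{i_1}(g)\cdots\mu_{i_k}(g)$, that $\norm{\wedge^j g}_{op} = \mu_1\cdots\mu_j(g)$, and comparing Hilbert--Schmidt with operator norms, one checks $R(X)\asymp_{d,k}\bigl(\tfrac{\mu_k}{\mu_{k+1}}(\exp X)\bigr)^2$; combined with the previous paragraph this gives $\lim_{X\to\infty}R(X)=\infty$. Theorem~\ref{thm:growth rate of positive proper rational functions} then produces $C,\delta>0$ with $R(X)\geq C\norm{X}_2^\delta$, hence $\tfrac{\mu_k}{\mu_{k+1}}(\exp X)\gtrsim\norm{X}_2^{\delta/2}$ for every $X\in\mathfrak{u}$.

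Finally I would assemble the estimate: for $\gamma\in\Gamma\cap\mathsf{G}^0$ with $\abs{\gamma}_S$ large,
\[
\frac{\mu_k}{\mu_{k+1}}(\gamma)\asymp\frac{\mu_k}{\mu_{k+1}}(u_\gamma)\gtrsim\norm{\log u_\gamma}^{\delta/2}\gtrsim\abs{\gamma}_S^{\delta/2},
\]
so taking logarithms gives $\log\tfrac{\mu_k}{\mu_{k+1}}(\gamma)\geq\alpha\log\abs{\gamma}_S - \beta$ for suitable $\alpha,\beta>0$; the finitely many $\gamma$ with $\abs{\gamma}_S$ below the threshold are absorbed by enlarging $\beta$, and passing back from $\Gamma\cap\mathsf{G}^0$ (and its generating set) to $\Gamma$ costs only bounded constants. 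I expect the main difficulty to be bookkeeping rather than conceptual: verifying carefully that the denominator of $R$ is nowhere zero (so that Theorem~\ref{thm:growth rate of positive proper rational functions} genuinely applies), that $\mathsf{P}_k$-divergence really does descend to $\mathsf{U}$ via cocompactness of $\Gamma\cap\mathsf{G}^0$ in $\mathsf{G}^0$, and that the word-length versus Lie-algebra-norm comparison is tracked correctly through the structure theorem; the genuinely new ingredient beyond the already-established results is simply exhibiting the right rational function.
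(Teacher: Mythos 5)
Your argument for part (4) is correct and is essentially the proof the paper gives: the paper likewise encodes $\tfrac{\mu_k}{\mu_{k+1}}(e^Y)$ as the square root of a positive, everywhere-defined rational function on $\mathfrak{u}$ (it uses $R(Y)=\norm{\wedge^k e^Y}_2^4/(\norm{\wedge^{k+1}e^Y}_2^2\cdot\norm{\wedge^{k-1}e^Y}_2^2)$, which is interchangeable with your $\norm{\wedge^k e^X}^4/\norm{\wedge^2(\wedge^k e^X)}^2$), transfers $\mathsf{P}_k$-divergence from $\Gamma$ to $\mathsf{U}$ via cocompactness of the lattice, applies Theorem~\ref{thm:growth rate of positive proper rational functions}, and compares $\abs{\gamma}_S$ with $\norm{Y}$ by the fundamental lemma of geometric group theory together with the unit-speed estimate for $t\mapsto e^{tY}$ (Lemma~\ref{lem:word metric and exponential function}). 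The only cosmetic differences are that the paper works with the decomposition $\gamma=\ell e^Y$, $\ell\in\mathsf{L}$, $Y\in\mathfrak{u}$, directly in $\mathsf{G}=\mathsf{L}\ltimes\mathsf{U}$ rather than passing to $\Gamma\cap\mathsf{G}^0$ and projecting to $\mathsf{U}$, and that divergence is transferred by approximating an escaping sequence $e^{Y_n}$ in $\mathsf{G}$ by lattice elements $\gamma_n$ with $\gamma_n^{-1}e^{Y_n}$ bounded, rather than by first projecting the lattice to $\mathsf{U}$; both routes work.

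The gap is that the statement to be proved is the whole theorem, and parts (1)--(3) are not addressed: you explicitly assume part (2), and parts (1) and (3) are never mentioned, yet none of them is free. In the paper, (2)(a) is Lemma~\ref{lem:structure of weakly unipotent}, which rests on Prasad's theorem that a reductive group with a Zariski-dense weakly unipotent subgroup is compact; (1) is proved by conjugating $\Gamma$ into block upper-triangular form with unitary diagonal blocks, pushing a generating set toward infinity in the symmetric space along a diagonal one-parameter subgroup, and invoking the Margulis lemma; cocompactness and (2)(b) use the splitting of $\mathsf{G}^0$ into its (compact, Abelian) semisimple part times $\mathsf{U}$ together with Malcev's theorem on lattices in simply connected nilpotent groups; and (3) follows from the polynomial bound on $\tfrac{\mu_1}{\mu_d}$ of words in $\mathsf{L}\ltimes\mathsf{U}$ (Lemma~\ref{lem: growth estimates in weakly unipotent groups}). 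Since your argument for (4) genuinely depends on (2) --- cocompactness of the lattice is what makes both the divergence transfer and the word-length comparison work --- the proposal is incomplete as a proof of the stated theorem, even though the part it does treat is sound.
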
 

For the rest of this section suppose that $\Gamma \leq \SL(d,\Rb)$ is a weakly unipotent discrete group with Zariski closure $\mathsf{G}$. By Lemma~\ref{lem:structure of weakly unipotent}, $\mathsf{G} = \mathsf{L} \ltimes \mathsf{U}$, where $\mathsf{L}$ is compact and $\mathsf{U}$ is the unipotent radical of $\mathsf{G}$. Thus (2)(a) is true, and then (3) follows from Lemma~\ref{lem: growth estimates in weakly unipotent groups}.

The next lemma will be used in the proof of (1). 

\begin{lemma} There exists a flag $\{0\} \subset V_1 \subset \dots \subset V_m =\Rb^d$ such that the projection of $\mathsf{G}$ to each $\GL(V_{j+1}/V_j)$ is compact. \end{lemma}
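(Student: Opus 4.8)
There exists a flag $\{0\} \subset V_1 \subset \dots \subset V_m = \Rb^d$ such that the projection of $\mathsf{G}$ to each $\GL(V_{j+1}/V_j)$ is compact.

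\textbf{Proof plan.} The plan is to produce the flag by a greedy construction and then analyze each successive quotient separately, using the decomposition already established for $\mathsf{G}$. First I would set $V_0 = \{0\}$ and, given a proper $\mathsf{G}$-invariant subspace $V_j \subsetneq \Rb^d$, let $V_{j+1} \supsetneq V_j$ be the preimage in $\Rb^d$ of a nonzero $\mathsf{G}$-invariant subspace of $\Rb^d / V_j$ of \emph{minimal} dimension; after finitely many steps this terminates at $V_m = \Rb^d$. By construction each quotient $W_j := V_{j+1}/V_j$ is an irreducible $\mathsf{G}$-module, and the key point will be that irreducibility of the subquotient forces the projection $\mathsf{G} \to \GL(W_j)$ to have compact image. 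Note that it is essential here to use a genuinely $\mathsf{G}$-invariant flag, not merely a flag triangularizing $\mathsf{U}$: otherwise the projection to $\GL(V_{j+1}/V_j)$ would not even be defined.

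Next I would fix $j$, write $\pi_j \colon \mathsf{G} \to \GL(W_j)$ for the induced representation, and invoke the decomposition $\mathsf{G} = \mathsf{L} \ltimes \mathsf{U}$ from Lemma~\ref{lem:structure of weakly unipotent}, with $\mathsf{L}$ compact and $\mathsf{U}$ the unipotent radical of $\mathsf{G}$. Since $\pi_j$ is a morphism of linear algebraic groups, $\pi_j(\mathsf{U})$ is again a group of unipotent transformations, and it is normal in $\pi_j(\mathsf{G})$ because $\mathsf{U}$ is normal in $\mathsf{G}$. By Kolchin's theorem $\pi_j(\mathsf{U})$ has a nonzero common fixed vector, so $W_j^{\pi_j(\mathsf{U})} \neq \{0\}$; and this subspace is $\pi_j(\mathsf{G})$-invariant by the usual normality computation $u \cdot (hw) = h \cdot (h^{-1}uh)w = hw$ for $h \in \pi_j(\mathsf{G})$, $u \in \pi_j(\mathsf{U})$, $w \in W_j^{\pi_j(\mathsf{U})}$. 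Irreducibility of $W_j$ then yields $W_j^{\pi_j(\mathsf{U})} = W_j$, i.e.\ $\pi_j(\mathsf{U}) = \{I\}$. Consequently every $g = \ell u \in \mathsf{G}$ (with $\ell \in \mathsf{L}$, $u \in \mathsf{U}$) has $\pi_j(g) = \pi_j(\ell)$, so the projection of $\mathsf{G}$ to $\GL(W_j)$ equals $\pi_j(\mathsf{L})$, which is compact as the continuous image of the compact group $\mathsf{L}$.

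I do not expect a serious obstacle in this lemma; the only things to be careful about are that the minimal-dimension choice genuinely produces irreducible subquotients (immediate from the definition) and that $\pi_j(\mathsf{U})$ really is a normal subgroup of unipotent transformations — both formal once one uses that the image of a unipotent algebraic group under a morphism of algebraic groups is unipotent and that normality passes to quotients. The single conceptual input, that a normal unipotent linear subgroup acting on an irreducible module acts trivially, is exactly Kolchin's fixed-vector theorem combined with the normality argument above.
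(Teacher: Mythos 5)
Your proof is correct, and it reaches the conclusion by a slightly different route than the paper. The paper argues by induction on dimension: since $\mathsf{U}$ is unipotent, its fixed subspace $W = \{w : u(w)=w \ \forall u \in \mathsf{U}\}$ is nonzero (Kolchin, implicitly) and proper when $\mathsf{U} \neq 1$, it is $\mathsf{G}$-invariant because $\mathsf{L}$ normalizes $\mathsf{U}$, and the projections of $\Gamma$ to $\GL(W)$ and $\GL(\Rb^d/W)$ are again weakly unipotent, so one recurses on both pieces (the base case $\mathsf{U}=1$ giving $\mathsf{G}=\mathsf{L}$ compact). You instead fix a full composition series of $\Rb^d$ as a $\mathsf{G}$-module in one step and show that on each irreducible subquotient the normal unipotent subgroup $\pi_j(\mathsf{U})$ must act trivially, again by Kolchin plus the normality/invariance of the fixed subspace, so that $\pi_j(\mathsf{G}) = \pi_j(\mathsf{L})$ is compact. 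The two arguments use exactly the same inputs — the decomposition $\mathsf{G} = \mathsf{L} \ltimes \mathsf{U}$ from Lemma~\ref{lem:structure of weakly unipotent} and the existence of nonzero fixed vectors for unipotent groups — but yours makes the irreducibility/normality mechanism explicit and avoids the induction, at the cost of invoking Kolchin's theorem by name, while the paper's version produces a specific flag (iterated $\mathsf{U}$-fixed subspaces) and keeps the argument self-contained. Both are complete; there is no gap in your version.
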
 

\begin{proof} If $\mathsf{U} = 1$, then $\mathsf{G}=\mathsf{L}$ is compact and the trivial flag $\{0\} \subset \Rb^d$ suffices. 

If $\mathsf{U}$ is non-trivial, then the subspace 
$$
W := \{ w \in \Rb^d : u(w) = w \text{ for all } u \in \mathsf{U}\}
$$
 is proper. Then, since $\mathsf{L}$ normalizes $\mathsf{U}$, $\mathsf{G}$ preserves the flag $\{0\} \subset W \subset \Rb^d$. Let $\Gamma_1, \mathsf{G}_1\subset \GL(W)$ and $\Gamma_2, \mathsf{G}_2 \subset \GL(\Rb^d / W)$ denote the projections of $\Gamma$ and $\mathsf{G}$. Then $\Gamma_j$ is weakly unipotent and Zariski-dense in $\mathsf{G}_j$ for $j=1,2$. Notice that we can apply Lemma~\ref{lem:structure of weakly unipotent} to both $\Gamma_1$ and $\Gamma_2$. So by induction on dimension, there exists a flag
$$
\{0\}=V_0  \subset \dots \subset W \subset \dots  \subset V_m=\Rb^d
$$
with the desired properties. 
\end{proof} 

\begin{lemma} $\Gamma$ is virtually nilpotent. \end{lemma}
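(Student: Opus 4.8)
The plan is to combine the flag produced by the preceding lemma with the structure theory of discrete subgroups of compact-by-nilpotent Lie groups.

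First, using the flag $\{0\} = V_0 \subset V_1 \subset \dots \subset V_m = \Rb^d$, I would choose on each quotient $V_{j+1}/V_j$ an inner product invariant under the (relatively compact, by the preceding lemma) image $\mathsf{K}_j$ of $\mathsf{G}$ in $\GL(V_{j+1}/V_j)$, pick a splitting $\Rb^d = W_0 \oplus \dots \oplus W_{m-1}$ with $V_{j} = W_0 \oplus \dots \oplus W_{j-1}$, and let $Q$ be the inner product on $\Rb^d$ making the $W_j$ mutually orthogonal and restricting to the chosen inner product on each $W_j \cong V_{j+1}/V_j$. Every $\gamma \in \Gamma$ is then block upper triangular with respect to $W_\bullet$ with $Q$-orthogonal diagonal blocks, so $\Gamma \leq \mathsf{K} \ltimes \mathsf{N}$, where $\mathsf{K} = \prod_j \OO(W_j, Q|_{W_j})$ is compact and $\mathsf{N}$ is the unipotent (hence simply connected nilpotent) group of block-unitriangular matrices. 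Since $\mathsf{K} \ltimes \mathsf{N}$ is closed in $\SL(d,\Rb)$, the group $\Gamma$ is discrete in it. (Alternatively one may simply take $\mathsf{K} = \mathsf{L}$ and $\mathsf{N} = \mathsf{U}$ from Lemma~\ref{lem:structure of weakly unipotent}, since $\mathsf{G} = \overline{\Gamma}^{Zar}$ is closed.)

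It therefore suffices to prove the following general fact, which I would establish by induction on $\dim \mathsf{N}$: if $\mathsf{K}$ is a compact Lie group, $\mathsf{N}$ is a simply connected nilpotent Lie group, and $\Delta \leq \mathsf{K} \ltimes \mathsf{N}$ is discrete, then $\Delta$ is virtually nilpotent. When $\dim \mathsf{N} = 0$, $\Delta$ is a discrete subgroup of a compact group, hence finite. For the inductive step, let $\mathsf{Z} := Z(\mathsf{N})$, a nontrivial vector group which is normal in $\mathsf{K} \ltimes \mathsf{N}$, so that $(\mathsf{K} \ltimes \mathsf{N})/\mathsf{Z} \cong \mathsf{K} \ltimes (\mathsf{N}/\mathsf{Z})$ with $\dim(\mathsf{N}/\mathsf{Z}) < \dim \mathsf{N}$. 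A short argument, using that $\mathsf{Z} \cong \Rb^m$ is closed and $\Delta$ is discrete, shows the image $\bar\Delta$ of $\Delta$ in this quotient is again discrete, so by induction $\bar\Delta$ is virtually nilpotent. On the other hand $A := \Delta \cap \mathsf{Z}$ is a discrete subgroup of $\Rb^m$, hence free abelian of finite rank, and $\Delta$ acts on $A$ by conjugation through $\mathsf{K}$ (as $\mathsf{N}$ centralizes $\mathsf{Z}$), hence with eigenvalues of modulus $1$; since this action is by integer matrices on the lattice $A$, the eigenvalues are roots of unity by Kronecker's theorem, and a standard congruence-subgroup argument then shows a finite-index subgroup of the image of $\Delta$ in $\Aut(A)$ acts unipotently. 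Passing to a finite-index subgroup $\Delta_1 \leq \Delta$ we obtain an extension $1 \to A \to \Delta_1 \to \bar\Delta_1 \to 1$ with $A$ free abelian, $\bar\Delta_1$ nilpotent, and $\bar\Delta_1$ acting unipotently on $A$; a routine lower-central-series computation gives that $\Delta_1$ is nilpotent, so $\Delta$ is virtually nilpotent, completing the induction.

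The main obstacle is the inductive step — in particular verifying that the image $\bar\Delta$ modulo $\mathsf{Z}$ remains discrete, and the final extension lemma, which is the place where weak unipotence is really used (compactness of $\mathsf{K}$ forces the conjugation action on $\mathsf{Z}$, hence on $A$, to have unit-modulus and therefore root-of-unity eigenvalues). If one prefers to bypass the induction, one can instead note that $\mathsf{K} \ltimes \mathsf{N}$ is a Lie group of polynomial growth, so each of its discrete subgroups has polynomial growth and is virtually nilpotent by Gromov's theorem; or, having first shown $\Gamma$ is polycyclic (discrete subgroups of connected solvable Lie groups are polycyclic, after passing to the finite-index subgroup in which the compact part is a torus — using the Tits alternative to rule out compact semisimple factors), one can deduce polynomial growth of $\Gamma$ directly from part~(3) of Theorem~\ref{thm:structure of weakly unipotent discrete groups} together with a volume-packing estimate coming from discreteness of $\Gamma$ in $\SL(d,\Rb)$, and conclude by the Milnor--Wolf theorem.
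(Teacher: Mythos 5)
Your main inductive argument has a genuine gap at the step you yourself flag as the crux: it is \emph{not} true that the image of a discrete subgroup $\Delta \leq \mathsf{K}\ltimes\mathsf{N}$ in the quotient by $\mathsf{Z} = Z(\mathsf{N})$ is again discrete. A concrete counterexample already occurs among weakly unipotent discrete groups: let $\gamma \in \SL(4,\Rb)$ be the affine screw motion $\left(\begin{smallmatrix} A & e_3 \\ 0 & 1\end{smallmatrix}\right)$ with $A$ the rotation of $\Rb^3$ about the $z$-axis by an angle incommensurable with $\pi$. Then $\gamma$ is weakly unipotent, $\langle \gamma\rangle$ is discrete (the translation parts escape to infinity), $\mathsf{N}\cong\Rb$ is abelian so $\mathsf{Z}=\mathsf{N}$, and the image of $\langle\gamma\rangle$ in $\mathsf{K}$ is the dense subgroup $\{A^n\}$ of a circle --- not discrete, so neither the base case nor the inductive hypothesis applies to $\bar\Delta$. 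This failure is exactly the difficulty that the Bieberbach/Auslander theorems and the Margulis lemma are designed to overcome, and it cannot be repaired by a ``short argument''; note that the conclusion (here $\langle\gamma\rangle\cong\Zb$) survives, but your proof of it does not. Your second fallback is also garbled: one cannot ``rule out compact semisimple factors'' of $\mathsf{L}$ (e.g.\ $\mathsf{L}=\SO(3)$ genuinely occurs), so $\Gamma$ need not sit in a connected solvable Lie group.

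Your first fallback --- polynomial growth --- is essentially sound and can be made to work, but only under a finite generation hypothesis that the lemma does not assume: from part (3) of the theorem (whose proof is independent of this lemma) one gets $\log\frac{\mu_1}{\mu_d}(\gamma)\lesssim\log\abs{\gamma}_S$, so the orbit of the ball $B_\Gamma(n)$ in the symmetric space lies in a ball of radius $O(\log n)$, and discreteness plus a packing argument gives polynomial growth of $\Gamma$; then either Gromov's theorem, or more economically Tits' alternative (amenability excludes free subgroups, so $\Gamma$ is virtually solvable) followed by Milnor--Wolf, yields virtual nilpotence. The paper instead argues much more softly: it fixes a finite set $S$ generating a subgroup with the same Zariski closure, conjugates so that $\Gamma$ is block upper triangular with orthogonal diagonal blocks (using the same flag you use), and pushes the basepoint in the symmetric space along a one-parameter diagonal subgroup $a_t$ that crushes the off-diagonal blocks, so that $\max_{s\in S}\dist_M(sa_t\mathsf{K}, a_t\mathsf{K})\to 0$; the Margulis lemma then gives that $\Gamma_S$ is virtually nilpotent, and passing through the Zariski closure ($\mathsf{G}^0$ nilpotent, $\Gamma\cap\mathsf{G}^0$ of finite index) handles a possibly non-finitely-generated $\Gamma$. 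If you want to keep your approach, you should either adopt the Margulis lemma argument or honestly invoke the theorem that discrete subgroups of almost connected Lie groups of polynomial growth are finitely generated and virtually nilpotent, rather than the false discreteness-of-quotient claim.
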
 

\begin{proof} Let $\dist_M$ denote the standard symmetric space metric on $M := \SL(d,\Rb)/\SU(d,\Rb)$ defined in Equation~\eqref{eqn:symmetric distance in prelims}. Fix a finite symmetric set $S \subset \Gamma$ such that the group $\Gamma_S$ generated by $S$ has the same Zariski closure as $\Gamma$. 

We claim that $\Gamma_S$ is virtually nilpotent. Using the Margulis lemma, see~\cite[Th.\ 9.5]{BGS1985}, it suffices to show that 
$$
\inf_{p \in M} \max_{s \in S} \dist_M( s(p), p) = 0.
$$
Let $d_j := \dim V_{j}-\dim V_{j-1}$. Using the last lemma and conjugating, we can assume that 
$$
\Gamma \leq \left\{ \begin{pmatrix} A_1 & *  & \dots & *\\ 0 & \ddots & \ddots & \vdots \\ \vdots & \ddots & \ddots & * \\ 0 & \dots & 0 & A_m \end{pmatrix} : A_j \in \mathsf{U}(d_j, \Rb)\right\}.
$$
Fix real numbers $\lambda_1 >  \dots > \lambda_m$ with $\sum_{j=1}^m \lambda_j d_j = 0$ and let
$$
a_t := \bigoplus_{j=1}^m e^{\lambda_j t} \id_{d_j} \in \SL(d,\Rb). 
$$
Then, by choosing $t$ sufficiently large, we can make
$$
\max_{s \in S} \dist_M\left( s a_t \SU(d,\Rb), a_t \SU(d,\Rb) \right) 
= \max_{s \in S} \dist_M\left( a_{-t} s a_t \SU(d,\Rb), \SU(d,\Rb) \right)
$$
arbitrarily small. So $\Gamma_S$ is virtually nilpotent. 

Then the connected component of the identity in $\overline{\Gamma}_S^{Zar}=\overline{\Gamma}^{Zar}$ is nilpotent which implies that $\Gamma$ is virtually nilpotent. 
\end{proof} 

\begin{lemma} $\Gamma$ is a cocompact lattice in $\mathsf{G}$. Moreover, $\mathsf{G}^0 = \mathsf{L}^0 \times \mathsf{U}$ and $\mathsf{L}^0$ is Abelian. 
\end{lemma}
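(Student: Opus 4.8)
We have already established that $\Gamma$ is virtually nilpotent and that $\mathsf{G} = \mathsf{L} \ltimes \mathsf{U}$ with $\mathsf{L}$ compact and $\mathsf{U}$ unipotent. The plan is to first handle the structural claim about $\mathsf{G}^0$, and then deduce cocompactness.

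For the structural claim, I would argue as follows. Since $\Gamma$ is virtually nilpotent, a finite-index subgroup $\Gamma_0 \leq \Gamma$ is nilpotent, and its Zariski closure $\mathsf{N} := \overline{\Gamma_0}^{Zar}$ is a nilpotent algebraic group; moreover $\mathsf{N}$ has finite index in $\mathsf{G}$, so $\mathsf{G}^0 = \mathsf{N}^0$ is nilpotent. Now $\mathsf{G}^0$ is a connected nilpotent linear algebraic group, hence decomposes as a direct product $\mathsf{T} \times \mathsf{U}_0$ where $\mathsf{T}$ is its maximal torus (in the real points, a compact-by-split torus) and $\mathsf{U}_0$ is its unipotent radical — this is the standard structure theory of connected nilpotent algebraic groups (the Jordan decomposition is functorial and the semisimple and unipotent parts each form a subgroup which are normal and intersect trivially). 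Since $\mathsf{G} = \mathsf{L}\ltimes \mathsf{U}$ with $\mathsf{U}$ the unipotent radical of $\mathsf{G}$, we get $\mathsf{U}_0 = \mathsf{U} \cap \mathsf{G}^0 = \mathsf{U}$ (as $\mathsf{U}$ is already connected, being unipotent), and $\mathsf{T}$ maps isomorphically onto a subgroup of $\mathsf{L}$, necessarily $\mathsf{L}^0$ by a dimension/connectedness count. Weak unipotence forces $\mathsf{T}$ to have no split part — indeed any $\Rb$-split torus in the Zariski closure of a weakly unipotent group would produce elements with eigenvalues of absolute value $\neq 1$ — so $\mathsf{T} = \mathsf{L}^0$ is a compact torus, in particular abelian. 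This gives $\mathsf{G}^0 = \mathsf{L}^0 \times \mathsf{U}$ with $\mathsf{L}^0$ abelian.

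For cocompactness of $\Gamma$ in $\mathsf{G}$: it suffices (passing to the finite-index nilpotent subgroup $\Gamma_0$ and the finite-index subgroup $\mathsf{G}^0$) to show $\Gamma_0$ is a cocompact lattice in $\mathsf{G}^0 = \mathsf{L}^0 \times \mathsf{U}$. Project $\Gamma_0$ to each factor. The projection to $\mathsf{U}$ is a finitely generated torsion-free nilpotent group which is Zariski-dense in $\mathsf{U}$ (since $\Gamma_0$ is Zariski-dense in $\mathsf{G}^0$ — replacing $\Gamma_0$ if necessary by the relevant finite-index subgroup with full Zariski closure — its image projects Zariski-densely), and by Malcev's theorem a Zariski-dense finitely generated nilpotent subgroup of a simply connected unipotent group $\mathsf{U}$ is a cocompact lattice. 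The projection to the compact group $\mathsf{L}^0$ is automatically relatively compact. Combining: $\Gamma_0$ is a subgroup of $\mathsf{L}^0 \times \mathsf{U}$ whose image in $\mathsf{U}$ is a cocompact lattice and whose image in $\mathsf{L}^0$ lands in a compact group, and discreteness of $\Gamma_0$ in $\mathsf{G}^0$ combined with these facts gives that $\Gamma_0 \backslash \mathsf{G}^0$ is compact (the $\mathsf{U}$-direction is cocompact modulo $\Gamma_0$, and the remaining $\mathsf{L}^0$-direction is compact).

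The main obstacle I expect is the bookkeeping around finite-index subgroups and Zariski closures: the decomposition $\mathsf{G}^0 = \mathsf{L}^0 \times \mathsf{U}$ requires knowing $\mathsf{G}^0$ is nilpotent, which uses that $\Gamma$ is virtually nilpotent, but then one must be careful that the finite-index nilpotent subgroup of $\Gamma$ still has $\mathsf{G}^0$ (or a finite-index subgroup thereof) as Zariski closure so that Malcev's theorem applies cleanly. A clean way to organize this is to fix at the outset a finite-index subgroup $\Gamma_0 \leq \Gamma$ that is nilpotent \emph{and} satisfies $\overline{\Gamma_0}^{Zar} \supseteq \mathsf{G}^0$ (possible since $\mathsf{G}$ has finitely many components), so that $\overline{\Gamma_0}^{Zar}$ is a finite-index nilpotent subgroup of $\mathsf{G}$ with identity component $\mathsf{G}^0$; everything then takes place inside $\mathsf{G}^0$. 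The only other slightly delicate point is confirming that the projection of $\Gamma_0$ onto $\mathsf{U}$ along the direct product decomposition is still Zariski-dense in $\mathsf{U}$ and discrete enough for Malcev — this follows because the projection $\mathsf{G}^0 \to \mathsf{U}$ is an algebraic quotient map with compact kernel $\mathsf{L}^0$, so it carries the discrete Zariski-dense $\Gamma_0$ to a discrete (the kernel being compact) Zariski-dense subgroup of $\mathsf{U}$.
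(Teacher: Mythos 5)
Your proposal is correct and follows essentially the same route as the paper: both deduce that $\mathsf{G}^0$ is nilpotent from the virtual nilpotence of $\Gamma$, invoke the structure theorem for connected nilpotent algebraic groups (the paper cites Borel, Th.\ III.10.6, identifying your torus $\mathsf{T}$ with the subgroup of semisimple elements) together with weak unipotence to get $\mathsf{G}^0 = \mathsf{L}^0 \times \mathsf{U}$ with $\mathsf{L}^0$ compact abelian, and then obtain cocompactness by projecting a finite-index nilpotent subgroup onto $\mathsf{U}$ along the compact factor and applying Malcev's theorem. Your extra care about choosing $\Gamma_0$ with Zariski closure containing $\mathsf{G}^0$ and about discreteness of the projection (compact kernel) only fills in details the paper states without elaboration.
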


\begin{proof} 
First notice that $\mathsf{G}^0 = \mathsf{L}^0 \ltimes \mathsf{U}$ since $\mathsf{U}$ is the unipotent radical and hence by definition is connected.

Let $\mathsf{A} \subset \mathsf{G}^0$ denote the set of semisimple elements in $\mathsf{G}^0$. By \cite[Th.\ III.10.6]{Borel_linalg_groups}, $\mathsf{A}$ is an Abelian subgroup and $\mathsf{G}^0 = \mathsf{A} \times \mathsf{U}$. Since $\mathsf{G}^0$ is weakly unipotent, $\mathsf{A}$ must be compact. Finally, since $\mathsf{L}^0$ is compact, $\mathsf{L}^0$ consists of semisimple elements and hence is a subgroup of $\mathsf{A}$. So $\mathsf{L}^0$ is Abelian and commutes with $\mathsf{U}$. 

Fix a finite-index nilpotent subgroup $\Gamma_0 \leq \Gamma$ with  $\overline{\Gamma_0}^{Zar} = \mathsf{G}^0$. Let $\Gamma_{0}^\prime$ denote the projection of $\Gamma_0$ to $\mathsf{U}$ with respect to the decomposition $\mathsf{G}^0 = \mathsf{A} \times \mathsf{U}$. Then $\Gamma_{0}^\prime$ is discrete and Zariski-dense in $\mathsf{U}$. Further $\mathsf{U}$, being unipotent and connected, is simply connected. So by a theorem of Malcev (see e.g.\ \cite[Th.\ 2.3]{Raghunathan}), $\Gamma_{0}^\prime$ is a cocompact lattice in $\mathsf{U}$. Then, since $\mathsf{A}$ is compact, $\Gamma_0 \leq \mathsf{G}^0$ is a cocompact lattice. Finally, since  $\Gamma_0 \leq \Gamma$ and $\mathsf{G}^0 \leq \mathsf{G}$ are finite-index subgroups, we see that $\Gamma$ is a cocompact lattice of $\mathsf{G}$.
\end{proof}

Finally, to prove (4) we will use Theorem~\ref{thm:growth rate of positive proper rational functions}. In the lemmas that follow let $\mathfrak{u}$ denote the Lie algebra of $\mathsf{U}$ and fix a norm $\norm{\cdot}$ on $\mathfrak{u}$. 

\begin{lemma}\label{lem: rational function approximating singular values} For any $k \in \{1,\dots, d-1\}$, there exists a (real) rational function $R \colon \mathfrak{u} \to \Rb$ such that:
\begin{enumerate} 
\item $R$ is positive and defined everywhere. 
\item There exists $C > 0$ such that: if $Y \in \mathfrak{u}$, then 
$$
\frac{1}{C} \sqrt{R(Y)} \leq \frac{\mu_k}{\mu_{k+1}}(e^Y) \leq  C \sqrt{R(Y)}.
$$
\item If $\Gamma$ is $\mathsf{P}_k$-divergent, then $\displaystyle \lim_{Y \in \mathfrak{u}, Y \to \infty} R(Y) = \infty$.
\end{enumerate}
\end{lemma}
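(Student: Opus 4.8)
The plan is to construct $R$ from Hilbert--Schmidt norms of exterior powers of $e^Y$. Since $\mathfrak{u}$ is the Lie algebra of the unipotent group $\mathsf{U}$, every $Y \in \mathfrak{u}$ is a nilpotent matrix, so $e^Y = \sum_{j=0}^{d-1} Y^j/j!$ is a \emph{polynomial} map of $Y$; hence for each $j$ the entries of $\wedge^j e^Y$ (the $j \times j$ minors of $e^Y$) are polynomials in linear coordinates on $\mathfrak{u}$, and therefore so is $\norm{\wedge^j e^Y}_{HS}^2$, the sum of the squares of those entries. I would then set
$$
R(Y) := \frac{\norm{\wedge^k e^Y}_{HS}^4}{\norm{\wedge^{k-1} e^Y}_{HS}^2 \, \norm{\wedge^{k+1} e^Y}_{HS}^2},
$$
with the conventions that $\wedge^0 e^Y$ is the scalar $1$ (relevant when $k = 1$) and $\wedge^d e^Y = \det e^Y = 1$ (relevant when $k = d-1$). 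This is a ratio of polynomials in $Y$, hence rational; and since each factor is a sum of squares of minors of the invertible matrix $e^Y$ (or the scalar $1$), each is strictly positive for every $Y$, so $R$ is everywhere defined and positive. This gives (1).

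For (2), I would use that for $A \in \GL(d,\Rb)$ the largest singular value of $\wedge^j A$ equals $\mu_1(A)\cdots\mu_j(A)$, and that $\norm{\wedge^j A}_{HS}$ lies between this value and $\sqrt{\binom{d}{j}}$ times it (the $\binom{d}{j}$ singular values of $\wedge^j A$ are squared and summed to give $\norm{\wedge^j A}_{HS}^2$). Substituting $A = e^Y$ into the definition of $R$ and invoking the telescoping identity
$$
\frac{(\mu_1 \cdots \mu_k)^2}{(\mu_1 \cdots \mu_{k-1})(\mu_1 \cdots \mu_{k+1})} = \frac{\mu_k}{\mu_{k+1}}
$$
(using $\mu_1 \cdots \mu_d = \det e^Y = 1$ to handle $k=1$ and $k=d-1$) yields $\frac{1}{C}\sqrt{R(Y)} \le \frac{\mu_k}{\mu_{k+1}}(e^Y) \le C \sqrt{R(Y)}$, for instance with $C = \max\{\binom{d}{k},\, \sqrt{\binom{d}{k-1}\binom{d}{k+1}}\}$.

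For (3), assume $\Gamma$ is $\mathsf{P}_k$-divergent. By the structure theorem established above, $\Gamma$ is a cocompact lattice in $\mathsf{G} = \mathsf{L} \ltimes \mathsf{U}$, so $\mathsf{G} = \Gamma K$ for some compact $K \subset \mathsf{G}$. Let $Y_n \to \infty$ in $\mathfrak{u}$. Since $\exp \colon \mathfrak{u} \to \mathsf{U}$ is a homeomorphism and $\mathsf{U}$ is closed in $\SL(d,\Rb)$, the sequence $e^{Y_n}$ escapes to infinity in $\SL(d,\Rb)$. Writing $e^{Y_n} = \gamma_n k_n$ with $\gamma_n \in \Gamma$ and $k_n \in K$, boundedness of $(k_n)$ forces $\gamma_n \to \infty$ in $\Gamma$, so $\mathsf{P}_k$-divergence gives $\frac{\mu_k}{\mu_{k+1}}(\gamma_n) \to \infty$; since right multiplication by the bounded elements $k_n^{-1}$ changes each singular value by a factor bounded in terms of $K$ alone, also $\frac{\mu_k}{\mu_{k+1}}(e^{Y_n}) \to \infty$. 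By (2) this means $R(Y_n) \to \infty$, and as the escaping sequence was arbitrary, $\lim_{Y \to \infty} R(Y) = \infty$.

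The routine ingredients — polynomiality of $e^Y$ on $\mathfrak{u}$, comparability of $\norm{\cdot}_{HS}$ with the operator norm on each $\wedge^j \Rb^d$, and stability of singular values under multiplication by a bounded group element — are standard. The only step carrying real content is (3): one must transfer $\mathsf{P}_k$-divergence from the lattice $\Gamma$ to the full unipotent group $\mathsf{U}$, and that is exactly where cocompactness of $\Gamma$ in $\mathsf{G}$ and properness of $\exp$ on $\mathfrak{u}$ enter.
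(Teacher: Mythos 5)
Your proposal is correct and follows essentially the same route as the paper: the same rational function $R(Y) = \norm{\wedge^k e^Y}_2^4 / \bigl(\norm{\wedge^{k-1} e^Y}_2^2 \norm{\wedge^{k+1} e^Y}_2^2\bigr)$, the same comparison of Hilbert--Schmidt and operator norms on exterior powers for (2), and the same use of cocompactness of $\Gamma$ in $\mathsf{G}$ together with properness of $\exp$ on $\mathfrak{u}$ to transfer $\mathsf{P}_k$-divergence for (3).
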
 

\begin{proof} We start by introducing some notation. For a $d$-by-$d$ real matrix $A$ let 
$$
\norm{A}_2 := \sqrt{ \sum_{i,j=1}^d \abs{A_{i,j}}^2}.
$$
Then there exists $C_d > 1$ such that 
\begin{equation}
\label{eqn:matrix L2 norm versus singular values}
\frac{1}{C_d} \norm{A}_2 \leq \mu_1(A) \leq C_d \norm{A}_2.
\end{equation} 
Also, for $2 \leq \ell \leq d$ and $g \in \GL(d,\Rb)$, let $\wedge^\ell(g) \in \GL(\bigwedge^\ell \Rb^d)$ denote the linear isomorphism defined by 
$$
\wedge^\ell(g)(v_1 \wedge \dots \wedge v_\ell) = (g v_1) \wedge \dots \wedge (gv_\ell).
$$
If $D_\ell := \dim \bigwedge^\ell \Rb^d$ and we identify $\bigwedge^\ell \Rb^d$ with $\Rb^{D_\ell}$ via the standard basis
$$
 \{ e_{i_1} \wedge \dots \wedge e_{i_\ell} : i_1 < \dots < i_\ell\},
$$
then 
\begin{equation}
\label{eqn:singular values of wedge}
\mu_1(\wedge^\ell(g)) = \mu_1(g) \cdots \mu_\ell(g)
\end{equation}
for all $g \in \GL(d,\Rb)$. 

Since $\mathsf{U}$ is unipotent, 
$$
e^Y = \id + Y + \frac{1}{2!} Y^2 + \dots + \frac{1}{(d-1)!} Y^{d-1}  
$$
for all $Y \in \mathfrak{u}$. Then Equations~\eqref{eqn:matrix L2 norm versus singular values} and~\eqref{eqn:singular values of wedge} imply that the rational function $R \colon \mathfrak{u} \to \Rb$ defined by 
$$
R(Y) = \frac{\norm{ \wedge^k e^Y}_2^4 }{\norm{ \wedge^{k+1} e^Y}_2^2 \cdot \norm{ \wedge^{k-1} e^Y}_2^2}
$$
satisfies (1) and (2). 

To prove (3), fix an escaping sequence $(Y_n)_{n \geq 1}$ in $\mathfrak{u}$. Since $\mathsf{U}$ is unipotent and connected, $\exp\colon \mathfrak{u} \to \mathsf{U}$ is a diffeomorphism and so $(e^{Y_n})_{n \geq 1}$ is an escaping sequence in $\mathsf{G}$. Since $\Gamma \leq \mathsf{G}$ is a cocompact lattice, there exists an escaping sequence $(\gamma_n)_{n \geq 1}$ in $\Gamma$ such that $\left\{ \gamma_n^{-1} e^{Y_n} : n \geq 1\right\}$ is relatively compact. Then, since $\Gamma$ is $\mathsf{P}_k$-divergent, 
\begin{equation*}
\lim_{n \to \infty} R(Y_n) \asymp \lim_{n \to \infty} \left( \frac{\mu_k}{\mu_{k+1}}\left(e^{Y_n}\right) \right)^2 \asymp \lim_{n \to \infty} \left( \frac{\mu_k}{\mu_{k+1}}(\gamma_n) \right)^2 = \infty. \qedhere
\end{equation*}
\end{proof}

\begin{lemma}\label{lem:word metric and exponential function} For any finite symmetric generating set $S \subset \Gamma$ there exist $\alpha_1,\beta_1 > 0$ such that: if $\gamma \in \Gamma$ and $\gamma = \ell e^Y$ where $\ell \in \mathsf{L}$ and $Y \in \mathfrak{u}$, then 
$$
\alpha_1 \abs{\gamma}_S - \beta_1 \leq \norm{Y}. 
$$
\end{lemma}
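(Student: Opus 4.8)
The plan is to compare two norm-like functions on $\Gamma$: the word length $|\cdot|_S$ and the quantity $\norm{Y}$, where $\gamma=\ell e^Y$ with $\ell\in\mathsf{L}$, $Y\in\mathfrak{u}$. By the structure theory already established (Lemma~\ref{lem:structure of weakly unipotent} and the preceding lemmas), $\mathsf{G}=\mathsf{L}\ltimes\mathsf{U}$ with $\mathsf{L}$ compact, so every $\gamma\in\Gamma\leq\mathsf{G}$ decomposes uniquely as $\gamma=\ell(\gamma)e^{Y(\gamma)}$ with $\ell(\gamma)\in\mathsf{L}$, $Y(\gamma)\in\mathfrak{u}$, and $Y(\gamma)$ depends continuously (indeed algebraically) on $\gamma$. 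The inequality we want, $\alpha_1|\gamma|_S-\beta_1\leq\norm{Y(\gamma)}$, is equivalent to saying that the number of elements of $\Gamma$ with $\norm{Y(\gamma)}\leq T$ grows at most exponentially in $T$ — or more directly, that $|\gamma|_S$ is bounded above by a linear function of $\norm{Y(\gamma)}$.

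First I would reduce to a statement about $\mathsf{U}$. Since $\Gamma$ is a cocompact lattice in $\mathsf{G}$ and $\Gamma_0':=$ (projection of a finite-index nilpotent $\Gamma_0\leq\Gamma$ to $\mathsf{U}$) is a cocompact lattice in $\mathsf{U}$ (as shown in the proof that $\Gamma$ is a cocompact lattice), it suffices to bound the word length in $\Gamma_0'$, relative to a generating set of $\Gamma_0'$, linearly by $\norm{Y}$ for $e^Y\in\Gamma_0'$; passing between $\Gamma$ and $\Gamma_0'$ only changes constants, using that word metrics on commensurable groups are bi-Lipschitz and that changing finite generating sets is bi-Lipschitz. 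So I am reduced to: \emph{for a lattice $\Lambda$ in a simply connected nilpotent Lie group $\mathsf{U}$ with a finite generating set $S'$, one has $|\lambda|_{S'}\lesssim \norm{\log\lambda}+1$.} This is a standard fact: word length in a finitely generated nilpotent group is bounded above by the Riemannian (or any left-invariant) distance in the ambient Lie group, since $\Lambda\hookrightarrow\mathsf{U}$ is a quasi-isometric embedding (Mostow/Malcev; cf.\ the cocompactness of $\Lambda$), and the Riemannian distance $d_{\mathsf{U}}(e,e^Y)$ is comparable to $\norm{Y}^{1/s}$ in the worst direction, hence in particular is $\lesssim\norm{Y}+1$ because $\norm{Y}\geq c\,d_{\mathsf{U}}(e,e^Y)$ whenever $d_{\mathsf U}(e,e^Y)\geq 1$ (the exponential map is a diffeomorphism, locally bi-Lipschitz near $0$, and distorts distances only polynomially). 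Concretely: travelling along a path of length $L$ in $\mathsf{U}$ from $e$ moves $\log$ by at most $C(L^s+1)$ for some $s=s(\mathsf U)$, which gives $\norm{Y}\lesssim |\lambda|_{S'}^s+1$ — the \emph{opposite} direction — but what we actually need, $|\lambda|_{S'}\lesssim\norm{Y}+1$, is the easy direction and follows from the quasi-isometric embedding $\Lambda\to\mathsf{U}$ together with $d_{\mathsf U}(e,e^Y)\lesssim\norm Y+1$ for $\norm Y\geq 1$.

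The main obstacle is not difficulty but bookkeeping: one must be careful that the decomposition $\gamma=\ell e^Y$ behaves well under multiplication (it does not, since $\mathsf{L}$ only normalizes $\mathsf{U}$), so I would avoid multiplicativity entirely and instead argue geometrically. Explicitly, fix a left-invariant Riemannian metric on $\mathsf{G}$. Because $\mathsf{L}$ is compact, $\norm{Y(\gamma)}$ is comparable to $d_{\mathsf G}(e,\gamma)$ up to an additive constant when $d_{\mathsf G}(e,\gamma)$ is large (project to $\mathsf{G}/\mathsf{L}\cong\mathsf{U}$, where the induced metric is bi-Lipschitz-after-additive-constant to $Y\mapsto\norm Y$ away from a compact set, using that $\exp$ is a diffeomorphism with polynomially bounded distortion and $\mathsf U$ is nilpotent so $d(e,e^Y)\to\infty$ as $\norm Y\to\infty$; more precisely $d(e,e^Y)\le \norm Y + O(1)$). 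Then since $\Gamma$ is a cocompact lattice in $\mathsf G$, the orbit map $\gamma\mapsto\gamma\cdot e$ is a quasi-isometry $(\Gamma,|\cdot|_S)\to(\mathsf G,d_{\mathsf G})$ by the Milnor–Švarc lemma, giving $|\gamma|_S\leq A\,d_{\mathsf G}(e,\gamma)+B\leq A'\norm{Y(\gamma)}+B'$, which is exactly the claimed bound with $\alpha_1:=1/A'$ and $\beta_1:=B'/A'$. Thus the proof is a concatenation of three standard ingredients — Milnor–Švarc, the compactness of $\mathsf{L}$, and the polynomial (in particular, at most linear in the relevant direction) distortion of $\exp$ on a nilpotent group — and I expect the only real care needed is in phrasing the comparison $\norm{Y(\gamma)}\asymp d_{\mathsf G}(e,\gamma)+O(1)$ cleanly.
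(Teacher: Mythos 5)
Your argument is correct and is essentially the paper's proof: the fundamental lemma of geometric group theory (Milnor--\v{S}varc) applied to the cocompact lattice $\Gamma \leq \mathsf{G}$, compactness of $\mathsf{L}$, and the upper bound $\dist_{\mathsf{G}}(\id, e^Y) \leq C(\norm{Y}+1)$, which the paper obtains by the concrete device of subdividing the one-parameter subgroup $t \mapsto e^{tY}$ into $\lfloor \norm{Y}\rfloor + 1$ left-translates of steps $e^{sY}$ with $\norm{sY}\leq 1$. One caution: your parenthetical claim that $\norm{Y(\gamma)}$ is comparable to $\dist_{\mathsf{G}}(\id,\gamma)$ up to additive constants in \emph{both} directions is false for general nilpotent $\mathsf{U}$ (central directions in a Heisenberg group are quadratically distorted, so $\norm{Y}$ can greatly exceed $\dist_{\mathsf{G}}(\id,e^Y)$), but your final chain of inequalities only uses the direction $\dist_{\mathsf{G}}(\id,e^Y)\lesssim\norm{Y}+1$, which is the correct and easy one.
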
 

\begin{proof} Fix a distance $\dist_\mathsf{G}$ on $\mathsf{G}$ generated by a $\mathsf{G}$-invariant Riemannian metric. Since $\Gamma \leq \mathsf{G}$ is a cocompact lattice, by the fundamental lemma of geometric group theory there exist $\alpha_0 > 1$, $\beta_0> 0$ such that 
$$
\frac{1}{\alpha_0} \abs{\gamma}_S -\beta_0  \leq \dist_\mathsf{G}(\gamma, \id) \leq \alpha_0 \abs{\gamma}_S +\beta_0 
$$
for all $\gamma \in \Gamma$. Also, let 
$$
R_1 := \max\{ \dist_\mathsf{G}(\ell,\id) : \ell \in \mathsf{L}\} \quad \text{and} \quad R_2 : =\max\{ \dist_\mathsf{G}(e^Y, \id) : \norm{Y} \leq 1\}.
$$

Now suppose that $\gamma = \ell e^Y \in \Gamma$ where $\ell \in \mathsf{L}$ and $Y \in \mathfrak{u}$. Let $n = \lfloor \norm{Y} \rfloor$. Then 
\begin{align*}
\frac{1}{\alpha_0} & \abs{\gamma}_S -\beta_0  \leq \dist_\mathsf{G}(\gamma, \id) \leq R_1+ \dist_\mathsf{G}(e^Y, \id) \\
&  \leq R_1 + \dist_\mathsf{G}\left(e^Y, e^{ \frac{n}{\norm{Y}}Y}\right) + \sum_{j=0}^{n-1} \dist_\mathsf{G}\left(e^{ \frac{j+1}{\norm{Y}}Y}, e^{ \frac{j}{\norm{Y}}Y}\right) \\
& \leq R_1 + R_2(n+1) \leq R_1+R_2 + R_2 \norm{Y}. 
\end{align*}
\end{proof}

\begin{lemma} 
If $\Gamma$ is $\mathsf{P}_k$-divergent and $S$ is a finite symmetric generating set of $\Gamma$, then there exist $\alpha_2, \beta_2 > 0$ such that 
$$
\log  \frac{\mu_k}{\mu_{k+1}}(\gamma) \geq \alpha_2 \log \abs{\gamma}_S +\beta_2
$$
for all $\gamma \in \Gamma$. 
\end{lemma}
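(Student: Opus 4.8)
The plan is to chain together the three preceding results. First I would record that, by Lemma~\ref{lem:structure of weakly unipotent} (giving $\mathsf{G} = \mathsf{L} \ltimes \mathsf{U}$ with $\mathsf{L}$ compact) and the fact, already used in the proof of Lemma~\ref{lem: rational function approximating singular values}, that $\exp\colon \mathfrak{u} \to \mathsf{U}$ is a diffeomorphism, every $\gamma \in \Gamma \subseteq \mathsf{G}$ can be written as $\gamma = \ell\, e^Y$ with $\ell \in \mathsf{L}$ and $Y \in \mathfrak{u}$. Since $\mathsf{L}$ is compact there is $M \geq 1$ with $\mu_1(\ell), \mu_1(\ell^{-1}) \leq M$ for all $\ell \in \mathsf{L}$, so submultiplicativity of singular values gives $M^{-1}\mu_j(e^Y) \leq \mu_j(\gamma) \leq M\mu_j(e^Y)$ for every $j$; in particular $\frac{\mu_k}{\mu_{k+1}}(\gamma) \geq M^{-2}\, \frac{\mu_k}{\mu_{k+1}}(e^Y)$.

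Next, since $\Gamma$ is assumed $\mathsf{P}_k$-divergent, Lemma~\ref{lem: rational function approximating singular values} supplies a positive, everywhere-defined rational function $R \colon \mathfrak{u} \to \Rb$ with $\frac{\mu_k}{\mu_{k+1}}(e^Y) \geq \frac{1}{C}\sqrt{R(Y)}$ for some $C > 0$ and with $R(Y) \to \infty$ as $Y \to \infty$. Fixing a linear identification $\mathfrak{u} \cong \Rb^m$ and applying Theorem~\ref{thm:growth rate of positive proper rational functions} to $R$ yields $C_0, \delta > 0$ with $R(Y) \geq C_0 \norm{Y}^{\delta}$ (all norms on $\mathfrak{u}$ being equivalent). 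Chaining these estimates,
$$
\frac{\mu_k}{\mu_{k+1}}(\gamma) \geq \frac{1}{M^2 C}\sqrt{R(Y)} \geq \frac{\sqrt{C_0}}{M^2 C}\, \norm{Y}^{\delta/2}.
$$

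Finally, Lemma~\ref{lem:word metric and exponential function} gives $\alpha_1, \beta_1 > 0$ with $\alpha_1 \abs{\gamma}_S - \beta_1 \leq \norm{Y}$. Hence there is a threshold $N_0$ such that $\abs{\gamma}_S \geq N_0$ forces $\norm{Y} \geq \frac{\alpha_1}{2}\abs{\gamma}_S$, and taking logarithms in the displayed inequality gives $\log \frac{\mu_k}{\mu_{k+1}}(\gamma) \geq \frac{\delta}{2}\log\abs{\gamma}_S + \beta_2$ for a suitable constant $\beta_2$. For the finitely many $\gamma$ with $\abs{\gamma}_S < N_0$ the left side is $\geq 0$ while $\log\abs{\gamma}_S$ is bounded above, so after decreasing $\beta_2$ if necessary the inequality holds for all $\gamma \in \Gamma$, with $\alpha_2 = \delta/2$. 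The argument is essentially a bookkeeping exercise once the ingredients are assembled; the only points requiring a little care are the reduction from $\gamma$ to its unipotent factor $e^Y$ — i.e.\ the harmless distortion of singular values under multiplication by the compact factor $\mathsf{L}$ — and the treatment of the bounded-length elements at the end.
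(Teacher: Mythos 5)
Your proof is correct and follows essentially the same route as the paper: decompose $\gamma = \ell e^Y$ using $\mathsf{G} = \mathsf{L} \ltimes \mathsf{U}$, combine Lemma~\ref{lem: rational function approximating singular values} with Theorem~\ref{thm:growth rate of positive proper rational functions} to get $\frac{\mu_k}{\mu_{k+1}}(e^Y) \gtrsim \norm{Y}^{\epsilon}$, and then invoke Lemma~\ref{lem:word metric and exponential function}. Your extra bookkeeping (the compact-factor distortion bound and the separate treatment of short words) is exactly what the paper absorbs into its $\gtrsim$ notation.
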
 

\begin{proof} By Lemma~\ref{lem: rational function approximating singular values} and Theorem~\ref{thm:growth rate of positive proper rational functions}, there exist $C_2, \epsilon > 0$ such that 
$$
\frac{\mu_k}{\mu_{k+1}}(e^Y) \geq C_2 \norm{Y}^{\epsilon}
$$
for all $Y \in \mathfrak{u}$. 

Fix $\gamma \in \Gamma$. Then $\gamma = \ell e^Y$ for some $\ell \in \mathsf{L}$ and $Y \in \mathfrak{u}$. Then by Lemma~\ref{lem:word metric and exponential function}
\begin{equation*}
\frac{\mu_k}{\mu_{k+1}}(\gamma) \gtrsim  \frac{\mu_k}{\mu_{k+1}}(e^Y)\gtrsim  \norm{Y}^{\epsilon}\gtrsim \abs{\gamma}_S^{\epsilon} - 1. \qedhere
\end{equation*}

\end{proof}

\section{Relatively Anosov implies the existence of a contracting flow} \label{sec:building_norms}

In this section we prove that (1)$\implies$(3) in Theorem \ref{thm:main}. Since the implication (3)$\implies$(2) is by definition and the implication (2)$\implies$(1) was established in Theorem~\ref{thm:2 implies 1 main theorem} this will complete the proof of Theorem~\ref{thm:main}.

This implication, when  combined with Theorem~\ref{thm:2 implies 1 main theorem}, also proves the claims in Theorem~\ref{thm:singular value and eigenvalue estimates} for a single representation. 

\begin{theorem}\label{thm: building a norm} Suppose that $(\Gamma, \peripherals)$ is relatively hyperbolic and  $\rho \colon \Gamma \to \SL(d,\Kb)$ is $\Psf_k$-Anosov relative to $\peripherals$. If $X = \Cc_{GM}(\Gamma,\peripherals,S)$ is a Groves--Manning cusped space for $(\Gamma,\peripherals)$, then $\rho$ is $\Psf_k$-Anosov relative to $X$. 
\end{theorem}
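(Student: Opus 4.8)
## Proof proposal

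The plan is to construct an explicit metric $\norm{\cdot}$ on the lift $\Gc(X)\times\Kb^d\to\Gc(X)$ which is $\Gamma$-equivariant and makes the flow $\homflow^t$ on $\Hom(\wh\Xi^{d-k},\wh\Theta^k)$ exponentially contracting; by Proposition~\ref{prop: Hom bundles contraction/expansions} it suffices to arrange that for some $c,C>0$ one has $\norm{Y}_{\geodflow^t(\sigma)}/\norm{Z}_{\geodflow^t(\sigma)}\le Ce^{-ct}\,\norm{Y}_\sigma/\norm{Z}_\sigma$ for all $Y\in\xi^k(\sigma^+)$ and nonzero $Z\in\xi^{d-k}(\sigma^-)$. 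The starting point is a single ``ambient'' inner product $Q_0$ on $\Kb^d$ together with a $\Gamma$-equivariant function $\sigma\mapsto Q_\sigma$, where along any geodesic line $\sigma$ we want $Q_{\geodflow^t(\sigma)}$ to measure vectors in $\xi^k(\sigma^+)$ as shrinking and vectors in $\xi^{d-k}(\sigma^-)$ as growing, relative to each other, at a definite exponential rate. A natural way to produce such a family is to push forward $Q_0$ by a ``cocycle'' built from $\rho(\gamma)$ along geodesics: over a compact piece of $\wh\Gc(X)$ one can take any metric, and the real work is extending it over the cusp regions of the (noncompact) quotient $\wh\Gc(X)$ using the structure of $\rho(P)$ for $P\in\peripherals$.

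Concretely I would proceed as follows. First, recall from Proposition~\ref{prop:eigenvalue data in rel Anosov repn} that each $\rho(P)$ is weakly unipotent, and from Observation~\ref{obs:strongly_dynamics_pres_div_cartan} that $\rho(\Gamma)$, hence each $\rho(P)$, is $\Psf_k$-divergent; Theorem~\ref{thm:structure of weakly unipotent discrete groups} then gives, for each $P$, the structure $\rho(P)\le\mathsf L\ltimes\mathsf U$ with $\mathsf L$ compact, the upper singular value bound $\log\frac{\mu_1}{\mu_d}(\rho(g))\lesssim\log|g|_{S\cap P}$, and the crucial \emph{lower} bound $\log\frac{\mu_k}{\mu_{k+1}}(\rho(g))\gtrsim\log|g|_{S\cap P}$ for $g\in P$. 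Combined with Proposition~\ref{prop:cusp_space_est}, which says $\dist_X(g(x_0),x_0)\asymp 2\log_2|g|_{S\cap P}$ inside a combinatorial horoball, these translate into the statement that, measured with respect to displacement in the Groves--Manning cusp space, the ratio $\mu_k/\mu_{k+1}$ of $\rho(g)$ grows at least linearly (hence the contraction rate along flowlines entering a cusp is bounded below) and $\mu_1/\mu_d$ grows at most linearly (hence the norms do not degenerate faster than the flow can handle). Second, I would fix a Groves--Manning cusp space $X$, choose geodesics realizing the relevant pairs $(\sigma^-,\sigma^+)$ via Proposition~\ref{prop:abundance of geodesic lines}, and on the compact core of $\wh\Gc(X)$ simply take a smooth metric; the existence of a contracting metric somewhere (which we get from being $\Psf_k$-Anosov relative to $\peripherals$, via the boundary-map splitting $E(X)=\Theta^k\oplus\Xi^{d-k}$ and standard hyperbolic-group arguments on the core) provides the template.

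Third — and this is the heart — I would define the metric on the fibers over the cusp regions by an averaging/interpolation construction adapted to the horoball structure: over a point $(g,n)$ in the combinatorial horoball attached at $P$, set the inner product to be a rescaled pullback $\rho(g)_*Q$ where $Q$ is a fixed inner product on $\Kb^d$ adapted to the $\mathsf L\ltimes\mathsf U$ decomposition (so that $\mathsf L$ acts orthogonally), with the rescaling chosen level-by-level to interpolate smoothly between the core metric at level $n=1$ and the ``deep cusp'' behavior; one then uses Proposition~\ref{prop:paths between inner products} to interpolate smoothly and controllably between inner products at adjacent points of $\Gc(X)$, so that the resulting family is continuous, $\Gamma$-equivariant, and satisfies the local-boundedness needed. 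The verification of exponential contraction then splits into cases according to whether a flow segment $[\sigma,\geodflow^t\sigma]$ stays in the core, stays in one cusp, or transitions; in the core it is the known estimate, in a cusp it follows from the singular value estimates of Theorem~\ref{thm:structure of weakly unipotent discrete groups} together with Lemma~\ref{lem: U_k under product}-type control on how $U_k$ moves under the relevant products, and the transitional case is handled by bounded-overlap bookkeeping using that transitions happen over uniformly bounded time. The main obstacle I anticipate is precisely the cusp estimate: making the norms in the horoballs both (i) decay fast enough along flowlines to beat a uniform exponential rate and (ii) not blow up so fast that equivariance forces a discontinuity where horoballs meet the core — this is exactly where the two-sided singular value control from Theorem~\ref{thm:structure of weakly unipotent discrete groups}(3)--(4), and hence ultimately the Nullstellensatz input of Theorem~\ref{thm:growth rate of positive proper rational functions}, is indispensable, and where the care in choosing the rescaling profile on horoball levels must be exercised.
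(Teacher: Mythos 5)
Your overall architecture matches the paper's: the thick--thin decomposition of $\Gc(X)$ along the combinatorial horoballs, the essential input $\log\frac{\mu_k}{\mu_{k+1}}(\rho(\gamma))\gtrsim \dist_X(\gamma,\id)$ for peripheral $\gamma$ (obtained exactly as you say, from weak unipotence, $\Psf_k$-divergence, Theorem~\ref{thm:structure of weakly unipotent discrete groups}(4) and Proposition~\ref{prop:cusp_space_est}), the use of Proposition~\ref{prop:paths between inner products} to interpolate, and the case analysis core/cusp/transition in the verification. One small correction: on the thick part one does not need a pre-existing contracting metric as a ``template''; any continuous equivariant family of inner products works there, and the contraction across compact pieces is then deduced a posteriori from the strongly dynamics preserving property (this is the content of Lemma~\ref{lem:growth_on_thick_part}).

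The genuine gap is in the step you yourself flag as the heart of the argument: the definition of the norm over the thin part. You propose a metric indexed by \emph{points} of the horoball, namely a level-rescaled pushforward $\rho(g)_*Q$ at the vertex $(g,n)$, with $Q$ adapted to the $\mathsf{L}\ltimes\mathsf{U}$ decomposition of $\overline{\rho(P)}^{Zar}$. But the quantity that must decay exponentially in $t$ is the ratio $\norm{Y}_{\geodflow^t(\sigma)}/\norm{Z}_{\geodflow^t(\sigma)}$ for $Y\in\xi^k(\sigma^+)$ and $Z\in\xi^{d-k}(\sigma^-)$, and these subspaces depend on the \emph{geodesic} $\sigma$, not on the point $\sigma(t)$. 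A geodesic that enters and exits a horoball has endpoints $\sigma^\pm$ that are generic (not the parabolic fixed point), so an anisotropic rescaling adapted to the flag of the parabolic point is not aligned with the directions that must contract; moreover, a point-based metric would make every relatively Anosov representation uniformly relatively Anosov relative to $X$, contradicting Example~\ref{ex:non uniform to GM cusp space}. The singular value bound only controls the \emph{total} contraction across the horoball crossing (this is Lemma~\ref{lem:growth_between_boundaries}: $\kappa_{T_\sigma^+}(\sigma)\le C_0e^{-\alpha T_\sigma^+}$); to convert this into a uniform pointwise-in-$t$ rate one must define the inner product on the fiber over each geodesic $\sigma$ in the thin part using the $\sigma$-dependent splitting $E_1\oplus E_2\oplus E_3$ with $E_1|_\sigma=\xi^k(\sigma^+)$, $E_2|_\sigma=\xi^{d-k}(\sigma^+)\cap\xi^{d-k}(\sigma^-)$, $E_3|_\sigma=\xi^k(\sigma^-)$, impose explicit exponential weights $e^{\alpha(j-2)t}$ on the first third of the crossing and the mirror-image weights on the last third, and spend the remaining contraction budget at a constant rate across the middle third via the interpolation of Proposition~\ref{prop:paths between inner products}. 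Without some such geodesic-dependent construction your ``rescaling profile on horoball levels'' cannot be chosen to make the argument close, so the proposal as written does not yet constitute a proof.
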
 

The rest of the section is devoted to the proof of Theorem~\ref{thm: building a norm}. So fix $\Gamma$, $\peripherals$, $\rho$, and $X= \Cc_{GM}(\Gamma,\peripherals,S)$ as in the statement of the theorem. Let $\xi$ denote the Anosov boundary map. Since $X$ is fixed for the entire section, we will let 
$$\Gc:=\Gc(X) \quad \text{and} \quad  E := E(X)=\Gc(X) \times \Kb^d.$$
Also let $I \colon \Gc \to \Gc$ denote the involution 
$$
I(\sigma)(t) = \sigma(-t). 
$$

Observation \ref{obs:strongly_dynamics_pres_div_cartan} implies that $\rho(\Gamma)$ is $\Psf_k$-divergent and Proposition~\ref{prop:eigenvalue data in rel Anosov repn} implies that if $P \in \peripherals$, then $\rho(P)$ is weakly unipotent. So by Theorem \ref{thm:structure of weakly unipotent discrete groups}(4) and Proposition~\ref{prop:cusp_space_est} there exist constants $\alpha,\beta > 0$ such that: if $P \in \peripherals$ and $\gamma \in P$, then 
\begin{equation}
\label{eqn:growth on parabolics in construction of norms} 
\log \frac{\mu_k}{\mu_{k+1}}(\rho(\gamma)) \geq -\beta + \alpha \dist_X(\gamma, \id). 
\end{equation}

\subsection{Thick-thin-like decomposition} We begin the construction of the norms by dividing the flow space $\Gc$ into a ``thick'' and ``thin'' part. 

For $P \in \peripherals$, let $H_P^\prime \subset X$ denote the induced subgraph of the associated combinatorial horoball with vertex set $\{ (\gamma, n) : \gamma \in P, n \geq 2\}$, let $H_P^{\prime\prime} \subset X$ denote the induced subgraph of the associated combinatorial horoball with vertex set $\{ (\gamma, 2) : \gamma \in P\}$, and let
$H_P := H_P^\prime \smallsetminus H_P^{\prime\prime}$.  Next, for 
$$
\gamma P \gamma^{-1} \in \peripherals^\Gamma:=\{ \gamma P \gamma^{-1} : P \in \peripherals, \gamma \in \Gamma\}
$$ 
let $H_{\gamma P \gamma^{-1}} := \gamma H_P$. 

The equivariant family of sets $\{H_P\}_{P \in \peripherals^\Gamma}$ are open in $X$, have disjoint closures, and each $\partial H_{\gamma P\gamma^{-1}}$ (with $P \in \peripherals$ and $\gamma \in \Gamma$) consists of the vertex set $\gamma\{ (g, 2) : g \in P\}$.  Further, $\Gamma$ acts cocompactly on the set
$$
X \smallsetminus \bigcup_{P \in \peripherals^\Gamma} H_P. 
$$

For $P \in \peripherals^\Gamma$, let 
\begin{align*}
\Gc_P & :=\{ \sigma \in \Gc : \sigma(0) \in H_P\}, \\
\partial \Gc_P & :=\{ \sigma \in \Gc : \sigma(0) \in \partial H_P\}, \\
\partial^+\Gc_P & := \{ \sigma \in \partial \Gc_P : \sigma(t) \in H_P \text{ for } t > 0 \text{ sufficiently small}\}, \text{and} \\
\partial^-\Gc_P &:= \{ \sigma \in \partial \Gc_P : \sigma(t) \in  H_P \text{ for } t < 0 \text{ sufficiently small}\}.
\end{align*}
Notice that $\sigma \in \partial^+ \Gc_P$ if and only if $I(\sigma) \in \partial^- \Gc_P$. Also, by definition,  $\partial^+\Gc_P \cap \partial^-\Gc_P = \varnothing$.

Next, for $\sigma \in \partial^+\Gc_P$, define
$$
T_{\sigma}^+ := \min \{ t \in (0,\infty] : \sigma(t) \notin H_P \}
$$
and for $\sigma \in \partial^-\Gc_P$, define
$$
T_{\sigma}^- := \max \{ t \in [-\infty, 0) : \sigma(t) \notin H_P\} = -T_{I(\sigma)}^+
$$
(where $\sigma(\pm \infty) = \sigma^\pm$). Then 
$$
\Gc_P=\left( \bigcup_{\sigma \in \partial^+\Gc_P}\, \bigcup_{t \in (0,T_\sigma^+)} \geodflow^t(\sigma)\right) \cup\left( \bigcup_{\sigma \in \partial^-\Gc_P}\, \bigcup_{t \in (T_\sigma^-,0)} \geodflow^t(\sigma) \right).
$$
Finally let 
\begin{align*}
\Gc_{thin} & := \bigcup_{P \in \peripherals^\Gamma} \Gc_P, \quad  \Gc_{thick}  := \Gc \smallsetminus \Gc_{thin},\\
E_{thin}& :=\bigcup_{\sigma \in \Gc_{thin}} E|_{\sigma}, \quad \text{and} \quad  E_{ thick}  :=\bigcup_{\sigma \in \Gc_{thick}} E|_{\sigma}.
\end{align*}

\subsection{Building the norm} \label{subsec:norm_defn}

Since $\xi$ is transverse, we can define a vector bundle decomposition $E = E_1 \oplus E_2 \oplus E_3$ by setting 
$$
E_1|_\sigma = \xi^k(\sigma^+), \quad \quad E_2|_\sigma = \xi^{d-k}(\sigma^+) \cap \xi^{d-k}(\sigma^-), \quad \text{and} \quad E_3|_\sigma= \xi^{k}(\sigma^-).
$$
For $\sigma \in \Gc(X)$, let $\pi^\sigma_1, \pi^\sigma_2, \pi^\sigma_3$ denote the projections induced by the decomposition 
$$
E|_\sigma = E_1|_\sigma \oplus E_2|_\sigma \oplus E_3|_{\sigma}.
$$

Fix a continuous $\rho$-equivariant family of inner products $Q_{\sigma}$ on the fibers of $E_{ thick}$ 
such that 
\begin{equation*}
Q_{\sigma} = Q_{I(\sigma)} \quad \text{for all} \quad \sigma \in \Gc_{thick}
\end{equation*}
and $E=E_1\oplus E_2 \oplus E_3$ is an orthogonal decomposition, that is
$$
Q_{\sigma} (Y,Y)= \sum_{j=1}^3 Q_\sigma(\pi^\sigma_j(Y), \pi^\sigma_j(Y))
$$
for all $\sigma \in \Gc_{thick}$ and $Y \in \Kb^d$.

Let $\alpha$ be the constant in Equation~\eqref{eqn:growth on parabolics in construction of norms}, then extend the family of inner products to $\Gc_{thin}$ as follows: 
\begin{enumerate}
\item If $\sigma = \geodflow^t(\sigma_0)$ for some $\sigma_0 \in \partial^+ \Gc_P$ and $t \in (0, T_{\sigma_0}^+)$, write $T = T_{\sigma_0}^+$ to lighten the notation, then:\begin{itemize}
\item If $t \in (0, \frac13 T]$, define
$$
Q_{\sigma} (Y,Y):= \sum_{j=1}^3 e^{\alpha(j-2)t} Q_{\sigma_0}(\pi^\sigma_j(Y), \pi^\sigma_j(Y)).
$$
\item If $t \in [\frac23 T, T)$, define
$$
Q_{\sigma} (Y,Y):= \sum_{j=1}^3 e^{\alpha(2-j)(T - t)} Q_{\phi^T(\sigma_0)}(\pi^\sigma_j(Y), \pi^\sigma_j(Y)).
$$
\item If $t \in (\frac13 T, \frac23 T)$, define 
$$ 
Q_\sigma := f\left(Q_{\geodflow^{\frac13 T }(\sigma_0)}, Q_{\geodflow^{\frac23 T }(\sigma_0)} \right) \left( \frac{3}{T} t - 1 \right)
$$
where $f$ is the path defined in Proposition~\ref{prop:paths between inner products}. 
\end{itemize}

\item If $\sigma = \geodflow^t(\sigma_0)$ for some $\sigma_0 \in \partial^- \Gc_P$ and $t \in (T_{\sigma_0}^-,0)$, define $Q_{\sigma}:= Q_{I(\sigma)}$.
\end{enumerate}
Finally, let $\norm{\cdot}_\sigma$ denote the norm induced by $Q_\sigma$.

\begin{lemma} The family of norms $\norm{\cdot}_{\sigma}$ is $\rho$-equivariant and continuous. \end{lemma}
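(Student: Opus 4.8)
The plan is to establish the two assertions — $\rho$-equivariance and continuity — separately, in each case reducing to the family already fixed over $\Gc_{thick}$, which was chosen $\rho$-equivariant, continuous, and $I$-invariant, by exploiting the naturality of every ingredient used to extend the family over $\Gc_{thin}$. First I would record the relevant symmetries. Since $H_{\gamma P\gamma^{-1}}=\gamma H_P$, the group $\Gamma$ permutes $\{H_P\}_{P\in\peripherals^\Gamma}$, hence permutes the pieces $\Gc_P,\partial\Gc_P,\partial^\pm\Gc_P$, preserves $\Gc_{thick}$ and $\Gc_{thin}$, commutes with $\phi^t$, and satisfies $T^\pm_{\gamma\sigma}=T^\pm_\sigma$; moreover $I$ commutes with the $\Gamma$-action. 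Because $\xi$ is $\rho$-equivariant we have $E_j|_{\gamma\sigma}=\rho(\gamma)E_j|_\sigma$, whence $\pi_j^{\gamma\sigma}\circ\rho(\gamma)=\rho(\gamma)\circ\pi_j^\sigma$; and the path $f$ of Proposition~\ref{prop:paths between inner products} is $\GL(d,\Kb)$-equivariant, since a linear map carries a basis orthogonal for $Q_0,Q_1$ to a basis orthogonal for the pushed-forward forms with the same diagonal entries, so \eqref{eqn:path_of_inner_products} transforms correctly. Observing next that the construction over $\Gc_{thin}$ is non-circular — each $Q_\sigma$ in case (1) is built in at most two steps from inner products over $\partial\Gc_P\subset\Gc_{thick}$ (those over $\sigma_0$ and $\phi^{T}(\sigma_0)$, with $\phi^{T/3}(\sigma_0)$ and $\phi^{2T/3}(\sigma_0)$ themselves falling under the first two sub-cases of (1)) — I would then check each of the three sub-formulas in (1) transforms correctly (scalars, intertwining projections, equivariant $f$, equivariant thick-part inner products), and that case (2), $Q_{\gamma\sigma}=Q_{I(\gamma\sigma)}=Q_{\gamma I(\sigma)}$, reduces to case (1) because $I(\sigma)$ lies in the $\partial^+$-flowout.

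I would also check that formulas (1) and (2) agree where they overlap: when $\sigma$ meets $H_P$ in a bounded time-interval, it lies simultaneously in the forward flowout of its entry configuration $\sigma_0\in\partial^+\Gc_P$ and the backward flowout of its exit configuration $\phi^{T}(\sigma_0)\in\partial^-\Gc_P$, so both formulas apply. The two presentations of $\sigma$ have the same transit time $T$ with parameters $t$ and $T-t$, $I(\sigma)$ is built from the exit configuration, and the interpolation anchors $\phi^{jT/3}(\sigma_0)$ are exchanged by $I$ with those of the exit configuration; combining the $I$-invariance $Q_\tau=Q_{I(\tau)}$ of the thick family on $\partial\Gc_P$, the identity $\pi_j^{I(\sigma)}=\pi^\sigma_{4-j}$ (as $I$ swaps $E_1\leftrightarrow E_3$ and fixes $E_2$), and the symmetry $f(Q_0,Q_1)(s)=f(Q_1,Q_0)(1-s)$, the two expressions coincide in each of the ranges $t\leq T/3$, $T/3<t<2T/3$, $t\geq 2T/3$. (Alternatively one may read the definition with (1) taking precedence; then (2) applies only when the backward ray remains in $H_P$, so there is neither overlap nor circularity, at the cost of having to run the continuity argument below across the locus where that backward exit time tends to $-\infty$.)

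For continuity, the family is continuous on the interior of $\Gc_{thick}$ by construction. On $\Gc_{thin}$ I would combine: (a) joint continuity of $(\sigma_0,t)\mapsto\phi^t(\sigma_0)$ together with continuity — in fact local constancy, since all distances in a Groves--Manning cusp space are integers, $\partial H_P$ is a vertex set, $H_P$ is open and $\partial H_P$ is closed, so exit times cannot jump — of the entry/exit configuration and transit time of a point of $\Gc_P$; (b) continuity of $f$ in $(Q_0,Q_1,t)$, from the Remark following Proposition~\ref{prop:paths between inner products}; (c) matching at the two seams $t=T/3,2T/3$, which holds because $f(Q_0,Q_1)(0)=Q_0$ and $f(Q_0,Q_1)(1)=Q_1$ agree with the exponential-weight formulas there; and (d) matching across the thick/thin interface at $\partial H_P$ (i.e.\ $t\to 0$ and $t\to T$), which holds because the weights $e^{\alpha(j-2)t}$ all tend to $1$ and, since $E=E_1\oplus E_2\oplus E_3$ was chosen $Q$-orthogonal over $\Gc_{thick}$, the sum $\sum_j Q_{\sigma_0}(\pi_j^\sigma(\cdot),\pi_j^\sigma(\cdot))$ recovers exactly the thick-part value $Q_{\sigma_0}(\cdot,\cdot)$; the case $T=\infty$ (geodesics converging to a parabolic point) invokes only the first sub-formula and is handled identically. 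Finally, a configuration in $\partial\Gc_P$ lying in neither $\partial^+\Gc_P$ nor $\partial^-\Gc_P$ is, by the combinatorial geometry of the horoball, interior to $\Gc_{thick}$, so no further matching is required there.

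The step I expect to be the main obstacle is part (d) of the continuity argument together with the continuity of the entry/exit data — confirming that the scaffolding configurations $\sigma_0$, $\phi^T(\sigma_0)$, $\phi^{jT/3}(\sigma_0)$ vary continuously with $\sigma$ and that the algebraic identities $f(Q_0,Q_1)(0)=Q_0$, $f(Q_0,Q_1)(1)=Q_1$, and $\sum_j Q(\pi_j\cdot,\pi_j\cdot)=Q$ conspire to make every piece glue seamlessly. Equivariance, by contrast, is essentially bookkeeping once one has recorded the naturality of $T^\pm$, $\pi_j$, $f$, and $I$ and the $\rho$-equivariance and $I$-invariance of the thick-part family.
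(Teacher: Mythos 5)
Your proposal is correct and takes essentially the same route as the paper: equivariance is ``by construction,'' and continuity is verified case by case from the definition after observing that, since $X$ is a metric graph, geodesics converging in $\Gc(X)$ eventually coincide on large intervals up to small time shifts --- which is exactly what makes your entry/exit configurations and transit times vary continuously. The paper compresses the entire verification (including the consistency of the overlapping cases (1) and (2) and the seam and thick/thin interface matching you carry out explicitly) into the single phrase ``straightforward to check directly from the definition,'' so your write-up simply supplies details the paper leaves to the reader.
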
 

\begin{proof}By construction, $\norm{\cdot}_{\sigma}$ is $\rho$-equivariant. To verify that $\norm{\cdot}_{\sigma}$ is continuous, it suffices to fix $P \in \peripherals^\Gamma$ and show that 
$$
\sigma \in \overline{\Gc_P} \mapsto Q_\sigma
$$
is continuous. Suppose $\sigma_n \to \sigma$  in $\overline{\Gc_P}$. Since $X$ is a metric graph, this implies that there exist sequences $(\epsilon_n)_{n \geq 1}$ and $(S_n)_{n \geq 1}$ such that: $\epsilon_n \to 0$, $S_n \to \infty$, and 
$$
\sigma_n(t+\epsilon_n) = \sigma(t)
$$
for all $t \in [-S_n, S_n]$. Then it is straightforward to check directly from the definition that $Q_{\sigma_n} \to Q_{\sigma}$. 
\end{proof} 

\begin{remark} One naive way of extending the inner products from the thick part to the thin part is to identify the space of inner products with the symmetric space $ \GL(d,\Kb) / \mathsf{U}(d,\Kb)$, then use the symmetric space geodesics to extend to the fibers above the thin part. However, since a given peripheral subgroup may limit onto many points in the geodesic boundary of the symmetric space, this extension may fail to be well-defined or continuous at geodesics asymptotic to a bounded parabolic point. Our piecewise definition can be viewed as refinement of this naive extension.

\end{remark} 

Given $\sigma \in \Gc$ and $t \geq 0$, define 
\begin{align*}
\kappa_t(\sigma) &:= \max\left\{ \frac{\norm{Y}_{\geodflow^{t}(\sigma)}}{\norm{Z}_{\geodflow^{t}(\sigma)}} : Y \in \xi^k(\sigma^+), Z \in \xi^{d-k}(\sigma^-), \norm{Y}_{\sigma}= \norm{Z}_{\sigma}=1 \right\} \\
& = \frac{ \max\{ \norm{Y}_{\geodflow^{t}(\sigma)} : Y \in \xi^k(\sigma^+), \norm{Y}_\sigma = 1 \}}{\min\{ \norm{Z}_{\geodflow^{t}(\sigma)} : Z \in \xi^{d-k}(\sigma^-), \norm{Z}_\sigma = 1\}}.
\end{align*}
Notice that if $s,t \geq 0$, then 
\begin{equation}
\label{eqn:kappa_is_sub_multiplicative}
\kappa_{t+s}(\sigma) \leq \kappa_s(\geodflow^t(\sigma))\kappa_t(\sigma).
\end{equation}

By Proposition~\ref{prop: Hom bundles contraction/expansions}, to prove that $\rho$ is $\Psf_k$-Anosov relative to $X$ it suffices to show that $\kappa_t(\sigma)$ decays to zero exponentially fast in $t$.

\subsection{Estimates on the thick part}\label{sec:estimates on the thick part}

\begin{lemma}\label{lem:growth_on_thick_part} For any compact set $K \subset X$, there exists $C(K) > 1$ such that: if $\sigma \in \Gc$, $t \geq 0$, $\gamma \in \Gamma$, and $\sigma(0), \gamma\sigma(t) \in K$, then 
$$
\frac{1}{C(K)} \frac{\mu_{d-k+1}}{\mu_{d-k}}(\rho(\gamma)) \leq \kappa_{t}(\sigma) \leq C(K) \frac{\mu_{d-k+1}}{\mu_{d-k}}(\rho(\gamma)).
$$
\end{lemma}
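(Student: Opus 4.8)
The plan is to prove the lemma by a compactness argument, after first rewriting $\kappa_t(\sigma)$ in terms of singular values of $\rho(\gamma)$. Write $g := \rho(\gamma)$, $V := \xi^k(\sigma^+)$ and $W := \xi^{d-k}(\sigma^-)$. By $\rho$-equivariance of the norm, $\norm{Y}_{\geodflow^t(\sigma)} = \norm{gY}_{\gamma\geodflow^t(\sigma)}$, and the basepoints $\sigma(0)$ and $(\gamma\geodflow^t(\sigma))(0) = \gamma\sigma(t)$ both lie in $K$. Since $\{\eta \in \Gc : \eta(0) \in K\}$ is compact and $\norm{\cdot}$ is continuous, there is $C_K > 1$ with $C_K^{-1}\norm{\cdot}_2 \le \norm{\cdot}_\eta \le C_K\norm{\cdot}_2$ whenever $\eta(0) \in K$. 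Feeding these comparisons into the definition of $\kappa_t$ gives
\[
\frac{1}{C_K^{4}}\cdot\frac{\mu_1(g|_V)}{\mu_{d-k}(g|_W)} \;\le\; \kappa_t(\sigma) \;\le\; C_K^{4}\cdot\frac{\mu_1(g|_V)}{\mu_{d-k}(g|_W)},
\]
where $\mu_1(g|_V) = \max\{\norm{gY}_2 : Y \in V,\ \norm{Y}_2 = 1\}$ and $\mu_{d-k}(g|_W) = \min\{\norm{gZ}_2 : Z \in W,\ \norm{Z}_2 = 1\}$. The min--max characterization of singular values gives $\mu_1(g|_V) \ge \mu_{d-k+1}(g)$ (as $\dim V = k$) and $\mu_{d-k}(g|_W) \le \mu_{d-k}(g)$ (as $\dim W = d-k$), which already yields the lower bound $\kappa_t(\sigma) \ge C_K^{-4}\,\frac{\mu_{d-k+1}}{\mu_{d-k}}(\rho(\gamma))$ with no extra hypotheses.

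For the matching upper bound I will use two elementary estimates, valid whenever $\mu_{d-k}(g) > \mu_{d-k+1}(g)$: (i) if $gV$ is transverse to $U_{d-k}(g)$ then $\mu_1(g|_V) \le c^{-1}\mu_{d-k+1}(g)$; (ii) if $W$ is transverse to $U_k(g^{-1})$ then $\mu_{d-k}(g|_W) \ge c\,\mu_{d-k}(g)$; here $c>0$ depends only on a lower bound for the relevant transversality. For (i): since $g$ carries $U_k(g^{-1})$ (the span of the $k$ smallest singular directions) into $U_{d-k}(g)^\perp$ with gain $\le \mu_{d-k+1}(g)$, one has $\Pi_{U_{d-k}(g)^\perp}\circ g = g\circ \Pi_{U_k(g^{-1})}$, so $\norm{\Pi_{U_{d-k}(g)^\perp}(gY)}_2 \le \mu_{d-k+1}(g)\norm{Y}_2$ for $Y \in V$, while transversality of $gV$ and $U_{d-k}(g)$ forces $\norm{\Pi_{U_{d-k}(g)^\perp}(gY)}_2 \ge c\,\norm{gY}_2$; combining gives (i). Estimate (ii) is proved similarly by decomposing $Z \in W$ along $U_k(g^{-1}) \oplus U_k(g^{-1})^\perp$. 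Combined with the display above, these give $\kappa_t(\sigma) \asymp \frac{\mu_{d-k+1}}{\mu_{d-k}}(\rho(\gamma))$ with constants controlled by $C_K$ and the transversality constant, provided $\mu_{d-k}(\rho(\gamma)) > \mu_{d-k+1}(\rho(\gamma))$ and the two transversality conditions hold uniformly in $\sigma,t,\gamma$.

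To establish these uniformly I argue by contradiction: suppose there are $\sigma_n$, $t_n \ge 0$, $\gamma_n$ with $\sigma_n(0), \gamma_n\sigma_n(t_n) \in K$ but $\kappa_{t_n}(\sigma_n)\cdot\frac{\mu_{d-k}}{\mu_{d-k+1}}(\rho(\gamma_n))$ tending to $0$ or $\infty$. Since $\dist_X(\sigma_n(0),\sigma_n(t_n)) = t_n$ and both endpoints lie in $K$, $t_n$ differs from $\dist_X(\gamma_n(x_0),x_0)$ by at most $2\diam(K\cup\{x_0\})$; so if $(\gamma_n)$ does not escape it has a constant subsequence, along which $t_n$ is bounded, $(\sigma_n,t_n)$ subconverges, and continuity of $\kappa$ forces the ratio to a finite positive limit, a contradiction. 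Hence $\gamma_n$ escapes and $t_n\to\infty$; passing to a subsequence, $\sigma_n\to\sigma$ and $\tau_n := \gamma_n\geodflow^{t_n}(\sigma_n)\to\tau$ in the compact space of geodesics based in $K$, and $\gamma_n\to x$, $\gamma_n^{-1}\to y$ in $\partial_\infty X$. Since $\gamma_n(\sigma_n^-) = \tau_n^- \to \tau^-$ while $\sigma_n^-\to\sigma^-\ne\sigma^+$, the convergence group property gives $x=\tau^-$; since $\gamma_n^{-1}(\tau_n(0)) = \sigma_n(t_n)\to\sigma^+$ while $\tau_n(0)\to\tau(0)\in X$, it gives $y=\sigma^+$. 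As $\rho$ is also $\Psf_{d-k}$-Anosov relative to $\peripherals$ (Observation~\ref{obs: k and d-k duality}), strong dynamics preservation and Observation~\ref{obs:strongly_dynamics_pres_div_cartan} yield $\frac{\mu_{d-k}}{\mu_{d-k+1}}(\rho(\gamma_n))\to\infty$, $U_{d-k}(\rho(\gamma_n))\to\xi^{d-k}(\tau^-)$ and $U_k(\rho(\gamma_n)^{-1})\to\xi^k(\sigma^+)$. But $\rho(\gamma_n)V_n = \xi^k(\tau_n^+)\to\xi^k(\tau^+)$, transverse to $\xi^{d-k}(\tau^-)$ as $\tau^+\ne\tau^-$, and $W_n = \xi^{d-k}(\sigma_n^-)\to\xi^{d-k}(\sigma^-)$, transverse to $\xi^k(\sigma^+)$ as $\sigma^-\ne\sigma^+$; so for large $n$ the hypotheses of (i) and (ii) hold with a uniform constant, and Steps 1--2 then force $\kappa_{t_n}(\sigma_n)\cdot\frac{\mu_{d-k}}{\mu_{d-k+1}}(\rho(\gamma_n))$ into a fixed compact subset of $(0,\infty)$, the desired contradiction. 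The main obstacle is estimate (i): one needs $\rho(\gamma)$ restricted to the $k$-plane $\xi^k(\sigma^+)$ to have operator norm comparable to $\mu_{d-k+1}(\rho(\gamma))$ rather than the much larger $\mu_1(\rho(\gamma))$, and the naive bound coming from $\xi^k(\sigma^+)$ being only $O(\mu_{d-k+1}/\mu_{d-k})$-close to the bottom $k$ singular directions is too weak; the fix is to use transversality of the \emph{image} $\rho(\gamma)\xi^k(\sigma^+) = \xi^k((\gamma\sigma)^+)$ to $U_{d-k}(\rho(\gamma))$, which holds because both $\sigma$ and $\gamma\sigma$ are anchored to $K$, and which forces that image to have a definite component in the bottom $k$ output directions.
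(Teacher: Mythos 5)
Your proposal is correct and follows essentially the same route as the paper's proof: a contradiction/compactness argument that uses equivariance to reduce the norms to Euclidean ones over the compact set, min--max for the unconditional lower bound, and the strongly dynamics preserving property to identify $\lim U_{d-k}(\rho(\gamma_n))=\xi^{d-k}(\tau^-)$ and $\lim U_k(\rho(\gamma_n)^{-1})=\xi^k(\sigma^+)$, whose transversality to $\xi^k(\tau^+)$ and $\xi^{d-k}(\sigma^-)$ drives the upper bound (the paper packages your estimates (i)--(ii) as a decomposition $Y_n=Y_{1,n}+Y_{2,n}$ along the singular value frame of $\rho(\gamma_n)$, but the mechanism is identical). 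One small ordering point: your deduction $x=\tau^-$ invokes $\sigma^-\neq y$, so the identification $y=\sigma^+$ should be carried out first.
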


The following proof is inspired by arguments of Tsouvalas \cite[Th.\ 1.1]{Kostas2020} (also see \cite[Prop.\ 6.5]{CZZ2021}).

\begin{proof} Suppose not. Then we can find sequences $(\sigma_n)_{n \geq 1}$ in $\Gc$, $(t_n)_{n \geq 1}$ in $[0,\infty)$, and $(\gamma_n)_{n \geq 1}$ in $\Gamma$ such that $\sigma_n(0), \gamma_n \sigma_n(t_n) \in K$ for all $n$ and 
\begin{equation}
\label{eqn:cont_hypothesis}
\lim_{n \to \infty} \abs{ \log\left(  \kappa_{t_n}(\sigma_n) \frac{\mu_{d-k}}{\mu_{d-k+1}}(\rho(\gamma_n))\right)} = \infty.
\end{equation}
Notice that we must have $t_n \to \infty$. 

Let $\wh{K} := \{ \sigma \in \Gc : \sigma(0) \in K\}$. Since $K$ is compact, we have 
$$
\norm{\cdot}_\sigma \asymp \norm{\cdot}_2
$$ 
for all $\sigma \in \wh{K}$. Then 
$$
\norm{\rho(\gamma_n)Y}_2 \asymp \norm{ \rho(\gamma_n) Y}_{\gamma_n \geodflow^{t_n}(\sigma_n)} =\norm{ Y}_{\geodflow^{t_n}(\sigma_n)} 
$$
for all $n \in \Nb$ and $Y \in \Rb^d$. So 
$$
\kappa_{t_n}(\sigma_n) \asymp \frac{ \max\{ \norm{\rho(\gamma_n)Y}_2 : Y \in \xi^k(\sigma_n^+), \norm{Y}_2 = 1 \}}{\min\{ \norm{\rho(\gamma_n)Z}_2 : Z \in \xi^{d-k}(\sigma_n^-), \norm{Z}_2 = 1\}}
$$
for all $n \in \Nb$. Thus by the max-min/min-max theorem for singular values, 
\begin{equation}
\label{eqn:lower_bd}
\kappa_{t_n}(\sigma_n) \gtrsim \frac{\mu_{d-k+1}}{\mu_{d-k}}(\rho(\gamma_n)).
\end{equation}

Passing to a subsequence we can suppose that $\sigma_n \to \eta_1$ and $\gamma_n\geodflow^{t_n}(\sigma_n) \to \eta_2$ in $\Gc$. Then $\gamma_n \to \eta_2^-$ and $\gamma_n^{-1} \to \eta_1^+$. Let $\rho(\gamma_n) = m_{n} a_n \ell_{n}$ be a singular value decomposition of $\rho(\gamma_n)$. Passing to a subsequence we can suppose that $m_{n} \to m$ and $\ell_n \to \ell$. Then, since the limit maps are strongly dynamics-preserving, Observation~\ref{obs:strongly_dynamics_pres_div_cartan} implies that
\begin{align}
\ell^{-1}\ip{ e_{d-k-1}, \dots, e_d} & = \xi^k(\eta_1^+) \quad \text{and} \label{eqn:k1_subspace}\\
m\ip{ e_{1}, \dots, e_{d-k}} & = \xi^{d-k}(\eta_2^-) \label{eqn:k2_subspace}.
\end{align}

Since $\xi^{d-k}(\sigma_n^-) \to \xi^{d-k}(\eta_1^{-})$ and $\xi^{d-k}(\eta_1^-)$ is transverse to $\xi^{k}(\eta_1^+)$, Equation~\eqref{eqn:k1_subspace} implies that 
\begin{equation}
\label{eqn:Z subspace estimate}
\min\{ \norm{\rho(\gamma_n)Z}_2 : Z \in \xi^{d-k}(\sigma_n^-), \norm{Z}_2 = 1\} \gtrsim \mu_{d-k}(\rho(\gamma_n)). 
\end{equation}

For each $n$, fix $Y_n \in \xi^k(\sigma_n^+)$ with $\norm{Y_n}_2 = 1$ and 
\begin{equation}
\label{eqn:Y subspace estimate}
 \norm{\rho(\gamma_n)Y_n}_2 = \max\{ \norm{\rho(\gamma_n)Y}_2 : Y \in \xi^k(\sigma_n^+), \norm{Y}_2 = 1 \}.
 \end{equation}
Then we can write $Y_n = Y_{1,n} + Y_{2,n}$ where $Y_{1,n} \in \ell_n^{-1}\ip{e_1,\dots, e_{d-k}}$ and $Y_{2,n} \in \ell_n^{-1} \ip{e_{d-k+1},\dots, e_d}$. We claim that 
$$
\norm{\rho(\gamma_n)Y_{1,n}}_2 \lesssim \norm{\rho(\gamma_n)Y_{2,n}}_2.
$$
If not we can pass to a subsequence so that 
$$
\lim_{n \to \infty} \frac{\norm{\rho(\gamma_n)Y_{1,n}}_2}{\norm{\rho(\gamma_n)Y_{2,n}}_2}=\infty.
$$
Passing to a further subsequence we can suppose that 
$$
V:=\lim_{n \to \infty} \frac{\rho(\gamma_n)Y_{n}}{\norm{\rho(\gamma_n)Y_{n}}_2}
$$
exists. Then by Equation~\eqref{eqn:k2_subspace}
$$
V = \lim_{n \to \infty} \frac{\rho(\gamma_n)Y_{n}}{\norm{\rho(\gamma_n)Y_{n}}_2} = \lim_{n \to \infty} \frac{\rho(\gamma_n)Y_{1,n}}{\norm{\rho(\gamma_n)Y_{1,n}}_2} \in m\ip{e_{1},\dots, e_{d-k}} = \xi^{d-k}(\eta_2^-). 
$$
However, 
$$
\frac{\rho(\gamma_n)Y_{n}}{\norm{\rho(\gamma_n)Y_{n}}_2} \in \rho(\gamma_n) \xi^k(\sigma_n^+) = \xi^k( (\gamma_n \sigma_n)^+)
$$
and so $V \in \xi^k(\eta_2^+)$. Thus we have a contradiction and thus 
$$
\norm{\rho(\gamma_n)Y_{1,n}}_2 \lesssim \norm{\rho(\gamma_n)Y_{2,n}}_2.
$$
Then 
$$
\norm{\rho(\gamma_n)Y_n}_2 \leq \norm{\rho(\gamma_n)Y_{n,1}}_2+\norm{\rho(\gamma_n)Y_{n,2}}_2 \lesssim \norm{\rho(\gamma_n)Y_{n,2}}_2 \leq \mu_{d-k+1}(\rho(\gamma_n)).
$$
Thus by Equations~\eqref{eqn:Z subspace estimate} and~\eqref{eqn:Y subspace estimate} we have
\begin{equation}
\label{eqn:upper_bd}
\kappa_{t_n}(\sigma_n) \lesssim \frac{\mu_{d-k+1}}{\mu_{d-k}}(\rho(\gamma_n)).
\end{equation}

Combining Equations~\eqref{eqn:cont_hypothesis}, \eqref{eqn:lower_bd}, and \eqref{eqn:upper_bd} gives a contradiction. 

\end{proof}

\begin{lemma}\label{lem:growth_between_boundaries} There exists $C_0 > 0$ such that: if $P \in \peripherals^\Gamma$, $\sigma \in \partial^+\Gc_P$, and $T_\sigma^+ < \infty$, then 
$$
\kappa_{T_{\sigma}^+}(\sigma) \leq C_0 e^{-\alpha T_{\sigma}^+}.
$$
\end{lemma}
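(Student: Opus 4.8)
The plan is to reduce the estimate to Lemma~\ref{lem:growth_on_thick_part}, via the observation that the two endpoints $\sigma(0)$ and $\geodflow^{T_\sigma^+}(\sigma)(0)=\sigma(T_\sigma^+)$ of the piece of $\sigma$ crossing the cusp both lie (modulo $\Gamma$) in a fixed finite set of vertices, and that the group element carrying one to the other is precisely a parabolic element whose $X$-translation length controls $T_\sigma^+$. Throughout I write $T:=T_\sigma^+$, and I first note that $\kappa_t$ is $\Gamma$-invariant: since the norms satisfy $\norm{\rho(\gamma)(\cdot)}_{\gamma\sigma}=\norm{\cdot}_\sigma$ and $\xi$ is $\rho$-equivariant, one checks directly that $\kappa_t(\gamma\sigma)=\kappa_t(\sigma)$ for all $\gamma\in\Gamma$. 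Hence I may assume $P\in\peripherals$.

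Next I would normalize the geodesic. Since $\sigma\in\partial^+\Gc_P$ we have $\sigma(0)\in\partial H_P$, and since $T<\infty$ continuity gives $\sigma(T)\in\overline{H_P}\smallsetminus H_P=\partial H_P$; by construction $\partial H_P$ is the set of level-$2$ vertices of the combinatorial horoball $\Hc(\Cc(P,S\cap P))$, so I can write $\sigma(0)=(g_0,2)$ and $\sigma(T)=(g_1,2)$ with $g_0,g_1\in P$. Put $p:=g_0^{-1}g_1\in P$; note $p\neq\id$ since $\sigma|_{[0,T]}$ is a nonconstant geodesic segment. Replacing $\sigma$ by $g_0^{-1}\sigma$ (which changes neither $T$ — as $g_0^{-1}H_P=H_P$ — nor $\kappa_T$, by $\Gamma$-invariance) I may assume $\sigma(0)=(\id,2)=:v_P$ and $\sigma(T)=(p,2)=p\cdot v_P$.

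Then I would apply Lemma~\ref{lem:growth_on_thick_part} with a compact set $K\subset X$ containing the finite set $\{v_P:P\in\peripherals\}$, with the geodesic $\sigma$, the time $T$, and the group element $p^{-1}$: both $\sigma(0)=v_P$ and $p^{-1}\sigma(T)=p^{-1}(p,2)=v_P$ lie in $K$, so
\[
\kappa_T(\sigma)\ \le\ C(K)\,\frac{\mu_{d-k+1}}{\mu_{d-k}}\big(\rho(p)^{-1}\big)\ =\ C(K)\,\frac{\mu_{k+1}}{\mu_k}\big(\rho(p)\big),
\]
using $\mu_j(g^{-1})=\mu_{d+1-j}(g)^{-1}$. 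Since $p\in P\in\peripherals$, the parabolic growth estimate~\eqref{eqn:growth on parabolics in construction of norms} gives
\[
\frac{\mu_k}{\mu_{k+1}}(\rho(p))\ \ge\ e^{-\beta}\,e^{\alpha\,\dist_X(p,\,\id)}.
\]
Finally I relate $\dist_X(p,\id)$ to $T$: because $\sigma|_{[0,T]}$ is a geodesic, $\dist_X\big((\id,2),(p,2)\big)=\dist_X(\sigma(0),\sigma(T))=T$, and since $(\id,2)$ (resp.\ $(p,2)$) is joined to the Cayley-graph vertex $\id$ (resp.\ $p$) by a single vertical edge, $T\le \dist_X(p,\id)+2$. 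Combining the three displays yields $\kappa_T(\sigma)\le C(K)\,e^{\beta+2\alpha}\,e^{-\alpha T}$, so $C_0:=C(K)\,e^{\beta+2\alpha}$ works.

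The step requiring the most care is the normalization in the second paragraph: one must use the precise description of $\partial H_P$ and of the attaching maps of the Groves--Manning horoballs to know that the two endpoints of the cusp-crossing segment are level-$2$ vertices lying in a single $P$-orbit, and that the connecting element $p$ is a parabolic element of $P$ with $\dist_X(p,\id)\asymp T$ up to an additive constant — it is exactly this comparison that converts the parabolic singular-value gap lower bound into the exponential decay $e^{-\alpha T}$. Once this bookkeeping is done, Lemma~\ref{lem:growth_on_thick_part} and~\eqref{eqn:growth on parabolics in construction of norms} finish the argument immediately; nothing here uses the middle ("$f$-path") portion of the norm construction, only the fact that the endpoints of the segment sit in the thick part.
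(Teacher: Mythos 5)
Your proposal is correct and follows the paper's proof essentially verbatim: both reduce to Lemma~\ref{lem:growth_on_thick_part} applied to the two level-$2$ endpoints of the cusp-crossing segment, convert $\frac{\mu_{d-k+1}}{\mu_{d-k}}(\rho(p)^{-1})$ to $\frac{\mu_{k+1}}{\mu_k}(\rho(p))$, invoke Equation~\eqref{eqn:growth on parabolics in construction of norms}, and use $\dist_X(p,\id)\geq T_\sigma^+-2$. The only differences are cosmetic (your explicit check of $\Gamma$-invariance of $\kappa_t$ and a slightly different choice of compact set $K$).
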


\begin{proof} Fix $C(K) > 1$ satisfying Lemma~\ref{lem:growth_on_thick_part} for the compact set $K := \overline{ \Bc_X(\id, 1)}$. 

Fix $P \in \peripherals^\Gamma$ and  $\sigma \in \partial^+\Gc_P$ with $T_\sigma^+ < \infty$. By translating we can assume that $P \in \peripherals$, $\sigma(0)=(\id,2) \in \partial H_P$, and $\sigma(T_\sigma^+) = (\gamma, 2) \in \partial H_P$ for some $\gamma \in P$. Then $\sigma(0), \gamma^{-1}\sigma(T_\sigma^+) \in K$ and 
$$
\dist_X(\gamma, \id) \geq \dist_X \big( (\gamma, 2) ,(\id, 2) \big) -2 = T_\sigma^+ - 2. 
$$
So by Lemma~\ref{lem:growth_on_thick_part} and Equation~\eqref{eqn:growth on parabolics in construction of norms} 
\begin{align*}
\kappa_{T_{\sigma}^+}(\sigma)& \leq C(K) \frac{\mu_{d-k+1}}{\mu_{d-k}}(\rho(\gamma)^{-1}) = C(K) \frac{\mu_{k+1}}{\mu_{k}}(\rho(\gamma)) \\
& \leq C(K)e^{\beta} e^{-\alpha \dist_X(\gamma, \id)} \leq C(K)e^{\beta+2\alpha} e^{-\alpha T_{\sigma}^+}. 
\end{align*}

\end{proof}

\subsection{Contraction in the thin part}

\begin{lemma} \label{lem:thin_contraction}
There exists $C_1 > 0$ such that: if $P \in \peripherals^\Gamma$, $t \geq 0$, and $\geodflow^s(\sigma) \in \Gc_P$ for all $0 \leq s \leq t$, then 
$$
\kappa_t(\sigma) \leq C_1 e^{-\alpha t}. 
$$
\end{lemma}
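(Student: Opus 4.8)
The plan is to run a case analysis on where the interval $[0,t]$ sits inside the combinatorial-horoball excursion containing $\sigma$. Since $Q_\sigma = Q_{I(\sigma)}$ on $\Gc_{thin}$ and the decomposition of $\Gc_P$ recorded above writes every geodesic of $\Gc_P$ as a flow translate of one in $\partial^+\Gc_P$ or in $\partial^-\Gc_P$, it suffices to treat $\sigma = \geodflow^a(\sigma_0)$ with $\sigma_0 \in \partial^+\Gc_P$ and $a \ge 0$, the case $\sigma_0 \in \partial^-\Gc_P$ following from the identical estimates applied to $I(\sigma_0)$; after applying Observation~\ref{obs: k and d-k duality} if necessary we may also assume $k \le \tfrac{d}{2}$, so that $\xi^k(\sigma^+) = E_1|_\sigma$ and $\xi^{d-k}(\sigma^-) = (E_2 \oplus E_3)|_\sigma$. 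Writing $T := T_{\sigma_0}^+ \in (0,\infty]$, the hypothesis forces $0 < a \le a+t < T$. Breaking $[a,a+t]$ at $\tfrac{1}{3}T$ and $\tfrac{2}{3}T$ expresses it as a concatenation of at most three subintervals, each contained in one of the three regimes used to define $Q$; by the submultiplicativity \eqref{eqn:kappa_is_sub_multiplicative} of $\kappa$ it is then enough to bound $\kappa_\ell(\geodflow^b(\sigma_0)) \le C e^{-\alpha \ell}$ for each such subinterval $[b,b+\ell]$, with a uniform constant $C$.

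First I would dispatch the subintervals contained in the outer thirds. On such a subinterval the metric is, by construction, a blockwise rescaling of a single thick-part inner product ($Q_{\sigma_0}$ on the first third, $Q_{\geodflow^T(\sigma_0)}$ on the last third) for which the splitting $E_1 \oplus E_2 \oplus E_3$ is orthogonal, and the weight on the block $E_j$ equals $e^{\pm\alpha(j-2)(\cdot)}$. Since the numerator and denominator in the definition of $\kappa_\ell$ are taken over $E_1$ and over $E_2\oplus E_3$ respectively, both are computed blockwise, and the rate at which the $E_1$-weight decays against the $E_2$- and $E_3$-weights gives directly the bound $\kappa_\ell(\geodflow^b(\sigma_0)) \le e^{-\alpha\ell}$.

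The hard part is a subinterval $[b,b+\ell] \subset (\tfrac{1}{3}T, \tfrac{2}{3}T)$ inside the middle third, where the metric is the geodesic of inner products $f\big(Q_{\geodflow^{T/3}(\sigma_0)}, Q_{\geodflow^{2T/3}(\sigma_0)}\big)$ from Proposition~\ref{prop:paths between inner products}. The first point is that both endpoint inner products are block-diagonal for $E_1\oplus E_2\oplus E_3$ — each is a blockwise rescaling of a thick-part metric, and those were chosen to make this splitting orthogonal — so the common orthogonal basis of Proposition~\ref{prop:paths between inner products}(1) can be taken adapted to the three blocks, whence the whole geodesic stays block-diagonal and the $E_j$-weights interpolate according to \eqref{eqn:path_of_inner_products}. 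The second, and crucial, point is that the blockwise ratio of $Q_{\geodflow^{2T/3}(\sigma_0)}$ to $Q_{\geodflow^{T/3}(\sigma_0)}$ on $E_1$, relative to that on $E_2\oplus E_3$, is controlled: by $\rho$-equivariance of the thick-part metric and cocompactness of $\Gamma \actson \Gc_{thick}$, it is comparable to the singular value gap $\tfrac{\mu_k}{\mu_{k+1}}(\rho(\gamma))$ of the peripheral element $\gamma \in P$ realizing the excursion, which by \eqref{eqn:growth on parabolics in construction of norms} and Proposition~\ref{prop:cusp_space_est} is $\gtrsim e^{\alpha T}$. Inserting this into \eqref{eqn:path_of_inner_products}, and using that the traversed fraction of the middle third is $3\ell/T \le 1$, yields once more $\kappa_\ell(\geodflow^b(\sigma_0)) \le C e^{-\alpha\ell}$. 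Multiplying the at most three bounds and absorbing the constants into a single $C_1$ finishes the proof.

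I expect the middle third to be the real obstacle: there the defining metric is an abstract symmetric-space geodesic between two inner products and is not manifestly aligned with the $E_1/(E_2\oplus E_3)$ splitting through which $\kappa$ is defined, so one has to combine the block-diagonality inherited from Proposition~\ref{prop:paths between inner products} with the quantitative peripheral growth estimate \eqref{eqn:growth on parabolics in construction of norms} to see that the interpolation neither loses the contraction built up during the first third nor introduces a competing expansion before the last third recaptures it.
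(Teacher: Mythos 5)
Your proof follows essentially the same route as the paper's: split the excursion at $T/3$ and $2T/3$, handle the outer thirds by the explicit exponential block weights, handle the middle third via the block-diagonality of the interpolating path from Proposition~\ref{prop:paths between inner products} together with the decay of $\kappa_{T}(\sigma_0)$ across the whole excursion (which comes from equivariance, cocompactness of the thick part, and the peripheral gap estimate \eqref{eqn:growth on parabolics in construction of norms}), and conclude by submultiplicativity. One cosmetic slip: the relevant blockwise ratio of $Q_{\geodflow^{2T/3}(\sigma_0)}$ to $Q_{\geodflow^{T/3}(\sigma_0)}$ on $E_1$ versus $E_2\oplus E_3$ is small — comparable to $e^{4\alpha T/3}$ times $\frac{\mu_{k+1}}{\mu_k}(\rho(\gamma))\lesssim e^{-\alpha T}$, i.e.\ to the reciprocal of the gap you name — and your subsequent deduction uses the correct (small) quantity, so nothing is affected.
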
 

\begin{proof} We claim that $C_1 := \max\{1, C_0\}$ suffices where  $C_0$ is the constant from Lemma \ref{lem:growth_between_boundaries}. 

Fix $P \in \peripherals^\Gamma$, $t > 0$, and $\sigma \in \Gc_P$ where $\geodflow^s(\sigma) \in \Gc_P$ for all $0 \leq s \leq t$. We break the proof into a number of cases.

\medskip

\noindent \textbf{Case 1:} Assume $\sigma([0,\infty)) \subset \Gc_P$. Then $\sigma = \geodflow^s(\sigma_0)$ for some $s > 0$ and  $\sigma_0 \in \partial^+ \Gc_P$ with  $T_{\sigma_0}^+ = \infty$. Fix $Y \in \xi^k(\sigma^+)$ and non-zero $Z\in \xi^{d-k}(\sigma^-)$. Then
$$
\norm{Y}_{\geodflow^t(\sigma)}  =\norm{Y}_{\geodflow^{t+s}(\sigma_0)} =  e^{-\alpha (t+s)} \norm{Y}_{\sigma_0}= e^{-\alpha t} \norm{Y}_{\sigma}.
$$
We can decompose $Z = Z_2 + Z_3$ where $Z_2 \in  E_2|_\sigma = \xi^{d-k}(\sigma^+) \cap \xi^{d-k}(\sigma^-)$ and $Z_3 \in E_3|_\sigma = \xi^k(\sigma^-)$. Then
$$
\norm{Z}_{\geodflow^t(\sigma)}^2  = \norm{Z_2}_{\sigma_0}^2  +e^{\alpha (t+s)}\norm{Z_3}_{\sigma_0}^2   \geq \norm{Z_2}^2_{\sigma_0}  +e^{\alpha s}\norm{Z_3}_{\sigma_0}^2  = \norm{Z}_{\sigma}^2.
$$
So
$$
\frac{\norm{Y}_{\geodflow^t(\sigma)}}{\norm{Z }_{\geodflow^t(\sigma)}} \leq e^{-\alpha t} \frac{\norm{Y}_{\sigma} }{\norm{Z}_{\sigma} }.
$$
Since $Y$ and $Z$ were arbitrary, 
$$
\kappa_t(\sigma) \leq e^{-\alpha t} \leq C_1e^{-\alpha t}. 
$$

\noindent \textbf{Case 2:}  Assume $\sigma((-\infty,0]) \subset \Gc_P$. Arguing as in Case 1, one can show that 
$$
\kappa_t(\sigma) \leq e^{-\alpha t} \leq C_1e^{-\alpha t}. 
$$

\noindent \textbf{Case 3:}  Assume $\sigma([0,\infty))$ and $\sigma((-\infty,0])$ both intersect $\partial_P \Gc$. Then there exist $\sigma_0 \in \partial^+ \Gc_P$ and $s \in [0,T_{\sigma_0}^+]$ such that $T_{\sigma_0}^+ < \infty$ and $\sigma = \geodflow^s(\sigma_0)$. Let $T := T_{\sigma_0}^+$ and $\sigma_1 := \geodflow^{T}(\sigma_0)$.

\medskip

\noindent \textbf{Case 3(a):} Assume $s,t+s \in [0,T/3]$ or $s,t+s \in [2T/3,T]$. Then arguing as in Case 1, one can show that 
\begin{equation}
\label{eqn:caseA_conclusion}
\kappa_t(\sigma) \leq e^{-\alpha t}.
\end{equation}

 \noindent\textbf{Case 3(b):} Assume $s,t+s \in [T/3, 2T/3]$. Let $Q_0 := Q_{\geodflow^{T/3}(\sigma_0)}$ and $Q_1 := Q_{\geodflow^{2T/3}(\sigma_0)}$. Then 
$$
\Kb^d = E_1|_{\sigma} \oplus E_2|_{\sigma} \oplus E_3|_{\sigma}
$$
is an orthogonal decomposition with respect to $Q_0$ and $Q_1$. So by Proposition~\ref{prop:paths between inner products} we can fix a basis $v_1,\dots, v_d$ such that 
\begin{itemize}
\item $\ip{v_1,\dots, v_k} = E_1|_{\sigma}$,
\item $\ip{v_{k+1},\dots, v_{d-k}} = E_2|_{\sigma}$,
\item $\ip{v_{d-k+1},\dots, v_d} = E_3|_{\sigma}$,
\item $v_1,\dots, v_d$ is orthonormal with respect to $Q_0$,
\item $v_1,\dots, v_d$ is orthogonal with respect to $Q_1$, and
\item if $r \in [T/3, 2T/3]$, then $v_1,\dots, v_d$ is orthogonal with respect to $Q_{\geodflow^r(\sigma_0)}$ and
\begin{align*}
\norm{v_j}_{\geodflow^r(\sigma_0)}^2 & = Q_1(v_j,v_j)^{ \frac{3}{T} r - 1} 
\end{align*}
\end{itemize}

\medskip 
\noindent \textbf{Claim:} $\kappa_t(\sigma)^2 = \left(\frac{ \max_{1 \leq j \leq k} Q_1(v_j,v_j)}{\min_{k+1 \leq j \leq d} Q_1(v_j,v_j)} \right)^{ \frac{3}{T}t }$.
\medskip 

Since $v_1,\dots, v_k \in E_1|_\sigma= \xi^k(\sigma^+)$ and $v_{k+1},\dots, v_d \in E_2|_\sigma \oplus E_3|_\sigma=\xi^{d-k}(\sigma^-)$, we have 
$$
\kappa_t(\sigma)^2 \geq \left(\frac{ \max_{1 \leq j \leq k} Q_1(v_j,v_j)}{\min_{k+1 \leq j \leq d} Q_1(v_j,v_j)} \right)^{ \frac{3}{T}t }.
$$
For the  other inequality, fix $Y \in \xi^k(\sigma^+)$ and $Z \in \xi^{d-k}(\sigma^-)$ with $\norm{Y}_\sigma =\norm{Z}_\sigma= 1$. Writing $Y = \sum_{j=1}^k c_j v_j$, we have 
$$
1 = \norm{Y}_\sigma^2= \norm{Y}_{\geodflow^s(\sigma_0)}^2= \sum_{j=1}^k c_j^2 Q_1(v_j,v_j)^{ \frac{3}{T} s - 1}.
$$
Then 
$$
\norm{Y}_{\geodflow^t(\sigma)}^2 = \norm{Y}_{\geodflow^{s+t}(\sigma_0)}^2= \sum_{j=1}^k c_j^2 Q_1(v_j,v_j)^{ \frac{3}{T} (s+t) - 1} \leq \max_{1 \leq j \leq k} Q_1(v_j,v_j)^{ \frac{3}{T}t }.
$$
Likewise,  
$$
\norm{Z}_{\geodflow^t(\sigma)}^2 \geq \min_{k+1 \leq j \leq d} Q_1(v_j,v_j)^{ \frac{3}{T} t}.
$$
Hence 
$$
\kappa_t(\sigma)^2 \leq \left(\frac{ \max_{1 \leq j \leq k} Q_1(v_j,v_j)}{\min_{k+1 \leq j \leq d} Q_1(v_j,v_j)} \right)^{ \frac{3}{T}t }
$$
and the claim is established. 

Notice that this argument also implies that 
$$
\kappa_{T/3}( \geodflow^{T/3}(\sigma_0))^2 = \frac{ \max_{1 \leq j \leq k} Q_1(v_j,v_j)}{\min_{k+1 \leq j \leq d} Q_1(v_j,v_j)}
$$
and so
$$
\kappa_t(\sigma) =\kappa_{T/3}( \geodflow^{T/3}(\sigma_0)) ^{ \frac{3}{T}t}.
$$

By definition, 
$$
\frac{Q_1(v_j, v_j)}{Q_{\sigma_1}(v_j,v_j)}= 
\begin{cases} 
e^{\alpha T/3} & \text{ if } 1 \leq j \leq k \\
1 & \text{ if } k+1 \leq j \leq d-k \\
e^{-\alpha T/3} & \text{ if } d-k+1 \leq j \leq d
\end{cases}
$$
and 
$$
Q_{\sigma_0}(v_j, v_j) = \frac{Q_{\sigma_0}(v_j,v_j)}{Q_0(v_j, v_j)}= 
\begin{cases} 
e^{\alpha T/3} & \text{ if } 1 \leq j \leq k \\
1 & \text{ if } k+1 \leq j \leq d-k \\
e^{-\alpha T/3} & \text{ if } d-k+1 \leq j \leq d
\end{cases}.
$$
So 
\begin{align*}
\kappa_{T/3}( \geodflow^{T/3}(\sigma_0))^2 & = \frac{ \max_{1 \leq j \leq k} Q_1(v_j,v_j)}{\min_{k+1 \leq j \leq d} Q_1(v_j,v_j)} \leq e^{2\alpha T/3} \frac{ \max_{1 \leq j \leq k} Q_{\sigma_1}(v_j,v_j)}{\min_{k+1 \leq j \leq d} Q_{\sigma_1}(v_j,v_j)} \\
& \leq \kappa_T(\sigma_0)^2 e^{2\alpha T/3} \frac{ \max_{1 \leq j \leq k} Q_{\sigma_0}(v_j,v_j)}{\min_{k+1 \leq j \leq d} Q_{\sigma_0}(v_j,v_j)} = \kappa_T(\sigma_0)^2 e^{4\alpha T/3}.
\end{align*}
Then by Lemma~\ref{lem:growth_between_boundaries} 
$$
\kappa_{T/3}( \geodflow^{T/3}(\sigma_0))  \leq C_0 e^{-\alpha T} e^{2\alpha T/3} = C_0 e^{-\alpha\frac{T}{3}}.
$$
So 
\begin{equation}
\label{eqn:caseB_conclusion} 
\kappa_t(\sigma) =\kappa_{T/3}( \geodflow^{T/3}(\sigma_0)) ^{ \frac{3}{T}t} \leq C_0^{\frac{3t}{T}} e^{-\alpha t } \leq C_0 e^{-\alpha t} \leq C_1 e^{-\alpha t}
\end{equation}
(notice that we used the fact that $t \leq \frac{1}{3}T$ in the second inequality). 

\medskip

\noindent \textbf{Case 3(c):} Assume $s,t+s \in [0,T]$. We can divide the interval $[s,t+s]$ into at most three pieces so each piece is contained in one of $[0,T/3]$, $[T/3, 2T/3]$, or $[2T/3,T]$. Then Equations~\eqref{eqn:kappa_is_sub_multiplicative}, \eqref{eqn:caseA_conclusion}, and \eqref{eqn:caseB_conclusion} imply
\begin{equation*}
\kappa_t(\sigma) \leq C_1 e^{-\alpha t}. \qedhere
\end{equation*}

\end{proof} 

\subsection{Contraction everywhere} 

Now we combine our estimates on the thick and thin parts to show that $\rho$ is $\Psf_k$-Anosov relative to $X$. This part of the argument is similar to an analogous argument for geometrically finite Fuchsian groups in Section 6 in~\cite{CZZ2021}. 

Since $\rho$ is $\Psf_k$-Anosov relative to $\peripherals$, Observation~\ref{obs:strongly_dynamics_pres_div_cartan} implies that
$$
\lim_{\gamma \to \infty}  \frac{\mu_{d-k+1}}{\mu_{d-k}}(\rho(\gamma)) = \lim_{\gamma \to \infty}  \frac{\mu_{k+1}}{\mu_{k}}(\rho(\gamma)^{-1})=0. 
$$
Then, since $\Gamma$ acts cocompactly on $\Gc_{thick}$,  by Lemma~\ref{lem:growth_on_thick_part}  there exists $T_0 > 1$ such that: if $\sigma \in \Gc_{thick}$, $t \geq T_0$, and $\geodflow^t(\sigma) \in \Gc_{thick}$, then 
$$
\kappa_t(\sigma) \leq \frac{1}{2C_1^2}
$$
(where $C_1$ is the constant in Lemma~\ref{lem:thin_contraction}). Next, since $\Gamma$ acts cocompactly on  $\bigcup_{P \in \peripherals^\Gamma} \partial \Gc_P$, there exists $C_2 > 1$ such that 
\begin{equation}
\kappa_t(\sigma) \leq C_2
\end{equation} 
for all $\sigma \in \bigcup_{P \in \peripherals^\Gamma} \partial \Gc_P$ and $t \in [0,T_0]$. Fix $T > T_0$ sufficiently large so that 
$$
C_2 C_1^2 e^{-\alpha(T-T_0)} \leq \frac{1}{2} \quad \text{and} \quad C_1 e^{-\alpha T} \leq \frac{1}{2}.
$$

\begin{lemma}\label{lem:time_T_contraction} If $\sigma \in \Gc$ and $t \geq T$, then $\kappa_t(\sigma) \leq \frac{1}{2}$.
\end{lemma}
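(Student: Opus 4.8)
We want to show that for every geodesic $\sigma \in \Gc$ and every $t \geq T$ we have $\kappa_t(\sigma) \leq \frac12$. The strategy is to follow the geodesic segment $\sigma([0,t])$ and subdivide it according to the thick–thin decomposition, using the submultiplicativity of $\kappa$ in Equation~\eqref{eqn:kappa_is_sub_multiplicative} together with the two key estimates already established: Lemma~\ref{lem:thin_contraction} on maximal thin excursions and Lemma~\ref{lem:growth_on_thick_part} (applied via cocompactness of the $\Gamma$-action on $\Gc_{thick}$) on the thick part. The main bookkeeping point is to reduce, after throwing away a bounded amount of ``slack'' near the endpoints, to a situation where the geodesic either spends a long time in a single thin piece $\Gc_P$, or crosses $\Gc_{thick}$ and accumulates a definite contraction factor of at most $\tfrac{1}{2C_1^2}$ each time it traverses thick distance $\geq T_0$, while the thin pieces contribute factors $\leq C_1 e^{-\alpha(\cdot)}$ and the short thick visits near boundary crossings contribute factors $\leq C_2$ over time $\leq T_0$.

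\textbf{First step: the purely thin and nearly-thin cases.} If $\sigma([0,t]) \subset \overline{\Gc_P}$ for a single $P \in \peripherals^\Gamma$ — or more precisely if $\geodflow^s(\sigma) \in \Gc_P$ for all $0 \leq s \leq t$ (the boundary is measure zero and handled by continuity) — then Lemma~\ref{lem:thin_contraction} gives directly $\kappa_t(\sigma) \leq C_1 e^{-\alpha t} \leq C_1 e^{-\alpha T} \leq \tfrac12$ by our choice of $T$. So we may assume $\sigma([0,t])$ meets $\Gc_{thick}$.

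\textbf{Second step: the general subdivision.} Let $0 \leq s_1 < s_1' \leq s_2 < s_2' \leq \cdots$ record the successive maximal subintervals of $[0,t]$ on which $\geodflow^s(\sigma)$ lies in $\Gc_{thick}$, interleaved with the maximal thin excursions. By submultiplicativity, $\kappa_t(\sigma)$ is bounded by the product of the $\kappa$-values over these pieces (plus the two end pieces, which we bound trivially by $\kappa \leq C_2$ if they are short thick pieces, or absorb into a thin estimate $\leq 1$, or note that the first thin excursion starting at time $0$ is covered by Lemma~\ref{lem:thin_contraction} directly). On each full traversal of $\Gc_{thick}$ the geometry forces the traversal to have length $\geq T_0$ once we have discarded an initial segment of length $< T_0$ near the start — this uses that $\Gamma$ acts cocompactly on $\bigcup_{P} \partial\Gc_P$, so consecutive boundary crossings of $\Gc_{thick}$ are a uniformly bounded distance apart only in the ``short'' direction, and the choice of $T_0$ then yields $\kappa \leq \tfrac{1}{2C_1^2}$ on that piece. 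Each thin excursion of length $\ell$ contributes $\leq C_1 e^{-\alpha \ell} \leq C_1$, and more importantly the excursions together with their adjacent short thick boundary visits contribute a product $\leq C_2 C_1^2 e^{-\alpha(\cdot)}$; combining one such block with one long thick traversal gives a factor $\leq \tfrac{1}{2C_1^2} \cdot C_2 C_1^2 = \tfrac{C_2}{2}$, which is $\leq \tfrac12$ after the calibration $C_2 C_1^2 e^{-\alpha(T-T_0)} \leq \tfrac12$ once $t \geq T$ forces at least one such block to occur with the remaining time $\geq T - T_0$. I would organize this as: peel off a leading segment of length at most $T_0$ (bounded by $C_2$), observe that the remaining segment of length $\geq T - T_0$ begins at a point of $\bigcup_P \partial\Gc_P$ or in $\Gc_{thick}$, and then apply Lemmas~\ref{lem:thin_contraction} and~\ref{lem:growth_on_thick_part} alternately, collecting the telescoped product.

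\textbf{The main obstacle.} The delicate point is the case analysis at the start of the geodesic: $\sigma(0)$ may lie deep inside a thin piece, so the first stretch need not be a ``full'' thin excursion (it has no left boundary crossing at time $0$), and similarly the thick traversals near $\sigma(0)$ may be arbitrarily short. The remedy is exactly the $T_0$-vs-$T$ gap built into the statement's preamble: the first $\min(t, T_0)$ units of time are bounded crudely using the cocompactness estimate $\kappa_t(\sigma) \leq C_2$ on $\bigcup_P \partial \Gc_P$ for $t \in [0,T_0]$ (and $\leq C_1$ on an initial thin piece by Lemma~\ref{lem:thin_contraction}), and then on $[\,T_0, t\,]$ — which has length $\geq T - T_0$ — the geodesic starts either in the thick part or at a thin-piece boundary, so the clean alternating estimates apply and the accumulated exponential contraction $e^{-\alpha(t - T_0)}$ dominates the bounded constants by our choice of $T$. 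Writing this out carefully requires splitting along whether $\geodflow^{T_0}(\sigma)$ lies in $\Gc_{thick}$ or $\Gc_{thin}$, but in each branch it reduces to the two lemmas plus Equation~\eqref{eqn:kappa_is_sub_multiplicative}.
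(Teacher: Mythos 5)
Your first step (the purely thin case) matches the paper, but your ``general subdivision'' in the second step has a genuine gap, in fact two. First, the claim that each traversal of $\Gc_{thick}$ has length $\geq T_0$ (after discarding an initial segment) is false and not forced by any geometry: a geodesic can leave a combinatorial horoball region $H_P$ and re-enter it, or enter a different one, after an arbitrarily short excursion through the thick part, and cocompactness of the $\Gamma$-action on $\bigcup_P \partial\Gc_P$ gives no lower bound on the duration of such excursions. Second, and independently, the alternating decomposition into maximal thick and thin pieces is not controlled by submultiplicativity: each thin piece contributes a factor $C_1 e^{-\alpha\ell}$ and each short thick piece a factor $C_2$, and since the number of alternations in $[0,t]$ is unbounded, the product of the constants $C_1, C_2$ is unbounded as well; your proposed pairing ``one block with one long thick traversal gives $\tfrac{1}{2C_1^2}\cdot C_2C_1^2 = \tfrac{C_2}{2}$'' is not $\leq \tfrac12$ since $C_2 > 1$, and the calibration $C_2C_1^2e^{-\alpha(T-T_0)} \leq \tfrac12$ does not rescue a product over many blocks.

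The paper's argument avoids both problems by using only \emph{three} pieces: set $s_1$ and $s_2$ to be the first and last times in $[0,t]$ at which $\geodflow^s(\sigma) \in \Gc_{thick}$. Then $[0,s_1]$ and $[s_2,t]$ are each contained in a single $\Gc_P$, so Lemma~\ref{lem:thin_contraction} applies to them, and the middle piece $[s_1,s_2]$ is handled in one shot: the estimate defining $T_0$ (coming from Lemma~\ref{lem:growth_on_thick_part} plus $\Psf_k$-divergence and cocompactness on $\Gc_{thick}$) requires only that the two \emph{endpoint} times lie in $\Gc_{thick}$, with no hypothesis on the intermediate times, so if $s_2-s_1 \geq T_0$ it gives $\kappa_{s_2-s_1}(\geodflow^{s_1}(\sigma)) \leq \tfrac{1}{2C_1^2}$, which exactly cancels the two factors of $C_1$ from the end pieces; if $s_2-s_1 \leq T_0$ one instead bounds the middle by $C_2$ and uses that $s_1+(t-s_2) \geq T-T_0$ to get the exponential decay. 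Your writeup never exploits the endpoints-only nature of the thick estimate, which is the key idea that makes the bookkeeping terminate.
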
 

\begin{proof} Fix $\sigma \in \Gc$ and $t \geq T$. If $\phi^s(\sigma) \in \Gc_{thin}$ for all $s \in [0,t]$, then Lemma~\ref{lem:thin_contraction} implies that
$$
\kappa_t(\sigma) \leq C_1 e^{-\alpha t} \leq C_1 e^{-\alpha T} \leq \frac{1}{2}.
$$
So we can suppose that $\phi^s(\sigma) \in \Gc_{thick}$ for some $s \in [0,t]$. Then define 
$$
s_1 := \min\{ s \in [0,t] :\phi^s(\sigma) \in \Gc_{thick}\} \quad \text{and} \quad s_2 := \max\{ s \in [0,t] : \phi^s(\sigma) \in \Gc_{thick}\}.
$$
If $s_2 -s_1 \geq T_0$, then 
$$
\kappa_t(\sigma) \leq \kappa_{t-s_2}(\geodflow^{s_2}(\sigma)) \cdot \kappa_{s_2-s_1}(\geodflow^{s_1}(\sigma)) \cdot \kappa_{s_1}(\sigma) \leq C_1 e^{-\alpha(t-s_2)}\cdot \frac{1}{2C_1^2} \cdot C_1 e^{-\alpha s_1} \leq \frac{1}{2}.
$$
If $s_2 - s_1 \leq T_0$, then 
\begin{align*}
\kappa_t(\sigma) &\leq \kappa_{t-s_2}(\geodflow^{s_2}(\sigma)) \cdot \kappa_{s_2-s_1}(\geodflow^{s_1}(\sigma)) \cdot \kappa_{s_1}(\sigma) \leq C_1 e^{-\alpha(t-s_2)}\cdot C_2 \cdot C_1 e^{-\alpha s_1} \\
& \leq C_2 C_1^2e^{-\alpha(s_1+t-s_2)} \leq C_2 C_1^2 e^{-\alpha(T-T_0)} \leq \frac{1}{2}.
\end{align*}
So in all cases $\kappa_t(\sigma) \leq \frac{1}{2}$ and the proof is complete. 
\end{proof}

\begin{lemma} There exists $C_3 \geq 1$ such that: if $\sigma \in \Gc$ and $t \in [0,T]$, then $\kappa_t(\sigma) \leq C_3$. \end{lemma}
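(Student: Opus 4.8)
The plan is to combine the thin‑part contraction of Lemma~\ref{lem:thin_contraction}, the thick‑part comparison of Lemma~\ref{lem:growth_on_thick_part}, the submultiplicativity~\eqref{eqn:kappa_is_sub_multiplicative}, the evident $\Gamma$‑invariance $\kappa_t(\gamma\sigma)=\kappa_t(\sigma)$ (immediate from the $\rho$‑equivariance of $\norm{\cdot}$ and from $\geodflow^t(\gamma\sigma)=\gamma\geodflow^t(\sigma)$), and the trivial fact that singular values are decreasing, so $\frac{\mu_{d-k+1}}{\mu_{d-k}}(g)\le 1$ for every $g$. I would first fix a compact set $K_0\subset X$ with $\Gamma\cdot K_0\supseteq X\smallsetminus\bigcup_{P\in\peripherals^\Gamma}H_P$ (using that $\Gamma$ acts cocompactly on that set), let $C(K_0)>1$ be the constant of Lemma~\ref{lem:growth_on_thick_part} for $K=K_0$, and set $C_3:=C_1^2\,C(K_0)\ge 1$. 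I would also record once and for all that $(\sigma,t)\mapsto\kappa_t(\sigma)$ is continuous: it is the quotient of the maximum and the minimum of the continuous function $(\sigma,t,v)\mapsto\norm{v}_{\geodflow^t(\sigma)}$ over the compact unit spheres of $\xi^k(\sigma^+)$ and $\xi^{d-k}(\sigma^-)$ in $\norm{\cdot}_\sigma$, which vary continuously with $\sigma$ since the norms, the boundary map $\xi$, and $\sigma\mapsto\sigma^\pm$ are all continuous.

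Now fix $\sigma\in\Gc$ and $t\in[0,T]$. \textbf{Case 1: $\geodflow^s(\sigma)\in\Gc_{thin}$ for all $s\in[0,t]$.} Since the sets $\{\Gc_P:P\in\peripherals^\Gamma\}$ are open with pairwise disjoint closures, the connected set $\{\geodflow^s(\sigma):s\in[0,t]\}$ lies inside a single $\Gc_P$, so Lemma~\ref{lem:thin_contraction} gives $\kappa_t(\sigma)\le C_1e^{-\alpha t}\le C_1\le C_3$.

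\textbf{Case 2: $\geodflow^s(\sigma)\in\Gc_{thick}$ for some $s\in[0,t]$.} Because $\Gc_{thick}$ is closed, the set $\{s\in[0,t]:\geodflow^s(\sigma)\in\Gc_{thick}\}$ is compact; let $s_1$ and $s_2$ be its minimum and maximum, so $0\le s_1\le s_2\le t$. On $[0,s_1)$ and on $(s_2,t]$ the flow stays in $\Gc_{thin}$, hence (by the connectedness argument) in a single $\Gc_P$ and a single $\Gc_{P'}$ respectively; applying Lemma~\ref{lem:thin_contraction} on $[0,s_1-\epsilon]$ and on $[s_2+\epsilon,t]$ and letting $\epsilon\downarrow0$ using continuity of $\kappa$ yields $\kappa_{s_1}(\sigma)\le C_1e^{-\alpha s_1}\le C_1$ and $\kappa_{t-s_2}(\geodflow^{s_2}(\sigma))\le C_1e^{-\alpha(t-s_2)}\le C_1$. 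For the middle piece, $\geodflow^{s_1}(\sigma),\geodflow^{s_2}(\sigma)\in\Gc_{thick}$ means $\sigma(s_1),\sigma(s_2)\in X\smallsetminus\bigcup_P H_P$, so there are $\gamma_1,\gamma_2\in\Gamma$ with $\gamma_1\sigma(s_1)\in K_0$ and $\gamma_2\gamma_1\sigma(s_2)\in K_0$ (the latter using that $X\smallsetminus\bigcup_P H_P$ is $\Gamma$‑invariant). Applying Lemma~\ref{lem:growth_on_thick_part} to the geodesic $\gamma_1\geodflow^{s_1}(\sigma)$, the time $s_2-s_1\le T$, the element $\gamma_2$, and the compact set $K_0$, and using $\frac{\mu_{d-k+1}}{\mu_{d-k}}(\rho(\gamma_2))\le1$ together with the $\Gamma$‑invariance of $\kappa$,
$$
\kappa_{s_2-s_1}(\geodflow^{s_1}(\sigma))=\kappa_{s_2-s_1}(\gamma_1\geodflow^{s_1}(\sigma))\le C(K_0)\,\frac{\mu_{d-k+1}}{\mu_{d-k}}(\rho(\gamma_2))\le C(K_0)
$$
(this also holds trivially when $s_1=s_2$, the left side then being $\kappa_0=1$). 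Finally, submultiplicativity~\eqref{eqn:kappa_is_sub_multiplicative} applied twice gives
$$
\kappa_t(\sigma)\le\kappa_{t-s_2}(\geodflow^{s_2}(\sigma))\cdot\kappa_{s_2-s_1}(\geodflow^{s_1}(\sigma))\cdot\kappa_{s_1}(\sigma)\le C_1\cdot C(K_0)\cdot C_1=C_3 .
$$

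I do not expect a real obstacle here: all the ingredients are already established, and the argument is essentially the one used for Lemma~\ref{lem:time_T_contraction} but without the need to make the constant small. The only points requiring a little care are the ``single cusp'' claim (which rests on the fact that the closures of the $\Gc_P$ are pairwise disjoint, so a connected subset of $\Gc_{thin}$ lies in one $\Gc_P$) and the behaviour of $\kappa$ at the boundary instants $s_1,s_2$, where $\geodflow^{s_i}(\sigma)$ sits on $\partial\Gc_P$ rather than strictly inside $\Gc_P$; this is dispatched by the continuity of $\kappa$ noted above.
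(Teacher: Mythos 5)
Your proof is correct, but it takes a different route from the paper's. The paper simply introduces the set $\Gc_{thicker}$ of geodesics that meet $\Gc_{thick}$ at some time in $[0,T]$, observes that $\Gamma$ acts cocompactly on it so that the continuous, $\Gamma$-invariant function $(t,\sigma)\mapsto\kappa_t(\sigma)$ is bounded on $[0,T]\times\Gc_{thicker}$, and handles the complement (entirely thin on $[0,T]$) with Lemma~\ref{lem:thin_contraction}. You instead run the thin--thick--thin time decomposition from the proof of Lemma~\ref{lem:time_T_contraction}, bounding the middle piece via Lemma~\ref{lem:growth_on_thick_part} together with the trivial inequality $\frac{\mu_{d-k+1}}{\mu_{d-k}}(\rho(\gamma_2))\le 1$ and the $\Gamma$-invariance of $\kappa$. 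All the steps check out: the closures of the $\Gc_P$ are disjoint, so a connected thin arc does lie in a single $\Gc_P$; $\Gc_{thick}$ is closed, so $s_1,s_2$ exist; and the limiting argument at the boundary instants is legitimate given the continuity of $t\mapsto\kappa_t(\sigma)$. What your approach buys is an explicit constant $C_3=C_1^2\,C(K_0)$ rather than one produced by an abstract compactness argument; what the paper's buys is brevity, since it avoids the endpoint/continuity bookkeeping entirely. Either version is acceptable here.
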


\begin{proof} Let 
$$
\Gc_{thicker} :=\{ \sigma \in \Gc: \geodflow^t(\sigma) \in \Gc_{thick} \text{ for some } t \in [0,T]\}.
$$
Then $\Gamma$ acts cocompactly on $\Gc_{thicker}$ and so 
$$
C_3:= \max \left\{ 1, C_1, \max\{ \kappa_t(\sigma) : t \in [0,T] \text{ and } \sigma \in \Gc_{thicker}\} \right\}
$$
is finite. 

If $\sigma \in \Gc$, then either $\geodflow^t(\sigma) \in \Gc_{thin}$ for all $t \in [0,T]$, in which case Lemma~\ref{lem:thin_contraction} implies that
$$
\kappa_t(\sigma) \leq C_1 e^{-\alpha t} \leq C_1 \leq C_3
$$
for all $t \in [0,T]$, or $\sigma \in \Gc_{thicker}$ in which case $\kappa_t(\sigma) \leq C_3$ for all $t \in [0,T]$. 
\end{proof}

\begin{lemma} There exists $c > 0$ such that: if $\sigma \in \Gc$ and $t \geq 0$, then $\kappa_t(\sigma) \leq 2C_3 e^{-c t}$. \end{lemma}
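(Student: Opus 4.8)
The plan is to set $c := (\log 2)/T$ and run the standard ``contraction over a fixed time window plus submultiplicativity'' argument, feeding in the two preceding lemmas. First I would fix $\sigma \in \Gc$ and $t \geq 0$ and write $t = nT + r$ with $n = \lfloor t/T \rfloor \in \Zb_{\geq 0}$ and $r \in [0,T)$. Iterating the submultiplicativity estimate~\eqref{eqn:kappa_is_sub_multiplicative} along the orbit points $\geodflow^{jT}(\sigma)$ for $j = 0, \dots, n-1$ gives
\[
\kappa_{nT}(\sigma) \leq \prod_{j=0}^{n-1} \kappa_T\big(\geodflow^{jT}(\sigma)\big),
\]
and each factor is at most $\tfrac12$ by Lemma~\ref{lem:time_T_contraction} (applied with parameter $t = T$ at the base point $\geodflow^{jT}(\sigma)$), so $\kappa_{nT}(\sigma) \leq 2^{-n}$.

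Then one further application of~\eqref{eqn:kappa_is_sub_multiplicative} yields $\kappa_t(\sigma) = \kappa_{r+nT}(\sigma) \leq \kappa_r\big(\geodflow^{nT}(\sigma)\big)\,\kappa_{nT}(\sigma)$, and since $r \in [0,T]$ the preceding lemma bounds the first factor by $C_3$. Hence
\[
\kappa_t(\sigma) \leq C_3\, 2^{-n} = C_3\, 2^{\,r/T}\, 2^{-t/T} \leq 2C_3\, e^{-(\log 2)\, t/T} = 2C_3\, e^{-ct},
\]
using $2^{r/T} \leq 2$; this argument also covers $n=0$ directly (there $\kappa_t(\sigma) \leq C_3 \leq 2C_3 e^{-ct}$ since $e^{-ct} \geq e^{-cT} = \tfrac12$ for $t \in [0,T]$). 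Combined with Proposition~\ref{prop: Hom bundles contraction/expansions}, this shows the flow $\homflow^t$ on $\Hom(\wh{\Xi}^{d-k}, \wh{\Theta}^k)$ is exponentially contracting, completing the proof of Theorem~\ref{thm: building a norm}.

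I do not expect any genuine obstacle in this final step; the one point to be careful about is that the exponential rate that emerges is $c = (\log 2)/T$ rather than the parabolic growth constant $\alpha$, and that the fractional remainder $r$ contributes only the harmless multiplicative factor $2^{r/T}\le 2$, which is already absorbed into the constant $2C_3$ so that no further adjustment of constants is required.
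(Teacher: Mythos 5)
Your proof is correct and follows essentially the same strategy as the paper: iterate the submultiplicativity estimate~\eqref{eqn:kappa_is_sub_multiplicative} over time-$T$ windows, apply Lemma~\ref{lem:time_T_contraction} to each, and absorb the remainder using the uniform bound $C_3$ on short time intervals. The only cosmetic difference is that the paper splits $[0,t]$ into $\lfloor t/T\rfloor$ subintervals each of length at least $T$ rather than isolating a fractional remainder, but the constants and the rate $c=(\log 2)/T$ come out identically.
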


\begin{proof} We claim that $c: = \frac{\log(2)}{T}$ suffices. 

If $t \geq T$, then we can break the interval $[0,t]$ into $\lfloor t/T \rfloor$ subintervals each with length at least $T$. Then using Equation~\eqref{eqn:kappa_is_sub_multiplicative} and Lemma~\ref{lem:time_T_contraction} we have 
$$
\kappa_t(\sigma) \leq \left(\frac{1}{2} \right)^{\lfloor t/T \rfloor} \leq 2\left(\frac{1}{2} \right)^{t/T} = 2 e^{-\frac{\log(2)}{T} t} \leq 2C_3 e^{-c t}. 
$$
If $t < T$, then 
\begin{equation*}
\kappa_t(\sigma) \leq C_3 = 2C_3 e^{-\log(2)} \leq 2C_3 e^{-c t}.  \qedhere
\end{equation*}
\end{proof} 

Then by Proposition~\ref{prop: Hom bundles contraction/expansions}  and the definition of $\kappa_t$, we see that $\rho$ is $\Psf_k$-Anosov relative to $X$.

\section{Uniformly Anosov representations}\label{sec: uniformly Anosov} 

In this section we prove the claims in Theorem~\ref{thm:uniform_stability_case} for a single representation. Later, in Section~\ref{sec:uniform stability}, we will complete the proof of  Theorem~\ref{thm:uniform_stability_case}  by observing that one can obtain uniform estimates over a small neighborhood in the constrained representation variety. The proofs in this section are slightly inefficient due to the need to carefully track constants for this later work.

\begin{theorem}\label{thm:estimates for uniform representations} Suppose that $(\Gamma,\peripherals)$ is relatively hyperbolic, $X$ is a weak cusp space for $(\Gamma, \peripherals)$, and $\rho \colon \Gamma \to \SL(d,\Kb)$ is uniformly $\Psf_k$-Anosov relative to $X$. Then:
\begin{enumerate}
\item There exists a $\rho$-equivariant quasi-isometric embedding 
$$
X \to \SL(d,\Kb) / \SU(d,\Kb).
$$
\item The Anosov boundary map 
$$
\xi \colon \partial_\infty X \to \Gr_k(\Kb^d) \times \Gr_{d-k}(\Kb^d)
$$
is H\"older relative to any visual metric on $\partial_\infty X$ and any Riemannian distance on $\Gr_k(\Kb^d) \times \Gr_{d-k}(\Kb^d)$. 

\end{enumerate}
\end{theorem}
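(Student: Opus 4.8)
\textbf{Proof proposal for Theorem~\ref{thm:estimates for uniform representations}.}

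The plan is to work upstairs on the flow space $\Gc(X)$ and exploit the locally uniform metric $\norm{\cdot}$ on $\wh{E}_\rho(X) \to \wh{\Gc}(X)$ together with the exponential contraction $\kappa_t(\sigma) \leq C e^{-ct}$ (valid now for every $\sigma \in \Gc(X)$ by the definition of uniformly Anosov and Proposition~\ref{prop: Hom bundles contraction/expansions}). As in Section~\ref{sec:contraction_Anosov}, for each $\sigma \in \Gc(X)$ pick $A_\sigma \in \GL(d,\Kb)$ with $\norm{\cdot}_2 = \norm{A_\sigma(\cdot)}_\sigma$; then $\rho(\gamma) A_\sigma = A_{\gamma\sigma} k_{\gamma,\sigma}$ for some $k_{\gamma,\sigma} \in \mathsf{U}(d,\Kb)$, so the map $\sigma(0) \mapsto A_\sigma \SU(d,\Kb)$ is (after normalizing determinants) a candidate $\rho$-equivariant map into the symmetric space. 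The key point the locally uniform hypothesis buys us is that this map is \emph{coarsely well-defined}: Definition~\ref{defn:locally uniform norms} gives, for each $r$, a constant $L_r$ with $\frac{1}{L_r}\norm{\cdot}_{\sigma_1} \leq \norm{\cdot}_{\sigma_2} \leq L_r \norm{\cdot}_{\sigma_1}$ whenever $\dist_X(\sigma_1(0),\sigma_2(0)) \leq r$, hence $\dist_N(A_{\sigma_1}\mathsf{U}, A_{\sigma_2}\mathsf{U}) \leq \sqrt{d}\,\log L_r$ whenever the basepoints are $r$-close; combined with Proposition~\ref{prop:abundance of geodesic lines} this lets us define $F\colon X \to \SL(d,\Kb)/\SU(d,\Kb)$ on all of $X$ (choosing, for $p \in X$, a geodesic line $\sigma_p$ passing within $R$ of $p$ and setting $F(p) := A_{\sigma_p}\SU(d,\Kb)$ normalized) up to bounded ambiguity, and $F$ is $\rho$-equivariant up to bounded error — which is all a quasi-isometric embedding requires.

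For part (1) I then need the two Lipschitz-type bounds. The upper bound $\dist_M(F(p),F(q)) \lesssim \dist_X(p,q) + \mathrm{const}$ follows by chaining along a geodesic in $X$ between $p$ and $q$ through vertices, using the locally uniform estimate on each unit step (each adjacent pair of basepoints is distance $1$ apart, giving a factor $L_1$ per step) exactly as in Lemma~\ref{lem:GM upper bound on mu1/mud}, together with $\rho$-equivariance to reduce to finitely many generators plus the weak-unipotence growth bound of Theorem~\ref{thm:structure of weakly unipotent discrete groups}(3) on peripheral segments. For the lower bound $\dist_X(p,q) \lesssim \dist_M(F(p),F(q)) + \mathrm{const}$, I adapt the argument of Lemma~\ref{lem:D-}: writing $p = \sigma(0)$, $q = \sigma(T)$ for a geodesic $\sigma$ (up to bounded error), Lemma~\ref{lem:decay of ratio of singular values} gives $\frac{\mu_{k+1}}{\mu_k}(A_\sigma^{-1} A_{\geodflow^T(\sigma)}) \leq C e^{-cT}$, hence $\log\frac{\mu_k}{\mu_{k+1}}(A_\sigma^{-1}A_{\geodflow^T\sigma}) \geq cT - \log C$, and since $\dist_M(F(p),F(q)) \asymp \log\frac{\mu_1}{\mu_d}(A_\sigma^{-1}A_{\geodflow^T\sigma}) \geq \log\frac{\mu_k}{\mu_{k+1}}(\cdots)$ we get $\dist_M(F(p),F(q)) \gtrsim \dist_X(p,q)$; the locally uniform hypothesis is again used to pass between $A_{\sigma_p}$ and $A_\sigma$ when the basepoint is only within $R$ of $p$.

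For part (2), H\"older regularity of $\xi = (\xi^k,\xi^{d-k})$, I use the standard argument: fix a basepoint $o \in X$; for distinct $x,y \in \partial_\infty X$ with visual-metric distance $\varrho(x,y) = e^{-(x|y)_o}$ small, pick a geodesic line $\sigma$ with $\sigma^+ = x$, $\sigma^- = y$ passing within bounded distance of a point $p$ on $X$ with $\dist_X(o,p) \asymp (x|y)_o$. Two nearby boundary points $x,x'$ with $(x|x')_o = t$ are then connected to a common geodesic out to distance $\asymp t$, and the contraction estimate $\kappa_t(\sigma) \leq C e^{-ct}$ — transported through the $A_\sigma$'s and controlled at the basepoint by the locally uniform bound $L_R$ — forces $\dist_{\Gr_k}(\xi^k(x), \xi^k(x')) \lesssim e^{-ct} = \varrho(x,x')^{c/c_0}$, where $c_0$ is the visual parameter; the same for $\xi^{d-k}$ via $I$. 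Concretely this is the computation already packaged in Lemma~\ref{lem: U_k under product} and Lemma~\ref{lem:convergence of uK} applied with the \emph{uniform} norms, so that all the implied constants are genuinely uniform over $\Gc(X)$ rather than merely over a compact set — that uniformity is exactly what the locally uniform hypothesis provides and is precisely what fails for a general (non-uniform) relatively Anosov representation.

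The main obstacle is the lower bound in part (1): in the non-uniform setting one only controls norms on a $\Gamma$-cocompact ``thick'' set, so the comparison between $A_{\sigma_p}$ (for a geodesic through $p$) and the symmetric-space distance degrades as $p$ goes deep into a cusp; the locally uniform hypothesis is what repairs this, but one must check carefully that the constant $L_R$ (with $R$ the geodesic-density radius of Proposition~\ref{prop:abundance of geodesic lines}) suffices uniformly and that the ambiguity in choosing $\sigma_p$ stays bounded — this bookkeeping, rather than any single hard estimate, is the crux, and it is also what makes the constants trackable for the uniform-stability statement proved later in Section~\ref{sec:uniform stability}.
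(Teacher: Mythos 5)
Your proposal follows essentially the same route as the paper: build the map into the symmetric space out of the matrices $A_\sigma$ realizing the locally uniform norms, get the upper Lipschitz bound by chaining the constant $L_{R_1}$ along a geodesic (Observation~\ref{obs:the A_sigma have local bounds} iterated), get the lower bound from the contraction estimate of Lemma~\ref{lem:decay of ratio of singular values}, and prove H\"older regularity by making the convergence $U_k(A_\sigma^{-1}A_{\geodflow^t(\sigma)}) \to A_\sigma^{-1}\xi^k(\sigma^+)$ quantitative with constants uniform over $\Gc(X)$. Two points deserve attention. First, your map $F(p) := A_{\sigma_p}\SU(d,\Kb)$ is only coarsely $\rho$-equivariant, and you assert that this suffices; but the theorem claims a genuinely $\rho$-equivariant embedding, and a single choice of $\sigma_p$ per orbit representative is not well-defined when $\Stab_\Gamma(p)$ is nontrivial. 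The paper repairs this by taking the circumcenter (center of mass) of the finite set $\{\bar A_\sigma \mathsf{K} : \sigma \in \Stab_\Gamma(p)\cdot\sigma_p\}$ in the nonpositively curved symmetric space, which is exactly equivariant and moves each point a bounded distance; you should include this step. Second, your invocation of Lemma~\ref{lem:GM upper bound on mu1/mud} and Theorem~\ref{thm:structure of weakly unipotent discrete groups}(3) for the upper bound is a detour that does not apply here: $X$ is an arbitrary weak cusp space (no combinatorial horoballs, no preferred vertex set), and the pure chaining argument you also describe — one factor $L_{R_1}^2$ per step of length $R_1$ along the geodesic — already yields $\frac{\mu_1}{\mu_d}(A_\sigma^{-1}A_{\geodflow^t(\sigma)}) \leq C_1 e^{c_1|t|}$ with no input from weak unipotence. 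Finally, for part (2) note that Lemma~\ref{lem:convergence of uK} as stated is a qualitative compactness statement; the H\"older exponent requires the telescoping estimate $\sum_j \frac{\mu_1}{\mu_d}(A_{\geodflow^{t+j+1}(\sigma)}^{-1}A_{\geodflow^{t+j}(\sigma)})\frac{\mu_{k+1}}{\mu_k}(A_\sigma^{-1}A_{\geodflow^{t+j+1}(\sigma)}) \lesssim L_r^2 e^{-ct}$ via Lemma~\ref{lem: U_k under product}, plus a slim-triangle argument relating the Gromov product of $x,y$ to the distance from the basepoint to a geodesic joining them; you gesture at both, and with those details filled in the argument closes.
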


The rest of the section is devoted to the proof of Theorem~\ref{thm:estimates for uniform representations}. Suppose $(\Gamma,\peripherals)$ is relatively hyperbolic, $X$ is a weak cusp space for $(\Gamma, \peripherals)$, and $\rho \colon \Gamma \to \SL(d,\Kb)$ is uniformly $\Psf_k$-Anosov relative to $X$. Let $\xi \colon \partial_\infty X \to \Gr_k(\Kb^d) \times \Gr_{d-k}(\Kb^d)$ denote the Anosov boundary map. 

Let $\dist_M$ denote the symmetric space distance on $M:=\SL(d,\Kb) / \SU(d,\Kb)$ defined by Equation~\eqref{eqn:symmetric distance in prelims} and let $\mathsf{K}:=\SU(d,\Kb)$. Then there exists $\alpha_0>1$, which only depends on $d$, such that 
\begin{equation}
\label{eqn:bounds in symmetric space}
\frac{1}{\alpha_0} \log \frac{\mu_{1}}{\mu_{d}}\left(  g^{-1}h \right) \leq \dist_M(g \mathsf{K},h \mathsf{K}) \leq \alpha_0 \log \frac{\mu_{1}}{\mu_{d}}\left(g^{-1}h \right)
\end{equation}
for all $g,h \in \SL(d,\Kb)$. 

By hypothesis and Proposition~\ref{prop: Hom bundles contraction/expansions}, there exists a family of norms $\norm{\cdot}$ on the fibers of $\Gc(X) \times \Kb^d \to \Gc(X)$ such that:
\begin{itemize}
\item Each $\norm{\cdot}_\sigma$ is induced by an inner product on $\Kb^d$. 
\item $\norm{\rho(\gamma)(\cdot)}_{\gamma \sigma} = \norm{\cdot}_\sigma$ for all $\gamma \in \Gamma$ and $\sigma \in \Gc(X)$.
\item For any $r \geq 0$, there is some $L_r \geq 1$ such that:
\begin{align}
\label{eqn:definition of Lr in Section uniform}
\frac{1}{L_r} \norm{\cdot}_{\sigma_1} \leq \norm{\cdot}_{\sigma_2} \leq L_r \norm{\cdot}_{\sigma_1}
\end{align}
for all $\sigma_1, \sigma_2 \in \Gc(X)$ with $\dist_X(\sigma_1(0), \sigma_2(0)) \leq r$. 
\item There are $c, C > 0$ such that
$$
\frac{\norm{Y }_{\geodflow^t(\sigma)}}{\norm{Z }_{\geodflow^t(\sigma)}} \leq Ce^{-ct} \frac{\norm{Y}_{\sigma} }{\norm{Z}_{\sigma} }
$$
for all $t \geq 0$, $\sigma \in \Gc(X)$, $Y \in \xi^k(\sigma^+)$, and non-zero $Z\in \xi^{d-k}(\sigma^-)$.
\end{itemize} 

As in Section~\ref{sec:contraction_Anosov}, since each $\norm{\cdot}_\sigma$ is induced by an inner product, for every $\sigma \in \Gc(X)$ there exists a matrix $A_\sigma \in \GL(d,\Kb)$ such that 
\begin{equation}
\label{eqn:definition of Asigma in Section uniform}
\norm{A_\sigma(\cdot)}_{\sigma} = \norm{\cdot}_2.
\end{equation}
It is convenient to make the following normalization: by possibly replacing each $A_\sigma$ by an element in the coset $A_\sigma \mathsf{U}(d,\Kb)$ we may assume that 
 \begin{equation}
 \label{eqn:positivity of determinant} 
 \det(A_\sigma) > 0
 \end{equation} 
for all $\sigma \in \Gc(X)$. 

We start by observing some estimates on the singular values of the matrices $A_\sigma$. By Lemma~\ref{lem:decay of ratio of singular values}
\begin{equation}
\label{eqn: repeating lem:decay of ratio of singular values}
\frac{\mu_{k+1}}{\mu_{k}}\left(A_\sigma^{-1} A_{\geodflow^{t}(\sigma)}\right) \leq C e^{-c t}
\end{equation}
for any $\sigma \in \Gc(X)$ and $t \geq 0$.

\begin{observation}\label{obs:the A_sigma have local bounds} If $\sigma_1, \sigma_2 \in \Gc(X)$ and $\dist_X(\sigma_1(0), \sigma_2(0)) \leq r$, then 
$$
\frac{\mu_1}{\mu_d}(A_{\sigma_1}^{-1}A_{\sigma_2}) \leq L_r^2.
$$
\end{observation}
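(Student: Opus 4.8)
The plan is to unwind the two definitions in play -- the defining property \eqref{eqn:definition of Asigma in Section uniform} of the matrices $A_\sigma$ and the locally uniform estimate \eqref{eqn:definition of Lr in Section uniform} -- and then simply read off the singular value bounds from the resulting comparison of Euclidean norms. First I would fix a nonzero vector $v \in \Kb^d$. From \eqref{eqn:definition of Asigma in Section uniform} applied at $\sigma_2$ we get $\norm{A_{\sigma_2}v}_{\sigma_2} = \norm{v}_2$, and applying it at $\sigma_1$ to the vector $A_{\sigma_2}v$ gives the identity
$$
\norm{A_{\sigma_1}^{-1}A_{\sigma_2}v}_2 = \norm{A_{\sigma_2}v}_{\sigma_1}.
$$
Since $\dist_X(\sigma_1(0),\sigma_2(0)) \leq r$, the key point is to apply the locally uniform bound \eqref{eqn:definition of Lr in Section uniform} not to $v$ but to the vector $A_{\sigma_2}v$, which yields $\frac{1}{L_r}\norm{A_{\sigma_2}v}_{\sigma_2} \leq \norm{A_{\sigma_2}v}_{\sigma_1} \leq L_r\norm{A_{\sigma_2}v}_{\sigma_2}$.

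Combining these three relations gives $\frac{1}{L_r}\norm{v}_2 \leq \norm{A_{\sigma_1}^{-1}A_{\sigma_2}v}_2 \leq L_r\norm{v}_2$ for every $v \in \Kb^d$. The right-hand inequality says that the operator norm of $A_{\sigma_1}^{-1}A_{\sigma_2}$ is at most $L_r$, i.e.\ $\mu_1(A_{\sigma_1}^{-1}A_{\sigma_2}) \leq L_r$; the left-hand inequality says that $A_{\sigma_1}^{-1}A_{\sigma_2}$ expands every vector by at least $1/L_r$, i.e.\ $\mu_d(A_{\sigma_1}^{-1}A_{\sigma_2}) = \min_{\norm{v}_2 = 1}\norm{A_{\sigma_1}^{-1}A_{\sigma_2}v}_2 \geq 1/L_r$. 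Dividing these two bounds gives $\frac{\mu_1}{\mu_d}(A_{\sigma_1}^{-1}A_{\sigma_2}) \leq L_r^2$, which is exactly the claim.

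There is essentially no obstacle here: the observation is a direct consequence of the definitions, and the only thing requiring care is bookkeeping -- tracking which inner-product norm ($\norm{\cdot}_{\sigma_1}$, $\norm{\cdot}_{\sigma_2}$, or $\norm{\cdot}_2$) is in use at each step, and remembering to invoke \eqref{eqn:definition of Lr in Section uniform} at the transported vector $A_{\sigma_2}v$ rather than at $v$.
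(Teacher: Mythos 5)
Your proof is correct and follows essentially the same route as the paper: the chain $\norm{A_{\sigma_1}^{-1}A_{\sigma_2}v}_2 = \norm{A_{\sigma_2}v}_{\sigma_1} \leq L_r\norm{A_{\sigma_2}v}_{\sigma_2} = L_r\norm{v}_2$ is exactly the paper's computation, and your use of the lower half of the locally uniform estimate to bound $\mu_d$ is equivalent to the paper's appeal to symmetry (i.e.\ $1/\mu_d(A_{\sigma_1}^{-1}A_{\sigma_2}) = \mu_1(A_{\sigma_2}^{-1}A_{\sigma_1}) \leq L_r$).
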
 

\begin{proof} Notice that 
\begin{align*}
\norm{A_{\sigma_1}^{-1} A_{\sigma_2}(\cdot)}_2 = \norm{A_{\sigma_2}(\cdot)}_{\sigma_1} \leq L_r  \norm{A_{\sigma_2}(\cdot)}_{\sigma_2} = L_r \norm{\cdot}_2. 
\end{align*}
So $\mu_1( A_{\sigma_1}^{-1} A_{\sigma_2}) \leq L_r$. Also, by symmetry, 
\begin{equation*}
\frac{1}{\mu_{d}}\left(A_{\sigma_1}^{-1} A_{\sigma_2}\right)  = \mu_1\left(A_{\sigma_2}^{-1} A_{\sigma_1}\right)  \leq L_r
\end{equation*}
which completes the proof of the observation. 
\end{proof} 

\subsection{Quasi-isometric embedding of the entire weak cusp space} \label{sec:QI of entire weak cusp space}

By Proposition~\ref{prop:abundance of geodesic lines}  there is some $R_1 > 0$ with the following property: for all $p,q \in X$, there exists a geodesic line $\sigma \colon \Rb \to X$ such that  
$$
p,q \in \Nc_X(\sigma, R_1).
$$

Let $C_1 := L_{R_1}^2$ and $c_1 := \frac{1}{R_1}\log(L_{R_1}^2)$. Then repeated applications of Observation~\ref{obs:the A_sigma have local bounds} implies that 
\begin{equation}
\label{eqn: upper bd singular value ratio in uniform proof}
\frac{\mu_{1}}{\mu_{d}}\left(A_\sigma^{-1} A_{\geodflow^{t}(\sigma)}\right) \leq \left( L_{R_1}^2 \right)^{ \left\lceil \frac{\abs{t}}{R_1} \right\rceil } \leq C_1 e^{c_1 \abs{t}}
\end{equation}
for any $\sigma \in \Gc(X)$ and $t \in \Rb$.

Fix a subset $\Fc \subset X$ such that 
$$
X = \bigsqcup_{p \in \Fc} \Gamma(p)
$$
is a disjoint union. We define an equivariant map from $X$ into the set of finite subsets of  $\Gc(X)$ as follows: 
\begin{itemize}
\item If $p \in \Fc$, let $\sigma_p \in \Gc(X)$ be any geodesic line with $\dist_X(p, \sigma_p(0)) \leq R_1$. Then let 
$$
S_p := \Stab_{\Gamma}(p) \cdot \sigma_p \subset \Gc(X). 
$$ 
\item If $p = \gamma(q)$ for some $\gamma \in \Gamma$ and $q \in \Fc$, then define $S_p := \gamma S_q$. 
\end{itemize}
Since $\Gamma$ acts properly discontinuously on $X$, each $S_p$ is a finite set. Further, if $p \in X$ and $\sigma \in S_p$, then 
\begin{equation}
\label{eqn:diam of S_p}
\dist_X(p,\sigma(0)) \leq R_1.
\end{equation}

Since the symmetric space $M=\SL(d,\Kb) /\SU(d,\Kb)$ is simply connected and non-positively curved, for any finite set $S \subset M$ the function 
$$
E_S(x) = \max_{s \in S} \dist_M(s,x)
$$
has a unique minimum point in $M$ (see ~\cite[Chap.\ 6.2.2]{P2016}) which we denote by ${\rm CoM}(S)$. By construction 
$$
g\; {\rm CoM}(S) = {\rm CoM}(gS) \quad \text{for all} \quad g \in \SL(d,\Kb)
$$
and 
\begin{equation}
\label{eqn:diam of COM}
\max_{s \in S} \dist_M(s,  {\rm CoM}(S) ) \leq \max_{s_1,s_2\in S} \dist_M( s_1, s_2).
\end{equation}

For $\sigma \in \Gc(X)$, let $\bar{A}_\sigma := \det(A_{\sigma})^{-1/d}A_{\sigma}  \in \SL(d,\Kb)$. Then define $F \colon X \to M$ by 
$$
F(p)= {\rm CoM} \left\{ \bar{A}_\sigma \mathsf{K} : \sigma \in S_p\right\}
$$
(recall that $\mathsf{K}=\SU(d,\Kb)$). 

\begin{lemma}\label{lem: qi embedded approximated by one} If $\sigma \in \Gc(X)$, $p \in X$, and $\dist_X(p,\sigma(0)) \leq R_1$, then 
$$
\dist_M\left(F(p), \bar{A}_\sigma \mathsf{K} \right) \leq 2\alpha_0 \log L_{2R_1}^2.
$$
\end{lemma}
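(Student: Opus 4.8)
The plan is to compare $\bar{A}_\sigma$ directly with the matrices $\bar{A}_{\sigma'}$ for $\sigma' \in S_p$, and then invoke the minimizing property of the center of mass. First I would record that $S_p$ is a nonempty finite set and that, by Equation~\eqref{eqn:diam of S_p}, every $\sigma' \in S_p$ satisfies $\dist_X(p,\sigma'(0)) \leq R_1$. Combined with the hypothesis $\dist_X(p,\sigma(0)) \leq R_1$ and the triangle inequality in $X$, this gives
$$
\dist_X(\sigma(0), \sigma'(0)) \leq 2R_1 \quad \text{for every } \sigma' \in S_p.
$$

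Next I would apply Observation~\ref{obs:the A_sigma have local bounds} with $r = 2R_1$ to obtain $\frac{\mu_1}{\mu_d}(A_\sigma^{-1} A_{\sigma'}) \leq L_{2R_1}^2$ for each such $\sigma'$. Since $\bar{A}_\sigma$ and $\bar{A}_{\sigma'}$ differ from $A_\sigma$ and $A_{\sigma'}$ only by positive scalars, and the ratio $\mu_1/\mu_d$ is scale-invariant, the same bound holds with the bars in place, so the right-hand inequality of Equation~\eqref{eqn:bounds in symmetric space} yields
$$
\dist_M\big(\bar{A}_\sigma \mathsf{K}, \bar{A}_{\sigma'}\mathsf{K}\big) \leq \alpha_0 \log \frac{\mu_1}{\mu_d}\big(\bar{A}_\sigma^{-1} \bar{A}_{\sigma'}\big) \leq \alpha_0 \log L_{2R_1}^2 \quad \text{for every } \sigma' \in S_p.
$$

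Finally, writing $S := \{\bar{A}_{\sigma'}\mathsf{K} : \sigma' \in S_p\}$, the previous display says exactly $E_S(\bar{A}_\sigma \mathsf{K}) \leq \alpha_0 \log L_{2R_1}^2$. Since $F(p) = {\rm CoM}(S)$ minimizes $E_S$ over $M$, we get $\max_{\sigma' \in S_p} \dist_M(F(p), \bar{A}_{\sigma'}\mathsf{K}) = E_S(F(p)) \leq \alpha_0 \log L_{2R_1}^2$. Choosing any $\sigma' \in S_p$ and applying the triangle inequality to $F(p)$, $\bar{A}_{\sigma'}\mathsf{K}$, and $\bar{A}_\sigma \mathsf{K}$ gives $\dist_M(F(p), \bar{A}_\sigma \mathsf{K}) \leq 2\alpha_0 \log L_{2R_1}^2$, which is the claim. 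There is no real obstacle here; the only point requiring a moment's care is that the normalization to $\SL(d,\Kb)$ (passing from $A_\sigma$ to $\bar{A}_\sigma$) does not affect the singular-value ratio estimate, which is immediate from scale invariance of $\mu_1/\mu_d$, and that $\log L_{2R_1}^2 \geq 0$ since $L_{2R_1} \geq 1$.
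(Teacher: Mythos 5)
Your proof is correct and follows essentially the same route as the paper's: bound the distance from each of $F(p)$ and $\bar{A}_\sigma \mathsf{K}$ to the points $\bar{A}_{\sigma'}\mathsf{K}$, $\sigma' \in S_p$, by $\alpha_0 \log L_{2R_1}^2$, and conclude by the triangle inequality. The only (harmless) difference is that you control $\max_{\sigma' \in S_p} \dist_M(F(p), \bar{A}_{\sigma'}\mathsf{K})$ via the minimizing property of the center of mass with $\bar{A}_\sigma\mathsf{K}$ as a competitor point, whereas the paper applies its diameter bound for the center of mass to the pairwise distances within $S_p$; both yield the same constant.
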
 

\begin{proof} By Equation~\eqref{eqn:diam of S_p} and Observation~\ref{obs:the A_sigma have local bounds}
$$
\max_{\sigma_1, \sigma_2 \in S_p} \frac{\mu_{1}}{\mu_{d}}\left(A_{\sigma_1}^{-1} A_{\sigma_2} \right) \leq L_{2R_1}^2.
$$
So by Equations~\eqref{eqn:bounds in symmetric space} and ~\eqref{eqn:diam of COM}
$$
\max_{\sigma_1 \in S_p}\dist_M\left( F(p),\bar{A}_{\sigma_1} \mathsf{K} \right) \leq \alpha_0 \log L_{2R_1}^2.
$$
Similar reasoning shows that 
$$
\max_{\sigma_1 \in S_p}\dist_M\left( \bar{A}_{\sigma} \mathsf{K} , \bar{A}_{\sigma_1} \mathsf{K} \right) \leq \alpha_0 \log L_{2R_1}^2
$$
which completes the proof. 
\end{proof}

\begin{lemma}\label{lem:unform qiembed entire space} $F$ is a $\rho$-equivariant quasi-isometric embedding with constants only depending on $d$, $L_{2R_1}$, $c$, $C$, and $R_1$. \end{lemma}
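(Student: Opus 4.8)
\emph{Overview.} The plan is to prove equivariance directly from the defining properties of the norms, and then obtain the two coarse distance inequalities by comparing $F(p)$ and $F(q)$ with points of the form $\bar{A}_\sigma\mathsf{K}$ attached to a single geodesic line running near $p$ and $q$; all the analytic input is already available, so the proof is essentially organized bookkeeping.

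\emph{Equivariance.} First I would check that $\rho(\gamma)\bar{A}_\sigma\mathsf{K}=\bar{A}_{\gamma\sigma}\mathsf{K}$ for all $\gamma\in\Gamma$ and $\sigma\in\Gc(X)$. From $\norm{\rho(\gamma)A_\sigma(\cdot)}_{\gamma\sigma}=\norm{A_\sigma(\cdot)}_\sigma=\norm{\cdot}_2=\norm{A_{\gamma\sigma}(\cdot)}_{\gamma\sigma}$ one gets $\rho(\gamma)A_\sigma=A_{\gamma\sigma}u$ for some $u\in\mathsf{U}(d,\Kb)$; taking determinants and using the normalization \eqref{eqn:positivity of determinant} together with $\rho(\gamma)\in\SL(d,\Kb)$ forces $\det u>0$, hence $\det u=1$ and $u\in\SU(d,\Kb)=\mathsf{K}$, which gives the claim after passing to $\bar{A}_\bullet$. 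Since $S_{\gamma p}=\gamma S_p$ and ${\rm CoM}$ is $\SL(d,\Kb)$-equivariant, this yields $F(\gamma p)=\rho(\gamma)F(p)$.

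\emph{Reduction to one geodesic, and the two inequalities.} Given $p,q\in X$, I would use Proposition~\ref{prop:abundance of geodesic lines} to pick a geodesic line $\sigma$ with $p,q\in\Nc_X(\sigma,R_1)$; after reparametrizing and, if necessary, reversing $\sigma$, I may assume $\dist_X(p,\sigma(0))\le R_1$ and $\dist_X(q,\sigma(T))\le R_1$ for some $T\ge0$, so that $\abs{T-\dist_X(p,q)}\le 2R_1$. Applying Lemma~\ref{lem: qi embedded approximated by one} to $(p,\sigma)$ and to $(q,\geodflow^T(\sigma))$ and using the triangle inequality,
\[
\abs{\dist_M(F(p),F(q))-\dist_M\big(\bar{A}_\sigma\mathsf{K},\bar{A}_{\geodflow^T(\sigma)}\mathsf{K}\big)}\le 4\alpha_0\log L_{2R_1}^2.
\]
Then I would estimate $\dist_M(\bar{A}_\sigma\mathsf{K},\bar{A}_{\geodflow^T(\sigma)}\mathsf{K})$ by \eqref{eqn:bounds in symmetric space}, noting that the positive scalars in $\bar{A}_\bullet=A_\bullet\det(A_\bullet)^{-1/d}$ cancel in singular-value ratios, so only $\tfrac{\mu_1}{\mu_d}(A_\sigma^{-1}A_{\geodflow^T(\sigma)})$ enters. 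The upper bound \eqref{eqn: upper bd singular value ratio in uniform proof} gives $\tfrac{\mu_1}{\mu_d}(A_\sigma^{-1}A_{\geodflow^T(\sigma)})\le C_1e^{c_1T}$, hence $\dist_M(F(p),F(q))\le\alpha_0c_1\,\dist_X(p,q)+\epsilon_1$; the lower bound \eqref{eqn: repeating lem:decay of ratio of singular values} gives $\tfrac{\mu_1}{\mu_d}(A_\sigma^{-1}A_{\geodflow^T(\sigma)})\ge\tfrac{\mu_k}{\mu_{k+1}}(A_\sigma^{-1}A_{\geodflow^T(\sigma)})\ge\tfrac1Ce^{cT}$, hence $\dist_M(F(p),F(q))\ge\tfrac{c}{\alpha_0}\dist_X(p,q)-\epsilon_2$. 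Taking $\lambda:=\max\{\alpha_0c_1,\alpha_0/c\}$ and $\epsilon:=\max\{\epsilon_1,\epsilon_2\}$ exhibits $F$ as a $(\lambda,\epsilon)$-quasi-isometric embedding.

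\emph{Constant tracking and the main obstacle.} Finally I would verify the dependence of the constants: $\alpha_0$ depends only on $d$; $C_1=L_{R_1}^2$ and $c_1=R_1^{-1}\log L_{R_1}^2$ depend only on $R_1$ and $L_{R_1}$; and $\epsilon_1,\epsilon_2$ are assembled from $\alpha_0$, $R_1$, $C_1$, $c_1$, $\log L_{2R_1}$, $C$, $c$. After observing that we may take $r\mapsto L_r$ non-decreasing (so $L_{R_1}\le L_{2R_1}$), all constants depend only on $d$, $L_{2R_1}$, $c$, $C$, and $R_1$. I do not expect a genuine obstacle here: the contraction estimate \eqref{eqn: repeating lem:decay of ratio of singular values}, the locally uniform comparison \eqref{eqn: upper bd singular value ratio in uniform proof}, and the center-of-mass reduction Lemma~\ref{lem: qi embedded approximated by one} already carry the content, so the only delicate points are making the determinant normalization interact correctly with $\mathsf{K}$ versus $\mathsf{U}(d,\Kb)$ in the equivariance step and keeping the constant bookkeeping honest.
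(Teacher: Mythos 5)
Your proposal is correct and follows essentially the same route as the paper: the equivariance step via $\rho(\gamma)A_\sigma=A_{\gamma\sigma}u$ with the determinant normalization forcing $u\in\SU(d,\Kb)$, and the two-sided distance estimate obtained by reducing to a single geodesic line through Proposition~\ref{prop:abundance of geodesic lines}, applying Lemma~\ref{lem: qi embedded approximated by one} at both endpoints, and invoking Equations~\eqref{eqn:bounds in symmetric space}, \eqref{eqn: repeating lem:decay of ratio of singular values}, and \eqref{eqn: upper bd singular value ratio in uniform proof}. The constant bookkeeping matches as well (your $\lambda=\max\{\alpha_0 c_1,\alpha_0/c\}$ is in fact the correct form of the multiplicative constant).
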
 

\begin{proof} We first verify that $F$ is  $\rho$-equivariant. If $\sigma \in \Gc(X)$ and $\gamma \in \Gamma$, then by definition 
$$
\norm{\rho(\gamma)A_{\sigma}(\cdot) }_{\gamma  \sigma} = \norm{A_{\sigma}(\cdot)}_{\sigma} = \norm{\cdot}_2
$$
and so $\rho(\gamma)A_{\sigma}=A_{\gamma \sigma}g_{\gamma, \sigma}$ for some $g_{\gamma, \sigma} \in \mathsf{U}(d,\Kb)$. By Equation~\eqref{eqn:positivity of determinant} we must have $g_{\gamma, \sigma} \in \mathsf{SU}(d,\Kb) = \mathsf{K}$. Then if $p \in X$ and $\gamma \in \Gamma$, we have 
\begin{align*}
\rho(\gamma) \left\{ \bar{A}_{\sigma} \mathsf{K} : \sigma \in S_p\right\} & = \left\{ \bar{A}_{\gamma \sigma} \mathsf{K} : \sigma \in S_p\right\}  = \left\{ \bar{A}_{\sigma} \mathsf{K} : \sigma \in S_{\gamma(p)} \right\}.
\end{align*}
So $\rho(\gamma) F(p) = F(\gamma(p))$ and thus $F$ is  $\rho$-equivariant. 

To show that $F$ is a quasi-isometric embedding, fix $p,q \in X$. Then fix a geodesic line $\sigma \in \Gc(X)$ and $T \geq 0$ such that 
$$
\dist_X(p,\sigma(0)) \leq R_1 \quad \text{and} \quad \dist_X(q, \sigma(T)) \leq R_1. 
$$
Notice that $\abs{T-\dist_X(p,q)} \leq 2R_1$, and Lemma~\ref{lem: qi embedded approximated by one} implies that 
$$
\abs{ \dist_M(F(p), F(q)) - \dist_M\left( \bar{A}_{\sigma} \mathsf{K}, \bar{A}_{\geodflow^T(\sigma)} \mathsf{K}\right)} \leq 4\alpha_0 \log L_{2R_1}^2.
$$

Then Equations~\eqref{eqn:bounds in symmetric space}, \eqref{eqn: repeating lem:decay of ratio of singular values}, and \eqref{eqn: upper bd singular value ratio in uniform proof}, imply that $F$ is a $(\alpha, \beta)$-quasi-isometric embedding where $\alpha := \max\left\{ \frac{1}{c\alpha_0}, c_1\alpha_0 \right\} $ and 
\begin{equation*}
\beta :=2\alpha_0 R_1\max\{c,c_1\}+ 4\alpha_0 \log \left( L_{2R_1}^2\right)+\alpha_0 \max\{ \log(C), \log(C_1)\}
\end{equation*}
Recall, $c_1$ and $C_1$ only depend on $L_{R_1}$ and $R_1$. So we can choose the quasi-isometric constants to depend only on $d$, $L_{2R_1}$, $c$, $C$, and $R_1$.
\end{proof}

\subsection{H\"older regularity of the boundary maps} \label{subsec:uniform Holder reg}

The key step in the proof of H\"older regularity is to make Lemma~\ref{lem:convergence of uK} quantitative. 

\begin{lemma}\label{lem:quantitative convergence}There exist $C_2, T_0 > 0$ (which only depend on $c$, $C$, and $L_1$) such that: if $\sigma_1, \sigma_2 \in \Gc(X)$, $r > 0$, $\dist_X(\sigma_1(0), \sigma_2(0)) \leq r$, and $t > T_0 + \frac{2}{c} \log L_r$, then
$$
\dist_{\Gr_k(\Kb^d)}\left( U_k\left(A_{\sigma_1}^{-1}A_{\geodflow^t(\sigma_2)}\right), A_{\sigma_1}^{-1}\xi^k(\sigma_2^+) \right) \leq  C_2L_r^2e^{-c t}. 
$$
\end{lemma}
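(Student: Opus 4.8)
The plan is to make the argument of Lemma~\ref{lem:convergence of uK} effective by tracking how quickly the quantity $\kappa_t$ decays and then converting that decay into an explicit angle bound via the max-min characterization of singular values. First I would reduce to the case $\sigma_1 = \sigma_2$. Indeed, by Observation~\ref{obs:the A_sigma have local bounds}, $\dist_X(\sigma_1(0), \sigma_2(0)) \leq r$ gives $\frac{\mu_1}{\mu_d}(A_{\sigma_1}^{-1} A_{\sigma_2}) \leq L_r^2$, so $A_{\sigma_1}^{-1} A_{\geodflow^t(\sigma_2)} = \big(A_{\sigma_1}^{-1} A_{\sigma_2}\big)\big(A_{\sigma_2}^{-1} A_{\geodflow^t(\sigma_2)}\big)$ differs from $A_{\sigma_2}^{-1} A_{\geodflow^t(\sigma_2)}$ by a factor whose singular value ratio is at most $L_r^2$; similarly $A_{\sigma_1}^{-1}\xi^k(\sigma_2^+)$ differs from $A_{\sigma_2}^{-1}\xi^k(\sigma_2^+)$ by applying $A_{\sigma_1}^{-1} A_{\sigma_2}$. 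So once I have an estimate for $\sigma_1 = \sigma_2$ of the form $\dist_{\Gr_k}\big(U_k(A_{\sigma_2}^{-1}A_{\geodflow^t(\sigma_2)}), A_{\sigma_2}^{-1}\xi^k(\sigma_2^+)\big) \leq C_2' e^{-ct}$, I can apply Lemma~\ref{lem: U_k under product}(1) and the analogous Lipschitz-type bound for the subspace $A_{\sigma_1}^{-1}A_{\sigma_2}\cdot(\cdot)$ (the operator $A_{\sigma_1}^{-1}A_{\sigma_2}$ moves a $k$-plane by at most $\frac{\mu_1}{\mu_d}(A_{\sigma_1}^{-1}A_{\sigma_2}) \leq L_r^2$ in the $\Gr_k$ distance) to pick up an extra factor of $L_r^2$, yielding $C_2 L_r^2 e^{-ct}$.

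Next, for the case $\sigma := \sigma_1 = \sigma_2$, I would quantify Lemma~\ref{lem:convergence of uK} directly. Set $B := A_\sigma^{-1} A_{\geodflow^t(\sigma)}$ and $V := A_\sigma^{-1}\xi^k(\sigma^+)$, $W' := A_\sigma^{-1}\xi^{d-k}(\sigma^-)$. By Equation~\eqref{eqn:dom splitting in consequences section} (the contraction estimate written in the $A_\sigma$-coordinates, exactly as in the proof of Lemma~\ref{lem:convergence of uK}), for every unit $Y \in V$ and unit $Z \in W'$ we have $\norm{BY}_2 / \norm{BZ}_2 \leq C e^{-ct}$; wait — more precisely one gets $\max_{Y \in V}\norm{BY}_2 \leq Ce^{-ct}\min_{Z \in W'}\norm{BZ}_2$. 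The max-min theorem for singular values then gives $\mu_{k+1}(B) \leq Ce^{-ct}\mu_k(B)$ (this is Lemma~\ref{lem:decay of ratio of singular values}). To get the angle bound, take any unit $Y \in V$; write $Y = Y_k + Y_k^\perp$ where $Y_k$ is the orthogonal projection onto $U_k(B)$. Then $\norm{BY_k^\perp}_2 \leq \mu_{k+1}(B)\norm{Y_k^\perp}_2 \leq \mu_{k+1}(B)$, while $\norm{BY}_2 \leq Ce^{-ct}\mu_k(B)\norm{Z_0}_2$ for the extremal $Z_0 \in W'$. On the other hand $\norm{BY_k}_2 \geq \mu_k(B)\cdot(\text{something})$ — this needs care because $Y_k$ need not be a top singular vector; instead I would argue that if $\dist_{\Gr_k}(V, U_k(B))$ were not small, one could produce a unit $Y \in V$ that is nearly orthogonal to $U_k(B)$, forcing $\norm{BY}_2 \lesssim \mu_{k+1}(B) \leq Ce^{-ct}\mu_k(B)$, and simultaneously by choosing coordinates the minimal value $\min_{Z\in W'}\norm{BZ}_2$ is bounded below by essentially $\mu_k(B)$ times the transversality angle between $V$ and $W'$ — but that last quantity is exactly what is uniform by the compactness/strong transversality built into the Anosov condition. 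The cleaner route, which I would actually pursue, is: since $\mu_{k+1}(B)/\mu_k(B) \leq Ce^{-ct}$, the $k$-plane $U_k(B)$ is the ``$k$-dominant'' subspace; and $B(V)$ lies at bounded angle from $B(U_k(B)) = U_k(B)$ by the same argument that $B$ expands $V$ nearly maximally (Equation~\eqref{eqn:dom splitting in consequences section} forces $\norm{BY}_2$ to be comparable to $\mu_k(B)$ for $Y \in V$ unit, uniformly, once $t$ is large — this is where the hypothesis $t > T_0 + \frac{2}{c}\log L_r$ enters, guaranteeing $Ce^{-ct}L_r^2 < 1$ or some such threshold). Then a standard linear-algebra estimate (e.g.\ \cite[Lem.\ A.5]{BPS}-style) converts ``$B$ expands $V$ almost maximally'' into ``$B(V)$ is close to $U_k(B)$ within $\lesssim \mu_{k+1}(B)/\mu_k(B) \leq Ce^{-ct}$ in $\Gr_k$ distance.'' Applying $B^{-1}$-invariance appropriately, or rather rephrasing, gives the bound on $\dist_{\Gr_k}(U_k(B), V)$ directly.

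The main obstacle will be organizing the constants so that $C_2$ and $T_0$ depend \emph{only} on $c$, $C$, and $L_1$ (not on $r$ or on $\sigma$), with the $r$-dependence isolated into the explicit factor $L_r^2$ and the threshold $\frac{2}{c}\log L_r$. The threshold is needed precisely so that after absorbing the $\sigma_1 \ne \sigma_2$ perturbation (which costs $L_r^2$ in singular-value ratios), the product $B \cdot (A_{\sigma_1}^{-1}A_{\sigma_2})$ still has $\mu_{k+1}/\mu_k$ bounded by something like $CL_r^2 e^{-ct} < 1$, so that $U_k$ of the product is well-defined and Lemma~\ref{lem: U_k under product} applies with the stated bound $\frac{\mu_1}{\mu_d}(A_{\sigma_1}^{-1}A_{\sigma_2})\frac{\mu_{k+1}}{\mu_k}(B) \leq L_r^2 \cdot Ce^{-ct}$. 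Concretely I would: (i) record from Equation~\eqref{eqn: repeating lem:decay of ratio of singular values} that $\frac{\mu_{k+1}}{\mu_k}(B) \leq Ce^{-ct}$; (ii) set $T_0$ large enough that $Ce^{-cT_0} \leq \tfrac12$ and note $t > T_0 + \tfrac{2}{c}\log L_r$ gives $CL_r^2 e^{-ct} \leq \tfrac12 L_r^{-?}$... — I would just require $C L_r^2 e^{-ct} < 1$, i.e.\ $t > \frac1c\log C + \frac2c \log L_r$, folding $\frac1c\log C$ into $T_0$; (iii) apply Lemma~\ref{lem: U_k under product}(1) to $A_{\sigma_1}^{-1}A_{\geodflow^t(\sigma_2)} = (A_{\sigma_1}^{-1}A_{\sigma_2}) B$ to compare $U_k(A_{\sigma_1}^{-1}A_{\geodflow^t(\sigma_2)})$ with $U_k((A_{\sigma_1}^{-1}A_{\sigma_2})\cdot)$... actually with $U_k$ of the left factor acting — here I need the version comparing to $(A_{\sigma_1}^{-1}A_{\sigma_2})U_k(B)$, which is Lemma~\ref{lem: U_k under product}(2) applied with $h = A_{\sigma_1}^{-1}A_{\sigma_2}$, $g = B$; and (iv) combine with the $\sigma_1=\sigma_2$ estimate $\dist_{\Gr_k}(U_k(B), A_{\sigma_2}^{-1}\xi^k(\sigma_2^+)) \lesssim Ce^{-ct}$ plus the fact that $A_{\sigma_1}^{-1}A_{\sigma_2}$ is $L_r^2$-bilipschitz on $\Gr_k$, to land at $C_2 L_r^2 e^{-ct}$ with $C_2$ an absolute multiple of $C$ (and $L_1$ entering only through whatever uniform transversality constant is needed for step (i)'s ``almost maximal expansion'' claim, via the compactness argument over $\Gc_{thick}$, i.e.\ over a fundamental domain where $\norm{\cdot}_\sigma$ is $L_1$-comparable to $\norm{\cdot}_2$).
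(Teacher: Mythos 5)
Your overall strategy---reduce to $\sigma_1=\sigma_2$, make Lemma~\ref{lem:convergence of uK} quantitative by direct linear algebra, then transport back through $A_{\sigma_1}^{-1}A_{\sigma_2}$---is genuinely different from the paper's, and the outer scaffolding (the singular-gap estimate $\frac{\mu_{k+1}}{\mu_k}(A_{\sigma_1}^{-1}A_{\geodflow^t(\sigma_2)})\leq L_r^2Ce^{-ct}$, the role of the threshold $T_0+\frac{2}{c}\log L_r$ in making $U_k$ well-defined, the use of Lemma~\ref{lem: U_k under product}) is sound. The gap is in the one step that carries all the content: the effective version of Lemma~\ref{lem:convergence of uK}. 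As written, your argument for it leans on the identity $B(U_k(B))=U_k(B)$, which is false ($U_k(B)=m\ip{e_1,\dots,e_k}$ sits in the target of $B=ma\ell$; its $B$-preimage is $\ell^{-1}\ip{e_1,\dots,e_k}$), and on the claim that $\norm{BY}_2$ is uniformly comparable to $\mu_k(B)$ for unit $Y$ in the distinguished $k$-plane, which you propose to get from ``compactness over $\Gc_{thick}$.'' No such compactness is available: $\sigma$ ranges over all of $\Gc(X)$, the base is not cocompact, and the constants $L_r$ only compare $\norm{\cdot}_{\sigma_1}$ with $\norm{\cdot}_{\sigma_2}$ for nearby geodesics, never with $\norm{\cdot}_2$. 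A correct direct argument does exist nearby---decompose a unit $Y\in A_\sigma^{-1}\xi^k(\sigma^+)$ orthogonally against $U_k(B)$, bound the orthogonal component by $\mu_{k+1}(B)\norm{B^{-1}Y}_2$, and bound $\norm{B^{-1}Y}_2\leq Ce^{-ct}\mu_{k+1}(B)^{-1}$ using the contraction hypothesis together with the min--max inequality $\min_{Z\in W'}\norm{B^{-1}Z}_2/\norm{Z}_2\leq\mu_{d-k}(B^{-1})$, which requires no transversality or compactness input at all---but that is not the argument you wrote.

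The paper sidesteps this difficulty entirely with a telescoping trick: it takes the qualitative limit $U_k(A_{\sigma_1}^{-1}A_{\geodflow^s(\sigma_2)})\to A_{\sigma_1}^{-1}\xi^k(\sigma_2^+)$ from Lemma~\ref{lem:convergence of uK} as given, bounds the distance to the limit by the series $\sum_{j\geq0}\dist_{\Gr_k(\Kb^d)}\bigl(U_k(A_{\sigma_1}^{-1}B_{t+j}),U_k(A_{\sigma_1}^{-1}B_{t+j+1})\bigr)$ with $B_s:=A_{\geodflow^s(\sigma_2)}$, controls each increment via Lemma~\ref{lem: U_k under product}(1) using $\frac{\mu_1}{\mu_d}(B_{t+j+1}^{-1}B_{t+j})\leq L_1^2$ and the gap $L_r^2Ce^{-c(t+j+1)}$, and sums the geometric series to get $C_2=L_1^2Ce^{-c}(1-e^{-c})^{-1}$. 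This is why $L_1$ appears in the list of constants. Note also that your final transport step needs $A_{\sigma_1}^{-1}A_{\sigma_2}$ to be Lipschitz on $\Gr_k(\Kb^d)$ with constant $L_r^2$; the Lipschitz constant of $g$ acting on $\Gr_k(\Kb^d)$ is controlled by singular-value ratios of $\wedge^kg$, not of $g$, so you would land on a higher power of $L_r$ than the stated $L_r^2$. The telescoping argument avoids this because the $r$-dependence enters exactly once, through the singular gap.
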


\begin{proof} For ease of notation, let $B_t = A_{\geodflow^t(\sigma_2)}$.

Let $T_0 := \frac{1}{c} \log C$. Observation~\ref{obs:the A_sigma have local bounds} and Equation~\eqref{eqn: repeating lem:decay of ratio of singular values} imply that
\begin{equation}
\label{eqn:singular value gap of AB t} 
\frac{\mu_{k+1}}{\mu_{k}}\left(A_{\sigma_1}^{-1}B_t\right) \leq \frac{\mu_1}{\mu_d}\left(A_{\sigma_1}^{-1}B_0\right) \frac{\mu_{k+1}}{\mu_{k}}\left(B_0^{-1}B_t\right) \leq L_r^2 C e^{-c t}
\end{equation}
and so $U_k\left(A_{\sigma_1}^{-1}B_t\right)$ is well defined when $t > T_0 + \frac{2}{c} \log L_r$. Further, Lemma~\ref{lem: U_k under product}  and Lemma~\ref{lem:convergence of uK} imply that 
$$
\lim_{t \to \infty} U_k\left(A_{\sigma_1}^{-1}B_t\right) = \lim_{t \to \infty} A_{\sigma_1}^{-1} B_0U_k\left(B^{-1}_0B_t\right) =A_{\sigma_1}^{-1}\xi^k(\sigma_2^+).
$$
Then 
$$
 \dist_{\Gr_k(\Kb^d)}\left( U_k\left(A_{\sigma_1}^{-1}B_{t}\right), A_{\sigma_1}^{-1}\xi^k(\sigma_2^+) \right) \leq \sum_{j=0}^\infty  \dist_{\Gr_k(\Kb^d)}\left( U_k\left(A_{\sigma_1}^{-1}B_{t+j}\right), U_k\left(A_{\sigma_1}^{-1}B_{t+j+1}\right) \right). 
 $$
Then by Lemma~\ref{lem: U_k under product}(1), Observation~\ref{obs:the A_sigma have local bounds}, and Equation~\eqref{eqn:singular value gap of AB t}, we have 
\begin{align*}
 \dist_{\Gr_k(\Kb^d)}&\left( U_k\left(A_{\sigma_1}^{-1}B_{t}\right), A_{\sigma_1}^{-1}\xi^k(\sigma_2^+) \right) \leq \sum_{j=0}^\infty \frac{\mu_1}{\mu_d}\left( B_{t+j+1}^{-1} B_{t+j}\right)\frac{\mu_{k+1}}{\mu_k}\left(A_{\sigma_1}^{-1}B_{t+j+1} \right) \\
 & \leq  \sum_{j=0}^\infty L_1^2\cdot L_r^2C e^{-c (t+j+1)} = \frac{L_1^2Ce^{-c}}{1-e^{-c}} L_r^2e^{-ct}. 
\end{align*}
So $C_2:= L_1^2Ce^{-c}(1-e^{-c})^{-1}$ suffices. 
\end{proof} 

Fix $\delta > 1$ such that every (possibly ideal) geodesic triangle in $X$ is $\delta$-slim (i.e.\ each side is contained in the $\delta$-neighborhood of the union of the two other sides).

\begin{lemma} 
There exist $C_3 > 0$ (which only depends on $\delta$, $c$, $C$, and $L_{2\delta}$) such that: if $\sigma \in \Gc(X)$ and $y \in \partial_\infty X \smallsetminus \{\sigma^+\}$,  then
$$
\dist_{\Gr_k(\Kb^d)}\left( A_\sigma^{-1} \xi^k(\sigma^+), A_{\sigma}^{-1} \xi^k(y) \right) \leq C_3e^{-c \dist_X(\sigma(0), \eta)}
$$
where $\eta \in \Gc(X)$ is any geodesic line with $\eta^- = \sigma^+$ and $\eta^+ = y$. 
\end{lemma}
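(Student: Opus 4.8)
The plan is to reduce the statement to the quantitative convergence estimate of Lemma~\ref{lem:quantitative convergence} by exploiting the thin-triangle property of $X$. Let $\sigma \in \Gc(X)$ and $y \in \partial_\infty X \smallsetminus \{\sigma^+\}$, and fix a geodesic line $\eta \in \Gc(X)$ with $\eta^- = \sigma^+$ and $\eta^+ = y$. Consider the (possibly ideal) geodesic triangle with vertices $\sigma(0)$, $\sigma^+ = \eta^-$, and $y = \eta^+$; its three sides are (the forward ray of) $\sigma$, (the backward ray of) $\eta$, and a geodesic line joining $\sigma(0)$ to $y$. Let $D := \dist_X(\sigma(0), \eta)$. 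First I would argue that, because the triangle is $\delta$-slim, after flowing forward a time $t \approx D$ along $\sigma$ and backward a time $\approx D$ along $\eta$ one lands within a uniformly bounded distance (roughly $2\delta$) of the same point on the side joining $\sigma(0)$ to $y$. More precisely, there exist times $t \geq 0$ and $s \leq 0$ with $t, |s| \asymp D$ such that $\dist_X(\geodflow^t(\sigma)(0), \geodflow^s(\eta)(0)) \leq 2\delta$ (one should double-check the exact relationship between $t$, $s$ and $D$ using the definition of $\dist_X(\sigma(0),\eta)$ and slimness; this is the kind of routine hyperbolic-geometry bookkeeping carried out in the appendix of the paper).

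Next I would apply Lemma~\ref{lem:quantitative convergence}. Note that $\geodflow^t(\sigma)^+ = \sigma^+$ and, writing $\eta' := I(\geodflow^s(\eta))$ so that $(\eta')^+ = \eta^- = \sigma^+$ as well, we have two geodesic lines whose basepoints are within $2\delta$ of each other and which share the same forward endpoint $\sigma^+$. Since $\dist_X(\geodflow^t(\sigma)(0),(\eta')(0)) \leq 2\delta$, Lemma~\ref{lem:quantitative convergence} (with $r = 2\delta$, and $t$ replaced by a comparable flow time) gives
$$
\dist_{\Gr_k(\Kb^d)}\!\left( U_k\!\left(A_{\geodflow^t(\sigma)}^{-1}A_{\geodflow^{t'}(\eta')}\right), A_{\geodflow^t(\sigma)}^{-1}\xi^k(\sigma^+) \right) \leq C_2 L_{2\delta}^2 e^{-ct'}
$$
for appropriate $t' \asymp D$, and similarly, applying the lemma a second time along $\eta'$ itself (whose forward endpoint is also $\sigma^+$) from a basepoint within $2\delta$ of $\geodflow^t(\sigma)(0)$, one controls $A_{\geodflow^t(\sigma)}^{-1}\xi^k(y) = A_{\geodflow^t(\sigma)}^{-1}\xi^k((\eta')^-)$ — here the natural quantity is $U_k$ of a product with $\xi^k$ of the \emph{backward} endpoint, so I would instead use the involution $I$ and the contraction estimate directly, or use Lemma~\ref{lem:convergence of uK} applied to $I(\eta')$. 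Comparing the two estimates via the triangle inequality in $\Gr_k(\Kb^d)$ shows that $\xi^k(\sigma^+)$ and $\xi^k(y)$ are exponentially close \emph{in the metric distorted by} $A_{\geodflow^t(\sigma)}^{-1}$, with exponential rate $c D$ and multiplicative constant depending only on $c, C, L_{2\delta}$.

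Finally I would convert this into the claimed bound with $A_\sigma^{-1}$ in front rather than $A_{\geodflow^t(\sigma)}^{-1}$. The map $A_\sigma^{-1} A_{\geodflow^t(\sigma)}$ has bounded singular-value spread only up to a factor $e^{c_1 t} \asymp e^{c_1 D}$ by Equation~\eqref{eqn: upper bd singular value ratio in uniform proof}; but crucially, the singular-value \emph{gap} $\frac{\mu_k}{\mu_{k+1}}(A_\sigma^{-1}A_{\geodflow^t(\sigma)})$ is \emph{large}, of order $e^{cD}$, by Equation~\eqref{eqn: repeating lem:decay of ratio of singular values}. Applying a Lipschitz-type estimate for the action of $A_\sigma^{-1} A_{\geodflow^t(\sigma)}$ on $\Gr_k$ near $U_k(A_\sigma^{-1}A_{\geodflow^t(\sigma)}) = A_\sigma^{-1}\xi^k(\sigma^+)$ (which is where both points $A_{\geodflow^t(\sigma)}^{-1}\xi^k(\sigma^+)$ and $A_{\geodflow^t(\sigma)}^{-1}\xi^k(y)$ are being pushed), the contraction of the gap exactly cancels the expansion $e^{c_1 D}$ up to adjusting the exponent $c$ down to some $c' > 0$. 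The main obstacle I expect is bookkeeping the three separate exponential rates ($c$ from contraction of $\kappa_t$, $c_1$ from the locally-uniform bound, and the slimness constant $\delta$ controlling the geometry) so that they combine into a single positive rate $c$ with constant $C_3$ depending only on $\delta, c, C, L_{2\delta}$; the geometric input (slimness forcing the two flowed basepoints together) and the ``push-forward near $U_k$'' Lipschitz estimate are the two technical lemmas that need care, but both are standard in this circle of ideas.
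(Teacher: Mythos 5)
Your overall strategy (reduce to Lemma~\ref{lem:quantitative convergence} via thin triangles) is the right one, but the way you set up the comparison contains a genuine error, and the paper's proof is organized precisely to avoid it. Your intermediate claim --- that $\xi^k(\sigma^+)$ and $\xi^k(y)$ are already exponentially close ``in the metric distorted by $A_{\geodflow^t(\sigma)}^{-1}$'' with rate $e^{-cD}$, where $t\asymp D:=\dist_X(\sigma(0),\eta)$ --- is false. The frame at $\geodflow^t(\sigma)(0)$ sits within bounded distance of $\eta$ itself, and from there the two endpoints $\sigma^+=\eta^-$ and $y=\eta^+$ are seen in essentially opposite directions, so $A_{\geodflow^t(\sigma)}^{-1}\xi^k(\sigma^+)$ and $A_{\geodflow^t(\sigma)}^{-1}\xi^k(y)$ are merely two transverse planes at definite distance, not $e^{-cD}$-close. (Indeed, controlling $A_{\geodflow^t(\sigma)}^{-1}\xi^k(y)$ requires flowing \emph{forward} along $\eta$ toward $y$, which takes you away from $\sigma$ entirely; Lemma~\ref{lem:quantitative convergence} applied to $\eta'=I(\geodflow^s(\eta))$ only recovers $\xi^k(\sigma^+)$ again, since that lemma sees the \emph{forward} endpoint.) The exponential closeness of the two planes is visible only in the frame $A_\sigma^{-1}$ at $\sigma(0)$, which is at distance $D$ from $\eta$. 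Consequently your final ``cancellation'' step is not repairing a small loss: it has to produce the entire $e^{-cD}$ from the contraction of $g=A_\sigma^{-1}A_{\geodflow^t(\sigma)}$, and for that one needs both $A_{\geodflow^t(\sigma)}^{-1}\xi^k(\sigma^+)$ and $A_{\geodflow^t(\sigma)}^{-1}\xi^k(y)$ to be \emph{uniformly} transverse to $U_{d-k}(g^{-1})$, a statement you neither identify nor establish (and which, for $\xi^k(y)$, is exactly as hard as the lemma itself).

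The missing idea in the paper's argument is an auxiliary geodesic $\hat\sigma$ with $\hat\sigma^+=y$ and $\hat\sigma^-=\sigma^-$. Thinness of the ideal triangle $\sigma\cup\hat\sigma\cup\eta$ forces $\hat\sigma$ to pass within $\delta$ of $\sigma(0)$ (after reparametrization, $\dist_X(\sigma(0),\hat\sigma(0))\le\delta$), and both $\sigma$ and $\hat\sigma$ must travel a time $t_0,\hat t_0\gtrsim D$ before reaching the $\delta$-neighborhood of $\eta$. One then applies Lemma~\ref{lem:quantitative convergence} twice \emph{in the single frame} $A_\sigma^{-1}$: once with $\sigma_2=\sigma$ to approximate $A_\sigma^{-1}\xi^k(\sigma^+)$ by $U_k(A_\sigma^{-1}A_{\geodflow^{t_0}(\sigma)})$, and once with $\sigma_2=\hat\sigma$ to approximate $A_\sigma^{-1}\xi^k(y)=A_\sigma^{-1}\xi^k(\hat\sigma^+)$ by $U_k(A_\sigma^{-1}A_{\geodflow^{\hat t_0}(\hat\sigma)})$, both with error $O(e^{-cD})$. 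The two $U_k$'s are then compared directly by Lemma~\ref{lem: U_k under product} together with Observation~\ref{obs:the A_sigma have local bounds} and Equation~\eqref{eqn: repeating lem:decay of ratio of singular values}, using that $\geodflow^{t_0}(\sigma)(0)$ and $\geodflow^{\hat t_0}(\hat\sigma)(0)$ are within $2\delta$ of each other near $\eta$; no change of frame, and hence no $e^{c_1D}$ distortion, ever enters. I recommend restructuring your proof around this auxiliary geodesic rather than attempting the frame conversion.
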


\begin{proof} Let $T_0$ be as in Lemma~\ref{lem:quantitative convergence} and let $T :=T_0 + \frac{2}{c} \log L_{\delta}$. 

\medskip
\noindent \textbf{Case 1:} Assume $\dist_X(\sigma(0), \eta) \leq 6\delta + T$. Then 
\begin{align*}
\dist_{\Gr_k(\Kb^d)}&\left( A_\sigma^{-1} \xi^k(\sigma^+), A_{\sigma}^{-1} \xi^k(y) \right) \leq {\rm diam} \Gr_k(\Kb^d) = \frac{\pi}{2} \\
& \leq  \left( \frac{\pi}{2e^{6\delta c+Tc}} \right)e^{-c \dist_X(\sigma(0), \eta)}.
\end{align*}

\noindent \textbf{Case 2:} Assume $\dist_X(\sigma(0), \eta) > 6\delta + T$. Let $\hat{\sigma} \in \Gc(X)$ be geodesic line with $\hat{\sigma}^+ = y$ and $\hat{\sigma}^- = \sigma^-$. 
If $\sigma^- = \eta^+$, then $\sigma \cup \eta$ is a degenerate ideal triangle and hence $\delta$-slim. So 
$$
 \dist_X(\sigma(0), \eta) \leq \delta,
$$
which is impossible in Case 2. Hence $\sigma^- \neq \eta^+$.

Since the ideal geodesic triangle $\sigma \cup \hat{\sigma} \cup \eta$ is $\delta$-slim and $\dist_X(\sigma(0), \eta) > \delta$, we can parametrize $\hat{\sigma}$ so that $\dist_X(\sigma(0), \hat{\sigma}(0)) \leq \delta$. Also, since $\eta$ is contained in the $\delta$-neighborhood of $\hat{\sigma} \cup \sigma$, we can pick $q \in \eta$ such that 
$$
\max\{ \dist_X(q, \sigma), \dist_X(q,\hat{\sigma}) \} \leq \delta. 
$$
Fix $t_0, \hat{t}_0 \in \Rb$ such that $\dist_X(q,\sigma(t_0)) \leq \delta$ and $\dist_X(q,\hat{\sigma}(\hat{t}_0)) \leq \delta$.

If $t_0 \leq 0$ or $\hat{t}_0 \leq 0$, then Observation~\ref{lem:asymp_geodesics} implies that 
$$
\dist_X(\sigma(0), \eta) \leq 6\delta,
$$ 
which is impossible in Case 2.  If $t_0$ or $\hat{t}_0$ is contained in $[0,T]$, then 
$$
\dist_X(\sigma(0), \eta) \leq 2\delta + T,
$$
which is impossible in Case 2. Thus $t_0, \hat{t}_0 > T$.  Then by Lemma~\ref{lem:quantitative convergence}
\begin{align*}
\dist_{\Gr_k(\Kb^d)} \left( A_\sigma^{-1} \xi^k(\sigma^+), A_{\sigma}^{-1} \xi^k(y) \right) & \leq C_2 L_0^2 e^{-c t_0}  + C_2 L_\delta^2 e^{-c \hat{t}_0}  \\
& \phantom{\leq} +\dist_{\Gr_k(\Kb^d)}\left( U_k\left(A_{\sigma}^{-1}A_{\geodflow^{t_0}(\sigma)}\right), U_k\left(A_{\sigma}^{-1}A_{\geodflow^{\hat{t}_0}(\hat{\sigma})}\right) \right).
\end{align*}
By Lemma~\ref{lem: U_k under product}, Observation~\ref{obs:the A_sigma have local bounds}, and Equation~\eqref{eqn: repeating lem:decay of ratio of singular values}
$$
\dist_{\Gr_k(\Kb^d)}\left( U_k\left(A_{\sigma}^{-1}A_{\geodflow^{t_0}(\sigma)}\right), U_k\left(A_{\sigma}^{-1}A_{\geodflow^{\hat{t}_0}(\hat{\sigma})}\right) \right) \leq L_{2\delta}^2 Ce^{-ct_0}.
$$
Since $t_0 \geq \dist_X(\sigma(0), \eta) -\delta$ and $\hat{t}_0 \geq \dist_X(\sigma(0), \eta) -2\delta$, we then have 
\begin{align*}
\dist_{\Gr_k(\Kb^d)}\left( A_\sigma^{-1} \xi^k(\sigma^+), A_{\sigma}^{-1} \xi^k(y) \right) & \leq (C_2 L_{0}^2 e^{\delta c} + C_2 L_{1}^2 e^{2\delta c} + CL_{2\delta}^2 e^{\delta c} ) e^{-c \dist_X(\sigma(0), \eta)} \\
& \leq (2C_2+C)L_{2\delta}^2 e^{2\delta c}  e^{-c \dist_X(\sigma(0), \eta)}.
\end{align*}
This completes the proof of the lemma. 
\end{proof} 

Fix $p_0 \in X$ and a visual distance $\dist_\infty$ on $\partial_\infty X$. By definition, there exist $C_4 > 1$, $\lambda > 0$ such that 
$$
\frac{1}{C_4} e^{-\lambda \dist_X(p_0, \sigma_{xy}) } \leq \dist_\infty(x,y) \leq C_4 e^{-\lambda  \dist_X(p_0, \sigma_{xy})}
$$
for all $x,y \in \partial_\infty X$ and all geodesic lines $\sigma_{xy}$ with $\sigma_{xy}^+ = y$ and $\sigma_{xy}^- =x$.  

Also, fix a compact set $K \subset \Gc(X)$ such that
$$
\partial_\infty X = \{ \sigma^+ : \sigma \in K\}.
$$
By continuity, there exists $C_{K}>1$ so that if $\sigma \in K$, then $\norm{\cdot}_\sigma$ is $C_{K}$-bilipschitz to the standard Euclidean norm $\norm{\cdot}_2$ on $\mathbb K^d$. Then Equation~\eqref{eqn:definition of Asigma in Section uniform} implies that
$$
\frac{\mu_1}{\mu_d}(A_\sigma) \leq C_{K}^2
$$
for all $\sigma \in K$. Finally, let  $R_2: = \max\{ \dist_X(p_0, \sigma(0)) : \sigma \in K\}$.

\begin{lemma}\label{lem:Holder regularity} There exist $C_5 > 0$ (which only depends on $\delta$, $d$, $c$, $C$, $L_{2\delta}$, $C_4$, $\lambda$, $C_{K}$, and $R_2$) such that: if $x,y \in \partial_\infty X$, then
$$
\dist_{\Gr_k(\Kb^d)}\left( \xi^k(x),\xi^k(y) \right) \leq C_5 \dist_\infty(x,y)^{c/\lambda}.
$$
\end{lemma}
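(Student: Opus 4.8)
The plan is to deduce the Hölder estimate directly from the preceding lemma (the exponential bound on $\dist_{\Gr_k(\Kb^d)}(A_\sigma^{-1}\xi^k(\sigma^+),A_\sigma^{-1}\xi^k(y))$ in terms of $\dist_X(\sigma(0),\eta)$), together with two routine facts: that on the compact set $K$ the matrices $A_\sigma$ are uniformly bilipschitz on the Grassmannian, and that distances to geodesics in $X$ translate into the visual metric. First I would record the standard estimate that there is a constant $C_6\geq 1$ depending only on $d$ with
\[
\dist_{\Gr_k(\Kb^d)}\bigl(gV,gW\bigr)\leq C_6\Bigl(\tfrac{\mu_1}{\mu_d}(g)\Bigr)^{d}\dist_{\Gr_k(\Kb^d)}(V,W)
\]
for all $g\in\GL(d,\Kb)$ and $V,W\in\Gr_k(\Kb^d)$. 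This follows from the Cartan decomposition $g=k_1 a k_2$: the unitary factors act isometrically on $\Gr_k(\Kb^d)$ via the Plücker embedding, and a diagonal matrix whose entries have ratio at most $t$ acts on $\proj(\wedge^k\Kb^d)$ with Lipschitz constant $\lesssim_d t^{\,d}$.

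Now fix distinct $x,y\in\partial_\infty X$ (the case $x=y$ being trivial since both sides vanish). I would choose $\sigma\in K$ with $\sigma^+=x$ and a geodesic line $\eta\in\Gc(X)$ with $\eta^-=x$ and $\eta^+=y$. Applying $A_\sigma$ to the conclusion of the preceding lemma and using $\tfrac{\mu_1}{\mu_d}(A_\sigma)\leq C_{K}^2$ for $\sigma\in K$ yields
\[
\dist_{\Gr_k(\Kb^d)}\bigl(\xi^k(x),\xi^k(y)\bigr)\leq C_6 C_{K}^{2d}\,\dist_{\Gr_k(\Kb^d)}\bigl(A_\sigma^{-1}\xi^k(x),A_\sigma^{-1}\xi^k(y)\bigr)\leq C_6 C_{K}^{2d}C_3\,e^{-c\,\dist_X(\sigma(0),\eta)}.
\]
Since $\sigma\in K$ we have $\dist_X(p_0,\sigma(0))\leq R_2$, hence $\dist_X(\sigma(0),\eta)\geq \dist_X(p_0,\eta)-R_2$, and so $e^{-c\,\dist_X(\sigma(0),\eta)}\leq e^{cR_2}e^{-c\,\dist_X(p_0,\eta)}$. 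Because $\eta$ is a geodesic line from $x$ to $y$, the defining property of the visual metric gives $\dist_\infty(x,y)\geq \tfrac1{C_4}e^{-\lambda\,\dist_X(p_0,\eta)}$, whence $e^{-c\,\dist_X(p_0,\eta)}\leq\bigl(C_4\,\dist_\infty(x,y)\bigr)^{c/\lambda}$. Combining the three displays gives
\[
\dist_{\Gr_k(\Kb^d)}\bigl(\xi^k(x),\xi^k(y)\bigr)\leq C_6 C_{K}^{2d}C_3\,e^{cR_2}C_4^{\,c/\lambda}\,\dist_\infty(x,y)^{c/\lambda},
\]
so $C_5:=C_6 C_{K}^{2d}C_3\,e^{cR_2}C_4^{\,c/\lambda}$ works, and recalling that $C_3$ depends only on $\delta,c,C,L_{2\delta}$ this has exactly the asserted dependence.

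There is no real obstacle beyond bookkeeping. The one point that requires care is that a single geodesic line $\eta$ must play two roles simultaneously: it must be the "$\eta$" of the preceding lemma (which demands $\eta^-=\sigma^+=x$, $\eta^+=y$) and also the geodesic $\sigma_{xy}$ in the visual-metric inequality (which demands $\sigma_{xy}^-=x$, $\sigma_{xy}^+=y$); since both requirements amount to "a geodesic from $x$ to $y$," there is no conflict, and the constant $C_3$ in the preceding lemma was stated for an \emph{arbitrary} such $\eta$, so the choice is harmless.
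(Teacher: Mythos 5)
Your argument is correct and follows essentially the same route as the paper: fix $\sigma\in K$ with $\sigma^+=x$, apply the preceding lemma to a geodesic $\eta$ from $x$ to $y$, and convert $\dist_X(p_0,\eta)$ into $\dist_\infty(x,y)$ via the visual-metric inequality. The only (cosmetic) difference is that the paper obtains the uniform Lipschitz bound for the action of $A_\sigma$ on $\Gr_k(\Kb^d)$ by a soft compactness argument on $\{g:\tfrac{\mu_1}{\mu_d}(g)\leq C_K^2\}$, whereas you make it explicit via the Cartan decomposition; both yield a constant depending only on $C_K$ and $d$.
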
 

\begin{proof} By compactness, there exists $C^\prime > 1$ (which only depends on $C_{K}$ and $d$) such that: if $g \in \SL(d,\Kb)$ and $\frac{\mu_1}{\mu_d}(g) \leq C_{K}^2$, then 
$$
\dist_{\Gr_k(\Kb^d)}\left( g V_1, gV_2 \right) \leq C^\prime \dist_{\Gr_k(\Kb^d)}(V_1,V_2)
$$
for all $V_1, V_2 \in \Gr_k(\Kb^d)$. 

Fix $x,y \in \partial_\infty X$ distinct. Then fix $\sigma \in K$ such that $\sigma^+ = x$. Then
\begin{align*}
\dist_{\Gr_k(\Kb^d)}\left( \xi^k(x),\xi^k(y) \right) & \leq C^\prime \dist_{\Gr_k(\Kb^d)}\left( A_\sigma^{-1} \xi^k(\sigma^+), A_{\sigma}^{-1} \xi^k(y) \right) \\
& \leq  C^\prime C_3e^{-c \dist_X(\sigma(0), \eta)} \leq C^\prime C_3e^{cR_2} e^{-c\dist_X(p_0, \eta)} \\
& \leq C^\prime C_3 C_4^{c/\lambda}  \dist_\infty(x,y)^{c/\lambda}
\end{align*}
where $\eta \in \Gc(X)$ is a geodesic line with $\eta^- = \sigma^+$ and $\eta^+ = y$. 
\end{proof}

\section{Uniform relatively Anosov and relatively Morse representations} \label{sec:relMorse_relunifAnosov}

Relatively Morse representations were introduced in \cite{KL}, building on definitions and work in \cite{KLP2018b}. In this section we will show that they are closely related to the uniform relatively Anosov representations introduced in this paper. 

In what follows let, endow $M:=\SL(d,\Kb) / \SU(d,\Kb)$ with the symmetric space distance defined by Equation~\eqref{eqn:symmetric distance in prelims} and let $\mathsf{K}:=\SU(d,\Kb)$.

\begin{definition}
Suppose that $I \subset \Rb$ is a finite or infinite interval. A quasi-geodesic $q\colon I \to M$ is \emph{$P_k$-Morse} with constants $\alpha, \beta > 0$ if 
$$
\log \frac{\mu_k}{\mu_{k+1}}(h_s^{-1} h_t) \geq \alpha \log \frac{\mu_1}{\mu_{d}}(h_s^{-1} h_t) - \beta
$$
for any $s,t \in I$ and $h_s,h_t \in \SL(d,\Kb)$ with $q(s) = h_s \mathsf{K}$ and $q(t) =h_t \mathsf{K}$.
\end{definition}

We remark that this is in fact Kapovich--Leeb--Porti's definition of a ``uniformly regular'' quasi-geodesics, and it is a consequence of the higher-rank Morse lemma \cite[Th.\ 1.1]{KLP2018b} that a quasi-geodesic is Morse if and only if it is uniformly regular. 

\begin{definition}[{\cite[Def.\ 8.1]{KL}}] \ 
\begin{itemize}
\item Let $X$ be a proper geodesic Gromov-hyperbolic metric space. A map $f\colon X \to M$ is a \emph{$\Psf_k$-Morse quasi-isometric embedding} if there exist constants $\alpha,\beta>0$ such that $f$ sends geodesics in $X$ to $\Psf_k$-Morse quasi-geodesics with constants $\alpha,\beta$.

\item Let $(\Gamma,\peripherals)$ be relatively hyperbolic and let $X$ be a weak cusp space for $(\Gamma,\peripherals)$. A representation $\rho\colon \Gamma \to \SL(d,\Kb)$ is \emph{$\Psf_k$-Morse relative to $X$} if there exists a $\rho$-equivariant $\Psf_k$-Morse quasi-isometric embedding of $X$ into $M$.
\end{itemize}
\end{definition}

\begin{proposition} \label{prop:uniformly rel Anosov implies rel Morse}
Suppose that $(\Gamma,\peripherals)$ is relatively hyperbolic and $X$ is a weak cusp space for $(\Gamma,\peripherals)$. If $\rho \colon \Gamma \to \SL(d,\Kb)$ is uniformly $\Psf_k$-Anosov relative to $X$, then $\rho$ is $\Psf_k$-Morse relative to $X$.
\end{proposition}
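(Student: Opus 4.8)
The strategy is to produce the required $\rho$-equivariant $\Psf_k$-Morse quasi-isometric embedding of $X$ into $M$ as the map $F \colon X \to M$ already constructed in Section~\ref{sec: uniformly Anosov} (in the proof of Theorem~\ref{thm:estimates for uniform representations}(1)). Lemma~\ref{lem:unform qiembed entire space} gives that $F$ is a $\rho$-equivariant quasi-isometric embedding, so the only thing left is to check the \emph{Morse} (i.e. uniform regularity) condition along images of geodesics.

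\textit{First step.} Fix a geodesic line (or segment) $c \colon I \to X$ and two parameters $s < t$ in $I$. By Lemma~\ref{lem: qi embedded approximated by one}, $F(c(s))$ is within a bounded distance (depending only on $\alpha_0$ and $L_{2R_1}$) of $\bar A_{\eta_s}\mathsf{K}$ and $F(c(t))$ within a bounded distance of $\bar A_{\eta_t}\mathsf{K}$, where $\eta_s,\eta_t \in \Gc(X)$ are geodesic lines with $\eta_s(0), \eta_t(0)$ at distance $\le R_1$ from $c(s), c(t)$ respectively. Up to enlarging the additive constant, it therefore suffices to establish the Morse inequality for the pair $A_{\eta_s}, A_{\eta_t}$ in place of $h_s, h_t$. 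Moreover, since $c$ is a geodesic and $\dist_X(c(s),c(t)) = t-s$, the geodesic lines $\eta_s$ and $\eta_t$ can be taken to have uniformly bounded ``fellow-traveling offset'': more precisely, one can compare both to a single geodesic line $\sigma$ extending $c$ and arrange $\dist_X(\sigma(s),\eta_s(0)) \le 2R_1$, $\dist_X(\sigma(t),\eta_t(0)) \le 2R_1$. By Observation~\ref{obs:the A_sigma have local bounds} this reduces everything to comparing $A_\sigma^{-1}A_{\geodflow^{t-s}(\sigma)}$ (after a shift so that $\sigma$ is reparametrized to start at $\sigma(s)$) against the $\Psf_k$-Morse inequality, at the cost of bounded multiplicative/additive errors in all singular-value ratios.

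\textit{Second step, the key estimate.} For $\sigma \in \Gc(X)$ and $\tau := t-s \ge 0$, Lemma~\ref{lem:decay of ratio of singular values} gives the lower bound
$$
\log \frac{\mu_k}{\mu_{k+1}}\!\left(A_\sigma^{-1}A_{\geodflow^{\tau}(\sigma)}\right) \ge c\tau - \log C,
$$
while Equation~\eqref{eqn: upper bd singular value ratio in uniform proof} from the uniform-norm hypothesis gives the upper bound
$$
\log \frac{\mu_1}{\mu_d}\!\left(A_\sigma^{-1}A_{\geodflow^{\tau}(\sigma)}\right) \le c_1\tau + \log C_1 .
$$
Combining these two (and absorbing the bounded errors from the first step via Observation~\ref{obs:the A_sigma have local bounds} and Equation~\eqref{eqn:bounds in symmetric space}) yields
$$
\log \frac{\mu_k}{\mu_{k+1}}\!\left(A_{\eta_s}^{-1}A_{\eta_t}\right) \ge \frac{c}{c_1}\log \frac{\mu_1}{\mu_d}\!\left(A_{\eta_s}^{-1}A_{\eta_t}\right) - \beta'
$$
for a constant $\beta'$ depending only on $c,C,c_1,C_1,L_{2R_1},\alpha_0,d,R_1$. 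Transferring this back to $h_s,h_t$ (bounded perturbations, again by Equation~\eqref{eqn:bounds in symmetric space}) gives the $\Psf_k$-Morse inequality with uniform constants $\alpha := c/c_1$ and $\beta := \beta''$. Since $F$ is already a quasi-isometric embedding with uniform constants (Lemma~\ref{lem:unform qiembed entire space}) and sends geodesics to quasi-geodesics, this shows $F$ is a $\Psf_k$-Morse quasi-isometric embedding, proving $\rho$ is $\Psf_k$-Morse relative to $X$.

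\textit{Main obstacle.} The only genuinely delicate point is the bookkeeping in the first step: one must be careful that the geodesic lines $\eta_s,\eta_t$ attached to the points $c(s),c(t)$ by the construction of $F$ can be simultaneously compared to a common geodesic extending $c$, so that the two-sided singular-value estimates of Lemma~\ref{lem:decay of ratio of singular values} and Equation~\eqref{eqn: upper bd singular value ratio in uniform proof} — which are phrased along the flow $\geodflow^t$ on a single geodesic — can legitimately be applied. This is where the hypothesis that $\norm{\cdot}$ is \emph{locally uniform} is essential: without it one has no control on $\frac{\mu_1}{\mu_d}(A_{\sigma_1}^{-1}A_{\sigma_2})$ for nearby $\sigma_1,\sigma_2$, and the bounded-perturbation arguments collapse. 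Everything else is a direct assembly of estimates already proved in Section~\ref{sec: uniformly Anosov} together with the higher-rank Morse lemma remark identifying ``uniformly regular'' with ``Morse''.
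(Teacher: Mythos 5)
Your overall strategy is the paper's: take the map $F$ and the matrices $\bar A_\sigma$ from Section~\ref{sec: uniformly Anosov}, reduce via Lemma~\ref{lem: qi embedded approximated by one} to an inequality for $\bar A_\sigma^{-1}\bar A_{\geodflow^{\tau}(\sigma)}$ along flow lines, and obtain it by playing the exponential lower bound on $\frac{\mu_k}{\mu_{k+1}}$ from Equation~\eqref{eqn: repeating lem:decay of ratio of singular values} against the locally-uniform upper bound on $\frac{\mu_1}{\mu_d}$ from Equation~\eqref{eqn: upper bd singular value ratio in uniform proof}. That part is correct and matches the paper, including the constant $\alpha = c/c_1$.

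There is, however, a gap: you only treat ordered pairs $s<t$, i.e.\ flow times $\tau = t-s \geq 0$. The Morse condition quantifies over \emph{all} $s,t\in I$, and for $s>t$ the relevant matrix is $h_s^{-1}h_t = (h_t^{-1}h_s)^{-1}$. Since $\mu_j(g^{-1}) = \mu_{d-j+1}(g)^{-1}$, one has $\frac{\mu_k}{\mu_{k+1}}(g^{-1}) = \frac{\mu_{d-k}}{\mu_{d-k+1}}(g)$, so the inequality for the reversed pair concerns the $(d-k)$-th singular value gap of $\bar A_\sigma^{-1}\bar A_{\geodflow^{\tau}(\sigma)}$, which does \emph{not} follow from your $k$-th gap estimate (the right-hand side $\frac{\mu_1}{\mu_d}$ is inverse-invariant, but the left-hand side is not, except when $2k=d$). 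Equivalently, you need the estimate for negative flow times $\tau<0$. The paper closes this by the involution $I(\sigma)(t)=\sigma(-t)$: since $I(\sigma)(0)=\sigma(0)$ and $I(\geodflow^{\tau}(\sigma)) = \geodflow^{-\tau}(I(\sigma))$, Observation~\ref{obs:the A_sigma have local bounds} gives
$$
\frac{\mu_k}{\mu_{k+1}}\left(\bar A_\sigma^{-1}\bar A_{\geodflow^{\tau}(\sigma)}\right) \asymp \frac{\mu_k}{\mu_{k+1}}\left(\bar A_{I(\sigma)}^{-1}\bar A_{\geodflow^{-\tau}(I(\sigma))}\right),
$$
reducing $\tau<0$ to the positive-time case along $I(\sigma)$. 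This is a short additional step, and it again uses the local uniformity of the metric (which you correctly identify as the crucial hypothesis), but as written your argument does not establish the Morse inequality for $s>t$.
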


\begin{proof}
Let $F \colon X \rightarrow M$ be the $\rho$-equivariant quasi-isometry and let $\left\{\bar{A}_\sigma : \sigma \in \Gc(X)\right\}$ be the matrices constructed in Section~\ref{sec:QI of entire weak cusp space}. By Lemma~\ref{lem: qi embedded approximated by one} it suffices to prove: there exist constants $\alpha, \beta > 0$ such that for any $\sigma \in \Gc(X)$ and $t \in \Rb$, 
$$ 
\log \frac{\mu_k}{\mu_{k+1}} \left( \bar{A}_\sigma^{-1} \bar{A}_{\geodflow^t(\sigma)}  \right) \geq \alpha \log \frac{\mu_1}{\mu_d} \left(\bar{A}_\sigma^{-1}  \bar{A}_{\geodflow^t(\sigma)} \right) - \beta.
$$

By Equations~\eqref{eqn: repeating lem:decay of ratio of singular values} and \eqref{eqn: upper bd singular value ratio in uniform proof}, there exist $\alpha_1, \beta_1 > 0$ such that: for any $\sigma \in \Gc(X)$ and $t>0$, 
\begin{equation}
\label{eqn:in the proof that uniform=>Morse}
 \log \frac{\mu_k}{\mu_{k+1}} \left( \bar{A}_\sigma^{-1} \bar{A}_{\geodflow^t(\sigma)}  \right) \geq \alpha_1 \log \frac{\mu_1}{\mu_d} \left(\bar{A}_\sigma^{-1}  \bar{A}_{\geodflow^t(\sigma)} \right) - \beta_1.
\end{equation}

For $\sigma \in \Gc(X)$, let $I(\sigma) \in \Gc(X)$ be the geodesic defined by $I(\sigma)(t)=\sigma(-t)$. Then by Observation~\ref{obs:the A_sigma have local bounds},
$$
\frac{\mu_k}{\mu_{k+1}} \left( \bar{A}_{\sigma}^{-1} \bar{A}_{\geodflow^t(\sigma)} \right) \asymp \frac{\mu_k}{\mu_{k+1}} \left( \bar{A}_{I(\sigma)}^{-1} \bar{A}_{I(\geodflow^t(\sigma))} \right)=\frac{\mu_k}{\mu_{k+1}} \left( \bar{A}_{I(\sigma)}^{-1} \bar{A}_{\geodflow^{-t}(I(\sigma))} \right)
$$
for any $\sigma \in \Gc(X)$ and $t \in \Rb$. So by Equation \eqref{eqn:in the proof that uniform=>Morse}, there exist $\alpha_2, \beta_2 > 0$ such that: for any $\sigma \in \Gc(X)$ and $t<0$, 
$$ 
\log \frac{\mu_k}{\mu_{k+1}} \left(  \bar{A}_{\sigma}^{-1} \bar{A}_{\geodflow^t(\sigma)} \right) \geq \alpha_2 \log \frac{\mu_1}{\mu_d} \left(\bar{A}_\sigma^{-1}  \bar{A}_{\geodflow^t(\sigma)} \right) - \beta_2.
$$

Then $\alpha :=\min\{\alpha_1,\alpha_2\}$ and $\beta:=\max\{\beta_1,\beta_2\}$ suffice. 
\end{proof}

\begin{proposition} \label{prop:rel Morse implies uniformly rel Anosov}
Suppose that $(\Gamma,\peripherals)$ is relatively hyperbolic and $X$ is a weak cusp space for $(\Gamma,\peripherals)$. If $\rho\colon \Gamma \to \SL(d,\Kb)$ is representation and there exists a \textbf{continuous} $\rho$-equivariant $\Psf_k$-Morse quasi-isometric embedding $F \colon X \rightarrow M$, then $\rho$ is uniformly $\Psf_k$-Anosov relative to $X$.
\end{proposition}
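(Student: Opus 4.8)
The plan is to run the construction of Section~\ref{sec:QI of entire weak cusp space} in reverse: starting from the continuous $\rho$-equivariant $\Psf_k$-Morse quasi-isometric embedding $F\colon X\to M$, I would build a locally uniform metric on $\wh{E}_\rho(X)\to\wh{\Gc}(X)$ and verify exponential contraction on the Hom bundle via Proposition~\ref{prop: Hom bundles contraction/expansions}. First I would lift $F$ to a map on the space of geodesics: for $\sigma\in\Gc(X)$ choose $A_\sigma\in\GL(d,\Kb)$ with $A_\sigma\mathsf{K}$ (after projectivizing via $\bar{A}_\sigma:=\det(A_\sigma)^{-1/d}A_\sigma$) equal to $F(\sigma(0))$, normalized so that $\det A_\sigma>0$; since $F$ is continuous and $\rho$-equivariant one can arrange $\sigma\mapsto A_\sigma$ to be continuous and to satisfy $\rho(\gamma)A_\sigma=A_{\gamma\sigma}$ up to $\mathsf{K}$. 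Defining $\norm{\cdot}_\sigma:=\norm{A_\sigma^{-1}(\cdot)}_2$ gives a $\rho$-equivariant, continuous family of inner-product norms on the fibers of $\Gc(X)\times\Kb^d\to\Gc(X)$, hence a metric on $\wh{E}_\rho(X)$. Local uniformity is immediate: if $\dist_X(\sigma_1(0),\sigma_2(0))\le r$ then $\dist_M(F(\sigma_1(0)),F(\sigma_2(0)))$ is bounded by a function of $r$ (quasi-isometry constants plus the diameter term), so $\frac{\mu_1}{\mu_d}(A_{\sigma_1}^{-1}A_{\sigma_2})$ is bounded, which is exactly the bilipschitz comparison in Definition~\ref{defn:locally uniform norms}.

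The substantive point is exponential contraction. For a geodesic $\sigma$ and $t\ge 0$, the $\Psf_k$-Morse condition applied to the quasi-geodesic $s\mapsto F(\sigma(s))$ gives
$$
\log\frac{\mu_k}{\mu_{k+1}}\!\left(A_\sigma^{-1}A_{\geodflow^t(\sigma)}\right)\ \geq\ \alpha'\,t-\beta'
$$
for constants $\alpha',\beta'>0$ (combining the Morse inequality with the quasi-geodesic lower bound $\frac{\mu_1}{\mu_d}\gtrsim$ displacement $\gtrsim t$, and Lemma~\ref{lem: qi embedded approximated by one}-type estimates to pass from $F(\sigma(0)),F(\sigma(t))$ to $A_\sigma,A_{\geodflow^t(\sigma)}$). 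This is the reverse of Lemma~\ref{lem:decay of ratio of singular values}: a singular-value gap for $A_\sigma^{-1}A_{\geodflow^t(\sigma)}$ translates, via the max–min characterization of singular values, into a uniform exponential separation of the norm of vectors in $\xi^k(\sigma^+)$ from those in $\xi^{d-k}(\sigma^-)$. Concretely, I would show there exist $c,C>0$ with
$$
\frac{\norm{Y}_{\geodflow^t(\sigma)}}{\norm{Z}_{\geodflow^t(\sigma)}}\ \leq\ Ce^{-ct}\,\frac{\norm{Y}_\sigma}{\norm{Z}_\sigma}
$$
for all $t\ge 0$, $Y\in\xi^k(\sigma^+)$, nonzero $Z\in\xi^{d-k}(\sigma^-)$; here one uses that $\xi$ is the Anosov boundary map, so $A_\sigma^{-1}\xi^k(\sigma^+)$ and $A_\sigma^{-1}\xi^{d-k}(\sigma^-)$ are controlled by $U_k$ and $U_{d-k}$ of $A_\sigma^{-1}A_{\geodflow^t(\sigma)}$ in the limit (Lemma~\ref{lem:convergence of uK}, or rather its quantitative form together with Lemma~\ref{lem: U_k under product}). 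Passing to the quotient and invoking Proposition~\ref{prop: Hom bundles contraction/expansions} then yields exponential contraction of $\homflow^t$ on $\Hom(\wh{\Xi}^{d-k},\wh{\Theta}^k)$, so $\rho$ is uniformly $\Psf_k$-Anosov relative to $X$.

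One point that needs care, and which I expect to be the main obstacle, is that a priori we only know $F$ is a $\Psf_k$-Morse quasi-isometric embedding — we have not been handed the Anosov boundary map $\xi$, nor that $\rho$ is even relatively $\Psf_k$-Anosov. So part of the work is to recover $\xi$ from $F$: one shows that for each $x\in\partial_\infty X$ the flags $(U_k,U_{d-k})(A_\sigma^{-1}A_{\geodflow^t(\sigma)})$ converge as $t\to\infty$ along any geodesic $\sigma$ with $\sigma^+=x$, independently of $\sigma$, and that the resulting map $x\mapsto\xi(x)$ is continuous, $\rho$-equivariant and transverse; transversality and the strongly dynamics preserving property follow from the $\Psf_k$-Morse property together with the relative hyperbolicity of the $\Gamma$-action on $X$ (using $\delta$-slimness of ideal triangles as in Section~\ref{subsec:uniform Holder reg}). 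This is essentially the content of the higher-rank Morse lemma \cite{KLP2018b} combined with Observation~\ref{obs:strongly_dynamics_pres_div_cartan}; alternatively, once the metric $\norm{\cdot}_\sigma$ and the contraction estimate above are in hand, Theorem~\ref{thm:2 implies 1 main theorem} supplies $\xi$ directly, so the cleanest route is to first build the metric and the contraction inequality purely from $F$, then cite Theorem~\ref{thm:2 implies 1 main theorem} to conclude that $\rho$ is $\Psf_k$-Anosov relative to $X$ with $\xi$ as its boundary map, and finally note that the metric we built is locally uniform, giving the ``uniformly'' upgrade.
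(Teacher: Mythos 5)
Your proposal is correct and follows essentially the same route as the paper: define $\norm{\cdot}_\sigma$ from $F(\sigma(0))$, get local uniformity from the quasi-isometric embedding plus compactness, recover $\xi$ as the limit of $U_k\left(A_\sigma^{-1}A_{\geodflow^n(\sigma)}\right)$ (checking it depends only on $\sigma^\pm$ via Lemma~\ref{lem: U_k under product}), and deduce exponential contraction from the Morse singular-value gap before invoking Proposition~\ref{prop: Hom bundles contraction/expansions}. The only difference is one of packaging: where you sketch converting the gap $\log\frac{\mu_k}{\mu_{k+1}}\left(A_\sigma^{-1}A_{\geodflow^t(\sigma)}\right)\gtrsim t$ into a dominated splitting by hand (or via the higher-rank Morse lemma), the paper applies \cite[Prop.\ 2.4]{BPS} to the bounded sequence of one-step increments $A_{\geodflow^{n+1}(\sigma)}^{-1}A_{\geodflow^{n}(\sigma)}$, which delivers the continuous transverse subbundles and the contraction estimate in one stroke.
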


\begin{proof} For each $\sigma \in \Gc(X)$, fix $A_\sigma \in \SL(d,\Kb)$ with $F(\sigma(0)) = A_\sigma \mathsf{K}$. Then define a metric on the fibers of $\Gc(X) \times \Kb^d \to \Gc(X)$ by
$$
\norm{A_\sigma(\cdot)}_\sigma = \norm{\cdot}_2.
$$
Notice that $\norm{\cdot}_\sigma$ is continuous in $\sigma$ and $\rho$-equivariant. Since $F$ is an quasi-isometric embedding, for any $r > 0$ the set 
$$
K_r:=\overline{\left\{A_{\sigma_1}^{-1} A_{\sigma_2} : \sigma_1, \sigma_2 \in \Gc(X),\  \dist_X(\sigma_1(0), \sigma_2(0)) \leq r \right\}}
$$ 
is compact in $\SL(d,\Kb)$. Hence $\norm{\cdot}_\sigma$ descends to a locally uniform metric on the vector bundle $\wh{E}_\rho(X) \rightarrow \wh{\Gc}(X)$.

Since $F$ is $\Psf_k$-Morse, there exist constants $C_0, c_0>0$ such that for $\sigma \in \Gc(X)$ and all $m,n \in \Zb$ with $m \geq n$ we have
$$
\frac{\mu_{d-k+1}}{\mu_{d-k}} \left( A_{\geodflow^{m+1}(\sigma)}^{-1} A_{\geodflow^{n}(\sigma)} \right) = \frac{\mu_{k+1}}{\mu_{k}} \left( A_{\geodflow^{n}(\sigma)}^{-1} A_{\geodflow^{m+1}(\sigma)} \right) \leq C_0e^{c_0(m-n+1)} 
$$
From this estimate and the compactness of $K_1$, for any given $\sigma$ the sequence $\left( A_{\geodflow^{n+1}(\sigma)}^{-1} A_{\geodflow^{n}(\sigma)} \right)_{n \in \Zb}$ lies in the compact flow space $\mathcal{D}$ in the hypotheses of \cite[Prop.\ 2.4]{BPS} with $p=d-k$. Hence, by that proposition,
\begin{itemize}
\item for any $\sigma \in \Gc(X)$ the limits 
$$
E^{cs}(\sigma):= \lim_{n \rightarrow \infty} U_k\left( A_\sigma^{-1} A_{\geodflow^{n}(\sigma)}\right) \quad \text{and} \quad E^{cu}(\sigma):= \lim_{n \rightarrow \infty} U_{d-k}\left( A_\sigma^{-1} A_{\geodflow^{-n}(\sigma)}\right)
$$
exist, depend continuously on $\sigma$, and $E^{cs}(\sigma) \oplus E^{cu}(\sigma) = \Kb^d$, 
\item there exist $C_1,c_1>0$ such that 
\begin{equation}
\label{eqn:exp decay in BPS citation}
 \frac{\norm{A_{\geodflow^n(\sigma)}^{-1} A_\sigma Y}_2}{\norm{A_{\geodflow^n(\sigma)}^{-1} A_\sigma Z}_2} \leq C_1 e^{-c_1 n} \frac{\norm{Y}_2}{\norm{Z}_2} 
\end{equation}
for all $\sigma \in \Gc(X)$, $n \in \Nb$, $Y \in E^{cs}(\sigma)$, and non-zero $Z \in E^{cu}(\sigma)$.
\end{itemize}

We claim that $A_\sigma E^{cs}(\sigma)$ only depends on $\sigma^+$. Fix $\sigma_1, \sigma_2 \in \Gc(X)$ with $\sigma^+_1 = \sigma^+_2$. Then 
$$
r:=\sup_{n \in \Nb} \dist_X(\sigma_1(n), \sigma_2(n)) 
$$
is finite. Then, since $K_r$ is compact, the set
$$
\left\{ A_{\geodflow^{n}(\sigma_1)}^{-1} A_{\geodflow^{n}(\sigma_2)} : n \in \Nb\right\} \subset K_r
$$
is relatively compact. So Lemma~\ref{lem: U_k under product} implies that
$$
A_{\sigma_1} E^{cs}(\sigma_1) = \lim_{n \rightarrow \infty} U_k\left( A_{\geodflow^{n}(\sigma_1)}\right)=\lim_{n \rightarrow \infty} U_k\left( A_{\geodflow^{n}(\sigma_2)}\right)=A_{\sigma_2} E^{cs}(\sigma_2).
$$
Thus $A_\sigma E^{cs}(\sigma)$ only depends on $\sigma^+$. 

A similar argument shows that $A_\sigma E^{cu}(\sigma)$ only depends on $\sigma^-$. So there exists a continuous transverse map 
$$
\xi=(\xi^k, \xi^{d-k}) \colon \partial(\Gamma,\peripherals) \to \Gr_k(\Kb^d) \times \Gr_{d-k}(\Kb^d)
$$
such that $\xi^k(\sigma^+) = A_\sigma E^{cs}(\sigma)$ and $\xi^{d-k}(\sigma^-)= A_\sigma E^{cu}(\sigma)$ for all $\sigma \in \Gc(X)$. Further, since $F$ is $\rho$-equivariant so is $\xi$. 

Now fix $\sigma \in \Gc(X)$, $t \geq 0$, $Y \in \xi^k(\sigma^+)$, and non-zero $Z \in \xi^{d-k}(\sigma^-)$. Let $n := \lfloor t \rfloor$ and $\sigma_1 := \geodflow^{t-n}(\sigma)$. Then by Equation~\eqref{eqn:exp decay in BPS citation} and the compactness of $K_1$, 
\begin{align*}
\frac{\norm{Y}_{\geodflow^t(\sigma)}}{\norm{Z}_{\geodflow^t(\sigma)}} & =\frac{\norm{Y}_{\geodflow^n(\sigma_1)}}{\norm{Z}_{\geodflow^n(\sigma_1)}} =  \frac{\norm{A_{\geodflow^n(\sigma_1)}^{-1} A_{\sigma_1} A_{\sigma_1}^{-1}Y}_2}{\norm{A_{\geodflow^n(\sigma_1)}^{-1} A_{\sigma_1} A_{\sigma_1}^{-1} Z}_2} \leq C_1 e^{-c_1 t} \frac{\norm{A_{\sigma_1}^{-1}Y}_2}{\norm{A_{\sigma_1}^{-1} Z}_2} \\
&  \asymp C_1 e^{-c_1 t} \frac{\norm{A_{\sigma}^{-1}Y}_2}{\norm{A_{\sigma}^{-1} Z}_2}=C_1 e^{-c_1 t} \frac{\norm{Y}_{\sigma}}{\norm{Z}_{\sigma}}. 
\end{align*}

So Proposition~\ref{prop: Hom bundles contraction/expansions} implies that $\rho$ is uniformly $\Psf_k$-Anosov relative to $X$.
\end{proof}

\section{Relative stability}\label{sec:stability} 

In this section we prove Theorem~\ref{thm:stability}, which we restate below. Then in Section~\ref{sec:proof of singular value and eigenvalue estimates} and Section~\ref{sec:uniform stability} we establish the stability assertions in Theorems~\ref{thm:singular value and eigenvalue estimates} and~\ref{thm:uniform_stability_case} respectively.

\begin{theorem}\label{thm:stability_in_body}
Suppose that $(\Gamma, \peripherals)$ is relatively hyperbolic and $X$ is a weak cusp space for $(\Gamma, \peripherals)$. If $\rho_0\colon\Gamma \to \SL(d,\Kb)$ is $\Psf_k$-Anosov relative to $X$, then there exists an open neighborhood $\Oc$ of $\rho_0$ in $\Hom_{\rho_0}(\Gamma, \SL(d,\Kb))$ such that every representation in $\Oc$ is $\Psf_k$-Anosov relative to $X$. 

Moreover:
\begin{enumerate}
\item If $\xi_\rho$ is the Anosov boundary map of $\rho \in \Oc$, then the map 
$$
(\rho, x) \in \Oc \times \partial(\Gamma, \peripherals) \mapsto \xi_\rho(x) \in \Gr_k(\Kb^d) \times \Gr_{d-k}(\Kb^d)
$$
is continuous. 
\item If $h\colon M \to \mathcal{O}$ is a real-analytic family of representations and $x \in \partial(\Gamma,\peripherals)$, then the map 
$$
u \in M \mapsto \xi_{h(u)}(x) \in \Gr_k(\Kb^d) \times \Gr_{d-k}(\Kb^d)
$$
is real-analytic.
\end{enumerate}
\end{theorem}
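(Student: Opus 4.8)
The plan is to follow the three-step strategy mentioned in the introduction (``the proof has three main steps, the first two closely follow \cite{CZZ2021} while the third is more complicated''), using Theorem~\ref{thm:main} to realize $\rho_0$ as $\Psf_k$-Anosov relative to some fixed Groves--Manning cusp space $X_0 = \Cc_{GM}(\Gamma,\peripherals,S)$; since the constrained representation variety $\Hom_{\rho_0}(\Gamma,\SL(d,\Kb))$ only allows deformations that are conjugate on each peripheral, the peripheral images remain weakly unipotent and the singular-value estimate of Theorem~\ref{thm:structure of weakly unipotent discrete groups}(4) applies uniformly. The first step is to produce, for each $\rho$ near $\rho_0$, a candidate equivariant transverse boundary map. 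One does this on the ``thick'' part using a persistence-of-hyperbolicity / graph-transform argument for the contracting flow on the $\Hom$ bundle (as in the classical Anosov stability proofs, e.g.\ \cite{Canary_notes}): the contraction property from Definition~\ref{defn:uniformly Anosov}/Proposition~\ref{prop: Hom bundles contraction/expansions} for $\rho_0$ is an open condition when restricted to the cocompact set $\wh\Gc_{thick}$, so for $\rho$ close to $\rho_0$ one still has a dominated splitting over $\wh\Gc_{thick}$, yielding continuous subbundles $\wh\Theta^k_\rho,\wh\Xi^{d-k}_\rho$ there. The second step is to extract from these subbundles an equivariant, transverse, strongly-dynamics-preserving map $\xi_\rho\colon\partial(\Gamma,\peripherals)\to\Gr_k\times\Gr_{d-k}$: one uses the abundance of geodesic lines (Proposition~\ref{prop:abundance of geodesic lines}) to see that every boundary point is the forward endpoint of a geodesic spending definite time in the thick part, defines $\xi^k_\rho$ via the limit of the subbundle along such a geodesic, and checks well-definedness/continuity exactly as in \cite{CZZ2021}. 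Transversality and equivariance are inherited in the limit; strong dynamics preservation follows from the contraction estimate as in Section~\ref{sec:contraction_Anosov}.

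The third step, which is the main obstacle, is to show that the boundary map $\xi_\rho$ so constructed actually makes $\rho$ a $\Psf_k$-Anosov representation relative to $\peripherals$ --- equivalently, by Theorem~\ref{thm:main}, relative to $X_0$. The issue is that the graph-transform argument only controls things over the thick part, and one must promote this to a genuine contracting flow over all of $\wh\Gc(X_0)$. My plan here is not to re-run the thick/thin norm construction of Section~\ref{sec:building_norms} from scratch for each $\rho$, but rather to invoke it as a black box: once $\xi_\rho$ is known to be a continuous, $\rho$-equivariant, transverse, strongly dynamics preserving map, Theorem~\ref{thm: building a norm} (i.e.\ (1)$\implies$(3) of Theorem~\ref{thm:main}) applies verbatim to $\rho$, producing the desired norms on $\wh{E}_\rho(X_0)$. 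The only genuinely new point is that to even start that machine we need to know $\rho(P)$ is weakly unipotent and $\Psf_k$-divergent for each $P\in\peripherals$ (to invoke Equation~\eqref{eqn:growth on parabolics in construction of norms} with constants $\alpha,\beta$ uniform over $\Oc$); weak unipotence is immediate because $\rho|_P$ is conjugate to $\rho_0|_P$, and $\Psf_k$-divergence plus uniform growth exponents follow from Theorem~\ref{thm:structure of weakly unipotent discrete groups}(4) applied to the fixed conjugacy class --- so the same $\alpha,\beta$ work for all $\rho\in\Oc$. The remaining subtlety is verifying strong dynamics preservation for $\rho$ directly from the thick-part dominated splitting before the norm on $X_0$ is available; this is where one has to argue carefully, essentially redoing the ``Case 1/Case 2'' convergence-group analysis of \cite{CZZ2021} in the present generality, using that $\Gamma$ acts cocompactly on $\wh\Gc_{thick}$ and that escaping sequences in $\Gamma$ give geodesics returning to the thick part arbitrarily far out.

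For the ``moreover'' part: continuity of $(\rho,x)\mapsto\xi_\rho(x)$ on $\Oc\times\partial(\Gamma,\peripherals)$ follows from the continuous dependence of the dominated splitting over the (compact) thick part on the representation --- standard hyperbolic-set persistence gives Lipschitz (indeed continuous) dependence of the invariant subbundles on $\rho$ in the $C^0$ topology --- together with the uniform-in-$\rho$ convergence estimate toward the boundary, which holds because the contraction constants $c,C$ can be taken uniform on a neighborhood. Concretely, one shows that for a geodesic $\sigma$ realizing the boundary point $x$, $\xi^k_\rho(\sigma^+)=\lim_{t\to\infty}(\text{subbundle of }\rho\text{ at }\geodflow^t\sigma)$ with a rate independent of $\rho\in\Oc$, so a $3\varepsilon$ argument gives joint continuity. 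For real-analyticity in part (2): when $h\colon M\to\Oc$ is real-analytic, the invariant (stable/unstable) subbundles of a uniformly hyperbolic/dominated cocycle depend real-analytically on parameters --- this is the standard implicit-function-theorem argument on the space of equivariant sections, since the subbundle at a point is the fixed point of the (parameter-analytic) graph transform contraction, so analytic dependence is automatic by the analytic implicit function theorem / uniform contraction principle. Evaluating at a fixed $x\in\partial(\Gamma,\peripherals)$ and passing to the boundary limit (again with uniform rate) preserves analyticity. The one place requiring care is that the boundary value is a limit, not a finite composition; but the convergence is geometric with a rate analytic in (and locally bounded over) $M$, so the limit is a locally uniform limit of analytic maps and hence analytic. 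I expect the bookkeeping of uniformity of all constants over $\Oc$ --- particularly reconciling the thick-part contraction constants with the peripheral growth exponents $\alpha,\beta$ --- to be the most delicate part of writing this out in full.
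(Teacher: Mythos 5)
Your overall architecture (graph transform plus separate treatment of the cusps, then extraction of boundary maps) is the right shape, but there are two genuine gaps in the main argument. First, a ``dominated splitting over $\wh\Gc_{thick}$'' is not a well-posed object: the thick part is not invariant under the flow $\geodflow^t$, so the graph transform $f \mapsto \flatflow^t \circ f \circ \flatflow^{-t}$ does not act on sections over it, and your claim that every boundary point is the endpoint of a geodesic spending definite time in the thick part arbitrarily far out fails precisely at the bounded parabolic points, where the geodesic eventually stays in a single cusp region $H_P$ forever. The paper's essential move, which your proposal omits, is to use the conjugating elements $g_C(\rho)$ with $g_C(\rho)\rho_0(g)g_C(\rho)^{-1}=\rho(g)$ for $g\in P$ to transport the $\rho_0$-splitting \emph{and} the $\rho_0$-metric onto the fibers over the cusps (Equations \eqref{eqn:cusp_flow_conjugacy} and \eqref{eqn:defn of norm on thin parts}); this makes the initial splitting flow-invariant and uniformly contracting on the thin part, so the contraction mapping argument runs over \emph{all} of $\Oc\times\wh\Gc(X)$ at once and one never needs to verify strong dynamics preservation ``before the norm is available'' --- it follows from Theorem~\ref{thm:2 implies 1 main theorem} once the global contracting metric exists. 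Relatedly, routing the argument through a Groves--Manning space $X_0$ via Theorem~\ref{thm: building a norm} only yields that nearby $\rho$ are $\Psf_k$-Anosov relative to $\peripherals$ (equivalently relative to $X_0$), not relative to the originally given weak cusp space $X$ as the statement requires; Theorem~\ref{thm:main} does not transfer the contracting-flow property to an arbitrary prescribed weak cusp space.

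Second, your argument for part (2) is incorrect as stated: a locally uniform limit of \emph{real}-analytic maps need not be real-analytic (by Weierstrass approximation, every continuous function is such a limit), so passing to the boundary limit ``with geometric rate'' does not preserve real-analyticity. The paper's proof avoids this by complexifying: it extends $h$ to a complex-analytic map on $M^{\Cb}$, checks that $\Hom_{\rho_0}(\Gamma,\SL(d,\Cb))$ is Zariski locally closed so the complexified family stays inside it and remains relatively Anosov, observes that for a loxodromic $\gamma$ the value $\xi^k_{h(u)}(\gamma^+)$ is the attracting $k$-plane of the $\Psf_k$-proximal element $h(u)(\gamma)$ (hence complex-analytic in $u$), and only then takes a locally uniform limit over attracting fixed points $\gamma_n^+\to x$ --- which is legitimate for \emph{complex}-analytic maps. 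If you want to keep your fixed-point-of-the-graph-transform approach to analyticity, you would still need this complexification step (or some substitute) to conclude.
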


The proof has three main steps. First we set up a flow space to work with, second we verify that the flow space has a dominated splitting, and finally we use the dominated splitting to construct the Anosov boundary maps. The arguments in the first two steps are similar to the proof of stability for relatively Anosov representations of geometrically finite Fuchsian groups in~\cite[Sec.\ 8]{CZZ2021}, but the argument in the third step is different (and more complicated). 

For the rest of the section fix $(\Gamma, \peripherals)$, $X$, and $\rho_0\colon\Gamma \to \SL(d,\Kb)$ as in the statement of Theorem~\ref{thm:stability_in_body}. Since $X$ is fixed for the entire section, for ease of notation we write 
$$
\Gc := \Gc(X), \quad \wh{\Gc} := \wh{\Gc}(X), \quad E:=E(X), \quad \text{and} \quad \wh{E}_\rho := \wh{E}_\rho(X).
$$

Let 
$$
\xi_{\rho_0} = (\xi_{\rho_0}^k, \xi_{\rho_0}^{d-k}) \colon \partial(\Gamma,\peripherals) \to \Gr_k(\Kb^d) \times \Gr_{d-k}(\Kb^d)
$$
denote the Anosov boundary map associated to $\rho_0$. Then let $\wh{E}_{\rho_0} = \wh{\Theta}^k_{\rho_0} \oplus \wh{\Xi}^{d-k}_{\rho_0}$ denote the Anosov splitting induced by $\xi_{\rho_0}$.

\subsection*{Step 1: Setting up the flow space}
\label{sec:stability step 1}

By hypothesis there exist a metric $\norm{\cdot}^{(0)}$ on the vector bundle $\wh{E}_{\rho_0} \to \wh{\Gc}$ and constants $C,c>0$ such that 
\begin{equation}
\label{eqn:initial contraction constants in stability proof}
\norm{\homflow^t(f)}_{\geodflow^t(\sigma)}^{(0)} \leq C e^{-ct} \norm{f}_\sigma^{(0)}
\end{equation}
for all $\sigma \in \wh{\Gc}$, $t \geq 0$, and $f \in \Hom(\wh{\Xi}_{\rho_0}^{d-k}, \wh{\Theta}_{\rho_0}^k)|_{\sigma}$.

Given an open neighborhood $\Oc \subset \Hom_{\rho_0}(\Gamma, \SL(d,\Kb))$, define $E(\Oc) := \Oc \times \Gc \times \Kb^d$ and 
$$
\wh{E}(\Oc) := \Gamma \backslash E(\Oc)
$$
where $\Gamma$ acts by $\gamma \cdot (\rho, \sigma, Y) = (\rho, \gamma \circ \sigma, \rho(\gamma)Y)$. Note that the map $E(\Oc) \rightarrow \Oc \times \Gc$ descends to a vector bundle
$$
\wh{E}(\Oc) \to \Oc \times \wh{\Gc}.
$$
Moreover, 
$$
\wh{E}(\Oc)|_\rho = \bigcup_{\sigma \in \wh{\Gc}(X)} \wh{E}(\Oc)|_{(\rho, \sigma)}
$$ 
naturally identifies with $\wh{E}_\rho$.

The flow $\geodflow^t$ on $\Gc$ extends to a flow $\flatflow^t$ on $E(\Oc)$ by acting trivially in the other factors and descends to a flow also denoted $\flatflow^t$ on $\wh{E}(\Oc)$. 

As before, let $\peripherals^\Gamma := \{ \gamma P \gamma^{-1} : P \in \peripherals, \gamma \in \Gamma\}$. Using the equivalent formulation of relative hyperbolicity given in~\cite[Prop.\ 6.13]{Bowditch_relhyp} there exists a collection of open sets $\{ H_P\}_{P \in \peripherals^\Gamma}$  in $X$ with the following properties: 
\begin{itemize} 
\item $\gamma H_P = H_{\gamma P \gamma^{-1}}$ for all $P \in \peripherals^{\Gamma}$ and $\gamma \in \Gamma$, 
\item $H_P$ accumulates on a single point in $\partial_\infty X$: the fixed point of $P$,
\item $\overline{H}_{P} \cap \overline{H}_Q = \varnothing$ for all distinct $P,Q \in \peripherals^\Gamma$, 
\item $\Gamma$ acts cocompactly on $X \smallsetminus \bigcup_{P \in \peripherals^\Gamma} H_P$. 
\end{itemize} 
(In the case when $X$ is a simply connected negative curved Riemannian manifold, the $H_P$ can be chosen to be horoballs.)

Then let
$$
\Cc := \left\{ P \backslash H_{P} : P \in \peripherals \right\}.
$$ 
Informally, $\Cc$ denotes the set of ``cusps'' of the quotient $\Gamma \backslash X$. 

If $C=P \backslash H_P \in \Cc$, then by shrinking $\Oc$ if necessary, we may assume that there is a continuous map $g_C \colon \Oc \to \SL(d, \Kb)$ such that
$$ 
g_C(\rho) \rho_0(g) g_C(\rho)^{-1} = \rho(g) 
$$
for all $g \in P$ and all $\rho \in \Oc$. Notice, if $\rho \in \Oc$, then the map 
\begin{align*}
\Phi^H_\rho & \colon E_{\rho} |_{\Gc|_H} \to E_{\rho_0} |_{\Gc|_H} \\
\Phi^H_\rho & (\sigma,Y) = (\sigma, g_C(\rho)(Y))
\end{align*} 
is a bundle isomorphism which descends to a bundle isomorphism
$$
\wh\Phi^C_\rho \colon \wh{E}_{\rho_0} |_{\wh\Gc|_C} \to \wh{E}_{\rho_0} |_{\wh\Gc|_C}.
$$
Moreover, if $\flatflow^s(Y) \in \wh{E}_{\rho_0} |_{\wh\Gc|_C}$ for all $s \in [0,t]$, then
\begin{equation} \label{eqn:cusp_flow_conjugacy}
\wh\Phi^C_\rho(\flatflow^t(Y)) = \flatflow^t(\wh\Phi^C_\rho(Y)).
\end{equation}

We use these isomorphisms on the cusps to extend the Anosov splitting $\wh{E}_{\rho_0} = \wh\Theta^k_{\rho_0} \oplus \wh\Xi^{d-k}_{\rho_0}$ to a (not necessarily flow-invariant) splitting 
$$
\wh{E}(\Oc) = \wh{F}^k \oplus \wh{G}^{d-k}
$$
by first defining
\begin{align*}
\wh{F}^k |_{(\rho_0,\sigma)} = \wh\Theta^k_{\rho_0}|_{\sigma} \quad \text{and} \quad \wh{G}^{d-k} |_{(\rho_0,\sigma)} = \wh\Xi^{d-k}_{\rho_0}|_{\sigma}
\end{align*}
for all $\sigma \in \wh{\Gc}$, then defining 
\begin{align*}
\wh{F}^k |_{(\rho,\sigma)} = \wh\Phi^C_\rho \left( \wh\Theta^k_{\rho_0} |_\sigma \right) \quad \text{and} \quad \wh{G}^{d-k} |_{(\rho,\sigma)} = \wh\Phi^C_\rho \left( \wh\Xi^{d-k}_{\rho_0} |_\sigma \right)
\end{align*}
for all $\rho \in \Oc$, $C \in \Cc$, and $\sigma \in \wh{\Gc}|_C$, and finally extending this splitting globally (further shrinking $\Oc$ and each $C \in \Cc$ if necessary).

By further shrinking $\Oc$ and each $C \in \Cc$, we may also fix a metric on the vector bundle $\wh{E}(\Oc) \to \Oc \times \wh{\Gc}$ such that: 
\begin{align}
\norm{\cdot}_{(\rho_0,\sigma)} = \norm{\cdot}^{(0)}_\sigma & \quad\quad \text{for all } \sigma \in \wh{\Gc}  \nonumber \\
 \norm{\Phi^C_\rho(\cdot)}_{(\rho, \sigma)} = \norm{\cdot}^{(0)}_\sigma & \quad\quad \text{for all } \rho \in \Oc, \ C \in \Cc, \text{ and }  \sigma \in \wh{\Gc}|_{C}. \label{eqn:defn of norm on thin parts}
\end{align}

\subsection*{Step 2: Using the contraction mapping theorem to obtain a $\flatflow^t$-invariant global splitting.}\label{sec:stability step 2}

Relative to the initial splitting $\wh{E}(\Oc) = \wh{F}^k \oplus \wh{G}^{d-k}$, we may decompose the flow 
$$
\flatflow^t = \begin{pmatrix} A_t & B_t \\ C_t & D_t \end{pmatrix}.
$$
Notice that the splitting is invariant if $B_t \equiv C_t \equiv 0$.
Consider the bundle 
$$
\Hom( \wh{G}^{d-k}, \wh{F}^k) \to \Oc \times \wh\Gc
$$ 
with the operator norm induced from our metric on $\wh{E}(\Oc)$, and let $\Rc_r \subset \Hom( \wh{G}^{d-k}, \wh{F}^k)$ denote the $r$-ball bundle about the zero section.

Fix $\epsilon \in (0, \frac12)$ so that
\begin{equation}
\label{eqn:epsilon is super small}
 \frac{1+2\epsilon}{1-2\epsilon} \leq 2, \quad \frac{1}{1-\epsilon} + \frac{\epsilon(1+\epsilon)}{(1-\epsilon)^2} \leq 2, \quad \text{and} \quad \epsilon\frac{(1+2\epsilon)^2(1+2\epsilon^2)}{(1-2\epsilon)^2(1-2\epsilon^2)} \leq \frac{1}{2}.
\end{equation}
Using these bounds on $\epsilon$, the proof of \cite[Prop.\ 8.3]{CZZ2021} yields the following. 

\begin{proposition}\label{prop:CZZ83} After possibly replacing $\Oc$ by a smaller neighborhood of $\rho_0$, there exists $T > 0$ such that: for all $t \in [T,2T]$ there is a well-defined map $\homflow^t\colon \Rc_1 \to \Rc_{2 \epsilon}$ given by
$$ \homflow^t(f) = (B_t + A_t f) (D_t + C_t f)^{-1}. 
$$
Furthermore, 
$$
\norm{\homflow^t(f_1) - \homflow^t(f_2)} \leq 2\epsilon \norm{f_1-f_2}
$$ 
for all $\rho \in \Oc$, $\sigma\in\wh\Gc$, and $f_1,f_2 \in \Rc_1|_{(\rho,\sigma)}$.
\end{proposition}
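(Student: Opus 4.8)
The plan is to mirror the proof of \cite[Prop.~8.3]{CZZ2021}, adapting the bookkeeping to our flow space $\wh\Gc$ and the split bundle $\wh{E}(\Oc)=\wh F^k\oplus\wh G^{d-k}$. The first step is to control the ``block entries'' $A_t,B_t,C_t,D_t$ of $\flatflow^t$ for small $t$ and $\rho$ near $\rho_0$. At $(\rho_0,\sigma)$ the splitting is $\flatflow^t$-invariant, so $B_t\equiv C_t\equiv 0$ there, and by Equation~\eqref{eqn:initial contraction constants in stability proof} we have $\norm{A_t^{-1}}\,\norm{D_t}\le Ce^{-ct}$ on all of $\wh\Gc$; in particular $D_t$ is invertible with $\norm{D_t^{-1}}$ bounded for $t$ in a compact interval. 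Then I would argue that for $\rho$ in a sufficiently small neighborhood of $\rho_0$, and after shrinking each cusp $C\in\Cc$, the entries $B_t$ and $C_t$ can be made as small as we like (bounded by a prescribed $\eta>0$) uniformly over $\sigma\in\wh\Gc$, $\rho\in\Oc$, $t\in[T,2T]$. The point here is exactly that on the thick part $\Gamma$ acts cocompactly, so uniformity is automatic from continuity of the construction in $\rho$, while on the thin parts Equation~\eqref{eqn:cusp_flow_conjugacy} and the choice of metric in Equation~\eqref{eqn:defn of norm on thin parts} force the flow (hence its block decomposition) to literally agree with that of $\rho_0$, where $B_t=C_t=0$. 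This is the step I expect to be the main obstacle: making precise that the ``extend the splitting globally'' and ``shrink $\Oc$ and each $C$'' clauses from Step~1 can be arranged so that the off-diagonal blocks are uniformly small while the diagonal blocks satisfy the contraction estimate with only slightly degraded constants, all uniformly over the noncompact base $\wh\Gc$.

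With those estimates in hand, the rest is the standard graph-transform computation. Fix $\epsilon\in(0,\tfrac12)$ satisfying Equation~\eqref{eqn:epsilon is super small}. Choose $T$ large enough that $Ce^{-cT}\le \tfrac{\epsilon}{1+\epsilon}$ (say), and then choose $\Oc$ and the cusps so that $\norm{B_t},\norm{C_t}\le\eta$ for $t\in[T,2T]$, where $\eta$ is small relative to $\epsilon$, $C$, $c$, $T$, and $\sup_{t\in[T,2T]}\norm{A_t}$, $\sup_{t\in[T,2T]}\norm{D_t^{-1}}$. For $f\in\Rc_1|_{(\rho,\sigma)}$, the operator $D_t+C_tf$ is invertible since $\norm{C_tf}\le\eta\le\tfrac12\norm{D_t^{-1}}^{-1}$, with $\norm{(D_t+C_tf)^{-1}}\le \tfrac{\norm{D_t^{-1}}}{1-\eta\norm{D_t^{-1}}}$ via the Neumann series; hence $\homflow^t(f)=(B_t+A_tf)(D_t+C_tf)^{-1}$ is well defined as an element of $\Hom(\wh G^{d-k},\wh F^k)|_{\geodflow^t(\sigma)}$. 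To see $\homflow^t(f)\in\Rc_{2\epsilon}$ one bounds $\norm{B_t+A_tf}\le\eta+\norm{A_tf}$ and multiplies by the bound on $\norm{(D_t+C_tf)^{-1}}$; writing $\norm{A_tf}\le\norm{A_t}\norm{D_t}^{-1}\cdot\norm{D_t}\,\norm{f}$ and using $\norm{A_t}\norm{D_t^{-1}}\cdots$ together with Equation~\eqref{eqn:initial contraction constants in stability proof} (which gives $\norm{A_t^{-1}}\norm{D_t}\le Ce^{-ct}\le\epsilon/(1+\epsilon)$, i.e.\ $\norm{D_t}\le \tfrac{\epsilon}{1+\epsilon}\norm{A_t}$, and recall $\norm{A_t^{-1}}^{-1}\le$ something) — the precise arithmetic is exactly \cite[Prop.~8.3]{CZZ2021}, and the three inequalities in Equation~\eqref{eqn:epsilon is super small} are arranged so the final constant lands at $2\epsilon$.

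Finally, for the contraction estimate, given $f_1,f_2\in\Rc_1|_{(\rho,\sigma)}$ I would use the resolvent-type identity
$$
\homflow^t(f_1)-\homflow^t(f_2)=\big(A_t-\homflow^t(f_1)C_t\big)(f_1-f_2)(D_t+C_tf_2)^{-1},
$$
which follows by a direct manipulation of $(B_t+A_tf_i)(D_t+C_tf_i)^{-1}$. Taking norms and bounding $\norm{A_t-\homflow^t(f_1)C_t}\le\norm{A_t}+2\epsilon\eta$, $\norm{(D_t+C_tf_2)^{-1}}\le\tfrac{\norm{D_t^{-1}}}{1-\eta\norm{D_t^{-1}}}$, and using again that $\norm{A_t}\,\norm{D_t^{-1}}\asymp Ce^{-ct}$ is small, one arranges (for $T$ large, $\eta$ small, and $\epsilon$ as in Equation~\eqref{eqn:epsilon is super small}) that the product of these is $\le 2\epsilon$, giving $\norm{\homflow^t(f_1)-\homflow^t(f_2)}\le 2\epsilon\norm{f_1-f_2}$. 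All constants here depend only on $C$, $c$, $T$, $\epsilon$, and the sup norms of $A_t,D_t^{-1}$ over $t\in[T,2T]$, none of which involve $\sigma$, so the bound is uniform over $\rho\in\Oc$ and $\sigma\in\wh\Gc$ as required.
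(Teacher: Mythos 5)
Your proposal takes essentially the same route as the paper, whose proof of this proposition consists of the Step 1 setup (cocompactness on the thick part plus the cusp conjugacy in Equations~\eqref{eqn:cusp_flow_conjugacy} and~\eqref{eqn:defn of norm on thin parts} forcing $B_t,C_t$ to be uniformly small) followed by a direct appeal to the graph-transform computation of \cite[Prop.\ 8.3]{CZZ2021}; your resolvent identity and Neumann-series bounds are exactly that computation. One slip to correct: Equation~\eqref{eqn:initial contraction constants in stability proof} yields $\norm{A_t}\,\norm{D_t^{-1}}\leq Ce^{-ct}$ (the $\wh{F}^k$-direction is contracted relative to $\wh{G}^{d-k}$ under forward flow), not $\norm{A_t^{-1}}\,\norm{D_t}\leq Ce^{-ct}$ as in your opening paragraph; your final paragraph uses the correct form, and it is that form (together with a bound on $\norm{A_t^{-1}}$ over $t\in[T,2T]$, which gives the asserted bound on $\norm{D_t^{-1}}$) that makes the map land in $\Rc_{2\epsilon}$ and be $2\epsilon$-Lipschitz.
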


\begin{remark}\label{remark:graph versus section}
	One can verify that the map $\homflow^t$ has the property that $\mathrm{Graph}(\homflow^t(f)) = \flatflow^t(\mathrm{Graph}(f))$ for all $t \in [T,2T]$, $\rho \in \Oc$, $\sigma\in\wh\Gc$, and $f \in \Rc_1|_{(\rho,\sigma)}$. Further, if the splitting is flow-invariant, then $\homflow^t$ coincides with the map 
	$$
	f \mapsto \flatflow^t \circ f \circ \flatflow^{-t}.
	$$
\end{remark}

Let $S(\Rc_r)$ be the space of continuous sections of the fiber bundle $\Rc_r \to \Oc \times \hat{\Gc}$. This is a complete metric space with the distance 
$$
\dist_S(\mathscr{s}_1, \mathscr{s}_2) = \sup_{(\rho, \sigma)} \norm{ \mathscr{s}_1(\rho,\sigma)- \mathscr{s}_2(\rho,\sigma)}.
$$
Also, when $t \in [T, 2T]$, $\homflow^t$ induces a map $\homflow^t_S\colon S(\Rc_1) \to S(\Rc_{2\epsilon})$ given by
$$ 
\homflow^t_S(\mathscr{s})(\rho,\sigma) := \homflow^t (\mathscr{s} (\rho, \flatflow^{-t}(\sigma))) .
$$
By Proposition \ref{prop:CZZ83}, this is a contraction mapping for each $t \in [T,2T]$. Hence for each $t$ in this range,  there exists a unique $\homflow^t_S$-invariant section $\mathscr{s}^{(t)}_0$ of the bundle $\Rc_{2\epsilon}$. 

Arguing as in \cite[pp.\ 43-44]{CZZ2021}, the section  $\mathscr{s}^{(t)}_0$ does not depend on $t \in [T,2T]$. Then 
$$ 
\wh\Xi^{d-k}|_{(\rho,\sigma)} := \mathrm{Graph}\left( \mathscr{s}^{(t)}_0(\rho,\sigma) \right)
$$
defines a flow-invariant $(d-k)$-dimensional subbundle $\wh\Xi^{d-k}$ of $\wh{E}(\Oc)$, see Remark~\ref{remark:graph versus section}.

Applying the same arguments to $\Hom(\wh{F}^k, \wh{G}^{d-k})$ (although flowing in the other direction) and further shrinking $\Oc$ if needed, we obtain a flow-invariant $k$-dimensional subbundle $\wh\Theta^k$ of $\wh{E}(\Oc)$.

Arguing as in \cite[p.\ 44]{CZZ2021}, the fibers $\wh\Xi^{d-k} |_{(\rho,\sigma)}$ and $\wh\Theta^k |_{(\rho,\sigma)}$ are transverse for every $(\rho,v) \in \Oc \times \wh\Gc$. Hence, by dimension counting, we obtain a flow-invariant splitting 
$$
\wh{E}(\Oc) = \wh\Theta^k \oplus \wh\Xi^{d-k}.
$$

The next two lemmas verify that this is a dominated splitting. 

\begin{lemma}\label{lem:bounded distortion on short time periods}
There exist $C_0 > 0$ such that: if $t \in [0,T]$, $\rho \in \Oc$, $\sigma \in \wh\Gc$, $Y \in \wh\Theta^{k}|_{(\rho,\sigma)}$, and $Z \in \wh\Xi^{d-k}|_{(\rho,\sigma)}$ is non-zero, then 
\begin{equation*}
\frac{\norm{\flatflow^t(Y)}_{ (\rho,\geodflow^t(\sigma))}}{\norm{\flatflow^t(Z)}_{(\rho, \geodflow^t(\sigma))}} \leq C_0 \frac{\norm{Y}_{(\rho,\sigma)}}{\norm{Z}_{(\rho,\sigma)}}.
\end{equation*}
\end{lemma}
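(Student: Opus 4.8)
The plan is to reduce the statement to a compactness argument. The claim is a uniform bound on the ``relative distortion'' over the \emph{bounded} time window $[0,T]$, so the only issue is that the base $\Oc \times \wh\Gc$ is non-compact (because $\wh\Gc$ is non-compact, due to the cusps). First I would invoke the thick-thin decomposition: write $\wh\Gc = \wh\Gc_{thick} \cup \wh\Gc_{thin}$, where $\wh\Gc_{thin}$ is the union of the images of the cusp regions $\wh\Gc|_C$ for $C \in \Cc$, and $\Gamma$ acts cocompactly on the complement $\Gc_{thick}$. Over the compact region $\Oc \times \wh{\Gc}_{thicker}$ (where $\wh\Gc_{thicker}$ is the set of $\sigma$ such that $\geodflow^s(\sigma)$ meets $\wh\Gc_{thick}$ for some $s \in [0,T]$), the subbundles $\wh\Theta^k$ and $\wh\Xi^{d-k}$ are continuous, the metric is continuous, and the flow $\flatflow^s$ for $s \in [0,T]$ moves points a bounded amount, so the desired ratio is bounded by a compactness argument; call this constant $C_0'$.

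Next I would handle the thin part. Suppose $\geodflow^s(\sigma) \in \wh\Gc|_C$ for all $s \in [0,t] \subset [0,T]$. Here the point is that on a cusp region the bundle isomorphisms $\wh\Phi^C_\rho$ conjugate the flow (Equation~\eqref{eqn:cusp_flow_conjugacy}) and the metric was chosen (Equation~\eqref{eqn:defn of norm on thin parts}) so that $\norm{\wh\Phi^C_\rho(\cdot)}_{(\rho,\sigma)} = \norm{\cdot}^{(0)}_\sigma$. Hence over such $\sigma$, computing the ratio $\norm{\flatflow^t(Y)}_{(\rho,\geodflow^t(\sigma))}/\norm{\flatflow^t(Z)}_{(\rho,\geodflow^t(\sigma))}$ for $Y,Z$ in the \emph{original} (not-yet-flow-invariant) splitting $\wh F^k, \wh G^{d-k}$ reduces exactly to the corresponding ratio for $\rho_0$ with respect to $\norm{\cdot}^{(0)}$, which is controlled — indeed contracted — by Equation~\eqref{eqn:initial contraction constants in stability proof} with $t \leq T$, giving a bound $C \cdot 1 = C$ (using $e^{-ct} \leq 1$). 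Then I would account for the discrepancy between the flow-invariant subbundles $\wh\Theta^k, \wh\Xi^{d-k}$ constructed in Step~2 and the original $\wh F^k, \wh G^{d-k}$: by construction $\wh\Xi^{d-k}$ is the graph of a section $\mathscr{s}^{(t)}_0$ of $\Rc_{2\epsilon}$ and $\wh\Theta^k$ is the graph of a similar section (flowing the other way), both of operator norm at most $2\epsilon < 1$, so passing from one splitting to the other changes each norm by a bounded bilipschitz factor depending only on $\epsilon$. Combining, the thin-part ratio is bounded by some constant $C_0''$ depending only on $C$ and $\epsilon$.

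For a general $\sigma$ and $t \in [0,T]$, I would split the interval $[0,t]$ at the first and last times it meets $\wh\Gc_{thick}$ (if it meets it at all) into at most three subintervals, two entirely in a thin region and one (of length $\leq T$) handled by the thicker-part compactness bound, and multiply the three ratio bounds together using the submultiplicativity of such ratios along the flow. Taking $C_0 := \max\{C_0', C_0''\}^3$ (or the appropriate product) finishes the proof. The main obstacle — really the only non-formality — is making precise that over the non-compact thin regions the ratio bound genuinely descends to the $\rho_0$-estimate via the conjugating isomorphisms and the specially chosen metric; once the bookkeeping of Equations~\eqref{eqn:cusp_flow_conjugacy}, \eqref{eqn:defn of norm on thin parts}, and \eqref{eqn:initial contraction constants in stability proof} is set up correctly, and one observes that the graph-correction from $\wh F^k \to \wh\Theta^k$ etc.\ is uniformly bilipschitz, everything else is a routine compactness-plus-concatenation argument exactly parallel to the single-representation case in Section~\ref{sec:contraction_Anosov}.
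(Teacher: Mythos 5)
Your proof follows essentially the same route as the paper's: a compactness bound over the (compact) set of geodesics that meet the thick part within time $T$, and on the thin part a reduction to the $\rho_0$-contraction estimate via the conjugating isomorphisms of Equations~\eqref{eqn:cusp_flow_conjugacy} and~\eqref{eqn:defn of norm on thin parts}, combined with the $(1\pm 2\epsilon)$-comparison between the flow-invariant splitting $\wh\Theta^k \oplus \wh\Xi^{d-k}$ and the original splitting $\wh F^k \oplus \wh G^{d-k}$ (which the paper carries out explicitly by decomposing $Y = Y_1 + Y_2$ and $Z = Z_1 + Z_2$ and using flow-invariance of both splittings over the cusp). The only difference is that your final concatenation over three subintervals is superfluous: with your definition of $\wh\Gc_{thicker}$, every $\sigma$ falls entirely into exactly one of the two cases, which is precisely how the paper argues.
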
 

\begin{proof} Let 
$$
\wh{\Gc}_{thinner} := \left\{ \sigma \in \wh{\Gc}: \sigma(t) \in \bigcup_{C \in \Cc} C \text{ for all } t \in [0,T] \right\}
$$
and 
$$
\wh{\Gc}_{thicker} := \wh{\Gc} \smallsetminus \wh{\Gc}_{thinner}. 
$$
Notice that $\wh{\Gc}_{thicker}$ is compact and so there exists $C_0^\prime > 0$ such that 
\begin{equation*}
\frac{\norm{\flatflow^t(Y)}_{ (\rho,\geodflow^t(\sigma))}}{\norm{\flatflow^t(Z)}_{(\rho, \geodflow^t(\sigma))}} \leq C_0^\prime \frac{\norm{Y}_{(\rho,\sigma)}}{\norm{Z}_{(\rho,\sigma)}}
\end{equation*}
for all $t \in [0,T]$, $\rho \in \Oc$, $\sigma \in \wh{\Gc}_{thicker}$, $Y \in \wh\Theta^{k}|_{(\rho,\sigma)}$, and non-zero $Z \in \wh\Xi^{d-k}|_{(\rho,\sigma)}$.

Suppose that $t \in [0,T]$, $\rho \in \Oc$, $\sigma \in \wh{\Gc}_{thinner}$, $Y \in \wh\Theta^{k}|_{(\rho,\sigma)}$, and $Z \in \wh\Xi^{d-k}|_{(\rho,\sigma)}$ is non-zero. Let 
 $$
 Y=Y_1 + Y_2 \quad \text{and} \quad Z=Z_1 + Z_2
 $$
 be the decomposition relative to $\wh{E}(\Oc) = \wh{F}^{k} \oplus \wh{G}^{d-k}$. Then, by the construction of $\wh{\Theta}^k$ and $\wh{\Xi}^{d-k}$, we have $\norm{Y_2}_{(\rho,\sigma)} \leq 2 \epsilon \norm{Y_1}_{(\rho,\sigma)}$ and $\norm{Z_1}_{(\rho,\sigma)} \leq 2\epsilon \norm{Z_2}_{(\rho,\sigma)}$. Further, by Equation~\eqref{eqn:cusp_flow_conjugacy},
 \begin{align*}
\flatflow^t(Y_1) \in \wh F^k|_{(\rho,\sigma)} \quad \text{and} \quad \flatflow^t(Y_2) \in \wh G^{d-k}|_{(\rho,\sigma)}.
 \end{align*}
Then, since $\flatflow^t(Y)=\flatflow^t(Y_1) +\flatflow^t(Y_2)  \in \wh{\Theta}^k|_{(\rho, \phi_t(v)))}$, we have 
 $$
\norm{\flatflow^t(Y_2) }_{(\rho, \phi^t(\sigma))} \leq 2\epsilon \norm{ \flatflow^t(Y_1)}_{(\rho, \phi^t(\sigma))}
$$
and hence
 $$
\norm{\flatflow^t(Y) }_{(\rho, \phi^t(\sigma))} \leq (1+2\epsilon) \norm{ \flatflow^t(Y_1)}_{(\rho, \phi^t(\sigma))}.
$$
Similar reasoning shows that 
\begin{align*}
\norm{\flatflow^t(Z)}_{(\rho, \phi^t(\sigma))} & \geq  (1-2\epsilon) \norm{\flatflow^t(Z_2)}_{(\rho, \phi^t(\sigma))}.
\end{align*}
Then by Equations~\eqref{eqn:initial contraction constants in stability proof}, ~\eqref{eqn:cusp_flow_conjugacy}, and~\eqref{eqn:epsilon is super small} we have 
\begin{align*}
\frac{\norm{\flatflow^t(Y)}_{ (\rho,\geodflow^t(\sigma))}}{\norm{\flatflow^t(Z)}_{(\rho, \geodflow^t(\sigma))}}& \leq \frac{1+2\epsilon}{1-2\epsilon} \frac{\norm{ \flatflow^t(Y_1)}_{(\rho, \phi^t(\sigma))}}{\norm{ \flatflow^t(Z_2)}_{(\rho, \phi^t(\sigma))}} \leq \frac{1+2\epsilon}{1-2\epsilon} Ce^{-ct} \frac{\norm{ Y_1}_{(\rho, \sigma)}}{\norm{ Z_2}_{(\rho, \sigma)}}\\
& \leq \frac{(1+2\epsilon)^2}{(1-2\epsilon)^2} Ce^{-ct} \frac{\norm{ Y}_{(\rho, \sigma)}}{\norm{ Z}_{(\rho, \sigma)}} \leq 4C \frac{\norm{ Y}_{(\rho, \sigma)}}{\norm{ Z}_{(\rho, \sigma)}}.
\end{align*}

So $C_0:=\max\{ C_0^\prime, 4C\}$ suffices. 
\end{proof} 

\begin{lemma}\label{lem:contraction on hom bundle in stability} 
There exist $C_1, c_1 > 0$ such that: if $t \geq 0$, $\rho \in \Oc$, $\sigma \in \wh\Gc$, $Y \in \wh\Theta^{k}|_{(\rho,\sigma)}$, and $Z \in \wh\Xi^{d-k}|_{(\rho,\sigma)}$ is non-zero, then 
\begin{equation*}
\frac{\norm{\flatflow^t(Y)}_{ (\rho,\geodflow^t(\sigma))}}{\norm{\flatflow^t(Z)}_{(\rho, \geodflow^t(\sigma))}} \leq C_1e^{-c_1 t} \frac{\norm{Y}_{(\rho,\sigma)}}{\norm{Z}_{(\rho,\sigma)}}.
\end{equation*}
\end{lemma}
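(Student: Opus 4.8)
The plan is to bootstrap from the bounded-distortion estimate in Lemma~\ref{lem:bounded distortion on short time periods} using the fact that the splitting $\wh{E}(\Oc) = \wh\Theta^k \oplus \wh\Xi^{d-k}$ is now $\flatflow^t$-invariant, so that the ``ratio function''
$$
\Psi(\rho,\sigma) := \sup \left\{ \frac{\norm{Y}_{(\rho,\sigma)}}{\norm{Z}_{(\rho,\sigma)}} : Y \in \wh\Theta^k|_{(\rho,\sigma)}, \ Z \in \wh\Xi^{d-k}|_{(\rho,\sigma)} \smallsetminus\{0\},\ \text{suitably normalized} \right\}
$$
behaves submultiplicatively along the flow (this is the analogue of Equation~\eqref{eqn:kappa_is_sub_multiplicative}). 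Concretely, the quantity I would track is
$$
\Lambda_t(\rho,\sigma) := \max\left\{ \frac{\norm{\flatflow^t(Y)}_{(\rho,\geodflow^t(\sigma))}}{\norm{\flatflow^t(Z)}_{(\rho,\geodflow^t(\sigma))}} \cdot \frac{\norm{Z}_{(\rho,\sigma)}}{\norm{Y}_{(\rho,\sigma)}} : Y \in \wh\Theta^k|_{(\rho,\sigma)}\smallsetminus\{0\},\ Z \in \wh\Xi^{d-k}|_{(\rho,\sigma)}\smallsetminus\{0\} \right\},
$$
and flow-invariance of the two subbundles gives $\Lambda_{t+s}(\rho,\sigma) \leq \Lambda_s(\rho,\geodflow^t(\sigma))\,\Lambda_t(\rho,\sigma)$. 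So it suffices to find a single time $T_1 \geq T$ and a constant $\theta \in (0,1)$ such that $\Lambda_{T_1}(\rho,\sigma) \leq \theta$ for all $(\rho,\sigma) \in \Oc \times \wh\Gc$; the lemma then follows by the usual ``break $[0,t]$ into blocks of length $T_1$, plus a bounded remainder handled by Lemma~\ref{lem:bounded distortion on short time periods}'' argument, exactly as in the proof of the final lemma of Section~\ref{sec:building_norms}.

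The core step is therefore establishing uniform contraction at a single large time. I would first treat the ``thin'' geodesics: if $\sigma(s) \in \bigcup_{C \in \Cc} C$ for all $s \in [0,t]$, then by the construction of the norm (Equation~\eqref{eqn:defn of norm on thin parts}), the conjugating isomorphisms $\wh\Phi^C_\rho$ intertwine the flow (Equation~\eqref{eqn:cusp_flow_conjugacy}) and are isometries, so the ratio is controlled by the ratio for $\rho_0$, which decays at rate $c$ by Equation~\eqref{eqn:initial contraction constants in stability proof}; thus $\Lambda_t(\rho,\sigma) \leq C e^{-ct}$ on purely thin segments. For a general $\sigma$ and a large time $T_1$, I would split $[0,T_1]$ into maximal thin subintervals and thick subintervals, just as in Lemma~\ref{lem:time_T_contraction}: on each thick subinterval the flow spends a controlled amount of time in a compact set, and there — after possibly shrinking $\Oc$ so that the $\rho_0$-contraction persists uniformly in $\rho$ by continuity and compactness — one gets a definite contraction factor once the thick subinterval is long enough (using that on the compact thick part the bundle $\wh{E}(\Oc) \to \Oc \times \wh\Gc_{thicker}$ is a compact bundle and the flow-invariant splitting varies continuously, so the Hom-bundle flow contracts uniformly over $\Oc$ for $\rho_0$ and hence for nearby $\rho$). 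The short thick subintervals contribute only a bounded multiplicative error via Lemma~\ref{lem:bounded distortion on short time periods}. Choosing $T_1$ large enough that the accumulated exponential gain beats the bounded losses yields $\Lambda_{T_1} \leq \theta < 1$ uniformly.

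The main obstacle I anticipate is the uniform contraction on the thick part over the whole neighborhood $\Oc$: a priori one only knows exponential contraction for $\rho_0$ (Equation~\eqref{eqn:initial contraction constants in stability proof}), and one must argue that this persists, with the same rate up to a fixed multiplicative constant, for all $\rho$ in a (possibly smaller) neighborhood. This is where one uses that $\wh\Gc_{thicker}$ is compact, that the splitting $\wh\Theta^k \oplus \wh\Xi^{d-k}$ and the metric depend continuously on $(\rho,\sigma)$, and that $\flatflow^t$ is a continuous flow — so the contraction estimate at a fixed time $T_1$ is an open condition in $\rho$ that holds at $\rho_0$, hence on a neighborhood. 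After shrinking $\Oc$ accordingly, one concludes as above. (One subtlety to be careful about: the decomposition into thin and thick subintervals must be done at the level of $\Gc$ rather than $\wh\Gc$, or equivalently one fixes a $\Gamma$-invariant choice; since the sets $H_P$ are $\Gamma$-equivariant this causes no difficulty, but it should be stated cleanly.) Once the single-time contraction is in hand, the passage to all $t \geq 0$ is routine.
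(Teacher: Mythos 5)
Your proof is essentially correct, but it takes a genuinely different route from the paper's. The paper does \emph{not} perform any thick/thin decomposition of the time interval at this stage: it obtains the key single-time estimate --- a contraction by the factor $\tfrac12$ for every $t\in[T,2T]$, every $\rho\in\Oc$ and \emph{every} $\sigma\in\wh\Gc$ simultaneously --- directly from the graph-transform machinery of Step 2, i.e.\ from the block decomposition $\flatflow^t=\left(\begin{smallmatrix} A_t & B_t\\ C_t & D_t\end{smallmatrix}\right)$ relative to the initial splitting $\wh F^k\oplus\wh G^{d-k}$, the fact that the invariant subbundles are graphs of sections of norm at most $2\epsilon$, and the numerical conditions in Equation~\eqref{eqn:epsilon is super small} (this is the computation of \cite[Prop.\ 8.5]{CZZ2021}, whose uniformity over $\wh\Gc$ was already arranged when Proposition~\ref{prop:CZZ83} was established); it then iterates this and absorbs the leftover time $<T$ with Lemma~\ref{lem:bounded distortion on short time periods}. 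You instead re-run the Section~\ref{sec:building_norms} scheme (submultiplicativity of $\Lambda_t$ as in Equation~\eqref{eqn:kappa_is_sub_multiplicative}, thin excursions handled via Equations~\eqref{eqn:cusp_flow_conjugacy} and~\eqref{eqn:defn of norm on thin parts}, thick returns handled by compactness and an openness-in-$\rho$ argument). This works, but note two costs. First, on a thin segment the vectors $Y\in\wh\Theta^k|_{(\rho,\sigma)}$ and $Z\in\wh\Xi^{d-k}|_{(\rho,\sigma)}$ do \emph{not} get carried by $\wh\Phi^C_\rho$ into the $\rho_0$-Anosov splitting --- the invariant splitting for $\rho$ is only a $2\epsilon$-graph over the conjugated one --- so "the ratio is controlled by the ratio for $\rho_0$" requires the decomposition $Y=Y_1+Y_2$, $Z=Z_1+Z_2$ and the resulting factor $(1+2\epsilon)^2/(1-2\epsilon)^2$, exactly as in the thin case of the proof of Lemma~\ref{lem:bounded distortion on short time periods}; you gloss over this. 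Second, your compactness/openness step on the thick part forces a further shrinking of $\Oc$ (depending on the chosen time $T_1$), which the paper's argument avoids entirely since its contraction constant is already uniform on the $\Oc$ fixed in Step 2. What your approach buys in exchange is independence from the linear-algebraic computation imported from \cite{CZZ2021}; what the paper's approach buys is brevity and no additional shrinking of the neighborhood.
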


\begin{proof} Using the estimates in Equation~\eqref{eqn:epsilon is super small}, the proof of \cite[Prop.\ 8.5]{CZZ2021} implies that 
\begin{equation*}
\frac{\norm{\flatflow^t(Y)}_{ (\rho,\geodflow^t(\sigma))}}{\norm{\flatflow^t(Z)}_{(\rho, \geodflow^t(\sigma))}} \leq \frac{1}{2} \frac{\norm{Y}_{(\rho,\sigma)}}{\norm{Z}_{(\rho,\sigma)}}
\end{equation*}
for all $t \in [T,2T]$, $\rho \in \Oc$, $\sigma \in \wh\Gc$, $Y \in \wh\Theta^{k}|_{(\rho,\sigma)}$, and non-zero $Z \in \wh\Xi^{d-k}|_{(\rho,\sigma)}$. 

Then, by repeatedly using the above estimate and Lemma~\ref{lem:bounded distortion on short time periods}, the lemma holds with 
\begin{equation*}
c_1 := -\frac{\log(2)}{T} \quad \text{and} \quad C_1 := \max\{ 1, C_0 \} \cdot e^{c_1 T} . \qedhere
\end{equation*}
 \end{proof}

\subsection*{Step 3: Finding the  Anosov boundary maps from this flow-invariant splitting.}

To complete the proof of Theorem~\ref{thm:stability_in_body} we need to show that each splitting 
\begin{align}
\label{eqn:splitting downstairs}
\wh{E}_\rho = \wh{E}(\Oc)|_{\rho} = \wh\Theta^k|_\rho \oplus \wh\Xi^{d-k}|_{\rho}
\end{align}
arises from a boundary map. To ease notation, for the rest of this step we fix some $\rho \in \Oc$. We lift the splitting in Equation~\eqref{eqn:splitting downstairs} to 
$$
\Gc \times \Kb^d = E = \Theta^k_\rho \oplus \Xi^{d-k}_\rho.
$$
We also lift the metric on $\wh{E}_\rho = \wh{E}(\Oc)|_{\rho}$ to a metric on $E \to \Gc$. Then Lemma~\ref{lem:contraction on hom bundle in stability} implies that
\begin{equation}
\label{eqn:contraction upstairs for rho}
\frac{\norm{Y}_{ \geodflow^t(\sigma)}}{\norm{Z}_{\geodflow^t(\sigma)}} \leq C_1e^{-c_1 t} \frac{\norm{Y}_{\sigma}}{\norm{Z}_{\sigma}}
\end{equation}
for all $t \geq 0$, $\sigma \in \Gc$, $Y \in \Theta^{k}_\rho(\sigma)$, and non-zero $Z \in \Xi^{d-k}_\rho(\sigma)$.

Since each $\norm{\cdot}_\sigma$ is induced by an inner product, for each $\sigma \in \Gc$ there exists a matrix $A_\sigma \in \GL(d,\Kb)$ such that 
$$
\norm{A_\sigma(\cdot)}_{\sigma} =\norm{\cdot}_2.
$$ 

\begin{lemma}\label{lem: gap between singular values in step 3} For any $\sigma \in \Gc$, 
$$
\lim_{t \to \infty} \frac{\mu_{k+1}}{\mu_{k}}\left(A_\sigma^{-1}A_{\geodflow^{t}(\sigma)}\right) =0.
$$
\end{lemma}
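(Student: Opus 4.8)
## Proof proposal for Lemma (gap between singular values in Step 3)

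The plan is to deduce this from the exponential contraction on the Hom bundle established in Equation~\eqref{eqn:contraction upstairs for rho}, by the exact same argument that was used in Section~\ref{sec:contraction_Anosov} to prove Lemma~\ref{lem:decay of ratio of singular values}. Indeed, the only properties of the family $\{A_\sigma\}$ used there were: (i) each $\norm{\cdot}_\sigma$ is induced by an inner product, so that $\norm{A_\sigma(\cdot)}_\sigma = \norm{\cdot}_2$ defines $A_\sigma$; and (ii) the dominated-splitting inequality
$$
\frac{\norm{Y}_{\geodflow^t(\sigma)}}{\norm{Z}_{\geodflow^t(\sigma)}} \leq C_1 e^{-c_1 t} \frac{\norm{Y}_{\sigma}}{\norm{Z}_{\sigma}}
$$
for $Y \in \Theta^k_\rho(\sigma)$ and non-zero $Z \in \Xi^{d-k}_\rho(\sigma)$. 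Both hold here, with the roles of $\xi^k(\sigma^+)$ and $\xi^{d-k}(\sigma^-)$ played by the (not yet known to be boundary-induced) subbundles $\Theta^k_\rho(\sigma)$ and $\Xi^{d-k}_\rho(\sigma)$.

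First I would rewrite the contraction inequality in terms of the Euclidean norm: applying $A_{\geodflow^t(\sigma)}^{-1}A_\sigma$ to the fiber over $\sigma$, the inequality becomes
$$
\max_{Y \in A_\sigma^{-1}\Theta^k_\rho(\sigma) \smallsetminus \{0\}} \frac{\norm{A_{\geodflow^t(\sigma)}^{-1}A_\sigma Y}_2}{\norm{Y}_2}
\leq C_1 e^{-c_1 t}\,
\min_{Z \in A_\sigma^{-1}\Xi^{d-k}_\rho(\sigma) \smallsetminus \{0\}} \frac{\norm{A_{\geodflow^t(\sigma)}^{-1}A_\sigma Z}_2}{\norm{Z}_2}.
$$
Since $\dim A_\sigma^{-1}\Theta^k_\rho(\sigma) = k$ and $\dim A_\sigma^{-1}\Xi^{d-k}_\rho(\sigma) = d-k$, the max--min/min--max characterization of singular values gives $\mu_{d-k+1}(A_{\geodflow^t(\sigma)}^{-1}A_\sigma) \leq C_1 e^{-c_1 t}\,\mu_{d-k}(A_{\geodflow^t(\sigma)}^{-1}A_\sigma)$, equivalently $\mu_{k+1}(A_\sigma^{-1}A_{\geodflow^t(\sigma)}) \leq C_1 e^{-c_1 t}\,\mu_{k}(A_\sigma^{-1}A_{\geodflow^t(\sigma)})$. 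Letting $t \to \infty$ yields the claim. In fact this argument gives the stronger quantitative bound $\frac{\mu_{k+1}}{\mu_k}(A_\sigma^{-1}A_{\geodflow^t(\sigma)}) \leq C_1 e^{-c_1 t}$, which will presumably be needed in the subsequent lemmas of Step~3 (to build the boundary map and prove continuity/analyticity).

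There is no real obstacle here: the lemma is a direct transcription of Lemma~\ref{lem:decay of ratio of singular values} into the stability setting, and the proof is essentially a one-line reference plus the observation that hypothesis~(ii) is precisely Equation~\eqref{eqn:contraction upstairs for rho}. The only point requiring a word of care is that the matrices $A_\sigma$ are only defined up to right multiplication by a unitary matrix, but this ambiguity does not affect $\mu_j(A_\sigma^{-1}A_{\geodflow^t(\sigma)})$ since singular values are bi-unitarily invariant, so the statement is well posed. Thus the proof would simply read: this follows from Equation~\eqref{eqn:contraction upstairs for rho} exactly as in the proof of Lemma~\ref{lem:decay of ratio of singular values}.
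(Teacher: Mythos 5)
Your proof is correct and is exactly the paper's approach: the paper's proof of this lemma simply reads ``This is exactly the same as the proof of Lemma~\ref{lem:decay of ratio of singular values},'' and your transcription of that max--min/min--max argument using Equation~\eqref{eqn:contraction upstairs for rho} is precisely what is intended. Your remarks on the quantitative bound and the unitary ambiguity in $A_\sigma$ are accurate but not needed.
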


\begin{proof} This is exactly the same as the proof of Lemma~\ref{lem:decay of ratio of singular values}. 

\end{proof} 

\begin{lemma}\label{lem: boundary maps as a limit in Step 3} If $\sigma \in \Gc$, then $\Theta^{k}_\rho(\sigma) = \lim_{t \to \infty} U_{k}\left(A_{\geodflow^{t}(\sigma)}\right)$. \end{lemma}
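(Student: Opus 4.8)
The plan is to argue exactly as in the proof of Lemma~\ref{lem:convergence of uK} in Section~\ref{sec:contraction_Anosov}, upgraded slightly to identify the limit with the subbundle $\Theta^k_\rho$. First I would note that by Lemma~\ref{lem: gap between singular values in step 3}, the subspace $U_k(A_\sigma^{-1} A_{\geodflow^t(\sigma)})$ is well defined for $t$ large, hence $U_k(A_{\geodflow^t(\sigma)}) = A_\sigma \cdot U_k(A_\sigma^{-1} A_{\geodflow^t(\sigma)})$ makes sense; so it suffices to show
$$
\lim_{t \to \infty} U_k\bigl(A_\sigma^{-1} A_{\geodflow^t(\sigma)}\bigr) = A_\sigma^{-1} \Theta^k_\rho(\sigma).
$$
Fix $\sigma$. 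The key point is that $A_\sigma^{-1}\Theta^k_\rho(\sigma)$ is the unique $k$-plane $V$ such that the contraction estimate~\eqref{eqn:contraction upstairs for rho} forces every $A_{\geodflow^t(\sigma)}^{-1}A_\sigma$-image of a vector in $V$ to grow dominantly compared to the image of any vector in the complementary $(d-k)$-plane $A_\sigma^{-1}\Xi^{d-k}_\rho(\sigma)$.

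Concretely, suppose for contradiction that there is a sequence $t_n \to \infty$ with $U_k(A_\sigma^{-1}A_{\geodflow^{t_n}(\sigma)}) \to V \neq A_\sigma^{-1}\Theta^k_\rho(\sigma)$. Pick $Y \in A_\sigma^{-1}\Theta^k_\rho(\sigma) \smallsetminus V$, and reproduce verbatim the estimate in Lemma~\ref{lem:convergence of uK}: because $Y$ stays a definite angle away from the top $k$-dimensional singular subspace of $A_\sigma^{-1}A_{\geodflow^{t_n}(\sigma)}$, we get
$$
\bigl\|A_{\geodflow^{t_n}(\sigma)}^{-1}A_\sigma Y\bigr\|_2 \gtrsim \mu_{d-k}\bigl(A_{\geodflow^{t_n}(\sigma)}^{-1}A_\sigma\bigr)\|Y\|_2,
$$
while the max-min/min-max theorem for singular values produces $Z_n \in A_\sigma^{-1}\Xi^{d-k}_\rho(\sigma)\smallsetminus\{0\}$ with $\|A_{\geodflow^{t_n}(\sigma)}^{-1}A_\sigma Z_n\|_2 \leq \mu_{d-k}(A_{\geodflow^{t_n}(\sigma)}^{-1}A_\sigma)\|Z_n\|_2$. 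Setting $\hat Y_n := A_\sigma Y \in \Theta^k_\rho(\sigma)$ and $\hat Z_n := A_\sigma Z_n \in \Xi^{d-k}_\rho(\sigma)$, the defining property $\norm{A_\tau(\cdot)}_\tau = \norm{\cdot}_2$ converts these into
$$
\frac{\norm{\hat Y_n}_{\geodflow^{t_n}(\sigma)}}{\norm{\hat Z_n}_{\geodflow^{t_n}(\sigma)}} = \frac{\|A_{\geodflow^{t_n}(\sigma)}^{-1}A_\sigma Y\|_2}{\|A_{\geodflow^{t_n}(\sigma)}^{-1}A_\sigma Z_n\|_2} \gtrsim \frac{\|Y\|_2}{\|Z_n\|_2} = \frac{\norm{\hat Y_n}_\sigma}{\norm{\hat Z_n}_\sigma},
$$
which contradicts Equation~\eqref{eqn:contraction upstairs for rho} once $t_n$ is large enough. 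Hence every convergent subsequence of $U_k(A_\sigma^{-1}A_{\geodflow^t(\sigma)})$ has limit $A_\sigma^{-1}\Theta^k_\rho(\sigma)$, and by compactness of $\Gr_k(\Kb^d)$ the full limit exists and equals $A_\sigma^{-1}\Theta^k_\rho(\sigma)$; applying $A_\sigma$ gives the claim.

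I do not expect a genuine obstacle here: the argument is a direct transcription of Lemma~\ref{lem:convergence of uK} to the single representation $\rho$, using Lemma~\ref{lem: gap between singular values in step 3} in place of Lemma~\ref{lem:decay of ratio of singular values} and Equation~\eqref{eqn:contraction upstairs for rho} in place of Equation~\eqref{eqn:dom splitting in consequences section}. The only mild subtlety worth spelling out is that we need the flow-invariance of $\Theta^k_\rho$ and $\Xi^{d-k}_\rho$ (established in Step~2) only implicitly — what we actually use is just the fiberwise contraction~\eqref{eqn:contraction upstairs for rho} and the fact that $\Theta^k_\rho(\sigma)\oplus\Xi^{d-k}_\rho(\sigma)=\Kb^d$ — so no further input is required beyond what is already in hand.
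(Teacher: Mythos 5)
Your argument is essentially the paper's proof: the paper runs the same contradiction, but directly with $A_{\geodflow^{t_n}(\sigma)}$ (no $A_\sigma^{-1}$ normalization), fixing $Y \in \Theta^k_\rho(\sigma)\smallsetminus V$, producing $Z_n \in \Xi^{d-k}_\rho(\sigma)$ via the max-min theorem, and contradicting Equation~\eqref{eqn:contraction upstairs for rho}. One small correction to your reduction step: the identity $U_k(A_{\geodflow^t(\sigma)}) = A_\sigma\, U_k(A_\sigma^{-1}A_{\geodflow^t(\sigma)})$ is false as written; however, by Lemma~\ref{lem: U_k under product}(2) the distance between the two sides is at most $\frac{\mu_1}{\mu_d}(A_\sigma)\frac{\mu_{k+1}}{\mu_k}(A_\sigma^{-1}A_{\geodflow^t(\sigma)})$, which tends to $0$ by Lemma~\ref{lem: gap between singular values in step 3}, so the two limits coincide and your reduction is valid after this one-line patch.
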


\begin{proof} Fix $\sigma \in \Gc$ and suppose not. Then there exists  $t_n \to \infty$ where
$$
V: = \lim_{n \to \infty} U_{k}\left(A_{\geodflow^{t_n}(\sigma)}\right) \in \Gr_{k}(\Kb^d)
$$
exists and does not equal $\Theta^{k}_\rho(\sigma)$. Fix $Y \in \Theta_\rho^{k}(\sigma) \smallsetminus V$ non-zero. Then 
$$
\norm{A_{\geodflow^{t_n}(\sigma)}^{-1} Y}_2 \gtrsim \frac{1}{\mu_{k+1}\left(A_{\geodflow^{t_n}(\sigma)}\right)}\norm{Y}_2 = \mu_{d-k}\left(A_{\geodflow^{t_n}(\sigma)}^{-1}\right)\norm{Y}_2.
$$
Also, by the max-min/min-max theorem for singular values, for every $n$ there exists a non-zero $Z_n \in \Xi^{d-k}_\rho(\sigma)$ such that 
$$
\norm{A_{\geodflow^{t_n} (\sigma)}^{-1} Z_n}_2 \leq \mu_{d-k}\left(A_{\geodflow^{t_n} (\sigma)}^{-1}\right) \norm{Z_n}_2.
$$
Then by Equation~\eqref{eqn:contraction upstairs for rho}
$$
0 = \lim_{n \to \infty} \frac{\norm{Y}_{\geodflow^{t_n}(\sigma)}}{\norm{Z_n}_{\geodflow^{t_n}(\sigma)}}\frac{\norm{Z_n}_{\sigma}}{\norm{Y}_{\sigma}} \asymp \lim_{n \to \infty} \frac{\norm{A_{\geodflow^{t_n}(\sigma)}^{-1}Y}_{2}}{\norm{A_{\geodflow^{t_n}(\sigma)}^{-1}Z_n}_{2}}\frac{\norm{Z_n}_{2}}{\norm{Y}_{2}} \gtrsim 1. 
$$
(notice the implicit constants depend on $\sigma$) and we have a contradiction. 
\end{proof}

\begin{lemma} If $\sigma \in \Gc$, then $\Theta_\rho^{k}(\sigma)$ only depends on $\sigma^+$. \end{lemma}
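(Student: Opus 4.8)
The plan is to show that if two geodesic lines $\sigma_1, \sigma_2 \in \Gc$ share the same forward endpoint, $\sigma_1^+ = \sigma_2^+$, then $\Theta_\rho^k(\sigma_1) = \Theta_\rho^k(\sigma_2)$. By Lemma~\ref{lem: boundary maps as a limit in Step 3}, it suffices to show that
$$
\lim_{t \to \infty} U_k\left(A_{\geodflow^t(\sigma_1)}\right) = \lim_{t \to \infty} U_k\left(A_{\geodflow^t(\sigma_2)}\right).
$$
First I would use the fact that since $\sigma_1^+ = \sigma_2^+$, the two geodesic rays $\geodflow^{[0,\infty)}(\sigma_1)$ and $\geodflow^{[0,\infty)}(\sigma_2)$ are asymptotic, so after possibly reparametrizing one of them by a bounded time shift,
$$
r := \sup_{t \geq 0} \dist_X\big(\geodflow^t(\sigma_1)(0), \geodflow^t(\sigma_2)(0)\big) < \infty
$$
(this uses that $X$ is Gromov-hyperbolic; it is exactly the kind of statement proved in the appendix on basic properties of Gromov-hyperbolic spaces, e.g.\ Observation~\ref{lem:asymp_geodesics} used earlier in Section~\ref{sec: uniformly Anosov}). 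A time shift only affects the limit $\lim_{t\to\infty} U_k(A_{\geodflow^t(\sigma)})$ trivially, so we may assume the rays are synchronized up to the bounded distance $r$.

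The key step is then to control $A_{\geodflow^t(\sigma_1)}^{-1} A_{\geodflow^t(\sigma_2)}$. From the equivariance and continuity of the metric $\norm{\cdot}$ on $E \to \Gc$ — which, being the lift of a metric on the compact-modulo-$\Gamma$ quotient bundle $\wh{E}_\rho$, varies in a $\Gamma$-equivariant and locally uniform way — I would argue that there is a compact set $K_r \subset \GL(d,\Kb)$ such that $A_{\eta_1}^{-1} A_{\eta_2} \in K_r$ whenever $\dist_X(\eta_1(0), \eta_2(0)) \leq r$. (This is the analogue of Observation~\ref{obs:the A_sigma have local bounds}, but here we are not in the uniform setting; it still follows because the norms descend to the quotient over $\Oc \times \wh\Gc$ which is only non-compact in the cusp directions, and in the cusps we chose the norm via the flow-conjugating isomorphisms $\wh\Phi^C_\rho$ so that nearby fibers are comparable. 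Alternatively one invokes local uniformity of any metric that is a continuous family of norms once one restricts to a set of geodesics whose basepoints stay in a set on which $\Gamma$ acts cocompactly together with the explicit control in the cusps.) Hence $\left\{A_{\geodflow^t(\sigma_1)}^{-1} A_{\geodflow^t(\sigma_2)} : t \geq 0\right\}$ is relatively compact in $\GL(d,\Kb)$.

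Finally, I would apply Lemma~\ref{lem: U_k under product}: writing $A_{\geodflow^t(\sigma_2)} = A_{\geodflow^t(\sigma_1)} \cdot g_t$ with $g_t := A_{\geodflow^t(\sigma_1)}^{-1}A_{\geodflow^t(\sigma_2)}$ in a fixed compact set, part~(1) of that lemma gives
$$
\dist_{\Gr_k(\Kb^d)}\Big( U_k\big(A_{\geodflow^t(\sigma_1)} g_t\big), U_k\big(A_{\geodflow^t(\sigma_1)}\big)\Big) \leq \frac{\mu_1}{\mu_d}(g_t) \cdot \frac{\mu_{k+1}}{\mu_k}\big(A_{\geodflow^t(\sigma_1)}\big),
$$
provided $\mu_k > \mu_{k+1}$ for both factors. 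The ratio $\frac{\mu_1}{\mu_d}(g_t)$ is uniformly bounded by relative compactness, while $\frac{\mu_{k+1}}{\mu_k}\big(A_{\geodflow^t(\sigma_1)}\big)$: we know from Lemma~\ref{lem: gap between singular values in step 3} (applied with the starting geodesic $\sigma_1$, noting $\geodflow^t(\sigma_1) = \geodflow^t(\sigma_1)$ and reindexing) that $\frac{\mu_{k+1}}{\mu_k}(A_{\sigma_1}^{-1}A_{\geodflow^t(\sigma_1)}) \to 0$; combined with a fixed comparison between $A_{\sigma_1}^{-1}A_{\geodflow^t(\sigma_1)}$ and $A_{\geodflow^t(\sigma_1)}$ (differing by the fixed invertible matrix $A_{\sigma_1}$, which changes singular value ratios by at most a factor $\frac{\mu_1}{\mu_d}(A_{\sigma_1})^2$) this forces $\frac{\mu_{k+1}}{\mu_k}\big(A_{\geodflow^t(\sigma_1)}\big) \to 0$. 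Hence the right-hand side tends to $0$, and so the two limits $\lim_t U_k(A_{\geodflow^t(\sigma_1)})$ and $\lim_t U_k(A_{\geodflow^t(\sigma_2)})$ coincide, i.e.\ $\Theta_\rho^k(\sigma_1) = \Theta_\rho^k(\sigma_2)$. The main obstacle I anticipate is carefully justifying the relative compactness of $\{A_{\geodflow^t(\sigma_1)}^{-1}A_{\geodflow^t(\sigma_2)}\}$: outside the uniform setting one must either feed the asymptotic rays into the part of $\wh\Gc$ on which the metric is genuinely controlled (cocompact part plus the explicitly-constructed cusp norms) or else observe that for a \emph{fixed} pair $\sigma_1,\sigma_2$ the relevant geodesics $\geodflow^t(\sigma_i)$ lie in a $\Gamma$-orbit of a bounded set and use $\rho$-equivariance of $A_\bullet$ up to $\SU(d,\Kb)$; either way this is the step requiring the most care.
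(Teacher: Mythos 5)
Your argument is essentially the paper's argument when $\sigma^+$ is a \emph{conical} limit point, but it has a genuine gap at the bounded parabolic points, and that gap is exactly the step you flag at the end. The relative compactness of $\left\{A_{\geodflow^t(\sigma_1)}^{-1}A_{\geodflow^t(\sigma_2)} : t \geq 0\right\}$ for two asymptotic geodesics is precisely the \emph{locally uniform} property of the metric (Definition~\ref{defn:locally uniform norms} and Observation~\ref{obs:the A_sigma have local bounds}), which is an extra hypothesis defining the \emph{uniformly} relatively Anosov class and is not available in the stability argument: the initial metric $\norm{\cdot}^{(0)}$ for $\rho_0$ is an arbitrary metric witnessing exponential contraction, and the metric for $\rho$ on the cusps is just its conjugate via $\wh\Phi^C_\rho$ (Equation~\eqref{eqn:defn of norm on thin parts}). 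When $\sigma^+$ is a parabolic point, both geodesics $\geodflow^t(\sigma_1)$, $\geodflow^t(\sigma_2)$ eventually leave every $\Gamma$-cocompact part of $\Gc$ and escape into a cusp of $\wh{\Gc}$, so neither continuity plus equivariance nor your fallback (``the geodesics lie in a $\Gamma$-orbit of a bounded set'') gives any uniform comparison of nearby fibers there; indeed Example~\ref{ex:non uniform to GM cusp space} shows one cannot hope to upgrade a general contracting metric to a locally uniform one. Your argument does go through for conical limit points, provided you run the comparison only along the recurrence times $t_n$ where $\gamma_n\geodflow^{t_n}(\sigma_i)$ returns to a fixed compact set (which suffices, since the full limit exists by Lemma~\ref{lem: boundary maps as a limit in Step 3}); this is what the paper does.

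For the parabolic case the paper takes a different route that avoids comparing $\sigma_1$ with $\sigma_2$ altogether: since $\sigma(t)$ eventually lies in $H_P$ and the metric there is the $\rho_0$-metric conjugated by the fixed matrix $g$ with $\rho(P)=g\rho_0(P)g^{-1}$, one has $A^{(0)}_{\geodflow^t(\sigma)}=g^{-1}A_{\geodflow^t(\sigma)}g$ for $t$ large, and Lemma~\ref{lem: boundary maps as a limit in Step 3} applied to $\rho_0$ (whose boundary map $\xi_{\rho_0}$ is already known) together with Lemma~\ref{lem: U_k under product} identifies the limit explicitly as $\Theta^k_\rho(\sigma)=g\,\xi^k_{\rho_0}(\sigma^+)$ for \emph{every} geodesic with that endpoint. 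You need an argument of this kind — one that pins down the limit for each geodesic individually rather than comparing two geodesics deep in the cusp — to close the gap.
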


\begin{proof} It suffices to consider the following two cases.

\medskip

\noindent \textbf{Case 1:} Assume $\sigma^+$ is a conical limit point. Then there exist a sequence $(\gamma_n)_{n \geq 1}$ in $\Gamma$ and $t_n \to \infty$ such that $\{\gamma_n\geodflow^{t_n}(\sigma) : n \geq 1\}$ is relatively compact in $\Gc$. 

Fix $\eta \in \Gc$ with $\eta^+ = \sigma^+$. Then  
\begin{align*}
\sup_{t \geq 0} \dist_{X}(\sigma(t), \eta(t)) < +\infty
\end{align*}
and so $\{\gamma_n\geodflow^{t_n}(\eta) : n \geq 1\}$ is also relatively compact in $\Gc$. So for every non-zero $Y \in \Kb^d$ we have
$$
\frac{\norm{A_{\geodflow^{t_n}(\sigma)}^{-1} Y}_2}{\norm{A_{\geodflow^{t_n} (\eta)}^{-1} Y}_2} =  \frac{\norm{Y}_{\geodflow^{t_n}(\sigma)}}{\norm{Y}_{\geodflow^{t_n}(\eta)}
} = \frac{\norm{\rho(\gamma_n)Y}_{\gamma_n\geodflow^{t_n}(\sigma)}}{\norm{\rho(\gamma_n)Y}_{\gamma_n\geodflow^{t_n}(\eta)}
} \asymp \frac{\norm{\rho(\gamma_n)Y}_{2}}{\norm{\rho(\gamma_n)Y}_{2}} = 1$$
where the implicit constants are independent of $n$. So
$$
\frac{ \norm{A_{\geodflow^{t_n}(\sigma)}^{-1}A_{\geodflow^{t_n}(\eta)} Y}_2}{\norm{Y}_2}  = \frac{\norm{A_{\geodflow^{t_n}(\sigma)}^{-1} (A_{\geodflow^{t_n}(\eta)} Y)}_2}{\norm{A_{\geodflow^{t_n} (\eta)}^{-1} (A_{\geodflow^{t_n}(\eta)} Y)}_2} \asymp 1
$$
and hence 
$$
\mu_j\left( A_{\geodflow^{t_n}(\sigma)}^{-1}A_{\geodflow^{t_n}(\eta)}\right) \asymp 1
$$
for all $1 \leq j \leq d$ and all $n$. Thus by Lemma \ref{lem: U_k under product}
$$
\dist\Big(U_{k}\left(A_{\geodflow^{t_n}(\sigma)}\right), U_{k}\left(A_{\geodflow^{t_n}(\eta)}\right)\Big) \lesssim \frac{\mu_{k+1}}{\mu_{k}}\left(A_{\geodflow^{-t_n}(\sigma)}\right).
$$
Hence by Lemma~\ref{lem: gap between singular values in step 3}
$$
\Theta_\rho^{k}(\sigma) = \lim_{n \to \infty} U_{k}\left(A_{\geodflow^{t_n}(\sigma)}\right)=\lim_{n \to \infty} U_{k}\left(A_{\geodflow^{t_n}(\eta)}\right)=\Theta_\rho^{k}(\eta).
$$

\noindent \textbf{Case 2:} Assume $\sigma^+$ is not a conical limit point. Then $\sigma^+$ is the fixed point of a subgroup $P \in \Pc^{\Gamma}$. By definition $\rho(P) = g\rho_0(P)g^{-1}$ for some $g \in \GL(d,\Kb)$. We claim that 
$$ 
\Theta_{\rho}^{k}(\sigma) = \lim_{t\to\infty} U_{k}\left(A_{\geodflow^{t}(\sigma)}\right) = g \xi^{k}_{\rho_0}(\sigma^+)
$$
where $\xi_{\rho_0}$ is the boundary map of $\rho_0$. Let $\norm{\cdot}^{(0)}_{\eta \in \Gc}$ denote the lift of our initial metric on $\wh{E}_{\rho_0}$. Then for each $\eta \in \Gc$ fix $A_{\eta}^{(0)} \in \GL(d,\Kb)$ satisfying 
$$
\norm{A_\eta^{(0)}(\cdot)}_{\eta}^{(0)} = \norm{\cdot}_2.
$$ 
Since $\sigma^+$ is not a conical limit point and $\Gamma$ acts cocompactly on $X \smallsetminus \bigcup_{P \in \Pc^{\Gamma}} H_P$, we must have $\sigma(t) \in H_P$ when $t$ is sufficiently large. Hence by Equation~\eqref{eqn:defn of norm on thin parts} we can assume that $A_{\geodflow^{t}(\sigma)}^{(0)}=g^{-1}A_{\geodflow^{t}(\sigma)} g$ for $t$ sufficiently large. 

By Lemma~\ref{lem: boundary maps as a limit in Step 3} applied to $\rho_0$ we have 
$$
\xi^{k}_{\rho_0}(\sigma^+) = \Theta_{\rho_0}^{k}(\sigma) = \lim_{t\to\infty} U_{k}\left(A^{(0)}_{\geodflow^{t}(\sigma)}\right).
$$
So by Lemma \ref{lem: U_k under product}
$$ 
\Theta_{\rho}^{k}(\sigma) = \lim_{t\to\infty} U_{k}\left(A_{\geodflow^{t}(\sigma)}\right) =  \lim_{t\to\infty} g U_{k}\left(A^{(0)}_{\geodflow^{t}(\sigma)}\right) = g \xi^{k}_{\rho_0}(\sigma^+)
$$
and hence $\Theta_{\rho}^{k}(\sigma)$ only depends on $\sigma^+$. 
\end{proof}

Repeating the arguments in the last three lemmas, but switching the roles of $k$ and $d-k$ implies the following. 

\begin{lemma} If $\sigma \in \Gc$, then $\Xi_\rho^{d-k}(\sigma)$ only depends on $\sigma^-$. \end{lemma}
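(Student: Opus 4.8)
The plan is to mirror exactly the argument just given for $\Theta^k_\rho(\sigma)$, but with the roles of $k$ and $d-k$ interchanged and with all flows reversed. First I would set up the analogue of Lemma~\ref{lem: boundary maps as a limit in Step 3}: for each $\sigma\in\Gc$,
$$
\Xi^{d-k}_\rho(\sigma) = \lim_{t\to\infty} U_{d-k}\bigl(A_{\geodflow^{-t}(\sigma)}\bigr).
$$
This follows from the same max-min/min-max argument, using Equation~\eqref{eqn:contraction upstairs for rho} in the form that the ratio $\norm{Y}_{\geodflow^{-t}(\sigma)}/\norm{Z}_{\geodflow^{-t}(\sigma)}$ blows up exponentially for $Y\in\Theta^k_\rho$ and $Z\in\Xi^{d-k}_\rho$ — equivalently, the reciprocal ratio contracts — together with the analogue of Lemma~\ref{lem: gap between singular values in step 3}, namely $\frac{\mu_{d-k+1}}{\mu_{d-k}}\bigl(A_\sigma^{-1}A_{\geodflow^{-t}(\sigma)}\bigr)\to 0$. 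Note that the flow-invariant subbundle $\wh\Xi^{d-k}$ was constructed in Step 2 precisely by applying the contraction mapping theorem while flowing in the reverse direction, so this reversal is natural.

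Next I would split into the same two cases according to whether $\sigma^-$ is a conical limit point. If $\sigma^-$ is conical, there is a sequence $(\gamma_n)$ in $\Gamma$ and $t_n\to\infty$ with $\{\gamma_n\geodflow^{-t_n}(\sigma):n\geq 1\}$ relatively compact; for any $\eta\in\Gc$ with $\eta^-=\sigma^-$ one has $\sup_{t\geq 0}\dist_X(\sigma(-t),\eta(-t))<\infty$, so $\{\gamma_n\geodflow^{-t_n}(\eta)\}$ is also relatively compact, and the $\rho$-equivariance of the metric gives $\mu_j\bigl(A_{\geodflow^{-t_n}(\sigma)}^{-1}A_{\geodflow^{-t_n}(\eta)}\bigr)\asymp 1$ for all $j$; Lemma~\ref{lem: U_k under product} then forces $U_{d-k}\bigl(A_{\geodflow^{-t_n}(\sigma)}\bigr)$ and $U_{d-k}\bigl(A_{\geodflow^{-t_n}(\eta)}\bigr)$ to have the same limit, whence $\Xi^{d-k}_\rho(\sigma)=\Xi^{d-k}_\rho(\eta)$. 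If $\sigma^-$ is not conical, it is the fixed point of some $P\in\peripherals^\Gamma$ with $\rho(P)=g\rho_0(P)g^{-1}$; since $\Gamma$ acts cocompactly off $\bigcup H_P$, the geodesic $\sigma(-t)$ eventually enters $H_P$, so by Equation~\eqref{eqn:defn of norm on thin parts} we may take $A^{(0)}_{\geodflow^{-t}(\sigma)}=g^{-1}A_{\geodflow^{-t}(\sigma)}g$ for large $t$, and the already-established formula for $\rho_0$ gives $\Xi^{d-k}_\rho(\sigma)=g\,\xi^{d-k}_{\rho_0}(\sigma^-)$, which depends only on $\sigma^-$.

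There is no real obstacle here: every ingredient is a verbatim or mirror-image copy of something proved in the preceding three lemmas, and the only care needed is bookkeeping of which singular values (the $(d-k)$-th gap rather than the $k$-th) and which flow direction to use. Accordingly I would write the proof tersely, invoking the symmetry.

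\begin{proof}
This is proved by repeating the arguments of the previous three lemmas with the roles of $k$ and $d-k$ interchanged and the flow direction reversed. In detail, Equation~\eqref{eqn:contraction upstairs for rho} implies that $\frac{\mu_{d-k+1}}{\mu_{d-k}}\bigl(A_\sigma^{-1}A_{\geodflow^{-t}(\sigma)}\bigr)\to 0$ as $t\to\infty$ for every $\sigma\in\Gc$ (argue exactly as in Lemma~\ref{lem: gap between singular values in step 3}, using that $\frac{\norm{Z}_{\geodflow^{-t}(\sigma)}}{\norm{Y}_{\geodflow^{-t}(\sigma)}}$ is exponentially larger than $\frac{\norm{Z}_\sigma}{\norm{Y}_\sigma}$ for $Y\in\Theta^k_\rho(\sigma)$ and non-zero $Z\in\Xi^{d-k}_\rho(\sigma)$). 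Then, arguing as in Lemma~\ref{lem: boundary maps as a limit in Step 3},
$$
\Xi^{d-k}_\rho(\sigma) = \lim_{t\to\infty} U_{d-k}\bigl(A_{\geodflow^{-t}(\sigma)}\bigr)
$$
for every $\sigma\in\Gc$. Finally, the proof that $\Xi^{d-k}_\rho(\sigma)$ depends only on $\sigma^-$ is identical to the proof that $\Theta^k_\rho(\sigma)$ depends only on $\sigma^+$, with $\geodflow^t$ replaced by $\geodflow^{-t}$, with $\sigma^+$ replaced by $\sigma^-$, and with $\xi^k_{\rho_0}$ replaced by $\xi^{d-k}_{\rho_0}$; in the non-conical case one uses that $\sigma(-t)$ eventually enters $H_P$ together with Equation~\eqref{eqn:defn of norm on thin parts}.
\end{proof}
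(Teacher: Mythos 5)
Your proof is correct and matches the paper exactly: the paper's own ``proof'' of this lemma is the single sentence that one repeats the arguments of the preceding three lemmas with the roles of $k$ and $d-k$ switched, which is precisely the symmetry you spell out (reversed flow, the $(d-k)$-th singular value gap, and the same conical/parabolic case split). Your extra detail on deriving $\frac{\mu_{d-k+1}}{\mu_{d-k}}\bigl(A_\sigma^{-1}A_{\geodflow^{-t}(\sigma)}\bigr)\to 0$ from Equation~\eqref{eqn:contraction upstairs for rho} is a correct instantiation of that symmetry.
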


Finally we can define continuous transverse $\rho$-equivariant maps 
$$
\xi_\rho = (\xi_\rho^k, \xi_\rho^{d-k}) \colon \partial_\infty X \to \Gr_k(\Kb^d) \times \Gr_{d-k}(\Kb^d)
$$
such that $\Theta^k_\rho(\sigma) = \xi_\rho^k(\sigma^+)$ and $\Xi^{d-k}_\rho(\sigma) = \xi_\rho^{d-k}(\sigma^-)$. This combined with Lemma~\ref{lem:contraction on hom bundle in stability} proves that $\rho$ is $\Psf_k$-Anosov relative to $X$. Since $\rho \in \Oc$ was arbitrary this completes the proof of the main assertion in Theorem~\ref{thm:stability_in_body}.

\subsection{The ``moreover'' parts of Theorem~\ref{thm:stability_in_body}}

By construction, the subspaces $\Theta^k|_{(\rho,\sigma)}$ and $\Xi^{d-k}|_{(\rho,\sigma)}$ depend continuously on $(\rho, \sigma) \in \Oc \times \Gc$. Hence the map 
$$
(\rho, x) \in \Oc \times \partial(\Gamma, \peripherals) \mapsto \xi_\rho(x) \in \Gr_k(\Kb^d) \times \Gr_{d-k}(\Kb^d)
$$
is continuous. 

The second part is slightly more involved. 

\begin{proposition}
If $h\colon M \to \mathcal{O}$ is a real-analytic family of representation and $x \in \partial(\Gamma,\peripherals)$, then the map 
$$
u \in M \mapsto \xi_{h(u)}(x) \in \Gr_k(\Kb^d) \times \Gr_{d-k}(\Kb^d)
$$
is real-analytic. 
\end{proposition}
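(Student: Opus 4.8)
The strategy is to revisit the contraction-mapping construction in Step 2 and observe that everything depends real-analytically on $\rho$ when $\rho$ ranges over a real-analytic family, so the invariant sections (hence the boundary subspaces) inherit real-analyticity. First I would reduce to a local statement: fix $x \in \partial(\Gamma,\peripherals)$, fix $u_0 \in M$, choose $\sigma \in \Gc$ with $\sigma^+ = x$, and recall from Lemma~\ref{lem: boundary maps as a limit in Step 3} (applied fibrewise over $\Oc$) that $\xi^k_\rho(x) = \Theta^k|_{(\rho,\sigma)}$ is the fibre of the flow-invariant subbundle $\wh{\Theta}^k \subset \wh{E}(\Oc)$ constructed in Step 2. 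The subbundle $\wh{\Theta}^k$ is the graph of the unique fixed section $\mathscr{s}_0^{(t)} \in S(\Rc_{2\epsilon})$ of the contraction mapping $\homflow^t_S$ (for $t \in [T,2T]$), so it suffices to show that the restriction of $\mathscr{s}_0^{(t)}$ to $\{(\rho, \sigma) : \rho \in h(M)\}$ depends real-analytically on $\rho$. Symmetrically this handles $\xi^{d-k}_\rho(x)$ using the subbundle $\wh{\Xi}^{d-k}$.

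Next I would set up an analytic version of the contraction. Pulling back along $h$, we get a real-analytic family $u \mapsto h(u)$ of representations; the conjugating matrices $g_C(h(u))$ can be chosen real-analytically in $u$ (they solve a system of linear equations with real-analytic coefficients near $u_0$, since $\rho_0|_P$ is irreducible enough that the centralizer is discrete — or more simply, after shrinking $M$, by the implicit function theorem applied to the conjugacy equation), so the splitting $\wh{E}(\Oc) = \wh{F}^k \oplus \wh{G}^{d-k}$, the metric, and the flow blocks $A_t, B_t, C_t, D_t$ over $\{h(u)\} \times \wh\Gc$ are all real-analytic in $u$ (they involve only $h(u)$, fixed data on $X$, and $g_C(h(u))$, composed by smooth algebraic operations). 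Consequently the map $(u, f) \mapsto \homflow^t(f) = (B_t + A_t f)(D_t + C_t f)^{-1}$ is real-analytic jointly in $(u,f)$ on the relevant neighbourhood, and it is a uniform contraction in $f$ for each fixed $u$ (Proposition~\ref{prop:CZZ83}). I would then invoke the standard fact that a fibrewise uniform contraction depending real-analytically on a parameter has a fixed point depending real-analytically on that parameter: either via the analytic implicit function theorem applied to $F(u,f) := f - \homflow^t_S(\text{section determined by } f)$ at a point (noting $\partial_f F$ is invertible because $\|\partial_f \homflow^t_S\| \leq 2\epsilon < 1$), or via the uniform-analyticity of the iterates $(\homflow^t_S)^n(\mathscr{s})$ together with the uniform (in $u$) exponential convergence to the fixed section on a fixed complex neighbourhood, so that the limit is analytic by a normal-families / Weierstrass argument.

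The remaining point is to pass from analyticity of the invariant \emph{section} $\mathscr{s}_0^{(t)}$ to analyticity of the individual fibre $\Theta^k|_{(h(u),\sigma)} = \mathrm{Graph}(\mathscr{s}_0^{(t)}(h(u),\sigma))$ as a point of $\Gr_k(\Kb^d)$: but evaluation of a section at the fixed base point $\sigma$ is continuous-linear, and the graph map $\Hom(\wh G^{d-k}, \wh F^k)|_{(\rho,\sigma)} \to \Gr_k(\Kb^d)$ is a real-analytic chart, so the composite $u \mapsto \xi^k_{h(u)}(x)$ is real-analytic near $u_0$; since $u_0$ was arbitrary this gives the claim globally on $M$, and the $\xi^{d-k}$ component is identical after switching $k \leftrightarrow d-k$. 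The main obstacle I anticipate is the technical care needed to make the parameter-dependent contraction mapping theorem yield \emph{analytic} (not merely smooth) dependence — this requires complexifying both the parameter $u$ and the fibre variable $f$, checking that $\homflow^t$ extends holomorphically to a fixed polydisc with the contraction estimate surviving (which it does, by continuity of the estimates in Proposition~\ref{prop:CZZ83} and compactness of $\wh\Gc_{thicker}$), and then applying a vector-valued Montel/Weierstrass theorem on the space of bounded holomorphic sections. The rest is bookkeeping that parallels the continuity argument already given.
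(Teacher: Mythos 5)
Your route is genuinely different from the paper's, and as written it has two concrete gaps. First, your justification for choosing the conjugating elements $g_C(h(u))$ analytically is wrong where it is specific: by Proposition~\ref{prop:eigenvalue data in rel Anosov repn} the restrictions $\rho_0|_P$ are weakly unipotent, so their centralizers are typically positive-dimensional algebraic groups, not discrete, and the conjugacy equation does not have a unique solution to which one could naively apply the implicit function theorem. (A local analytic section of the orbit map $g \mapsto g\,\rho_0|_P\, g^{-1}$ does exist, since the orbit is a smooth locally closed subvariety, but that is the argument you would need to make.) Second, and more seriously, the heart of your plan --- that $(u,f)\mapsto \homflow^t(f)$ is real-analytic \emph{as a map into the Banach space of bounded sections over the non-compact base $\Oc\times\wh\Gc$}, with a holomorphic extension to a fixed complex polydisc on which the contraction estimate survives --- is asserted rather than proved. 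The blocks $A_t,B_t,C_t,D_t$ depend on the reference splitting $\wh F^k\oplus\wh G^{d-k}$ and on the metric of Step 1, both of which are only chosen \emph{continuously} in $\rho$ away from the cusps; you would have to redo Step 1 so that these choices are analytic in $u$, and then establish uniform (over all of $\wh\Gc$, in particular over the thin parts where the flow is conjugated through $g_C$) radius-of-convergence and contraction bounds for the complexified family. Compactness of $\wh\Gc_{thicker}$ does not address the thin parts, which is exactly where the delicate estimates of Sections~\ref{sec:building_norms} and~\ref{sec:stability} live. So the ``bookkeeping'' you defer is the actual content.

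The paper avoids all of this. It reduces to $\Kb=\Rb$, realizes $\Hom(\Gamma,\SL(d,\Rb))$ inside $\SL(d,\Rb)^N$, complexifies $M$ to $M^{\Cb}$ and extends $h$ holomorphically, and observes that $\Hom_{\rho_0}(\Gamma,\SL(d,\Cb))$ is locally Zariski closed, so after shrinking, every representation in $h(M^{\Cb})$ is still relatively $\Psf_k$-Anosov. Then for a hyperbolic $\gamma$ the value $\xi^k_{h(u)}(\gamma^+)$ is the attracting $k$-plane of the $\Psf_k$-proximal element $h(u)(\gamma)$, which depends holomorphically on $u$ by elementary spectral theory; a general $x$ is a limit of such $\gamma_n^+$, and the joint continuity already established in part (1) of the theorem makes $u\mapsto\xi^k_{h(u)}(x)$ a locally uniform limit of holomorphic maps, hence holomorphic, hence real-analytic on the totally real slice $M$. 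This replaces your infinite-dimensional analytic fixed-point argument with a finite-dimensional one (proximal fixed points) plus Weierstrass convergence, at the cost of needing the continuity statement and the local Zariski-closedness of the constrained representation variety. If you want to salvage your approach, the cleanest fix is to abandon the global contraction on sections and instead note, as in your final paragraph, that once analyticity is known at the dense set of loxodromic fixed points, continuity upgrades it everywhere --- which is precisely the paper's argument.
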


\begin{proof}
This closely follows the proof of the analogous statement in \cite[pp.\ 50-51]{CZZ2021} for geometrically finite Fuchsian groups. 

Using the inclusion $\SL(d,\Cb) \hookrightarrow \SL(2d,\Rb)$, we may assume that $\Kb=\Rb$. Fix a finite generating set $S \subset \Gamma$ and let $N := \abs{S}$. Then we may view $\Hom(\Gamma,\SL(d,\Rb))$ as a Zariski-closed subset of $\SL(d,\Rb)^N$  and consider $h$ as a real-analytic map from $M$ to $\SL(d,\Rb)^N$.

We can then realize $M$ as a totally real submanifold of a complex manifold $M^{\Cb}$ and assume that $h$ extends to a complex analytic map $h\colon M^{\Cb} \to \SL(d,\Cb)^N$. Note that $h(M)$ and $h(M^{\Cb})$ have the same Zariski closure in $\SL(d,\Cb)^N$. In particular, 
$$
h(M^{\Cb}) \subset \Hom(\Gamma, \SL(d,\Cb)).
$$

We claim that $\Hom_{\rho_0}(\Gamma, \SL(d,\Cb))$ is locally closed in the Zariski topology (i.e.\ open in its closure). For $P \in \peripherals$, let 
$$
U_P := \{ \tau \in \Hom(P, \SL(d,\Cb)) : \tau \text{ is conjugate to } \rho_0|_P\}.
$$
Then, $U_P$ is the orbit of $\rho_0|_P$ under the conjugation action of $\SL(d,\Cb)$ and hence is locally closed in the Zariski topology. Next define 
\begin{align*}
f &\colon  \Hom(\Gamma, \SL(d,\Cb)) \to \prod_{P \in \peripherals} \Hom(P, \SL(d,\Cb))
\end{align*}
by $f(\rho) = (\rho|_P)_{P \in \peripherals}$. Then, by definition, 
$$
\Hom_{\rho_0}(\Gamma, \SL(d,\Cb)) = f^{-1}\left( \prod_{P \in \peripherals} U_P \right)
$$
and so $\Hom_{\rho_0}(\Gamma, \SL(d,\Cb))$ is also locally closed in the Zariski topology.

Then, after possibly shrinking $M^{\Cb}$, we may assume that 
$$
h(M^{\Cb}) \subset \Hom_{\rho_0}(\Gamma, \SL(d,\Cb)).
$$
Then, by possibly shrinking again and using the first part of Theorem~\ref{thm:stability}, we may assume that  every representation in $h(M^{\Cb})$ is $\Psf_k$-Anosov. Hence it suffices to show that for any $x \in \partial(\Gamma,\peripherals)$, the map $M^{\Cb} \to \Gr_k(\Cb^d)$ given by $u \mapsto \xi^k_{h(u)}(x)$ is complex analytic in $x$, this implies that the restriction of this map to $M$ is real-analytic.

If $\gamma$ is a hyperbolic element and $\rho \in h(M^{\Cb})$, then Proposition~\ref{prop:eigenvalue data in rel Anosov repn} implies that $\rho(\gamma)$ is $\Psf_k$-proximal and $\xi_\rho^k(\gamma^+)$ is the attracting $k$-plane of $\rho(\gamma)$. It then follows that the map $M^{\Cb} \to \Gr_k(\Cb^d)$ given by $u \mapsto \xi_{h(u)}^k(\gamma^+)$ is complex analytic. More generally, if $x \in \partial(\Gamma,\peripherals)$, there exists a sequence $(\gamma_n)_{n \geq 1}$ of hyperbolic elements of $\Gamma$ such that $\gamma_n^+ \to x$. Then, since the map from $M^{\Cb} \times \partial(\Gamma,\peripherals) \to \Gr_k(\Cb^d)$ given by $(u,y) \mapsto \xi^k_{h(u)}(y)$ is continuous, $u \mapsto \xi_{h(u)}^k(x)$ is a locally uniform limit of complex analytic functions, and hence complex analytic.
\end{proof}

\subsection{Stability in the context of Theorem~\ref{thm:singular value and eigenvalue estimates}}\label{sec:proof of singular value and eigenvalue estimates}

Suppose that $X = \Cc_{GM}(\Gamma, \peripherals, S)$ is a Groves--Manning cusp space and fix $x_0 \in X$. Also fix a neighborhood $\Oc^\prime \subset \Oc$ of $\rho_0$ which is relatively compact in $\Oc$. 

Notice that the metric $\norm{\cdot}_{(\rho,\sigma)}$ on $\wh{E}(\Oc) \to \wh{\Gc}$ constructed in \hyperref[sec:stability step 1]{Step 1} is continuous and the contraction constants for the flow can be chosen to be independent of $\rho \in \Oc$. So by the explicit constants in Lemma~\ref{lem:D-} there exist $\alpha_1, \beta_1 > 0$ such that 
\begin{equation}
\label{eqn:lower bound in GM moreover stuff}
-\beta_1+\alpha_1 d_X(\gamma(x_0), x_0) \leq \log \frac{\mu_k}{\mu_{k+1}}(\rho(\gamma))
\end{equation}
 for all $\rho \in \Oc^\prime$ and $\gamma \in \Gamma$. Then we also have 
\begin{align*}
\alpha_1 \ell_X(\gamma) &= \lim_{n \to \infty} \frac{\alpha_1 d_X(\gamma^n(x_0), x_0)}{n}   \leq \lim_{n \to \infty} \frac{1}{n} \log \frac{\mu_k}{\mu_{k+1}}(\rho(\gamma)^n)=\log \frac{\lambda_k}{\lambda_{k+1}}(\rho(\gamma))
\end{align*}
 for all $\rho \in \Oc^\prime$ and $\gamma \in \Gamma$. 
 
By Lemmas~\ref{lem:structure of weakly unipotent} and~\ref{lem: growth estimates in weakly unipotent groups} there exists $\hat{\beta}_0 > 0$ such that: if $P \in \peripherals$ and $g \in P$, then 
\begin{equation*}
\log \frac{\mu_1}{\mu_d}\left( \rho_0(g) \right) \leq 2(d-1) \log \abs{g}_{S \cap P} + \hat{\beta}_0. 
\end{equation*}
Then, since $\rho|_P$ is conjugate to $\rho_0|_P$ for any $\rho \in \Oc$ and $P \in \peripherals$, there exists $\hat{\beta} > 0$ such that: if $\rho \in \Oc^\prime$, $P \in \peripherals$, and $g \in P$, then 
\begin{equation*}
\log \frac{\mu_1}{\mu_d}\left( \rho(g) \right) \leq 2(d-1) \log \abs{g}_{S \cap P} + \hat{\beta}. 
\end{equation*}
So by Lemma~\ref{lem:GM upper bound on mu1/mud}, see Equation~\eqref{eqn:constant in lem:GM upper bound on mu1/mud}, there exist $\alpha_2,\beta_2 > 0$ such that 
\begin{equation}
\label{eqn:upper bound in GM moreover stuff}
\log \frac{\mu_1}{\mu_{d}}(\rho(\gamma)) \leq \alpha_2 d_X(\gamma(x_0), x_0) +\beta_2
\end{equation}
 for all $\rho \in \Oc^\prime$ and $\gamma \in \Gamma$.  
 
Then by Equations~\eqref{eqn:symmetric distance in prelims}, \eqref{eqn:lower bound in GM moreover stuff}, and \eqref{eqn:upper bound in GM moreover stuff}: For any $p_0 \in \SL(d,\Kb)/ \SU(d,\Kb)$ and $\rho \in \Oc^\prime$ the orbits $\Gamma(x_0)$ and $\rho(\Gamma)(p_0)$ are quasi-isometric with the quasi-isometry constants independent of  $\rho \in \Oc^\prime$.

\subsection{Stability in the context of Theorem~\ref{thm:uniform_stability_case}}\label{sec:uniform stability} Now suppose that the initial family of norms $\norm{\cdot}^{(0)}_\sigma$ on $\wh{E}_{\rho_0} \to \wh{\Gc}$ is locally uniform. Then the family of norms $\norm{\cdot}_{\rho,\sigma}$ constructed in \hyperref[sec:stability step 1]{Step 1} restricts to a locally uniform family of norms on each $\wh{E}_{\rho}=\wh{E}(\Oc)|_{\rho}$. Thus any representation $\rho \in \Oc$ is uniformly $\Psf_k$-Anosov relative to $X$. 
Fix a neighborhood $\Oc^\prime \subset \Oc$ of $\rho_0$ which is relatively compact in $\Oc$. By the construction of the norms in \hyperref[sec:stability step 1]{Step 1} and the estimates in~\hyperref[sec:stability step 2]{Step 2},  we can choose the contraction constants $C$ and $c$ and the locally uniform metric constants $(L_r)_{r>0}$  implicit in Definition~\ref{defn:locally uniform norms}, as well as the constant $C_{K}$  appearing in Section~\ref{subsec:uniform Holder reg} to be independent of $\rho \in \Oc^\prime$. 

By Lemma~\ref{lem:unform qiembed entire space}, for any $\rho \in \Oc^\prime$ we have a $\rho$-equivariant quasi-isometric embedding $X \to \SL(d,\Kb) / \SU(d,\Kb)$ whose quasi-isometry constants depend only on properties of $X$, the dimension $d$,  $L_r$ for a particular $r > 0$, $C$, and  $c$. In particular, they may be chosen to be uniform over $\rho \in \Oc^\prime$.
 
By Lemma~\ref{lem:Holder regularity}, for any $\rho \in \Oc^\prime$, the Anosov boundary map 
$$
\xi_\rho \colon \partial_\infty X \to \Gr_k(\Kb^d) \times \Gr_{d-k}(\Kb^d)
$$
is H\"older (relative to any visual metric on $\partial_\infty X$ and any Riemannian distance on $\Gr_k(\Kb^d) \times \Gr_{d-k}(\Kb^d)$), with constants depending only on the properties of $X$, the dimension $d$, $L_r$ for a particular $r > 0$, $C_{K}$, $c$, and $C$. In particular, the H\"older constants can be chosen to be independent of $\rho \in \Oc^\prime$.

\section{The general semisimple case}\label{sec:general case}

In this section, we consider relatively Anosov representations into general semisimple Lie groups. The main result of this section, Proposition~\ref{prop:adapted representations} below, is an extension of Proposition 4.3 in \cite{GW} and will allow us to reduce the general case to the case of representations into the special linear group. For geometrically finite Fuchsian groups, this reduction was established in~\cite[App.\ B]{CZZ2021} and our exposition is based on the arguments there. 

For the rest of the section, we will assume that $\mathsf{G}$ is a semisimple Lie group of non-compact type with finite center. Fix a parabolic subgroup $\Psf^+ \leq \mathsf G$ and an opposite parabolic subgroup $\Psf^- \leq \mathsf G$, then let $\Fc^\pm := \mathsf G/\Psf^{\pm}$ be the associated flag varieties. We say that $F_1 \in \Fc^+$ is \emph{transverse} to $F_2 \in \Fc^-$ if $(F_1,F_2)$ is contained in the unique open $\mathsf{G}$-orbit in $\Fc^+ \times \Fc^-$. 

Definition~\ref{defn:Pk Anosov} then naturally extends as follows: 

\begin{definition} Suppose that $(\Gamma,\peripherals)$ is relatively hyperbolic with Bowditch boundary $\partial(\Gamma, \peripherals)$. A representation $\rho \colon \Gamma \to \mathsf{G}$ is \emph{$\Psf^\pm$-Anosov relative to $\peripherals$} if there exists a continuous map 
$$
\xi = (\xi^+, \xi^-) \colon \partial(\Gamma, \peripherals) \to \Fc^+ \times \Fc^-
$$
which is 
\begin{enumerate} 
\item \emph{$\rho$-equivariant}: if $\gamma \in \Gamma$, then $\rho(\gamma) \circ \xi = \xi \circ \gamma$, 
\item \emph{transverse}: if $x,y \in \partial(\Gamma, \peripherals)$ are distinct, then $\xi^+(x)$ and $\xi^-(y)$ are transverse, 
\item \emph{strongly dynamics-preserving}: if $(\gamma_n)_{n \geq 1}$ is a sequence of elements in $\Gamma$ where $\gamma_n \to x \in \partial(\Gamma, \peripherals)$ and $\gamma_n^{-1} \to y \in \partial(\Gamma, \peripherals)$, then 
$$
\lim_{n \to \infty} \rho(\gamma_n)F = \xi^+(x)
$$
uniformly on compact subsets of $\{ F \in \Fc^+ : F \text{ transverse to } \xi^-(y)\}$. 
\end{enumerate}
\end{definition}

\begin{example}\label{ex:abstract formulation for SL(d,k)}  Let $e_1,\dots, e_d$ denote the standard basis of $\Kb^d$ and fix $k \in \{1,\dots, d\}$. Then 
$$
\Psf^+:= \{ g \in \SL(d,\Kb) : g\ip{e_1,\dots,e_k}=\ip{e_1,\dots,e_k}\}
$$ 
and 
$$
\Psf^-:=\{g \in \SL(d,\Kb) : g\ip{e_{k+1},\dots, e_d}=\ip{e_{k+1},\dots, e_d}\}
$$
are opposite parabolic subgroups where the associated flag varieties $\Fc^+$, $\Fc^-$ naturally identify with $\Gr_k(\Kb^d)$, $\Gr_{d-k}(\Kb^d)$ respectively. Further, under this identification, transversality in the sense above is equivalent to transversality in the usual linear algebra sense. So $\Psf^\pm$-Anosov representations coincide with the $\Psf_k$-Anosov representations defined in Definition~\ref{defn:Pk Anosov}. 
\end{example} 

If $\Psi\colon \mathsf{G}\to\mathsf{SL}(V)$ is a finite-dimensional irreducible representation, we say that $\Psi$ {\em is adapted to $\Psf^\pm$}  if there exists a decomposition $V=L_0\oplus W_0$ where $L_0$ is a line, $W_0$ is a hyperplane, 
\begin{align*}
\Psf^+ = \{ g \in \mathsf{G} : \Psi(g)(L_0) = L_0 \}, \quad \text{ and } \quad \Psf^- = \{ g \in\mathsf  G : \Psi(g)(W_0) = W_0\}.
\end{align*}
Given such a representation, we may define embeddings $\zeta^+_{\Psi} \colon \Fc^+ \to \proj(V)$ and $\zeta^-_{\Psi} \colon \Fc^- \to \Gr_{\mathrm{dim}(V)-1}(V)$ by 
\begin{align*}
\zeta^+_{\Psi}(g\Psf^+) = \Psi(g)(L_0) \quad \text{ and } \quad \zeta^-_{\Psi}(g\Psf^-) = \Psi(g)( W_0).
\end{align*}
Then let $\zeta_\Psi := (\zeta^+_\Psi, \zeta^-_{\Psi})$. 

\begin{remark} Such representations can be constructed as follows: if $\gL$ is the Lie algebra of $\mathsf{G}$, $\mathfrak{n}^+$ is the nilpotent radical of the Lie algebra of $\Psf^+$, and $n := \dim \mathfrak{n}^+$, then the representation defined by $\Psi(g) := \wedge^n \mathrm{Ad}(g)$ and $V := \Spanset \{ \Psi(\mathsf G) (\bigwedge^n \mathfrak{n}^+)\} \subset \bigwedge^n \gL$ is adapted to $\Psf^\pm$, see \cite[Rem.\ 4.12]{GW}.
\end{remark}

The main result of this section is the following extension of \cite[Prop.\ 4.3]{GW}, which was previously established for geometrically finite Fuchsian groups in~\cite{CZZ2021}. 

\begin{proposition}\label{prop:adapted representations} Suppose that $\Psi \colon \mathsf{G}\to\mathsf{SL}(V)$ is a finite-dimensional irreducible representation which is adapted to $\Psf^\pm$. If $(\Gamma,\peripherals)$ is relatively hyperbolic and $\rho \colon \Gamma \to \mathsf{G}$ is a representation, then the following are equivalent: 
\begin{enumerate}
\item $\rho$ is $\Psf^\pm$-Anosov relative to $\peripherals$,
\item $\Psi \circ \rho$ is $\Psf_1$-Anosov relative to $\peripherals$. 
\end{enumerate}
Moreover, when the above conditions hold, if $\xi_\rho, \xi_{\Psi \circ \rho}$ are the Anosov boundary maps of $\rho, \Psi \circ \rho$ respectively, then $\xi_{\Psi \circ \rho}=\zeta_\Psi \circ \xi_\rho$. 
\end{proposition}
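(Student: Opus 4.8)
The statement to prove is Proposition~\ref{prop:adapted representations}: for an irreducible representation $\Psi\colon\mathsf{G}\to\SL(V)$ adapted to $\Psf^\pm$, a representation $\rho\colon\Gamma\to\mathsf{G}$ is $\Psf^\pm$-Anosov relative to $\peripherals$ if and only if $\Psi\circ\rho$ is $\Psf_1$-Anosov relative to $\peripherals$, with boundary maps related by $\xi_{\Psi\circ\rho}=\zeta_\Psi\circ\xi_\rho$. The plan is to show that the data defining one notion transports to the data defining the other via the maps $\zeta^\pm_\Psi$, using only the definitions (Definition~\ref{defn:Pk Anosov} and its generalization) plus elementary facts about the dynamics of $\Psi$ on flag varieties. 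First I would record the three key properties of the embeddings $\zeta^\pm_\Psi$: (i) they are $\rho$-to-$(\Psi\circ\rho)$-equivariant, i.e.\ $\zeta^+_\Psi(\rho(\gamma)F)=\Psi(\rho(\gamma))\zeta^+_\Psi(F)$, which is immediate from the definition $\zeta^+_\Psi(g\Psf^+)=\Psi(g)L_0$; (ii) they are continuous injective (hence topological embeddings onto their images by compactness of $\Fc^\pm$); and (iii) they take transverse pairs in $\Fc^+\times\Fc^-$ to transverse pairs in $\proj(V)\times\Gr_{\dim V-1}(V)$, and conversely a pair in $\Fc^+\times\Fc^-$ is transverse if and only if its $\zeta_\Psi$-image is transverse. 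Property (iii) is the one needing a short argument: since $\Psi$ is adapted, $\Psf^+\cap\Psf^-$ is the stabilizer of the transverse pair $(L_0,W_0)$, and the $\mathsf{G}$-orbit of $(L_0,W_0)$ in $\proj(V)\times\Gr_{\dim V-1}(V)$ meets the image of $\Fc^+\times\Fc^-$ exactly in $\zeta_\Psi(\{(F_1,F_2): F_1\pitchfork F_2\})$; transversality in $\proj(V)$ means $L_0\oplus W_0=V$, i.e.\ $L_0\not\subset W_0$, and one checks using irreducibility and the weight-space description of $L_0,W_0$ that $\Psi(g)L_0\subset\Psi(h)W_0$ forces $g\Psf^+$ and $h\Psf^-$ to be non-transverse.

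Given these properties, the argument is symmetric in the two directions, so I would set up a single dictionary. Suppose $\rho$ is $\Psf^\pm$-Anosov with boundary map $\xi_\rho=(\xi^+_\rho,\xi^-_\rho)\colon\partial(\Gamma,\peripherals)\to\Fc^+\times\Fc^-$. Define $\eta:=\zeta_\Psi\circ\xi_\rho=(\zeta^+_\Psi\circ\xi^+_\rho,\ \zeta^-_\Psi\circ\xi^-_\rho)\colon\partial(\Gamma,\peripherals)\to\proj(V)\times\Gr_{\dim V-1}(V)$. Equivariance of $\eta$ for $\Psi\circ\rho$ follows from property (i) and equivariance of $\xi_\rho$; transversality of $\eta$ follows from property (iii) and transversality of $\xi_\rho$; it remains to check the strongly dynamics preserving property. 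Fix a sequence $(\gamma_n)$ with $\gamma_n\to x$ and $\gamma_n^{-1}\to y$ in $\partial(\Gamma,\peripherals)$. For a line $L\in\proj(V)$ transverse to the hyperplane $\zeta^-_\Psi(\xi^-_\rho(y))=\Psi(\rho(\gamma))\dots$ — here I would use that the hyperplanes in $\proj(V)$ that arise as $\Psi(g)W_0$ form the image of $\Fc^-$, but an arbitrary line $L$ need not lie in $\zeta^+_\Psi(\Fc^+)$. The cleanest route is: the strongly dynamics preserving condition for $\Psi\circ\rho$ at $\Psf_1$ is equivalent (by Observation~\ref{obs:strongly_dynamics_pres_div_cartan} with $k=1$) to $\frac{\mu_1}{\mu_2}(\Psi(\rho(\gamma_n)))\to\infty$, $U_1(\Psi(\rho(\gamma_n)))\to\zeta^+_\Psi(\xi^+_\rho(x))$, and $U_{\dim V-1}(\Psi(\rho(\gamma_n))^{-1})\to\zeta^-_\Psi(\xi^-_\rho(y))$. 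So I would instead prove these three statements directly. Using that $\Psi$ is a representation of $\mathsf{G}$ into $\SL(V)$, the Cartan projection of $\Psi(g)$ is $\mu\circ\Psi = w\circ\mu_{\mathsf{G}}(g)$ for the highest weight $w$ of $\Psi$, and adaptedness to $\Psf^\pm$ means $w$ is orthogonal to exactly the simple roots defining $\Psf^\pm$; hence $\frac{\mu_1}{\mu_2}(\Psi(g))\to\infty$ along an escaping sequence iff that sequence is ``$\Psf^\pm$-divergent'' in $\mathsf{G}$, and the $U_1$, $U_{\dim V-1}$ subspaces of $\Psi(g)$ are precisely $\zeta^+_\Psi$, $\zeta^-_\Psi$ applied to the corresponding Cartan flags of $g$ in $\Fc^\pm$. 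Then strong dynamics preservation of $\xi_\rho$ (together with the $\mathsf{G}$-version of Observation~\ref{obs:strongly_dynamics_pres_div_cartan}) gives exactly the three required limits. The converse direction runs the same dictionary backwards: if $\Psi\circ\rho$ is $\Psf_1$-Anosov with boundary map $\xi_{\Psi\circ\rho}$, then $\xi_{\Psi\circ\rho}$ lands in $\zeta_\Psi(\Fc^+\times\Fc^-)$ (because its values are limits of $U_1,U_{\dim V-1}$ of elements $\Psi(\rho(\gamma))$, which lie in $\zeta^\pm_\Psi(\Fc^\pm)$, a closed set), so we may define $\xi_\rho:=\zeta_\Psi^{-1}\circ\xi_{\Psi\circ\rho}$, continuous by property (ii), and reverse each of the three checks.

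The main obstacle I anticipate is the precise translation between Cartan/singular-value data of $g\in\mathsf{G}$ and of $\Psi(g)\in\SL(V)$ — specifically, making rigorous that for $g$ with $\mu_1(\Psi(g))>\mu_2(\Psi(g))$ one has $U_1(\Psi(g))=\zeta^+_\Psi(F^+_g)$ and $U_{\dim V-1}(\Psi(g)^{-1})=\zeta^-_\Psi(F^-_g)$, where $F^\pm_g\in\Fc^\pm$ are the Cartan flags of $g$ (defined when $g$ is ``$\Psf^\pm$-regular''). This requires choosing a compatible Cartan decomposition $g=k_1 a k_2$ in $\mathsf{G}$, applying $\Psi$, and observing that $\Psi(a)$ is diagonal with largest eigenvalue on the highest weight line $L_0$ with multiplicity one precisely when $a$ is regular enough — a standard highest-weight computation, but one that must be stated carefully given the paper works over $\Kb\in\{\Rb,\Cb\}$ and never explicitly sets up the root-system machinery. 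I would handle this by citing the analogous facts used implicitly in \cite[Prop.\ 4.3]{GW} and \cite[App.\ B]{CZZ2021}, and by phrasing everything through Observation~\ref{obs:strongly_dynamics_pres_div_cartan} so that the only genuinely new input is the orbit-transversality statement (property (iii)) and the observation that $\zeta^\pm_\Psi(\Fc^\pm)$ are closed. The equivariance and the final identity $\xi_{\Psi\circ\rho}=\zeta_\Psi\circ\xi_\rho$ then fall out of uniqueness of Anosov boundary maps. Everything else is bookkeeping.
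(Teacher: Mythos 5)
Your overall structure (transversality correspondence, equivariance, closedness of $\zeta_\Psi^\pm(\Fc^\pm)$ for the converse, then pull back via $\zeta_\Psi^{-1}$) matches the paper, and the proposal is essentially correct. The genuine difference is in how you handle the dynamical heart of the argument, i.e.\ the equivalence between ``$g_nF\to F^+$ for all $F$ transverse to $F^-$'' in $\Fc^+$ and the corresponding statement for $\Psi(g_n)$ in $\proj(V)$. You route this through Cartan-projection functoriality: $\mu_j\circ\Psi$ expressed via the highest weight, the identification $U_1(\Psi(g))=\zeta_\Psi^+(F_g^+)$, and a $\mathsf{G}$-analogue of Observation~\ref{obs:strongly_dynamics_pres_div_cartan} --- none of which the paper sets up, so you would have to import them wholesale from \cite{GGKW} (they are there, so this is legitimate, but it is a heavier dependency than you may realize, and the $\mathsf{G}$-version of Observation~\ref{obs:strongly_dynamics_pres_div_cartan} is itself roughly as hard as the statement you are using it to prove). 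The paper instead proves this equivalence (its Lemma~\ref{lem: adapted representations are nice}(2)) by a compactness argument in $\proj(\End(V))$: any subsequential limit $S$ of $[\Psi(g_n)]$ must have image $\zeta_\Psi^+(F^+)$ because, by irreducibility, $\zeta_\Psi^+(\Fc^+)$ spans $V$ and one can choose spanning flags transverse to $F^-$; the kernel is identified by the dual argument. This avoids all root-system machinery and only cites the transversality statement (your property (iii)) from \cite{GGKW} or \cite{CZZ2021}. Both routes work; the paper's buys self-containedness, yours buys quantitative information (the singular-value dictionary) that the paper does not need here.

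Two small cautions if you pursue your route. First, your parenthetical that adaptedness means the highest weight is ``orthogonal to exactly the simple roots defining $\Psf^\pm$'' is backwards: the stabilizer of the highest weight line is the parabolic associated to the set of simple roots \emph{not} orthogonal to the highest weight, so adaptedness requires nonvanishing pairing with precisely the simple roots cut out by $\Psf^+$. Second, the identity $U_1(\Psi(g))=\zeta_\Psi^+(F_g^+)$ only holds after conjugating so that $\Psi(\mathsf{K})\subset\SU(V)$ and $\Psi(\exp\aL)$ is diagonal for the chosen inner product on $V$; since the paper's $U_1$ is defined relative to the standard Euclidean structure, this normalization must be made explicit.
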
 

We will combine Proposition~\ref{prop:adapted representations} and Observation~\ref{obs: k and d-k duality} to prove the following corollary (which can also be deduced directly from the structure theory of the flag manifolds $\Fc^\pm$). 

\begin{corollary}[see Section~\ref{sec: Proof of cor: P+ and P- duality}] \label{cor: P+ and P- duality} If $(\Gamma,\peripherals)$ is relatively hyperbolic and $\rho \colon \Gamma \to \mathsf{G}$ is a representation, then the following are equivalent: 
\begin{enumerate}
\item $\rho$ is $\Psf^\pm$-Anosov relative to $\peripherals$,
\item $\rho$ is $\Psf^\mp$-Anosov relative to $\peripherals$.
\end{enumerate}
Moreover, when the above conditions hold, if $\xi=(\xi^+, \xi^-)$ is the $\Psf^\pm$-Anosov boundary map of $\rho$, then $\hat{\xi}=(\xi^-, \xi^+)$ is the $\Psf^\mp$-Anosov boundary map of $\rho$. 
\end{corollary}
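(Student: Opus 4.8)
The plan is to deduce Corollary~\ref{cor: P+ and P- duality} from Proposition~\ref{prop:adapted representations} together with Observation~\ref{obs: k and d-k duality}, which is exactly the route advertised in the text. First I would fix a finite-dimensional irreducible representation $\Psi\colon\mathsf{G}\to\SL(V)$ adapted to $\Psf^\pm$; such a $\Psi$ exists by the remark following the definition of adapted representations (e.g.\ $\Psi=\wedge^n\Ad$ on the span $V$ of $\Psi(\mathsf{G})(\wedge^n\mathfrak{n}^+)$). The key observation is that a decomposition $V=L_0\oplus W_0$ exhibiting $\Psi$ as adapted to $\Psf^\pm$ is, after swapping the roles of $L_0$ and $W_0$, exactly a decomposition exhibiting $\Psi$ as adapted to $\Psf^\mp$: the line $L_0$ and hyperplane $W_0$ trade places, and the defining conditions $\Psf^+=\{g:\Psi(g)L_0=L_0\}$, $\Psf^-=\{g:\Psi(g)W_0=W_0\}$ are symmetric in $(L_0,\Psf^+)$ and $(W_0,\Psf^-)$. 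Thus $\Psi$ is simultaneously adapted to $\Psf^\pm$ and to $\Psf^\mp$. Moreover, if $N:=\dim V$, the embedding $\zeta^-_\Psi\colon\Fc^-\to\Gr_{N-1}(V)$ is the ``$\Psf_1$-for-$\Psf^\mp$'' analogue of $\zeta^+_\Psi$, and by Example~\ref{ex:abstract formulation for SL(d,k)} and Observation~\ref{obs: k and d-k duality} the notions ``$\Psf_1$-Anosov'' and ``$\Psf_{N-1}$-Anosov'' for $\SL(V)$ coincide, with the two boundary maps being the two coordinates of a single transverse map.

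Concretely, I would argue as follows. Suppose $\rho\colon\Gamma\to\mathsf{G}$ is $\Psf^\pm$-Anosov relative to $\peripherals$. By Proposition~\ref{prop:adapted representations}, $\Psi\circ\rho$ is $\Psf_1$-Anosov relative to $\peripherals$ with boundary map $\xi_{\Psi\circ\rho}=\zeta_\Psi\circ\xi_\rho=(\zeta^+_\Psi\circ\xi^+,\,\zeta^-_\Psi\circ\xi^-)$. By Observation~\ref{obs: k and d-k duality}, $\Psi\circ\rho$ is also $\Psf_{N-1}$-Anosov relative to $\peripherals$; unpacking Definition~\ref{defn:Pk Anosov} and the symmetry in Observation~\ref{obs:strongly_dynamics_pres_div_cartan}, the $\Psf_{N-1}$-Anosov boundary map is obtained from the $\Psf_1$-Anosov boundary map by swapping coordinates, i.e.\ it is $(\zeta^-_\Psi\circ\xi^-,\,\zeta^+_\Psi\circ\xi^+)\colon\partial(\Gamma,\peripherals)\to\Gr_{N-1}(V)\times\Gr_1(V)$. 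Now I apply Proposition~\ref{prop:adapted representations} in the other direction, using that $\Psi$ is adapted to $\Psf^\mp$: since $\Psi\circ\rho$ is $\Psf_1$-Anosov relative to $\peripherals$ with respect to the $\Psf^\mp$-adapted decomposition (this is the same statement as being $\Psf_{N-1}$-Anosov with respect to the original decomposition), we conclude that $\rho$ is $\Psf^\mp$-Anosov relative to $\peripherals$, and that its $\Psf^\mp$-Anosov boundary map $\hat\xi$ satisfies $\zeta_\Psi^{\mp}\circ\hat\xi = $ (the $\Psf_1$-for-$\Psf^\mp$ boundary map) $=(\zeta^-_\Psi\circ\xi^-,\zeta^+_\Psi\circ\xi^+)$. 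Since $\zeta^\mp_\Psi$ is injective and coordinatewise equals $(\zeta^-_\Psi,\zeta^+_\Psi)$, this forces $\hat\xi=(\xi^-,\xi^+)$. The converse implication is identical with the roles of $\Psf^\pm$ and $\Psf^\mp$ interchanged.

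I would be a little careful about one bookkeeping point: Proposition~\ref{prop:adapted representations} is stated for a fixed choice of adapted representation and phrases the conclusion in terms of $\Psf_1$-Anosov (i.e.\ $k=1$) for $\SL(V)$. To invoke it ``for $\Psf^\mp$'' cleanly, I should note that the hypothesis of that proposition is symmetric under the simultaneous swap $\Psf^+\leftrightarrow\Psf^-$, $L_0\leftrightarrow W_0$, and that after this swap $\zeta_\Psi$ becomes $(\zeta^-_\Psi,\zeta^+_\Psi)$ viewed as a map into $\proj(V)\times\Gr_{N-1}(V)$ — but now ``$\proj(V)$'' is playing the role of $\Gr_1(V)$ for the swapped data. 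The only genuinely non-formal input is Observation~\ref{obs: k and d-k duality} (whose proof is already given, via Observation~\ref{obs:strongly_dynamics_pres_div_cartan}), and I should explicitly record the elementary fact that its boundary-map-level content is coordinate-swapping; this is immediate from transversality and strong dynamics preservation being symmetric in the two flag factors.

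The main obstacle — really the only subtlety — will be lining up the identifications of flag varieties and boundary maps so that the two applications of Proposition~\ref{prop:adapted representations} are logically compatible, i.e.\ making sure that ``$\Psi$ adapted to $\Psf^\pm$ via $(L_0,W_0)$'' and ``$\Psi$ adapted to $\Psf^\mp$ via $(W_0,L_0)$'' are recognized as the same hypothesis and that the two produced boundary maps $\xi_\rho$ and $\hat\xi$ are being compared in the correct Grassmannians. Everything else is formal. As an alternative (and a sanity check), one can bypass $\Psi$ entirely: transversality, equivariance, and strong dynamics preservation are manifestly symmetric under exchanging $\Fc^+\leftrightarrow\Fc^-$ once one knows the unique-open-orbit characterization of transversality is symmetric (the open $\mathsf{G}$-orbit in $\Fc^+\times\Fc^-$ maps to the open orbit in $\Fc^-\times\Fc^+$ under the flip), so $(\xi^+,\xi^-)$ being a $\Psf^\pm$-Anosov boundary map is literally the same data as $(\xi^-,\xi^+)$ being a $\Psf^\mp$-Anosov boundary map; I would mention this direct argument as the parenthetical remark promised in the statement, but present the $\Psi$-based deduction as the main proof since it reuses the machinery already in place.
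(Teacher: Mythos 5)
Your overall architecture is the paper's: fix an irreducible $\Psi$ adapted to $\Psf^\pm$, use Proposition~\ref{prop:adapted representations} to translate ``$\rho$ is $\Psf^\pm$-Anosov'' into ``$\Psi\circ\rho$ is $\Psf_1$-Anosov,'' apply Observation~\ref{obs: k and d-k duality} to pass to $\Psf_{\dim(V)-1}$-Anosov, and then translate back to $\Psf^\mp$-Anosov. The paper carries out that last translation by stating a separate lemma ($\rho$ is $\Psf^\mp$-Anosov if and only if $\Psi\circ\rho$ is $\Psf_{\dim(V)-1}$-Anosov, with the boundary maps matched by $\zeta_\Psi$) and noting that the proof of Proposition~\ref{prop:adapted representations} establishes it essentially verbatim.

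The one genuine error is your ``key observation'' that swapping $L_0$ and $W_0$ exhibits the same $\Psi$ as adapted to $\Psf^\mp$. The definition of adaptedness is not symmetric: the subspace playing the role of $L_0$ must be a \emph{line} and the one playing the role of $W_0$ a \emph{hyperplane}, whereas your swap would assign a hyperplane to the line slot. Already for $\mathsf{G}=\SL(d,\Rb)$ with $\Psi=\mathrm{id}$, $\Psf^+$ the stabilizer of $\ip{e_1}$ and $\Psf^-$ the stabilizer of $\ip{e_2,\dots,e_d}$, there is no line whose stabilizer is $\Psf^-$, so $\mathrm{id}$ is adapted to $\Psf^\pm$ but not to $\Psf^\mp$. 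Consequently you cannot literally reapply Proposition~\ref{prop:adapted representations} to ``the swapped data''; you must either rerun its proof with $\Gr_1(V)$ and $\Gr_{\dim(V)-1}(V)$ interchanged (this works because Lemma~\ref{lem: adapted representations are nice} already contains the dual statement, via the passage to ${}^*\Psi$ in its proof), or replace $\Psi$ by its dual $\Psi^*$ -- which genuinely is adapted to $\Psf^\mp$ via the annihilators of $W_0$ and $L_0$ -- and then relate $\Psf_1$-Anosovness of $\Psi^*\circ\rho$ to $\Psf_{\dim(V)-1}$-Anosovness of $\Psi\circ\rho$. A related caution about your parenthetical ``direct'' argument: strong dynamics preservation is a statement about attraction in $\Fc^+$, and the corresponding attraction statement in $\Fc^-$ is not formally the same condition under the flip $\Fc^+\leftrightarrow\Fc^-$; that symmetry is precisely the content of the corollary and rests on the Cartan-decomposition input behind Observation~\ref{obs:strongly_dynamics_pres_div_cartan} (or its Lie-theoretic analogue), so it is not ``manifest.'' With the lemma proved as in the paper, your bookkeeping of the boundary maps and the conclusion $\hat{\xi}=(\xi^-,\xi^+)$ go through.
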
 

Proposition~\ref{prop:adapted representations} also allows us to generalize our results about linear relatively Anosov representations to the general setting. As in the $\SL(d,\Kb)$ case, if $(\Gamma,\peripherals)$ is relatively hyperbolic and $\rho_0\colon \Gamma\to \mathsf{G}$ is a representation, let $\mathrm{Hom}_{\rho_0}(\Gamma,\mathsf{G})$ denote the space of representations $\rho\colon\Gamma\to \mathsf{G}$ so that  if $P \in \peripherals$, then $\rho|_P$ is conjugate to $\rho_0|_P$. 

As a consequence of Proposition~\ref{prop:adapted representations} and Theorem~\ref{thm:stability}, we have the following corollary. 

\begin{corollary}\label{cor:stability in general}
Suppose that $(\Gamma, \peripherals)$ is relatively hyperbolic and $\rho_0\colon\Gamma \to \mathsf{G}$ is $\Psf^\pm$-Anosov relative to $\peripherals$. Then there exists an open neighborhood $\Oc$ of $\rho_0$ in $\Hom_{\rho_0}(\Gamma, \mathsf{G})$ such that every representation in $\Oc$ is $\Psf^\pm$-Anosov relative to $\peripherals$. 

Moreover, 
\begin{enumerate}
\item If $\xi_\rho$ is the Anosov boundary map of $\rho \in \Oc$, then the map 
$$
(\rho, x) \in \Oc \times \partial(\Gamma, \peripherals) \mapsto \xi_\rho(x) \in \Fc^+ \times \Fc^-
$$
is continuous. 
\item If $h\colon M \to \mathcal{O}$ is a real-analytic family of representation and $x \in \partial(\Gamma,\peripherals)$, then the map 
$$
u \in M \mapsto \xi_{h(u)}(x) \in \Fc^+ \times \Fc^-
$$
is real-analytic.
\end{enumerate}
\end{corollary}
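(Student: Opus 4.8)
The plan is to transfer the assertions about $\Psf^\pm$-Anosov representations into $\mathsf{G}$ to the already-established assertions about $\Psf_1$-Anosov representations into a special linear group, via an adapted representation. Fix a finite-dimensional irreducible representation $\Psi\colon\mathsf{G}\to\SL(V)$ adapted to $\Psf^\pm$, which exists by the remark preceding Proposition~\ref{prop:adapted representations}. Post-composition defines a continuous map $\Psi_*\colon\Hom(\Gamma,\mathsf{G})\to\Hom(\Gamma,\SL(V))$, $\rho\mapsto\Psi\circ\rho$, and since $\Psi$ carries a conjugacy in $\mathsf{G}$ to a conjugacy in $\SL(V)$ we have $\Psi_*\big(\Hom_{\rho_0}(\Gamma,\mathsf{G})\big)\subset\Hom_{\Psi\circ\rho_0}(\Gamma,\SL(V))$. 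By Proposition~\ref{prop:adapted representations}, $\Psi\circ\rho_0$ is $\Psf_1$-Anosov relative to $\peripherals$, hence (Theorem~\ref{thm:main}) relative to a fixed Groves--Manning cusp space $X$. Theorems~\ref{thm:stability} and~\ref{thm:singular value and eigenvalue estimates} then provide an open neighborhood $\Oc'$ of $\Psi\circ\rho_0$ in $\Hom_{\Psi\circ\rho_0}(\Gamma,\SL(V))$ on which every representation is $\Psf_1$-Anosov relative to $\peripherals$, the boundary map varies continuously in $(\tau,x)$, it is real-analytic along real-analytic families, and (fixing $x_0\in X$) the orbit $\tau(\Gamma)(q_0)$ in the symmetric space $X_{\SL(V)}$ is quasi-isometric to $\Gamma(x_0)$ with constants uniform over $\Oc'$. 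Setting $\Oc:=\Psi_*^{-1}(\Oc')\cap\Hom_{\rho_0}(\Gamma,\mathsf{G})$, an open neighborhood of $\rho_0$, Proposition~\ref{prop:adapted representations} shows every $\rho\in\Oc$ is $\Psf^\pm$-Anosov relative to $\peripherals$.

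For the ``moreover'' statements the key tool is the identity $\xi_{\Psi\circ\rho}=\zeta_\Psi\circ\xi_\rho$ from Proposition~\ref{prop:adapted representations}, where $\zeta_\Psi=(\zeta_\Psi^+,\zeta_\Psi^-)\colon\Fc^+\times\Fc^-\to\proj(V)\times\Gr_{\dim V-1}(V)$. Since $\Fc^\pm$ are compact and $\zeta_\Psi$ is continuous and injective, $\zeta_\Psi$ is a homeomorphism onto its image; being moreover the orbit map of a reductive group on a projective space (resp.\ a Grassmannian), it is a closed algebraic embedding, hence real-bianalytic onto its image. Thus $\xi_\rho(x)=\zeta_\Psi^{-1}\big(\xi_{\Psi\circ\rho}(x)\big)$, and part~(1) follows by composing with the continuous maps $\Psi_*$ and $(\tau,x)\mapsto\xi_\tau(x)$ on $\Oc'\times\partial(\Gamma,\peripherals)$, while part~(2) follows by composing the real-analytic map $u\mapsto\xi_{\Psi\circ h(u)}(x)$ of Theorem~\ref{thm:stability}(2) with the real-analytic local inverse of $\zeta_\Psi$.

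For part~(3), let $p_0$ lie in the symmetric space $X_\mathsf{G}$ of $\mathsf{G}$ and $q_0$ in $X_{\SL(V)}$. By the choice of $\Oc'$ it suffices to show $\dist_{X_\mathsf{G}}\big(\rho(\gamma)p_0,p_0\big)\asymp\dist_{X_{\SL(V)}}\big((\Psi\circ\rho)(\gamma)q_0,q_0\big)$ with constants independent of $\gamma\in\Gamma$ and $\rho\in\Oc$ (note the left side does not depend on $\rho$ beyond $\Gamma$ acting via $\rho$); chaining this with the uniform quasi-isometry $\Gamma(x_0)\simeq(\Psi\circ\rho)(\Gamma)(q_0)$ of Theorem~\ref{thm:singular value and eigenvalue estimates} then gives the claim. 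This comparison is the standard compatibility of Cartan projections $\mu_{\SL(V)}\circ\Psi=\Psi_*\circ\mu_\mathsf{G}$ together with the fact that the induced linear map $\Psi_*$ is injective on a maximal Cartan subspace of $\mathsf{G}$ --- which holds because $d\Psi$ is injective on $\mathfrak{g}$ after passing to $\mathsf{G}/\ker\Psi$ (a step affecting none of the hypotheses, since $\Psf^\pm$ contain $\ker\Psi$). I expect this last point --- that an adapted $\Psi$ collapses no flat direction of $X_\mathsf{G}$, so that orbit growth in $X_\mathsf{G}$ is genuinely comparable to orbit growth in $X_{\SL(V)}$ --- to be the only step requiring care; the remainder of the argument is a formal transfer along $\Psi_*$ and $\zeta_\Psi$.
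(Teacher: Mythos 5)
Your overall route is exactly the paper's: the paper offers no more than the one-line derivation ``this follows from Proposition~\ref{prop:adapted representations} and Theorems~\ref{thm:stability} and~\ref{thm:singular value and eigenvalue estimates},'' and your elaboration of the main statement and of parts (1)--(2) is correct. In particular, pulling back the neighborhood $\Oc'$ along $\Psi_*$, and inverting $\zeta_\Psi$ (which is a $\mathsf{G}$-equivariant injective immersion of the compact homogeneous space $\mathsf{G}/\Psf^\pm$, hence a real-analytic embedding with real-analytic inverse on its image, precisely because $\Psf^\pm$ is the full stabilizer of $L_0$, resp.\ $W_0$) are sound.

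The gap is in your justification of part (3). You need $\dist_{X_{\mathsf{G}}}(\rho(\gamma)p_0,p_0)\asymp\dist_{X_{\SL(V)}}(\Psi(\rho(\gamma))q_0,q_0)$, and you propose to get injectivity of $d\Psi$ on the Cartan subspace ``after passing to $\mathsf{G}/\ker\Psi$.'' But the quantity on the left lives in the symmetric space of $\mathsf{G}$, not of $\mathsf{G}/\ker\Psi$; if $\ker\Psi$ has positive dimension these are different spaces and the reduction is not harmless. This is not a hypothetical worry: an adapted representation kills every noncompact simple factor of $\mathsf{G}$ contained in $\Psf^+\cap\Psf^-$ (such a factor must act trivially on the line $L_0$ and on $V/W_0$, and irreducibility then forces it into $\ker\Psi$). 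For $\mathsf{G}=\SL(2,\Rb)\times\SL(2,\Rb)$ and $\Psf^\pm=B^\pm\times\SL(2,\Rb)$, a representation $\rho=(\rho_1,\rho_2)$ is $\Psf^\pm$-Anosov as soon as $\rho_1$ is, with $\rho_2$ unconstrained; if some peripheral element has positive translation length under $\rho_2$, its displacement in $X_{\mathsf{G}}$ grows linearly in word length while $\dist_X(\gamma x_0,x_0)$ grows only logarithmically, so the claimed quasi-isometry fails. Thus part (3) genuinely requires the (standard, but here unstated) hypothesis that $\Psf^\pm$ contains no simple factor of $\mathsf{G}$, equivalently that $\ker\Psi$ is finite. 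Under that hypothesis your sketch does close up: the restricted weights of $\Psi$ then have trivial common kernel in $\mathfrak a$ (their common kernel is Weyl-invariant, hence a sum of the $\mathfrak a$-components of the killed simple factors), so $\max_i\abs{\chi_i(\mu(g))}\asymp\norm{\mu(g)}$ and the two displacement functions are comparable. You should either add this hypothesis explicitly or flag that the corollary is being read with that convention; as written, the sentence about $\mathsf{G}/\ker\Psi$ ``affecting none of the hypotheses'' is the one step that does not survive scrutiny.
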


We also can use Proposition~\ref{prop:adapted representations} to extend Theorem~\ref{thm:main} to general $\Psf^\pm$-Anosov representations.

Given a weak cusp space $X$ for a relatively hyperbolic group $(\Gamma, \peripherals)$ and a representation $\rho \colon \Gamma \to \mathsf{G}$, we define the bundles
\begin{align*}
\wh B_\rho^\pm := \Gamma \backslash (\Gc(X) \times \Fc^{\pm})\quad\mathrm{and}\quad
\wh V_\rho^\pm := \Gamma \backslash(\Gc(X) \times T\Fc^{\pm}),
\end{align*}
where $ T\Fc^{\pm}$ is the tangent bundle of $ \Fc^{\pm} $. Observe that $\wh{V}_\rho^\pm$ is a vector bundle over $\wh{B}_\rho^\pm$ of rank $\dim(\Fc^\pm)$. The flow $\geodflow^t$ on $\Gc(X)$ extends to flows $\flatflow^t$ on $B_\rho^\pm:=\Gc(X) \times \Fc^{\pm}$ and $\homflow^t$ on $V_\rho^\pm:=\Gc(X) \times T\Fc^{\pm}$ whose action is trivial on the second factor.
These in turn descends to flows on $\wh B_\rho^\pm$ and $\wh V_\rho^\pm$, also denoted by $\flatflow^t$ and $\homflow^t$ respectively, which cover the geodesic flow on $\wh{\Gc}(X)$. 

Given a continuous $\rho$-equivariant transverse map $\xi$ we define sections 
$$
\mathscr{s}_\xi^\pm \colon \Gc(X)\to B_\rho^\pm=\Gc(X)\times \Fc^\pm
$$
by $\mathscr{s}_\xi^\pm(\sigma)=\big(\sigma,\xi^\pm(\sigma^\pm)\big)$. Since $\xi$ is $\rho$-equivariant, $\mathscr{s}_\xi^\pm$ descend to sections
$\wh{\mathscr{s}}_\xi^\pm\colon\wh{\Gc}(X)\to \wh B_\rho^\pm$. Finally we consider the vector bundles  $(\wh{\mathscr{s}}_\xi^\pm)^*\wh V_\rho^\pm \to \wh{\Gc}(X)$. By construction, the flow $\homflow^t$ on $\wh V_\rho^\pm$ pulls back to a flow on this bundle which we also denote by $\homflow^t$.

\begin{proposition}[see Section~\ref{sec: proof of prop:equivalence of definitions general case}] \label{prop:equivalence of definitions general case}
Suppose that $(\Gamma,\peripherals)$ is relatively hyperbolic, $\rho\colon \Gamma \to \mathsf{G}$ is a representation, and 
\[ \xi = (\xi^+, \xi^-) \colon \partial(\Gamma, \peripherals) \to \Fc^+ \times \Fc ^- \]
is a continuous $\rho$-equivariant transverse map. Then the following are equivalent: 
\begin{enumerate}
\item $\rho$ is $\Psf^\pm$-Anosov relative to $\peripherals$ with Anosov boundary map $\xi$. 
\item There exist a weak cusp space $X$ for $(\Gamma, \peripherals)$ and a family of norms $\norm{\cdot}$ on the fibers of the associated bundle $(\wh{\mathscr{s}}_\xi^-)^*\wh V_\rho^-$ such that the flow $\homflow^t$ is exponentially contracting.
\item There exist a weak cusp space $X$ for $(\Gamma, \peripherals)$ and a family of norms $\norm{\cdot}$ on the fibers of the associated bundle $(\wh{\mathscr{s}}_\xi^+)^*\wh V_\rho^+$ such that the flow $\homflow^t$ is exponentially expanding. 
\item For any Groves--Manning cusp space $X$ for $(\Gamma,\peripherals)$, there exists a family of norms $\norm{\cdot}$ on the fibers of the associated bundle $(\wh{\mathscr{s}}_\xi^-)^*\wh V_\rho^-$ such that the flow $\homflow^t$ is exponentially contracting.
\item For any Groves--Manning cusp space $X$ for $(\Gamma,\peripherals)$, there exists a family of norms $\norm{\cdot}$ on the fibers of the associated bundle $(\wh{\mathscr{s}}_\xi^+)^*\wh V_\rho^+$ such that the flow $\homflow^t$ is exponentially expanding. 
\end{enumerate}
\end{proposition}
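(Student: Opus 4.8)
### Proof proposal for Proposition~\ref{prop:equivalence of definitions general case}

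The plan is to reduce everything to the linear case via the adapted representation $\Psi$ and Proposition~\ref{prop:adapted representations}, together with a dictionary between the abstract Hom-bundle condition used for $\SL(d,\Kb)$ and the pulled-back tangent-bundle condition for general flag varieties. First I would fix a finite-dimensional irreducible representation $\Psi\colon\mathsf{G}\to\mathsf{SL}(V)$ adapted to $\Psf^\pm$, with decomposition $V=L_0\oplus W_0$, as in the remark after the definition of adapted representations; write $d=\dim V$ and set $\tilde\rho:=\Psi\circ\rho$. By Proposition~\ref{prop:adapted representations}, item (1) of the present proposition is equivalent to $\tilde\rho$ being $\Psf_1$-Anosov relative to $\peripherals$, with boundary map $\xi_{\tilde\rho}=\zeta_\Psi\circ\xi$. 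Now Theorem~\ref{thm:main} applied to $\tilde\rho$ already gives the equivalence of: $\tilde\rho$ is $\Psf_1$-Anosov relative to $\peripherals$; $\tilde\rho$ is $\Psf_1$-Anosov relative to some weak cusp space $X$; $\tilde\rho$ is $\Psf_1$-Anosov relative to every Groves--Manning cusp space. So the remaining content is purely a matter of translating ``$\tilde\rho$ is $\Psf_1$-Anosov relative to $X$'' (a statement about contraction of $\homflow^t$ on $\Hom(\wh\Xi^{d-1},\wh\Theta^1)$) into statements (2)--(5) about the bundles $(\wh{\mathscr{s}}_\xi^\pm)^*\wh V_\rho^\pm$.

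The key geometric observation is that the embeddings $\zeta_\Psi^+\colon\Fc^+\to\proj(V)$ and $\zeta_\Psi^-\colon\Fc^-\to\Gr_{d-1}(V)$ are $\mathsf{G}$-equivariant smooth embeddings onto $\mathsf{G}$-orbits, and hence their differentials identify $T\Fc^+$ (resp.\ $T\Fc^-$) $\mathsf{G}$-equivariantly with subbundles of $\zeta_\Psi^+{}^*T\proj(V)$ (resp.\ $\zeta_\Psi^-{}^*T\Gr_{d-1}(V)$). Over a point of $\proj(V)$ corresponding to a line $L$, the tangent space $T_L\proj(V)$ is canonically $\Hom(L,V/L)$; and for the $\Psf_1$-Anosov splitting $E=\wh\Theta^1\oplus\wh\Xi^{d-1}$ with $\wh\Theta^1(\sigma)=\zeta_\Psi^+(\xi^+(\sigma^+))=:L_\sigma$, the relevant piece of $\Hom(L_\sigma, V/L_\sigma)$ transverse to the other tangent directions is exactly $\Hom(\wh\Xi^{d-1}(\sigma),\wh\Theta^1(\sigma))$ after using the transversality $V=\wh\Theta^1\oplus\wh\Xi^{d-1}$ to write $V/L_\sigma\cong$ (complement). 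I would make precise the claim that, under $d\zeta_\Psi^+$, the subbundle $(\wh{\mathscr{s}}_\xi^+)^*\wh V_\rho^+$ maps isomorphically and flow-equivariantly onto $\Hom(\wh\Theta^1,\wh\Xi^{d-1})$, and similarly $(\wh{\mathscr{s}}_\xi^-)^*\wh V_\rho^-$ onto $\Hom(\wh\Xi^{d-1},\wh\Theta^1)$ via $d\zeta_\Psi^-$ (here $T_W\Gr_{d-1}(V)\cong\Hom(V/W,W)\cong\Hom(\wh\Theta^1,W)$, and restricting a functional on $V/W\cong\wh\Theta^1$ and reinterpreting the target gives $\Hom(\wh\Xi^{d-1},\wh\Theta^1)$ up to the canonical isomorphisms coming from transversality). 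This is where I expect the real work: checking that these identifications are genuinely flow-equivariant (they are, because $\flatflow^t$ acts trivially on the fiber direction in both pictures and $\zeta_\Psi^\pm$ are $\rho$-equivariant) and that a metric on one side pulls back to a metric on the other with operator norms comparable up to multiplicative constants (which follows from compactness of the relevant Grassmannian pieces once one fixes any auxiliary metric, exactly as in the passage from ``metric on $\wh E_\rho$'' to ``operator norm on the Hom bundle'' in Section~\ref{sec:intro to flow}).

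Granting this dictionary, the proof assembles as follows. The equivalence (2)$\iff$(4) and (3)$\iff$(5) are immediate from Theorem~\ref{thm:main} applied to $\tilde\rho$, once (2) is shown equivalent to ``$\tilde\rho$ is $\Psf_1$-Anosov relative to $X$''. For the latter: exponential contraction of $\homflow^t$ on $(\wh{\mathscr{s}}_\xi^-)^*\wh V_\rho^-$ transfers, via $d\zeta_\Psi^-$ and the comparability of norms, to exponential contraction of $\homflow^t$ on $\Hom(\wh\Xi^{d-1},\wh\Theta^1)$, i.e.\ to $\tilde\rho$ being $\Psf_1$-Anosov relative to $X$, and conversely; then Proposition~\ref{prop:adapted representations} identifies this with (1). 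The equivalence (3)$\iff$(1) is the ``expanding'' mirror image: by Proposition~\ref{prop: Hom bundles contraction/expansions}(3), exponential expansion of the flow on $\Hom(\wh\Theta^1,\wh\Xi^{d-1})$ is equivalent to exponential contraction on $\Hom(\wh\Xi^{d-1},\wh\Theta^1)$, and under $d\zeta_\Psi^+$ the former matches exponential expansion of $\homflow^t$ on $(\wh{\mathscr{s}}_\xi^+)^*\wh V_\rho^+$; alternatively one uses Corollary~\ref{cor: P+ and P- duality} together with the $\Psf_1\leftrightarrow\Psf_{d-1}$ duality of Observation~\ref{obs: k and d-k duality}. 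One small point to handle with care is the direction ``(2) $\Rightarrow$ existence of $X$''—which is automatic since $X$ is part of the data in (2)—versus ``(1) $\Rightarrow$ (4)/(5)'', which genuinely needs the hard implication (1)$\Rightarrow$(3) of Theorem~\ref{thm:main} for $\tilde\rho$. I would organize the writeup as: (a) set up $\Psi$, $\tilde\rho$, and the identifications $d\zeta_\Psi^\pm$; (b) prove the bundle-isomorphism lemma with flow-equivariance and norm comparability; (c) deduce (2)$\iff$(1)$\iff$(3) for a fixed weak cusp space; (d) invoke Theorem~\ref{thm:main} to upgrade ``some weak cusp space'' to ``every Groves--Manning cusp space,'' yielding (4) and (5).
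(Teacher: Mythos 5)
Your forward direction, (1) $\implies$ (4),(5), is essentially the paper's argument: apply Proposition~\ref{prop:adapted representations} and Theorem~\ref{thm:main} to $\Psi\circ\rho$, then pull the norm back along the bundle map induced by $\zeta_\Psi^\pm$. The problem is in the converse, which is where the real content lies.

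Your ``dictionary'' claims that $d\zeta_\Psi^+$ identifies $(\wh{\mathscr{s}}_\xi^+)^*\wh V_\rho^+$ \emph{isomorphically} with $\Hom(\wh\Theta^1,\wh\Xi^{d-1})$ (equivalently, with $(\wh{\mathscr{s}}_{\xi_{\Psi\circ\rho}}^+)^*\wh V_{\Psi\circ\rho}^+$ as in Example~\ref{ex: prop:equivalence of definitions general case in special case}). That cannot be right on dimension grounds: the fiber of $(\wh{\mathscr{s}}_\xi^+)^*\wh V_\rho^+$ has dimension $\dim\Fc^+=\dim\nL^-$, whereas $\Hom(\wh\Xi^{d-1},\wh\Theta^1)$ has fiber dimension $\dim V-1$, and for a general adapted representation these differ (e.g.\ for $\mathsf{G}=\SL(3,\Rb)$ with $\Psf^\pm$ Borel subgroups, $\dim\Fc^+=3$ while $\dim V$ is much larger). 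The maps $\zeta_\Psi^\pm$ only induce bundle \emph{embeddings} $\wh V_\rho^\pm\into\wh V_{\Psi\circ\rho}^\pm$ onto proper flow-invariant subbundles — you in fact say this correctly one sentence earlier. An embedding suffices to transport contraction \emph{down} to the subbundle (hence (1) $\implies$ (2)--(5)), but contraction of the flow on the subbundle $(\wh{\mathscr{s}}_\xi^-)^*\wh V_\rho^-$ says nothing about the complementary directions in $\Hom(\wh\Xi^{d-1},\wh\Theta^1)$, so you cannot conclude that $\Psi\circ\rho$ is $\Psf_1$-Anosov relative to $X$ and then invoke Theorem~\ref{thm:2 implies 1 main theorem}. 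This is exactly the step ``and conversely'' in your paragraph (c), and it does not go through.

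The paper closes this gap by an entirely different argument for (3) $\implies$ (1) (Lemma~\ref{lem: 3 implies 1 in general annoying proof}): given an escaping sequence $\gamma_n\to x$, $\gamma_n^{-1}\to y$, one conjugates $\rho(\gamma_n)$ into $\mathsf{L}=\Psf^+\cap\Psf^-$, writes a Cartan decomposition $m_ne^{H_n}\ell_n$ there, and uses the parametrization $T\colon\nL^-\to\Fc^+$ together with the root-space decomposition of $T_{\Psf^+}\Fc^+\cong\nL^-$ to show that exponential contraction of $d(e^{H_n})$ on each root line $\gL_\alpha$ with $\alpha(H_\star)<0$ forces $\alpha(H_n)\to-\infty$, whence Observation~\ref{obs:properties_of_the_map}(4) gives the strongly dynamics preserving property. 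The point is that $T_{\Psf^+}\Fc^+$, small as it is, sees every relevant root exactly once, which is the information the full $\Hom$ bundle of the linear representation would over-record. If you want to keep a purely linear-algebra reduction for the converse, you would need an additional argument that contraction on the image of $d\zeta_\Psi^-$ propagates to all of $\Hom(\wh\Xi^{d-1},\wh\Theta^1)$; no such argument is in your proposal, and it is not obvious one exists.
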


\begin{example}\label{ex: prop:equivalence of definitions general case in special case}  Let $\Psf^+, \Psf^- \leq \SL(d,\Kb)$ be as in Example~\ref{ex:abstract formulation for SL(d,k)}. Suppose that $(\Gamma,\peripherals)$ is relatively hyperbolic, $\rho\colon \Gamma \to \mathsf{G}$ is a representation, and
\[ 
\xi = (\xi^+, \xi^-) \colon \partial(\Gamma, \peripherals) \to \Fc^+ \times \Fc ^- 
\]
is a continuous $\rho$-equivariant transverse map. Then $\left. (\mathscr{s}_\xi^+)^* V^+_\rho\right|_{\sigma} = T_{\xi^+(\sigma^+)} \Gr_k(\Kb^d)$ and, since $\xi$ is transverse, there is a natural isomorphism 
$$
T_{\xi^+(\sigma^+)} \Gr_k(\Kb^d) \simeq \Hom( \xi^+(\sigma^+), \xi^-(\sigma^-)).
$$
Using the notation from Section~\ref{sec:intro to flow}, this implies that there is a bundle isomorphism $(\mathscr{s}_\xi^+)^* V^+_\rho \simeq \Hom(\Theta^k, \Xi^{d-k})$ which descends to a bundle isomorphism 
$$
(\wh{\mathscr{s}}_\xi^+)^*\wh V_\rho^+ \simeq \Hom\left(\wh{\Theta}^k, \wh{\Xi}^{d-k}\right).
$$
Moreover, this isomorphism intertwines the flows. The same reasoning implies that $(\wh{\mathscr{s}}_\xi^-)^*\wh V_\rho^-$ is isomorphic to $\Hom\left(\wh{\Xi}^{d-k},\wh{\Theta}^k\right)$. So, in this special case, the implications (1) $\implies$ (4) and (1) $\implies$ (5) in Proposition~\ref{prop:equivalence of definitions general case} follow from Theorem~\ref{thm:main}. Notice that Theorem~\ref{thm:main} does not imply the converse, since in Theorem~\ref{thm:main} we assumed the family of norms on $\Hom\left(\wh{\Theta}^k, \wh{\Xi}^{d-k}\right)$ are induced by a metric on the bundle $\wh{E}_\rho(X)$. 
\end{example}

\subsection{Proof of Proposition~\ref{prop:adapted representations}} We start with a lemma. 

\begin{lemma}\label{lem: adapted representations are nice} Suppose that $\Psi \colon \mathsf{G}\to\mathsf{SL}(V)$ is a finite-dimensional irreducible representation which is adapted to $\Psf^\pm$. If $(F^+, F^-) \in \Fc^+ \times \Fc^-$, then: 
\begin{enumerate}
\item $F^+$ and $F^-$ are transverse if and only if $\zeta_\Psi^+(F^+)$ and $\zeta_\Psi^-(F^-)$ are transverse. 
\item If $(g_n)_{n \geq 1}$ is a sequence in $\mathsf{G}$, then the following are equivalent: 
\begin{enumerate}
\item $g_n F \to F^+$ uniformly on compact subsets of $$\{ F \in \Fc^+ : F \text{ transverse to } F^-\}.$$ 
\item $\Psi(g_n) v \to \zeta_\Psi^+(F^+)$ uniformly on compact subsets of $$\{ v \in \proj(V) : v\text{ transverse to } \zeta_\Psi^-(F^-)\}.$$ 

\end{enumerate}
\end{enumerate}
\end{lemma}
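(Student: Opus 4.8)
The plan is to prove items (1) and (2) separately, with item (1) — the compatibility of $\zeta_\Psi$ with transversality — as the main structural input.

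For item (1), the forward direction is soft: if $F^+,F^-$ are transverse then, since $\mathsf G$ acts transitively on the open orbit of $\Fc^+\times\Fc^-$, we may write $(F^+,F^-)=(g\Psf^+,g\Psf^-)$, so $\zeta_\Psi^+(F^+)=\Psi(g)L_0$ and $\zeta_\Psi^-(F^-)=\Psi(g)W_0$, and applying $\Psi(g)$ to $V=L_0\oplus W_0$ gives transversality in $\proj(V)$. For the converse the idea is to describe $L_0,W_0$ via highest weights. Choosing a maximal torus and Borel $B\subseteq\Psf^+$, irreducibility forces the $B$-invariant line $L_0$ to be the highest weight line $V_\chi$, and $\Psf^+=\Stab_{\mathsf G}(L_0)$ forces $\Psf^+$ to be the standard parabolic $\Psf_\theta$ with $\theta=\{\alpha\text{ simple}:\langle\chi,\alpha^\vee\rangle=0\}$; dually, $W_0$ is the unique $\Psf^-$-invariant hyperplane, which one checks equals $\bigoplus_{\mu\neq\chi}V_\mu$ (this is $\mathfrak p^-$-invariant since $\mathfrak n^-$ lowers weights and so cannot hit the top weight space). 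By $\mathsf G$-equivariance assume $F^-=\Psf^-$, so $\zeta_\Psi^-(F^-)=W_0$, and write $F^+=g\Psf^+$. If $F^+$ is not transverse to $\Psf^-$, then $g$ lies in a non-identity Bruhat cell $\Psf^-w\Psf^+$; since the identity double coset is $W_\theta$ and $\Stab_W(\chi)=W_\theta$, we have $w\chi\neq\chi$, hence $\Psi(g)L_0=\Psi(p^-)\Psi(w)V_\chi=\Psi(p^-)V_{w\chi}\subseteq\Psi(p^-)W_0=W_0$. Thus $\zeta_\Psi^+(F^+)\subseteq\zeta_\Psi^-(F^-)$, i.e.\ they are not transverse, completing (1).

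For item (2), the implication (b)$\Rightarrow$(a) is immediate: if $F$ is transverse to $F^-$, then $\zeta_\Psi^+(F)$ is transverse to $\zeta_\Psi^-(F^-)$ by (1), so (b) gives $\Psi(g_n)\zeta_\Psi^+(F)=\zeta_\Psi^+(g_nF)\to\zeta_\Psi^+(F^+)$, and since $\zeta_\Psi^+$ is a $\mathsf G$-equivariant homeomorphism onto a closed subset of $\proj(V)$ (injectivity uses $\Psf^+=\Stab_{\mathsf G}(L_0)$, and $\Fc^+$ is compact), this forces $g_nF\to F^+$. For (a)$\Rightarrow$(b): condition (a) forces $(g_n)$ to be escaping, since any subsequential limit in $\mathsf G$ would be a homeomorphism of $\Fc^+$ constant on a dense set. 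By the standard description of such contracting sequences via the Cartan (KAK) decomposition (see e.g.\ \cite{GGKW}), after passing to a subsequence we may write $g_n=k_na_n\ell_n$ with $k_n\to k_\infty$, $\ell_n\to\ell_\infty$ in a maximal compact subgroup of $\mathsf G$, with $k_\infty\Psf^+=F^+$ and $\ell_\infty^{-1}\Psf^-=F^-$, and with the Cartan projection of $g_n$ escaping every wall bounding $\Psf^+$. Because $\Psi$ is adapted, $\chi$ strictly dominates every other weight with a gap controlled by exactly those walls, so $\log\tfrac{\mu_1}{\mu_2}(\Psi(g_n))\to\infty$; and comparing singular value decompositions gives $U_1(\Psi(g_n))=\Psi(k_n)L_0=\zeta_\Psi^+(k_n\Psf^+)\to\zeta_\Psi^+(F^+)$ and likewise $U_{\dim V-1}(\Psi(g_n)^{-1})=\zeta_\Psi^-(\ell_n^{-1}\Psf^-)\to\zeta_\Psi^-(F^-)$. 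Observation~\ref{obs:strongly_dynamics_pres_div_cartan} (with $k=1$) then yields $\Psi(g_n)v\to\zeta_\Psi^+(F^+)$ for every $v\in\proj(V)$ transverse to $\zeta_\Psi^-(F^-)$, which is (b); applying this along every subsequence gives the full statement.

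The main obstacle is the converse of item (1): it requires matching the $\mathsf G$-orbit (Bruhat cell) stratification of $\Fc^+\times\Fc^-$ with the incidence relation between $\zeta_\Psi^+(F^+)$ and $\zeta_\Psi^-(F^-)$ inside $\proj(V)$, which is exactly what the highest-weight bookkeeping above is designed to do. A secondary technical point, used in item (2), is invoking the Cartan-decomposition characterization of condition (a) for a general semisimple $\mathsf G$ and checking that the "adapted" hypothesis transfers the relevant regularity — the Cartan projection leaving the walls of $\Psf^+$ — to the single singular value gap $\mu_1/\mu_2$ of $\Psi(g_n)$.
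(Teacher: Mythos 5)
Your proposal is correct, but it takes a genuinely different route from the paper on both halves of the lemma. For part (1) the paper simply cites \cite[Prop.\ 3.5]{GGKW} (or \cite[Obs.\ B.10]{CZZ2021}), whereas you supply a self-contained proof via highest-weight theory: identifying $L_0$ with the highest weight line, $W_0$ with $\bigoplus_{\mu\neq\chi}V_\mu$, and matching the open $\mathsf G$-orbit with the big Bruhat cell through $\Stab_W(\chi)=W_\theta$. That is a legitimate proof of the cited fact, at the cost of structure theory the paper deliberately keeps out of this section. For the hard implication (a)$\implies$(b) of part (2) the two arguments diverge more sharply. The paper stays inside $\proj(\End(V))$: by compactness it extracts a limit $S$ of $[\Psi(g_n)]$, uses irreducibility (so that $\zeta_\Psi^+(\Fc^+)$ spans $V$) plus a perturbation to transversality to show ${\rm image}(S)=\zeta_\Psi^+(F^+)$, hence $S$ has rank one, and then dualizes to identify $\ker(S)=\zeta_\Psi^-(F^-)$ — no Cartan decomposition, no weights. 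You instead invoke the general-group analogue of Observation~\ref{obs:strongly_dynamics_pres_div_cartan} from \cite{GGKW} to convert condition (a) into a $KAK$ statement (Cartan projections escaping the walls of $\Psf^+$, Cartan attractors converging to $F^\pm$), use the adapted hypothesis to transfer that $\theta$-regularity to the single gap $\mu_1/\mu_2(\Psi(g_n))$, and finish with Observation~\ref{obs:strongly_dynamics_pres_div_cartan} in $\SL(V)$. Both are sound; yours makes the quantitative link between the Cartan projection of $g_n$ and the singular values of $\Psi(g_n)$ explicit (which is useful elsewhere, e.g.\ for the quasi-isometry transfer in Corollary~\ref{cor:stability in general}), but it leans on a nontrivial black box — in particular the step where pointwise convergence on the \emph{entire} transversal of $F^-$ forces $\ell_\infty^{-1}\Psf^-=F^-$ requires the cited GGKW equivalence at full strength — whereas the paper's $\End(V)$ argument needs nothing beyond irreducibility and compactness.
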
 

\begin{proof} 
Part (1) follows from \cite[Prop.\ 3.5]{GGKW} or \cite[Obs.\ B.10]{CZZ2021}. Since $\zeta_\Psi$ is an embedding, the implication (b) $\implies$ (a) in part (2) follows immediately from part (1). 

To show that (a) $\implies$ (b) in part (2), suppose $(g_n)_{n \geq 1}$ is a sequence in $\mathsf{G}$ and $g_n F \to F^+$ uniformly on compact subsets of 
$$
\{ F \in \Fc^+ : F \text{ transverse to } F^-\}.
$$
 To show that $\Psi(g_n) L \to \zeta_\Psi^+(F^+)$ uniformly on compact subsets of 
 $$
 \{ v \in \proj(V) : v\text{ transverse to } \zeta_\Psi^-(F^-)\},
 $$  
 it suffices to show that the sequence $[\Psi(g_n)] \in \PGL(V) \subset \proj(\End(V))$ converges to the projective linear transformation $T \in \proj(\End(V))$ with ${\rm image}(T)= \zeta_\Psi^+(F^+)$ and $\ker(T) = \zeta_\Psi^-(F^-)$. Since $\proj(\End(V))$ is compact, it is enough to consider the case where $[\Psi(g_n)]$ converges to some $S \in \proj(\End(V))$. 

Since $\Psi$ is irreducible, $\zeta_\Psi^+(\Fc^+)$ spans $V$. So we can pick $F_1,\dots, F_m \in \Fc^+$ such that 
$$
\ker S \oplus \zeta_\Psi^+(F_1) \oplus \dots \oplus \zeta_\Psi^+(F_m) = V.
$$
By perturbing, we may also assume that each $F_j$ is transverse to $F^-$. Then 
$$
S(\zeta_\Psi^+(F_j)) = \lim_{n \to \infty} \Psi(g_n)\zeta_\Psi^+(F_j)=\lim_{n \to \infty} \zeta_\Psi^+( g_nF_j) = \zeta_\Psi^+(F^+). 
$$
So ${\rm image}(S) = \zeta_\Psi^+(F^+)$.

To compute the kernel, we notice that $\Gr_{\dim(V)-1}(V)$ may be identified with $\proj(V^*)$ by identifying a hyperplane $Q$ in $V$ with the projective
class of linear functionals with kernel $Q$.  Notice that $[{^*\Psi(\gamma_n)}]$ converges to ${^*S}$ in $\proj(\End(V^*))$. Further, since $g_n F \to F^+$ uniformly on compact subsets of 
$$
\{ F \in \Fc^+ : F \text{ transverse to } F^-\},
$$ 
one can show that $g_n F \rightarrow F^-$ uniformly on compact subsets of 
$$
\{ F \in \Fc^- : F \text{ transverse to } F^+\},
$$ 
see for instance~\cite[Appendix A]{CZZ3}.  So repeating the argument above shows that $\mathrm{Image}({^*S})=\zeta_\Psi^-(F^-)$, so  the kernel of $S$ is $\zeta_\Psi^-(F^-)$. 

Since $T$ and $S$ have rank one and the same image and kernel, we see that $T=S$. 

\end{proof} 

\begin{proof}[Proof of Proposition~\ref{prop:adapted representations}] First suppose that $\rho$ is $\Psf^\pm$-Anosov relative to $\peripherals$ with boundary map $\xi_\rho$. Then Lemma~\ref{lem: adapted representations are nice} implies that $\Psi \circ \rho$ is $\Psf_1$-Anosov relative to $\peripherals$ with boundary map $\zeta_\Psi \circ \xi_\rho$.

Next suppose that  $\Psi \circ \rho$ is $\Psf_1$-Anosov relative to $\peripherals$ with boundary map $\xi_{\Psi \circ \rho}$. We claim that $\xi_{\Psi \circ \rho}$ has image in $\zeta_\Psi(\Fc^+ \times \Fc^-)$. Fix $x \in \partial(\Gamma, \peripherals)$. Then there exists a sequence $(\gamma_n)_{n \geq 1}$ such that $\gamma_n \to x$. Passing to a subsequence we can suppose that $\gamma_n^{-1} \to y$. Since $\Psi$ is irreducible, there exists $F \in \Fc^+$ such that $\zeta_\Psi^+(F)$ is transverse to $\xi_{\Psi \circ \rho}^{\dim(V)-1}(y)$. Then 
$$
\xi_{\Psi \circ \rho}^1(x) = \lim_{n \to \infty} (\Psi \circ \rho)(\gamma_n) \zeta_\Psi^+(F) = \lim_{n \to \infty} \zeta_\Psi^+( \rho(\gamma_n) F) \in \zeta_\Psi^+(\Fc^+)
$$
So $\xi_{\Psi \circ \rho}^1$ has image in $\zeta_\Psi^+(\Fc^+)$. A similar argument shows that $\xi_{\Psi \circ \rho}^{d-1}$ has image in $\zeta_\Psi^-(\Fc^-)$. Thus $\xi_{\Psi \circ \rho}$ has image in $\zeta_\Psi(\Fc^+ \times \Fc^-)$.

Then $\xi_\rho : = \zeta_\Psi^{-1} \circ \xi_{\Psi \circ \rho}$ is well defined and Lemma~\ref{lem: adapted representations are nice} implies that $\rho$ is $\Psf^\pm$-Anosov relative to $\peripherals$ with boundary map $\xi_\rho$.

\end{proof} 

\subsection{Proof of Corollary~\ref{cor: P+ and P- duality}}\label{sec: Proof of cor: P+ and P- duality} Fix a finite-dimensional irreducible representation  $\Psi\colon\mathsf{G}\to\mathsf{SL}(V)$  which is adapted to $\Psf^\pm$. The proof of Proposition~\ref{prop:adapted representations} can be used to show the following. 

\begin{lemma}  If $(\Gamma,\peripherals)$ is relatively hyperbolic and $\rho \colon \Gamma \to \mathsf{G}$ is a representation, then the following are equivalent: 
\begin{enumerate}
\item $\rho$ is $\Psf^\mp$-Anosov relative to $\peripherals$,
\item $\Psi \circ \rho$ is $\Psf_{\dim(V)-1}$-Anosov relative to $\peripherals$. 
\end{enumerate}
Moreover, when the above conditions hold, if $\xi_\rho, \xi_{\Psi \circ \rho}$ are the Anosov boundary maps of $\rho, \Psi \circ \rho$ respectively, then $\xi_{\Psi \circ \rho}=\zeta_\Psi \circ \xi_\rho$. 
\end{lemma}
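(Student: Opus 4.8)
The plan is to run the proof of Proposition~\ref{prop:adapted representations} essentially verbatim, interchanging the roles of the two parabolics and replacing $\proj(V)$ throughout by $\Gr_{\dim(V)-1}(V)$. The only input that needs a (cosmetic) reformulation is Lemma~\ref{lem: adapted representations are nice}(2): I will use the ``transpose'' statement that, for a sequence $(g_n)_{n\geq 1}$ in $\mathsf{G}$, one has $g_n F \to F^-$ for every $F \in \Fc^-$ transverse to $F^+$ if and only if $\Psi(g_n) Q \to \zeta_\Psi^-(F^-)$ for every $Q \in \Gr_{\dim(V)-1}(V)$ transverse to $\zeta_\Psi^+(F^+)$. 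This is exactly Lemma~\ref{lem: adapted representations are nice}(2) applied to the contragredient representation ${}^*\Psi\colon \mathsf{G}\to\SL(V^*)$: since $\Psi$ is adapted to $\Psf^\pm$ via a decomposition $V = L_0\oplus W_0$, the representation ${}^*\Psi$ is adapted to the opposite pair via $V^* = W_0^\perp \oplus L_0^\perp$, and under the identification $\Gr_{\dim(V)-1}(V)\cong\proj(V^*)$ (sending a hyperplane to its annihilator) the map $\zeta_\Psi^-$ becomes $\zeta_{{}^*\Psi}^+$. Part~(1) of Lemma~\ref{lem: adapted representations are nice} is symmetric in $+$ and $-$ and requires no change.

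For the forward implication, suppose $\rho$ is $\Psf^\mp$-Anosov relative to $\peripherals$ with boundary map $\xi_\rho = (\xi_\rho^-,\xi_\rho^+)\colon \partial(\Gamma,\peripherals)\to\Fc^-\times\Fc^+$. I would check that $\zeta_\Psi\circ\xi_\rho := (\zeta_\Psi^-\circ\xi_\rho^-,\ \zeta_\Psi^+\circ\xi_\rho^+)\colon \partial(\Gamma,\peripherals)\to \Gr_{\dim(V)-1}(V)\times\Gr_{1}(V)$ is continuous, is $(\Psi\circ\rho)$-equivariant (immediate since $\zeta_\Psi^\pm$ intertwine the $\mathsf{G}$-action with the $\SL(V)$-action), is transverse by Lemma~\ref{lem: adapted representations are nice}(1), and is strongly dynamics preserving by the transpose version of Lemma~\ref{lem: adapted representations are nice}(2) stated above. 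Hence $\Psi\circ\rho$ is $\Psf_{\dim(V)-1}$-Anosov relative to $\peripherals$ with Anosov boundary map $\zeta_\Psi\circ\xi_\rho$.

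For the reverse implication, suppose $\Psi\circ\rho$ is $\Psf_{\dim(V)-1}$-Anosov with boundary map $\xi_{\Psi\circ\rho} = (\xi^{\dim(V)-1},\xi^1)$. First I would show that $\xi_{\Psi\circ\rho}$ takes values in $\zeta_\Psi^-(\Fc^-)\times\zeta_\Psi^+(\Fc^+)$: given $x\in\partial(\Gamma,\peripherals)$, pick $\gamma_n\to x$ and, after passing to a subsequence, $\gamma_n^{-1}\to y$; since $\Psi$ is irreducible, $\bigcap_{F\in\Fc^-}\zeta_\Psi^-(F)$ is a proper $\Psi(\mathsf{G})$-invariant subspace, hence $\{0\}$, so there is $F\in\Fc^-$ with $\xi^1(y)\not\subset\zeta_\Psi^-(F)$, i.e.\ $\zeta_\Psi^-(F)$ transverse to $\xi^1(y)$; then by strong dynamics preservation and compactness of $\Fc^-$ one gets $\xi^{\dim(V)-1}(x) = \lim_n (\Psi\circ\rho)(\gamma_n)\zeta_\Psi^-(F) = \lim_n \zeta_\Psi^-(\rho(\gamma_n)F)\in\zeta_\Psi^-(\Fc^-)$, and the analogous argument for $\xi^1$ gives $\xi^1(x)\in\zeta_\Psi^+(\Fc^+)$. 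As $\zeta_\Psi^\pm$ are embeddings, $\xi_\rho := \big((\zeta_\Psi^-)^{-1}\circ\xi^{\dim(V)-1},\ (\zeta_\Psi^+)^{-1}\circ\xi^1\big)$ is well defined, continuous and $\rho$-equivariant, and by Lemma~\ref{lem: adapted representations are nice}(1) and its transpose it is transverse and strongly dynamics preserving; thus $\rho$ is $\Psf^\mp$-Anosov relative to $\peripherals$ with boundary map $\xi_\rho$, and by construction $\xi_{\Psi\circ\rho} = \zeta_\Psi\circ\xi_\rho$, which is the ``moreover'' clause.

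I expect the only genuine subtlety to be the bookkeeping — matching which parabolic plays the role of ``$\Psf^+$'' for $\Psi$ versus for ${}^*\Psi$, and pairing $\zeta_\Psi^-$ and $\zeta_\Psi^+$ with the correct Grassmannian factors in the target of $\xi_{\Psi\circ\rho}$ — rather than any new geometric content; once the transpose form of Lemma~\ref{lem: adapted representations are nice}(2) is in hand, the argument is a line-by-line transcription of the proof of Proposition~\ref{prop:adapted representations}.
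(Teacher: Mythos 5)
Your proposal is correct and is exactly the route the paper intends: the paper's entire proof is the remark that the argument for Proposition~\ref{prop:adapted representations} carries over, and your write-up faithfully executes that transcription. The one genuinely new ingredient you need — the ``transpose'' form of Lemma~\ref{lem: adapted representations are nice}(2), obtained by applying it to the contragredient ${}^*\Psi$ under the identification $\Gr_{\dim(V)-1}(V)\cong\proj(V^*)$ — is handled correctly (and is consistent with how the paper itself already uses ${}^*\Psi$ to compute kernels in the proof of that lemma).
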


Then the Corollary follows from this Lemma, Proposition~\ref{prop:adapted representations}, and Observation~\ref{obs: k and d-k duality}.

\subsection{Proof of Proposition~\ref{prop:equivalence of definitions general case}}\label{sec: proof of prop:equivalence of definitions general case} Suppose that $(\Gamma,\peripherals)$ is relatively hyperbolic, $\rho\colon \Gamma \to \mathsf{G}$ is a representation, and 
\[ \xi = (\xi^+, \xi^-) \colon \partial(\Gamma, \peripherals) \to \Fc^+ \times \Fc ^- \]
is a continuous $\rho$-equivariant transverse map. 

We note that (4)$\implies$(2) and (5)$\implies$(3) are by definition. As the next proof demonstrates, two of the other implications follow quickly from Proposition~\ref{prop:adapted representations} and Theorem~\ref{thm:main}. 

\begin{lemma} [(1)$\implies$(4),(5)] If $\rho$ is $\Psf^\pm$-Anosov relative to $\peripherals$ with Anosov boundary map $\xi$ and $X$ is a Groves--Manning cusp space for $(\Gamma,\peripherals)$, then there exists a family of norms $\norm{\cdot}$ on the fibers of the associated bundle $(\wh{\mathscr{s}}_\xi^\pm)^*\wh V_\rho^\pm$ such that the flow $\homflow^t$ is exponentially expanding/contracting. 
\end{lemma}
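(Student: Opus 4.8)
The plan is to reduce the general semisimple case to the $\SL(d,\Kb)$ case already handled in Theorem~\ref{thm:main}, using the adapted representation machinery. First, fix a finite-dimensional irreducible representation $\Psi \colon \mathsf{G} \to \mathsf{SL}(V)$ adapted to $\Psf^\pm$ (such a $\Psi$ exists by the remark following the definition of adapted representations, e.g.\ $\Psi = \wedge^n \Ad$). Suppose $\rho$ is $\Psf^\pm$-Anosov relative to $\peripherals$ with Anosov boundary map $\xi$. By Proposition~\ref{prop:adapted representations}, the representation $\Psi \circ \rho \colon \Gamma \to \SL(V)$ is $\Psf_1$-Anosov relative to $\peripherals$, and its Anosov boundary map is $\zeta_\Psi \circ \xi = (\zeta_\Psi^+ \circ \xi^+, \zeta_\Psi^- \circ \xi^-)$.

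Next, I would apply Theorem~\ref{thm:main} to $\Psi \circ \rho$ with $k = 1$ and $d = \dim V$: for any Groves--Manning cusp space $X$ for $(\Gamma, \peripherals)$, the representation $\Psi \circ \rho$ is $\Psf_1$-Anosov relative to $X$. By Example~\ref{ex: prop:equivalence of definitions general case in special case}, applied to $\Psi \circ \rho$ and its boundary map $\zeta_\Psi \circ \xi$, this yields a family of norms on $(\wh{\mathscr{s}}_{\zeta_\Psi \circ \xi}^+)^* \wh{V}_{\Psi \circ \rho}^+ \simeq \Hom(\wh{\Theta}^1, \wh{\Xi}^{d-1})$ on which the flow $\homflow^t$ is exponentially expanding, and similarly a family of norms on $(\wh{\mathscr{s}}_{\zeta_\Psi \circ \xi}^-)^* \wh{V}_{\Psi \circ \rho}^-$ on which $\homflow^t$ is exponentially contracting.

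The remaining step is to transport these norms back to the bundles $(\wh{\mathscr{s}}_\xi^\pm)^* \wh{V}_\rho^\pm$ associated to $\rho$ itself. The key observation is that the embedding $\zeta_\Psi^\pm \colon \Fc^\pm \to \proj(V)$ (respectively $\Gr_{\dim V - 1}(V)$) is a $\mathsf{G}$-equivariant smooth immersion, so its differential gives a $\mathsf{G}$-equivariant bundle embedding $T\Fc^\pm \hookrightarrow T\proj(V)$ (resp.\ $T\Gr_{\dim V - 1}(V)$) covering $\zeta_\Psi^\pm$. Pulling back along $\mathscr{s}_\xi^\pm$ and passing to the $\Gamma$-quotient, this produces a $\flatflow^t$-equivariant bundle embedding $(\wh{\mathscr{s}}_\xi^\pm)^* \wh{V}_\rho^\pm \hookrightarrow (\wh{\mathscr{s}}_{\zeta_\Psi \circ \xi}^\pm)^* \wh{V}_{\Psi \circ \rho}^\pm$ which intertwines the flows $\homflow^t$ (because $\zeta_\Psi$ is $\mathsf{G}$-equivariant and the flows act trivially on the flag factor). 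Restricting the norm on the larger bundle to the image subbundle gives the desired family of norms on $(\wh{\mathscr{s}}_\xi^\pm)^* \wh{V}_\rho^\pm$, and exponential contraction/expansion is inherited by any flow-invariant subbundle. The main obstacle I anticipate is making precise the identification between $(\wh{\mathscr{s}}_\xi^+)^*\wh V_\rho^+$, its image under $d\zeta_\Psi^+$, and the Hom-bundle picture of Example~\ref{ex: prop:equivalence of definitions general case in special case} so that the flows genuinely match up; this is essentially a naturality check for the tangent-space identifications $T_{\xi^+(\sigma^+)}\Fc^+ \to \Hom(\ell, W)$ under the Plücker-type embedding, and requires care but no new ideas. (The reverse implications in Proposition~\ref{prop:equivalence of definitions general case}, e.g.\ (2)$\implies$(1), would be handled separately by the analogue of the "standard arguments" of Section~\ref{sec:contraction_Anosov}, but that is not part of the statement to be proved here.)
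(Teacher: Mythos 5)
Your proposal is correct and follows essentially the same route as the paper: apply Proposition~\ref{prop:adapted representations} to pass to $\Psi\circ\rho$, invoke Theorem~\ref{thm:main} via Example~\ref{ex: prop:equivalence of definitions general case in special case}, and pull the norms back along the flow-intertwining bundle embedding induced by $d\zeta_\Psi^\pm$. The paper records exactly this as $(\wh{\mathscr{s}}_\xi^\pm)^*\wh V_\rho^\pm=(\iota^\pm)^*\bigl((\wh{\mathscr{s}}_{\xi_{\Psi\circ\rho}}^\pm)^*\wh V_{\Psi\circ\rho}^\pm\bigr)$ with the pullback norm, so no further comparison is needed.
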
 

\begin{proof} Let  $\Psi\colon\mathsf{G}\to\mathsf{SL}(V)$ be a finite-dimensional irreducible representation which is adapted to $\Psf^\pm$. Then Proposition~\ref{prop:adapted representations} implies that $\Psi \circ \rho$ is $\Psf_1$-Anosov with Anosov boundary map $\xi_{\Psi \circ \rho}=\zeta_\Psi \circ \xi$. 

By Theorem~\ref{thm:main} (see Example~\ref{ex: prop:equivalence of definitions general case in special case}), there exists a family of norms $\norm{\cdot}$ on the fibers of the associated bundle associated bundle $(\wh{\mathscr{s}}_{\xi_{\Psi\circ\rho}}^\pm)^*\wh V_{\Psi \circ \rho}^\pm$ such that the flow $\phi^t$ is exponentially expanding/contracting. 

Notice that the maps $\zeta^\pm$ induce bundle embeddings $\iota^\pm \colon \wh V_\rho^\pm \into \wh V_{\Psi\circ\rho}^\pm$ which intertwines the flows on the two bundles. Further, 
$$
(\wh{\mathscr{s}}_\xi^\pm)^*\wh V_\rho^\pm=(\iota^\pm)^*\left((\wh{\mathscr{s}}_{\xi_{\Psi\circ\rho}}^\pm)^*\wh V_{\Psi\circ\rho}^\pm\right)
$$
and so if we equip $(\wh{\mathscr{s}}_\xi^\pm)^*\wh V_\rho^\pm$  with the pullback norm, then the flow is exponentially expanding/contracting. 
\end{proof}

We will complete the proof of Proof of Proposition~\ref{prop:equivalence of definitions general case} by showing that (3) $\implies$ (1) and (2) $\implies$ (1). To prove these directions we need to set some additional notation. Let $\gL$ be the Lie algebra of $\mathsf{G}$ and let $\pL^\pm$ be the Lie algebra of $\Psf^\pm$. Then there exists a Cartan decomposition $\gL = \kL \oplus \pL$, a Cartan subspace $\aL \subset \pL$, and an element $H_\star \in \aL$ so that 
$$\pL^\pm = \gL_0 \oplus \bigoplus_{\alpha(\pm H_\star) \geq 0} \gL_\alpha$$
where 
$$\gL = \gL_0 \oplus \bigoplus_{\alpha \in \Sigma} \gL_\alpha$$
is the root space decomposition associated to $\aL$. Let $\nL^\pm = \bigoplus_{\alpha(\pm H_\star) > 0} \gL_\alpha$ and define
$$
T \colon \nL^{-} \to \Fc^+ \quad \text{by} \quad T(Y) = e^{Y}\Psf^{+}.
$$

We use the following observation whose proof can be found in~\cite[Obs.\ B.13 and Lem.\ B.14]{CZZ2021}. 

\begin{observation}\label{obs:properties_of_the_map} \
\begin{enumerate}
\item $T(\nL^-) = \{ F\in \Fc^+ : F  \text{ is transverse to } \Psf^-\}$.
\item $d(T)_0 \colon \nL^{-} \to T_{\Psf^+} \Fc^+$ is a linear isomorphism. 
\item If $H \in \aL$, then $e^H \circ T = T\circ \Ad(e^H)$. 
\item If $(H_n)_{n \geq 1}$ is a sequence in $\aL$ with $\lim_{n \to \infty} \alpha(H_n) = -\infty$ for all $\alpha \in \Sigma$ with $\alpha(H_\star) < 0$, then 
\begin{align*}
\lim_{n \to \infty} e^{H_n} (F) = \Psf^+
\end{align*}
uniformly on compact subsets of $$\{ F \in \Fc^+ : F \text{ transverse to } \Psf^-\}.$$ 
\end{enumerate}
\end{observation}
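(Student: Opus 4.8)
The plan is to derive all four assertions from the standard structure theory of a parabolic pair $(\Psf^+,\Psf^-)$, with the Bruhat ``big cell'' decomposition as the one substantive input. Throughout, write $\mathsf{N}^\pm := \exp(\nL^\pm)$ for the unipotent radicals of $\Psf^\pm$, and record the vector-space decomposition $\gL = \nL^- \oplus \pL^+$ (immediate from $\pL^+ = \gL_0 \oplus \nL^+$ and $\gL = \nL^- \oplus \gL_0 \oplus \nL^+$). Note also that $\aL \subset \gL_0 \subset \pL^+$, so $e^{H} \in \Psf^+$ for all $H \in \aL$, and that $\Ad(e^H)$ acts on $\gL_\alpha$ by the scalar $e^{\alpha(H)}$, hence preserves each of $\nL^-$, $\gL_0$, $\nL^+$.

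For (1) and (2), the key point is that the orbit map $o\colon \mathsf{N}^- \to \Fc^+$, $n \mapsto n\Psf^+$, is a diffeomorphism onto $\{F \in \Fc^+ : F \text{ transverse to } \Psf^-\}$. This combines three facts: (i) $\mathsf{N}^- \cap \Psf^+ = \{e\}$ — since $\nL^- \cap \pL^+ = 0$ and a unipotent subgroup of a semisimple Lie group with finite center injects into the adjoint group, giving injectivity of $o$; (ii) the differential $d o_e$ is the composite $\nL^- \hookrightarrow \gL \twoheadrightarrow \gL/\pL^+ \cong T_{\Psf^+}\Fc^+$, which is an isomorphism by the decomposition above, so $o$ is an open immersion; and (iii) the image of $o$ is exactly the transversality locus. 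For (iii), $F^+$ transverse to $\Psf^-$ means $(F^+, \Psf^-)$ lies in the open $\mathsf{G}$-orbit of $\Fc^+\times\Fc^-$; the stabilizer of $\Psf^-$ is $\Psf^-$, which contains $\mathsf{N}^-$, and by the Bruhat decomposition $\mathsf{N}^-$ already acts transitively on the fiber $\{F^+ : (F^+,\Psf^-) \text{ lies in the open orbit}\}$. Since $\exp\colon \nL^- \to \mathsf{N}^-$ is a (polynomial) diffeomorphism, $T = o \circ \exp$ is a diffeomorphism from $\nL^-$ onto the transversality locus — this is (1) — and $d(T)_0 = d o_e \circ d(\exp)_0$ is the isomorphism $\nL^- \xrightarrow{\sim} T_{\Psf^+}\Fc^+$ just described, which is (2).

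For (3), use the conjugation identity $g\exp(X)g^{-1} = \exp(\Ad(g)X)$. Given $H \in \aL$ and $Y \in \nL^-$, the element $\Ad(e^H)Y$ again lies in $\nL^-$ by the remark in the first paragraph, so $T(\Ad(e^H)Y)$ is defined; then $T(\Ad(e^H)Y) = \exp(\Ad(e^H)Y)\Psf^+ = e^H e^Y e^{-H}\Psf^+ = e^H e^Y \Psf^+ = e^H T(Y)$, where the penultimate step uses $e^{-H} \in \Psf^+$. For (4), take $F \in \Fc^+$ transverse to $\Psf^-$ and write $F = T(Y)$ with $Y \in \nL^-$ via (1); decompose $Y = \sum_{\alpha(H_\star)<0} Y_\alpha$ with $Y_\alpha \in \gL_\alpha$. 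By (3), $e^{H_n}(F) = T(\Ad(e^{H_n})Y) = T\!\left(\sum_{\alpha} e^{\alpha(H_n)} Y_\alpha\right)$. The hypothesis that $\alpha(H_n) \to -\infty$ for every $\alpha \in \Sigma$ with $\alpha(H_\star) < 0$ forces $e^{\alpha(H_n)} \to 0$ for each such $\alpha$, so $\Ad(e^{H_n})Y \to 0$ in $\nL^-$; by continuity of $T$ (part of (2)), $e^{H_n}(F) \to T(0) = \Psf^+$.

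The manipulations with root spaces and the adjoint identity are routine bookkeeping; the one genuinely substantive ingredient is point (iii) above — identifying the image of the $\mathsf{N}^-$-orbit map with the transversality locus, i.e.\ matching the ``open $\mathsf{G}$-orbit'' definition of transversality with the Bruhat big cell. I expect this to be the main point needing care, and it is precisely what the cited references handle; once it is in hand, (1)--(4) fall out as above.
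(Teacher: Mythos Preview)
Your argument is correct. The paper itself does not prove this observation; it simply cites \cite[Obs.\ B.13 and Lem.\ B.14]{CZZ2021} for the proof. Your write-up supplies exactly the standard structure-theoretic argument one would expect that reference to contain: the Bruhat big-cell identification of the $\mathsf{N}^-$-orbit of $\Psf^+$ with the transversality locus gives (1) and (2), the conjugation identity $g\exp(X)g^{-1}=\exp(\Ad(g)X)$ together with $e^{-H}\in\Psf^+$ gives (3), and (4) follows by writing $F=T(Y)$, decomposing $Y$ into root components, and using that each $e^{\alpha(H_n)}\to 0$. The one point you flag as needing care---that the $\mathsf{N}^-$-orbit of $\Psf^+$ coincides with the set of flags transverse to $\Psf^-$---is handled correctly via the observation that $\Psf^- = (\Psf^+\cap\Psf^-)\ltimes \mathsf{N}^-$ with the Levi factor fixing $\Psf^+$, so the $\Psf^-$-orbit and the $\mathsf{N}^-$-orbit agree.
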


\begin{remark} To be precise, for part (4) it was only claimed in ~\cite{CZZ2021}   that $\lim_{n \to \infty} e^{H_n} (F) = \Psf^+$ for all $F\in \Fc^+$ transverse to $\Psf^-$, however the proof implies the stronger form of convergence stated above. \end{remark}

\begin{lemma}[(3)$\implies$(1)]\label{lem: 3 implies 1 in general annoying proof} If there exist a weak cusp space $X$ for $(\Gamma,\peripherals)$ and a family of norms $\norm{\cdot}$ on the fibers of the associated bundle $(\wh{\mathscr{s}}_\xi^+)^*\wh V_\rho^+$ such that the flow $\homflow^t$ is exponentially expanding, then $\rho$ is $\Psf^\pm$-Anosov relative to $\peripherals$ with Anosov boundary map $\xi$.
\end{lemma}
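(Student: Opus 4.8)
The plan is to verify directly that $\xi$ is strongly dynamics preserving, imitating the argument in Section~\ref{sec:contraction_Anosov} but with the Lie-theoretic input of Observation~\ref{obs:properties_of_the_map} in place of the singular-value machinery. First I would unpack the hypothesis: pulled back to $\Gc(X)$, the bundle $(\wh{\mathscr{s}}_\xi^+)^*\wh V_\rho^+$ has fiber $T_{\xi^+(\sigma^+)}\Fc^+$ over $\sigma$, this fiber is unchanged along each flow line (since $(\geodflow^t\sigma)^+=\sigma^+$), and $\homflow^t$ is the identity on fibers, so exponential expansion provides a continuous $\rho$-equivariant family of norms $\norm{\cdot}_\sigma$ on these spaces and constants $c,C>0$ with $\norm{v}_{\geodflow^t\sigma}\geq C^{-1}e^{ct}\norm{v}_\sigma$ for all $t\geq0$ and $v\in T_{\xi^+(\sigma^+)}\Fc^+$. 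Since $\xi$ is already continuous, transverse and $\rho$-equivariant, it will remain only to show: for every escaping $(\gamma_n)$ with $\gamma_n\to x$, $\gamma_n^{-1}\to y$ and every $F\in\Fc^+$ transverse to $\xi^-(y)$, one has $\rho(\gamma_n)F\to\xi^+(x)$; $\xi$ is then by definition the Anosov boundary map.

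Next I would set up the flow space as in the proof in Section~\ref{sec:contraction_Anosov}: fixing $x_0\in X$ and using Proposition~\ref{prop:abundance of geodesic lines}, choose geodesic lines $\sigma_n$ and times $T_n\geq0$ so that $\sigma_n(0)$ and $\tau_n(0):=\gamma_n^{-1}\geodflow^{T_n}(\sigma_n)(0)$ lie in a fixed ball about $x_0$; then $T_n\to\infty$, $\sigma_n^+\to x$, and, by the standard convergence-group argument, $\tau_n^-\to y$. Passing to a subsequence, $\sigma_n,\tau_n$ lie in a compact set $K\subset\Gc(X)$ and $\tau_n\to\eta\in K$ with $\eta^-=y$. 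For each $\sigma\in\Gc(X)$ I would pick $g_\sigma\in\mathsf G$ with $g_\sigma\Psf^+=\xi^+(\sigma^+)$ and $g_\sigma\Psf^-=\xi^-(\sigma^-)$ (possible because transverse pairs form a single $\mathsf G$-orbit), arranging that $\sigma\mapsto g_\sigma$ is continuous near $\eta$ and stays in a compact subset of $\mathsf G$ over $K$. Because $(\gamma_n\tau_n)^\pm=(\geodflow^{T_n}\sigma_n)^\pm=\sigma_n^\pm$ and $\xi$ is $\rho$-equivariant, $\rho(\gamma_n)g_{\tau_n}$ and $g_{\sigma_n}$ carry $\Psf^\pm$ to the same flags, so $m_n:=g_{\sigma_n}^{-1}\rho(\gamma_n)g_{\tau_n}$ lies in the common Levi $\Psf^+\cap\Psf^-$, which normalizes $\nL^-$.

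The crux will be the estimate $\norm{\Ad(m_n)|_{\nL^-}}\lesssim e^{-cT_n}$ for a fixed norm on $\nL^-$. To get it I would apply the exponential expansion with $\sigma=\sigma_n$, $t=T_n$; using the $\rho$-equivariance of the norms together with $\geodflow^{T_n}\sigma_n=\gamma_n\tau_n$, and comparing $\norm{\cdot}_{\sigma_n}$ and $\norm{\cdot}_{\tau_n}$ with a fixed background Riemannian metric on $\Fc^+$ (legitimate since $\sigma_n,\tau_n\in K$ and the norm family is continuous), this shows $d(\rho(\gamma_n))_{\xi^+(\tau_n^+)}\colon T_{\xi^+(\tau_n^+)}\Fc^+\to T_{\xi^+(\sigma_n^+)}\Fc^+$ contracts by $\lesssim e^{-cT_n}$; conjugating by the bounded $g_{\sigma_n},g_{\tau_n}$ and identifying $T_{\Psf^+}\Fc^+\cong\nL^-$ via $d(T)_0$ (Observation~\ref{obs:properties_of_the_map}(2)) yields the claim. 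Finally, for large $n$, $F$ is transverse to $\xi^-(\tau_n^-)$ (transversality is open and $\xi^-(\tau_n^-)\to\xi^-(y)$), so $g_{\tau_n}^{-1}F$ is transverse to $\Psf^-$ and hence, by Observation~\ref{obs:properties_of_the_map}(1), equals $T(Y_n)$ for some $Y_n\in\nL^-$; since $g_{\tau_n}\to g_\eta$ and $g_\eta^{-1}F=T(Y_0)$ is in the big cell, $Y_n\to Y_0$, so $(Y_n)$ is bounded. Then, since the Levi fixes $\Psf^+$ and normalizes $\nL^-$,
$$
\rho(\gamma_n)F=g_{\sigma_n}m_n\,g_{\tau_n}^{-1}F=g_{\sigma_n}m_n e^{Y_n}\Psf^+=g_{\sigma_n}e^{\Ad(m_n)Y_n}\Psf^+=g_{\sigma_n}\,T(\Ad(m_n)Y_n),
$$
and the crux estimate gives $\Ad(m_n)Y_n\to0$, so $T(\Ad(m_n)Y_n)\to T(0)=\Psf^+$; as $g_{\sigma_n}$ ranges in a compact set, $\rho(\gamma_n)F-g_{\sigma_n}\Psf^+\to0$, while $g_{\sigma_n}\Psf^+=\xi^+(\sigma_n^+)\to\xi^+(x)$. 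Hence $\rho(\gamma_n)F\to\xi^+(x)$, as required.

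The hard part will be the third paragraph: transferring the exponential expansion on $(\wh{\mathscr{s}}_\xi^+)^*\wh V_\rho^+$ into the contraction estimate for $\Ad(m_n)|_{\nL^-}$, which requires keeping careful track of the conjugating elements $g_\sigma$, their boundedness over compact sets, and the identification $T_{\Psf^+}\Fc^+\cong\nL^-$. By contrast, the convergence-group bookkeeping ($T_n\to\infty$, $\sigma_n^+\to x$, $\tau_n^-\to y$, relative compactness of the relevant sequences) is routine and essentially identical to Section~\ref{sec:contraction_Anosov}, and the twin implication (2)$\implies$(1) will follow from the same argument applied to the involution $\sigma\mapsto I(\sigma)$ (equivalently, via Corollary~\ref{cor: P+ and P- duality}).
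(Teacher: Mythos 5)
Your argument is correct, and while the core strategy matches the paper's --- conjugate $\rho(\gamma_n)$ into the common Levi $\mathsf{L}=\Psf^+\cap\Psf^-$, use the expansion hypothesis to show its derivative at the fixed point $\Psf^+$ contracts like $e^{-cT_n}$, and then push points of the big cell $T(\nL^-)$ to $\Psf^+$ --- you deviate from the paper's proof in two genuine ways. First, the paper splits into the cases $x\neq y$ and $x=y$: in the first case it uses that $\gamma_n$ is eventually loxodromic, takes $\sigma_n$ asymptotic to the axis so that a \emph{single} conjugator $g_n$ puts $\rho(\gamma_n)$ into $\mathsf{L}$, and in the second case it reduces to the first by composing with a fixed $\beta\in\Gamma$; you avoid the dichotomy altogether by using geodesics through the orbit of a basepoint (as in Section~\ref{sec:contraction_Anosov}) and \emph{two} conjugators $g_{\sigma_n}, g_{\tau_n}$, so that $m_n=g_{\sigma_n}^{-1}\rho(\gamma_n)g_{\tau_n}\in\mathsf{L}$. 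Second, the paper then takes a Cartan decomposition $m_ne^{H_n}\ell_n$ of the Levi element, shows $\alpha(H_n)\to-\infty$ for each root $\alpha$ with $\alpha(H_\star)<0$ via the eigenvector identity $e^{H_n}d(T)_0(Y)=e^{\alpha(H_n)}d(T)_0(Y)$, and invokes Observation~\ref{obs:properties_of_the_map}(4); you instead bound $\Ad(m_n)|_{\nL^-}$ in operator norm directly and conclude from $m_ne^{Y_n}\Psf^+=T(\Ad(m_n)Y_n)$ with $(Y_n)$ bounded, which is cleaner and bypasses both the $KAK$ decomposition of $\mathsf{L}$ and part (4) of the observation. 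The one ingredient you use that is not literally in Observation~\ref{obs:properties_of_the_map} is the identity $m\circ T=T\circ\Ad(m)$ for arbitrary $m\in\mathsf{L}$ (the observation states it only for $m=e^H$, $H\in\aL$); this holds because $\mathsf{L}$ normalizes $\nL^-$ and fixes $\Psf^+$, so $me^Y\Psf^+=e^{\Ad(m)Y}\Psf^+$, but you should state it explicitly since it is also what justifies identifying $d(m_n)_{\Psf^+}$ with $\Ad(m_n)|_{\nL^-}$ via $d(T)_0$. With that made explicit, and the routine remark that it suffices to verify the limit along subsequences (so that $\tau_n\to\eta$ and the boundedness of $g_{\sigma_n},g_{\tau_n}$ can be arranged), the proof is complete.
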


\begin{proof} The following argument is similar to the proof of Lemma B.9 in~\cite{CZZ2021}. 

Notice that we only have to show that $\xi$ is strongly dynamics-preserving. So consider an escaping sequence $(\gamma_n)_{n \geq 1}$ in $\Gamma$ with $\gamma_n \to x$ and $\gamma_n^{-1} \to y$.

Let $\mathsf{K} \leq \mathsf{G}$ be the maximal compact subgroup with Lie algebra $\kL$, fix a $\mathsf{K}$-invariant Riemannian metric on $\Fc^+$, and let $|\cdot|$ denote the induced family of norms on the fibers of $T\Fc^+ \to \Fc^+$. 

Recall that $\mathscr{s}_\xi^+ \colon \Gc(X) \to V_\rho^+=\Gc(X) \times \Fc^+$ is given by
$$
\mathscr{s}_\xi^+(\sigma) =( \sigma, \xi^+(\sigma^+)).
$$
By hypothesis, there exists a $\rho$-equivariant continuous family of norms on the fibers of $(\mathscr{s}_\xi^+)^*V_\rho^+$ and constants  $C, c> 0$ such that 
\begin{align*}
\norm{Z}_{\flatflow^{-t}(\mathscr{s}_\xi^+(\sigma))} \leq Ce^{-c t} \norm{Z}_{\mathscr{s}_\xi^+(\sigma)}
\end{align*}
for all $t > 0$, $\sigma \in \Gc(X)$, and $Z \in T_{\mathscr{s}_\xi^+(\sigma^+)} \Fc^+$. 

\medskip
\noindent \textbf{Case 1:} If  $x \neq y$, then $\gamma_n$ is loxodromic when $n$ is sufficiently large, $\gamma_n^+ \to x$, and $\gamma_n^{-} \to y$. Furthermore, we can find a bounded sequence $(\sigma_n)_{n \geq 1}$ in $\Gc(X)$ such that $\sigma_n^\pm = \gamma_n^\pm$, and a bounded sequence $(g_n)_{n \geq 1}$ in $\mathsf{G}$ such that 
\begin{align*}
g_n (\xi^+(\gamma_n^+), \xi^-(\gamma_n^-)) = (\Psf^+, \Psf^-).
\end{align*}
Then 
$$g_n \rho(\gamma_n) g_n^{-1} \Psf^\pm = \Psf^\pm\quad\mathrm{so}\quad
g_n \rho(\gamma_n) g_n^{-1} \in \mathsf{L}:=\Psf^+ \cap \Psf^-.$$
for all $n$.
Notice that  
\begin{align*}
\gL_0 \oplus \bigoplus_{\alpha(H_\star)=0} \gL_\alpha
\end{align*}
is a root space decomposition of the Lie algebra of $\mathsf{L}$. Then, using the Cartan decomposition of the reductive group $\mathsf{L}$, for every $n \geq 1$ there exist $m_{n}, \ell_{n} \in \mathsf{K} \cap \mathsf{L}$ and $H_n \in \aL$ so that 
\begin{align*}
g_n \rho(\gamma_n) g_n^{-1} = m_{n} e^{H_n} \ell_{n}.
\end{align*}

\noindent \textbf{Claim:} If $\alpha \in \Sigma$ and $\alpha(H_\star) < 0$, then $\lim_{n \to \infty} \alpha(H_n) = -\infty$. 

\medskip

Since $(\gamma_n \sigma_n)^\pm = \sigma_n^\pm$, we can find  $t_n \to \infty$ and a bounded sequence $(\hat{\sigma}_n)_{n \geq 1}$ in $\Gc(X)$ such that $\gamma_n^{-1}\sigma_n = \phi^{-t_n}(\hat{\sigma}_n)$. 
Since $\{\sigma_n\}\cup\{\hat{\sigma}_n \}$ is bounded there exists $C_1 > 1$ such that: if $\sigma \in \{\sigma_n\}\cup\{\hat{\sigma}_n\}$, then 
\begin{align*}
\frac{1}{C_1} |Z|_{\xi^+(\sigma^+)} \leq \norm{Z}_{\mathscr{s}^+_\xi(\sigma)} \leq C_1 |Z|_{\xi^+(\sigma^+)}
\end{align*}
for all $Z \in T_{\xi^+(\sigma^+)}\Fc^+$. Likewise, there exists $C_2 > 1$ such that 
\begin{align*}
\frac{1}{C_2} |Z|_F \leq |g_n(Z)|_{g_n(F)}  \leq C_2 |Z|_F
\end{align*}
for all $n \geq 1$, $F \in \Fc^+$, and $Z \in T_F\Fc^+$.

Since both $m_n$ and $\ell_n$ fix $\Psf^+$ and $|\cdot|$ is a $\mathsf{K}$-invariant family of norms, it follows that for any $Z \in T_{\Psf^+} \Fc^+$, we have 
\begin{align}\label{eqn: long}
|e^{H_n} (Z)|_{\Psf^+} & = |m_{n}^{-1} g_n \rho(\gamma_n) g_n^{-1} \ell_{n}^{-1}( Z)|_{\Psf^+} \leq C_2 | \rho(\gamma_n) g_n^{-1} \ell_{n}^{-1}( Z)|_{\xi^+(\sigma_n^+)} \nonumber\\
 &\leq C_1C_2  \norm{\rho(\gamma_n) g_n^{-1} \ell_{n}^{-1} (Z)}_{\mathscr{s}_\xi^+(\sigma_n)}= C_1C_2  \norm{g_n^{-1} \ell_{n}^{-1}(Z)}_{\varphi^{-t_n}(\mathscr{s}_\xi^+(\hat{\sigma}_n))}\nonumber\\
 &\leq C_1C_2Ce^{-c t_n} \norm{g_n^{-1} \ell_{n}^{-1} (Z)}_{\mathscr{s}_\xi^+(\hat{\sigma}_n)}\leq C_1^2C_2Ce^{-c t_n} |g_n^{-1} \ell_{n}^{-1} (Z)|_{\xi^+(\hat{\sigma}_n^+)}\\
& \leq C_1^2C_2^2Ce^{-c t_n} |Z|_{\Psf^+}.\nonumber
\end{align}

Fix $\alpha\in\Sigma$ with $\alpha(H_\star)<0$. Then fix $Y \in\mathfrak g_\alpha$. Then Observation~\ref{obs:properties_of_the_map}(2) implies that $Z:= d(T)_0(Y)\in T_{\Psf^+}\Fc^+$. Further
\[
e^{H_n}(Z)=d(e^{H_n}\circ T)_0(Y)=\left.\frac{d}{dt}\right|_{t=0}e^{H_n}\circ T(tY)
\]
and by Observation~\ref{obs:properties_of_the_map}(3) 
\[
\left.\frac{d}{dt}\right|_{t=0}e^{H_n}\circ T(tY)=\left.\frac{d}{dt}\right|_{t=0}T(te^{\alpha(H_n)}Y)=e^{\alpha(H_n)}\left.\frac{d}{dt}\right|_{t=0}T(tY)=e^{\alpha(H_n)}Z.
\]
Thus, $e^{H_n}(Z)=e^{\alpha(H_n)}Z$, so the inequality in Equation~\eqref{eqn: long} implies that 
\begin{align*}
\lim_{n \to \infty} \alpha(H_n) = -\infty.
\end{align*}
This completes the proof of the claim.

Then, by Observation~\ref{obs:properties_of_the_map}(4) 
\begin{align*}
\lim_{n \to \infty} e^{H_n}( F) = \Psf^+
\end{align*}
uniformly on compact subsets of $$\{ F \in \Fc^+ : F \text{ transverse to } \Psf^-\}.$$ 
Since $g_n (\xi^+(x), \xi^-(y)) \to (\Psf^+,\Psf^-)$, $m_{n}\Psf^\pm =\Psf^\pm=\ell_n \Psf^\pm$, and 
$\rho(\gamma_n) = g_n^{-1} m_n e^{H_n} \ell_n g_n$ we then have 
\begin{align*}
\lim_{n \to \infty} \rho(\gamma_n) (F)= \xi^+(x)
\end{align*}
uniformly on compact subsets of $$\{ F \in \Fc^+ : F \text{ transverse to } \xi^-(y)\}.$$

\medskip
\noindent \textbf{Case 2:}  If $x = y$, pick $\beta \in \Gamma$ so that $z:=\beta^{-1}( x) \neq x$. Then $\gamma_n\beta \to x$ and  $(\gamma_n\beta)^{-1} \to z \neq x$. 
By the first case, $\rho(\gamma_n\beta)(F) \to \xi^+(x)$ uniformly on compact subsets of $$\{ F \in \Fc^+ : F \text{ transverse to } \xi^-(z) = \rho(\beta^{-1})\xi^-(x)\}.$$ 
Equivalently, $\rho(\gamma_n)(F) \to \xi^+(x)$ uniformly on compact subsets of $$\{ F \in \Fc^+ : F \text{ transverse to } \xi^-(x)\}.$$ 
\end{proof} 

\begin{lemma}[(2) $\implies$ (1)] If there exist a weak cusp space $X$ for $(\Gamma,\peripherals)$ and a family of norms $\norm{\cdot}$ on the fibers of the associated bundle $(\wh{\mathscr{s}}_\xi^-)^*\wh V_\rho^-$ such that the flow $\homflow^t$ is exponentially contracting, then $\rho$ is $\Psf^\pm$-Anosov relative to $\peripherals$ with Anosov boundary map $\xi$.
\end{lemma}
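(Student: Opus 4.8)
The plan is to reduce this to the already-proven implication (3)$\implies$(1), i.e.\ to Lemma~\ref{lem: 3 implies 1 in general annoying proof}, by passing to the opposite parabolic. The key observation is a duality: using Corollary~\ref{cor: P+ and P- duality}, the representation $\rho$ is $\Psf^\pm$-Anosov relative to $\peripherals$ with boundary map $\xi=(\xi^+,\xi^-)$ if and only if $\rho$ is $\Psf^\mp$-Anosov relative to $\peripherals$ with boundary map $\hat{\xi}=(\xi^-,\xi^+)$. So it suffices to show that exponential contraction of $\homflow^t$ on $(\wh{\mathscr{s}}_\xi^-)^*\wh{V}_\rho^-$ implies exponential expansion of $\homflow^t$ on the bundle $(\wh{\mathscr{s}}_{\hat\xi}^+)^*\wh{V}_\rho^+$ built from $\hat\xi$ and the opposite flags $\Fc^\mp$; then Lemma~\ref{lem: 3 implies 1 in general annoying proof} (applied with the roles of $\Psf^+$ and $\Psf^-$ interchanged) finishes the argument.

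First I would set up the identification precisely. The flag varieties for $\Psf^\mp$ are exactly $\Fc^-$ and $\Fc^+$ with the roles of $+$ and $-$ swapped; the section $\mathscr{s}_{\hat\xi}^+$ associated to $\hat\xi$ and the flag variety "$\Fc^+$ of $\Psf^\mp$" (which is our $\Fc^-$) sends $\sigma\mapsto(\sigma,\xi^-(\sigma^-))$, i.e.\ it is literally the section $\mathscr{s}_\xi^-$. Likewise the bundle $\wh{V}_\rho^+$ for $\Psf^\mp$ is $\Gamma\backslash(\Gc(X)\times T\Fc^-)$, which is $\wh{V}_\rho^-$ in the original notation. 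Hence the bundle appearing in hypothesis (2) for $(\rho,\Psf^\pm,\xi)$ is \emph{the same} bundle, with the same flow $\homflow^t$, as the bundle appearing in hypothesis (3) for $(\rho,\Psf^\mp,\hat\xi)$ --- except that hypothesis (3) asks for \emph{expansion} while hypothesis (2) gives \emph{contraction}, and these are reconciled by the involution $I(\sigma)(t)=\sigma(-t)$ on $\Gc(X)$, which conjugates $\geodflow^t$ to $\geodflow^{-t}$ and swaps $\sigma^+$ with $\sigma^-$. So I would use $I$ to transport the contracting metric $\norm{\cdot}$ on $(\wh{\mathscr{s}}_\xi^-)^*\wh{V}_\rho^-$ to a metric for which the flow is expanding in the appropriate sense --- exactly the same bookkeeping that appears in the proof of Proposition~\ref{prop:rel Morse implies uniformly rel Anosov} and in Section~\ref{sec:building_norms} where $Q_\sigma=Q_{I(\sigma)}$ is used.

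Concretely, given the metric $\norm{\cdot}$ from hypothesis (2) with $\norm{\homflow^t(f)}_{\geodflow^t(\sigma)}\le Ce^{-ct}\norm{f}_\sigma$ for $t\ge0$ on $(\wh{\mathscr{s}}_\xi^-)^*\wh{V}_\rho^-$, I define a new metric $\norm{\cdot}'$ on the bundle $(\wh{\mathscr{s}}_{\hat\xi}^+)^*\wh{V}_\rho^+$ (for $\Psf^\mp$) by $\norm{\cdot}'_\sigma:=\norm{\cdot}_{I(\sigma)}$, checking that this is well-defined, $\rho$-equivariant, and continuous using that $I$ descends to $\wh\Gc(X)$ and commutes with the $\Gamma$-action. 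Since $\homflow^{-t}$ on $\wh{V}_\rho^\mp$ corresponds under $I$ to $\homflow^{t}$ on $\wh{V}_\rho^\pm$ and $\geodflow^{-t}\circ I=I\circ\geodflow^t$, the contraction estimate for $\norm{\cdot}$ translates into $\norm{\homflow^{t}(f)}'_{\geodflow^{t}(\sigma)}\ge \tfrac1C e^{ct}\norm{f}'_\sigma$ for $t\ge0$, which is exactly the exponential expansion required in hypothesis (3) for $(\rho,\Psf^\mp,\hat\xi)$. Then Lemma~\ref{lem: 3 implies 1 in general annoying proof} (with $\Psf^+$ and $\Psf^-$ swapped) gives that $\rho$ is $\Psf^\mp$-Anosov relative to $\peripherals$ with Anosov boundary map $\hat\xi$, and Corollary~\ref{cor: P+ and P- duality} then gives that $\rho$ is $\Psf^\pm$-Anosov relative to $\peripherals$ with Anosov boundary map $\xi$, as desired.

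I expect the only real obstacle to be notational: one must be careful that the definitions of $\mathscr{s}_\xi^\pm$, $\wh{V}_\rho^\pm$, $\flatflow^t$ and $\homflow^t$ are genuinely symmetric under the simultaneous swap ($+\leftrightarrow-$, $\xi^+\leftrightarrow\xi^-$, $\geodflow^t\leftrightarrow\geodflow^{-t}$ via $I$), so that Lemma~\ref{lem: 3 implies 1 in general annoying proof} can legitimately be invoked with the roles of $\Psf^+$ and $\Psf^-$ reversed; all the actual dynamical content is already contained in that lemma and in Corollary~\ref{cor: P+ and P- duality}, so no new estimates are needed. (Alternatively, one could run the argument of Lemma~\ref{lem: 3 implies 1 in general annoying proof} directly, replacing $T\colon\nL^-\to\Fc^+$ and Observation~\ref{obs:properties_of_the_map} with their counterparts for $\nL^+\to\Fc^-$ and flowing in the opposite time direction; but reducing to the already-established case via $I$ is cleaner.)
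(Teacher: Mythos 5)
Your proposal is correct and follows essentially the same route as the paper: define $\hat{\xi}=(\xi^-,\xi^+)$, identify $(\wh{\mathscr{s}}_\xi^-)^*\wh V_\rho^-$ with $(\wh{\mathscr{s}}_{\hat{\xi}}^+)^*\wh V_\rho^+$ for the opposite parabolic, invoke the (3)$\implies$(1) lemma, and finish with Corollary~\ref{cor: P+ and P- duality}. In fact you are more careful than the paper's one-line argument, since you make explicit the involution $I$ needed to reconcile the sections (which evaluate $\xi^-$ at $\sigma^-$ versus $\sigma^+$) and to convert contraction into expansion.
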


\begin{proof} Suppose $\xi = (\xi^+, \xi^-)$ and then define $\hat{\xi}:=(\xi^-,\xi^+)$. Then $(\wh{\mathscr{s}}_{\hat{\xi}}^+)^*\wh V_\rho^+ = (\wh{\mathscr{s}}_\xi^-)^*\wh V_\rho^-$ and so Lemma~\ref{lem: 3 implies 1 in general annoying proof} implies that  $\rho$ is $\Psf^\mp$-Anosov relative to $\peripherals$ with Anosov boundary map $\hat{\xi}$. Then Corollary~\ref{cor: P+ and P- duality} implies that $\rho$ is $\Psf^\pm$-Anosov relative to $\peripherals$ with Anosov boundary map $\xi$.
\end{proof}

\appendix

\section{Proofs for Sections~\ref{sec:SVD background} and~\ref{sec: background on proximal and weakly unipotent elements}}\label{sec: proofs for section SVD background}

In this appendix we prove three observations stated in Sections~\ref{sec:SVD background} and~\ref{sec: background on proximal and weakly unipotent elements}.

\begin{observation}[Observation \ref{obs:strongly_dynamics_pres_div_cartan}] \label{obs:strongly_dynamics_pres_div_cartan in appendix}
Suppose that $(g_n)_{n \geq 1}$ is a sequence in $\SL(d,\Kb)$, $V_0 \in \Gr_k(\Kb^d)$, and $W_0 \in \Gr_{d-k}(\Kb^d)$. Then the following are equivalent: 
\begin{enumerate}
\item $g_n(V) \to V_0$ uniformly on compact subsets of 
$$
\left\{ V \in \Gr_k(\Kb^d) : V \text{ transverse to } W_0\right\}.
$$ 
\item $\frac{\mu_k}{\mu_{k+1}}(g_n) \to \infty$, $U_k(g_n) \to V_0$, and $U_{d-k}(g_n^{-1}) \to W_0$. 
\item There exist open sets $\Oc \subset \Gr_k(\Kb^d)$ and $\Oc^\prime \subset \Gr_{d-k}(\Kb^d)$ such that $g_n(V) \to V_0$ for all $V \in \Oc$ and $g_n^{-1}(W) \to W_0$ for all $W \in \Oc^\prime$. 
\end{enumerate}
\end{observation}

\begin{proof} Let $g_n = m_n a_n \ell_n$ denote a singular value decomposition of $g_n$. Notice that, if $\frac{\mu_k}{\mu_{k+1}}(g_n) > 0$, then $U_k(g_n) = m_n \ip{e_1,\dots,e_k}$ and $U_{d-k}(g_n^{-1}) = \ell_n^{-1}\ip{e_{k+1}, \dots, e_d}$. 

Also let $M_{d-k,k}(\Kb)$ denote the subspace of $(d-k)$-by-$k$ matrices with entries in $\Kb$ and let $T \colon M_{d-k,k}(\Kb) \rightarrow \Gr_k(\Kb^d)$ denote the map
$$
T(A) = \left\{ (v,Av) : v \in \Kb^k\right\}.
$$
Then $T$ induces a homeomorphism 
$$
M_{d-k,k}(\Kb) \cong \Oc_{\ip{e_{k+1},\dots, e_d}} := \left\{ V \in \Gr_k(\Kb^d) : V \text{ transverse to } \ip{e_{k+1}, \dots, e_d}\right\}.
$$
Further,
\begin{equation}
\label{eqn:action of a_n on chart}
a_n \cdot T( [A_{i, j}]) = T\left( \left[ \frac{\mu_{k+i}(g_n)}{\mu_j(g_n)} A_{i,j} \right] \right)
\end{equation}
for all $[A_{i,j}] \in M_{d-k,k}(\Kb)$. 

(3) $\implies$ (2): By compactness, it suffices to consider the case where the limits 
$$
m := \lim_{n \rightarrow \infty} m_n  \quad \text{and} \quad \ell : = \lim_{n \rightarrow \infty}\ell_n
$$
exist. 

Notice that if $C \subset \ell \Oc$ is compact, then for $N$ sufficiently large the set 
$$
\ell^{-1} C \cup \bigcup_{n \geq N} \ell_n^{-1} C
$$
is a compact subset of $ \Oc$. So 
\begin{equation}
\label{eqn:limit of a_n} 
\lim_{n \rightarrow \infty} a_n(V) =m^{-1}\lim_{n \rightarrow \infty} g_n(\ell_n^{-1}V)= m^{-1}(V_0)
\end{equation}
uniformly on compact subsets of $\ell\Oc$. 

Fix a subsequence $(n_t)_{t \geq 1}$ such that 
$$
\lim_{t \rightarrow \infty} \frac{\mu_{k+1}}{\mu_k} (g_{n_t}) = \limsup_{n \rightarrow \infty} \frac{\mu_{k+1}}{\mu_k} (g_n) \in [0,1].
$$
Passing to a subsequence we can also suppose that 
$$
c_{i,j}:=\lim_{t \rightarrow \infty} \frac{\mu_{k+i}}{\mu_j} (g_{n_t}) 
$$
exists for all $1 \leq i \leq d-k$ and $1 \leq j \leq k$. Then, by Equations~\eqref{eqn:limit of a_n}  and ~\eqref{eqn:action of a_n on chart},
\begin{equation*}
m^{-1}(V_0)=\lim_{t \rightarrow \infty} a_{n_t} \cdot T\left( [A_{i,j}]\right) = T\left( \left[  c_{i,j} A_{i,j} \right] \right)
\end{equation*}
for all $[A_{i,j}] \in T^{-1}(  \Oc_{\ip{e_{k+1},\dots, e_d}} \cap \ell \Oc)$. Since $\Oc_{\ip{e_{k+1},\dots, e_d}} \cap \ell \Oc$ is dense in $\ell \Oc$, we must have 
$$
c_{i,j} = 0 \quad \text{and} \quad m^{-1}(V_0) = T(0) = \ip{e_1,\dots, e_k}.
$$
So $\frac{\mu_k}{\mu_{k+1}}(g_n) \to \infty$ and 
$$
\lim_{n \rightarrow \infty} U_k(g_n) = \lim_{n \rightarrow \infty} m_n\ip{e_1,\dots, e_k} = m\ip{e_1,\dots, e_k} = V_0.
$$

Using the exact argument for the action of $g_n^{-1}$ on $\Gr_{d-k}(\Kb^d)$ we see that 
$$
\lim_{n \rightarrow \infty} U_{d-k}(g_n^{-1}) =W_0. 
$$

(1) $\implies$ (2): By compactness, it suffices to consider the case where the limits 
$$
m := \lim_{n \rightarrow \infty} m_n  \quad \text{and} \quad \ell : = \lim_{n \rightarrow \infty}\ell_n
$$
exist. Since 
$$
\left\{ V \in \Gr_k(\Kb^d) : V \text{ transverse to } W_0\right\}
$$ 
is open, arguing as in the proof that (3) $\implies$ (2), we see that $\frac{\mu_k}{\mu_{k+1}}(g_n) \to \infty$ and 
$$
\lim_{n \rightarrow \infty} U_k(g_n) = \lim_{n \rightarrow \infty} m_n\ip{e_1,\dots, e_k} = m\ip{e_1,\dots, e_k} = V_0.
$$

Now suppose for a contradiction that $\ell^{-1}\ip{e_{k+1}, \dots, e_d} \neq W_0$. Then there exists $V \in \Gr_k(\Kb^d)$ which is transverse to $\ell W_0$ but not $\ip{e_{k+1},\dots, e_d}$. Then arguing as in  Equation~\eqref{eqn:limit of a_n}, we see that 
$$
\ip{e_1,\dots, e_k} = m^{-1} V_0 = \lim_{n \rightarrow \infty} a_n(V). 
$$
However, $C : = \Gr_k(\Kb^d) \smallsetminus \Oc_{\ip{e_{k+1},\dots, e_d}}$ is closed and $a_n(C) = C$ for all $n$. So we also have
$$
\ip{e_1,\dots, e_k} = \lim_{n \rightarrow \infty} a_n(V) \in C. 
$$
This is clearly impossible and hence $\ell^{-1}\ip{e_{k+1}, \dots, e_d} = W_0$. Then 
$$
\lim_{n \rightarrow \infty} U_{d-k}(g_n^{-1}) = \lim_{n \rightarrow \infty} \ell_n^{-1}\ip{e_{k+1},\dots, e_d} = \ell^{-1}\ip{e_{k+1},\dots, e_d} = W_0.
$$

(2) $\implies$ (1) and (3): Since $\frac{\mu_k}{\mu_{k+1}}(g_n) \to \infty$, Equation~\eqref{eqn:action of a_n on chart} implies that
$$
\lim_{n \rightarrow \infty} a_n(V) = \ip{e_1,\dots,e_k}
$$ 
uniformly on compact subsets of $\left\{ V \in \Gr_k(\Kb^d) : V \text{ transverse to } \ip{e_{k+1},\dots,e_d}\right\}$. Then, since $m_n \ip{e_1,\dots,e_k} \rightarrow V_0$ and $\ell_n^{-1}\ip{e_{k+1}, \dots, e_d} \rightarrow W_0$, we have that $g_n(V) \to V_0$ uniformly on compact subsets of $\left\{ V \in \Gr_k(\Kb^d) : V \text{ transverse to } W_0\right\}$. So (1) holds. 

The same reasoning shows that $g_n^{-1}(W) \to W_0$ uniformly on compact subsets of $\left\{ W \in \Gr_{d-k}(\Kb^d) : W \text{ transverse to } V_0\right\}$. Hence (3) holds. 

\end{proof}

\begin{observation}[Observation \ref{obs:dynamics of Pk proximal}] \label{obs:dynamics of Pk proximal in appendix} If $g \in \SL(d,\Kb)$, then the following are equivalent: 
\begin{enumerate}
\item $g$ is $\mathsf{P}_k$-proximal,
\item there exist $V_0 \in \Gr_k(\Kb^d)$, $W_0 \in \Gr_{d-k}(\Kb^d)$ such that $V_0 \oplus W_0 = \Kb^d$ and
$$g^n(V) \to V_0$$ 
uniformly on compact subsets of $\left\{ V \in \Gr_k(\Kb^d) : V \text{ transverse to } W_0\right\}$.
\end{enumerate}
Moreover, if the above conditions are satisfied, then $V_0=V_g^+$ and $W_0 = W_g^-$. 
\end{observation}

\begin{proof} Let $f \colon \Gr_k(\Kb^d) \rightarrow \proj\left( \bigwedge^k \Kb^d\right)$ denote the Pl\"ucker embedding.

(1) $\implies$ (2): Fix a basis $\{v_1, \dots, v_d\}$ of $\Kb^d$ such that 
$$
V_g^+ = \ip{v_1, \dots, v_k} \quad \text{and} \quad W_g^- = \ip{v_{k+1}, \dots, v_d}. 
$$
Then relative to the basis $\{ v_{i_1} \wedge \cdots \wedge v_{i_k} : 1 \leq i_1 < \cdots < i_k \leq d\}$ we have 
$$
\wedge^k g = \begin{pmatrix} \lambda & \\ & A \end{pmatrix}
$$
where $\abs{\lambda} =  \lambda_1(\wedge^k g)$ and $\lambda_1(A) < \abs{\lambda}$. Since $\lambda_1(A) = \lim_{n \rightarrow \infty} \mu_1(A^n)^{1/n}$, then 
$$
\lim_{n \rightarrow \infty} (\wedge^k g)^n w  =[v_1 \wedge \cdots \wedge v_k] = f(V_g^+)
$$
for all $w \in \proj\left( \bigwedge^k \Kb^d \right)$ not in the projectivization of 
$$
W:=\ip{ v_{i_1} \wedge \cdots \wedge v_{i_k} : (i_1, \dots, i_k) \neq (1,\dots, k)}.
$$
Moreover, the convergence is uniform on compact subsets of $\proj\left( \bigwedge^k \Kb^d \right) \smallsetminus \proj(W)$. Also, notice that $V \in \Gr_k(\Kb^d)$ is transverse to $W_g^-$ if and only if $f(V) \notin \proj(W)$. Hence,  $g^n(V) \to V_g^+$ uniformly on compact subsets of 
$$
\left\{ V \in \Gr_k(\Kb^d) : V \text{ transverse to } W_g^-\right\}.
$$

(2) $\implies$ (1):  Fix a compact neighborhood $K$ of $V_0$ homeomorphic to a closed ball and where every element of $K$ is transverse to $W_0$. Then Observation~\ref{obs:strongly_dynamics_pres_div_cartan in appendix} implies that there exists $N \geq 1$ such that $g^n(K) \subset K$ for all $n \geq N$. Then for each $n \geq N$, $g^n$ has a fixed point $V_n \in K$.  Using Observation~\ref{obs:strongly_dynamics_pres_div_cartan in appendix}, we have 
$$
V_n = \lim_{m \rightarrow \infty} \left(g^{n}\right)^{m} V_n = V_0. 
$$
So $V_0$ is $g^n$-invariant for each $n \geq N$. So $V_0$ is $g$-invariant. The same argument applied to $g^{-1}$ shows that $W_0$ is $g$-invariant. 

Relative to the decomposition $\Kb^d = V_0 \oplus W_0$ we can write 
$$
g = \begin{pmatrix} A & \\ & B \end{pmatrix}
$$
where $A \in \GL(V_0)$ and $B \in \GL(W_0)$. Then pick unit vectors $v \in V_0$ and $w \in W_0$ such that $\norm{A^n v} = \lambda_k(A)^n$ and $\norm{B^n w} = \lambda_1(B)^n$ for all $n \geq 1$. Extend $v$ to a basis $\{ v, v_2,\dots, v_k\}$ of $V_0$, then consider the subspace 
$$
V := \ip{v+w, v_2, \dots, v_k} \in \Gr_k(\Kb^d). 
$$
Since $V$ is transverse to $W_0$, we have $g^nV \rightarrow V_0$. This is only possible if $\lambda_k(A) > \lambda_1(B)$. Hence $g$ is $\Psf_k$-proximal, $V_g^+=V_0$, and $W_g^-=W_0$.

\end{proof}

\begin{observation}[Observation \ref{obs:dynamics of weakly unipotent}] Suppose that $g \in \SL(d,\Kb)$, $V_0^\pm \in \Gr_k(\Kb^d)$, $W_0^\pm \in \Gr_{d-k}(\Kb^d)$, and 
$$
g^{\pm n} V \to V_0^\pm
$$
uniformly on compact subsets of $\left\{ V \in \Gr_k(\Kb^d) : V \text{ transverse to } W_0^{\pm}\right\}$. Then  $g$ is weakly unipotent if and only if $V_0^+=V_0^-$. 
\end{observation}  

\begin{proof} 
$(\implies)$: Let $h = \wedge^k g$. Then $h$ is also weakly unipotent and so if $h = h_{ss}h_u$ is the Jordan decomposition, then $h_{ss}$ is elliptic. So we can fix a subsequence $(n_j)_{j \geq 1}$ such that
$$
\id: = \lim_{j \rightarrow \infty} h_{ss}^{ \pm n_j}.
$$
Passing to a further subsequence, we can suppose that the limits 
$$
T_{\pm}:=\lim_{j \rightarrow  \infty} h_{u}^{\pm n_j} 
$$
exist in $\proj\left( \End\left( \bigwedge^k \Kb^d \right)\right)$. Since $h_u$ is unipotent, if we fix a basis of $ \bigwedge^k \Kb^d$, then the entries in the matrix representation of $h_u^n$ are polynomials in $n$. So $T_{+} = T_-$. 

Let $f \colon \Gr_k(\Kb^d) \rightarrow \proj\left( \bigwedge^k \Kb^d\right)$ denote the Pl\"ucker embedding.  The sets
$$
\Oc_1 : = \{ V \in  \Gr_k(\Kb^d)  : V \text{ transverse to } W_0^+ \text{ and } W_0^-\}
$$
and 
$$
\Oc_{2} : = \{ V \in  \Gr_k(\Kb^d)  : f(V) \notin \proj(\ker(T_-)) \cup \proj(\ker(T_+)) \}
$$
are open and dense. So we can fix $V \in \Oc_1 \cap \Oc_{2}$. Then 
\begin{align*}
f(V_0^+) & = \lim_{n \rightarrow \infty} f(g^n V) = \lim_{j \rightarrow \infty}h_u^{n_j}  h_{ss}^{n_j}f(V) = T_+(f(V)) = T_-(f(V)) \\
& =  \lim_{j \rightarrow \infty}h_u^{-n_j}  h_{ss}^{-n_j}f(V) = \lim_{n \rightarrow \infty} f(g^{-n} V)= f(V_0^-). 
\end{align*}
So $V_0^+ = V_0^-$. 

$(\impliedby)$: Suppose for a contradiction that $g$ is not weakly unipotent. Then $g$ is $\Psf_m$-proximal for some $1 \leq m \leq d-1$. Let $V_g^+ \in \Gr_m(\Kb^d)$ and $W_g^- \in \Gr_{d-m}(\Kb^d)$ denote the attracting/repelling subspaces. By possibly replacing $g$ by $g^{-1}$, we can assume that $m \leq k$. 

Using Observations~\ref{obs:strongly_dynamics_pres_div_cartan in appendix} and ~\ref{obs:dynamics of Pk proximal in appendix}, we have 
$$
V_g^+ = \lim_{n \rightarrow \infty} U_m(g^n) \subset  \lim_{n \rightarrow \infty} U_k(g^n) = V_0^+.
$$
Applying the same argument to $g^{-1}$ we see that $V_0^- \subset W_g^-$. So $V_0^+ \neq V_0^-$ and we have a contradiction. 
\end{proof}

\section{Basic properties of Gromov-hyperbolic metric spaces}\label{appendix: basic properties} 

In this appendix we collect some basic (and probably well-known) facts about Gromov-hyperbolic metric spaces. 

For the rest of this section suppose that $X$ is a proper geodesic Gromov-hyperbolic metric space. Fix $\delta > 0$ such that every (possibly ideal) geodesic triangle in $X$ is $\delta$-slim (i.e.\ each side is contained in the $\delta$-neighborhood of the union of the two other sides).

\begin{observation}\label{lem:asymp_geodesics} If $\sigma_1,\sigma_2 \ [0,\infty) \to X$ are geodesic rays and $\sigma_1^+=\sigma_2^+$, then 
$$
\sup_{t \geq 0} \dist_X(\sigma_1(t), \sigma_2(t)) \leq \dist_X(\sigma_1(0), \sigma_2(0)) + 4\delta.
$$
\end{observation}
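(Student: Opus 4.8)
The plan is to obtain the bound from a single application of $\delta$-slimness to an ideal geodesic triangle, followed by routine triangle-inequality bookkeeping. Fix $t \geq 0$, write $D := \dist_X(\sigma_1(0), \sigma_2(0))$ and $\xi := \sigma_1^+ = \sigma_2^+$, and consider the (possibly ideal) geodesic triangle with vertices $\sigma_1(0)$, $\sigma_2(0)$, $\xi$ whose three sides are the ray $\sigma_1$, the ray $\sigma_2$, and a geodesic segment $\gamma$ joining $\sigma_1(0)$ to $\sigma_2(0)$. Since this triangle is $\delta$-slim, the point $\sigma_1(t)$ lies within distance $\delta$ of $\sigma_2 \cup \gamma$, and the argument splits according to which side it is close to.

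First I would treat the case that $\dist_X(\sigma_1(t), \sigma_2(s)) \leq \delta$ for some $s \geq 0$. Estimating $\dist_X(\sigma_1(0), \sigma_2(s))$ from both sides via $\sigma_1(t)$ and via $\sigma_2(0)$ yields $s - D \leq \dist_X(\sigma_1(0), \sigma_2(s)) \leq t + \delta$ and $t - \delta \leq \dist_X(\sigma_1(0), \sigma_2(s)) \leq D + s$, hence $|s - t| \leq D + \delta$; then $\dist_X(\sigma_1(t), \sigma_2(t)) \leq \dist_X(\sigma_1(t), \sigma_2(s)) + |s - t| \leq D + 2\delta$. Next I would treat the case that $\dist_X(\sigma_1(t), p) \leq \delta$ for some $p$ on the segment $\gamma$. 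Here the point is that $\dist_X(\sigma_1(0), p) + \dist_X(p, \sigma_2(0)) = D$, while $\dist_X(\sigma_1(0), p) \geq \dist_X(\sigma_1(0), \sigma_1(t)) - \delta = t - \delta$; combining these gives $\dist_X(p, \sigma_2(0)) \leq D - t + \delta$, so that $\dist_X(\sigma_1(t), \sigma_2(t)) \leq \dist_X(\sigma_1(t), p) + \dist_X(p, \sigma_2(0)) + \dist_X(\sigma_2(0), \sigma_2(t)) \leq \delta + (D - t + \delta) + t = D + 2\delta$ (the bound $t-\delta \ge 0$ may fail, but then the direct estimate $\dist_X(\sigma_1(t),\sigma_2(t)) \le t + D + t$ already gives at most $D + 2\delta$ since $t < \delta$). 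In every case $\dist_X(\sigma_1(t), \sigma_2(t)) \leq D + 2\delta \leq D + 4\delta$, and taking the supremum over $t \geq 0$ finishes the proof.

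The only place requiring any thought is the second case: the naive bound $\dist_X(\sigma_1(t),\sigma_2(t)) \le \dist_X(\sigma_1(t),\sigma_1(0)) + D + \dist_X(\sigma_2(0),\sigma_2(t)) = 2t + D$ is too lossy since $t$ can be comparable to $D$, so one genuinely needs to use that $p$ lies on the finite side $\gamma$ (which forces $\dist_X(p,\sigma_2(0))$ to be small precisely when $t$ is large). Beyond that, the estimate is entirely elementary, and I would not expect any obstacle; the computation in fact yields the sharper constant $2\delta$, which of course implies the stated bound.
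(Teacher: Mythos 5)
Your proof is correct, and it takes a genuinely different (and shorter) route than the paper's. You apply $\delta$-slimness once, to the ideal triangle with vertices $\sigma_1(0)$, $\sigma_2(0)$ and the common endpoint $\sigma_1^+=\sigma_2^+$ at infinity; this is legitimate here because the paper's standing convention is that $\delta$ works for all \emph{possibly ideal} geodesic triangles. The paper instead works entirely with finite triangles: it first invokes the a priori bound $\sup_t \dist_X(\sigma_1(t),\sigma_2(t)) \leq C$ (implicit in the two rays being asymptotic), uses it to choose $T$ so large that $\sigma_1(t_0)$ cannot be $\delta$-close to the far side $\eta_2$ of the quadrilateral $\sigma_1(0),\sigma_1(T),\sigma_2(T),\sigma_2(0)$, and then applies slimness twice to the two triangles obtained by cutting along the diagonal from $\sigma_1(0)$ to $\sigma_2(T)$ — hence the $4\delta$. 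Your single application yields the sharper constant $2\delta$ and avoids both the choice of $T$ and the preliminary finiteness of the sup; the trade-off is only that it leans on slimness of ideal triangles, which the paper has assumed anyway. All the case bookkeeping in your argument checks out: in Case 1 the two-sided comparison of $\dist_X(\sigma_1(0),\sigma_2(s))$ correctly gives $|s-t| \leq D+\delta$, and in Case 2 the identity $\dist_X(\sigma_1(0),p)+\dist_X(p,\sigma_2(0))=D$ for $p$ on the finite side does exactly the work needed to cancel the $t$; the parenthetical about $t<\delta$ is not even necessary, since the inequality $\dist_X(p,\sigma_2(0)) \leq D-t+\delta$ holds regardless of the sign of $t-\delta$.
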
 

\begin{proof} By definition there exists $C > 0$ such that 
$$
\dist_X(\sigma_1(t), \sigma_2(t)) \leq C
$$
for all $t \geq 0$. 

Fix $t_0 \geq 0$. Let $T := t_0 + \delta+1 +C$. Then let
\begin{enumerate}
\item $\eta_1$ be a geodesic segment joining $\sigma_1(0)$ and $\sigma_2(0)$,
\item $\eta_2$ be a geodesic segment joining $\sigma_1(T)$ and $\sigma_2(T)$, and 
\item $\sigma_3$ denote the geodesic joining $\sigma_1(0)$ to $\sigma_2(T)$. 
\end{enumerate}
Since $\sigma_1|_{[0,T]} \cup \eta_2 \cup \sigma_3$ is $\delta$-slim,  there exists $q \in \eta_2 \cup \sigma_3$ such that $\dist_X(\sigma_1(t_0), q) \leq \delta$. By construction, 
$$
\dist_X(\sigma_1(t_0), \eta_2) \geq (T-t_0) - C \geq \delta+1 > \delta
$$
and so $q \in \sigma_3$. Since $\sigma_3 \cup \eta_1 \cup \sigma_2|_{[0,T]}$ is $\delta$-slim,   there exists $q^\prime \in \eta_1 \cup \sigma_2$ such that $\dist_X(q,q^\prime) \leq \delta$.  

If $q^\prime \in \eta_1$, then $\dist_X(q^\prime, \sigma_1(0)) \geq t_0 - 2\delta$ and so 
$$
\dist_X(q^\prime, \sigma_2(0))=\dist_X(\sigma_1(0), \sigma_2(0))-\dist_X(q^\prime, \sigma_1(0)) \leq \dist_X(\sigma_1(0), \sigma_2(0)) - (t_0 - 2\delta).
$$
Thus 
\begin{align*}
\dist_X(\sigma_1(t_0), \sigma_2(t_0)) & \leq \dist_X(\sigma_1(t_0), q^\prime)+ \dist_X(q^\prime, \sigma_2(0))+\dist_X(\sigma_2(0), \sigma_2(t_0))\\
&  \leq \dist_X(\sigma_1(0), \sigma_2(0)) +4\delta. 
\end{align*}
Otherwise, $q^\prime = \sigma_2(s)$ for some $s \geq 0$. Then 
$$
\abs{t_0-s} \leq \dist_X(\sigma_1(0), \sigma_2(0)) +2\delta
$$
and so 
\begin{equation*}
\dist_X(\sigma_1(t_0), \sigma_2(t_0)) \leq 2\delta + \abs{t_0-s} \leq \dist_X(\sigma_1(0), \sigma_2(0)) +4\delta.  \qedhere
\end{equation*}
\end{proof} 

The following result can be viewed as a metric analogue of~\cite[Theorem 4.1]{AMS}. It is certainly well known, but we know of no reference. Recall, that a discrete subgroup of $\Isom(X)$ is non-elementary if its limit set consists of at least three points. In this case, the group acts minimally on its limit set and the limit set is uncountable (since it is a perfect closed set). 

\begin{lemma}\label{lem:AMS_Gromov_hyp} Suppose that $\dist_\infty$ is a visual metric on $\partial_\infty X$  and $\Gamma \leq \Isom(X)$ is a non-elementary discrete subgroup. Then there exist $\epsilon > 0$ and a finite set $F \subset \Gamma$ with the following property: for any $\gamma \in \Gamma$ there is some $f \in F$ where $\gamma f$ is loxodromic and $\dist_\infty( (\gamma f)^+, (\gamma f)^-) > \epsilon$. 
\end{lemma}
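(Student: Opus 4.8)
The statement is a uniform version of the standard fact that for any element $\gamma$ of a non-elementary discrete group, multiplying by a suitable element of a fixed finite set produces a loxodromic element with well-separated fixed points. The plan is to exploit the convergence group action of $\Gamma$ on $\partial_\infty X$ together with a north-south/ping-pong argument. First I would fix a visual metric $\dist_\infty$ and choose three distinct points in the limit set $\Lambda$ of $\Gamma$; since $\Gamma$ is non-elementary, $\Lambda$ is uncountable and $\Gamma$ acts minimally on it. Pick two loxodromic elements $g_1, g_2 \in \Gamma$ with $\{g_1^\pm\} \cap \{g_2^\pm\} = \varnothing$, whose existence follows from non-elementarity (two independent loxodromics). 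Their attracting/repelling fixed points give four points in $\Lambda$ that are pairwise distinct; replacing $g_i$ by high powers, I may assume each $g_i$ has very strong north-south dynamics, i.e.\ $g_i^{\pm}$ contracts the complement of a small ball around $g_i^{\mp}$ into a small ball around $g_i^{\pm}$, with the balls chosen to have pairwise disjoint closures.

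\textbf{Key steps.} The finite set $F$ will be built from $\{g_1^{\pm 1}, g_2^{\pm 1}, \id\}$ together with finitely many ``connecting'' elements, chosen using minimality of the action on $\Lambda$ so that for \emph{any} pair of points $(a,b)$ in $\partial_\infty X$ that could arise as $(\gamma^{-1}(\text{pole}), \text{pole})$, one of the elements of $F$ moves things into a good ping-pong configuration. Concretely: given arbitrary $\gamma \in \Gamma$, consider the action of $\gamma$ near infinity. Because $\Gamma$ is a convergence group, either $\gamma$ already has an attracting/repelling behaviour we can read off, or we use compactness: I would argue that there is $f \in F$ such that the product $\gamma f$ has an attracting basin and a repelling basin (two disjoint small balls $B^+, B^-$ in $\partial_\infty X$, taken from the fixed finite family of balls around $g_i^\pm$ and their finitely many translates) with $\gamma f (\partial_\infty X \setminus B^-) \subset B^+$. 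Standard convergence-group dynamics then force $\gamma f$ to be loxodromic with $(\gamma f)^+ \in B^+$ and $(\gamma f)^- \in B^-$, and since $B^+, B^-$ are among finitely many balls with pairwise disjoint closures, $\dist_\infty((\gamma f)^+, (\gamma f)^-) > \epsilon$ for a uniform $\epsilon$ depending only on the finite configuration.

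\textbf{Carrying out the dichotomy.} The technical heart is producing $f$. I would proceed by compactness/continuity: the map $\gamma \mapsto$ (dynamics of $\gamma$ on $\partial_\infty X$) is not continuous, but the space of possible ``limit data'' is controlled. More precisely, for a fixed small radius $r$, cover $\partial_\infty X$ by finitely many $r$-balls; for each ordered pair $(B, B')$ of balls among a suitable finite collection with disjoint closures, minimality gives an element $h_{B,B'} \in \Gamma$ mapping a neighborhood of $g_1^-$ into $B$ and $g_1^+$ out near $B'$, etc. Then for arbitrary $\gamma$, after post-composing by one of $g_1^{\pm}, g_2^{\pm}$ to regularize (ensuring the image of the ``bulk'' of $\partial_\infty X$ under $\gamma$ lands in a small ball disjoint from where the preimage of the bulk concentrates), and pre/post-composing by an appropriate $h_{B,B'}$, the composite has genuine ping-pong dynamics. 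The main obstacle I anticipate is exactly this bookkeeping: making the finitely many cases of ``where does $\gamma$ send the bulk and where does $\gamma^{-1}$ send the bulk'' exhaustive and checking that in each case some fixed $f$ works — this is where the convergence-group property (every sequence has a subsequence converging to a constant off a point) must be leveraged carefully to see that only finitely many ``configurations'' are possible up to the choices in $F$. Once the ping-pong configuration is in hand, the conclusion (loxodromic, fixed points in prescribed disjoint balls, hence uniform separation) is immediate from the dynamics of convergence actions.
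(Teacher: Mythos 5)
Your overall strategy (ping-pong via ``connecting'' elements supplied by minimality of the action on the limit set) is the right one and matches the paper's in spirit, but the proposal stops exactly at the point where the lemma's content lies. You write that you ``would argue that there is $f \in F$ such that $\gamma f$ has an attracting basin and a repelling basin'' and then flag the verification that finitely many configurations suffice as ``the main obstacle.'' That obstacle is the theorem. The difficulty is that the contraction behaviour of $\gamma$ on $\partial_\infty X$ is not uniform over $\Gamma$: the convergence property only produces north--south-type contraction along \emph{escaping} sequences, so for any individual $\gamma$ (and in particular for $\gamma$ ranging over a bounded subset of $\Gamma$, e.g.\ $\gamma = \id$ or an elliptic element) there is no ``bulk'' being contracted into a small ball, and your scheme of covering $\partial_\infty X$ by finitely many $r$-balls and matching $\gamma$ to a configuration has nothing to latch onto. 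The paper resolves this with a compactness argument you do not supply: assume no finite set works, extract an escaping sequence $(\gamma_n)$ of bad elements, pass to a subsequence with $\gamma_n(p_0) \to a$ and $\gamma_n^{-1}(p_0) \to b$, and then choose a connecting element $g_{i,j}$ whose poles $x_i, x_j$ avoid $a$ and $b$ (possible because four pairwise-separated reference points were fixed in advance); the convergence dynamics then force $\gamma_n g_{i,j}$ to be loxodromic with $\epsilon$-separated fixed points for large $n$, a contradiction. This yields the conclusion only for $\gamma$ outside a finite exceptional set $F_0$.

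The second omission is precisely that exceptional set. Even after the compactness argument, finitely many $\gamma$ remain for which no connecting element is guaranteed to work, and these must be handled separately; the paper does so by fixing one loxodromic $h$ with $\dist_\infty(h^+,h^-) > \epsilon$ and adjoining $f^{-1}h$ to $F$ for each $f \in F_0$, so that $\gamma = f$ pairs with $f^{-1}h$ to give $\gamma(f^{-1}h) = h$. Your proposal contains no mechanism for these elements. So while the skeleton is sound, the two steps that make the statement uniform — the escaping-sequence contradiction and the treatment of the finite exceptional set — are both missing rather than merely sketched.
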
 

\begin{proof} For $x \in \partial_\infty X$ and $r > 0$ let $\Bc(x,r) := \{ y \in \partial_\infty X : \dist_\infty(x,y) < r\}$. 

Fix four distinct points $x_1, x_2, x_3,x_4 \in \partial_\infty X$ in the limit set of $\Gamma$. Let $\epsilon = \frac{1}{4} \min_{1 \leq i < j \leq 4} \dist_\infty(x_i, x_j)$. Since $\Gamma$ acts minimally on its limit set, for every distinct $1 \leq i,j \leq 4$ we can find an element $g_{i,j} \in \Gamma$ such that 
$$
g_{i,j}\Big( \partial_\infty X \smallsetminus \Bc\left(x_j, \epsilon\right)\Big) \subset \mathcal{B}\left(x_i, \epsilon\right) \quad \text{and} \quad g_{i,j}^{-1}\Big( \partial_\infty X \smallsetminus \mathcal{B}\left(x_i, \epsilon\right)\Big) \subset \mathcal{B}\left(x_j, \epsilon\right).
$$

We claim that there exists a finite set $F_0 \subset \Gamma$ such that: if $\gamma \in \Gamma \smallsetminus F_0$, then there exist distinct $1 \leq i,j \leq 4$ such that $\gamma g_{i,j}$ is loxodromic and 
$$
\dist_\infty( (\gamma g_{i,j})^+, (\gamma g_{i,j})^-) > \epsilon. 
$$
Suppose not. Then there exists an escaping sequence $(\gamma_n)_{n \geq 1}$ in $\Gamma$ where each $\gamma_n$ does not have this property. 

Fix a point $p_0 \in X$. Passing to a subsequence we can suppose that $\gamma_n(p_0) \to a \in \partial_\infty X$ and $\gamma_n^{-1}(p_0) \to b \in \partial_\infty X$. Then $\gamma_n(x) \to a$ for all $x \in \partial_\infty X \smallsetminus \{b\}$ and the convergence is uniform on compact subsets of $\partial_\infty X \smallsetminus \{b\}$.

Since the balls $\{ \mathcal{B}(x_i, 2\epsilon)\}_{1 \leq i \leq 4}$ are pairwise disjoint we can pick distinct $1 \leq i,j \leq 4$ such that $a,b \notin \mathcal{B}(x_i, 2\epsilon) \cup \mathcal{B}(x_j, 2\epsilon)$.  Then $\gamma_n g_{i,j}(p_0) \to a$ and 
$$
(\gamma_n g_{i,j})^{-1}(p_0) = g_{i,j}^{-1}\gamma_n^{-1}(p_0)  \to g_{i,j}^{-1}(b) \in \mathcal{B}(x_j, \epsilon). 
$$
Then, by our choice of $i,j$,
$$
\dist_\infty( a, g_{i,j}^{-1}(b)) > \epsilon.
$$
Thus $ \gamma_n g_{i,j}$ is loxodromic for $n$ sufficiently large. Further, $( \gamma_n g_{i,j})^+ \to a$ and $(\gamma_n g_{i,j} )^- \to g_{i,j}^{-1}(b)$. So
for $n$ sufficiently large we have 
$$
\dist_\infty( (\gamma_ng_{i,j} )^+, (\gamma_ng_{i,j} )^-) > \epsilon.
$$
Thus we have a contradiction. Thus there exists a finite set $F_0 \subset \Gamma$ with the desired property. 

Now fix a loxodromic element $h$ with $\dist_\infty(h^+, h^-) > \epsilon$. Then the set 
$$
F := \{ g_{i,j} : 1 \leq i, j \leq 4 \text{ distinct} \} \cup \{ f^{-1}h : f \in F_0\}
$$
satisfies the lemma. 
\end{proof} 

\begin{lemma}\label{lem:finding_axes} Suppose that $Y \subset X$ is a subset where every point in $Y$ is  contained within a bounded distance of a geodesic line in $X$. Then there exists $R > 0$ such that: for any $p,q \in Y$ there is a geodesic line $\sigma : \Rb \rightarrow X$ with 
$$
p,q \in \Nc_X(\sigma, R).
$$
\end{lemma}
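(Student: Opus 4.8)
The plan is to reduce, for each pair $p,q\in Y$, to the construction of a single bi-infinite quasigeodesic running near both points, and then to invoke stability of quasigeodesics. Fix $D>0$ so that every point of $Y$ lies within $D$ of a geodesic line. Given $p,q\in Y$, choose geodesic lines $\alpha,\beta$ and points $x=\alpha(t_p)$, $y=\beta(t_q)$ with $\dist_X(p,x)\le D$ and $\dist_X(q,y)\le D$, and write $a^{\pm}=\alpha(\pm\infty)$, $b^{\pm}=\beta(\pm\infty)$ for the ideal endpoints. First I would record two elementary facts about Gromov products: since $x$ lies on the geodesic line $\alpha$, the product $(a^{+}\mid a^{-})_x$ is bounded by $2\delta$, so by the hyperbolic inequality at least one of $(a^{+}\mid y)_x,(a^{-}\mid y)_x$ is at most $c_0:=3\delta$; symmetrically at least one of $(b^{+}\mid x)_y,(b^{-}\mid x)_y$ is at most $c_0$. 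Set $S_\alpha=\{i:(a^{i}\mid y)_x\le c_0\}$ and $S_\beta=\{j:(b^{j}\mid x)_y\le c_0\}$; both are nonempty.

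The main case is that there exist signs $i\in S_\alpha$ and $j\in S_\beta$ with $a^{i}\ne b^{j}$ and with $\dist_X(x,y)\ge L_0$, where $L_0=L_0(\delta)$ is the length threshold appearing in the local-to-global principle for broken geodesics in $\delta$-hyperbolic spaces (a standard consequence of $\delta$-hyperbolicity). Here I would form the bi-infinite path $c$ obtained by concatenating the geodesic ray $[a^{i},x]$, the segment $[x,y]$, and the geodesic ray $[y,b^{j}]$. Its two break points are $x$ and $y$, where the relevant Gromov products are $(a^{i}\mid y)_x\le c_0$ and $(x\mid b^{j})_y\le c_0$, and its unique interior segment $[x,y]$ has length at least $L_0$. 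The local-to-global principle then shows $c$ is a quasigeodesic with constants depending only on $\delta$, and stability of quasigeodesics places $c$ within Hausdorff distance $H_0=H_0(\delta)$ of a geodesic line $\sigma$ joining $a^{i}$ to $b^{j}$ (which exists since these ideal points are distinct). As $x,y$ lie on $c$, we obtain $\dist_X(p,\sigma)\le D+H_0$ and $\dist_X(q,\sigma)\le D+H_0$.

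The remaining two degenerate situations are handled directly, with $\sigma:=\alpha$. If $\dist_X(x,y)<L_0$, then $\dist_X(q,\alpha)\le \dist_X(q,y)+\dist_X(y,x)\le D+L_0$ while $\dist_X(p,\alpha)\le D$. Otherwise no admissible pair of signs exists; but $S_\alpha$ cannot contain both signs, since that would force $a^{+}=b^{j}=a^{-}$ for any $j\in S_\beta$, so $S_\alpha=\{i_0\}$, and likewise $S_\beta=\{j_0\}$, with $a^{i_0}=b^{j_0}=:\omega$. Then $(\omega\mid x)_y\le c_0$ gives $\dist_X(y,[\omega,x])\le c_0+C\delta$ for a universal $C$, and since $\omega$ is an ideal endpoint of $\alpha$ the ray $[\omega,x]$ from $x$ and the corresponding subray of $\alpha$ from $x$ are asymptotic with common basepoint, so by Observation~\ref{lem:asymp_geodesics} the former lies within $4\delta$ of $\alpha$. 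Hence $\dist_X(q,\alpha)\le D+c_0+(C+4)\delta$ and $\dist_X(p,\alpha)\le D$. Taking $R$ to be the maximum of the finitely many bounds produced — each depending only on $\delta$ and $D$ — gives the statement.

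The only real subtlety is the combinatorial bookkeeping in choosing the signs $i,j$ so that $a^{i}\ne b^{j}$, together with the observation that the failure of such a choice forces $p$ and $q$ both to lie near the single line $\alpha$; the quasigeodesic stability input is entirely standard. Finally, note that applying the lemma with $Y=X$ — which satisfies the hypothesis by condition (2) in Definition~\ref{defn:weak cusp space} — yields Proposition~\ref{prop:abundance of geodesic lines}.
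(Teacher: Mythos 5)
Your argument is correct, but it follows a genuinely different route from the paper's. The paper's proof is purely combinatorial: it takes the two lines $\sigma_p,\sigma_q$ near $p,q$, forms the four ideal geodesics $\eta_{ab}$ joining $\sigma_p^a$ to $\sigma_q^b$, and runs a two-step chase using only $\delta$-slimness of ideal triangles to show that one of the five lines $\eta_{++},\eta_{+-},\eta_{-+},\eta_{--},\sigma_q$ passes within $R_0+2\delta$ of both points. You instead build a single bi-infinite broken geodesic $[a^i,x]\cup[x,y]\cup[y,b^j]$ through both points, use Gromov-product bookkeeping to choose the signs so the breaks are small, and invoke the local-to-global principle plus stability of quasigeodesics. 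Your approach is conceptually cleaner (one path, one stability theorem) but imports heavier standard machinery and yields worse implicit constants; the paper's gives the explicit bound $R_0+2\delta$ from slimness alone. Your sign analysis is sound: both $S_\alpha$ and $S_\beta$ are nonempty by the hyperbolic inequality applied to $(a^+\mid a^-)_x\le 2\delta$, and if no admissible pair $a^i\ne b^j$ exists then both sets are singletons with $a^{i_0}=b^{j_0}$, whence the Gromov product bound together with Observation~\ref{lem:asymp_geodesics} places $q$ near $\alpha$; the short-segment case is trivially absorbed into the constant. One small point worth noting in your favor: your construction never needs the geodesic line joining an endpoint of $\alpha$ to an endpoint of $\beta$ to exist a priori (you only form it for a pair of \emph{distinct} ideal points), whereas the paper's proof tacitly assumes all four lines $\eta_{ab}$ exist, which silently excludes the degenerate case $\sigma_p^a=\sigma_q^b$ that your case analysis handles explicitly.
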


\begin{proof} Fix $R_0> 0$ such that: for any $p \in Y$ there is a geodesic line $\sigma : \Rb \rightarrow X$ with $\dist_X(p,\sigma) < R_0$. We claim that $R:=R_0+2\delta$ suffices. 

Fix $p,q \in Y$. Then there exist geodesic lines $\sigma_p, \sigma_q$ and $p^\prime \in \sigma_p$, $q^\prime \in \sigma_q$ with 
$$
\dist_X(p,p^\prime) < R_0 \quad \text{and} \quad \dist_X(q,q^\prime) < R_0.
$$
For $a,b \in \{+,-\}$, let  $\eta_{ab}$ be a geodesic line joining $\sigma_p^a$ and $\sigma_q^b$. 

The ideal geodesic triangle $\sigma_p \cup \eta_{++} \cup \eta_{-+}$ is $\delta$-slim, so there exists $p^{\prime\prime} \in \eta_{++} \cup \eta_{-+}$ such that 
$$
\dist_X(p^\prime, p^{\prime\prime}) \leq \delta. 
$$
\noindent \textbf{Case 1:} Assume $p^{\prime\prime} \in \eta_{++}$. The ideal geodesic triangle $\sigma_q \cup \eta_{++} \cup \eta_{+-}$ is $\delta$-thin, so there exists $q^{\prime\prime} \in \eta_{++} \cup \eta_{+-}$ such that 
$$
\dist_X(q^\prime, q^{\prime\prime}) \leq \delta. 
$$
If $q^{\prime\prime} \in \eta_{++}$, then 
$$
p,q \in \Nc_X(\eta_{++}, R_0+\delta)
$$
and the proof is complete. Otherwise, $q^{\prime\prime} \in \eta_{+-}$. Again using the fact that the ideal geodesic triangle $\sigma_q \cup \eta_{++} \cup \eta_{+-}$ is $\delta$-thin,  there exists $p^{\prime\prime\prime} \in \sigma_q \cup \eta_{+-}$ such that 
$$
\dist_X(p^{\prime\prime}, p^{\prime\prime\prime}) \leq \delta. 
$$
If $p^{\prime\prime\prime} \in \sigma_q$, then 
$$
p,q \in \Nc(\sigma_q, R_0+2\delta)
$$
and if $p^{\prime\prime\prime} \in  \eta_{+-}$, then 
$$
p,q \in \Nc(\eta_{+-}, R_0+2\delta).
$$
So the proof is complete in Case 1. 

\medskip

\noindent \textbf{Case 2:} Assume $p^{\prime\prime} \in \eta_{-+}$. The ideal geodesic triangle $\sigma_q \cup \eta_{-+} \cup \eta_{--}$ is $\delta$-thin, so there exists $q^{\prime\prime} \in \eta_{-+} \cup \eta_{--}$ such that 
$$
\dist_X(q^\prime, q^{\prime\prime}) \leq \delta. 
$$
If $q^{\prime\prime} \in \eta_{-+}$, then 
$$
p,q \in \Nc_X(\eta_{-+}, R_0+\delta)
$$
and the proof is complete. Otherwise, $q^{\prime\prime} \in \eta_{--}$. Again using the fact that the ideal geodesic triangle $\sigma_q \cup \eta_{-+} \cup \eta_{--}$ is $\delta$-thin,  there exists $p^{\prime\prime\prime} \in \sigma_q \cup \eta_{--}$ such that 
$$
\dist_X(p^{\prime\prime}, p^{\prime\prime\prime}) \leq \delta. 
$$
If $p^{\prime\prime\prime} \in \sigma_q$, then 
$$
p,q \in \Nc(\sigma_q, R_0+2\delta)
$$
and if $p^{\prime\prime\prime} \in  \eta_{--}$, then 
$$
p,q \in \Nc(\eta_{--}, R_0+2\delta).
$$
So the proof is complete in Case 2. 
\end{proof}

\bibliographystyle{alpha}
\bibliography{geom} 

\end{document}